\title{Dynamic alpha--invariants of del Pezzo surfaces with boundary}
\author{Jes\'us Mart\'inez Garc\'ia}
\date{2013}
\newcommand{\bigcell}[2]{\begin{tabular}{@{}#1@{}}#2\end{tabular}}
\begin{document}

\maketitle

\begin{abstract}
The global log canonical threshold, algebraic counterpart to Tian's alpha--invariant, plays an important role when studying the geometry of Fano varieties. In particular, Tian showed that Fano manifolds with big alpha--invariant can be equipped with a K\"ahler--Einstein metric. In recent years Donaldson drafted a programme to precisely determine when a smooth Fano variety $X$ admits a K\"ahler--Einstein metric. It was conjectured that the existence of such a metric is equivalent to $X$ being K-stable, an algebraic--geometric property. A crucial step in Donaldson's programme consists on finding a K\"ahler--Einstein metric with edge singularities of small angle along a smooth anticanonical boundary. Jeffres, Mazzeo and Rubinstein showed that a dynamic version of the alpha--invariant could be used to find such metrics.

The global log canonical threshold measures how anticanonical pairs fail to be log canonical. In this thesis we compute the global log canonical threshold of del Pezzo surfaces in various settings. First we extend Cheltsov's computation of the global log canonical threshold of complex del Pezzo surfaces to non-singular del Pezzo surfaces over a ground field which is algebraically closed and has arbitrary characteristic. Then we study which anticanonical pairs fail to be log canonical. In particular, we give a very explicit classification of very singular anticanonical pairs for del Pezzo surfaces of degree smaller or equal than $3$. We conjecture under which circumstances such a classification is plausible for an arbitrary Fano variety and derive several consequences. As an application, we compute the dynamic alpha--invariant on smooth del Pezzo surfaces of small degree, where the boundary is any smooth elliptic curve $C$.

Our main result is a computation of the dynamic alpha--invariant on all smooth del Pezzo surfaces with boundary any smooth elliptic curve $C$. The values of the alpha--invariant depend on the choice of $C$. We apply our computation to find K\"ahler--Einstein metrics with edge singularities of angle $\beta$ along $C$.
\end{abstract}

\declaration

\dedication{To Sanja, who kept me sane and happy while working on this thesis. To my parents, who encouraged me to study Mathematics without pushing me into them.}

\acknowledgements{I would like to thank my supervisor, Ivan Cheltsov, who took me as a student when I had one year less to carry out my PhD Thesis than a regular student. He generouslyshared his time and ideas with me and introduced me to challenging and very popular research topics, providing me with the background I needed. His motivation and enthusiasm have made my task both easier and enjoyable.

Another important source of encouragement and valuable career advice has been my second supervisor, Michael Wemyss. I am very grateful for his support.

My examiners, Arend Bayer and Damiano Testa, have given me very detailed comments and corrections, which have improved the shape and readability of this Thesis, as well as providing me with new ideas and different points of view to my arguments and results which are very useful.

I would also like to thank Yuji Odaka, Yuji Sano, Chi Li, Cristiano Spotti, Ruadhai Dervan, Hendrik \Suss, Andrea Fanelli, Costya Shramov, Yuri G. Prokhorov and Sean Paul for useful conversations.

I have been invited to speak on the work carried out in this Thesis in a few locations in the last year. I would like to thank Fedor Bogomolov, Yuri Prokhorov and Constantin Shramov and Hendrik \Suss at Steklov Institute of Mathematics and the Higher School of Economics (Moscow); Paolo Cascini, Andrea \emph{Fano} Fanelli, Yoshinori Gongyo and Yuji Odaka at Imperial College London; Jungkai Chen at the National Taiwan University (Taipei); and Xiuxiong Chen and Sean Paul at the Simons Centre for Geometry and Physics (Stony Brook). In all these places I was given the opportunity to present my work and discuss with others in a friendly and relaxied environment. The support was crucial in the last stages of writing up this thesis when I visited Taipei and Stony Brook and I am very grateful for the invitations.

Studying a Phd is a full-time learning process which would be very difficult, if not impossible without sufficient financial support. My studies have been supported by competitive funded by Fundaci\'on Caja Madrid, EPSRC and the School of Mathematics of the University of Edinburgh. I am grateful for their support. In times of imposed austerity, it is my hope they choose to keep funding new generations of mathematicians.

During my studies I have been lucky to travel to many conferences, workshops and graduate schools, in which I have enlarged the scope my research. Most of the financial support necessary for this trips has come from the Edinburgh Fund and the London Mathematical Society (LMS). I would like to thank them both and very especially the LMS, since it provides a very necessary breath of fresh air for Mathematics in the United Kingdom.

Many of the complicated bureacratic problems that any international student encounters, including meeting tight deadlines have been simplified by Mrs Gill Law, Graduate School Administrator of the School of Mathematics at the University of Edinburgh and I thank her for that.

I would also like to thank my colleagues in the School, who are not just colleagues but friends. Special thanks to my office mates Spiros Adams-Florou, Ciaran Meachan Patrick Orson and Becca Tramel, and to Marina, George, Hari, Elena, Pamela, Amy, Eric, Noel, Tim, Rachel, Joe, Noah, Noel, Chris and Barry.

Finally I am forever in debt to my wife, Sanja Marjanovi\'c, who has made my stay in Edinburgh a truly memorable one and who has always stayed by my side; to my parents Jes\'us and Carmen, who introduced me to Mathematics and supported me all the way through; and to my brother, Alberto, whose visits and phone calls always make me laugh.
}
\onehalfspacing
\tableofcontents

\chapter{Introduction}
\section{The Calabi Problem}
In 1954, Calabi asked a question in the International Congress of Mathematicians in Amsterdam, which in its most general form can be stated as follows:
\begin{que}
\label{que:Calabi}
When does a compact K\"ahler manifold admit a K\"ahler--Einstein metric?
\end{que}
Let $X$ be a complex compact variety of dimension $n$ with mild singularities. The existence of a K\"ahler-Einstein metric on $X$ makes sense only if we assume that the first Chern class of $M$ is either positive, zero, or negative. This problem is natural in algebraic geometry, when we consider $X$ to be projective, since all projective varieties are K\"ahler with a natural metric provided by the Fubini-Study metric of $\bbP^n$ restricted to $X$. The first Chern class condition in Question \ref{que:Calabi} also has a natural interpretation within birational geometry, corresponding to the canonical divisor $K_X$ being negative ($X$ is Fano), zero ($X$ is Calabi--Yau) or positive ($X$ is of general type), respectively. Therefore it has long been suspected that this problem should have a natural algebro--geometric interpretation. Question \ref{que:Calabi} was answered positively for $K_X\equiv 0$ by S.T. Yau in 1978 (see \cite{Yau-Calabi-announcement} and \cite{Yau-Calabi-conjecture-paper}), and by T. Aubin in 1976 for $K_X>0$ \cite{Aubin-CalabiProblem}. In the Fano case, little progress had been made until very recently, when Donaldson and his school revived the subject under a new approach. In the Fano case, pioneering work was carried out by G. Tian for smooth surfaces (see \cite{TianAlphaInvariant}, \cite{Tian-del-Pezzo-2}).

The first step towards answering Question \ref{que:Calabi} taken by Tian in \cite{TianAlphaInvariant} was to introduce a numerical invariant, $\alpha(X)$, known as Tian's $\alpha$--invariant, to give sufficient conditions for the existence of a K\"ahler--Einstein metric on a Fano manifold.
\begin{thm}
\label{thm:Tian-alpha}
Let X be a Fano variety with quotient singularities. Suppose
$$\alpha(X)>\frac{\dim X}{\dim X+1}$$
holds. Then X has an orbifold K\"ahler–Einstein metric.
\end{thm}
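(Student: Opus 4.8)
The plan is to prove existence via the continuity method applied to the complex Monge--Amp\`ere equation, handling the orbifold structure by working in local uniformizing charts where the quotient singularities appear as finite group quotients of smooth pieces. Fix a reference K\"ahler metric $\omega_0$ in the class $2\pi c_1(X)$ and write its Ricci form as $\mathrm{Ric}(\omega_0) = \omega_0 + \tfrac{i}{2\pi}\partial\bar\partial f_0$, where $f_0$ is the (orbifold) Ricci potential. For $t \in [0,1]$ I would consider the family
\[
(\omega_0 + i\partial\bar\partial\varphi_t)^n = e^{f_0 - t\varphi_t}\,\omega_0^n, \qquad \omega_0 + i\partial\bar\partial\varphi_t > 0,
\]
whose solution at $t = 1$ is a K\"ahler--Einstein potential. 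Let $S \subset [0,1]$ be the set of parameters admitting a smooth orbifold solution; it is nonempty, since $t = 0$ is the Calabi problem solved by Yau's (orbifold) theorem. The goal is then to show that $S$ is both open and closed, whence $S = [0,1]$.

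Openness is the routine part: the linearization of the equation at a solution is $\Delta_{\omega_{\varphi_t}} + t$, which is invertible on the appropriate orbifold H\"older spaces for $t$ away from $1$ because the first nonzero eigenvalue of the Laplacian exceeds $t$ along the path, so the implicit function theorem produces a nearby solution. I would run the uniformizing-chart version of the elliptic theory throughout, so that every function space is the $G$-invariant part of the usual space on a smooth finite cover.

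Closedness reduces to a priori estimates. Here Yau's estimates, again read off in uniformizing charts where equivariant elliptic regularity applies, show that a uniform $C^0$ bound on $\varphi_t$ bootstraps to uniform $C^{k,\gamma}$ bounds for all $k$. Thus the entire theorem rests on one estimate: the oscillation $\sup_X\varphi_t - \inf_X\varphi_t$ must stay bounded independently of $t$.

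This oscillation estimate is where the hypothesis $\alpha(X) > \tfrac{\dim X}{\dim X + 1}$, and the main difficulty, lives. Normalizing $\sup_X\varphi_t = 0$, integration of the equation controls $\sup$ from one side via the constraint $\int_X e^{f_0 - t\varphi_t}\omega_0^n = V$, where $V = \int_X \omega_0^n$. For the other side I would invoke the analytic meaning of the global log canonical threshold: for every $\alpha < \alpha(X)$ there is a uniform constant with $\int_X e^{-\alpha(\varphi_t - \sup_X\varphi_t)}\,\omega_0^n \le C_\alpha$, since the $\varphi_t$ are $\omega_0$-plurisubharmonic. Choosing $\tfrac{n}{n+1} < \alpha < \alpha(X)$ --- possible precisely because the inequality is strict --- and combining this uniform integrability with the Green's-function bound $-\inf_X\varphi_t \le \tfrac{1}{V}\int_X(-\varphi_t)\,\omega_0^n + C$ and the standard comparisons between the Aubin functionals $I$ and $J$ (with $I \le (n+1)J$), I would derive a closed differential inequality along the path that bounds the oscillation. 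The numerology is arranged so that the exponent $\alpha$ beats $\tfrac{n}{n+1}$ exactly when the energy terms are absorbed; this is the step I expect to be genuinely delicate, as it amounts to the coercivity of the Ding functional $\mathcal{D}(\varphi) = -E(\varphi) - \log\big(\tfrac{1}{V}\int_X e^{f_0 - \varphi}\omega_0^n\big)$, and the strict threshold $\tfrac{n}{n+1}$ cannot be relaxed. With the uniform oscillation bound in hand, $S$ is closed, a solution exists at $t = 1$, and orbifold elliptic regularity upgrades it to a genuine orbifold K\"ahler--Einstein metric.
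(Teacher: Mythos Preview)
The paper does not prove this theorem at all: it is stated as a background result in the introduction, with the smooth case attributed to Tian and the extension to quotient singularities to Demailly--Koll\'ar (via Nadel's multiplier ideal methods). There is no argument in the paper to compare your proposal against.

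That said, your sketch is essentially the classical Tian continuity-method proof, and the outline is correct in spirit. The one point worth flagging is your treatment of openness: you write that the linearization $\Delta_{\omega_{\varphi_t}} + t$ is invertible ``for $t$ away from $1$'', but of course the difficulty is precisely to reach $t=1$, and at $t=1$ the kernel of $\Delta_{\omega_\varphi} + 1$ is the space of holomorphic vector field potentials. This is not a gap in the existence proof, since one only needs openness on $[0,1)$ together with closedness at $1$, but the phrasing is slightly misleading. Otherwise the $C^0$ estimate via the $\alpha$-invariant integrability bound and the $I$--$J$ comparison is the standard route, and the orbifold modifications (working $G$-equivariantly in uniformizing charts) are handled as you indicate.
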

This Theorem was proved in the smooth case by Tian in \cite{TianAlphaInvariant}. The generalisation to singular cases are due to \cite{Demailly-Kollar} using methods in \cite{Nadel-KE-metrics}.

Tian's $\alpha$--invariant coincides with the global log canonical threshold, $\glct(X)$, as shown in \cite{Demailly-Kollar} and \cite[App.~A]{CheltsovShramovLct3folds}. This algebraic invariant, which is defined in Section \ref{sec:singularities-pairs} of this Thesis, is algebraic and has a birational nature. Therefore it is possible to compute it in many examples, providing that we have enough information on the variety $X$. Unfortunately Theorem \ref{thm:Tian-alpha} gives a sufficient but not necessary condition for the existence of a K\"ahler--Einstein metric. Indeed, $\glct(\bbP^2)=\frac{1}{3}$ (see Theorem \ref{thm:del-Pezzo-glct-charp}) but the Fubini-Study metric in $\bbP^2$ is clearly K\"ahler-Einstein.

Almost 20 years ago Yau and Tian suggested that $X$ being K\"ahler--Einstein should be equivalent to some type of algebro--geometric stability, known as K--stability. The definitions regarding K--stability are very technical and the details change depending on the author. We follow \cite{Chen-Donaldson-Sun-Kstability}, which we believe to be the most standard algebraic terminology.
\begin{dfn}
\label{dfn:test-configuration}
Let $X$ be a Fano variety with log terminal singularities. A \textbf{test-configuration} for $X$ is a flat family $\pi\colon \calX \rightarrow \bbA^1$ embedded in $\bbP^N\times \bbA^1$ for some $N$, invariant under a $\bbG_m$ action on $\bbP^{N}\times \bC$ covering the standard multiplicative action on $\bbA^1$ such that 
\begin{itemize}
\item $\pi^{-1}(1)\cong X$ and the embedding $X\subset \bbP^N$ is defined by the complete linear system $\vert -r K_{X}\vert $ for some $r\geq 1$;
\item The central fibre $\calX_0=\pi^{-1}(0) $ is a normal variety with klt singularities.

Let $L=\calO_{\bbP^N}(1)\vert_{\calX_0}$ be the hyperplane bundle, $L\rightarrow \calX_{0}$. The vector space $H^0(\calX_0, L^k)$ where $k\geq 0$ has a $\bbG_m$-action. Let $d_k=h^0(\calX_0, L^k)$ and $w_k$ be the total weight of the action. For large $k$, $d_k$ and $w_k$ are polynomials on the variable $k$ of degrees $n, n+1$ respectively. Thus 
$$\frac{w_k}{k d_k} = F_0 + F_1 k^{-1} + O(k^{-2})$$
and the \textbf{Donaldson-Futaki invariant of $\calX$} is $\DF(\calX)=F_1$.
\end{itemize}
\end{dfn}

\begin{dfn}
\label{dfn:K-stability}
A Fano variety $X$ is K-stable if for all test configurations $\calX$ such that $\calX_0\not\cong X$ we have $\DF(\calX)>0$.
\end{dfn}

\emph{A priori} further test configurations can be considered when defining K-stability, by relaxing the conditions on the singularities of $\calX_0$. However C. Li and C. Xu have proved in \cite{Li-Xu-K-stability} that the above definition of test configuration is enough to test K--stability.

\begin{conj}[{Yau-Tian-Donaldson, \cite{Yau-YTD-conjecture}, \cite{TianKEimpliesAnalyticKstability}}]
\label{conj:YTD}
Let $X$ be a complex Fano variety with klt singularities. Then
$$X \text{ is K-stable} \iff X \text{admits a K\"ahler--Einstein metric}.$$
\end{conj}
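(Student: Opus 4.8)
The plan is to prove each implication separately, since the two directions demand very different techniques. The implication that a K\"ahler--Einstein metric forces K-stability is the more tractable one, and I would attack it first through the Mabuchi K-energy. The key point is that the Donaldson--Futaki invariant $\DF(\calX)$ of a test configuration arises as the leading-order slope of the K-energy along the degenerating family determined by $\calX$. Concretely, I would read $\DF(\calX)$ off the asymptotic expansion of $w_k/(kd_k)$ from Definition \ref{dfn:test-configuration}, relate the total weight $w_k$ to the Chow/Hilbert--Mumford weights, and match these to the asymptotic slope of the K-energy functional. A K\"ahler--Einstein metric is a critical point of the K-energy and, in the absence of holomorphic vector fields, its minimiser; the energy is then bounded below and grows along any non-trivial degeneration, yielding $\DF(\calX)>0$ whenever $\calX_0\not\cong X$. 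The subtlety is handling automorphisms and the singular central fibre, for which I would pass to a normalised equivariant test configuration and use the klt hypothesis on $\calX_0$.

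The reverse implication, that K-stability produces a K\"ahler--Einstein metric, is the crux and I expect it to absorb essentially all the difficulty. My plan is a continuity method in the cone angle, deforming through K\"ahler--Einstein metrics with edge singularities of angle $2\pi\beta$ along a smooth anticanonical divisor $C$, and letting $\beta$ increase from near $0$ towards $1$. For $\beta$ small the existence of such a conical metric is governed by the dynamic $\alpha$--invariant computed in this thesis, so the continuity interval is non-empty. Writing $B\subset(0,1]$ for the set of angles admitting a conical K\"ahler--Einstein metric, I would argue that $B$ is both open and closed. Openness follows from an implicit function theorem applied to the linearisation of the conical Monge--Amp\`ere equation in suitable weighted H\"older spaces, with K-stability ruling out the obstruction coming from a kernel of the linearised operator.

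Closedness is where the genuine obstacle lies, and it requires the deep analysis of Gromov--Hausdorff limits. Taking $\beta_i\to\beta_\infty\in B$, I would extract a Gromov--Hausdorff limit of the corresponding conical manifolds; the heart of the matter is the partial $C^0$--estimate, which forces pluri-anticanonical sections to stay uniformly bounded and hence the limit space to be a normal projective variety $W$ with klt singularities, embedded by $\vert -rK\vert$ in the same $\bbP^N$. This limit, together with the degenerating family, defines a test configuration $\calX$ with $\calX_0\cong W$ in the sense of Definition \ref{dfn:test-configuration}. If $W\not\cong X$ this contradicts K-stability via $\DF(\calX)\le 0$; hence $W\cong X$, the limit metric solves the equation at $\beta_\infty$, and $B$ is closed. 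The main technical hurdles are establishing the partial $C^0$--estimate in the conical setting and proving that the Gromov--Hausdorff limit is algebraic with controlled singularities; these demand Cheeger--Colding structure theory adapted to edge singularities and constitute the most serious gap between this plan and a complete proof.
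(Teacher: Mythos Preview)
This statement is labelled as a \emph{conjecture} in the paper, and the paper does not supply a proof of it. Immediately after stating it, the paper cites two external results: Theorem~\ref{thm:berman} (Berman), which establishes the implication ``K\"ahler--Einstein $\Rightarrow$ K-stable'' in the klt generality of the conjecture, and Theorem~\ref{thm:Chen-Donaldson-Sun} (Chen--Donaldson--Sun, and independently Tian), which settles both directions in the smooth case. The paper's own contribution is orthogonal: it computes dynamic $\alpha$--invariants on del Pezzo surfaces, which feed into the existence of conical K\"ahler--Einstein metrics via Theorem~\ref{thm:Jeffres-Mazzeo-Rubinstein}, but it nowhere attempts to prove Conjecture~\ref{conj:YTD}.

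Your proposal is not a proof but a survey-level sketch of precisely the Berman and Chen--Donaldson--Sun strategies that the paper \emph{cites}. As such there is nothing to compare: the paper has no proof of this statement, and what you have written is an outline of the programme the paper is reporting on, not an argument. You correctly identify the partial $C^0$--estimate and the algebraicity of Gromov--Hausdorff limits as the genuine content; those are exactly the places where an outline stops being a proof. Note also that in the stated generality (klt rather than smooth) the conjecture was, at the time of the thesis, not fully established --- the CDS result covers only smooth $X$ --- so a complete proof would require substantially more than the smooth-case continuity method you describe.
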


A lot of progress was recently achieved in solving this long standing conjecture. In particular R. Berman showed the following
\begin{thm}[Berman, \cite{Berman-ke-implies-kstability}]
\label{thm:berman}
Let $X$ be a complex Fano variety with klt singularities which admits a K\"ahler--Einstein metric, then $X$ is K--stable.
\end{thm}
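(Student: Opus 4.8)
The plan is to realise Kähler--Einstein metrics as minimisers of an energy functional and to identify $\DF(\calX)$ with the asymptotic slope of that functional along a ray built from the test configuration; positivity of the slope then follows from convexity together with the minimising property. I would work with the \emph{Ding functional} $\mathcal{D}$, defined on metrics on $-K_X$, rather than with the Mabuchi K-energy directly, because its behaviour in low regularity is more tractable and its Euler--Lagrange equation is precisely the Kähler--Einstein equation. The hypothesis that $X$ carries such a metric then translates into the statement that $\mathcal{D}$ attains its minimum at the corresponding potential. Because $X$ has only klt singularities, everything must be phrased inside the finite-energy pluripotential classes $\mathcal{E}^1$, passing to a resolution where necessary so that the relevant integrals converge.

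First I would associate to a test configuration $\calX$ a geodesic ray $(\phi_t)_{t\geq 0}$ in the space of finite-energy metrics on $-K_X$. The $\bbG_m$-action on the total space $\calX$, together with a smooth reference metric, produces an $S^1$-invariant metric on the relative anticanonical bundle of $\pi\colon\calX\to\bbA^1$; restricting to the fibres over the punctured disc and passing to potentials gives, after the change of variable $t=-\log\lvert s\rvert$, a bounded subgeodesic ray, which I would replace by its geodesic envelope. The central fibre $\calX_0$ records the boundary behaviour of this ray as $t\to\infty$.

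The heart of the argument is the identification of the asymptotic slope $\lim_{t\to\infty}\mathcal{D}(\phi_t)/t$ with a \emph{Ding invariant} of $\calX$, computed intersection-theoretically on the total space, together with the comparison inequality $\DF(\calX)\geq\mathrm{Ding}(\calX)$, whose difference measures the discrepancy of the klt central fibre and is therefore nonnegative (vanishing exactly when $\calX_0$ is again $\mathbb{Q}$-Fano). Convexity of $\mathcal{D}$ along the geodesic ray---this is Berndtsson's theorem on the plurisubharmonic variation and positivity of direct images---combined with the fact that $\mathcal{D}$ is minimised at the Kähler--Einstein potential forces the slope to be nonnegative, giving $\DF(\calX)\geq 0$. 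To upgrade this to the strict inequality required by Definition \ref{dfn:K-stability} whenever $\calX_0\not\cong X$, I would invoke Berndtsson's rigidity theorem: equality in the convexity forces the ray to be generated by a holomorphic vector field, so that $\calX$ is a product configuration with $\calX_0\cong X$, which is precisely the case excluded by the definition.

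The principal obstacle throughout is regularity: the rays arising from test configurations are merely bounded (at best $C^{1,1}$), so none of these functionals or their convexity can be handled by naive differentiation, and the entire argument must be carried out within pluripotential theory, establishing that $\mathcal{D}$ is well defined, finite and convex on $\mathcal{E}^1$. Reconciling the analytic slope with the algebraic invariant $\DF(\calX)$ demands a careful intersection computation in the spirit of Tian, Paul--Tian and Phong--Ross--Sturm, and the presence of klt singularities means that each ingredient---Berndtsson convexity, the slope formula, and the equality analysis---must be re-established beyond the smooth projective setting in which it was originally proved.
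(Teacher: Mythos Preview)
The paper does not prove this theorem at all: it is stated in the introduction purely as background, with the proof delegated entirely to the cited reference \cite{Berman-ke-implies-kstability}. There is therefore no ``paper's own proof'' to compare your proposal against.

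That said, your sketch is a faithful outline of Berman's actual argument in the cited work: the Ding functional on $\mathcal{E}^1$, geodesic rays attached to test configurations, Berndtsson's convexity and rigidity, and the comparison $\DF(\calX)\geq\mathrm{Ding}(\calX)$ are exactly the ingredients he uses. So while there is nothing in this thesis to grade against, your proposal correctly identifies the strategy of the source.
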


A partial converse has been recently proved by X-X. Chen, S.K. Donaldson and S. Sun in \cite{Chen-Donaldson-Sun-Kstability}, \cite{Chen-Donaldson-Sun-Kstability1}, \cite{Chen-Donaldson-Sun-Kstability2} and \cite{Chen-Donaldson-Sun-Kstability3} and independently by Tian in \cite{Tian-Kstability-solution}:
\begin{thm}
\label{thm:Chen-Donaldson-Sun}
Conjecture \ref{conj:YTD} holds whenever $X$ is a smooth Fano complex manifold.
\end{thm}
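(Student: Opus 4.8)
The plan is to establish the two implications of Conjecture \ref{conj:YTD} separately. The forward direction --- that existence of a Kähler--Einstein metric implies K-stability --- is precisely Theorem \ref{thm:berman}, so it remains to prove the converse: if a smooth Fano manifold $X$ is K-stable, then it carries a Kähler--Einstein metric. Following the strategy of Chen--Donaldson--Sun, I would not attack the smooth Monge--Ampère equation directly, but instead deform through Kähler--Einstein metrics with edge (conical) singularities of cone angle $2\pi\beta$ along a fixed smooth divisor $D \in |-\lambda K_X|$, using $\beta \in (0,1]$ as the continuity parameter and aiming to push it up to $\beta = 1$, which recovers the smooth metric.

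First I would set up the continuity method in $\beta$. For sufficiently small $\beta$ the relevant twisted functional is proper because the log pair $(X,(1-\beta)D)$ has large (dynamic) $\alpha$--invariant, and existence of a conical Kähler--Einstein metric in this range follows from the work of Jeffres--Mazzeo--Rubinstein mentioned in the introduction. Openness of the set $S \subseteq (0,1]$ of admissible angles is obtained from the implicit function theorem applied in weighted Hölder spaces adapted to the conical geometry, together with the vanishing of the holomorphic-vector-field obstruction, which holds because K-stability of $X$ forces its automorphism group to be finite.

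The heart of the matter is closedness, and this is where I expect the main obstacle to lie. Taking a sequence $\beta_i \to \beta_\infty = \sup S$, one obtains conical Kähler--Einstein manifolds with a uniform lower Ricci bound and noncollapsed volume, so by Cheeger--Colding--Tian structure theory they converge in the Gromov--Hausdorff sense to a length space $Z$ which is smooth away from a closed singular set of real codimension at least $4$. The crucial analytic input is the partial $C^0$ estimate of Donaldson--Sun: the Bergman kernels of high powers of the (twisted) anticanonical bundle are uniformly bounded below, yielding uniform projective embeddings and allowing one to realise $Z$ as the underlying space of a normal $\mathbb{Q}$-Fano variety $W$ equipped with a singular Kähler--Einstein metric. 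Promoting this purely metric limit to an algebraic object, and controlling the singularities of $W$, is the technical core of the argument and is exactly the content of the Donaldson--Sun papers cited.

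Finally I would close the loop with algebraic geometry. If $\beta_\infty < 1$, or more generally if the limit variety $W$ is not isomorphic to $X$, then the uniform embeddings organise into a test configuration $\calX \to \bbA^1$ with central fibre $W$ and a $\bbG_m$-action, and the convergence of the conical metrics together with the Futaki--weight computation forces $\DF(\calX) \le 0$. Since $W \not\cong X$, this contradicts the K-stability of $X$; hence $\beta_\infty = 1$ and $W \cong X$, so $X$ itself admits a smooth Kähler--Einstein metric, which completes the converse and the theorem. Tian's independent proof replaces the conical continuity path by a parallel degeneration and limiting scheme, but arrives at the same algebro--geometric contradiction.
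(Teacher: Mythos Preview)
Your outline is a faithful high-level sketch of the Chen--Donaldson--Sun strategy, and it matches in spirit what the paper records; but note that the paper does \emph{not} give a proof of this theorem. It is stated as a cited result (with references to \cite{Chen-Donaldson-Sun-Kstability}, \cite{Chen-Donaldson-Sun-Kstability1}--\cite{Chen-Donaldson-Sun-Kstability3}, and Tian's \cite{Tian-Kstability-solution}), followed by a one-paragraph summary of the approach.

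That summary differs slightly from yours in emphasis. The paper phrases the argument by contrapositive and stresses the \emph{linearity in $\beta$} of the dynamic Donaldson--Futaki invariant $\DF_\beta(\calX)$: one has $\DF_\beta(\calX)\ge 0$ for small $\beta$ by the existence of edge metrics together with a dynamic version of Berman's theorem, and if $X$ admits no K\"ahler--Einstein metric one produces a test configuration with $\DF_{\beta'}(\calX)\le 0$ at some intermediate $\beta'$, whence linearity forces $\DF_1(\calX)<0$ and $X$ is not K-stable. Your version packages the same mechanism as a direct continuity method in $\beta$, with more of the analytic machinery (openness via the implicit function theorem, closedness via Cheeger--Colding--Tian compactness and the Donaldson--Sun partial $C^0$ estimate) made explicit. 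Both are legitimate descriptions of the same programme; neither constitutes a proof in the sense of this thesis, and the paper makes no claim to supply one.
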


Unfortunately Theorem \ref{thm:Chen-Donaldson-Sun} does not provide a very effective method to decide when a given Fano manifold admits a K\"ahler--Einstein metric, since the number of test configurations to check is huge. From that point of view Theorem \ref{thm:Tian-alpha} is often a better method.

Donaldson's programme to solve Theorem \ref{thm:Chen-Donaldson-Sun} was drafted in \cite{DonaldsonStabilityKEsurvey} and it uses a \emph{dynamic approach}. 

\begin{dfn}
\label{dfn:Kahler-Edge-metric}
Let $X$ be a compact K\"ahler manifold of dimension $n$ and $0<\beta\leq 1$. Let $D\sim -K_X$ be a smooth effective divisor and $\beta\in (0,1]$ such that $-(K_X+(1-\beta)D)$ is ample for $0<\beta\ll 1$. A \textbf{K\"ahler edge metric} on $X$ with singularities of angle $2\pi\beta$ along $D$ is a K\"ahler metric $g_\beta$ on $X\setminus D$ such that it is asymptotically equivalent at $D$ to the model edge metric
$$g_\beta := \vert z_1\vert^{2\beta-2} \vert dz_1\vert^2 +\sum_{j=2}^n  \vert dz_j\vert^2, $$
where $z_1, z_2, \ldots, z_n$ are holomorphic coordinates on $X$ such that $D =\{z_1 = 0\}$ locally. When $\beta=1$ they coincide with usual K\"ahler metrics.

If $g_\beta$ is also an Einstein metric, we say that  $g_\beta$ is a \textbf{K\"ahler--Einstein metric with edge singularities} of angle $2\pi\beta$ along $D$.
\end{dfn}

All the above concepts can be generalised to this \emph{dynamic} setting. In particular, we have dynamic versions of Tian's $\alpha$--invariant and Theorem \ref{thm:Tian-alpha} by T. Jeffres, R. Mazzeo and Y. Rubinstein and work of R. Berman:
\begin{thm}[\cite{JeffresMazzeoRubinstein}, see also \cite{Berman-dynamic-alpha}]
\label{thm:Jeffres-Mazzeo-Rubinstein}
Let $(X,D)$ be a smooth log pair where $X$ is a smooth Fano variety. Suppose that $-(K_X+(1-\beta)D)$ is ample for some $\beta\in (0,1]$. If
$$\alpha(S,\sum (1-\beta)D)>\frac{\dim X}{\dim X+1},$$
then there is a K\"ahler--Einstein metric with edge singularities of angle $2\pi\beta$ along $D$.
\end{thm}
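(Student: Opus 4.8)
The plan is to establish existence of the Kähler–Einstein edge metric by a continuity method, mirroring the strategy behind Theorem \ref{thm:Tian-alpha} in the smooth case but carried out in the function spaces adapted to edge singularities of cone angle $2\pi\beta$ along $D$. First I would recast the problem as a complex Monge–Ampère equation. Fix a reference metric $\omega_0$ in the class $c_1\big(-(K_X+(1-\beta)D)\big)$ carrying a prescribed edge singularity along $D$ — concretely Donaldson's model potential, obtained by adding a multiple of $|s_D|^{2\beta}$ to a smooth potential, where $s_D$ is a defining section of $D$. Writing $\omega_\varphi = \omega_0 + i\partial\bar\partial\varphi$, the desired metric corresponds to a solution of
$$(\omega_0 + i\partial\bar\partial\varphi)^n = e^{-\varphi + h}\,\omega_0^n,$$
where $h$ is the Ricci potential of $\omega_0$ and $n=\dim X$, sought within a class of potentials with controlled edge behaviour; at the solution one has $\mathrm{Ric}(\omega_\varphi)=\omega_\varphi + 2\pi(1-\beta)[D]$, which is exactly the edge Kähler–Einstein condition.

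I would then run the continuity method on the family
$$(\omega_0 + i\partial\bar\partial\varphi_t)^n = e^{-t\varphi_t + h}\,\omega_0^n, \qquad t\in[0,1],$$
and study $S=\{t:\varphi_t \text{ exists}\}$, which is nonempty since $t=0$ is solvable by the edge analogue of Yau's theorem. Openness of $S$ follows from the implicit function theorem once a satisfactory linear theory is in place: the linearization is $\Delta_{\omega_t}+t$, and along the path $\mathrm{Ric}(\omega_t)\ge t\,\omega_t$, so a Lichnerowicz-type eigenvalue estimate makes this operator invertible for $t<1$. The substance, however, is the \emph{linear edge theory}: one must show that the Laplacian of a conical metric is Fredholm and invertible on the edge Hölder spaces $C^{2,\gamma}_\beta$, together with a Schauder-type estimate for the Monge–Ampère operator in these spaces. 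Building this calculus — the mapping properties of $\Delta_{\omega_t}$ in the edge framework — is the principal technical obstacle, and is where the restriction $\gamma<\min\{1,\beta^{-1}-1\}$ on the admissible Hölder exponent arises.

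Closedness of $S$ reduces to a priori estimates on $\varphi_t$ uniform in $t$. The higher-order ($C^{2,\gamma}_\beta$) bounds follow from the linear theory combined with a second-order estimate of Aubin–Yau type adapted to the edge setting, which controls $\operatorname{tr}_{\omega_0}\omega_t$ once $\|\varphi_t\|_{C^0}$ is bounded. The decisive ingredient is therefore the $C^0$ estimate, and this is exactly where the hypothesis enters. Using the identification of the dynamic $\alpha$–invariant with the log canonical threshold of the pair $(X,(1-\beta)D)$ and its analytic interpretation à la Demailly–Kollár, the bound $\alpha(X,(1-\beta)D)>\tfrac{n}{n+1}$ furnishes a uniform exponential integrability estimate
$$\int_X e^{-c\,(\varphi_t-\sup_X\varphi_t)}\,|s_D|^{-2(1-\beta)}\,\omega_0^n \le C$$
for some $c>\tfrac{n}{n+1}$, uniformly in $t$ and in the path of potentials. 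Feeding this into a Moser–Trudinger / $\alpha$–invariant argument makes the Ding (equivalently Mabuchi) functional proper along the continuity path, which yields the uniform $C^0$ bound and hence closes $S$, so $1\in S$ and a solution exists.

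I expect the $C^0$ estimate driven by the $\alpha$–invariant to be the conceptual heart of the argument — it is the only place the arithmetic bound $\tfrac{n}{n+1}$ is used, and it transplants Tian's properness mechanism to the conical measure $|s_D|^{-2(1-\beta)}\omega_0^n$. The genuinely hard analytic work, by contrast, is the construction of the linear edge theory underlying openness and the higher-order regularity: once the Fredholm/Schauder package for $\Delta_{\omega_t}$ on $C^{2,\gamma}_\beta$ is available, the Kähler-geometric estimates and the final continuity argument proceed as in the smooth theory.
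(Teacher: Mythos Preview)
The paper does not prove this statement at all: Theorem~\ref{thm:Jeffres-Mazzeo-Rubinstein} is quoted in the Introduction as a result from the literature, with proof deferred entirely to the references \cite{JeffresMazzeoRubinstein} and \cite{Berman-dynamic-alpha}. The thesis uses it as a black box to derive existence of K\"ahler--Einstein edge metrics from the computations of $\alpha(S,(1-\beta)C)$ in Chapter~\ref{chap:dynamic}, and nowhere attempts an independent argument. So there is no ``paper's own proof'' to compare your proposal against.

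That said, your outline is a reasonable high-level summary of the strategy in \cite{JeffresMazzeoRubinstein}: set up the Monge--Amp\`ere equation with a reference edge metric, run a continuity path, obtain openness from the linear edge (polyhomogeneous) theory, and close the path using the $\alpha$--invariant hypothesis to force properness of the Ding/Mabuchi functional and hence a uniform $C^0$ bound. The identification of the edge Schauder/Fredholm package as the hard analytic input, and of the inequality $\alpha>\tfrac{n}{n+1}$ as entering only through the $C^0$ estimate, is accurate. Just be aware that what you have written is a roadmap rather than a proof: each of the ingredients you name (the edge linear theory, the conical Aubin--Yau $C^2$ estimate, the Moser--Trudinger argument for the weighted measure $|s_D|^{-2(1-\beta)}\omega_0^n$) is itself a substantial piece of work in the cited papers, and the thesis makes no claim to reproduce any of it.
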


The approach described in \cite{Chen-Donaldson-Sun-Kstability} to prove Theorem \ref{thm:Chen-Donaldson-Sun} is the following. First the authors construct a dynamic Donaldson-Futaki invariant $\DF_\beta(\calX)$, which is inear on $\beta$ and show that there is always a K\"ahler-Einstein edge metric for small $0<\beta_0\ll 1$ using Theorem \ref{thm:Jeffres-Mazzeo-Rubinstein}. Then a dynamic version of Theorem \ref{thm:berman} is used to show that $\DF_\beta(\calX)\geq 0$ for all test configurations $\calX$ of $X$ and small $0<\beta\ll 1$. If $X$ does not admit a K\"ahler--Einstein metric, then a test configuration $\calX$ is constructed for $X$ such that $\DF_{\beta'}(\calX)\leq 0$ for some $0<\beta_0<\beta'<1$. Since $\DF_\beta$ is linear on $\beta$, then $\DF_1(\calX)=\DF(\calX)<0$, so $X$ is not K--stable.

\section{Main results}
The above setting and the study by Tian and others of K\"ahler-Einstein metrics on non-singular del Pezzo surfaces suggests the following question.
\begin{que}
\label{que:KEE-metrics}
Let $S$ be a non-singular del Pezzo surface and $C$ be a smooth curve such that $-(K_S + (1-\beta)C)$ is ample for $0<\beta\ll 1$. For which $\beta$ can we find a K\"ahler--Einstein metric with edge singularities along $C$?
\end{que}
An obvious choice for $C$ are elliptic curves $C\in \vert-K_S\vert$. In this thesis we partially answer this question by computing $\alpha(S, (1-\beta)C)$ for all smooth complex del Pezzo surfaces and smooth curves $C\in\vert-K_S\vert$. This is the content of section $4$.

In fact, the only other choice of smooth irreducible curve $C$ is some rational $C\cong \bbP^1$ with certain numerical properties. They are classified in Theorem \ref{thm:del-Pezzo-dynamic-rational-classification}.

Another question comes up naturally:
\begin{que}
\label{que:KEE-metrics-boundary-condition}
Let $\beta<1$ be fixed, and $(X,\Delta)$ be a log pair, where $\Delta$ is a smooth effective hypersurface in a given fixed rational class. Does the existence of a K\"ahler--Einstein edge metric with edge singularities of angle $2\pi\beta$ along $\Delta$ depend on $\Delta$ or does it only depend on the class of $\Delta$?
\end{que}
Our results give indications that the existence of these metrics depends on the class of $\Delta$, since the value of $\alpha(S, (1-\beta)C)$ for $C\in \vert-K_S\vert$ depends on $C$, its value being higher for very general members. On the other hand, for the existence of such a metric, a requirement is that $-(K_S+(1-\beta)\Delta)$ is ample. This condition only depends on the rational class of $\Delta$.

Computing the global log canonical threshold of a Fano variety $X$ or a pair $(X,\Delta)$ requires certain level of understanding of effective $\bbQ$-divisors $D\simq (-K_X-\Delta)$ with singularities worse than log canonical. While this understanding does not need to be thorough for $\alpha(X)$, when computing $\alpha(X,(1-\beta)\Delta)$ when $\Delta$ varies, understanding which pairs $(X,D)$ have bad singularities for $\Delta\simq D$ is crucial. For this reason, trying to answer Question \ref{que:KEE-metrics}, we asked ourselves the following question:
\begin{que}
\label{que:Cats}
Given a Fano variety $X$, can we classify all effective $\bbQ$-divisors $D\simq-K_X$ such that $(X,D)$ is not log canonical?
\end{que}
In this thesis we develop a setting in which Question \ref{que:Cats} can be answered in a rather precise way according to $\Supp(D)$, where $(X,D)$ is not log canonical only if $\Supp(D)$ contains some very singular effective divisor $T\in \vert-mK_X\vert$ for some small $m$. In the literature $D$ is usually known as a \emph{tiger}. For this reason we coined the term \emph{cat} for such $T$. Cats live in a finite number of rational classes $\vert -mK_X\vert$.

K--stability can be understood also when $\charac(k)>0$. In that case the algebraic counterpart of Tian's $\alpha$--invariant, the global log canonical threshold $\glct(X)$, is considered. For instance, Y. Odaka and Y. Sano proved
\begin{thm}[\cite{OdakaSanoAlphaInvariant}]
\label{thm:OdakaSanoResult}
Let $X$ be a $\mathbb{Q}$-Fano variety of dimension $n$ and  suppose that $\glct(X)> \frac{n}{n+1}$ (resp. $\glct(X)\geq \frac{n}{n+1}$). 
Then, $X$ is K-stable $($resp.\ K-semistable$)$. 
\end{thm}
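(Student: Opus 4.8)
The plan is to prove this via Odaka's reformulation of the Donaldson--Futaki invariant as an intersection number on the (compactified) total space of a test configuration, and then to use the hypothesis on $\glct(X)$ to control the discrepancies that appear. Throughout I work with the anticanonical polarisation, so a test configuration $\pi\colon\calX\to\bbA^1$ for $X$ carries $\mathcal{L}\simq -rK_{\calX/\bbA^1}$. First I would reduce to a manageable class: by the theorem of Li and Xu cited above it is enough to verify $\DF(\calX)>0$ on \emph{normal} test configurations whose central fibre $\calX_0$ is a klt $\bbQ$-Fano variety, so that in particular the total space $\calX$ may be taken normal and $\bbQ$-Gorenstein. I then compactify $\calX\to\bbA^1$ to a family $\bar\pi\colon\bar\calX\to\bbP^1$ by gluing in a product fibre over $\infty$ and extend $\bar{\mathcal{L}}\simq -rK_{\bar\calX/\bbP^1}$, so that the $\bbG_m$-weights defining $\DF$ become honest intersection numbers on the projective variety $\bar\calX$.

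The next step is Odaka's formula, which expresses $(-K_X)^n\,\DF(\calX)$ as an explicit combination of $(-K_{\bar\calX/\bbP^1})^{n+1}$ and $K_{\bar\calX/\bbP^1}\cdot(-K_{\bar\calX/\bbP^1})^n$. Passing to a log resolution $g\colon\tilde\calX\to\bar\calX$ on which $\calX_0$ has simple normal crossings and writing $K_{\tilde\calX}=g^{*}K_{\bar\calX}+\sum_i a_iE_i$, this formula separates into a \emph{volume part}, governed by $\bar{\mathcal{L}}^{\,n+1}$, and a \emph{discrepancy part} assembled from the log discrepancies $1+a_i$ and the multiplicities of the components of the central fibre. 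The identity is arranged so that the sign of $\DF(\calX)$ is controlled by whether the discrepancy part beats the volume part by the dimension factor $\tfrac{n}{n+1}$; this is exactly the place where the threshold $\tfrac{n}{n+1}$ will enter.

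The heart of the argument, and the step I expect to be the main obstacle, is to feed $\glct(X)>\tfrac{n}{n+1}$ into the discrepancy estimate. The idea is that a general member of $\vert -rK_{\calX/\bbA^1}\vert$ restricts on each fibre $\calX_s$ with $s\neq 0$ to an anticanonical $\bbQ$-divisor $D_s\simq -K_X$, to which the definition of $\glct$ applies: $(X,\tfrac{n}{n+1}D_s)$ is log canonical. The problem is to transfer this control of singularities on the \emph{general} fibre, where $\glct$ lives, into a lower bound on the discrepancies $a_i$ occurring \emph{along the central fibre}, which is what Odaka's formula sees. This requires running inversion of adjunction and semicontinuity of log canonical thresholds in the fibration $\bar\calX\to\bbP^1$ in the correct direction, and keeping track of the $\bbG_m$-weights throughout; this compatibility is the delicate point, not the final bookkeeping. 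Granting it, one substitutes the resulting discrepancy bound into the formula of the second step and finds $(-K_X)^n\,\DF(\calX)\ge 0$, with strict inequality whenever $\calX_0\not\cong X$ under the strict hypothesis $\glct(X)>\tfrac{n}{n+1}$ --- giving K-stability --- and with non-strict inequality in the borderline case $\glct(X)=\tfrac{n}{n+1}$ --- giving K-semistability. One extra subtlety worth flagging, given the positive-characteristic framing of this section, is that the appeal to a log resolution in the second and third steps must be replaced by an appropriate substitute (an alteration, or the relevant steps of the MMP) when $\charac(k)>0$.
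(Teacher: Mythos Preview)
The paper does not prove this theorem: it is quoted as a result of Odaka and Sano \cite{OdakaSanoAlphaInvariant}, with only the remark that ``their proof uses resolution of singularities for dimension $n$, so it is valid in finite characteristic when $\dim(X)\leq 3$.'' There is therefore no proof in the paper to compare your proposal against.

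That said, your outline is broadly the Odaka--Sano strategy: express $\DF$ as an intersection number on the compactified total space, pass to a log resolution, and use $\glct(X)\geq\tfrac{n}{n+1}$ to bound the discrepancies appearing in the resulting formula. Two comments on accuracy. First, the reduction via Li--Xu is anachronistic: Odaka--Sano predates that work and handles general (not necessarily normal or special-fibre-klt) test configurations directly, which is why resolution of singularities is invoked. Second, your description of the ``heart of the argument'' is slightly off in mechanism. One does not run semicontinuity of log canonical thresholds across the family to transport information from the general fibre to the central fibre; rather, one works directly with the central fibre $\calX_0=\sum b_iE_i$ as a divisor on $\calX$ and applies the $\glct$ hypothesis to the anticanonical $\bbQ$-divisor on $X$ obtained by pushing forward (via the $\bbG_m$-action and restriction to a general fibre) a suitable multiple of $\calX_0$. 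This gives the needed lower bound on the log discrepancies $a_i$ in terms of the multiplicities $b_i$, and the combinatorial bookkeeping then produces the factor $\tfrac{n}{n+1}$. Your identification of the positive-characteristic caveat is exactly right and matches the paper's only substantive comment on the proof.
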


Their proof uses resolution of singularities for dimension $n$, so it is valid in finite characteristic when $\dim(X)\leq 3$.

Although we have introduced K--stability in the context of K\"ahler-Einstein metrics, it is interesting in birational geometry on its own right. For instance, in \cite{OdakaAnnals}, Odaka shows that, given certain conditions, if $(X,L)$ is $K$-stable where $L$ is an ample line bundle then $X$ has only semi-log canonical singularities (the proof assumes $\charac(k)=0$).

This thesis started with a \emph{pet project}: to compute $\glct(S)$ for $S$ a non-singular del Pezzo surface over an algebraically closed field $k$. The computations of the global log canonical thresholds of del Pezzo surfaces were carried out by I. Cheltsov in \cite{CheltsovLCTdP} when $k=\bbC$. 
\begin{thm}[{see \cite{CheltsovLCTdP} and \cite{JMGlctCharP}}]
\label{thm:del-Pezzo-glct-charp}
Let $S$ be a non-singular del Pezzo surface over an algebraically closed field $k$. Then:
\begin{equation*}
\glct(S)=\left\{
	\begin{array}{rl}
		1		&\text{when } K_S^2=1 \text{ and } \vert -K_S\vert \text{ has no cuspidal curves}\\
		5/6 &\text{when } K_S^2=1 \text{ and } \vert -K_S\vert \text{ has a cuspidal curve}\\
		5/6 &\text{when } K_S^2=2 \text{ and } \vert -K_S\vert \text{ has no tacnodal curves}\\
		3/4 &\text{when } K_S^2=2 \text{ and } \vert -K_S\vert \text{ has a tacnodal curve}\\
		3/4 &\text{when } K_S^2=3 \text{ and } \forall C \in \vert -K_S\vert,\ C \text{ has no Eckardt points}\\
		2/3 &\text{when } K_S^2=3 \text{ and } \exists C \in \vert -K_S\vert \text{ with an Eckardt point}\\
		2/3 &\text{when } K_S^2=4\\
		1/2 &\text{when } K_S^2=5,6 \text{ or } S\cong\bbP^1\times\bbP^1\ (K_S^2=8) \\
		1/3 &\text{when } K_S^2=7,9 \text{ or } S\cong\bbF_1\ (K_S^2=8)
\end{array}
\right.
\end{equation*}
\end{thm}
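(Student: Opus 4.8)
The plan is to prove the two inequalities separately for each degree $d=K_S^2$, reducing everything to log canonical thresholds of anticanonical pairs: by definition $\glct(S)=\inf\{c : (S,cD)\text{ is log canonical for every effective } D\simq -K_S\}$, so for each stated value I must exhibit a divisor realising it (upper bound) and show that no effective anticanonical divisor does worse (lower bound). Since Cheltsov has carried out this computation over $\bbC$ in \cite{CheltsovLCTdP}, the genuinely new work is to verify that each geometric and cohomological input survives over an algebraically closed field of arbitrary characteristic. Here the skeleton is available: the structure theory of non-singular del Pezzo surfaces (blow-ups of $\bbP^2$ at points in general position, together with $\bbP^1\times\bbP^1$ and $\bbF_1$) and resolution of singularities for surfaces both hold over any such field.

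\textbf{Upper bounds.} For each case I would write down an explicit $D\simq -K_S$ whose log canonical threshold equals the asserted value: a cuspidal member of $|-K_S|$ in degree $1$ (giving $5/6$), a tacnodal member in degree $2$ (giving $3/4$), the three concurrent lines cut out by a hyperplane through an Eckardt point in degree $3$ (giving $2/3$), and the appropriate line or union of lines in the higher-degree cases. Each threshold is computed on an explicit resolution and is insensitive to the characteristic, \emph{once} the relevant singular curve or special line configuration is known to exist. Thus the real content of this direction is the production of these special members of $|-K_S|$ (cuspidal, tacnodal) and special configurations (Eckardt points) in characteristic $p$, which I treat below.

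\textbf{Lower bounds.} This is the heart of the proof. I would argue by contradiction: suppose some effective $D\simq -K_S$ and some rational $c$ below the claimed threshold make $(S,cD)$ not log canonical at a point $p$. I then run an intersection-theoretic case analysis, bounding $\mathrm{mult}_p D$ through $D\cdot C=-K_S\cdot C$ for suitable curves $C$ (lines, conics, and members of $|-K_S|$ passing through $p$), subdivided according to the position of $p$ relative to the finitely many $(-1)$-curves and special curves on $S$. Restricting $D$ to such a curve and applying inversion of adjunction converts the two-dimensional problem into one-dimensional threshold inequalities, which yield the contradiction. The subtle point is that Cheltsov's arguments invoke Kawamata--Viehweg and Nadel vanishing, together with connectedness of the non-klt locus, and these can fail when $\charac(k)>0$. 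The key idea is that in dimension two they may be replaced by direct computations on the minimal resolution, where the needed connectedness and boundedness follow from the negative-definiteness of the exceptional intersection form (Mumford) rather than from any vanishing theorem.

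\textbf{Characteristic $p$ obstacles.} The main obstacle, and the place demanding the most care, is the small-characteristic geometry that can manufacture anticanonical divisors with unexpectedly bad singularities. I must recheck the classification of singular members of $|-K_S|$ (whether additional cuspidal or tacnodal curves appear) and the configuration of lines on cubic and lower-degree del Pezzo surfaces, where $\charac(k)=2$ can produce extra Eckardt points and an inseparable Gauss map can render every tangent line special. I expect the decisive cases to be $d=1,2,3$ in characteristics $2$ and $3$: there one must confirm either that the extra special configurations still give exactly the claimed threshold, or that they fail to occur, so that the dichotomies in the statement (cuspidal versus non-cuspidal, tacnodal versus non-tacnodal, with versus without an Eckardt point) remain the correct invariants governing $\glct(S)$.
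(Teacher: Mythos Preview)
Your strategy is sound at the coarse level---upper bounds via explicit anticanonical divisors, lower bounds via contradiction and intersection-theoretic multiplicity estimates, with surface-specific arguments replacing vanishing theorems---but the paper's route differs in its organisation for the new cases $K_S^2=2,3,4$.

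For degrees $2$ and $3$ the paper does \emph{not} adapt Cheltsov's argument step by step. Instead it first proves a stronger structural statement, the \emph{Cat Property}: every effective $D\simq -K_S$ with $(S,D)$ not log canonical has some $T\in|-K_S|$ contained in $\Supp(D)$. This is established purely by intersection theory and repeated blow-ups (Lemmas~\ref{lem:del-Pezzo-cat-degree-1}, \ref{lem:del-Pezzo-cat-degree-2}, Theorem~\ref{thm:del-Pezzo-cat-degree-3}); the degree-$3$ argument even uses the Geiser involution and a descending-chain-condition trick on $\deg\Omega$. Once the Cat Property holds, $\glct(S)$ is simply the minimum of $\lct(S,T)$ over singular $T\in|-K_S|$ (Corollary~\ref{cor:del-Pezzo-glct-low-degrees}), so the lower bound is immediate. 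Your approach would reach the same endpoint but without the intermediate classification of tigers, which the paper exploits elsewhere for the dynamic $\alpha$-invariant.

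For degree $4$ the Cat Property fails globally, so the paper gives a direct proof (Section~\ref{sec:delPezzo-4-curves}): it manufactures, for each point $p$ and each infinitely-near point $q$, specific auxiliary $\bbQ$-divisors $G,H\simq -K_S$ whose irreducible components all pass through $p$ (and $q$) with controlled degrees, then uses convexity (Lemma~\ref{lem:convexity}) to peel off a component and bound $\mult_pD$. This is close in spirit to your proposal but more constructive than a generic ``inversion of adjunction'' argument.

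Two minor corrections. First, your worry about whether cuspidal/tacnodal curves or Eckardt points exist in small characteristic is beside the point: the theorem is stated conditionally, so the proof only needs to match the threshold to whichever case obtains, not to decide which case occurs. Second, the replacement of Kawamata--Viehweg/Nadel vanishing and Shokurov connectedness is not quite ``Mumford negative-definiteness''; the paper instead proves the needed local inequalities (Lemma~\ref{lem:adjunction}, Theorems~\ref{thm:pseudo-inductive-blow-up}--\ref{thm:inequality-local-blowup-bound}) directly by induction on the number of blow-ups in the minimal log resolution, which is the honest characteristic-free substitute.
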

The cases $K_S^2=2,3,4$ in Theorem \ref{thm:del-Pezzo-glct-charp} had been proved in \cite{CheltsovLCTdP} using results not known to be true in finite characteristic. In this thesis we give a new proof for these cases, using the answer to Question \ref{que:Cats} when $K_S^2=2,3$ (section \ref{sec:delpezzo-cats}) and providing a different proof when $K_S^2=4$ (section \ref{sec:delPezzo-4-curves}). These new proofs are independent of the characteristic of $k$.

In Chapter \ref{chap:singularities} we provide basic definitions and results of the theory of singularities of pairs. Then we construct our theory of tigers and cats, which is needed in order to answer Question \ref{que:Cats}. We finish the chapter with a study of singularities in the case of surfaces, recalling well-known results which use intersection theory. Furthermore, we provide new results which are crucial tools for chapters 3 and 4.

Chapter \ref{chap:del-Pezzo} starts with a survey of well-known properties of del Pezzo surfaces. While we believe most of these results to be well known we did not find them in a shape and notation adequate to our needs, which is why we prove several Lemmas of del Pezzo surfaces in Section \ref{sec:delPezzo-properties}. The brave reader may want to skip to the second section of Chapter \ref{chap:del-Pezzo}, where we answer Question \ref{que:Cats} for non-singular del Pezzo surfaces. The last section focuses in the non-singular del Pezzo surface of degree $4$, computing the log canonical threshold, in order to complete the proof of Theorem \ref{thm:del-Pezzo-glct-charp}.

Chapter \ref{chap:dynamic} is the main part of this thesis, where we compute the dynamic $\alpha$--invariant of all non-singular del Pezzo surfaces with a boundary elliptic curve. Each of the degrees has its own section.

Finally, Appendix \ref{app:lcts} contains glorious computations of log canonical thresholds of pairs that we have outsourced from the main text in order to facilitate readability.

\chapter{Singularities of pairs, a theory of cats and tigers}
\label{chap:singularities}
\epigraph{\textit{``I'm off to check my tiger trap!"}}{Calvin, in Calvin \& Hobbes}

\section{Singularities of pairs}
\label{sec:singularities-pairs}
The study of singularities of algebraic varieties and hypersurfaces in them is essential in Birational Geometry. Historically, singularities were studied in terms of local equations, or as quotients of affine space by a group action. This approach allowed P. Du Val to classify del Pezzo surfaces with mild singularities (see \cite{DuVal1}, \cite{DuVal2} and \cite{DuVal3}) in the early 20th century.

In the early 80s, S. Mori pioneered new methods to understand birational contractions (see \cite{Mori-Hartshorne-Conjecture} and \cite{Mori-Cone-Theorem}) with the Cone and Contraction Theorem. Mori results were a generalisation to higher dimensions of Castelnuovo's Contractibility Criterion for non-singular surfaces, establishing intersection theory as the standard tool to understand birational contractions. However, while in dimension $2$ we can contract curves to a non-singular surface, in higher dimensions the contraction theorem may contract a smooth locus into a singular point. A new theory of singularities was needed, developed in terms of the resolution of singular points and the multiplicities of the exceptional divisors. All these ideas form what is nowadays known as \emph{Minimal Model Theory, Mori Theory} or the \emph{Minimal Model Programme}.

In this section we introduce basic notions in the theory of singularities of pairs, following \cite{KollarMori} and \cite{Corti-book-Flips-3folds-4folds}.  Our goal is to define the global log canonical threshold, which is equivalent to Tian's $\alpha$--invariant. The standard reference for the theory of singularities in the Minimal Model Programme is \cite{Kollar-Singularities-of-pairs}.

\begin{dfn}
\label{dfn:log-pair}
Let $X$ be an algebraic variety of dimension $n$ over an algebraically closed field $k$ of characteristic $p\geq 0$. Let $D=\sum b_i D_i$ be an effective $\bbQ$-divisor in $X$ where $D_i$ are prime Weil divisors and $b_i\in \bbQ$. Suppose $K_X+D$ is $\bbQ$-Cartier. We say that $(X, D)$ is a \textbf{log pair}, or simply a \textbf{pair}.
\end{dfn}

\begin{dfn}
Given any birational morphism $f\colon Y\ra X$ and an exceptional divisor $E\subset Y$ such that $f(E)=p$ where $p\in X$ is a point we will say that $E$ is \textbf{exceptional} over $X$. We will say that $F$ is a divisor over $p$ if either $F$ is exceptional over $p$ or $p\in F\subset X$.
\end{dfn}

\begin{dfn}
\label{dfn:discrepancy}
The \textbf{discrepancy} of $(X,D)$ is given by
$$\disc(X,D)=\inf_E\{a(E,X,D)\setsep E \text{ is exceptional}\}$$
and the \textbf{total discrepancy} of $(X,D)$ is given by
$$\totdisc(X,D)=\inf_E\{a(E,X,D)\setsep E \text{ is a divisor}\}.$$

Given $p\in X$, we define the \textbf{discrepancy} of $(X,D)$ at $p$ by
$$\disc_p(X,D)=\inf_E\{a(E,X,D)\setsep E \text{ is an exceptional divisor over } p\}$$
and the \textbf{total discrepancy} of $(X,D)$ at $p$ is
$$\totdisc_p(X,D)=\inf_E\{a(E,X,D)\setsep E \text{ is a divisor over } p\}.$$

\end{dfn}
In the above definitions the infimum is taken over all the irreducible exceptional divisors $E_i$ and all birational morphisms $f\colon Y \ra X$ and over all irreducible divisors of $X$.

\begin{nota}
When $D=0$, we write
$$a(E_i,X)=a(E_i,X,0),\qquad \disc(X)=\disc(X,0)=\totdisc(X,0).$$
\end{nota}

\begin{lem}[{\cite[Lem. 2.2.7]{KollarMori}}]
\label{lem:disc-monotonous}
If $(X,D)$ is a log pair and $D'$ is an effective $\bbQ$-Cartier $\bbQ$-divisor, then
$$a(E,X,D)\geq a(E,X,D+D')$$
for all divisors $E$ over $X$.
\end{lem}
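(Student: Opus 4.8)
The plan is to compute both discrepancies on a single birational model that extracts $E$, write down the two defining relations for $K_Y$, and subtract them. Fix a prime divisor $E$ over $X$ and choose a proper birational morphism $f\colon Y\ra X$ from a normal variety on which $E$ appears as an honest prime divisor. Fixing compatible canonical divisors $K_X$ and $K_Y$ (with $f_*K_Y=K_X$, a choice that does not depend on $D$), the defining relations for the discrepancies of the two pairs read, as equalities of $\bbQ$-divisors,
$$K_Y = f^*(K_X+D) + \sum_{F} a(F,X,D)\,F,$$
$$K_Y = f^*(K_X+D+D') + \sum_{F} a(F,X,D+D')\,F,$$
where in each sum $F$ ranges over the prime divisors on $Y$. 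Both relations make sense because $K_X+D$ is $\bbQ$-Cartier by the hypothesis that $(X,D)$ is a log pair, and $K_X+D+D'=(K_X+D)+D'$ is $\bbQ$-Cartier since $D'$ is.

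Subtracting the two relations, the terms $K_Y$ and the $f^*(K_X+D)$ contributions cancel against their counterparts, leaving
$$f^*D' = \sum_{F}\bigl(a(F,X,D)-a(F,X,D+D')\bigr)\,F$$
as an equality of $\bbQ$-divisors on $Y$. Now $f^*D'$ is an effective $\bbQ$-divisor: since $D'$ is effective and $\bbQ$-Cartier, some positive integer multiple $mD'$ is an effective Cartier divisor, locally the zero locus of a regular function, whose pullback stays regular, so $f^*(mD')$ is effective and hence so is $f^*D'$. Comparing the coefficient of each prime divisor $F$ on the two sides therefore yields $a(F,X,D)-a(F,X,D+D')\geq 0$. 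Specialising to $F=E$ proves $a(E,X,D)\geq a(E,X,D+D')$, and since $E$ was an arbitrary divisor over $X$ the inequality holds for every such $E$.

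The one step that genuinely uses the hypotheses---and the place I expect any subtlety to hide---is the effectivity of $f^*D'$: the $\bbQ$-Cartier assumption is exactly what makes $f^*D'$ well defined, and effectivity of $D'$ is what forces its coefficients to be nonnegative. Everything else is bookkeeping with the defining relation for discrepancies. I would also note that the argument treats all prime divisors $F$ uniformly, whether $F$ is $f$-exceptional or the strict transform of a component of $D$ or of $D'$, since the defining relation sums over every prime divisor of $Y$; in particular no separate case analysis for exceptional versus non-exceptional divisors is required.
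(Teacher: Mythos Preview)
Your argument is correct and is essentially the standard proof of this fact (as in \cite[Lem.~2.27]{KollarMori}). Note that the paper does not supply its own proof of this lemma; it merely cites Koll\'ar--Mori, so there is no ``paper's proof'' to compare against beyond that reference, whose argument is the same subtraction of the two discrepancy relations followed by the observation that $f^*D'$ is effective.
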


\begin{lem}[{\cite[Cor. 2.3.1]{KollarMori}}]
\label{lem:disc-range}
Let $D$ be a $\bbQ$-divisor such that $K_X+D$ is $\bbQ$-Cartier.
\begin{itemize}
	\item[(i)] Either $\disc(X,D)=-\infty$ or
				$$-1\leq \totdisc(X,D)\leq\disc(X,D)\leq 1.$$
	\item[(ii)] If $X$ is smooth then $\disc(X)=1$.
\end{itemize}
\end{lem}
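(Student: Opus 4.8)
The plan is to establish the three inequalities of part (i) in turn and then read off part (ii), working throughout with the crepant reformulation of discrepancy. Recall that any divisor $E$ over $X$ appears on some birational model $f\colon Y\ra X$, and that setting $B_Y=f_*^{-1}D-\sum_E a(E,X,D)E$, so that $K_Y+B_Y=f^*(K_X+D)$, the discrepancies of $(X,D)$ computed over the centre of $E$ coincide with those of $(Y,B_Y)$. This reduces every computation to blowing up smooth centres on a fixed model.

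First I would dispose of the two easy bounds. For $\disc(X,D)\leq 1$ I pick a smooth subvariety $Z$ of codimension two lying in the smooth locus of $X$ and disjoint from $\Supp(D)$ (a general point, for a surface); blowing up $Z$ yields an exceptional divisor $E$ with $a(E,X,D)=a(E,X)=\mathrm{codim}\,Z-1=1$, so the infimum is at most $1$. Alternatively, $\disc(X,D)\leq\disc(X)\leq 1$ follows at once from Lemma \ref{lem:disc-monotonous}, since $D$ is effective. The inequality $\totdisc(X,D)\leq\disc(X,D)$ is immediate, the total discrepancy being an infimum over the larger family of all divisors rather than only the exceptional ones.

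The heart of the matter is the dichotomy that either $\disc(X,D)=-\infty$ or $\totdisc(X,D)\geq -1$. I would fix a log resolution $f\colon Y\ra X$ on which $B_Y$ has simple normal crossing support, and use that every divisor over $X$ is obtained from the smooth $Y$ by finitely many blow-ups of smooth centres. The argument then splits according to the coefficients of $B_Y$. If all of them are at most $1$, I claim discrepancies stay $\geq -1$: blowing up a smooth centre $Z$ of codimension $r\geq 2$ meeting the boundary, at most $r$ of the simple normal crossing components of $B_Y$ pass through $Z$, whence $\mathrm{mult}_Z B_Y\leq r$ and the new exceptional divisor $F$ satisfies $a(F)=(r-1)-\mathrm{mult}_Z B_Y\geq -1$; its coefficient $-a(F)$ in the new boundary is then $\leq 1$, so both the simple normal crossing condition and the coefficient bound are preserved, and the estimate propagates by induction to give $\totdisc(X,D)\geq -1$. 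If instead some component $E_0$ of $B_Y$ has coefficient $c>1$, I build a cascade: blow up a general point of $E_0$, then repeatedly blow up the intersection of the strict transform of $E_0$ with the most recent exceptional divisor. A direct multiplicity count shows that the $k$-th exceptional divisor has discrepancy tending to $-\infty$ (for a surface one finds exactly $-k(c-1)$), and since these are exceptional, $\disc(X,D)=-\infty$.

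Part (ii) uses the same machinery. The upper bound $\disc(X)\leq 1$ is the codimension-two blow-up above. For the lower bound, when $X$ is smooth every exceptional valuation is realised by iterated blow-ups of smooth centres; starting from $D=0$, each exceptional divisor acquires a non-positive coefficient in the successive crepant boundaries, so at each blow-up of a smooth centre of codimension $r\geq 2$ one has $a(F)=(r-1)-\mathrm{mult}_Z B\geq r-1\geq 1$, and hence every discrepancy is $\geq 1$, with equality for the codimension-two blow-up. The step I expect to be the real obstacle is the lower bound in the dichotomy: it rests both on the existence of a simple normal crossing log resolution of $(X,D)$ and on the fact that blow-ups of smooth centres of a simple normal crossing pair preserve the estimate, and these need care when $\charac(k)>0$. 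As this thesis deals with surfaces, embedded resolution of surface singularities is classical and characteristic-free, so the argument goes through in every characteristic.
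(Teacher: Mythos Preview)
The paper does not prove this lemma at all; it is stated with a citation to \cite[Cor. 2.3.1]{KollarMori} and no argument is given. Your proposal is essentially the standard proof found there, and it is correct.

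Two minor remarks. First, your ``alternatively'' for the upper bound $\disc(X,D)\leq 1$ is not really an alternative: the inequality $\disc(X)\leq 1$ is itself established by the same codimension-two blow-up you already performed, so invoking Lemma \ref{lem:disc-monotonous} adds nothing. Second, in part (ii) the assertion that every exceptional valuation over a smooth $X$ is realised by iterated blow-ups of smooth centres deserves a word of justification; in arbitrary dimension this is not entirely trivial, but for surfaces it is the classical fact that any birational morphism between smooth surfaces factors as a composition of point blow-ups, which is characteristic-free and suffices for this thesis. Likewise, your cascade construction as written (blowing up a general \emph{point} of $E_0$) is only literally correct on a surface---in higher dimension one should start from a codimension-two centre on $E_0$---but again this is irrelevant here.
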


\begin{dfn}
\label{dfn:MMP-singularities}
Let $(X,D)$ be a pair for $X$ a normal projective variety over an algebraically closed field $k$. Assume $D$ is effective. Then $(X, D)$ is
\begin{equation*}
\left.
\begin{array}{r}
\text{terminal}\\
\text{canonical}\\
\text{Kawamata log terminal (klt)}\\
\text{log canonical}
\end{array} \right\} \text{ at } p \text{ if } \totdisc_p(X,D) \left\{
\begin{array}{l}
>0\\
\geq0\\
>-1\\
\geq -1,\\
\end{array} \right. 
\end{equation*}

When $(X,D)$ is terminal (respectively canonical, klt, log canonical) $\forall p\in X$, we simply say that $(X,D)$ is terminal (respectively canonical, klt, log canonical).

We say $X$ is terminal (respectively canonical, klt, log canonical) if $(X,0)$ is terminal (respectively canonical, klt, log canonical).
\end{dfn}

By Lemma \ref{lem:disc-range} (ii) smooth varieties are terminal. By Lemma \ref{lem:disc-range} (i), log canonical is the widest possible class of singularities for which the concept of discrepancy still makes sense. In this thesis we will be mostly focused on the log canonical class, which is important when defining the log canonical threshold. Kawamata log terminal singularities behave well in cohomology. For instance, the following result can be generalised to log pairs with klt singularities:
\begin{thm}[{Kawamata-Viehweg vanishing theorem, see \cite{Lazarsfeld1}[Thm. 4.3.1]}]
\label{thm:KawamataViehweg}
Let $X$ be a smooth complex projective variety of dimension $n$ and $D$ a nef and big divisor on $X$. Then
$$H^i(X, \calO_X(K_X+D))=0 \text{ for }i>0.$$
\end{thm}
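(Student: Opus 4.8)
The plan is to deduce the statement from the Kodaira vanishing theorem, which I take as the known base case: for an ample divisor $A$ on a smooth projective $X$ one has $H^i(X,\calO_X(K_X+A))=0$ for $i>0$. The nef-and-big hypothesis on $D$ is weaker than ampleness, so I would bridge the gap in two moves: first trade ``nef and big'' for the shape ``ample $+$ small effective'', and then eliminate the effective part by passing to a suitable finite cover on which only an honest Kodaira vanishing is needed.

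For the first move I would invoke Kodaira's lemma. Since $D$ is big, for an auxiliary ample divisor $H$ and $m\gg 0$ the divisor $mD-H$ is linearly equivalent to an effective divisor $E$, so $D\simq \frac{1}{m}H+\frac{1}{m}E$. Using in addition that $D$ is nef, for every small rational $\epsilon>0$ one rewrites $D-\frac{\epsilon}{m}E\simq (1-\epsilon)D+\frac{\epsilon}{m}H$, and the right-hand side is ample as the sum of a nef and an ample $\bbQ$-divisor. Hence $D\simq A+\Delta$ with $A:=D-\frac{\epsilon}{m}E$ ample and $\Delta:=\frac{\epsilon}{m}E$ effective; choosing $\epsilon$ small forces every coefficient of $\Delta$ into $[0,1)$, so $\lfloor\Delta\rfloor=0$. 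Passing to a log resolution of $(X,\Delta)$ I may also assume $\Supp(\Delta)$ has simple normal crossings; since $X$ is smooth the canonical pushforward and higher direct images behave predictably, so this normalisation does not change the cohomology I am after.

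The heart of the argument is the second move. Clearing denominators, fix $N$ with $N\Delta$ integral and $ND\sim NA+N\Delta$, and build the degree-$N$ cyclic cover $\pi\colon Y\ra X$ attached to this relation, with $\pi_*\calO_Y=\bigoplus_{i=0}^{N-1}L^{-i}$ for the appropriate line bundle $L$. After resolving the (comparatively mild, snc-induced) singularities of $Y$, one computes $K_Y=\pi^*K_X+R$ with $R$ the ramification, checks that the ramification exactly absorbs the fractional divisor $\Delta$, and finds that a twist of $\omega_Y$ by the pullback of an ample class reproduces the class $\calO_X(K_X+D)$ inside one Galois-eigen-summand. Applying Kodaira vanishing on the smooth cover $Y$, then using the projection formula together with the decomposition of $\pi_*$ under the Galois action to isolate the summand carrying $\calO_X(K_X+D)$, yields $H^i(X,\calO_X(K_X+D))=0$ for $i>0$.

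The main obstacle is precisely this covering construction. One must produce a cover $Y$ that is smooth, or smooth enough that $R^j\pi_*\omega_Y$ and the birational corrections vanish as needed, keep exact track of the ramification so that $\Delta$ is absorbed and a genuinely ample class survives upstairs, and correctly identify which eigen-summand of $\pi_*\omega_Y$ realises $\calO_X(K_X+D)$. Getting the bookkeeping of the round-downs and the ramification divisor right — so that no stray contribution spoils the ampleness required for Kodaira vanishing on $Y$ — is the delicate point; once it is done, the conclusion is a formal consequence of Kodaira vanishing and the projection formula. (An alternative I would keep in reserve is the analytic route via the Bochner--Kodaira--Nakano identity and $L^2$ estimates, which proves the statement directly but replaces the covering bookkeeping with curvature and Hörmander-type estimates.)
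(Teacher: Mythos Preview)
The paper does not prove this theorem; it is stated with a citation to Lazarsfeld's \emph{Positivity in Algebraic Geometry I} and used as a black box (specifically in the proof of Theorem~\ref{thm:del-Pezzo-dynamic-rational-classification}). So there is no proof in the paper to compare against.

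That said, your sketch is the standard algebraic proof and is essentially correct. A couple of points deserve tightening. First, when you pass to a log resolution $\mu\colon X'\to X$ of $(X,\Delta)$ you must explain why proving vanishing on $X'$ suffices: since $X$ is already smooth, $\mu$ is a composition of blow-ups in smooth centres, and one uses $R^j\mu_*\omega_{X'}=0$ for $j>0$ together with the projection formula and Leray to descend the vanishing; this is routine but not literally ``does not change the cohomology''. Second, in the cyclic cover step the right object is the Kawamata covering (or an iterated branched cover along the snc components of $\Delta$), which is smooth precisely because $\Supp(\Delta)$ is snc; the ramification computation shows that $\pi^*(K_X+D)$ differs from $K_Y$ plus a $\pi^*(\text{ample})$ by an effective integral divisor supported on the ramification, and the eigen-summand you want in $\pi_*\omega_Y$ is $\omega_X\otimes\calO_X(\lfloor D\rfloor)=\calO_X(K_X+D)$ since $\lfloor\Delta\rfloor=0$. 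With those two clarifications the argument goes through exactly as in Kawamata's and Viehweg's original papers and in Lazarsfeld's exposition.
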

There are other classes of singularities that we will not use such as \emph{purely log terminal} and \emph{divisorial log terminal}. The essay \emph{What is log terminal?} by O. Fujino in \cite{Corti-book-Flips-3folds-4folds} discusses all these concepts.

\begin{dfn}
\label{dfn:log-resolution}
Let $(X,D)$ be a log pair, where $D=\sum b_i D_i$. A \textbf{resolution} of $X$ is a proper birational morphism $f\colon Y \ra X$ such that $Y$ is smooth. A \textbf{log resolution} of $(X,D)$ is a resolution $f\colon Y \ra X$ of $X$ such that $(D_i)_Y$ is smooth for all $i$, the set $\Supp(D_Y)\cup \Exc(f)$ has simple normal crossings and the exceptional locus of $f$, $\Exc(f)$ has pure codimension $1$ or is empty.
\end{dfn}
If $\dim X=2$, there is a \textit{canonical} choic of log resolution, consisting on blowing-up the singular locus of $X$ (which consists of isolated points) and the singular locus in $\Supp(D)$ until obtaining simple normal crossings.

The condition on the codimension of $\Exc(f)$ in the above definition is not always included in the literature (for instance in \cite{KollarMori}). However more recent works (see \cite{Corti-book-Flips-3folds-4folds}) include it in order to simplify arguments. We follow the convention in \cite{Corti-book-Flips-3folds-4folds} in Definition \ref{dfn:log-resolution}. In any case, if $\codim(\Exc(f))\geq 2$ we can blow-up the smooth locus of higher codimension to obtain a log resolution in the sense of Definition \ref{dfn:log-resolution}.

When $\charac(k)=0$, log resolutions exist in all dimensions by \cite{HironakaSingResol}. An accessible source to the proof is \cite{HauserSingResol} or the more recent \cite{Kollar-Resolution-Singularities}. If $\charac(k)=p>0$, log resolutions exists when $\dim X=n\leq 3$ by the classical work of S.S. Abhyankar in high characteristics and the very recent proof of V. Cossart and O. Piltant \cite{CossartPiltant1}, \cite{CossartPiltant2}.

When resolution of singularities is available, it is enough to test the discrepancy on a log resolution, as the following result illustrates.
\begin{lem}[{\cite[Cor. 2.32]{KollarMori}}]
\label{lem:disc-log-resolution}
Given $X$, let $f\colon Y \ra X$ be any resolution of singularities of $X$ where $\Exc(f)$ has pure codimension $1$. Let $D=\sum b_j D_j$ where $b_j\leq 1$. Then there is a log resolution for $(X,D)$ such that if $a(E_i,X,D)\geq -1$ for all exceptional $E_i$, then
$$\disc(X,D)=\min\{\min_i\{a(E_i,X,D)\},\min_j\{1-b_j\},1\}.$$
\end{lem}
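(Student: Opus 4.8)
The plan is to reduce the computation of $\disc(X,D)$, which is \emph{a priori} an infimum over all divisors extracted on all birational models, to a finite computation on a single well-chosen log resolution, and then to analyse the remaining valuations combinatorially. One subtlety dictates the whole construction, and it is already visible for two transverse curves on a smooth surface: if $D=D_1+D_2$ with $b_1=b_2=1$, blowing up $D_1\cap D_2$ produces an exceptional divisor of discrepancy $-1$, so the formula evaluated on the identity resolution (which has no exceptional divisors) would give the wrong value $0$. This is why the statement only asserts the \emph{existence} of a good log resolution. Accordingly, the first and decisive step is to produce, starting from the given $f\colon Y\ra X$, a log resolution $g\colon Y'\ra X$ of $(X,D)$ in which the strict transforms $\widetilde{D}_j$ are pairwise disjoint; this is achieved by further blowing up the intersection strata of the boundary components, which is legitimate since resolution of singularities is available in the relevant dimension and characteristic.

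Next I would pass to the crepant model. Writing $K_{Y'}=g^{*}(K_X+D)+\sum_i a(E_i,X,D)E_i-\sum_j b_j\widetilde{D}_j$, set $\Gamma=\sum_j b_j\widetilde{D}_j-\sum_i a(E_i,X,D)E_i$, so that $K_{Y'}+\Gamma=g^{*}(K_X+D)$. Because $a(E_i,X,D)\ge-1$ and $b_j\le1$, the pair $(Y',\Gamma)$ is an SNC pair all of whose coefficients are $\le1$. Since crepant pullback preserves discrepancies, $a(F,X,D)=a(F,Y',\Gamma)$ for every divisor $F$ over $X$; and a divisor is exceptional over $X$ precisely when it is one of the $E_i$ or is exceptional over $Y'$. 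Therefore
$$\disc(X,D)=\min\left\{\min_i a(E_i,X,D),\ \disc(Y',\Gamma)\right\},$$
and the problem is reduced to computing the discrepancy of an SNC pair with coefficients $\le1$ on a smooth variety.

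The heart of the argument is the computation of $\disc(Y',\Gamma)$, and here the main obstacle is to show that no exotic valuation beats the obvious blow-ups. The clean way is to observe that over any point of a stratum where components of $\Gamma$ with coefficients $c_1,\dots,c_s$ meet transversally, a monomial valuation with non-negative weights $w_i$ has discrepancy $\sum_i w_i(1-c_i)-1$ (summed over all $n$ coordinate directions, with $c_i=0$ in the free directions), and that for SNC pairs these monomial valuations realise the infimum. I would establish the latter by induction on a sequence of smooth blow-ups, checking at each step that the pair stays SNC with coefficients $\le1$ and that the newly extracted divisor does not lower the running minimum. Minimising $\sum_i w_i(1-c_i)-1$ over weight vectors with at least two positive entries (the condition for exceptionality over $Y'$), and using $1-c_i\ge0$, the contributions split into: a single boundary component paired with a free direction, giving $1-b_j$; two free directions, giving $1$; and all crossings involving at least one exceptional $E_i$. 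For the last, the key numerical point is that \emph{because the $\widetilde{D}_j$ are disjoint}, each such crossing term equals $a(E_i,X,D)+t$ with $t\ge0$, hence is $\ge\min_i a(E_i,X,D)$; this is exactly where $a(E_i,X,D)\ge-1$ and $b_j\le1$ are used.

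Consequently $\disc(Y',\Gamma)\ge\min\{1,\min_j(1-b_j),\min_i a(E_i,X,D)\}$, and substituting into the displayed reduction, with the crossing terms absorbed into $\min_i a(E_i,X,D)$, gives
$$\disc(X,D)=\min\left\{\min_i a(E_i,X,D),\ \min_j(1-b_j),\ 1\right\}.$$
The reverse (easy) inequality $\disc(X,D)\le$ right-hand side is obtained directly: the $E_i$ are exceptional over $X$ and realise $\min_i a(E_i,X,D)$; blowing up a general codimension-two centre inside $\widetilde{D}_j$ realises $1-b_j$; and $\disc(X,D)\le1$ is Lemma~\ref{lem:disc-range}(i). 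The only genuinely delicate points are the separation of the boundary components and the reduction to monomial valuations; once these are in place the identity is a short bookkeeping of non-negative quantities.
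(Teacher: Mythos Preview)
The paper does not give its own proof of this lemma; it is stated with a citation to \cite[Cor.~2.32]{KollarMori} and no argument is supplied. Your proposal is essentially the standard Koll\'ar--Mori proof: reduce via crepant pullback to an SNC pair $(Y',\Gamma)$ with all coefficients $\le 1$, then show that the discrepancy of such a pair is controlled by blow-ups of smooth codimension-two centres, yielding exactly the three candidate minima. Your identification of the key subtlety---that the strict transforms $\widetilde{D}_j$ must be made pairwise disjoint by further blow-ups, because otherwise a crossing $\widetilde{D}_j\cap\widetilde{D}_k$ contributes $1-b_j-b_k$, which can undercut the formula---is correct and matches the care taken in the source. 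The bookkeeping of the crossing terms (showing $1+a(E_i)-b_j\ge a(E_i)$ from $b_j\le 1$, and $1+a(E_i)+a(E_k)\ge\min\{a(E_i),a(E_k)\}$ from $a(E_\ell)\ge -1$) is the right use of the hypotheses. The one place you are slightly informal is the assertion that monomial valuations realise the infimum for SNC pairs; in Koll\'ar--Mori this is the content of the preceding lemma (their 2.29/2.31), proved by an induction on blow-ups of smooth centres, and you would need to carry that out rather than merely gesture at it. Otherwise the argument is sound.
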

In particular, this Lemma tells us that the infimum in Definition \ref{dfn:discrepancy} is actually a minimum.

\begin{rmk}
In Lemma \ref{lem:disc-log-resolution} the existence of the resolution $f$ does not seem to play any role. Indeed it does. So far our setting only assumes that the ground field $k$ of our varieties is algebraically closed. When the characteristic of $k$ is finite, it is an open problem whether resolutions of singularities exist. This is only known in dimensions $1$--$3$. The proof of Lemma \ref{lem:disc-log-resolution} requires the existence of a resolution of singularities for $X$.

\end{rmk}

\begin{dfn}\label{dfn:lcts}
\mbox{}
\begin{itemize}
	\item[(i)] The \textbf{log canonical threshold} of a pair $(X,D)$ is
$$\lct(X,D)=\max\{\lambda \setsep (X,\lambda D) \text{ is log canonical}\}.$$
	\item[(ii)] The \textbf{local log canonical threshold} of the pair $(X,D)$ at $p\in X$ is
$$\lct_p(X,D)=\max\{\lambda \setsep (X,\lambda D) \text{ is log canonical at } p\}.$$
\end{itemize}
\end{dfn}

Observe that
$$\lct(X,D)=\min_{p\in X}\lct_p(X,D).$$

\begin{exa}
\label{exa:P2cuspidal}
Let $C=\{y^2-x^3=0\}\subset S=\bbA^2$ and $p=(0,0)$. Then $\lct(S,C)=\lct_p(S,C)=\frac{5}{6}$. Indeed, $S$ is smooth and $C$ is an irreducible and reduced Cartier divisor whose only singularity is at $p$. The minimal log resolution $f\colon Y\ra S$ consists of $3$ blow-ups over $p$ with exceptional divisors $E_1,E_2,E_3$ (see Figure \ref{fig:cusp}). The log pullback of $(S,C)$ is
$$f^*(K_S+C)\sim K_Y+C_Y + (2-1)E_1 + (3-2)E_2 + (6-4)E_3.$$
In particular $(S,D)$ is not log canonical since $a(E_3,S, C)=-2$. Consider the log pullback of $(S,\lambda C)$, for $\lambda \in \bbQ$:
$$f^*(K_S+\lambda C)\sim K_Y+\lambda C_Y + (2\lambda-1)E_1 + (3\lambda-2)E_2 + (6\lambda-4)E_3.$$
Then $a(E_1,S, \lambda C)\geq -1$ and $a(E_2,S, \lambda C)\geq -1$ for $\lambda\leq 1$ and $a(E_3,S,\lambda C)=4-6\lambda\geq -1$ for $\lambda\leq \frac{5}{6}$. Therefore $\lct_p(S,C)=\frac{5}{6}.$
\begin{figure}[ht]%
\begin{center}
\includegraphics[scale=0.5]{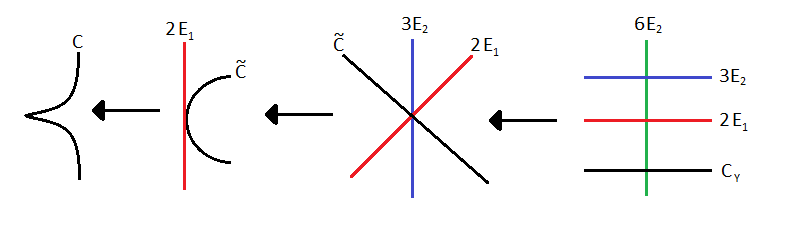}
\end{center}
\caption{Resolution of a cuspidal cubic curve.}%
\label{fig:cusp}%
\end{figure}
\end{exa}

This example is a particular case of the following result.
\begin{lem}[{see \cite{Igusa}}]
\label{lem:Igusa}
Let $f(x,y)=x^m+y^{n} + g(x,y)\in k[x,y]$ where $m\leq n$ and $g$ is a polynomial of degree $d\geq n+1$. Let $p=(0,0)\in \bbA^2$ and $C=\{f=0\}$. Then
$$\lct_p(\bbA^2, C)=\frac{1}{m}+\frac{1}{n}.$$
\end{lem}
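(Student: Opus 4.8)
The plan is to detect the log canonical threshold with a single weighted blow-up adapted to the quasi-homogeneous leading part $x^m+y^n$, and then to show that the higher-order terms collected in $g$ are irrelevant. Assign the weights $\mathrm{wt}(x)=n$ and $\mathrm{wt}(y)=m$, chosen so that the two leading monomials $x^m$ and $y^n$ acquire equal weighted order $mn$. Let $\pi\colon Y\to \bbA^2$ be the corresponding weighted blow-up of $p$, with exceptional divisor $E$. The standard formulas for weighted blow-ups give $a(E,\bbA^2)=n+m-1$ and $\mathrm{ord}_E(C)=\mathrm{ord}_E(f)=\min\{na+mb : x^ay^b \text{ appears in } f\}$. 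First I would verify that $g$ does not disturb this order: every monomial $x^ay^b$ of $g$ satisfies $a+b\ge n+1$, so, using $m\le n$, its weighted order is $na+mb\ge m(a+b)\ge m(n+1)>mn$. Hence the minimum is attained only by $x^m$ and $y^n$, and $\mathrm{ord}_E(C)=mn$.

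This already yields the upper bound. Indeed,
$$a(E,\bbA^2,\lambda C)=a(E,\bbA^2)-\lambda\,\mathrm{ord}_E(C)=(n+m-1)-\lambda mn,$$
which is $\ge -1$ precisely when $\lambda\le \frac1m+\frac1n$. Since $E$ is a divisor over $p$, this forces $\lct_p(\bbA^2,C)\le \frac1m+\frac1n$. (When $m=1$ the curve $C$ is smooth at $p$ and one gets $\lct_p=1$; the stated formula should then be read as $\min\{1,\frac1m+\frac1n\}$, the bound $1$ always being imposed by the coefficient of $C$ itself in the total discrepancy. I therefore assume $m\ge 2$, so that $\frac1m+\frac1n\le 1$.)

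For the reverse inequality I would show that $(\bbA^2,\lambda C)$ is log canonical for $\lambda=\frac1m+\frac1n$. Writing the crepant pullback, the coefficient of $E$ becomes $\lambda mn-(n+m-1)=1$, so
$$K_Y+\lambda C_Y+E=\pi^*\bigl(K_{\bbA^2}+\lambda C\bigr),$$
and it suffices to prove that $(Y,\lambda C_Y+E)$ is log canonical along $E$. The surface $Y$ is toric with only cyclic quotient (hence klt) singularities, located at the two torus-fixed points of $E$; there the pair $(Y,E)$ is the toric boundary pair, which is log canonical. The strict transform $C_Y$ meets $E$ only at the zeros of the face polynomial $x^m+y^n$ on $E$, and these lie in the torus, away from the singular points. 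The heart of the matter is to check that at these points $C_Y$ is smooth and meets $E$ transversally: then $(Y,\lambda C_Y+E)$ is a simple normal crossings pair with all coefficients in $[0,1]$ at a smooth point of $Y$, hence log canonical there.

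The main obstacle is precisely this last transversality, i.e.\ the nondegeneracy of the principal part $x^m+y^n$ with respect to its Newton boundary. In characteristic $0$ (and whenever $\charac(k)\nmid mn$) the partials $mx^{m-1}$ and $ny^{n-1}$ do not vanish simultaneously on the torus, nondegeneracy holds, and the argument closes; in the remaining characteristics one must argue more carefully, or fall back on an explicit Hirzebruch--Jung resolution, equivalent to iterating ordinary blow-ups along the continued fraction expansion of $n/m$, exactly as in the cuspidal computation of Example \ref{exa:P2cuspidal}. This nondegeneracy is also what makes the reduction to $x^m+y^n$ fully legitimate, beyond the mere bookkeeping of $\mathrm{ord}_E$.
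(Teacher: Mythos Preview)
The paper does not supply its own proof of this lemma; it is stated with a bare citation to Igusa. Your argument via the $(n,m)$-weighted blow-up is the standard one and is correct. The order computation for $g$ is clean, the upper bound is immediate, and for the lower bound your reduction to checking that $(Y,\lambda C_Y+E)$ is log canonical along $E$ is exactly right: the crepant pullback identity $K_Y+\lambda C_Y+E=\pi^*(K_{\bbA^2}+\lambda C)$ makes log canonicity equivalent on both sides, the toric boundary $(Y,E)$ is log canonical at the two cyclic quotient points, and the strict transform $C_Y$ meets $E$ only in the torus because the initial form $x^m+y^n$ does not vanish at the fixed points.

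You correctly isolate the one genuine issue, namely nondegeneracy of the face polynomial $x^m+y^n$. In characteristic zero, or more generally when $\charac(k)\nmid mn$, this holds and the transversality of $C_Y$ with $E$ follows at once, closing the argument. In bad characteristic the weighted blow-up alone may not produce a log resolution, and one would indeed have to pass to the Hirzebruch--Jung chain (or otherwise argue more carefully); since the paper only invokes this lemma in examples with small explicit exponents, this subtlety is harmless in context. Your observation about $m=1$ is also on point: the formula as printed should be read as $\min\{1,\tfrac{1}{m}+\tfrac{1}{n}\}$, and the paper's applications of the lemma all have $m\ge 2$.
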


\begin{dfn}\label{dfn:LCS}
The \textbf{locus of log canonical singularities} of a log pair $(X,D=\sum d_i D_i)$ is the closed set:
$$
\LCS(X,D)=\left(\bigcup_{d_i\geq 1} D_i\right)\cup \left(\bigcup_{a_j\leq -1}\sigma(F_j)\right)\subsetneq X.
$$
where $F_j$ are the reduced exceptional divisors over $X$ for any birational morphism $\sigma \colon Y\ra X$. This is called the \textit{non-klt locus} in \cite{Kollar-Singularities-of-pairs}.
\end{dfn}

Suppose $(X,D)$ is not log canonical at some point $p$. If $\codim(\LCS(X,D))\geq 2$ we will say that $(X,D)$ is \textbf{log canonical in codimension $1$} or \textbf{log canonical near $p$ but not at $p$}. Moreover, if $\dim(\LCS(X,D))=0$ we will say that $p$ is an \textbf{isolated locus of non-log canonical singularities} or simply that it is \textbf{isolated}, when no confusion is likely. When $\dim X=2$ these two concepts coincide.

\begin{dfn}
\label{dfn:glct}
Let $X$ be a Fano variety. Let $\Delta$ be an effective $\bbQ$-divisor such that $-(K_X-\Delta)$ is ample and $(X,\Delta)$ has klt singularities. The \textbf{global log canonical threshold} of $(X,\Delta)$ is
$$\glct(X,\Delta)=\sup\{\lambda \setsep (X,\Delta + \lambda D) \text{ is log canonical } \forall D\simq -(K_X-\Delta) \text{ effective }\bbQ\text{-divisor}\}.$$
When $\Delta=0$, we simply write $\glct(X)=\glct(X,0)$. When $\Delta=(1-\beta)D$, we call the function $\glct(X,(1-\beta)D)\colon (0,1] \ra \bbR$ with variable $\beta$ the \textbf{dynamic global log canonical threshold} of $(X,D)$.This function can be shown to be continuous when varying $\beta$.
\end{dfn}

\begin{thm}[{see \cite{Demailly-Kollar} or \cite[App. A]{CheltsovShramovLct3folds}}]
Let $(X,D)$ be a pair with klt singularities such that $-(K_X+D)$ is ample. The global log canonical threshold of $(X,D)$coincides with Tian's $\alpha$--invariant:
$$\glct(X,D)=\alpha(X,D).$$
\end{thm}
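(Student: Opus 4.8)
The plan is to reduce both invariants to the same local integrability quantity and then to bridge the gap between arbitrary plurisubharmonic potentials and honest effective divisors by Demailly's regularisation. We work over $\bbC$, where Tian's $\alpha(X,D)$ is defined analytically. After fixing a smooth reference form $\omega$ representing the ample class $-(K_X+D)$, the invariant $\alpha(X,D)$ is the supremum of those $\lambda>0$ for which there is a uniform $C>0$ with $\int_X e^{-2\lambda(\varphi-\sup_X\varphi)}\,dV\le C$ for every $\omega$-plurisubharmonic potential $\varphi$ (interpreted relative to the klt pair $(X,D)$, up to the standard normalisation). First I would localise this condition: since $X$ is compact, the uniform integrability fails at a given $\lambda$ precisely when some $\varphi$ and some point $x$ make $e^{-2\lambda\varphi}$ non-integrable near $x$. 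Writing $c_x(\varphi)=\sup\{c>0 \setsep e^{-2c\varphi}\in L^1 \text{ near } x\}$ for the complex singularity exponent, this yields the clean reformulation $\alpha(X,D)=\inf_{\varphi}\inf_{x\in X} c_x(\varphi)$.

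The core input is the Demailly--Koll\'ar dictionary between the complex singularity exponent and the log canonical threshold. When $\varphi$ has analytic singularities, locally $\varphi=\frac{1}{2}\log\sum_i|g_i|^2$ for generators $g_i$ of a coherent ideal, the curvature current $\frac{i}{\pi}\partial\bar\partial\varphi$ represents an effective $\bbQ$-divisor $\Gamma\simq-(K_X+D)$, and one has the identity $c_x(\varphi)=\lct_x(X,D+\Gamma)$. Under this dictionary the infimum of $c_x$ taken only over analytic-singularity potentials is exactly $\inf_\Gamma\inf_x \lct_x(X,D+\Gamma)=\glct(X,D)$, where $\Gamma$ runs over effective $\bbQ$-divisors in the class $-(K_X+D)$. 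Because every such $\Gamma$ produces a potential $\varphi$ of this type, the effective divisors embed into the admissible potentials; since $\alpha$ imposes integrability over a larger family, this gives at once the inequality $\alpha(X,D)\le\glct(X,D)$.

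The reverse inequality is the substantive step and the expected main obstacle, since a general $\omega$-psh potential need not have analytic singularities and can a priori be more singular than any divisor in the class. Here I would invoke Demailly's approximation theorem: an arbitrary $\varphi$ is approximated from above by a decreasing family $\varphi_m$ with analytic singularities, built from Bergman kernels of the spaces $H^0(X,mL)$ with $L=-(K_X+D)$ via the Ohsawa--Takegoshi extension theorem, in such a way that the Lelong numbers and hence the singularity exponents are controlled, giving $c_x(\varphi)\ge\limsup_m c_x(\varphi_m)$ up to the expected error terms. Combined with the semicontinuity (openness) of the integrability condition, this shows that the worst singularity exponent is already achieved, in the limit, by analytic-singularity potentials, that is, by divisors, so $\glct(X,D)\le\alpha(X,D)$. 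The delicate bookkeeping throughout is to verify that the $\bbQ$-linear class $-(K_X+D)$ and the klt hypothesis on $(X,D)$ survive these analytic constructions, which is what permits the local identity $c_x=\lct_x$ to be globalised into the stated equality $\glct(X,D)=\alpha(X,D)$.
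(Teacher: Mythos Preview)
The paper does not supply its own proof of this theorem; it is stated with a citation to \cite{Demailly-Kollar} and \cite[App.~A]{CheltsovShramovLct3folds} and no argument is given in the text. So there is no in-paper proof to compare against.

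Your sketch is a faithful outline of the argument in those cited references: reformulate $\alpha(X,D)$ via complex singularity exponents $c_x(\varphi)$, use the identity $c_x(\varphi)=\lct_x(X,D+\Gamma)$ for potentials with analytic singularities, get $\alpha\le\glct$ immediately since divisors give potentials, and then use Demailly's Bergman-kernel approximation together with semicontinuity of the singularity exponent (the effective version proved in \cite{Demailly-Kollar}) to pass from arbitrary $\omega$-psh potentials to analytic ones for the reverse inequality. One small caution: you invoke ``openness'' of the integrability condition, but for the inequality $\glct\le\alpha$ what is actually needed and used in \cite{Demailly-Kollar} is their semicontinuity theorem for $c_x$ under decreasing limits, not the (much later) openness conjecture; your use of $\limsup$ is the right formulation. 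Otherwise the strategy and the identification of the delicate step are correct.
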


When the ground field $k$ of $X$ is $k=\bbC$, we will use the notation $\alpha(X,D)$, and when $k=\bar k$ is simply algebraically closed, we will use $\glct(X, D)$.

It is not yet known whether $\glct(X)$ is a rational number. The following conjecture, due to Tian for complex Fano manifolds (see \cite{Tian-conjecture}), generalises the one in \cite[Conj 1.4]{Cheltsov-Shramov-Park-WPS} for complex Fano varieties:
\begin{conj}
\label{conj:Tian-glct-stabilisation}
Let $X$ be a projective Fano variety over an algebraically closed field. Suppose $X$ is $\bbQ$-factorial and has at wost log terminal singularities. Then 
\mbox{}
\begin{itemize}
	\item[(i)] $\exists D\simq -K_X$, an effective $\bbQ$-divisor on $X$ such that $\glct(X)=\lct(X,D)$.
	\item[(ii)] $\glct(X)$ is a rational number.
\end{itemize}
\end{conj}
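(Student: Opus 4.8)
The plan is to reformulate $\glct(X)$ as an infimum over valuations and then reduce both assertions to properties of a single minimizing divisor. Writing
$$\glct(X) = \inf\{\lct(X,D) \setsep D\geq 0,\ D\simq -K_X\}$$
and using that, for a prime divisor $E$ over $X$ with log discrepancy $A_X(E) := a(E,X)+1$, the pair $(X,\lambda D)$ is log canonical along $E$ exactly when $\lambda\,\mathrm{ord}_E(D)\leq A_X(E)$, I would exchange the two infima to obtain
$$\glct(X) = \inf_E \frac{A_X(E)}{\tau(E)}, \qquad \tau(E) := \sup\{\mathrm{ord}_E(D) \setsep D\geq 0,\ D\simq -K_X\},$$
the infimum running over all prime divisors $E$ over $X$. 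Here $\tau(E)$ is the pseudoeffective threshold: if $\pi\colon Y\ra X$ is a birational model extracting $E$, then $\tau(E) = \sup\{t\geq 0 \setsep \pi^*(-K_X)-tE \text{ is pseudoeffective}\}$, a finite positive number because $-K_X$ is ample.

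In this formulation both parts of the conjecture reduce to two sub-problems: (a) that for a minimizing $E$ the quantities $A_X(E)$ and $\tau(E)$ are rational, and (b) that the infimum over $E$ is attained. Sub-problem (a) is immediate for $A_X(E)$, while for $\tau(E)$ I would invoke the rationality theorem of the Minimal Model Programme --- or, for surfaces, the Zariski decomposition together with the finiteness of Zariski chambers: the pseudoeffective threshold is a wall of the Mori chamber decomposition of the two-dimensional cone spanned by $\pi^*(-K_X)$ and $E$, and such a wall is rational because its bounding classes are. Granting (b), part (i) then follows by manufacturing an effective $D\simq -K_X$ along which $E_0$ computes the log canonical threshold with value $A_X(E_0)/\tau(E_0) = \glct(X)$; the extremal class $\pi^*(-K_X)-\tau(E_0)E_0$ is rational and pseudoeffective, and a divisor with the prescribed vanishing order along $E_0$ can be extracted from the extremal-ray structure of a suitable run of the MMP.

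The main obstacle is sub-problem (b): the infimum is taken over the infinite-dimensional space of all divisors over $X$, and a priori it could be approached only by an unbounded sequence of valuations admitting no convergent subsequence. To control this I would replace the space of all divisorial valuations by a space of valuations of bounded log discrepancy, on which $E\mapsto A_X(E)/\tau(E)$ can be shown to be lower semicontinuous, and then prove that the candidate minimizers are extracted from a bounded family of varieties. This last point is precisely where Birkar's solution of the Borisov--Alexeev--Borisov conjecture (boundedness of $\epsilon$-log canonical Fano varieties) enters, exactly as in the existence proof for minimizers of the stability threshold $\delta(X)$. I expect this compactness-and-boundedness step to be by far the hardest --- it is the reason the statement is posed as a conjecture rather than a theorem, since in $2013$ these inputs were unavailable in the required generality.

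Finally, I would record that for the smooth del Pezzo surfaces which are the subject of this thesis no such machinery is needed: the explicit computation of Theorem \ref{thm:del-Pezzo-glct-charp} exhibits in every case a concrete anticanonical curve $C$ --- a cuspidal cubic, a tacnodal curve, a cubic through an Eckardt point, and so on --- for which $\glct(X)=\lct(X,C)$, and each value obtained there is visibly rational. Hence both (i) and (ii) hold for all smooth del Pezzo surfaces as an immediate corollary of the main classification, independently of the general valuative approach above.
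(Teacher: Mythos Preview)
The statement you are addressing is a \emph{conjecture} in the paper, not a theorem: the paper offers no proof of it and explicitly remarks immediately afterwards that ``(i) implies (ii)'' and that ``the conjecture is not known to be true even for complex del Pezzo surfaces with quotient singularities''. There is therefore no ``paper's own proof'' to compare against; the conjecture is used only to motivate the definition of the Cat index (Definition~\ref{dfn:cat-index}).

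Your sketch is a reasonable outline of the valuative strategy developed after 2013 (the reformulation $\glct(X)=\inf_E A_X(E)/\tau(E)$ is correct, and you rightly identify that attainment of the infimum is the crux, requiring boundedness results such as Birkar's BAB). You also correctly acknowledge that these inputs were unavailable at the time of the thesis. One point to be more careful about: even granting a minimizing divisorial valuation $E_0$ with rational $\tau(E_0)$, producing an effective $D\simq -K_X$ with $\mathrm{ord}_{E_0}(D)=\tau(E_0)$ requires that the boundary class $\pi^*(-K_X)-\tau(E_0)E_0$ be $\mathbb{Q}$-effective rather than merely pseudoeffective; this step needs justification and is not automatic from the MMP rationality theorem alone.

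Your final paragraph is accurate and is the only part that genuinely matches the paper: Lemma~\ref{lem:del-Pezzo-Cat-index} records that $\Cat(S)=1$ for smooth del Pezzo surfaces precisely because the computation of $\glct(S)$ in Theorem~\ref{thm:del-Pezzo-glct-charp} exhibits an explicit $D\in|-K_S|$ with $\glct(S)=\lct(S,D)$, verifying both parts of the conjecture in that case.
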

Note that (i) implies (ii). The conjecture is not known to be true even for complex del Pezzo surfaces with quotient singularities. However, there is strong evidence to support it. In fact, in the case of complex del Pezzo surfaces with Du Val singularities, $D$ can be found in $\vert -mK_X\vert$ for $m\leq 6$ (see \cite{Kosta-thesis}, \cite{Park-Won-lctDuVal}).

When computing global log canonical thresholds the following results and its corollaries are essential. They allow us to remove some irreducible divisor $B_i$ from any other effective $\bbQ$-divisor $D$ for which we only know that $(X,D)$ is not log canonical, where $B_i\subseteq\Supp(B)$ for $B$ an effective divisor that we understand. Then we can use intersection theory on $B_i$ and $D$. Together with Lemma \ref{lem:adjunction} in section \ref{sec:surfaces-intersection-theory}, the following results are an essential tool for computations.
\begin{lem}
\label{lem:higher-dim-convexity-forward}
Let $X$ be a $\bbQ$-factorial variety, $D$ and $B$ effective $\bbQ$-divisors on $X$. If $(X,D)$ and $(X,B)$ are log canonical then, for all $\alpha\in [0,1]\cap\bbQ$, the pair
$$(X,\alpha D+(1-\alpha)B)$$
is log canonical.
\end{lem}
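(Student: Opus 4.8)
The plan is to reduce everything to the single, entirely elementary observation that, for a \emph{fixed} divisor $E$ over $X$, the log discrepancy $a(E,X,-)$ is an affine--linear function of the boundary. Once this is established, log canonicity --- which by Definition \ref{dfn:MMP-singularities} amounts to the inequality $a(E,X,\Delta)\geq -1$ holding for every divisor $E$ over $X$ --- is manifestly preserved under convex combinations, and the lemma follows immediately.

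First I would exploit the hypothesis that $X$ is $\bbQ$--factorial: this guarantees that the individual divisors $D$ and $B$ (not merely the sums $K_X+D$ and $K_X+B$) are $\bbQ$--Cartier, so that for any birational morphism $f\colon Y\ra X$ the pullbacks $f^*D$ and $f^*B$ are defined as $\bbQ$--divisors on $Y$. It also ensures that $K_X+\alpha D+(1-\alpha)B=\alpha(K_X+D)+(1-\alpha)(K_X+B)$ is $\bbQ$--Cartier, so that $(X,\alpha D+(1-\alpha)B)$ is a genuine log pair in the sense of Definition \ref{dfn:log-pair} and the statement is meaningful.

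Next, fix a divisor $E$ over $X$ and a model $f\colon Y\ra X$ on which $E$ appears. Writing $a(E,X)=a(E,X,0)$ for the discrepancy with empty boundary, the defining relation $K_Y=f^*(K_X+\Delta)+\sum_E a(E,X,\Delta)E$ yields, for $\Delta$ any $\bbQ$--Cartier boundary, the uniform formula
$$a(E,X,\Delta)=a(E,X)-\mathrm{mult}_E\bigl(f^*\Delta\bigr),$$
valid whether $E$ is $f$--exceptional or is the strict transform of a prime divisor of $X$ (in the latter case both sides reduce to minus the coefficient of that divisor in $\Delta$). Applying this with $\Delta=\alpha D+(1-\alpha)B$, using linearity of pullback together with $a(E,X)=\alpha\,a(E,X)+(1-\alpha)\,a(E,X)$, I obtain
$$a\bigl(E,X,\alpha D+(1-\alpha)B\bigr)=\alpha\,a(E,X,D)+(1-\alpha)\,a(E,X,B),$$
which is the affine--linearity claimed above.

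Finally I would conclude: since $(X,D)$ and $(X,B)$ are log canonical, $a(E,X,D)\geq -1$ and $a(E,X,B)\geq -1$ for every divisor $E$ over $X$, so for $\alpha\in[0,1]$ the displayed identity gives $a(E,X,\alpha D+(1-\alpha)B)\geq \alpha(-1)+(1-\alpha)(-1)=-1$. As this holds for every $E$, the total discrepancy of $(X,\alpha D+(1-\alpha)B)$ is at least $-1$, i.e.\ the pair is log canonical. There is no serious obstacle here; the only point demanding care is the bookkeeping in the discrepancy formula --- checking that the affine--linearity holds simultaneously for $f$--exceptional divisors and for the strict transforms of components of $D$ and $B$ --- together with the remark that, although the total discrepancy is an infimum over infinitely many $E$, the identity is term--by--term, so the bound $\geq -1$ descends to the infimum.
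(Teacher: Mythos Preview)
Your proof is correct and follows essentially the same approach as the paper's: both arguments exploit the affine--linearity of the discrepancy $a(E,X,\Delta)$ in the boundary $\Delta$, and then conclude that a convex combination of values $\geq -1$ is again $\geq -1$. The paper writes this out by scaling and adding the two log pullback equations directly, while you isolate the formula $a(E,X,\Delta)=a(E,X)-\mathrm{mult}_E(f^*\Delta)$ first; this is purely a difference of presentation, and your additional care about the role of $\bbQ$--factoriality and the non--exceptional components is welcome but not strictly required beyond what the paper does implicitly.
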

\begin{proof}
Let $f\colon Y\ra X$ be any proper birational morphism with exceptional divisor $\bigcup E_i$. Then we may write
$$\alpha K_Y + \alpha D_Y \simq \alpha f^*(K_X+D) + \alpha \sum a_i E_i,$$
$$(1-\alpha) K_Y + (1-\alpha) B_Y\simq (1-\alpha) f^*(K_X+B)+(1-\alpha) \sum b_i E_i,$$
where $a_i\geq -1$, $b_i\geq -1$, since $(X,D)$, $(X,B)$ are log canonical.
Adding the two equivalences, we obtain
$$K_Y + \alpha D_Y +(1-\alpha)B_Y\simq f^*(K_X+\alpha D + (1-\alpha) B) + \sum (\alpha a_i +(1-\alpha)b_i)E_i.$$
Observe that
$$\alpha a_i + (1-\alpha) b_i \geq -\alpha -(1-\alpha) = -1$$
so $(X,\alpha D + (1-\alpha)B)$ is log canonical.
\end{proof}
Often, we will use the following corollary:
\begin{cor}
\label{cor:higher-dim-convexity}
Let $X$ be a $\bbQ$-factorial variety, $D$ and $B$ effective $\bbQ$-divisors on $X$, with $D\simq B$. If $(X,D)$ is not log canonical but $(X,B)$ is log canonical, then there is $\alpha\in [0,1)\cap\bbQ$ such that the pair
$$(X,D':=\frac{1}{1-\alpha}(D-\alpha B))$$
is not log canonical, $D'$ is effective and the support of $D'$ does not contain at least one of the irreducible components of $B$. Observe that $D'\simq D$.
\end{cor}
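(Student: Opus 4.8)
The plan is to exhibit $\alpha$ explicitly as the largest value for which subtracting $\alpha B$ from $D$ still leaves an effective divisor, and then to read off the failure of log canonicity from the contrapositive of Lemma~\ref{lem:higher-dim-convexity-forward}. Write $B=\sum_i b_i B_i$ with $b_i>0$, and let $c_i\geq 0$ denote the coefficient of $B_i$ in $D$ (so $c_i=0$ exactly when $B_i\not\subseteq\Supp(D)$). For $0\leq\alpha<1$ set $D_\alpha=\frac{1}{1-\alpha}(D-\alpha B)$; since $B\simq D$ we have $(1-\alpha)D_\alpha=D-\alpha B\simq(1-\alpha)D$, so $D_\alpha\simq D$ for every such $\alpha$, and $D_0=D$. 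The natural candidate is
$$\alpha_0=\min_i\frac{c_i}{b_i},$$
the threshold at which the first component of $B$ is forced out of $D-\alpha B$.

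First I would check that $\alpha:=\alpha_0$ does what we want at the level of divisors. It is rational and non-negative. For each $i$ we have $\alpha_0\leq c_i/b_i$, hence $c_i-\alpha_0 b_i\geq 0$; the components of $D$ not appearing in $B$ keep their positive coefficients; and dividing by $1-\alpha_0>0$ preserves signs, so $D'=D_{\alpha_0}$ is effective. If $i_0$ attains the minimum, then the coefficient of $B_{i_0}$ in $D-\alpha_0 B$ is $c_{i_0}-\alpha_0 b_{i_0}=0$, so $B_{i_0}\not\subseteq\Supp(D')$, which is the required dropping of a component; and $D'\simq D$ by the computation above.

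The failure of log canonicity then comes for free from convexity: by construction $D=(1-\alpha_0)D'+\alpha_0 B$ is a convex combination of the effective divisors $D'$ and $B$. Were $(X,D')$ log canonical, then since $(X,B)$ is log canonical Lemma~\ref{lem:higher-dim-convexity-forward} would force $(X,D)$ to be log canonical, contradicting the hypothesis. Hence $(X,D')$ is not log canonical.

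The one genuine obstacle is to guarantee $\alpha_0<1$, so that $1-\alpha_0>0$ and the construction even makes sense; this is where I expect the real work to be. Suppose instead $\alpha_0\geq 1$, i.e.\ $c_i\geq b_i$ for all $i$. Then $D-B$ is effective (its $B_i$-coefficients are $c_i-b_i\geq 0$ and the remaining components of $D$ have positive coefficients). But $D\simq B$ gives $D-B\simq 0$, and an effective $\bbQ$-divisor that is $\bbQ$-linearly trivial on a projective variety must vanish, since a suitable positive multiple is the divisor of a global rational function with no poles, hence of a constant. Thus $D=B$, forcing $(X,D)=(X,B)$ to be log canonical, contradicting the hypothesis. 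Therefore $\alpha_0<1$, and taking $\alpha=\alpha_0$ completes the argument. Note the case $\alpha_0=0$ (which occurs precisely when some $B_i\not\subseteq\Supp(D)$) is automatically included, as there $D'=D$ already omits a component of $B$.
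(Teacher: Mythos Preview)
Your proof is correct and follows essentially the same approach as the paper: you choose $\alpha$ to be the minimum ratio $c_i/b_i$ over the components of $B$, verify effectiveness and the dropping of a component, deduce non-log-canonicity from the contrapositive of Lemma~\ref{lem:higher-dim-convexity-forward}, and rule out $\alpha\geq 1$ via the observation that $D-B$ effective and $\bbQ$-linearly trivial forces $D=B$. Your write-up is somewhat more explicit than the paper's (e.g.\ you separate out the case $\alpha_0=0$ and spell out why an effective $\bbQ$-trivial divisor vanishes), but the argument is the same.
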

\begin{proof}
Suppose $(X,D')$ is log canonical. Then $(X,(1-\alpha)D'+\alpha B)$ is log canonical by Lemma \ref{lem:higher-dim-convexity-forward}, but this is impossible, since $(1-\alpha)D'+\alpha B=D$.

We may write $D=\sum_{i=1}^n d_i D_i +\Omega$, $B=\sum^n_{i=1}b_i D_i$ where $\Omega$ is an effective $\bbQ$-divisor such that for all $i$, $d_i\geq 0$, $b_i>0$ and $D_i\not\subseteq \Omega$. We need to show how to choose $\alpha$. If $d_i=0$ for some $i$ then take $\alpha=0$. Hence $D'=D$ and $B_i\not\subseteq\Supp(D')$. Now, assume all $d_i>0$. Let
$$\alpha=\min\left\{\frac{d_i}{b_i}\setsep b_i\neq 0\right\}.$$
Observe that $\alpha<1$, since otherwise
$$0\leq D-B \simq 0$$ and $D=B$. Hence, for some $i$ we have $B_i\not\subseteq\Supp(D')$, finishing the proof.
\end{proof}

\begin{cor}[log convexity]
\label{cor:higher-log-convexity}
Let $X$ be a $\bbQ$-factorial variety. Suppose $(X,A+D_1)$ is not log canonical and $(X,A+D_2)$ is log canonical where $A,D_1,D_2$ are effective $\mathbb{Q}$-divisors, such that $D_1\simq D_2$. There is $\lambda \in [0,1)\bigcap \mathbb{Q}$ such that 
$$D_3=\frac{1}{1-\lambda} (D_1-\lambda D_2),$$
is effective, $\Supp(D_3)$ does not contain at least one of the components of $D_2$ and $(X,A+D_3)$ is not log canonical.
\end{cor}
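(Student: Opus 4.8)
The plan is to mimic the proof of Corollary \ref{cor:higher-dim-convexity}, the only new feature being the common part $A$ shared by the two boundaries. The crucial observation is the convex-combination identity
$$A+D_1 = (1-\lambda)(A+D_3)+\lambda(A+D_2),$$
which holds for any $\lambda$ by the very definition $D_3=\frac{1}{1-\lambda}(D_1-\lambda D_2)$, since $(1-\lambda)D_3+\lambda D_2 = D_1$. Granting for the moment that $\lambda\in[0,1)$ can be chosen so that $D_3$ is effective, the non-log-canonicity of $(X,A+D_3)$ follows by contradiction: if $(X,A+D_3)$ were log canonical, then as $(X,A+D_2)$ is log canonical by hypothesis and $X$ is $\bbQ$-factorial (so that $A+D_3$ and $A+D_2$ are genuine boundaries), Lemma \ref{lem:higher-dim-convexity-forward} applied to the effective divisors $A+D_3$ and $A+D_2$ would make the pair attached to their convex combination log canonical; but that convex combination is exactly $A+D_1$, contradicting the hypothesis that $(X,A+D_1)$ is not log canonical.

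It remains to select $\lambda$, which I would do exactly as in Corollary \ref{cor:higher-dim-convexity}. Writing $D_2=\sum_{i} b_i D_i$ with all $b_i>0$, I would decompose $D_1=\sum_i d_i D_i+\Omega$ where $d_i\geq 0$, $\Omega$ is effective and no $D_i$ is a component of $\Omega$. If some $d_i=0$ I take $\lambda=0$, so that $D_3=D_1$ already omits $D_i$ from its support and $(X,A+D_3)=(X,A+D_1)$ is visibly not log canonical. Otherwise all $d_i>0$ and I set $\lambda=\min_i\{d_i/b_i\}$; then every coefficient $d_i-\lambda b_i$ of $D_3$ is non-negative so $D_3$ is effective, while the index $i_0$ realising the minimum gives $d_{i_0}-\lambda b_{i_0}=0$, so the component $D_{i_0}$ of $D_2$ is dropped from $\Supp(D_3)$.

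The one point requiring a short argument is that $\lambda<1$. Here I would use $D_1\simq D_2$: if $\lambda\geq 1$ then $d_i\geq b_i$ for every $i$, whence $D_1-D_2=\sum_i(d_i-b_i)D_i+\Omega$ is an effective $\bbQ$-divisor that is $\bbQ$-linearly equivalent to $0$, forcing $D_1=D_2$ and hence $(X,A+D_1)=(X,A+D_2)$, which is absurd since one is log canonical and the other is not. I do not expect any serious obstacle: the entire content is the convexity identity together with the component-dropping bookkeeping already carried out in Corollary \ref{cor:higher-dim-convexity}, the presence of the fixed divisor $A$ being harmless because it enters both boundaries symmetrically and is simply carried along inside the convexity Lemma \ref{lem:higher-dim-convexity-forward}.
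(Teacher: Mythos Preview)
Your proof is correct and follows essentially the same route as the paper. The paper's proof is even terser: it simply applies Corollary \ref{cor:higher-dim-convexity} to the pair $(X,A+D_1)$ with $B=A+D_2$ and observes that $\frac{1}{1-\lambda}\bigl((A+D_1)-\lambda(A+D_2)\bigr)=A+D_3$. Your version, which selects $\lambda$ from the ratios in $D_1$ versus $D_2$ (rather than $A+D_1$ versus $A+D_2$) and then invokes Lemma \ref{lem:higher-dim-convexity-forward} directly, has the minor advantage of making it transparent that the dropped component lies in $D_2$ rather than possibly in $A$; in the paper's formulation this is implicit, since every component of $A$ has ratio exactly $1$ and so cannot realise the minimum $\lambda<1$.
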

\begin{proof}
By Corollary \ref{cor:higher-dim-convexity}, the pair $$(X,\frac{1}{1-\lambda}(A+D_1-\lambda(A+D_2)))$$
is not log canonical. This is the same pair as in the statement.
\end{proof}

\begin{lem}
\label{lem:pairs-fixed-boundary-lcs}
Let $0<\beta\leq 1 $ and $0<\lambda\leq 1$. Let $X$ be a non-singular Fano variety and $\Delta\sim-K_X$ be a non-singular irreducible and reduced hypersurface. Let $D\simq-K_X$ be an effective $\bbQ$-divisor such that
\begin{equation}
(X,(1-\beta)\Delta+\lambda\beta D)
\label{eq:pairs-fixed-boundary-lcs}
\end{equation}
is not log canonical at some $p\in X$ and $\lambda\beta\leq\glct(X)$. Then $p\in \Delta$. and the pair \eqref{eq:pairs-fixed-boundary-lcs} is log canonical in codimension $1$.
\end{lem}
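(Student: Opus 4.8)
The plan is to isolate the single fact that drives everything, namely that the pair $(X,\lambda\beta D)$ is log canonical everywhere, and then read off both assertions from it together with the smoothness of $\Delta$ and the ampleness of $-K_X$. To obtain the key fact I would first rewrite the definition of the global log canonical threshold as $\glct(X)=\inf_{D'\simq -K_X}\lct(X,D')$. Since the log canonical threshold is attained (it is a maximum, by Lemma \ref{lem:disc-log-resolution}), the hypothesis $\lambda\beta\leq\glct(X)\leq\lct(X,D)$ immediately gives that $(X,\lambda\beta D)$ is log canonical, using Lemma \ref{lem:disc-monotonous} to pass from $\lct(X,D)\cdot D$ down to the smaller boundary $\lambda\beta D$.

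For the first assertion I would argue by contradiction and locality. If $p\notin\Delta$, choose an open neighbourhood $U$ of $p$ disjoint from the closed set $\Delta$. On $U$ the fixed part $(1-\beta)\Delta$ and any $\Delta$-component occurring in $D$ contribute nothing, so the boundary of \eqref{eq:pairs-fixed-boundary-lcs} agrees with $\lambda\beta D$ on $U$. As $(X,\lambda\beta D)$ is log canonical, the pair \eqref{eq:pairs-fixed-boundary-lcs} is then log canonical at $p$, contradicting the hypothesis. Hence $p\in\Delta$.

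For the second assertion I would check that every prime component of the boundary carries coefficient at most $1$; this forces the pair to be log canonical at every codimension-one point, so its non-log-canonical locus is confined to codimension $\geq 2$, which is the meaning of log canonical in codimension one. A component $D_i\neq\Delta$ appears with coefficient $\lambda\beta\,\mathrm{mult}_{D_i}D$, and this is $\leq 1$ precisely because $(X,\lambda\beta D)$ is log canonical at the generic point of $D_i$. The one remaining component, $\Delta$ itself, appears with coefficient $(1-\beta)+\lambda\beta\,\mathrm{mult}_\Delta D$.

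The main obstacle is bounding this last coefficient, because log-canonicity of $(X,\lambda\beta D)$ alone only yields $\lambda\beta\,\mathrm{mult}_\Delta D\leq 1$, and adding the fixed amount $(1-\beta)$ could a priori push the total above $1$. The way around it is to bound $\mathrm{mult}_\Delta D$ directly: writing $D=\mathrm{mult}_\Delta D\cdot\Delta+R$ with $R\geq 0$ and $\Delta\not\subseteq\Supp R$, we have $R\simq(1-\mathrm{mult}_\Delta D)(-K_X)$, and since $-K_X$ is ample an effective $R$ in a negative multiple of $-K_X$ is impossible, forcing $\mathrm{mult}_\Delta D\leq 1$. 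Combined with $\lambda\leq 1$ this gives coefficient $(1-\beta)+\lambda\beta\,\mathrm{mult}_\Delta D\leq(1-\beta)+\lambda\beta\leq 1$. Thus every coefficient is at most $1$; the only borderline case is equality, which occurs when $\lambda\beta=\glct(X)$ and some reduced component is a log canonical centre, but a reduced smooth component with coefficient $1$ is still log canonical at its generic point, so this does not create a codimension-one failure. Therefore \eqref{eq:pairs-fixed-boundary-lcs} is log canonical in codimension one, completing the proof.
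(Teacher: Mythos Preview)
Your argument is correct, and it takes a genuinely different route from the paper's. The paper invokes log-convexity (Corollary~\ref{cor:higher-log-convexity}) at the outset: since $(X,(1-\beta)\Delta+\lambda\beta\Delta)$ is log canonical (as $\Delta$ is smooth and $(1-\beta)+\lambda\beta\leq 1$), one may replace $D$ by an effective $D'\simq-K_X$ with $\Delta\not\subseteq\Supp(D')$ while preserving non-log-canonicity at $p$. After this reduction the coefficient of $\Delta$ in the boundary is simply $1-\beta$, so the codimension-one claim becomes immediate, and the $p\in\Delta$ claim follows exactly as in your locality argument. You bypass the convexity reduction entirely and instead bound $\mathrm{mult}_\Delta D\leq 1$ directly from the ampleness of $-K_X$, which lets you control the coefficient of $\Delta$ even when $\Delta\subseteq\Supp(D)$.

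What each approach buys: the paper's reduction is the standard manoeuvre in this subject and keeps the final bookkeeping trivial, at the cost of quoting the convexity corollary. Your approach is more self-contained and slightly more elementary, trading the convexity lemma for the easy ampleness observation that an effective $\bbQ$-divisor cannot be $\bbQ$-linearly equivalent to a negative multiple of an ample class. Both arguments rest on the same core fact, namely that $(X,\lambda\beta D)$ is globally log canonical because $\lambda\beta\leq\glct(X)$.
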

\begin{proof}
Since $X$ and $\Delta$ are non-singular, then $(X, (1-\beta)\Delta + \lambda\beta \Delta)$ is log canonical. In particular, by Corollary \ref{cor:higher-log-convexity}, we may assume that $\Delta\not\subseteq\Supp(D)$.

If $(X,(1-\beta)\Delta+\lambda \beta D)$ is log canonical at all points in $\Delta$, then $(X,\lambda \beta D)$ is not log canonical, which is impossible since $\lambda\beta\leq \glct(X)$ and $D\simq-K_X$. Therefore $p\in \Delta$.

If the pair \eqref{eq:pairs-fixed-boundary-lcs} is not log canonical along a hypersurface $D_i$, then $D_i=\Delta$, by the same argument. However, then $1-\beta>1$, which is impossible.
\end{proof}

\section{Tigers and cats}
In this section we develop a setting to classify pairs $(X, D)$ which are not log canonical, where $X$ is a normal Fano variety and $D$ an effective $\bbQ$-divisor with $D\simq -K_X$. When the propoposed setting is possible the classification is very precise, giving an accurate desription of $\Supp(D)$An an application we obtain an easy way to compute $\glct(X, \Delta)$ and $\alpha(X, (1-\beta)\Delta)$. When this classification is possible for $X$, we have coined the expression \emph{the Cat Property holds on $X$} for reasons given below.

Our original motivation to develop this theory was to study birational maps between del Pezzo fibrations. The classification of non log-canonical pairs plays an important role in this setting, as hinted in \cite{Cheltsov-birationally-rigid-del-Pezzo-fibrations}. While unsuccessfully working on this problem, the author realised that when the Cat Property holds, the computation of the dynamic $\alpha$--invariant becomes almost trivial, as shown below in Lemma \ref{lem:cat-property-easy-dynamic-alpha}.

The author found out later that for a non-singular complex del Pezzo surface $S$ of low degree, the Cat Property plays an important role when showing there is no non-trivial $\bbG_a$-action on $\Aut(\hat{S})$, the automorphism group of the affine cone of $S$. The study of this problem has been pioneered by T. Kishimoto, Y. Prokhorov and M. Zaidenberg. In \cite{Kishimoto-Prokhorov-Zaidenberg1}, the authors showed that such a $\bbG_a$--action exists when $\deg S \geq 4$. In \cite{Kishimoto-Prokhorov-Zaidenberg-general} a criterion of existence for the $\bbG_a$--action is provided. The study of this problem for $3$-folds has been started in \cite{Kishimoto-Prokhorov-Zaidenberg-3fold}. In \cite{Kishimoto-Prokhorov-Zaidenberg2}, the case of non-singular del Pezzo surfaces of low degrees is studied, proving the non-existence of a non-trivial $\bbG_a$ action when $1\leq \deg S\leq 2$. The Cat Property is implicitly discussed. In particular it follows that if the Cat Property holds for a smooth cubic surface $S$, there is no non-trivial $\bbG_a$-action on $\Aut(\hat S)$. While the author showed that the Cat Property holds on cubic surfaces (Theorem \ref{thm:del-Pezzo-cat-degree-3}), the same result was proved independently by I. Cheltsov, J. Park and J. Won in \cite{Cheltsov-Park-Won-Sancho}. J. Park and J. Won originally arrived to this problem motivated by a \emph{semi-local} version of Tian's $\alpha$--invariant proposed by F. Ambro in \cite{Ambro-Minimal-log-discrepancy}. 

In \cite{Keel-McKernan-book} Keel and \McKernan introduced the concept of \textit{tiger}, possibly as an inside joke following Miles Reid's general elephant conjecture. A general elephant, as coined by Miles Reid in \cite{Reid-YPG}, is a general element of the anti-canonical linear system and therefore expected to have good singularities. A tiger would be somewhat the opposite, an element in the anti-canonical class with very bad singularities. Colorful names aside, the advantage of this term is that it allow us to use just one word to summarise a long list of conditions:
\begin{dfn}
\label{dfn:tiger}
Let $X$ be a $\bbQ$-factorial projective Fano variety with klt singularities. Let $(X,D)$ be a log pair where $D$ is an effective $\bbQ$-divisor with $D\simq-K_X$. If $(X,D)$ is not log canonical we will call $D$ a \textbf{tiger}.
\end{dfn}
Note that for a tiger $D$ we have $\lct(X,D)<1$. Actually, in \cite{Keel-McKernan-book}, tigers included also any effective $\bbQ$-divisor $D\simq-K_X$ such that $(X,D)$ is not klt (i.e. $\lct(X,D)\leq 1$). In \cite{Keel-McKernan-book} such $D$ is actually called a \textit{special tiger}, reserving the name \textit{tiger} for a divisor $E$ over $X$ with discrepancy $a(X,D,E)\leq -1$. We will stick to Definition \ref{dfn:tiger} since it fits better our purposes.

We are interested in tigers in $\vert-mK_X\vert$ for $m$ small, which will be called \textit{cats}. In order to define a cat precisely, we need the following:
\begin{dfn}
\label{dfn:cat-index}
Suppose $X$ satisfies Conjecture \ref{conj:Tian-glct-stabilisation}, i.e. there is an effective $\bbQ$-divisor $D$ such that $D\simq-K_X$ and $\glct(X)=\lct(X,D)$. We define the \textbf{Cat index} of $X$ as:
$$\Cat(X)=\min\{m\in \bbZ_{>0} \setsep \glct(X)=\lct(X,\frac{1}{m}D) \text{ for } D\in -mK_X\vert \}.$$
If $X$ does not satisfy Conjecture \ref{conj:Tian-glct-stabilisation}, then define $\Cat(X)=\infty$.
\end{dfn}

\begin{exa}
\label{exa:Cat-index}
From Theorem \ref{thm:del-Pezzo-glct-charp}, we have $\glct(\bbP^2)=\frac{1}{3}$. Let $L$ b e a line in $\bbP^2$. Then $\lct(\bbP^2,3L)=\frac{1}{3}$. Since $3L\in \vert-K_{\bbP^2}$, then $\Cat(\bbP^2)=1$. In particular $D_1=3L$ or $D_2=\{y^2z-x^3=0\}$ are cats since $D_1,D_2\in \vert-K_{\bbP^2}\vert$ but $(\bbP^2,D_1)$ and $(\bbP^2,D_2)$ are not log canonical.

\end{exa}

\begin{dfn}
\label{dfn:cat}
If $D$ is a tiger of $X$ with $mD\in \vert-mK_X\vert$ and $m\leq \Cat(X)$, then we say that $D$ is a \textbf{cat}.
\end{dfn}

\begin{dfn}
\label{dfn:cat-property}
We say that a set $U\subseteq X$ satisfies the \textbf{Cat Property}, or that the Cat Property holds on $U$, if for all tigers $D\simq-K_X$ such that $(X,D)$ is not log canonical on $U$, there is an effective $\bbQ$-divisor $T\simq-K_X$ with $mT\in \vert-mK_X\vert$ for some $m\leq \Cat(X)$, such that $\Supp(T)\subseteq\Supp(D)$.
\end{dfn}
We are mostly interested in the case $U=X$.

\begin{lem}
\label{lem:Cat-property-not-lc-has-cats}
Let $X$ be a Fano variety satisfying the Cat Property such that $\exists D$ a tiger of $X$. Then there is a cat $T$ with $\Supp(T)\subseteq\Supp(D)$.
\end{lem}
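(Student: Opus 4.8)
The plan is to feed the given tiger $D$ into the Cat Property and then repair its output, because the Cat Property only promises an effective divisor $T\simq -K_X$ with $mT\in\vert -mK_X\vert$ for some $m\leq\Cat(X)$ and $\Supp(T)\subseteq\Supp(D)$; it says nothing about whether $(X,T)$ is log canonical. Since a \emph{cat} must in addition be a tiger (Definition \ref{dfn:cat}), i.e.\ $(X,T)$ must fail to be log canonical, the real work is to arrange this non-log-canonicity while keeping the support inside $\Supp(D)$.

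I would argue by induction on the number $n$ of irreducible components of $\Supp(D)$, the induction running over all tigers of $X$. Given a tiger $D$, I apply the Cat Property (Definition \ref{dfn:cat-property}) with $U=X$; this is legitimate because, being a tiger, $(X,D)$ is not log canonical somewhere on $X$. This produces $T$ as above. If $(X,T)$ is already not log canonical, then $T$ is a tiger satisfying $mT\in\vert -mK_X\vert$ with $m\leq\Cat(X)$, hence a cat, and $\Supp(T)\subseteq\Supp(D)$, so the conclusion holds.

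The substantive case is when $(X,T)$ is log canonical. Then $D$ and $T$ are effective with $D\simq T\simq -K_X$, where $(X,D)$ is not log canonical and $(X,T)$ is, so Corollary \ref{cor:higher-dim-convexity} (applicable since $X$ is $\bbQ$-factorial by Definition \ref{dfn:tiger}) yields $\alpha\in[0,1)\cap\bbQ$ such that $D'=\frac{1}{1-\alpha}(D-\alpha T)$ is effective, $(X,D')$ is not log canonical, and $\Supp(D')$ omits at least one component of $T$. Because $\Supp(T)\subseteq\Supp(D)$, no new components can appear, so $\Supp(D')\subseteq\Supp(D)$, and the omitted component (which lies in $\Supp(D)$) makes the inclusion strict. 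Thus $D'$ is again a tiger, with $D'\simq -K_X$ and strictly fewer components than $D$, and the induction hypothesis furnishes a cat $T'$ with $\Supp(T')\subseteq\Supp(D')\subseteq\Supp(D)$. The base case $n=1$ is automatic: shrinking the support would force $D'=0$, which cannot satisfy $D'\simq -K_X$ since $-K_X$ is ample, so the log-canonical case cannot occur and the Cat Property alone already delivers a cat.

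The main obstacle, and essentially the only point requiring an idea, is bridging the gap between what the Cat Property guarantees (the correct linear class, bounded index, and containment of support) and what the definition of a cat demands (a genuine tiger, i.e.\ a non-log-canonical pair). The descent supplied by Corollary \ref{cor:higher-dim-convexity}—subtracting off the log-canonical $T$ to strictly shrink the support while preserving non-log-canonicity—is precisely the device that closes this gap, and the finiteness of the number of components guarantees termination.
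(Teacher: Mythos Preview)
Your proof is correct and follows essentially the same approach as the paper: apply the Cat Property to obtain $T$, and if $(X,T)$ happens to be log canonical, use Corollary~\ref{cor:higher-dim-convexity} to strip a component and descend. The paper phrases this as a proof by contradiction (assume every candidate $T$ is log canonical and derive a contradiction with the Cat Property after subtraction), whereas you phrase it as an explicit induction on the number of irreducible components of $\Supp(D)$; the underlying mechanism is identical, and if anything your inductive framing makes the termination of the descent more transparent.
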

\begin{proof}
If the Cat Property holds on $X$, by Corollary \ref{cor:higher-dim-convexity} we may assume that $(X,T)$ in Definition \ref{dfn:cat-property} is not log canonical (i.e.~$T$ is a cat). Indeed, suppose that for all effective $\bbQ$-divisors $T\simq-K_X$ such that $mT\in \vert-mK_X\vert$ with $m\leq \Cat(X)$ the pair $(X,T)$ is log canonical. Then we may subtract $\Supp(T)$ from $\Supp(D)$ using Corollary \ref{cor:higher-dim-convexity}  to obtain another tiger $D'\simq-K_X$. Since $(X,D')$ is not log canonical and for all effective $\bbQ$-divisors $T\simq-K_X$ with $mT\in \vert-mK_X\vert$, with $m\leq \Cat(X)$ we have $\Supp(T)\not\subset\Supp(D)$, we contradict the Cat Property.
\end{proof}

\begin{lem}
\label{lem:del-Pezzo-Cat-index}
Let $S$ be a non-singular del Pezzo surface. Then $\Cat(S)=1$.
\end{lem}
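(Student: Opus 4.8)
The plan is to show that the minimum defining $\Cat(S)$ is attained at $m=1$, so that Conjecture \ref{conj:Tian-glct-stabilisation} holds for $S$ with witnessing divisor already in $\vert -K_S\vert$. Since $\Cat(S)\geq 1$ trivially, it suffices to exhibit, for each non-singular del Pezzo surface $S$, an effective integral divisor $D\in\vert -K_S\vert$ with $\glct(S)=\lct(S,D)$. Here I would first record the key inequality: unwinding Definition \ref{dfn:glct}, for every effective $\bbQ$-divisor $D\simq -K_S$ one has
$$\glct(S)=\inf_{D\simq -K_S}\lct(S,D)\leq \lct(S,D).$$
Hence the whole task collapses to producing, in each case, a genuine member $D\in\vert -K_S\vert$ with $\lct(S,D)\leq\glct(S)$; equality is then automatic and realises $\glct(S)=\lct(S,\tfrac11 D)$, giving $\Cat(S)=1$.

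\textbf{Cases realised by an irreducible anticanonical curve.} For the values $\glct(S)\in\{1,5/6,3/4\}$ appearing in Theorem \ref{thm:del-Pezzo-glct-charp} (the degrees $K_S^2\leq 3$), the witness is a single anticanonical curve whose singularity type matches the case hypothesis. When $\glct(S)=1$ I would take any reduced $C\in\vert -K_S\vert$: at a smooth point of $C$ the pair $(S,\lambda C)$ is log canonical iff $\lambda\leq 1$, so $\lct(S,C)\leq 1$, and combined with the reduction $\lct(S,C)=1$. When $\glct(S)=5/6$ (degree $1$ with a cuspidal curve, or degree $2$ with no tacnodal curve) I would take a cuspidal $C\in\vert -K_S\vert$, whose local equation $y^2-x^3$ gives $\lct=5/6$ by Example \ref{exa:P2cuspidal} or Lemma \ref{lem:Igusa} with $(m,n)=(2,3)$. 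When $\glct(S)=3/4$ (the tacnodal degree $2$ case, or the cubic with no Eckardt point) I would use a tacnodal member, locally $y^2-x^4$, giving $\lct=1/2+1/4=3/4$ by Lemma \ref{lem:Igusa} with $(m,n)=(2,4)$; on the cubic this tacnodal hyperplane section arises as a line meeting a conic tangentially.

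\textbf{Cases realised by a reducible or non-reduced configuration.} For $\glct(S)\in\{2/3,1/2,1/3\}$ the witness is a configuration of lines (and a double line) inside $\vert -K_S\vert$, and the matching $\lct$ is a transverse local computation. For $\glct(S)=2/3$ (the cubic with an Eckardt point, and $K_S^2=4$) I would use three concurrent lines through a point, locally a product of three distinct linear forms, with $\lct=2/3$; for the cubic these are exactly the tangent-plane section at the Eckardt point. For $\glct(S)=1/2$ ($K_S^2=5,6$ and $\bbP^1\times\bbP^1$) I would take a divisor with a double component transverse to a reduced one, e.g.\ on $\bbP^1\times\bbP^1$ the member $2f+g_1+g_2\in\vert -K\vert$ with local model $x^2y$, giving $\lct=1/2$. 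For $\glct(S)=1/3$ ($K_S^2=7,9$ and $\bbF_1$) I would take a triple line, as in Example \ref{exa:Cat-index} where $\lct(\bbP^2,3L)=1/3$, and the analogous triple-fibre configurations on $\bbF_1$ and the degree $7$ surface.

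\textbf{Main obstacle.} The only non-formal point is \emph{existence}: one must know that each required singular member (cuspidal or tacnodal anticanonical curve, three concurrent lines, double line, triple line) actually occurs in $\vert -K_S\vert$ for the surface in question. For the reducible and non-reduced configurations this follows from the abundance of lines and fibres on del Pezzo surfaces of degree $\geq 4$; for the irreducible singular curves it is precisely the case hypothesis in Theorem \ref{thm:del-Pezzo-glct-charp} or is produced inside its proof, where the infimum defining $\glct(S)$ is shown to be attained by such an anticanonical curve. I would therefore organise the argument so that it reads off, degree by degree, the witnessing divisor already constructed in the proof of Theorem \ref{thm:del-Pezzo-glct-charp}, confirm in each case that it lies in $\vert -K_S\vert$ rather than a higher multiple, and conclude $\Cat(S)=1$.
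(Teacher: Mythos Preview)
Your proposal is correct and is exactly the approach the paper takes: the paper's proof is the single sentence ``This is the first step of the computation of $\glct(S)$ where we find a divisor $D\in\vert-K_S\vert$ such that $\glct(S)=\lct(S,D)$,'' citing \cite{CheltsovLCTdP} and \cite{JMGlctCharP}; you have simply unpacked that sentence degree by degree.

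One small inaccuracy to correct when you write it up: for $K_S^2=4$ there are no three concurrent \emph{lines} (Lemma~\ref{lem:del-Pezzo-lines-through-a-point}), so the witness is two lines plus an irreducible conic through a pseudo-Eckardt point, as in Claim~\ref{clm:del-Pezzo-deg4-upperbound}. Locally at that point these are still three smooth branches meeting pairwise transversally, so your $\lct=2/3$ computation remains valid; only the global description of the components changes.
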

\begin{proof}
This is the first step of the computation of $\glct(S)$ where we find a divisor $D\in vert-K_S\vert$ such that $\glct(S)=\lct(S,D)$. See \cite{CheltsovLCTdP} or \cite{JMGlctCharP}.
\end{proof}

By far, most Fano varieties do not seem to satisfy the Cat Property. For instance, in the following two examples $\Cat(X)<\infty$ but we can find sequences of tigers $D_m$ with irreducible support where $\min\{m\in \bbN\setsep mD_m\in \vert-mK_X\vert\}$. 
\begin{exa}
\label{exa:plane-not-cat-property}
Consider in $S=\bbP^2$ the curve given by $C_m=\{zy^m+x^{m+1}=0\}$. Locally around $p=(0:0:1)$ we have $\lct_p(S,C_m)=\frac{1}{m}+ \frac{1}{m+1}=\frac{2m+1}{m(m+1)}$ by Lemma \ref{lem:Igusa}. Let $D_m=\frac{3}{m+1}C_m\simq -K_S$. Then
$$\lct(S,D_m)=\frac{m+1}{3m}\lct_p(S,C_m)=\frac{2m+1}{3m}<1$$
for all $m>1$. Note that $C_m$ is irreducible and by Lemma \ref{lem:del-Pezzo-Cat-index}, $\Cat(\bbP^2)=1$, so one can always find a tiger $(\bbP^2,D_m)$ such that no $D'\in \vert-K_S\vert$ satisfies $D'\subset \Supp(D_m)$.
\end{exa}

\begin{exa}
\label{exa:F1-not-cat-property}
Let $\pi:\widetilde S =\bbF_1\ra \bbP^2$ be the blow-up of $p=(0:0:1)$ with exceptional curve $E\cong \bbP^1$ and let $D_m$ be the $\bbQ$-divisor in $\bbP^2$ from the previous example. As we will see later, by Lemma \ref{lem:log-pullback-preserves-lc}, the pair
\begin{equation}
\big(\widetilde S,D'_m=\widetilde D_m +(\frac{3}{m+1}-1)E\big)
\label{eq:CatDeg9Aux1}
\end{equation}
is not log canonical, where $\widetilde D_m$ is the strict transform of $D_m$. Moreover $D'_m\simq-K_{\bbF_1}$. The exceptional curve $E$ is given in local coordinates $(x,y)$ by the equation $y=0$. The curve $\widetilde C_m$, the strict transform of the curve $C_m$ in $\bbP^2$ used in Example \ref{exa:plane-not-cat-property}, is given locally by the equation $y=x^m$. Therefore $\widetilde D_m \cdot E =\mult_pD_m = \frac{3m}{m+1}>2$ if and only if $m>2$. Notice also that $\mult_pD_m <3$. Hence, for $m>2$, the coefficient of $E$ in \eqref{eq:CatDeg9Aux1} is bigger than $1$, but $\Supp(\widetilde D_m)$ is irreducible and contains no cat in it. Finally the pair in \eqref{eq:CatDeg9Aux1} is a tiger, since $D'_m\simq-K_{\widetilde \bbF_1}$ but $\lct(\bbF_1, D'_m)\leq \lambda_m$ for $\lambda_m=1/(\frac{3m}{m+1}-1)=\frac{m+1}{2m-1}<1$ for $m>2$.
\end{exa}

This is not the only kind of bad behaviour. We can also find simple tigers $D$ whose support has simple normal crossings but $(X,D)$ is not log canonical in codimension $1$, see example \ref{exa:del-Pezzo-no-cat-codimension1}. On the other hand all non-singular del Pezzo surfaces $S$ with $K_S^2\leq 3$ satisfy the Cat Property (see Lemmas \ref{lem:del-Pezzo-cat-degree-1}, \ref{lem:del-Pezzo-cat-degree-2} and Theorem \ref{thm:del-Pezzo-cat-degree-3}). This is also the case for any general point $p\in S_4$, a del Pezzo surface of degree $4$ (see Lemma \ref{lem:del-Pezzo-cat-degree-4-generic}). Based on the behaviour in dimension $2$, we formulate the following conjecture:
\begin{conj}
\label{conj:cat}
Let $X$ be a non-singular Fano variety of dimension $n\geq 2$. $X$ satisfies the Cat property if and only if all effective $\bbQ$-divisors $D\simq-K_X$ are log canonical in codimension~$1$.
\end{conj}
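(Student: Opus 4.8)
Since Conjecture \ref{conj:cat} is an equivalence, the plan is to prove the two implications separately: the forward direction (the Cat Property forces every $D\simq -K_X$ to be log canonical in codimension $1$) by contraposition, adapting the explicit constructions of Examples \ref{exa:plane-not-cat-property} and \ref{exa:F1-not-cat-property}, and the reverse direction as the genuine content of the statement. Throughout I would use that, since $X$ is non-singular, an effective $D=\sum d_i D_i\simq -K_X$ fails to be log canonical in codimension $1$ if and only if some prime component carries coefficient $d_i>1$, because exceptional valuations always contribute to the non-log-canonical locus in codimension $\ge 2$.

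For the forward direction I would argue the contrapositive: assuming some $D_0\simq -K_X$ has a prime component $\Gamma$ with $\mult_\Gamma D_0>1$, I would produce a tiger whose support contains no cat. First, using Corollaries \ref{cor:higher-dim-convexity} and \ref{cor:higher-log-convexity}, I would strip away the components transverse to $\Gamma$ and reduce to a witness concentrated along $\Gamma$. The geometric input is that a prime divisor carrying coefficient $>1$ in an anticanonical divisor reflects enough positivity in $\vert -mK_X\vert$ (or a closely related system) to manufacture irreducible members of unbounded degree whose singularities worsen with $m$, exactly as the curves $C_m$ do on $\bbP^2$ and $\bbF_1$. Rescaling each such member into the anticanonical class yields tigers with irreducible supports of arbitrarily large degree, which therefore cannot contain any effective $T\simq -K_X$ with $mT\in\vert -mK_X\vert$ for a fixed $m\le\Cat(X)$; this contradicts the Cat Property.

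For the reverse direction, the crux, I would assume every effective $D\simq -K_X$ is log canonical in codimension $1$ and deduce the Cat Property. Fix a tiger $D$; by hypothesis its non-log-canonical locus has codimension $\ge 2$, so after the reductions of Corollary \ref{cor:higher-dim-convexity} the pair fails to be log canonical only at isolated centres. I would then apply inversion of adjunction together with the Shokurov--Koll\'ar connectedness principle to describe the subscheme responsible for the non-log-canonical singularity, and invoke boundedness of Fano varieties of fixed dimension (bounding $(-K_X)^n$ and, granting Conjecture \ref{conj:Tian-glct-stabilisation}, the index $\Cat(X)$) to extract from $\Supp(D)$ an effective divisor $T\simq -mK_X$ with $m\le\Cat(X)$, that is, a cat.

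The main obstacle is precisely this last extraction. There is no formal reason why a local, analytic non-log-canonical singularity at a point should be cut out globally by a member of a bounded anticanonical multiple lying inside $\Supp(D)$; converting the local multiplicity data into a bounded-degree effective divisor is exactly the boundedness gap that keeps the statement conjectural rather than a theorem. A secondary subtlety is that the formulation already presupposes Conjecture \ref{conj:Tian-glct-stabilisation} so that $\Cat(X)$ is finite, so any unconditional argument would first have to establish finiteness of the Cat index for the $X$ at hand, after which Lemma \ref{lem:Cat-property-not-lc-has-cats} would let me pass freely between cats and tigers.
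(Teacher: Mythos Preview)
The statement is a \emph{conjecture}; the paper does not prove it in general and explicitly says so. What the paper does is verify it in dimension $2$ (Corollary \ref{cor:cat-conjecture-dim2}), and that verification is purely by classification: for del Pezzo surfaces of degree $\leq 3$ one proves both properties hold (Lemma \ref{lem:delPezzo-lc-cod1-deg1-to-3} gives log canonicity in codimension $1$; Lemmas \ref{lem:del-Pezzo-cat-degree-1}, \ref{lem:del-Pezzo-cat-degree-2} and Theorem \ref{thm:del-Pezzo-cat-degree-3} give the Cat Property), and for degree $\geq 4$ one exhibits a single explicit $D$ violating both (Example \ref{exa:del-Pezzo-no-cat-codimension1}). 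There is no abstract implication argument in either direction.

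Your forward direction has a genuine gap. You claim that the existence of some $D_0\simq -K_X$ with a coefficient $>1$ lets you ``manufacture irreducible members of unbounded degree whose singularities worsen with $m$, exactly as the curves $C_m$ do on $\bbP^2$ and $\bbF_1$''. But Examples \ref{exa:plane-not-cat-property} and \ref{exa:F1-not-cat-property} are not consequences of such a $D_0$; they are ad hoc constructions using the explicit projective geometry of those surfaces. There is no mechanism in your outline that produces, from an arbitrary prime divisor $\Gamma$ with $\mult_\Gamma D_0>1$, an irreducible anticanonical tiger whose support contains no cat. The paper's actual counterexample in degrees $4$--$9$ (Example \ref{exa:del-Pezzo-no-cat-codimension1}) is much simpler: it directly writes down a $D$ with $2D\in\vert -2K_S\vert$, $D\notin\vert -K_S\vert$, and $\Supp(D)$ containing no member of $\vert -K_S\vert$, and this same $D$ happens to have a coefficient $\tfrac{3}{2}>1$. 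The two failures are exhibited simultaneously by one example, not deduced from one another.

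Your reverse direction is honest about the obstacle, and you are right that this is exactly why the statement remains conjectural. The paper confirms this: it says the dimension-$2$ proof ``uses heavily the classification of del Pezzo surfaces and its birational geometry and it does not provide any deep insight on how the proof should work in higher dimensions''. The ingredients you list (inversion of adjunction, connectedness, Fano boundedness) are reasonable heuristics, but as you acknowledge, there is no known way to extract a bounded-degree global divisor from local non-log-canonical data, and the paper does not attempt one.
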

The conjecture is verified in dimension $2$ in Corollary \ref{cor:cat-conjecture-dim2}. Unfortunately the proof uses heavily the classification of del Pezzo surfaces and its birational geometry and it does not provide any deep insight on how the proof should work in higher dimensions. However, should Conjecture \ref{conj:cat} hold, we could split the study of tigers in two distinct classes, depending on whether the Cat Property holds or not. This is evidenced by Chapter \ref{chap:dynamic}. Indeed, when computing $\alpha(S, (1-\beta)C)$ for $S$ a non-singular del Pezzo surface and $C$ a smooth elliptic curve, the methods used are very different, depending onwhether the Cat Property holds or not.

Let us explore a few applications of the Cat Property. The following is straight forward from the definition of Cat index:
Suppose $X$ satisfies the Cat Property. Then it is straight forward to compute the global log canonical threshold, since we just need to compute $\lct(X,\frac{1}{m}D)$ for very singular $D\in \vert -mK_X\vert$ and $0<m\leq \Cat(X)$.
\begin{obs}
\label{obs:cat-property-easy-glct}
Let $X$ be a Fano variety on which the cat property holds. Then
$$\glct(X)=\min\{\lct(X,D)\setsep mD\in \vert-mK_X\vert,\ m\leq \Cat(X)\}.$$
\end{obs}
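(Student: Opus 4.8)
The plan is to prove the two inequalities $\glct(X)\leq M$ and $M\leq \glct(X)$ separately, where $M:=\min\{\lct(X,D)\setsep mD\in \vert-mK_X\vert,\ m\leq \Cat(X)\}$, and then combine them; the whole argument is a direct unravelling of Definition \ref{dfn:glct} and Definition \ref{dfn:cat-index}, which is exactly why the statement is flagged as straightforward. First I would rewrite the global log canonical threshold as an infimum: taking $\Delta=0$ in Definition \ref{dfn:glct} and using that $(X,\lambda D)$ is log canonical precisely when $\lambda\leq \lct(X,D)$, we have
$$\glct(X)=\inf\{\lct(X,D)\setsep D\simq -K_X \text{ effective } \bbQ\text{-divisor}\}.$$

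For the inequality $\glct(X)\leq M$, I would observe that any $D$ in the set defining $M$ satisfies $mD\in\vert-mK_X\vert$ for some $m\leq \Cat(X)$, and hence $D\simq -K_X$. Thus every such $D$ is one of the divisors over which the infimum above is taken, so $\glct(X)\leq \lct(X,D)$ for each of them; passing to the minimum gives $\glct(X)\leq M$ at once.

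For the reverse inequality $M\leq \glct(X)$, I would appeal directly to the definition of the Cat index. Since the Cat Property is assumed to hold on $X$, the index $\Cat(X)$ is finite (the set-up of Definition \ref{dfn:cat-property} presupposes this, equivalently $X$ satisfies Conjecture \ref{conj:Tian-glct-stabilisation}), so by Definition \ref{dfn:cat-index} there is an effective divisor $D_0\in\vert-\Cat(X)K_X\vert$ realising $\glct(X)=\lct(X,\frac{1}{\Cat(X)}D_0)$. The $\bbQ$-divisor $\frac{1}{\Cat(X)}D_0$ is effective and satisfies $m\cdot\frac{1}{\Cat(X)}D_0=D_0\in\vert-mK_X\vert$ with $m=\Cat(X)\leq\Cat(X)$, so it lies in the set defining $M$. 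Hence $M\leq \lct(X,\frac{1}{\Cat(X)}D_0)=\glct(X)$, and combining the two inequalities yields $\glct(X)=M$. By Lemma \ref{lem:Cat-property-not-lc-has-cats} the divisors attaining this minimum may moreover be taken to be cats, which is the practical content of the statement.

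I do not expect a serious obstacle. The only point that is not purely formal is that the right-hand side should be a genuine minimum rather than merely an infimum: this relies on the Cat Property forcing $\Cat(X)<\infty$, so that the minimisation runs over the finitely many linear systems $\vert-mK_X\vert$ with $m\leq\Cat(X)$ (within each of which the log canonical threshold attains its minimum by lower semicontinuity on the projective space $\vert-mK_X\vert$), guaranteeing both that the set defining $M$ is nonempty and that the extremum is achieved.
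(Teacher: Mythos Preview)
Your proposal is correct and matches the paper's approach: the paper treats this as an observation that is immediate from Definition \ref{dfn:cat-index} and gives no formal proof beyond remarking that one need only compute $\lct(X,\tfrac{1}{m}D)$ for $D\in\vert-mK_X\vert$ with $m\leq\Cat(X)$. Note that your argument, like the paper's own comment, really uses only that $\Cat(X)<\infty$ rather than the full Cat Property; the Cat Property does not by itself force $\Cat(X)<\infty$, so strictly speaking the hypothesis you need is Conjecture \ref{conj:Tian-glct-stabilisation} for $X$, which the paper tacitly assumes in this context.
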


Unfortunately so far all proofs to show that a variety satisfies the Cat Property are generalisations of the computation of the global log canonical threshold and in any case we need to know the global log canonical threshold \emph{a priori} in order to define $\Cat(X)$, which is required to define the Cat Property. Nevertheless, when present, the Cat Property gives us control over all tigers of $X$, which comes useful when studying the dynamic alpha--invariant of a complex Fano variety:
\begin{lem}
\label{lem:cat-property-easy-dynamic-alpha}
Let $X$ be a complex Fano variety with $\Cat(X)<\infty$. Suppose the Cat Property holds on $X$. Suppose $\exists \Delta\in \vert-K_X\vert$ a smooth effective divisor. Then
$$\alpha(X,(1-\beta)\Delta)=\lct(X,(1-\beta)\Delta,\beta T)$$
for some effective $T\simq -K_X$ such that $mT\in \vert-mK_X\vert$ for some $m\leq \Cat(X)$.
\end{lem}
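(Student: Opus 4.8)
The plan is to unwind the dynamic threshold and reduce the search for the worst divisor to the finitely many linear systems in which cats live. Writing $\Delta$ for the fixed smooth anticanonical boundary, Definition \ref{dfn:glct} and the ampleness normalisation of Theorem \ref{thm:Jeffres-Mazzeo-Rubinstein} show that the competing divisors $D$ range over the class $-(K_X+(1-\beta)\Delta)\simq\beta(-K_X)$; substituting $D=\beta D'$ with $D'\simq-K_X$ effective rewrites
$$\alpha(X,(1-\beta)\Delta)=\inf_{D'\simq-K_X}\lct\big(X,(1-\beta)\Delta,\beta D'\big)=:\inf_{D'}\tau(D').$$
Two elementary bounds set the stage. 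First, $D'=\Delta$ is smooth, so $\tau(\Delta)=1$; hence $\alpha\le1$, and note that $\Delta$ is already an admissible choice of $T$ ($m=1\le\Cat(X)$). Second, choosing a static minimiser $D^\ast\simq-K_X$ with $\glct(X)=\lct(X,D^\ast)$ (available since $\Cat(X)<\infty$) and dropping $\Delta$ by monotonicity (Lemma \ref{lem:disc-monotonous}) gives $\alpha\beta\le\glct(X)$. Thus at the critical value $\lambda=\alpha$ Lemma \ref{lem:pairs-fixed-boundary-lcs} applies: the pair first fails to be log canonical only at isolated points of $\Delta$.

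The core step is to show the infimum is unchanged when $D'$ is restricted to the divisors $T$ allowed in the conclusion (effective, $T\simq-K_X$, with $mT\in\vert-mK_X\vert$ for some $m\le\Cat(X)$). The inequality $\inf_T\tau(T)\ge\alpha$ is immediate, as each such $T$ is a competitor. For the reverse, if $\alpha=1$ then $T=\Delta$ already realises it; if $\alpha<1$, I would take any $D'$ with $\tau(D')<1$, so that at $\lambda=1$ the divisor $(1-\beta)\Delta+\beta D'\simq-K_X$ is non-log canonical by monotonicity, hence a tiger. The Cat Property, sharpened by Lemma \ref{lem:Cat-property-not-lc-has-cats}, then produces a genuine cat $T$ with $\Supp(T)\subseteq\Supp(\Delta)\cup\Supp(D')$, and the plan is to prove $\tau(T)\le\tau(D')$, so that $D'$ may be replaced by $T$ without raising $\tau$.

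Granting this reduction, the passage from an infimum to the equality in the statement is a boundedness argument: the admissible $T$ lie in the finitely many systems $\vert-mK_X\vert$ with $m\le\Cat(X)$, hence vary in a bounded family, and $T\mapsto\tau(T)=\lct(X,(1-\beta)\Delta,\beta T)$ is lower semicontinuous there, so the infimum is attained by some $T$. For that $T$ one obtains $\alpha(X,(1-\beta)\Delta)=\lct(X,(1-\beta)\Delta,\beta T)$ with $mT\in\vert-mK_X\vert$, $m\le\Cat(X)$, which is the assertion.

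The hard part is the comparison $\tau(T)\le\tau(D')$. Containment of supports does not make the coefficients of $T$ dominate those of $D'$ — both are anticanonical, so they share the same degree — hence monotonicity alone is useless. The plan is to localise: by Lemma \ref{lem:pairs-fixed-boundary-lcs} the obstruction concentrates at isolated points $p\in\Delta$, and I would arrange (modifying $T$ through Lemma \ref{lem:Cat-property-not-lc-has-cats} if necessary) that the cat $T$ passes through such a $p$ and carries the bad singularity there. One then transfers non-log-canonicity from $\beta D'$ to $\beta T$ at the same $\lambda$ using log convexity (Corollary \ref{cor:higher-log-convexity}) with fixed boundary $A=(1-\beta)\Delta$, peeling off the components of $D'$ outside the cat via Corollary \ref{cor:higher-dim-convexity}. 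Verifying that the cat furnished by the Cat Property can always be chosen to witness the failure at $p$, rather than merely to share the support of $D'$, is where the genuine difficulty lies.
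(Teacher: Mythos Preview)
Your overall framework is sound, but the ``hard part'' you identify is indeed where your argument breaks down, and you have the direction of log convexity reversed. You want to peel off the components of $D'$ that lie \emph{outside} the cat $T$, leaving behind something supported on $T$. But Corollary~\ref{cor:higher-log-convexity} requires the divisor you peel off to form a \emph{log canonical} pair with the fixed boundary $(1-\beta)\Delta$; there is no reason the stray components of $D'$ should have this property. The comparison $\tau(T)\le\tau(D')$ that you are aiming for is simply not available from support containment.

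The paper's proof runs in the opposite direction. It argues by contradiction: assume no admissible $T$ realises $\omega$, so $\lct(X,(1-\beta)\Delta,\beta\tfrac{1}{n_i}T_i)>\omega$ for every $T_i$ with $n_iT_i\in\vert-n_iK_X\vert$, $n_i\le\Cat(X)$. This contradiction hypothesis is precisely what makes the pairs $(X,(1-\beta)\Delta+\lambda\beta\tfrac{1}{n_i}T_i)$ log canonical for some $\lambda>\omega$, and hence makes the $T_i$ eligible to be peeled \emph{off} of $D$ via Corollary~\ref{cor:higher-log-convexity}. Iterating over all $T_i$ supported in $D$ produces $D_k\simq-K_X$ whose support contains no such $T_i$, with $(X,(1-\beta)\Delta+\lambda\beta D_k)$ still not log canonical. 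One more application of convexity against $\Delta$ itself shows $(X,D_k)$ is a tiger with no cat in its support, contradicting the Cat Property.

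So the missing idea is: use the failure of every cat to achieve $\omega$ as the hypothesis that lets you strip cats out, rather than trying to push non-log-canonicity onto a cat. Your semicontinuity argument for attaining the infimum over admissible $T$ is then unnecessary, since the contradiction directly rules out $\tau(T)>\omega$ for all such $T$.
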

\begin{proof}
There are effective $\bbQ$-divisors $D\simq-K_X$ such that
\begin{equation}
\omega:=\alpha(X, (1-\beta)\Delta) \leq \lct(X, (1-\beta)\Delta, \beta D)\leq \lct(X, (1-\beta)\Delta, \beta \Delta)=\lct(X,\Delta)=1,
\label{eq:cat-property-easy-dynamic-alpha-aux1}
\end{equation}
given that $\Delta$ is smooth. We proceed by contradiction. In particular we may assume $\omega<1$. Suppose, for contradiction, that any effective $\bbQ$-divisor $D\simq-K_X$ for which \eqref{eq:cat-property-easy-dynamic-alpha-aux1} holds satisfies
$$\min\{m\setsep mD\in \vert-mK_X\vert\}>\Cat(X).$$
Let $D$ be one of those $\bbQ$-divisors.

Suppose that $\forall T$, effective $\bbQ$-divisor with $T\simq -K_X$ and $mT\in \vert-mK_X\vert$ for some $m\leq \Cat(X)$ we have $\Supp(T)\not\subseteq\Supp(D)$. Then, since $(X, (1-\beta)\Delta+\lambda\beta D)$ is not log canonical for $\lambda>\omega$, in particular $(X, (1-\beta)\Delta+\beta D)$ is not log canonical, since $\omega<1$, but $(1-\beta)\Delta+\beta D\simq -K_X$. Hence there is some $T\simq-K_X$ with $mT\in \vert-mK_X\vert$ for some $d\leq \Cat(X)$ and $\Supp(T)\subseteq\Supp(D)$, by the Cat Property, a contradiction.

Therefore $\exists T_i\in \vert-n_iK_X\vert$ with $n_i\leq \Cat(X)$ such that $\Supp(T_i)\subseteq\Supp(D)$ for $1\leq i\leq k$ for some positive number $k=k(D)$ depending on $D$. By assumption
$$\lct(X, (1-\beta)\Delta, \frac{\beta}{n_i}T_i)>\lct(X, (1-\beta)\Delta,\beta D)\geq\omega.$$
Therefore the pair
$$(X, (1-\beta)\Delta+\frac{\lambda\beta}{n_i}T_i)$$
is log canonical for all $1\leq i\leq k(D)$ and some $\lambda>\omega$ and the pair
$$(X, (1-\beta)\Delta+\lambda\beta D)$$
is not log canonical. In particular $\frac{1}{n_i}T_i\neq D$, $\forall i$.

Let $D_0=D$. By Corollary \ref{cor:higher-log-convexity} $\exists a_1\in [0,1)\cap \bbQ$ such that
$$D_1:=\frac{1}{1-a_1}(D_0-\frac{a_1}{n_1}T_1)\simq -K_X$$
is effective, $T_1\not\subset\Supp(D_1)$ and $(X, (1-\beta)\Delta+\lambda\beta D_1)$ is not log canonical for some $\lambda>\omega$.

By Corollary \ref{cor:higher-log-convexity}, for each $2\leq i \leq k(D)=k$, $\exists a_i\in [0,1)\cap \bbQ$ such that 
$$D_i:=\frac{1}{1-a_i}(D_{i-1}-\frac{a_i}{n_i}T_i)\simq-K_X$$
is effective, $T_1,\ldots, T_i\not\subset\Supp(D_i)$ and the pair
$$(X, (1-\beta)\Delta + \lambda\beta D_i)$$
is not log canonical for some $\lambda>\omega$. In particular, the pair
$$(X,(1-\beta)\Delta+\beta D_k)$$
is not log canonical, since we can choose $1\geq\lambda>\omega$. The pair $(X, (1-\beta) \Delta +\beta \Delta)=(X,\Delta)$ is log canonical. Therefore, by Corollary \ref{cor:higher-dim-convexity}, the pair $(X, D_k)$ is not log canonical. By construction, $\Supp(B)\not\subseteq\Supp(D_K)$ for all effective $\bbQ$-divisors $B\simq-K_X$ such that $mB\in \vert-mK_X\vert$ for some $m\leq \Cat(X)$. This contradicts the Cat Property.
\end{proof}
\begin{cor}
\label{cor:cat-property-easy-dynamic-alpha}
Let $X$ be a complex Fano variety with $\Cat(X)<\infty$. Suppose the Cat Property holds on $X$. Let $\Delta\in \vert-K_X\vert$ be a smooth effective divisor. Then
$$\alpha(X,(1-\beta)\Delta)=\min\{1,\min\{ \lct(X,(1-\beta)\Delta,\beta T)\setsep T \text{ is a cat of } X\}\}.$$
\end{cor}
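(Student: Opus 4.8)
The plan is to prove equality by sandwiching $\omega:=\alpha(X,(1-\beta)\Delta)$ between the two bounds given by the right-hand side, which I abbreviate $M:=\min\{1,\min\{\lct(X,(1-\beta)\Delta,\beta T)\setsep T \text{ a cat of }X\}\}$, using Lemma \ref{lem:cat-property-easy-dynamic-alpha} for the nontrivial direction. The first move is to rewrite $\alpha$ as an infimum. Unravelling Definition \ref{dfn:glct}, for a fixed effective $D\simq-K_X$ the pair $(X,(1-\beta)\Delta+\lambda\beta D)$ is log canonical exactly when $\lambda\le\lct(X,(1-\beta)\Delta,\beta D)$, so $\omega=\inf_D\lct(X,(1-\beta)\Delta,\beta D)$, the infimum ranging over all effective $D\simq-K_X$.

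Since every cat $T$ is in particular an effective $\bbQ$-divisor with $T\simq-K_X$, this infimum description gives $\omega\le\lct(X,(1-\beta)\Delta,\beta T)$ for every cat $T$; together with $\omega\le\lct(X,(1-\beta)\Delta,\beta\Delta)=\lct(X,\Delta)=1$, which is exactly the chain \eqref{eq:cat-property-easy-dynamic-alpha-aux1}, this yields $\omega\le M$ at once (with the convention that the minimum over an empty family of cats is $+\infty$). For the reverse inequality I would invoke Lemma \ref{lem:cat-property-easy-dynamic-alpha}, which produces an effective $T_0\simq-K_X$ with $mT_0\in\vert-mK_X\vert$ for some $m\le\Cat(X)$ and $\omega=\lct(X,(1-\beta)\Delta,\beta T_0)$. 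If $\omega=1$, there is nothing left to prove, since $\omega\le M\le1$ forces $M=1=\omega$. The genuine case is $\omega<1$, where I must verify that $T_0$ is an \emph{honest} cat, i.e.\ that $(X,T_0)$ is not log canonical.

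Here is the argument for that case. As $\omega<1$ is the threshold, taking $\lambda=1>\omega$ shows $(X,(1-\beta)\Delta+\beta T_0)$ is not log canonical; on the other hand $(X,\Delta)$ is log canonical because $\Delta$ is smooth. Since $(1-\beta)\Delta+\beta T_0$ is precisely the convex combination $\beta T_0+(1-\beta)\Delta$ appearing in Lemma \ref{lem:higher-dim-convexity-forward}, the contrapositive of that lemma forces $(X,T_0)$ to be non-log-canonical. Thus $T_0$ is a tiger with $mT_0\in\vert-mK_X\vert$ and $m\le\Cat(X)$, hence a cat by Definition \ref{dfn:cat}, and therefore $M\le\lct(X,(1-\beta)\Delta,\beta T_0)=\omega$. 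Combining with $\omega\le M$ gives $\omega=M$.

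The step I would treat most carefully, and the only real obstacle, is this promotion of $T_0$ to a cat when $\omega<1$: Lemma \ref{lem:cat-property-easy-dynamic-alpha} only guarantees $mT_0\in\vert-mK_X\vert$, not that $(X,T_0)$ itself is non-log-canonical, so the definition of cat is not met \emph{a priori}. Everything hinges on transferring non-log-canonicity from the mixture $(1-\beta)\Delta+\beta T_0$ to the pure boundary $T_0$ via convexity, which is exactly where the smoothness of $\Delta$ (equivalently $\lct(X,\Delta)=1$) is essential. The boundary value $\omega=1$ must be isolated separately, since there the extracted $T_0$ need not be a tiger at all; note this also transparently covers the case in which no cats exist, for by Lemma \ref{lem:Cat-property-not-lc-has-cats} the absence of cats means there are no tigers, so $\glct(X)=1$ and hence $\omega=1=M$.
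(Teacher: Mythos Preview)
Your proof is correct and is precisely the natural deduction the paper leaves implicit: the corollary is stated without proof immediately after Lemma~\ref{lem:cat-property-easy-dynamic-alpha}, and your argument---bounding $\omega\le M$ trivially via \eqref{eq:cat-property-easy-dynamic-alpha-aux1}, then invoking the lemma and, in the nontrivial case $\omega<1$, upgrading the extracted $T_0$ to a genuine cat by the contrapositive of Lemma~\ref{lem:higher-dim-convexity-forward} against the log canonical pair $(X,\Delta)$---is exactly how one fills it in. Your care in isolating the boundary case $\omega=1$ (and noting it subsumes the case where $X$ has no cats) is appropriate.
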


\section{Log pairs on surfaces and intersection theory}
\label{sec:surfaces-intersection-theory}

Most of the surfaces we will deal with will be smooth. However, occasionally, we will deal with surfaces with canonical singularities. We first introduce Du Val singularities.
\begin{dfn}[{\cite[Def. 4.4]{KollarMori}}]
\label{dfn:duVal-sing}
A normal surface singularity $(0\in S)$ with minimal resolution $f\colon S'\ra S$ is called a Du Val singularity if and only if $K_{S'}\cdot E_i=0$ for every exceptional curve $E_i\subset S'$.
\end{dfn}
Du Val singularities have a very explicit classification:
\begin{thm}[{\cite[Thm 4.22]{KollarMori}}]
\label{thm:DuVal-classification}
Every Du Val singularity has embedding dimension $3$. Up to a local analytic change of coordinates, the following is a complete list of Du Val singularities:
\begin{itemize}
	\item[(A)]. The singularity $A_n\ (n\geq 1)$ has equation $x^2+y^2+z^{n+1}=0$ and dual graph with $n$ vertices:
		$$\xymatrix{ {\circ} \ar@{-}[r] & \cdots \ar@{-}[r] & {\circ} }$$
	\item[(B)]. The singularity $D_n\ (n\geq 4)$ has equation $x^2+y^2z+z^{n-1}=0$ and dual graph with $n$ vertices:
		$$\xymatrix{ & &
{\circ} & \\ {\circ} \ar@{-}[r] & \ar@{-}[r] \cdots \ar@{-}[r] &{\circ} \ar@{-}[r] \ar@{-}[u] & {\circ}}$$
	\item[(C)]. The singularity $E_6$ (respectively $E_7$, respectively $E_8$) has equation $x^2+y^3+z^4=0$ (respectively $x^2+y^3+yz^3=0$, respectively $x^2+y^3+z^5=0$) and dual graph with $6$ (respectively $7$, respectively $8$) vertices:
		$$\xymatrix{ & &
{\circ} & &\\ {\circ} \ar@{-}[r] & \ar@{-}[r] \cdots \ar@{-}[r] &{\circ} \ar@{-}[r] \ar@{-}[u] &{\circ} \ar@{-}[r] & {\circ}}$$
\end{itemize}

If $S'$ is a smooth surface and $E\subset S'$ a collection of proper rational $(-2)$-curves whose dual graph is listed above, then $E\subset S'$ is the minimal resolution of a surface $0\in S$ which has the corresponding Du Val singularity at $0$.
\end{thm}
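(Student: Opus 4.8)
The plan is to split the statement into three independent pieces: the recognition of the exceptional curves as $(-2)$-curves, the combinatorial classification of their dual graph, and the realisation of each admissible graph by an explicit hypersurface equation. Throughout, $f\colon S'\to S$ denotes the minimal resolution of Definition~\ref{dfn:duVal-sing}, with exceptional curves $E_1,\dots,E_r$.

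First I would pin down each $E_i$. The intersection matrix $(E_i\cdot E_j)$ of the exceptional curves of any resolution of a normal surface singularity is negative definite (Mumford, Grauert), so $E_i^2<0$; minimality of $f$ forbids $(-1)$-curves, hence $E_i^2\le-2$. The Du Val hypothesis $K_{S'}\cdot E_i=0$ together with adjunction gives
$$2p_a(E_i)-2=E_i^2+K_{S'}\cdot E_i=E_i^2,$$
and since $p_a(E_i)\ge0$ this forces $E_i^2=-2$ and $p_a(E_i)=0$. Thus every $E_i$ is a smooth rational $(-2)$-curve, which is also precisely the hypothesis of the converse direction.

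Next I would classify the dual graph $\Gamma$, whose vertices are the $E_i$ (all of weight $-2$) and whose edges record the intersection numbers $E_i\cdot E_j\ge0$ for $i\ne j$. The exceptional fibre of a normal singularity is connected by Zariski's connectedness theorem, so $\Gamma$ is connected, and negative definiteness of the intersection form is the only further input needed. A cycle, or a double edge between $E_1$ and $E_2$, would produce a nonzero vector $v=\sum E_i$ (resp.\ $v=E_1+E_2$) with $v^2=0$, contradicting negative definiteness; hence $\Gamma$ is a tree with simple edges. A vertex $E_0$ of degree $\ge4$ with neighbours $E_1,\dots,E_4$ gives $v=2E_0+E_1+E_2+E_3+E_4$ with $v^2=0$, so every vertex has degree $\le3$; a similar test rules out two branch vertices and bounds the lengths of the arms. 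The resulting short case analysis---equivalently, the classification of connected negative-definite graphs with all weights $-2$---leaves exactly the Dynkin diagrams $A_n$, $D_n$, $E_6$, $E_7$, $E_8$ listed above.

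It remains to produce the equations and the embedding dimension. Canonical surface singularities are rational, so I would invoke Artin's theory of the fundamental cycle $Z$, the minimal nonzero effective cycle on the exceptional locus with $Z\cdot E_i\le0$ for all $i$. For a rational surface singularity the multiplicity equals $-Z^2$ and the embedding dimension equals $-Z^2+1$; for each graph above $Z$ is the cycle whose coefficients are the marks of the highest root, and a direct computation gives $Z^2=-2$. Hence every Du Val singularity is a double point of embedding dimension $3$, that is, a hypersurface $\{g=0\}\subset(\bbC^3,0)$ with $\mult_0 g=2$. The final identification of $g$ with the normal forms $x^2+y^2+z^{n+1}$, $x^2+y^2z+z^{n-1}$, and so on, together with the converse assertion that contracting a prescribed $(-2)$-configuration (possible by the Grauert--Artin contractibility criterion, since the form is negative definite) recovers the corresponding normal form, is the part I expect to be hardest: it requires either matching the explicit equation against the resolution obtained by successive blow-ups, or the detour through the realisation $S\cong\bbC^2/G$ for a finite $G\subset\mathrm{SL}_2(\bbC)$ together with the invariant-ring computation furnished by the McKay correspondence. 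By contrast, the two earlier steps are essentially linear algebra.
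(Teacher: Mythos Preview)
The paper does not prove this theorem: it is stated with the citation \cite[Thm~4.22]{KollarMori} and quoted as background, so there is no in-paper argument to compare against. Your sketch is the standard route to the classification (and is essentially the one in Koll\'ar--Mori): negative definiteness plus adjunction forces the exceptional components to be smooth rational $(-2)$-curves, the ADE classification of connected negative-definite $(-2)$-graphs follows from the isotropic-vector tests you describe, and Artin's fundamental-cycle computation $Z^2=-2$ gives multiplicity $2$ and embedding dimension $3$. The one place to be slightly more careful is the final identification of the normal forms: you correctly flag this as the hardest part, and in practice one either resolves each hypersurface by hand and reads off the graph, or appeals to Arnold's classification of simple double points; the converse (contractibility of a given ADE configuration) is immediate from Grauert--Artin as you say.
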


\begin{thm}[{\cite[Thm 4.20]{KollarMori}}]
\label{thm:duVal-equivalence}
Let $(0\in S)$ be the germ of a normal surface singularity. The following are equivalent:
\begin{itemize}
	\item[(i)] $(0\in S)$ is canonical.
	\item[(ii)] $(0\in S)$ is Du Val.
\end{itemize}
\end{thm}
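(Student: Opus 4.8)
The plan is to compare both conditions on the minimal resolution $f\colon S'\ra S$, whose existence and uniqueness for a normal surface singularity is classical, and to lean on two standard facts: the intersection matrix $(E_i\cdot E_j)$ of the exceptional curves $E_i$ of $f$ is \emph{negative definite}, and minimality is equivalent to $K_{S'}$ being $f$-nef, i.e.\ $K_{S'}\cdot E_i\geq 0$ for every $i$. Writing the discrepancies as $a_i=a(E_i,S)$ and setting $\Gamma=\sum_i a_iE_i$, so that $K_{S'}\simq f^*K_S+\Gamma$, the projection formula gives $f^*K_S\cdot E_j=0$, whence
$$K_{S'}\cdot E_j=\sum_i a_i\,(E_i\cdot E_j)\qquad\text{for every }j.$$
Both implications then reduce to analysing this linear system.

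For (i)$\Rightarrow$(ii), I would argue as follows. Canonicity forces $a_i\geq 0$, so $\Gamma$ is an effective $f$-exceptional divisor, and minimality gives $\Gamma\cdot E_j=K_{S'}\cdot E_j\geq 0$ for all $j$. Therefore $\Gamma^2=\sum_j a_j(\Gamma\cdot E_j)\geq 0$; but negative definiteness of the intersection form forces $\Gamma^2<0$ unless $\Gamma=0$. Hence $\Gamma=0$, so $K_{S'}\cdot E_j=0$ for all $j$, which is exactly the Du Val condition.

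For (ii)$\Rightarrow$(i), the Du Val hypothesis says $K_{S'}\cdot E_j=0$ for all $j$, so $(a_i)$ lies in the kernel of the matrix $(E_i\cdot E_j)$; as this matrix is negative definite, hence invertible, we get $a_i=0$ for all $i$, i.e.\ $K_{S'}\simq f^*K_S$ is crepant. It then remains to propagate nonnegativity of discrepancies to \emph{all} divisors over $0$, not just those on $S'$. Any such divisor $E$ appears on a model $g\colon Z\ra S'$ obtained by blowing up smooth points of the smooth surface $S'$; crepancy of $f$ gives $a(E,S)=a(E,S')$, and since $S'$ is smooth we have $\disc(S')=1$ by Lemma \ref{lem:disc-range}(ii), so $a(E,S')\geq 1>0$, while the $E_i$ themselves have discrepancy $0$. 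Thus $\totdisc_0(S,0)\geq 0$ and $(0\in S)$ is canonical.

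The crux — and the one input I would cite rather than reprove — is the negative definiteness of $(E_i\cdot E_j)$ (Mumford, Grauert), which drives both directions through the sign of $\Gamma^2$ and the invertibility of the matrix; the characterisation of the minimal resolution by $f$-nefness of $K_{S'}$ and the crepant-pullback identity $a(E,S)=a(E,S')$ are the remaining, more routine, ingredients.
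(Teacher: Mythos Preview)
The paper does not prove this theorem; it is stated with a citation to \cite[Thm 4.20]{KollarMori} and no proof is given. Your argument is correct and is essentially the standard proof one finds in the cited reference: both directions are driven by the negative definiteness of the intersection form on the exceptional locus of the minimal resolution, combined with the $f$-nefness of $K_{S'}$ that characterises minimality.

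One small remark worth being aware of: in the direction (ii)$\Rightarrow$(i) you write $K_{S'}\simq f^*K_S+\Gamma$ at the outset, which presupposes that $K_S$ is $\bbQ$-Cartier. The Du Val condition (Definition~\ref{dfn:duVal-sing}) is formulated purely in terms of $K_{S'}\cdot E_i=0$ and does not assume this, so a fully self-contained proof would first deduce that $K_S$ is $\bbQ$-Cartier (in fact Cartier) from the numerical $f$-triviality of $K_{S'}$. In the setting of this thesis, where log pairs are always assumed to have $\bbQ$-Cartier log canonical class, this point is harmless.
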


\begin{nota}
\label{nota:surface-basics}
Let $S$ be a surface with canonical singularities. By \cite[4.11, 4.19]{KollarMori} $S$ is $\bbQ$-factorial. Let $f:\widetilde S\ra S$ be a birational morphism and $D$ be a $\bbQ$-divisor in $S$ with proper transform $\widetilde D$. Then we can write the \textbf{log pullback} of $(S,D)$ by $f$ as
$$K_{\widetilde S}+ \widetilde D +\sum^{r}_{i=1}a_iE_i\equiv f^*(K_S+D),$$
where $E_i$ are exceptional curves ($E_i\cong \bbP^1, E_i^2<0)$ and $a_i$ are rational numbers.

Often $f\colon \widetilde S \ra S$ will be the blow-up of a point $p$ with exceptional curve $E$. Other times $f$ will be the minimal log resolution of $(S,D)$. This will be clear from the context, when not explicitly stated. We will denote the strict transform of any $\bbQ$-divisor $B$ in $\widetilde S$ by $\widetilde B$.

\end{nota}

\begin{lem}
\label{lem:log-pullback-preserves-lc}
The log pair $(S,D)$ is log canonical if and only if 
		\begin{equation}
						(\widetilde S, \widetilde D + \displaystyle{\sum^{r}_{i=1}} a_iE_i)
						\label{eq:log-pullback-preserves-lc}
		\end{equation}
is log canonical. In particular when $f\colon \widetilde S \ra S$ is the blow-up of a point $p\in S$ with exceptional divisor $E$, the pair $(S, D)$ is log canonical at $p$ if and only if
$$(\widetilde S, \widetilde D +(\mult_pD-1)E)$$
is log canonical for all $q\in E$.
\end{lem}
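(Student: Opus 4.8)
The plan is to show that passing to the log pullback preserves the discrepancy over \emph{every} divisor, so that log canonicity of the two pairs becomes literally the same numerical condition. Write $B=\widetilde D+\sum_{i=1}^r a_iE_i$ for the boundary appearing in \eqref{eq:log-pullback-preserves-lc}. By the very definition of the log pullback in Notation \ref{nota:surface-basics}, and choosing canonical divisors compatibly so that $f_*K_{\widetilde S}=K_S$, this is an honest equality of $\bbQ$-divisors
$$K_{\widetilde S}+B=f^*(K_S+D);$$
rearranging against the defining relation of the discrepancies shows $a_i=-a(E_i,S,D)$.

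The key claim I would isolate is: for every divisor $F$ over $\widetilde S$ one has $a(F,\widetilde S,B)=a(F,S,D)$. Because $f$ is birational, the valuations realised as divisors over $\widetilde S$ are exactly those realised as divisors over $S$, so this claim matches up term by term the two families over which the total discrepancies of Definition \ref{dfn:discrepancy} are taken. To prove it I would pick any birational morphism $g\colon Y\ra\widetilde S$ carrying $F$ and set $h=f\circ g\colon Y\ra S$; pulling back the displayed equality by $g$ gives $g^*(K_{\widetilde S}+B)=h^*(K_S+D)$, and subtracting a fixed $K_Y$ from both sides forces the two discrepancy divisors computed on $Y$ to coincide coefficient by coefficient. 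Taking $g=\mathrm{id}$ handles the components of $B$ themselves on the same footing, so no separate bookkeeping for boundary components is needed provided one works with the total discrepancy rather than only the exceptional one.

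Granting the claim, $(S,D)$ is log canonical, i.e.\ $\totdisc(S,D)\geq-1$, if and only if $a(F,S,D)\geq -1$ for all divisors $F$ over $S$, if and only if $a(F,\widetilde S,B)\geq -1$ for all divisors $F$ over $\widetilde S$, which is precisely the log canonicity of $(\widetilde S,B)$; running the identical argument with $\totdisc_p$ and using $f^{-1}(p)=E$ to identify the valuations centred over $p$ with those centred at points $q\in E$ (together with $E$ itself) yields the localised version. For the \emph{in particular} statement I would take $f$ to be the blow-up of a smooth point $p$ with single exceptional $E$, where $K_{\widetilde S}=f^*K_S+E$ and $f^*D=\widetilde D+(\mult_p D)E$, so that
$$f^*(K_S+D)=K_{\widetilde S}-E+\widetilde D+(\mult_p D)E=K_{\widetilde S}+\widetilde D+(\mult_p D-1)E,$$
which identifies the boundary as $\widetilde D+(\mult_p D-1)E$ and reduces the assertion to the general case applied at $p$.

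The main obstacle is not conceptual but a matter of care: one must ensure the log pullback relation is used as an equality of $\bbQ$-divisors (rather than mere numerical equivalence) so that coefficients can legitimately be compared on $Y$, and one must use the total discrepancy throughout so that the components $E_i$ and the strict transform $\widetilde D$ are governed by the same inequality $\geq -1$ as the purely exceptional valuations. Once these conventions are fixed, every step is a formal manipulation of pullbacks.
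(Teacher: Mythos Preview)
Your argument is correct and is in fact the clean, valuation-theoretic version of this standard fact (compare \cite[Lem.~2.30]{KollarMori}): once $K_{\widetilde S}+B=f^*(K_S+D)$ holds as an equality of $\bbQ$-divisors, discrepancies over $\widetilde S$ and over $S$ coincide term by term, and the localised statement follows by restricting to valuations centred over $p$. Your care about using total discrepancy and about equality versus numerical equivalence is exactly the right place to be careful.

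The paper takes a more concrete route, tailored to surfaces: rather than comparing discrepancies along all valuations, it fixes a log resolution $g\colon\bar S\to\widetilde S$ of $(\widetilde S,B)$, observes that $f\circ g$ is then a log resolution of $(S,D)$ with the same log pullback, and reads off log canonicity from Lemma~\ref{lem:disc-log-resolution}. The converse direction is handled by dominating a log resolution of $(S,D)$ and the given $\widetilde S$ by a common smooth surface. Your approach is more conceptual and dimension-independent; the paper's approach has the virtue of staying within the explicit log-resolution framework it uses elsewhere (and implicitly relies on resolution of singularities for surfaces, which is available in all characteristics).
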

\begin{proof}
If $(\widetilde S, \widetilde D + \sum a_i E_i)$ is log canonical, there is a log resolution $g\colon \bar S \ra \widetilde S$ of $(\widetilde S, \widetilde D + \sum a_i E_i)$ with exceptional divisors $F_j$ such that its log pullback is
$$g^*(K_{\widetilde S }+ \widetilde D +\sum a_i E_i)=K_{\bar S} + \bar D +\sum a_i \bar E_i + \sum b_j F_j$$
where $a_i\leq 1$ and $b_j\leq 1$, $\forall i,j$. Since $f\circ g$ is a log resolution for $(S, D)$ with log pullback
$$(f\circ g)^*(K_S+ D)=K_{\bar S} + \bar D + \sum a_i \bar E_i +\sum b_i \bar F_i,$$
the pair $(S, D)$ is log canonical.

If $(S,D)$ is log canonical then there is a log resolution $f'\colon \bar S\ra S$ consisting of blow-ups. Since there is a birational map $\bar S \dra S$, there is a smooth surface $\hat S$ and birational morphisms $g'\colon \hat S \ra \bar S$ and $g\colon \hat S \ra \widetilde S $, such that $f\circ g=f'\circ g'$. In particular, by the previous implication $(\widetilde S, \widetilde D + \sum a_i E_i)$ is log canonical.
\end{proof}

In this section we deal with a pair $(S,D)$ (or $(S,\omega D)$ for some $\omega \in \bbQ\cap [0,1]$) where $S$ is a non-singular surface. Let $p\in S$ and $D\simq -K_S$ be an effective $\bbQ$-divisor such that $(S,D)$ (respectively $(S,\omega D)$) is not log canonical at $p$.

Lemma \ref{lem:higher-dim-convexity-forward} and corollaries \ref{cor:higher-dim-convexity} and \ref{cor:higher-log-convexity} have the following $2$-dimensional versions that we will use often:
\begin{lem}
\label{lem:convexity-forward}
Let $S$ be a surface with canonical singularities, $D$ and $B$ be effective $\bbQ$-divisors on $S$. If $(S,D)$ and $(S,B)$ are log canonical then, for all $\alpha\in [0,1]\cap\bbQ$, the pair
$$(S,\alpha D+(1-\alpha)B)$$
is log canonical.
\end{lem}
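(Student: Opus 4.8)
The plan is to observe that this is simply the surface specialisation of Lemma \ref{lem:higher-dim-convexity-forward}, so the entire content of the proof reduces to checking that its single hypothesis---that the ambient variety be $\bbQ$-factorial---is automatically satisfied here. A surface $S$ with canonical singularities is $\bbQ$-factorial by \cite[4.11, 4.19]{KollarMori}, as already recorded in Notation \ref{nota:surface-basics}. Consequently every $\bbQ$-divisor on $S$ is $\bbQ$-Cartier, and since canonical singularities are in particular log terminal, $K_S$ is $\bbQ$-Cartier as well. Thus $(S,D)$, $(S,B)$, and $(S,\alpha D+(1-\alpha)B)$ are all genuine log pairs in the sense of Definition \ref{dfn:log-pair}, and all the hypotheses of Lemma \ref{lem:higher-dim-convexity-forward} are met.

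First I would state that $S$ is $\bbQ$-factorial, citing Notation \ref{nota:surface-basics}. Then I would apply Lemma \ref{lem:higher-dim-convexity-forward} directly with $X=S$, concluding that $(S,\alpha D+(1-\alpha)B)$ is log canonical for every $\alpha\in[0,1]\cap\bbQ$. This is a one-line deduction once $\bbQ$-factoriality is in place.

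Should one instead prefer a self-contained argument that does not invoke Lemma \ref{lem:higher-dim-convexity-forward} as a black box, the same proof can simply be repeated in the surface setting: for an arbitrary birational morphism $f\colon\widetilde S\to S$ with exceptional curves $E_i$, write the log pullbacks of $(S,D)$ and $(S,B)$, each with discrepancy coefficients $a_i\geq-1$ and $b_i\geq-1$ respectively by log canonicity, scale them by $\alpha$ and $1-\alpha$, and add. The resulting discrepancy of $E_i$ for the pair $(S,\alpha D+(1-\alpha)B)$ is $\alpha a_i+(1-\alpha)b_i\geq-\alpha-(1-\alpha)=-1$, so the combined pair is log canonical. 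The convexity of the interval $[-1,\infty)$ under the affine combination is the only mechanism at work.

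I do not expect any genuine obstacle. The only point requiring care is confirming $\bbQ$-factoriality so that the discrepancy formalism applies to all three pairs simultaneously; this is handled entirely by the cited classification of canonical surface singularities as Du Val (Theorem \ref{thm:duVal-equivalence}) together with their $\bbQ$-factoriality. Everything else is the same convexity estimate used in Lemma \ref{lem:higher-dim-convexity-forward}.
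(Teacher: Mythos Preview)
Your proposal is correct and matches the paper's approach exactly: the paper does not give a separate proof of this lemma but simply introduces it as the ``$2$-dimensional version'' of Lemma \ref{lem:higher-dim-convexity-forward}, relying (as you do) on the $\bbQ$-factoriality of canonical surface singularities already recorded in Notation \ref{nota:surface-basics}. Your optional self-contained argument is precisely the proof of Lemma \ref{lem:higher-dim-convexity-forward} specialised to surfaces, so either route is fine.
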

\begin{lem}[Convexity]\label{lem:convexity}
Given $S$ non-singular (at $p$), let $D,B$ be effective $\bbQ$-divisors on $S$ such that $(S,B)$ is log canonical (at $p$) and $(S,D)$ is not log canonical (at $p$). Then, for all $\alpha \in [0,1)\bigcap \bbQ$, the pair
$$(S, D' = \frac{1}{1-\alpha}(D-\alpha B))$$
is not log canonical (at $p$).
Moreover if $D\simq B$, then $ D'\simq D$ and we can choose $\alpha$ such that $\exists B_i$ irreducible curve ($p\in B_i$) in the support of $B$ with $B_i \not\subset \Supp( D')$ where $D'$ is effective.
\end{lem}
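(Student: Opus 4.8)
The plan is to treat the two assertions separately. The first---that $(S,D')$ fails to be log canonical at $p$ for \emph{every} $\alpha\in[0,1)\cap\bbQ$---is a pure discrepancy computation that does not require $D'$ to be effective, so I would read it at the level of discrepancies. The moreover part---the linear equivalence $D'\simq D$ together with the removal of a component of $B$ from $\Supp(D')$---follows the recipe of Corollary \ref{cor:higher-dim-convexity}; the only genuinely new ingredient is the bookkeeping needed to force the removed component through $p$.

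For the first assertion I would fix a common log resolution $f\colon Y\to S$ of $(S,D)$, $(S,B)$ and $(S,D')$ (resolutions of surface singularities exist in every characteristic) and use that $a(E,S,\cdot)$ is additive on convex combinations whose coefficients sum to $1$. Since $D=(1-\alpha)D'+\alpha B$, this gives, for every divisor $E$ over $p$,
\[
a(E,S,D)=(1-\alpha)\,a(E,S,D')+\alpha\,a(E,S,B).
\]
As $(S,D)$ is not log canonical at $p$ there is some $E$ over $p$ with $a(E,S,D)<-1$, while $(S,B)$ log canonical at $p$ gives $a(E,S,B)\ge-1$; substituting and dividing by $1-\alpha>0$ yields $a(E,S,D')<-1$, so $(S,D')$ is not log canonical at $p$. (Equivalently one argues by contradiction, as in Corollary \ref{cor:higher-dim-convexity}: if $(S,D')$ were log canonical, then since $(1-\alpha)D'+\alpha B=D$, Lemma \ref{lem:convexity-forward} would force $(S,D)$ to be log canonical.)

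For the moreover part, $D\simq B$ immediately gives $D'=\tfrac{1}{1-\alpha}(D-\alpha B)\simq\tfrac{1}{1-\alpha}(D-\alpha D)=D$. To produce $\alpha$, I would write $B=\sum_i b_iB_i$ with $b_i>0$ and $D=\sum_i d_iB_i+\Omega$, where $\Omega\ge 0$ shares no component with $B$ and $d_i\ge 0$. If some $d_i=0$ I take $\alpha=0$, so $D'=D$ and $B_i\not\subseteq\Supp(D')$. Otherwise all $d_i>0$ and I set $\alpha=\min_i\{d_i/b_i\}$; then every coefficient $\tfrac{d_i-\alpha b_i}{1-\alpha}$ of $D'$ is $\ge 0$, so $D'$ is effective, while the index realising the minimum gives a component $B_j$ with coefficient $0$ in $D'$, i.e.\ $B_j\not\subseteq\Supp(D')$. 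Here $\alpha<1$, for otherwise $d_i\ge b_i$ for all $i$ would make $D-B$ effective and $\simq 0$, hence $D=B$, contradicting that $(S,D)$ is not log canonical while $(S,B)$ is.

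The step I expect to be the main obstacle is ensuring that the removed component $B_j$ actually passes through $p$, as the statement asserts. The minimisation above is global and may select a component of $B$ disjoint from $p$ (for instance a component with $d_i=0$ forcing $\alpha=0$), whereas log canonicity at $p$ only sees the germs of the divisors at $p$. The clean case is $B$ irreducible with $p\in B$: then $D\simq B$ rules out $d_1\ge b_1$ (which would force $D=B$), so $\alpha=d_1/b_1<1$ removes the unique component $B=B_1\ni p$, which already covers the typical application, e.g.\ peeling a boundary curve $\Delta$ off $\Supp(D)$ as in Lemma \ref{lem:pairs-fixed-boundary-lcs}. For reducible $B$ one must restrict the minimisation to the components of $B$ through $p$ and argue that the failure of log canonicity at $p$ is concentrated there; making this localisation compatible with effectivity of $D'$ is the delicate point, and is where I would concentrate the argument.
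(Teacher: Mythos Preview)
Your approach is essentially identical to the paper's. The paper does not give a standalone proof of this lemma: it simply presents Lemma~\ref{lem:convexity} as the two-dimensional version of Corollary~\ref{cor:higher-dim-convexity}, whose proof is exactly the contrapositive-via-Lemma~\ref{lem:higher-dim-convexity-forward} argument you give (your direct discrepancy computation is a mild unpacking of the same idea), together with the same choice $\alpha=\min_i\{d_i/b_i\}$ and the same observation that $\alpha<1$ because $D-B$ effective and $\simq 0$ forces $D=B$.

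Your concern about ensuring $p\in B_j$ for the removed component is well-taken, and you are right that neither the global Corollary~\ref{cor:higher-dim-convexity} nor the paper's sentence deducing Lemma~\ref{lem:convexity} from it addresses this point. In the paper's practice this never bites: every time the ``moreover'' clause is invoked (e.g.\ with the auxiliary divisors $G$ and $H$ of Lemmas~\ref{lem:del-Pezzo-deg4-aux-divisors-G-Basic}, \ref{lem:del-Pezzo-deg4-aux-divisors-H}, \ref{lem:del-Pezzo-deg5-aux-divisors-G}, \ref{lem:del-Pezzo-deg5-aux-divisors-H}, or with explicit $B\in|-K_S|$), the divisor $B$ is constructed so that \emph{every} irreducible component passes through $p$---this is precisely condition~(ii) in those lemmas. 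Under that standing hypothesis the global minimisation automatically lands on a component through $p$, and the parenthetical ``$(p\in B_i)$'' in the statement is then automatic. So your identification of this as the delicate point is correct, and your resolution for the cases that actually arise (all components of $B$ through $p$, or $B$ irreducible through $p$) is exactly what the paper relies on implicitly.
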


\begin{lem}[Log-convexity]
\label{lem:log-convexity}
Let $X$ be a $\bbQ$-factorial variety. Suppose $(X,A+D_1)$ is not log canonical and $(X,A+D_2)$ is log canonical where $A,D_1,D_2$ are effective $\mathbb{Q}$-divisors, such that $D_1\simq D_2$. There is $\lambda \in [0,1)\bigcap \mathbb{Q}$ such that 
$$D_3=\frac{1}{1-\lambda} (D_1-\lambda D_2),$$
is effective, $\Supp(D_3)$ does not contain at least one of the components of $D_2$ and $(X,A+D_3)$ is not log canonical.

Let $S$ be a smooth surface. Suppose $(S,A+D_1)$ is not log canonical and $(S,A+D_2)$ is log canonical where $A,D_1,D_2$ are effective $\mathbb{Q}$-divisors, such that $D_1\simq D_2$. There is $\lambda \in [0,1)\bigcap \mathbb{Q}$ such that 
$$D_3=\frac{1}{1-\lambda} (D_1-\lambda D_2),$$
is effective, $\Supp(D_3)$ does notcontain at least one of the components of $D_2$ and $(S,A+D_3)$ is not log canonical.
\end{lem}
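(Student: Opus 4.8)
The plan is to reduce the statement to the convexity results already in hand, since both assertions are essentially restatements of Corollary~\ref{cor:higher-log-convexity} (which itself rests on Corollary~\ref{cor:higher-dim-convexity}). The first paragraph of the statement is, word for word, Corollary~\ref{cor:higher-log-convexity}, so nothing new is required there. For the surface version I would first remark that a smooth surface is in particular $\bbQ$-factorial, so it is already a special case of the first assertion; but to keep the section self-contained and parallel to its one-dimension-up counterparts, I would instead mirror the derivation of Corollary~\ref{cor:higher-log-convexity} from Corollary~\ref{cor:higher-dim-convexity}, now invoking the surface-specific Lemma~\ref{lem:convexity} in place of the general Corollary~\ref{cor:higher-dim-convexity}.

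The whole content is carried by one algebraic identity. For any $\lambda$, expanding $D_3=\frac{1}{1-\lambda}(D_1-\lambda D_2)$ and collecting the $A$-terms gives
$$A+D_3=\frac{1}{1-\lambda}\bigl((A+D_1)-\lambda(A+D_2)\bigr).$$
Hence the pair $(S,A+D_3)$ is exactly the pair produced by feeding the two $\bbQ$-divisors $A+D_1$ and $A+D_2$ into the convexity machine. Since $D_1\simq D_2$ we have $A+D_1\simq A+D_2$, so the hypotheses of Lemma~\ref{lem:convexity} are met with $D:=A+D_1$ (not log canonical) and $B:=A+D_2$ (log canonical), and the two are linearly equivalent. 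Lemma~\ref{lem:convexity} then returns $\lambda\in[0,1)\cap\bbQ$ for which $\frac{1}{1-\lambda}(D-\lambda B)=A+D_3$ is effective, the pair $(S,A+D_3)$ is not log canonical, and $\Supp(A+D_3)$ misses at least one component of $B=A+D_2$; this is the required $D_3$.

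I do not expect a genuine obstacle here: the difficulty was already absorbed into the earlier convexity lemmas, and the argument is a purely formal rewriting. The only point worth a sentence is the bookkeeping that the dropped component can be taken among the components of $D_2$ rather than of $A$ — but this is inherited verbatim from the already-established Corollary~\ref{cor:higher-log-convexity}, whose conclusion is phrased in exactly this way, so no extra work is needed in the two-dimensional case.
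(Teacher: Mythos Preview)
Your proposal is correct and matches the paper's intent exactly. The paper gives no separate proof for this lemma: it is introduced explicitly as one of the ``$2$-dimensional versions'' of Lemma~\ref{lem:higher-dim-convexity-forward} and Corollaries~\ref{cor:higher-dim-convexity} and~\ref{cor:higher-log-convexity}, so the first assertion is literally Corollary~\ref{cor:higher-log-convexity}, and the surface case is immediate since a smooth surface is $\bbQ$-factorial (or, equivalently, via Lemma~\ref{lem:convexity} as you indicate).
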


\subsection{Classical local inequalities}
The following result is well known and it can be found (when the ground field is $\bbC$) on \cite{CheltsovLCTdP}. Our proof for algebraically closed fields can also be found in \cite{JMGlctCharP}.
\begin{lem}\label{lem:adjunction}
Let $S$ be a non-singular surface, $D$ be an effective $\bbQ$-divisor and $C$ be an irreducible curve on the surface $S$. We may write $D = mC + \Omega$, where $m\geq 0$ is a rational number, and $\Omega=\sum a_i \Omega_i$ is an effective $\bbQ$-divisor such that $C \not\subset \Supp(\Omega)$. Suppose the pair $(S,D)$ is not log canonical at some point $p\in S$ such that $p\in C$. The following are true:
\begin{itemize}
	\item[(i)] $\mult_p D>1$.
	\item[(ii)] If $C \subset \LCS(S,D)$, then $m \geq 1$. In particular, if $D$ is not log canonical along $C$, then $m>1$.
	\item[(iii)] If $m \leq 1$ and $p\in C$ with $C$ non-singular at $p$, then $C\cdot \Omega>1$.
\end{itemize}
\end{lem}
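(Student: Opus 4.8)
The plan is to prove the three parts independently, in each case reducing---after one or more blow-ups of $p$---to an elementary statement, using the log pullback formula of Lemma~\ref{lem:log-pullback-preserves-lc} together with the monotonicity of discrepancies (Lemma~\ref{lem:disc-monotonous}).

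For (i) I would establish the contrapositive: \emph{if $\mult_p D\leq 1$ then $(S,D)$ is log canonical at $p$.} Blow up $p$ to obtain $f\colon\widetilde S\ra S$ with exceptional curve $E$; the log pullback reads
\[
K_{\widetilde S}+\widetilde D+(\mult_p D-1)E\simq f^*(K_S+D).
\]
Since $\mult_p D-1\leq 0$, the anti-effective term $(\mult_p D-1)E$ only raises discrepancies, so by Lemma~\ref{lem:disc-monotonous} it suffices to prove $(\widetilde S,\widetilde D)$ log canonical, with $\widetilde D$ effective. The crucial point is that $\mult_q\widetilde D\leq\mult_p D\leq 1$ for every $q\in E$, because the multiplicity of a strict transform never exceeds the original multiplicity; thus the bound $\mult\leq 1$ is preserved under the blow-up. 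Iterating, and discarding the anti-effective exceptional part at each step, after finitely many blow-ups (the process terminating with an embedded resolution of $\Supp(D)$) we reach a simple normal crossings model on which every boundary coefficient is $\leq 1$, hence a log canonical pair; by Lemma~\ref{lem:log-pullback-preserves-lc} this gives $(S,D)$ log canonical at $p$. Part (ii) is then immediate by localising at the generic point $\eta$ of $C$: as $C\not\subset\Supp(\Omega)$, near $\eta$ we have $D=mC$ on the smooth surface $S$, so $(S,D)$ is klt at $\eta$ if and only if $m<1$ and log canonical at $\eta$ if and only if $m\leq 1$. Hence $C\subset\LCS(S,D)$ forces $m\geq 1$, and failure of log canonicity along $C$ forces $m>1$.

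For (iii), assume $m\leq 1$. Since $C+\Omega=D+(1-m)C$ and $(1-m)C$ is effective, Lemma~\ref{lem:disc-monotonous} gives $a(F,S,C+\Omega)\leq a(F,S,D)$ for every divisor $F$, so $(S,C+\Omega)$ is also not log canonical at $p$; I have thus raised the coefficient of $C$ to $1$. As $C$ is smooth, hence Cartier, at $p$, the different of $\Omega$ along $C$ is simply $\Omega|_C$, and by inversion of adjunction in dimension $2$ the pair $(S,C+\Omega)$ is log canonical near $C$ at $p$ if and only if $(C,\Omega|_C)$ is log canonical at $p$, i.e.\ if and only if $\mult_p(\Omega|_C)\leq 1$. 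Since the former fails, $\mult_p(\Omega|_C)=(\Omega\cdot C)_p>1$; and because $C\not\subset\Supp(\Omega)$ all local intersection numbers are non-negative, whence $\Omega\cdot C\geq(\Omega\cdot C)_p>1$.

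The main obstacle is part (iii), where I invoke inversion of adjunction. Since the paper works over an arbitrary algebraically closed field, I would need its two-dimensional case to be characteristic-free; this is fine, as it rests only on resolution of singularities for surfaces---available in every characteristic---and on the adjunction formula. To keep the argument entirely self-contained I would instead run a blow-up induction parallel to part (i): blowing up $p$, the strict transform $\widetilde C$ meets $E$ transversally at one point and $\widetilde C\cdot\widetilde\Omega=C\cdot\Omega-\mult_p\Omega$, so a descending induction on the intersection number $C\cdot\Omega$, tracking the point at which log canonicity fails, yields the strict inequality directly and avoids citing inversion of adjunction altogether.
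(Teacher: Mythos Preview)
Your argument is correct, but the route differs from the paper's for both (i) and (iii). The paper proves (i) directly by induction on the number $N$ of blow-ups in the minimal log resolution: at the inductive step one has $1<\mult_q D_1+(\mult_p D-1)\leq 2\mult_p D-1$, forcing $\mult_p D>1$. You instead run the contrapositive, dropping the non-positive exceptional coefficient via Lemma~\ref{lem:disc-monotonous} and iterating until the support is SNC; this is clean and makes the role of $\mult_p D\leq 1$ transparent, though one should say explicitly that termination is by embedded resolution of $\Supp(D)$ over $p$ (a finite process for surfaces). For (iii) the paper again inducts on $N$, doing a case split on whether the non-lc point $q$ of the blown-up pair lies on $\widetilde C$ or not, and in each case intersecting with the appropriate curve. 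Your primary argument---raising the coefficient of $C$ to $1$ and invoking surface inversion of adjunction---is shorter and conceptually cleaner, and your remark that it is characteristic-free (needing only resolution for surfaces and ordinary adjunction) is correct. What each approach buys: yours isolates the key mechanism (adjunction to $C$) in one step, while the paper's induction is entirely self-contained and avoids any external input.

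One small caveat on your alternative self-contained sketch for (iii): ``descending induction on the intersection number $C\cdot\Omega$'' is not well-founded as stated, since $C\cdot\Omega$ is rational. The correct induction variable is the number of blow-ups in the minimal log resolution, exactly as the paper does; your identity $\widetilde C\cdot\widetilde\Omega=C\cdot\Omega-\mult_p\Omega$ is the right bookkeeping, but the induction must be on $N$, not on the intersection number itself.
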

\begin{proof}
Part (ii) is straight forward from the definition of $\LCS(S,D)$. For (i) and (iii) suppose $(S, D)$ is not log canonical. Consider $f\colon \widetilde S \ra S$, the minimal log resolution of $(S,D)$, where the components of $f^{-1}(D)$ have simple normal crossings. By Lemma \ref{lem:log-pullback-preserves-lc}, the pair
$$(\widetilde S, \widetilde D + \sum a_i E_i)$$
is not log canonical. We do induction onthe number $N$ of exceptional divisors of $f$.

For the induction hypothesis we assume that (i) and (iii) hold if the minimal log resolution of a pair consists on at most $N$ blow-ups. Suppose the log resolution of $(S,D)$ consists of $(N+1)$ blow-ups. Let $\sigma S_1 \ra S$ be the blow-up of $p$ with exceptional divisor $E_1$. Since the minimal log resolution is unique, $f$ factors through $S_1$, i.e. there is a birational morphism $g\colon \widetilde S \ra S_1$ consistsing of $N$ blow-ups, such that $f=\sigma \circ g$. By Lemma \ref{lem:log-pullback-preserves-lc}, the pair
\begin{equation}
(S_1, D_1 + (\mult_p D -1)E_1)
\label{eq:adjunction-proof}
\end{equation}
is not log canonical at some $q\in E_1$, where $D_1$, $C_1$ and $\Omega_1$ are the strict transforms of $D$ $C$ and $\Omega$, respectively.

We will prove (i) first, and then (iii). For (i), in the initial step of induction, $D$ is smooth at $p$, so we can assume $D=aD_1$ around $p$. Since $(S,D)$ is not log canonical, $a>1$, so $\mult_p(D)=a>1$. If $D$ is not smooth, then the pair \eqref{eq:adjunction-proof} is not log canonical and its log resolution consists of $N$ blow-ups. Therefore we may apply the induction hypothesis to show
$$1<\mult_qD_1 + (\mult_pD -1)\leq 2\mult_pD-1$$
which implies $\mult_pD>1$.

For (iii) we observe that $\Supp(D)$ is not smooth at $p$. If it was, then $D=mC$ near $p$ and $m>1$ since $(S,D)$ is not log canonical. The initial step for the induction occurs when \eqref{eq:adjunction-proof} is already a log resolution. Then
$$1<\mult_pD-1=\mult_p\Omega+m-1\leq \mult_p\Omega \leq C\cdot \Omega,$$
proving the claim.

Suppose $\mult_pD-1<1$. Then the pair \eqref{eq:adjunction-proof} is not log canonical at some point $q\in E_1$ and log canonical near $q$. The log resolution of the pair \eqref{eq:adjunction-proof} consists of $N$ blow-ups and we can assume (iii) is verified for \eqref{eq:adjunction-proof} by the induction hypothesis, where we substitute $C$ by $C_1$ or $E_1$. If $q\in C_1$, then by the induction hypothesis
$$1<C_1\cdot (\Omega_1+(\mult_pD-1)E_1)=C\cdot \Omega+m-1 \leq C\cdot \Omega$$
since $m\leq 1$. If $q\not\in C_1$, then the pair
$$(S_1, \Omega_1 + (\mult_pD -1)E_1)$$
is not log canonical at $q$ and by the induction hypothesis we have
$$1<E_1\cdot \Omega_1 = \mult_p\Omega\leq C\cdot \Omega.$$
\end{proof}

\subsection{New local inequalities}
In this section we prove a few theorems on the local behaviour of pairs which fail to be log canonical. Let $(S, D=\sum a_i D_i)$ be a log pair where $S$ is a smooth surface and $D$ is an effective $\bbQ$-divisor.

Lemma \ref{lem:adjunction} gives a basic characterisation of when $(S,D)$ is not log canonical. Either $\exists a_i>1$, or at some point $p$ the curves in $\Supp(D)$ are very singular with $1\geq a_i\gg 0$ for enough $D_i$ such that $p\in D_i$. In the first case we say that $(S,D)$ is \emph{not log canonical along} $D_i$ or $(S,D)$ is \emph{not log canonical in codimension} $1$. If all $a_i\leq 1$ but $(S,D)$ is not log canonical, then it is not log canonical in a finite number of points by Lemma \ref{lem:adjunction} (ii). If $p$ is one of those points we say that $p$ is \emph{a locus of non-log canonical singularities} for $(S,D)$ or, if no confusion is likely, we simply say that $p$ is \emph{isolated}.

The general philosophy is that we should treat separately the cases for which $(S,D)$ is not log canonical in codimension $1$ and those for which $(S,D)$ has an isolated locus of non-log canonical singularities. However, in both cases it will be useful to bound the coefficients $a_i$. Furthermore, when $p$ is an isolated locus of non-log canonical singularities of $(S,D)$, we will also be interested in bounding $\mult_p D$.

If $(S,D)$ is not log canonical in codimension $1$, bounding the coefficients of $D$ is relatively simple using intersection theory. All the theorems in this section deal with the isolated case. We will use intersection theory both in the statements and the proofs of these results to bound the multiplicities and coefficients. All these results make frequent use of Lemma \ref{lem:adjunction} (i) and (iii). In some way, they can be seen as applications of that simple fact. While stated in a general setting, these results will come useful in different particular situations, according to the intersection theory of the curves involved.

Theorem \ref{thm:pseudo-inductive-blow-up} is rather elementary. Although it has a technical statement, it reduces to mimick the procedure involved when finding the minimal log resolution of a pair $(S, (1-\beta)C + \lambda\beta D)$ which is log canonical in codimension $1$, where $C$ is a smooth curve and $\lambda, \beta$ are arbitrary coefficients.

The next result is Theorem \ref{thm:inequality-Cheltsov}. This Theorem reflects the nature of how these local inequalities are found. Once we know which statement we want to prove (providing it is true), the proof is easy to obtain by induction. However finding a strong statement is not easy. Theorem \ref{thm:inequality-Cheltsov} is a generalisation of a result of D. Kosta, \cite[Thm 2.21]{Kosta-thesis}, which had a non-symmetric statement. While we provide our own proof, the credit to finding the statement of the Theorem, which is the difficult part, belongs to I. Cheltsov. Theorem \ref{thm:inequality-Cheltsov} is only useful for us other when studying the intersection of two lines, mainly in order to shorten the use of Theorem \ref{thm:pseudo-inductive-blow-up}. However the Theorem is really powerful and I. Cheltsov, in \cite{Cheltsov-Trento-Proceedings}, has used it to reprove many known results on global log canonical thresholds in a more efficient way. Another inequality that implies Kosta's original result can be found in \cite{Cheltsov-Kosta}. Their result and Theorem \ref{thm:inequality-Cheltsov} do not imply one another. A generalisation of both Theorem \ref{thm:inequality-Cheltsov} and \cite{Cheltsov-Kosta}, if it exists, remains yet to be found.

Finally, the last result of this section is Theorem \ref{thm:inequality-local-blowup-bound}. We use this Theorem in the computation of the dynamic alpha--invariant of del Pezzo surfaces of degrees $7$--$9$. The proof is the most involved, which is why we have left it at the end. While proving this result we noticed we could get, as a by-product, a lower estimate on the number of exceptional curves in the minimal log resolution of a given log pair. This was as an unexpected result and it suggests that the number of exceptional curves in the minimal log resolution of a log pair and the log canonical threshold of the pair are closely related.

In the following Theorem we could take $\lambda=1$ but since all the applications use a number $0<\lambda\leq 1$, we include $\lambda$ for the reader's convenience.
\begin{thm}
\label{thm:pseudo-inductive-blow-up}
Let $S=S_0$ be a surface which is non-singular at $q=q_0\in C$, where $C=C^0$ is a smooth curve. Let $0<\beta\leq 1$ be a rational number, $0<\lambda\leq 1$ and $D^0$ be an effective $\bbQ$-divisor such that
\begin{equation}
(S_0, (1-\beta)C^0+\lambda\beta D^0)
\label{eq:pseudo-inductive-blow-up-basicpair}
\end{equation}
is a pair which is not log canonical at $q=q_0$ but is log canonical near $q$.

Let $f_1\colon S_1 \ra S_0$ be the blow-up of $q_0$ with exceptional divisor $F_1$. Let $C^1$ $D^1$ be the strict transforms of $C^0$, $D^0$, respectively. 

Let $i\geq 2$. Let $f_i\colon S_i\ra S_{i-1}$ be the blow-up of $q_{i-1}=C^{i-1}\cap F_{i-1}$ with exceptional divisor $F_i$. Denote by $A^i$ the strict transform in $S^i$ of any $\bbQ$-divisor $A^{i-1}$ of $S_{i-1}$. In particular let $D^i, C^i, F^i_{i-1}, F^i_j$ be the strict transforms of $D^{i-1}, C^{i-1}, F_{i-1}, F^{i-1}_j$, respectively. Let $m_i=\mult_{q_i}D^{i}$ for $i\geq 0$.

\begin{itemize}
	\item[(i)] If $i=1$ then the pair
$$(S_1, (1-\beta)C^1+\lambda\beta D^1 + (\lambda\beta m_0 -\beta) F_1)$$
is not log canonical at some $r_1\in F_1$.

If $i\geq 2$, the pair
\begin{equation}
(S_{i-1}, (1-\beta)C^{i-1}+ \lambda\beta D^{i-1} + (\lambda\beta(\sum_{j=0}^{i-2}m_j)-(i-1)\beta)F_{i-1})
\label{eq:pseudo-inductive-blow-up-pair}
\end{equation}
is not log canonical at $q_{i-1}=C^{i-1}\cap F_{i-1}$ and
\begin{equation}
\lambda\beta(\sum_{j=0}^{i-2}m_j)-(i-1)\beta\leq 1,
\label{eq:pseudo-inductive-blow-up-condition1}
\end{equation}
then the pair
\begin{equation}
(S_i, (1-\beta)C^i+ \lambda\beta D^i + (\lambda\beta(\sum_{j=0}^{i-2}m_j)-(i-1)\beta)F_{i-1}^i+(\lambda\beta(\sum_{j=0}^{i-1}m_j)-i\beta)F_i)
\label{eq:pseudo-inductive-blow-up-logpullback}
\end{equation}
is not log canonical at some $r_i\in F_i$ and is log canonical near $r_i$.

Observe that $F_{i-1}^i\cap C^i=\emptyset$, since $C_{i-1}$ is smooth at $q_{i-1}$ for all $i\geq 1$.
	\item[(ii)] Suppose $i=1$. If $\lambda\beta m_0-\beta\leq 1$, then $r_1=q_1=F_1\cap C^1$.
	
	Suppose $i=2$. If $\lambda\beta m_0\leq 1$ or $\lambda\beta(m_0+m_1)\leq 1$ and in addition to condition (i) we have
	$$\lambda\beta(\sum_{j=0}^{i-1}m_j)-i\beta\leq 1,$$
	then $r_i$ is an isolated locus of non-klt singularities and $r_i\in F_i\cap (F_{i-1}^i\cup C^i)$. 
	\item[(iii)] Let $i\geq 2$. Suppose the pair \eqref{eq:pseudo-inductive-blow-up-pair} is not log canonical at $q_{i-1}=C^{i-1}\cap F_{i-1}$, the inequality
	$$\lambda\beta(\sum_{j=0}^{i-1}m_j)-(i-1)\beta\leq 1$$
	holds, and $\lambda\beta m_0\leq 1$ or  $\lambda\beta(m_0+m_1)-\beta\leq 1$ holds. Then the pair \eqref{eq:pseudo-inductive-blow-up-logpullback} is not log canonical only at $r_i=q_i=F_i\cap C^i$.
	\item[(iv)] Let $i\geq 2$. Suppose that in addition to (i), inequality 
	$$\lambda\beta((\sum_{j=0}^{i-3} + m_j) + 2m_{i-2})-i\beta\leq 1$$
	holds where the sum is $0$ for $i=2$. Suppose further that $\lambda\beta m_0\leq 1$ or  $\lambda\beta(m_0+m_1)-\beta\leq 1$ holds. Then the pair \eqref{eq:pseudo-inductive-blow-up-logpullback} is not log canonical only at $r_i=q_i=F_i\cap C^i$.
\end{itemize}
\end{thm}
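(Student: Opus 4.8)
The plan is to prove all four parts simultaneously by induction on $i$, the single engine being the behaviour of the log pullback under the blow-up of a smooth point together with Lemma~\ref{lem:log-pullback-preserves-lc}, which lets me transport the non-log-canonical locus up and down the tower $S_i \to S_{i-1}$. Throughout I carry the inductive assumption that the relevant pair is \emph{isolated} non-log-canonical, i.e. not log canonical at $q_{i-1}$ but log canonical on a punctured neighbourhood; this is exactly the hypothesis supplied in the base case \eqref{eq:pseudo-inductive-blow-up-basicpair}, and confirming that it survives each blow-up is what makes the induction self-sustaining.

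For part (i) the computation is explicit. Since $S_{i-1}$ is smooth at $q_{i-1}$, the discrepancy of $F_i$ over $S_{i-1}$ is $1$, and $C^{i-1}, F_{i-1}, D^{i-1}$ pass through $q_{i-1}$ with multiplicities $1,1,m_{i-1}$. Hence the coefficient of $F_i$ in the log pullback equals the total multiplicity of the boundary at $q_{i-1}$ minus one, namely
$$(1-\beta) + \lambda\beta m_{i-1} + \Big(\lambda\beta\!\sum_{j=0}^{i-2}m_j - (i-1)\beta\Big) - 1 = \lambda\beta\!\sum_{j=0}^{i-1}m_j - i\beta,$$
matching \eqref{eq:pseudo-inductive-blow-up-logpullback}; meanwhile $F_{i-1}^i$ retains its coefficient and $C^i, D^i$ retain $(1-\beta), \lambda\beta$. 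The two smooth branches $C^{i-1}, F_{i-1}$ through $q_{i-1}$ are separated by $F_i$, giving the stated $F_{i-1}^i \cap C^i = \emptyset$. Condition \eqref{eq:pseudo-inductive-blow-up-condition1}, $c_{i-1}\le 1$, ensures the coefficient of $F_{i-1}$ does not exceed $1$, so that the failure at stage $i-1$ is confined to the point $q_{i-1}$ rather than spreading along $F_{i-1}$; then, since $f_i$ is an isomorphism away from $q_{i-1}$ and the pair was log canonical on a punctured neighbourhood, Lemma~\ref{lem:log-pullback-preserves-lc} forces the non-log-canonical locus of \eqref{eq:pseudo-inductive-blow-up-logpullback} onto $F_i$, at some $r_i$, log canonical nearby. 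The base case $i=1$ is the identical computation with $c_0=0$.

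For parts (ii)–(iv) the task is to \emph{locate} $r_i$ on $F_i$. Here I use the standard fact that the multiplicities are non-increasing along the tower, $m_0 \ge m_1 \ge \cdots$, since each $q_i$ is infinitely near $q_{i-1}$; granting this, each coefficient inequality imposed in (ii)–(iv) forces $\mathrm{coeff}(F_i)=\lambda\beta\sum_{j=0}^{i-1}m_j - i\beta \le 1$ (in (iv) this is the point of replacing $m_{i-1}$ by $m_{i-2}$ and doubling, combined with $m_{i-1}\le m_{i-2}$), and the inequalities also give $c_{i-1}=\mathrm{coeff}(F^i_{i-1})\le 1$. Thus no curve through $F_i$ carries coefficient exceeding $1$ and the failure is isolated. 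I then examine the finitely many candidates: the corner $q_i = F_i \cap C^i$, the corner $F_i \cap F^i_{i-1}$, and the points where $D^i$ meets $F_i$. Two transverse curves with coefficients $\le 1$ are automatically log canonical at their intersection, which disposes of $F_i \cap F^i_{i-1}$ whenever $D^i$ avoids it. At a candidate $r\ne q_i$ off $C^i$, I apply Lemma~\ref{lem:adjunction}(iii) with the smooth curve $F_i$ (coefficient $\le 1$), concluding $F_i\cdot\Omega>1$ locally; comparing with the global intersection numbers $D^i\cdot F_i=m_{i-1}$ and $F^i_{i-1}\cdot F_i=1$ contradicts the remaining hypotheses such as $\lambda\beta m_0\le 1$, $\lambda\beta(m_0+m_1)-\beta\le1$, or the bound of (iv), which is exactly the coefficient the failure would produce at the $F^i_{i-1}$-corner. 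What survives is $r_i=q_i$, and the word ``only'' is justified by having excluded every other point.

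The genuinely delicate part is not the coefficient bookkeeping, which is mechanical, but this exclusion step: ruling out that the failure sits at the \emph{wrong} corner $F_i \cap F^i_{i-1}$ or at a point where a tangent branch of $D^i$ meets $F_i$ away from $C$. This is where the asymmetric collection of inequalities in (ii), (iii) and (iv) becomes unavoidable — each is tailored to bound, via Lemma~\ref{lem:adjunction}(i) and (iii) and the intersection numbers on $S_i$, the multiplicity $D^i$ could contribute at an offending point, with (iv) in particular designed to match the coefficient arising one blow-up further along the $F^i_{i-1}$-corner. Tracking these local intersection contributions, and verifying at each inductive step that the surviving failure at $q_i$ remains isolated so the induction can continue, is the main obstacle; once the correct inequalities are in hand, the verifications reduce to the two applications of adjunction described above.
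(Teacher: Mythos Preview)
Your approach is essentially the paper's: compute the log pullback coefficient via Lemma~\ref{lem:log-pullback-preserves-lc} for part (i), then for (ii)--(iv) rule out the candidates on $F_i \setminus C^i$ by intersecting with $F_i$ or $F^i_{i-1}$ and applying Lemma~\ref{lem:adjunction}(iii), using $m_0 \ge m_1 \ge \cdots$ throughout. One detail you leave implicit: for (iv) the paper intersects with $F^i_{i-1}$ (not $F_i$) at the corner $F_i\cap F^i_{i-1}$, which is precisely what produces the $2m_{i-2}$ term in the hypothesis via $F^i_{i-1}\cdot D^i = m_{i-2}-m_{i-1}$.
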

\begin{proof}
Since the pair \eqref{eq:pseudo-inductive-blow-up-basicpair} is log canonical near $q_0$, we may assume it is log canonical in codimension $1$. By Lemma \ref{lem:log-pullback-preserves-lc}, the pair
\begin{equation}
(S_1,(1-\beta)C^1+\lambda\beta D^1 + (\lambda\beta m_0 -\beta)F_1)
\label{eq:pseudo-inductive-blow-up-proof-first-logpullback}
\end{equation}
is not log canonical at some $r_1\in F_1$. This proves statement (i) for $i=1$.

Condition (ii) for $i=1$ states that $\lambda\beta m_0-\beta\leq 1$, which is precisely the condition for \eqref{eq:pseudo-inductive-blow-up-proof-first-logpullback} to be log canonical along $F_1$ by Lemma \ref{lem:adjunction} (ii). Moreover, if $r_1\not\in C^1$, then the pair
\begin{equation}
(S_1,\lambda\beta D^1 + (\lambda\beta m_0-\beta)F_1)
\label{eq:pseudo-inductive-blow-up-proof-noC}
\end{equation}
is not log canonical. We apply Lemma \ref{lem:adjunction} (iii) on $F_1$ to get
$$1<\lambda\beta D^1\cdot F_1 =\lambda\beta m_0\leq 1,$$
a contradiction.

Now suppose $i\geq 2$. We prove first part (i). If the pair \eqref{eq:pseudo-inductive-blow-up-basicpair} is not log canonical at $q_{i-1}$ and \eqref{eq:pseudo-inductive-blow-up-condition1} holds then \eqref{eq:pseudo-inductive-blow-up-basicpair} is log canonical near $q_{i-1}$ and by Lemma \ref{lem:log-pullback-preserves-lc}, the pair \eqref{eq:pseudo-inductive-blow-up-logpullback} is not log canonical at some $r_i\in F_i$. Since  $C^0$ is smooth at $q_0$ and we are blowing-up smooth points in $C^k$, $C^k$ is smooth at $q_k$ for all $0\leq k\leq i$. Therefore $C^k\cdot F_k=1$ for all $0\leq k\leq i$. In particular $C^k\cdot F_{k-1}^k=0$ for all $k<i$, so $C^i\cap F^i_{i-1}=\emptyset$. Since $C^i\neq F^i_{i-1}$ and both curves are irreducible at $q_i$, we proved (i).

For (ii), suppose $\lambda\beta(\sum^{i-1}_{j=0}m_j)-i\beta\leq 1$. This implies that $r_i$ is an isolated locus of non-klt singularities for the pair \eqref{eq:pseudo-inductive-blow-up-logpullback} (see Lemma \ref{lem:adjunction} (ii)). Moreover if $r_i\in F_i\setminus (F^i_{i-1}\cup C^i)$, then the pair
$$(S_i, \lambda\beta D^i+((\lambda\beta\sum_{j=0}^{i-1}m_j-i\beta)F_i))$$
is not log canonical at $r_i$. If $\lambda\beta m_0\leq 1$, then
$$1<\lambda \beta D^i \cdot F_i = \lambda \beta m_i \leq \lambda\beta m_0\leq 1$$
by Lemma \ref{lem:adjunction} (iii), giving a contradiction. Therefore $\lambda\beta(m_0+m_1)-\beta\leq 1$ and $\lambda\beta m_0>1$. Then $\lambda\beta m_1-\beta\leq 1-\lambda\beta m_0<0$, so $\lambda\beta m_k\leq \lambda\beta m_1<\beta$ for $1\leq k\leq i-1$. We obtain a contradiction via Lemma \ref{lem:adjunction} (i):
\begin{align*}
1&<\mult_{r_i}(\lambda\beta D^i+((\lambda\beta\sum_{j=0}^{i-1}m_j)-i\beta)F_i)\\
&=\lambda\beta \mult_{r_i}D^i+\lambda\beta(\sum_{j=0}^{i-1}m_j)-i\beta\\
&\leq\lambda\beta(m_{i-1}+\sum_{j=0}^{i-1}m_j)-i\beta\\
&\leq\lambda\beta (m_0+m_1)-\beta \leq 1.
\end{align*}
This finishes the proof of (ii).

Notice that the hypothesis of (iii) implies (i) and (ii):
\begin{align*}
\lambda\beta(\sum^{i-2}_{j=0}m_j)-(i-2)\beta\leq \lambda\beta(\sum^{i-1}_{j=0} m_j)-(i-1)\beta&\leq 1\\
\lambda\beta(\sum^{i-1}_{j=0}m_j)-i\beta\leq \lambda\beta(\sum^{i-1}_{j=0}) m_j-i\beta+\beta&\leq 1.
\end{align*}
Therefore $r_i\in F_i\cap(F^i_{i-1}\cup C^i)$. If $r_i=F^i_{i-1}\cap F_i$, then $r_i\not\in C^i$ and the pair
\begin{equation}
(S_i, \lambda\beta D^i + ((\lambda\beta \sum_{j=0}^{i-2}m_j)-(i-1)\beta)F^i_{i-1}+(\lambda\beta(\sum^{i-1}_{j=0} m_j)-i\beta)F_i)
\label{eq:pseudo-inductive-blow-up-noCpair}
\end{equation}
is not log canonical at $r_i$. Applying Lemma \ref{lem:adjunction} (iii) with $F_i$ we obtain
$$1<(\lambda\beta D^i + ((\lambda\beta\sum_{j=0}^{i-2}m_j)-(i-1)\beta)F^{i}_{i-1})\cdot F_i = \lambda\beta \sum^{i-1}_{j=0}m_j - (i-1)\beta\leq 1,$$
which is absurd.

For (iv), observe that the hypotheses imply the hypotheses of part (ii) of this lemma:
\begin{align*}
\lambda\beta(\sum^{i-1}_{j=0} m_j)-i\beta = \lambda\beta (\sum^{i-3}_{j=0}m_j+m_{i-2} + m_{i-1})-i\beta\leq\lambda\beta ((\sum^{i-3}_{j=0}m_j)+2 m_{i-2})-i\beta\leq 1.
\end{align*}
where we use $m_j\leq m_k$ for all $k\leq j$. Hence $r_i\in F_i\cap (C^i \cup F_{i-1}^i)$. If $r_i = F_i\cap F^{i}_{i-1}$, then the pair \eqref{eq:pseudo-inductive-blow-up-noCpair} is not log canonical at $r_i$. Then, by Lemma \ref{lem:adjunction} (iii) applied with $F^i_{i-1}$ we get a contradiction:
$$1<(\lambda\beta D^i + (\lambda\beta(\sum^{i-1}_{j=0} m_j)-i\beta)F_i)\cdot F^i_{i-1}= \lambda\beta(\sum^{i-3}m_j + 2m_{i-2})-i\beta\leq 1.$$
Therefore $r_i=q_i=C^i\cap F_{i-1}^i$.
\end{proof}

\begin{thm}[See \cite{Cheltsov-Trento-Proceedings}]
\label{thm:inequality-Cheltsov}
Let $S$ be a surface and $p\in S$ be a non-singular point. Let
$$(S, a_1C_1+a_2C_2+\Omega)$$
be a log pair which is not log canonical at $p\in S$ but log canonical near $p$. Suppose that $(C_1\cdot C_2)\vert_p=1$, $C_1, C_2$ are smooth at $p$ and $C_1, C_2\not\subseteq\Supp(\Omega)$. Suppose that $a_1>0$, $a_2>0$ and $0<\mult_p\Omega\leq 1$. Then
$$(\Omega\cdot C_1)\vert_p>2(1-a_2)\qquad \text{ or }\qquad (\Omega\cdot C_2)\vert_p>2(1-a_1).$$
\end{thm}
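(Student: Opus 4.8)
The plan is to argue by contradiction, combined with an induction on the number $N$ of blow-ups in the minimal log resolution of $(S, a_1 C_1 + a_2 C_2 + \Omega)$ at $p$. Write $m = \mult_p \Omega$ and $\mu_i = (\Omega \cdot C_i)\vert_p$, and suppose both inequalities fail, i.e.
$$\mu_1 \le 2(1 - a_2) \quad\text{and}\quad \mu_2 \le 2(1 - a_1).$$
Since $C_i$ is smooth at $p$ we have $\mu_i \ge m$, so $m \le 2(1-a_1)$ and $m \le 2(1-a_2)$; in particular $a_1, a_2 \le 1$ and, adding, $m \le 2 - a_1 - a_2$. I would then blow up $p$ by $\sigma\colon S_1 \to S$ with exceptional divisor $E$, whose coefficient in the log pullback is $c := a_1 + a_2 + m - 1$ (using that $C_1, C_2$ are smooth). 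By Lemma \ref{lem:adjunction}(i) the hypothesis that $(S,D)$ is not lc at $p$ gives $a_1 + a_2 + m > 1$, hence $c > 0$, while the bound on $m$ gives $c \le 1$. By Lemma \ref{lem:log-pullback-preserves-lc} it suffices to show that $(S_1, a_1 \widetilde{C}_1 + a_2 \widetilde{C}_2 + \widetilde\Omega + cE)$ is log canonical at every point of $E$, contradicting the non-lc hypothesis. The base case $N=0$ is immediate, since an SNC pair with $a_1,a_2 \le 1$ and $\mult_p\Omega \le 1$ (so every component of $\Omega$ through $p$ has coefficient $\le 1$) is log canonical.

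Because $C_1, C_2$ meet transversally, $\widetilde{C}_1$ and $\widetilde{C}_2$ meet $E$ at two distinct points $p_1, p_2$ and $\widetilde{C}_1 \cap \widetilde{C}_2 = \emptyset$. As all codimension-one coefficients ($a_1, a_2, c$, and those of $\widetilde\Omega$) are $\le 1$, the non-lc locus on $S_1$ is a finite set of points; this both licenses the local inequalities below and supplies the ``log canonical near'' hypothesis needed for the induction. At a point $q \in E$ lying on neither $\widetilde{C}_1$ nor $\widetilde{C}_2$, only $\widetilde\Omega$ and $E$ pass through $q$; if the pair were not lc at $q$ then Lemma \ref{lem:adjunction}(iii) applied to $E$ (coefficient $c \le 1$) would force $(\widetilde\Omega \cdot E)\vert_q > 1$, contradicting $(\widetilde\Omega\cdot E)\vert_q \le \widetilde\Omega \cdot E = m \le 1$, where the total intersection $\widetilde\Omega\cdot E = m$ comes from $\sigma^*\Omega = \widetilde\Omega + mE$. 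Hence the pair is lc away from $p_1, p_2$.

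The heart of the argument is the point $p_1 = \widetilde{C}_1 \cap E$. Here $\widetilde{C}_1$ and $E$ are smooth and meet transversally, carrying coefficients $a_1$ and $c$, and the remainder $\widetilde\Omega$ satisfies $\mult_{p_1}\widetilde\Omega \le \widetilde\Omega\cdot E = m \le 1$ (if $\widetilde\Omega$ avoids $p_1$ the pair is SNC with coefficients $\le 1$ and we are done). Thus the theorem applies to the triple $(\widetilde{C}_1, E, \widetilde\Omega)$ at $p_1$, whose minimal log resolution needs at most $N-1$ blow-ups. Were the pair not lc at $p_1$, the inductive hypothesis would give $(\widetilde\Omega\cdot\widetilde{C}_1)\vert_{p_1} > 2(1-c)$ or $(\widetilde\Omega\cdot E)\vert_{p_1} > 2(1-a_1)$. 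I would rule out both using $(\widetilde\Omega\cdot E)\vert_{p_1} \le m$ and the blow-up formula $(\widetilde\Omega\cdot\widetilde{C}_1)\vert_{p_1} = \mu_1 - m$, together with $2(1-c) = 4 - 2a_1 - 2a_2 - 2m$: the second alternative fails because $m \le 2(1-a_1)$, and the first because $\mu_1 \le 2(1-a_2) \le 4 - 2a_1 - 2a_2 - m$, this last inequality being again exactly $m \le 2(1-a_1)$. Hence the pair is lc at $p_1$, and the same computation with the indices interchanged settles $p_2$. This exhausts $E$ and produces the contradiction, proving the theorem.

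The genuinely hard part is not the induction, which is routine once arranged, but finding the symmetric statement that is stable under this reduction. The coefficient of the new curve $E$ is $c = a_1 + a_2 + m - 1$, so the bound $2(1-c)$ arising at $p_1$ mixes all of $a_1, a_2, m$, and it is precisely the balancing inequality $m \le 2(1-a_i)$, forced by $\mu_i \ge m$, that makes both alternatives collapse after blowing up. Arranging the constants $2(1-a_2)$ and $2(1-a_1)$ so that they close up under this substitution — rather than reproving an inequality from scratch at each stage — is the delicate point, which is why the statement is credited to Cheltsov.
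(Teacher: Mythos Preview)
Your proof is correct and follows essentially the same approach as the paper's: induction on the number $N$ of blow-ups in the minimal log resolution, blowing up $p$ once and reducing to the same local situation $(\widetilde C_1, E, \widetilde\Omega)$ at $\widetilde C_1\cap E$ with the same arithmetic. The only cosmetic difference is that the paper argues directly (assume not log canonical, derive one of the inequalities) while you argue the contrapositive (assume both inequalities fail, show the blown-up pair is log canonical at every point of $E$); the computations and the use of Lemma~\ref{lem:adjunction} are identical.
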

\begin{proof}
First observe that if $a_1\geq 1$ or $a_2\geq 1$ the statement is trivial. Hence, assume that $a_1,a_2<1$. Hence the pair
$$(S,D=a_1C_1+a_2C_2+\Omega)$$
is log canonical in codimension $1$ but not at $p$. 
Therefore there is a birational morphism $f\colon\hat S \ra S$ with exceptional divisors $E_i$ which is biregular away from $p$, consisting on $N$ blow-ups of points infinitely near $p$ and such that the log pullback
$$f^*(K_S+D)\simq K_{\hat S}+ \hat D +\sum^{N}_{i=1}b_iE_i$$
satisfies $b_N>1$ where $\hat D $ is the strict transform of $D$ in $\hat S$. We will proceed by induction on $N$. By the nature of induction, it is enough to study the situation for the first blow-up $\sigma\colon \widetilde S \ra S$ of $p$ with exceptional divisor $E$. By Lemma \ref{lem:log-pullback-preserves-lc} the pair
$$(\widetilde S, a_1\widetilde C_1 + a_2\widetilde C_2+\widetilde \Omega + (\mult_p\Omega+a_1+a_2-1)E)$$
is not log canonical at some point $q\in E$. If $N=1$ then  $\mult_p\Omega+a_1+a_2>2$. In this case, Lemma \ref{lem:adjunction} (iii) gives
$$1<\widetilde C_1\cdot (\widetilde \Omega+(a_1+a_2+\mult_p\Omega-1)E+a_2\widetilde C_2)\vert_{\widetilde C_1\cap E}$$
and
$$1<\widetilde C_2\cdot (\widetilde \Omega+(a_1+a_2+\mult_p\Omega-1)E+a_1\widetilde C_1)\vert_{\widetilde C_2\cap E},$$
which implies
$$(C_1\cdot \Omega)\vert_p>2-a_1-a_2\qquad (C_2\cdot \Omega)\vert_p>2-a_1-a_2,$$
since $\widetilde C_1\cdot \widetilde C_2=0$. If $a_1\geq a_2$ then the second inequality gives $(C_2\cdot \Omega)>2-a_1-a_1\geq 2(1-a_1)$. Conversely, if $a_1\leq a_2$, then the first inequality gives $(C_1\cdot \Omega)\vert_q>2(1-a_2)$.

Therefore $N>1$ and $\mult_p\Omega+a_1+a_2-1\leq 1$. If $q\not\in \widetilde C_1\cup\widetilde C_2$, then Lemma \ref{lem:adjunction} (iii) implies
$$\mult_p\Omega=E\cdot\widetilde \Omega>1,$$
which is impossible. Hence either $q\in \widetilde C_1$ or $q\in \widetilde C_2$. Without loss of generality, suppose the former. Then the pair
$$(\widetilde S, a_1\widetilde C_1+(\mult_p\Omega+a_1+a_2-1)E_1+\widetilde \Omega)$$
is not log canonical at $q\in \widetilde C_1$. But the function $\widetilde f \colon \hat S \ra \widetilde S$ by the factorisation of $f=\sigma\circ \widetilde f$ where $E_1=E$, consists of $N-1$ blow-ups. Therefore, by the induction hypothesis either
$$(C_1\cdot \Omega)\vert_p-\mult_p\Omega = (\widetilde C_1\cdot\widetilde \Omega)\vert_q>2(1-\mult_p\Omega-a_1-a_2+1)=2(1-a_2)+(2-2\mult_p\Omega-a_1)$$
or
$$(C_2\cdot \Omega)\vert_p\geq \mult_p\Omega\geq E_1\cdot \Omega>2(1-a_1).$$
The Lemma follows, since $a_1+\mult_p\Omega\leq 2-a_2\leq 2$, so the first equation gives $(C_1\cdot\Omega)\vert_p>2(1-a_2)$.
\end{proof}

\begin{thm}
\label{thm:inequality-local-blowup-bound}
Let $S$ be a non-singular surface and $C$ an irreducible curve, non-singular at $p\in C$. Let $\Delta$ be an effective $\bbQ$-divisor such that $C\not \subseteq \Supp(\Delta)$. Suppose the pair
\begin{equation}
(S,(1-\beta)C+\Delta),\qquad 0<\beta\leq 1
\label{eq:inequality-local-blowup-bound-pair}
\end{equation}
is not log canonical at $p\in C$ but it is log canonical in codimension $1$ near $p$. If
$$\mult_p\Delta\leq\min\{1,\frac{1}{n}+\beta\}\text{ for some } n\in \bbN,$$
then
$$(\Delta\cdot C)\vert_p>1+n\beta.$$
Moreover if $N$ is the minimum number of blow-ups required to resolve the pair \eqref{eq:inequality-local-blowup-bound-pair}, then
$$N>\max\{n\in \bbN \setsep \mult_p\Delta\leq \frac{1}{n}+\beta\}.$$
\end{thm}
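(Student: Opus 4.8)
The plan is to resolve the pair by a chain of blow-ups centred on the strict transform of $C$, to show that the non-log-canonical point never leaves $C$ until after the $n$-th blow-up, and then to read off the inequality from a telescoping intersection computation together with Lemma~\ref{lem:adjunction}(iii). First note that the hypothesis is never vacuous: since $C$ is smooth at $p$, Lemma~\ref{lem:adjunction}(i) gives $\mult_p((1-\beta)C+\Delta)=(1-\beta)+\mult_p\Delta>1$, so $\mult_p\Delta>\beta$ and hence $\max\{n\in\bbN\setsep \mult_p\Delta\leq \frac1n+\beta\}$ is finite. Write $m_0=\mult_p\Delta$ and fix $n$ with $m_0\leq\frac1n+\beta$.

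I would set up exactly the tower of Theorem~\ref{thm:pseudo-inductive-blow-up} with $\lambda=1$ and $\Delta$ in place of $\lambda\beta D$: let $f_k\colon S_k\to S_{k-1}$ blow up $q_{k-1}$, starting from $q_0=p$, put $q_k=C^k\cap F_k$ and $m_k=\mult_{q_k}\Delta^k$. Two elementary facts drive everything. First, multiplicities of strict transforms cannot increase, so $m_0\geq m_1\geq\cdots$; hence the coefficient of $F_k$ in the log pullback, namely $c_k=\sum_{j=0}^{k-1}m_j-k\beta$, satisfies $c_k\leq k(m_0-\beta)\leq\frac kn\leq 1$ for every $k\leq n$. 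Second, because $c_k\leq 1$ the pair stays log canonical along each $F_k$, so the non-log-canonical point is isolated on $F_k$. I then claim $q_k=C^k\cap F_k$ for all $1\leq k\leq n$, by induction using Theorem~\ref{thm:pseudo-inductive-blow-up}: the base case $k=1$ is parts (i)--(ii), and the inductive step $k\geq 2$ is part (iv), whose hypotheses follow from the bounds just displayed, since $\sum_{j=0}^{k-2}m_j-(k-1)\beta\leq (k-1)(m_0-\beta)\leq 1$, $\sum_{j=0}^{k-3}m_j+2m_{k-2}-k\beta\leq k(m_0-\beta)\leq 1$, and the alternative $m_0\leq 1$ holds trivially.

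With the tracking in hand the conclusion is a short computation. Blowing up a point of $C$ gives $(\Delta^{k}\cdot C^{k})\vert_{q_{k}}=m_{k}+(\Delta^{k+1}\cdot C^{k+1})\vert_{q_{k+1}}$, so telescoping yields $(\Delta\cdot C)\vert_p=\sum_{j=0}^{n-1}m_j+(\Delta^n\cdot C^n)\vert_{q_n}$. At $q_n=C^n\cap F_n$ the pair $(S_n,(1-\beta)C^n+\Delta^n+c_nF_n)$ is not log canonical, and taking $C^n$ (coefficient $1-\beta\leq 1$) as the distinguished curve, Lemma~\ref{lem:adjunction}(iii) gives $(\Delta^n\cdot C^n)\vert_{q_n}+c_n=\big(C^n\cdot(\Delta^n+c_nF_n)\big)\vert_{q_n}>1$, i.e. $(\Delta^n\cdot C^n)\vert_{q_n}>1-c_n$. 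Substituting $c_n=\sum_{j=0}^{n-1}m_j-n\beta$ the sums cancel and leave $(\Delta\cdot C)\vert_p>1+n\beta$. For the estimate on $N$ take $n$ maximal: after these $n$ blow-ups the pair is still not log canonical at $q_n$, yet every coefficient near $q_n$ is $\leq 1$ (the components of $\Delta$ have coefficient $\leq 1$ since the pair is log canonical in codimension $1$, and $1-\beta,c_n\leq 1$); by the simple normal crossings criterion the support cannot be snc at $q_n$, so the unique minimal log resolution must still blow up $q_0,\dots,q_n$, whence $N\geq n+1>n$.

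The main obstacle is the inductive step that keeps the non-log-canonical point on $C$, that is, excluding the possibility that it jumps to the node $F_{k-1}^k\cap F_k$ of two consecutive exceptional curves. This is precisely where the monotonicity $m_j\leq m_0$ and the sharp hypothesis $m_0\leq\frac1n+\beta$ are used together: applying Lemma~\ref{lem:adjunction}(iii) along $F_{k-1}^k$ produces the quantity $\sum_{j=0}^{k-3}m_j+2m_{k-2}-k\beta$, which the bound forces to be $\leq 1$, contradicting the strict inequality $>1$ that a genuine non-log-canonical node would require. Everything else is bookkeeping of coefficients and local intersection numbers, already packaged in Theorem~\ref{thm:pseudo-inductive-blow-up}.
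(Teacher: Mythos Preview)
Your proof is correct and follows essentially the same approach as the paper: both track the non-log-canonical point along the strict transform of $C$ through a chain of blow-ups using the bound $m_0\leq\frac{1}{n}+\beta$ to control the exceptional coefficients, and then apply Lemma~\ref{lem:adjunction}(iii) with $C^n$ to extract the inequality $(\Delta\cdot C)\vert_p>1+n\beta$. The only difference is organisational: you package the inductive case analysis (excluding the positions $F_k\setminus(C^k\cup F_{k-1}^k)$ and $F_{k-1}^k\cap F_k$) into a citation of Theorem~\ref{thm:pseudo-inductive-blow-up}(i),(ii),(iv), whereas the paper redoes that analysis from scratch as its Cases~1--4.
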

\begin{proof}
The case $\beta=1$ is trivial by Lemma \ref{lem:adjunction} (i), since then
$$(C\cdot \Delta)\vert_p\geq\mult_p\Delta>1,$$
and the hypothesis is vacuous. Hence we may assume $\beta<1$. First note that the pair \eqref{eq:inequality-local-blowup-bound-pair} cannot non-singular support at $p$, since if this was the case, then we could assume $\Delta=aZ$ where $Z$ is an irreducible curve with $p\in Z$, $Z$ smooth at $p$. But \eqref{eq:inequality-local-blowup-bound-pair} not being log canonical and being non-singular at $p$ implies that $a>1$ and then
$$1<a=\mult_p\Delta\leq 1,$$
a contradiction.

Consider the minimal log resolution of the pair \eqref{eq:inequality-local-blowup-bound-pair} at $p$. It consists of $N\geq 1$ blow-ups $\sigma_i\colon S_i\ra S_{i-1}$ of points $p_{i-1}\in E_{i-1}$ with exceptional curves $E_i$ where $p_0=p$ and $S_0=S$ where $p_i$ is a point in which the log pullback of the pair \eqref{eq:inequality-local-blowup-bound-pair} via the composition $\sigma_1\circ\cdots\circ\sigma_i$ is not log canonical by Lemma \ref{lem:log-pullback-preserves-lc}. Denote by $\Delta_i$ and $C_i$ the strict transforms $S_i$ of the divisors $\Delta$ and $C$ respectively. Let $m_i=\mult_{p_i}\Delta_i$.

We claim that after each blow-up the log pullback of the pair \eqref{eq:inequality-local-blowup-bound-pair} is not log canonical only at $p_i=C_i\cap E_i$. As a consequence the exceptional locus of $\sigma_1\circ\cdots \circ \sigma_i$ is a chain of exceptional divisors. We will prove this assertion and the rest of the lemma by induction on the number of blow-ups. The initial step is when no blow-up are needed to achieve simple normal crossings, which is proven already. Assume we have blown-up $p$ and points infinitely close to $p$ at most $k-1$ times where $k\leq n$.

\textbf{Induction hypothesis:} For $1\leq l\leq k-1$ the log pullback of the pair $(S,(1-\beta)C_0+\Delta_0)$ via $\sigma:=\sigma_1\circ\cdots\circ\sigma_l$  is
\begin{equation}
(S_l,(1-\beta)C_l+\Delta_l+\sum^l_{i=1}((\sum^i_{j=1}m_{j-1})-i\beta)E_i)
\label{eq:inequality-local-blowup-bound-log-pullback}
\end{equation}
which is not log canonical only at $p_l=E_l\cap C_l$ and 
$$\sum^l_{j=1}m_{l-1}-l\beta\leq 1$$
(i.e. the pair \eqref{eq:inequality-local-blowup-bound-log-pullback} is log canonical in codimension $1$).

Notice that since $C_0=C$ is smooth, then $C_l$ is smooth and since $p_i\in C_i$ for all $i$, then
$$C_l\cdot E_i=\delta_{il}=\begin{dcases}
																				1		& i=l,\\
																				0		& i\neq l.
																	\end{dcases}
																	$$
Suppose further that
$$\quad E_i\cdot E_j=\begin{dcases}
		1		&	\vert i-j\vert =1,\\
		-1	&	i=j,\\
		0		&	\text{otherwise,}
	\end{dcases}
$$
for $1\leq i<j\leq k-1$.
Furthermore, given that $p_i\in C_i\cap E_i$ we have that
$$C_l\sim\sigma^*(C)-\sum^l_{i=1} E_i,$$
$$\Delta_l\simq\sigma^*(\Delta)-\sum^l_{i=1}((\sum_{j=1}^i m_{j-1})-i\beta)E_i,$$
for $1\leq l\leq k-1$. This is the end of the induction hypothesis.

Blowing up $p_{k-1}$ we obtain that the pair
\begin{equation}
(S_k,(1-\beta)C_k+\Delta_k +\sum^k_{i=1}((\sum^i_{j=1}m_{j-1})-i\beta)E_i)
\label{eq:inequality-local-blowup-bound-induction-pair}
\end{equation}
is not log canonical at some $p_k\in E_k$ by Lemma \ref{lem:log-pullback-preserves-lc}. We distinguish four cases.

\textbf{Case 1}: The pair \eqref{eq:inequality-local-blowup-bound-induction-pair} is not log canonical in codimension $1$. By the induction hypothesis and Lemma \ref{lem:adjunction} (ii) we have
$$1<(\sum^k_{j=1}m_{j-1})-k\beta\leq km_0-k\beta,$$
but then we obtain a contradiction, since this inequality implies
$$\frac{1}{n}+\beta\geq \mult_{p}\Delta=m_0>\frac{1}{k}+\beta$$
using the lemma's assumption in the statement, so $k>n$ but we assumed $k\leq n$.

\textbf{Case 2}: The point $p_k\in E_k\setminus (C_k\cup E_{k-1})$. This implies that the pair
$$(S,\Delta_k + ((\sum_{j=1}^km_{j-1})-k\beta)E_k)$$
is not log canonical at $p_k$, since $p_k\not \in E_i$ for $i\neq k$. By case 1 we have $\sum_{i=1}^k (m_{j-1})-k\beta\leq 1$. Hence, applying Lemma \ref{lem:adjunction} (iii) with $E_k$, we obtain
$$1\geq \min\{1,\frac{1}{n}+\beta\}\geq \mult_p \Delta_0 =m_0\geq m_{k-1}=\Delta_k\cdot E_k>1$$
which is absurd. Therefore $p_k\in E_k\cap (C_k\cup E_{k-1})$.

\textbf{Case 3}: The point $p_k=E_{k-1}\cap E_k$. Not this case only makes sense for $k\geq 2$. The pair
$$(S_k,\Delta_k + \sum_{i=k-1}^k ((\sum_{j=1}^i m_{j-1})-i\beta)E_i)$$
is not log canonical at $p_k$. Applying Lemma \ref{lem:adjunction} (iii) with $E_{k-1}$ we have that
\begin{align*}
1	&<E_{k-1}\cdot (\Delta_k + ((\sum_{j=1}^k m_{j-1})-k\beta)E_k)\\
	&=E_{k-1}\cdot \Delta_{k-1}-m_{k-1}+\sum_{j=1}^km_{j-1}-k\beta\\
	&=m_{k-2}-m_{k-1}+\sum_{j=1}^k m_{j-1}-k\beta\\
	&\leq(k-1)m_{0}+m_0-k\beta = km_{0}-k\beta,
\end{align*}
where we slightly abused the notation in the first equality when identifying $E_{k-1}\subset S_{k-1}$ with its strict transform in $S_k$. Reordering and dividing by $k$ we obtain a contradiction:
$$\frac{1}{n}+\beta\geq m_0=\mult_p\Delta>\frac{1}{k}+\beta$$
since we assumed that $n\geq k$.

\textbf{Case 4}: The point $p_k=C_k\cap E_k$. Since we reached contradictions in each of the previous cases, this is the only possibility, which together with case 1 not being possible, proves the induction. Moreover, note that since $p_k\not\in E_i$ for $i\neq k$, then the pair
$$(S_k,(1-\beta)C_k+\Delta_k + ((\sum_{j=1}^k m_{j-1})-k\beta) E_k)$$
is not log canonical at $p_k$. By applying Lemma \ref{lem:adjunction} (iii) with $C_k$, we obtain
\begin{align*}
1	&<\left.\left(C_k\cdot \left(\Delta_k+\left(\left(\sum_{j=1}^k m_{j-1}\right)-k\beta\right)E_k\right)\right)\right\vert_{p_k}\\
	&=(C_k\cdot\Delta_k)\vert_{p_k}+\sum_{j=1}^k m_{j-1}-k\beta\\
	&=(C\cdot\Delta)\vert_{p}-\sum_{j=1}^k m_{j-1}+\sum^k_{j-1} m_{j-1}-k\beta.
\end{align*}
Thus $(C\cdot \Delta)\vert_{p}>1+k\beta$. If $k=n$ the first assertion of the lemma is proven. For the second notice that $k<N$, where $N$ is the number of blow-ups in the minimal log resolution by case 1, since all the discrepancies are smaller or equal than $1$. Hence we have not achieved a log resolution yet. If $k<n$ we can repeat the inductive step blowing-up $p_k\in E_k$ until $k=n$.
\end{proof}

\chapter{Log canonical thresholds of del Pezzo surfaces revisited}
\label{chap:del-Pezzo}
This chapter has three sections. In the first one we introduce and give several properties of del Pezzo surfaces, the main class of varieties studied in this Thesis. The second section studies the Cat Property we introduced in the previous chapter. As an application, we prove several cases of Theorem \ref{thm:del-Pezzo-glct-charp}. The last section proves Theorem \ref{thm:del-Pezzo-glct-charp} when $\deg S=4$.
\section{Basic properties of del Pezzo surfaces.}
\label{sec:delPezzo-properties}
In this section we survey several well known results on del Pezzo surfaces. We provide our own proofs for most of them, given that we have not found proofs adequate to our needs in the literature. However, most of the results are either well known or simple exercises. For those results stated without proof we provide an adequate reference. The author learnt most of the material in this section from \cite{Kollar-Rational-Curves-Algebraic-Varieties}, \cite{Beau}, \cite{DemazureDelPezzo}, \cite{ManinCubicForms} and \cite{DolgTopicsClass}.

The first result is standard and will be used freely in the rest of this Thesis.
\begin{lem}[{Genus formula, see \cite[V.1 Ex. 1.3]{HartshorneAG}}]
\label{lem:genus-formula}
Let $S$ be a non-singular surface and $D$ an effective divisor in $S$. Then
$$K_S\cdot D + D^2=2p_a(D)-2$$
where $p_a(D)$ is the \textbf{arithmetic genus} of $D$. When $D$ is a smooth reduced and irreducible curve, then $p_a(D)=g(D)$.

In the particular case of $\bbP^2$, if $C$ is a curve of degree $d$, we have
$$p_a(C)=\frac{(d-1)(d-2)}{2}.$$
\end{lem}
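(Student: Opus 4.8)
The plan is to derive the formula from the Riemann--Roch theorem on surfaces together with the structure sequence of the divisor $D$. First I would adopt the cohomological definition of the arithmetic genus, namely $p_a(D)=1-\chi(\calO_D)$, where $\chi$ denotes the Euler characteristic of a coherent sheaf. Starting from the short exact sequence of $\calO_S$-modules
$$0\to \calO_S(-D)\to \calO_S\to \calO_D\to 0,$$
additivity of $\chi$ on short exact sequences gives $\chi(\calO_D)=\chi(\calO_S)-\chi(\calO_S(-D))$.

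Next I would invoke the Riemann--Roch theorem for a line bundle $\calO_S(L)$ on the smooth projective surface $S$, in the form
$$\chi(\calO_S(L))=\chi(\calO_S)+\frac{1}{2}L\cdot(L-K_S).$$
Applying this with $L=-D$ yields $\chi(\calO_S(-D))=\chi(\calO_S)+\frac{1}{2}(D^2+K_S\cdot D)$, and substituting into the previous identity gives $\chi(\calO_D)=-\frac{1}{2}(D^2+K_S\cdot D)$. Therefore
$$2p_a(D)-2=-2\chi(\calO_D)=K_S\cdot D+D^2,$$
which is the claimed equality. For the second assertion, when $D$ is a smooth reduced irreducible curve the sheaf $\calO_D$ is the structure sheaf of a smooth projective curve, so $h^0(\calO_D)=1$ and $h^1(\calO_D)=g(D)$ by definition of the geometric genus; hence $p_a(D)=1-(1-g(D))=g(D)$. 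For the final assertion I would specialise to $S=\bbP^2$, where $K_{\bbP^2}\sim -3H$ and a curve of degree $d$ lies in the class $C\sim dH$ with $H^2=1$; substituting $K_S\cdot C+C^2=-3d+d^2$ into the genus formula gives
$$p_a(C)=1+\frac{1}{2}(d^2-3d)=\frac{(d-1)(d-2)}{2}.$$

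The computation itself is entirely routine; the only genuine input is the Riemann--Roch theorem for surfaces, which I would take as known, consistent with the paper treating this lemma as standard and citing \cite{HartshorneAG}. The one point that deserves mild care is the meaning of $p_a(D)$ when $D$ is non-reduced or singular: there the identity $p_a(D)=1-\chi(\calO_D)$ must be read as the \emph{definition} of the arithmetic genus rather than as the geometric genus, and it is precisely this cohomological definition that makes the argument via the structure sequence uniform across all effective divisors $D$. In other words, the only ``obstacle'' is bookkeeping about which notion of genus is in play, not any substantive difficulty.
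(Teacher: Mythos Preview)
Your proof is correct and is exactly the standard argument (structure sequence plus Riemann--Roch on surfaces) that the reference \cite[V.1, Ex.~1.3]{HartshorneAG} has in mind. The paper itself does not give a proof of this lemma at all: it is stated with the citation and used freely thereafter, so there is nothing to compare against beyond noting that your derivation is the intended one.
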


\begin{thm}[{Nakai-Moishezon criterion \cite[V.I.10]{HartshorneAG}}]
\label{thm:Nakai-Moishezon-criterion}
A divisor $D$ on a surface $S$ is ample if and only if $D^2>0$ and $D\cdot C>0$ for all irreducible curves $C$ in $S$.
\end{thm}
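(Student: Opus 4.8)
The plan is to prove the two implications separately, with the forward (necessity) direction being routine and the converse carrying the real content. For necessity, if $D$ is ample then $mD$ is very ample for some $m>0$, giving a closed embedding $S\hookrightarrow\bbP^N$ with $\calO_S(mD)=\calO_{\bbP^N}(1)\vert_S$. Then $m^2D^2=(mD)^2$ is the degree of the image surface, hence positive, and for each irreducible curve $C$ the number $mD\cdot C$ is the degree of $C$ in the embedding, again positive. Dividing by $m^2$ (resp.\ $m$) gives $D^2>0$ and $D\cdot C>0$.

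For the converse I would assume $D^2>0$ and $D\cdot C>0$ for every irreducible curve $C$, and first reduce to the case that $D$ is effective, which is legitimate because $D$ is ample if and only if some positive multiple is. Here I would invoke Riemann--Roch for surfaces,
$$\chi(\calO_S(nD))=\chi(\calO_S)+\frac{1}{2}nD\cdot(nD-K_S),$$
whose right-hand side grows like $\frac{1}{2}n^2D^2\to+\infty$ since $D^2>0$. By Serre duality $h^2(nD)=h^0(K_S-nD)$, and if $K_S-nD$ were equivalent to a nonzero effective divisor $E$ then $D\cdot(K_S-nD)=D\cdot E>0$ because $D$ meets every curve positively, contradicting $D\cdot(K_S-nD)=D\cdot K_S-nD^2\to-\infty$. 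Hence $h^2(nD)=0$ for $n\gg 0$, so $h^0(nD)\geq\chi(\calO_S(nD))\to+\infty$ and $nD$ is effective. Replacing $D$ by this multiple, I may assume $D$ is a nonzero effective divisor, regarded as a curve with ideal sheaf $\calO_S(-D)$.

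The heart of the argument is then to restrict to the curve $D$. Tensoring the structure sequence $0\to\calO_S(-D)\to\calO_S\to\calO_D\to 0$ by $\calO_S(nD)$ gives
$$0\to\calO_S((n-1)D)\to\calO_S(nD)\to\calO_D(nD)\to 0,$$
where $\calO_D(nD)$ is the restriction of $\calO_S(nD)$ to $D$. On each irreducible component $C_i$ of $D$ this restriction has degree $nD\cdot C_i>0$, so $\calO_D(nD)$ is ample on the (possibly reducible, non-reduced) curve $D$; hence $H^1(D,\calO_D(nD))=0$ and $\calO_D(nD)$ is base-point-free for $n\gg 0$. The long exact sequence then shows the maps $H^1(S,\calO_S((n-1)D))\to H^1(S,\calO_S(nD))$ are surjective for large $n$; since these dimensions are non-increasing they stabilise, so eventually $H^0(S,\calO_S(nD))\to H^0(D,\calO_D(nD))$ is surjective. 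From this I would conclude that $\calO_S(nD)$ is base-point-free for $n\gg 0$: at a point of $\Supp(D)$ the surjectivity combined with base-point-freeness on $D$ supplies a nonvanishing section, while at a point off $\Supp(D)$ the canonical section of $\calO_S(nD)$ with divisor $nD$ does not vanish. The associated morphism $\phi\colon S\to\bbP^N$ satisfies $\calO_S(nD)=\phi^*\calO_{\bbP^N}(1)$, and it contracts no curve $C$ since $nD\cdot C>0$; thus $\phi$ is quasi-finite, hence finite, and $\calO_S(nD)$ is the pullback of an ample bundle under a finite morphism, so $\calO_S(nD)$, and therefore $D$, is ample.

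The main obstacle I anticipate is the passage to the effective case followed by the positivity on the curve $D$: one must treat $D$ as a genuine one-dimensional scheme, possibly reducible and non-reduced, and use that a line bundle of positive degree on every component of such a curve is ample. The hypothesis $D\cdot C>0$ for \emph{all} curves $C$ (not merely the components of $D$) enters decisively only at the last step, through the finiteness of $\phi$; everything else is bookkeeping with the cohomology long exact sequence and Riemann--Roch.
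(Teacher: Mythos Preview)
Your proof outline is correct and follows the classical argument as given in Hartshorne. However, the paper does not actually prove this theorem: it is stated as a background result with a citation to \cite[V.I.10]{HartshorneAG} and no proof is supplied. Your approach is essentially the same as Hartshorne's own proof, so there is nothing to compare.
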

This theorem suggests the following definitions:
\begin{dfn}
\label{dfn:del-Pezzo-degree}
A \textbf{del Pezzo surface} $S$ over an algebraically closed field $k$ is a non-singular surface whose anticanonical divisor, $-K_S$ is ample. Given any effective $\bbQ$-divisor $D\neq 0$, its \textbf{anticanonical degree} (or just degree) is the positive rational number defined by
$$\deg (D)=(-K_S)\cdot D.$$
If $S$ has at worst canonical singularities and $D$ is an effective divisor, then $\deg(D)$ is a positive integer. The \textbf{degree} of $S$ is the positive integer
$$\deg(S)=(K_S)^2.$$
\end{dfn}
We will call effective divisors of degrees $2,3,\ldots$ conics, cubics... respectively, unless the curve we are dealing with is in $\bbP^2$. The term \textbf{line} is reserved for irreducible curves with self-intersection $(-1)$. When $S=\bbP^2$, a curve of degree $d$ (or a line, conic, cubic) will be the vanishing locus of a homogeneous polynomial of degree $d$, as usual. The following lemma characterises lines rather precisely. Conics will be characterised in Lemma \ref{lem:del-Pezzo-all-conics-rational} when $\deg S\geq 3$.
\begin{lem}
\label{lem:del-Pezzo-lines-numerical}
Let $S$ be a non-singular del Pezzo surface. Then every irreducible curve with a negative self-intersection number is exceptional, i.e. if $C$ is an irreducible curve with $C^2<0$ then $C^2=-1$ and $C\cong \bbP^1$. Moreover $\deg C=1$. 
\end{lem}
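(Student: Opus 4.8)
The plan is to combine the ampleness of $-K_S$ with the genus formula (Lemma \ref{lem:genus-formula}) to pin down all the relevant intersection numbers exactly. First I would observe that, since $C$ is an irreducible curve and $-K_S$ is ample, the definition of the degree together with the Nakai--Moishezon criterion (Theorem \ref{thm:Nakai-Moishezon-criterion}) gives $\deg C = (-K_S)\cdot C > 0$; as this is an intersection number of integral cycles on a non-singular surface it is a positive integer, so $K_S\cdot C \leq -1$. On the other hand, the hypothesis $C^2 < 0$ together with integrality of intersection numbers forces $C^2 \leq -1$.

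Next I would feed these two bounds into the genus formula. Because $C$ is irreducible we have $p_a(C)\geq 0$, so
$$K_S\cdot C + C^2 = 2p_a(C) - 2 \geq -2.$$
But the bounds of the previous paragraph give $K_S\cdot C + C^2 \leq (-1)+(-1) = -2$. Hence every inequality must in fact be an equality, which yields simultaneously $p_a(C) = 0$, $K_S\cdot C = -1$ and $C^2 = -1$. In particular $\deg C = (-K_S)\cdot C = 1$, which already establishes the numerical part of the claim.

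It then remains only to upgrade $p_a(C)=0$ to the statement $C\cong \bbP^1$. For this I would use that for an integral projective curve the arithmetic genus dominates the geometric genus, the difference being a sum of nonnegative local $\delta$-invariants supported at the singular points (equivalently, $p_a(C)=h^1(\calO_C)$ and one compares $\calO_C$ with the pushforward of the structure sheaf of the normalisation). Since $p_a(C)=0$ and each contribution is nonnegative, $C$ can have no singular points and must have geometric genus $0$; a non-singular rational curve over an algebraically closed field is isomorphic to $\bbP^1$.

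The numerical heart of the argument is essentially automatic once the sign information is in place, so that portion is short. The only step requiring genuine care is the final one: justifying that arithmetic genus $0$ on an integral curve forces both smoothness and rationality. I expect this to be the main obstacle, and I would handle it either by citing the standard relation between $p_a$ and $g$ via the $\delta$-invariants or by extracting it directly from the exact sequence relating $\calO_C$ to the structure sheaf of its normalisation.
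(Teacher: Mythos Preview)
Your proof is correct and follows essentially the same approach as the paper: both combine the ampleness of $-K_S$ (giving $K_S\cdot C\leq -1$) with the genus formula to force $p_a(C)=0$, $C^2=-1$, and $\deg C=1$, then invoke $p_a(C)\geq g(C)$ with equality iff $C$ is smooth. Your version is slightly more streamlined in extracting all three equalities simultaneously, and you give more detail on the final smoothness step, but the argument is the same.
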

\begin{proof}
The arithmetic genus $p_a(C)$ of $C$, and the geometric genus $g(C)$ satisfy
$$p_a(C)\geq g(C)\geq 0 \quad \text{and}\quad  p_a(C)=g(C)\iff C \text{ is non-singular}.$$
Since $C$ is irreducible, it is enough to show that $p_a(C)=0$ and $C^2=-1$. By the genus formula and using Theorem \ref{thm:Nakai-Moishezon-criterion} with the ample divisor $(-K_S)$ we obtain
$$p_a(C)-2=K_S\cdot + C^2\leq K_S\cdot C -1\leq -2.$$
Thus $p_a(C)=0$. But then
$$-2=p_a(C)-2=K_S\cdot C + C^2\leq -1 +C^2$$
so $C^2=-1$, using again Theorem \ref{thm:Nakai-Moishezon-criterion}. It follows from the genus formula that $\deg C=1$. In particular all $(-1)$-curves are lines.
\end{proof}

\begin{lem}
\label{lem:del-Pezzo-nefcurves}
Let $S$ be a non-singular del Pezzo surface. If $C\subset S$ is an irreducible curve such that $(-K_S)\cdot C>1$, then $C$ is nef. Moreover if $-K_S\cdot C>2$ and $C$ is reduced, then $C$ is nef and big.
\end{lem}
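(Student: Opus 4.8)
The plan is to reduce both assertions to positivity of the single number $C^2$, exploiting that $C$ is irreducible. First I would recall that a divisor on a surface is nef precisely when it meets every irreducible curve non-negatively, and observe that when $C$ itself is irreducible the only intersection that can fail to be non-negative is the self-intersection $C^2$: for any irreducible curve $Z\neq C$ we automatically have $C\cdot Z\geq 0$, since two distinct irreducible curves share no component and hence meet properly. Thus proving that $C$ is nef amounts exactly to showing $C^2\geq 0$.

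For the first statement I would argue by contraposition using Lemma \ref{lem:del-Pezzo-lines-numerical}. If $C^2<0$, that lemma forces $C^2=-1$ and $\deg C=(-K_S)\cdot C=1$, contradicting the hypothesis $(-K_S)\cdot C>1$. Hence $C^2\geq 0$, and by the reduction above $C$ is nef.

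For the second statement, note that $(-K_S)\cdot C>2>1$ already yields that $C$ is nef by the first part, so it remains to prove that $C$ is big. Here I would invoke the standard numerical criterion that a nef divisor on a surface is big if and only if its self-intersection is strictly positive; it therefore suffices to upgrade $C^2\geq 0$ to $C^2>0$. Suppose for contradiction that $C^2=0$. Applying the genus formula (Lemma \ref{lem:genus-formula}) gives $K_S\cdot C+C^2=2p_a(C)-2$, so that $(-K_S)\cdot C=2-2p_a(C)$. Since $C$ is reduced and irreducible, its arithmetic genus satisfies $p_a(C)\geq 0$, whence $(-K_S)\cdot C\leq 2$, contradicting $(-K_S)\cdot C>2$. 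Therefore $C^2>0$, and a nef divisor with positive self-intersection is big.

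The argument is short precisely because the real work was carried out in Lemma \ref{lem:del-Pezzo-lines-numerical}; there is no serious obstacle to overcome. The only points requiring genuine care are the reduction of nefness to the sign of $C^2$ for an irreducible curve, and the combination of the nef-and-big criterion with the inequality $p_a(C)\geq 0$ — and it is exactly in the latter step that the reducedness hypothesis on $C$ is used.
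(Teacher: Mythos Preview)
Your proposal is correct and follows essentially the same line as the paper: both reduce the question to the sign of $C^2$ and settle it via the genus formula, using $p_a(C)\geq 0$. The only cosmetic difference is that for the first assertion the paper applies the genus formula directly to get $C^2=-K_S\cdot C+2p_a(C)-2\geq 2+0-2=0$, whereas you route the same computation through Lemma~\ref{lem:del-Pezzo-lines-numerical}; the second assertion is handled identically in both.
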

\begin{proof}
By the genus formula, if $C$ is not a line, then $C$ is nef, since
$$C^2=-K_S\cdot C+2p_a(C)-2\geq 2+2p_a(C)-2\geq 0$$
where $g$ is the genus of $C$. From the same equation we see that if its del Pezzo degree is bigger than $2$, then $C^2>0$ and therefore $C$ is big and nef.
\end{proof}

\begin{dfn}
A set of distinct points $\{p_1,\ldots ,p_r\}$ on $\bbP_k^2$ with $r\leq 8$ are in \textbf{general position} if no three of them lie on a line, no six of them lie on a conic and a cubic containing $7$ points, one of them double, does not contain the eighth one.
\end{dfn}
We can classify del Pezzo surfaces:
\begin{thm}[{\cite[Chapter IV, Theorems 24.3, 24.4, 26.2]{ManinCubicForms}}]
\label{thm:del-Pezzo-classification-models}
Let $S$ be a non-singular del Pezzo surface of degree $d$. Then $1\leq d\leq 9$ and either $S=\bbP^1\times \bbP^1$ ($\deg S=8$) or $S$ is a blow-up of $\bbP^2$ in $9-d$ points in general position:
$$\pi\colon S\lra \bbP^2.$$
Conversely, any blow-up of $\bbP^2$ in $9-d$ points in general position, for $1\leq d\leq 9$ is a del Pezzo surface of degree $d$. We call the morphism $\pi$ a \textbf{model} of $S$.
\end{thm}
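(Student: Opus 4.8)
The plan is to prove both implications with the Nakai--Moishezon criterion (Theorem \ref{thm:Nakai-Moishezon-criterion}) as the central tool, reserving the genus formula (Lemma \ref{lem:genus-formula}) and Lemma \ref{lem:del-Pezzo-lines-numerical} for the numerical bookkeeping.

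First I would establish the lower bound and the blow-up structure. Since $-K_S$ is ample, Nakai--Moishezon gives $d=(-K_S)^2>0$, so $d\geq 1$. Next I would show $S$ is rational: $-2K_S$ is ample, so $2K_S$ cannot be effective and $P_2=0$, while $h^1(S,\mathcal{O}_S)=0$ by Kawamata--Viehweg vanishing (Theorem \ref{thm:KawamataViehweg}) applied to $\mathcal{O}_S=\mathcal{O}_S(K_S+(-K_S))$, so Castelnuovo's rationality criterion applies. Now I would run the obvious procedure: whenever $S$ contains a $(-1)$-curve $E$, contract it by $\sigma\colon S\to S'$ and verify that $S'$ is again del Pezzo. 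Indeed $(-K_{S'})^2=(-K_S)^2+1>0$, and for any irreducible $C'$ on $S'$ with strict transform $C$ one finds $-K_{S'}\cdot C'=-K_S\cdot C+E\cdot C>0$ because $E\cdot C\geq 0$, so $-K_{S'}$ is ample by Nakai--Moishezon. Since the Picard rank drops by one at each contraction the process terminates at a minimal del Pezzo surface, which by the classification of minimal rational surfaces is $\bbP^2$ or a Hirzebruch surface $\bbF_n$; the intersection of $-K_{\bbF_n}$ with the negative section equals $2-n$, so Lemma \ref{lem:del-Pezzo-lines-numerical} together with minimality rules out $n\geq 2$ and $n=1$, leaving $\bbP^2$ or $\bbF_0=\bbP^1\times\bbP^1$. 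As $K^2$ rises by $1$ at each step, the number of blow-ups is $9-d$ (resp.\ $8-d$), forcing $d\leq 9$. Two remarks complete this half: no point may be infinitely near another, for otherwise the strict transform of an exceptional curve would be a $(-2)$-curve, contradicting Lemma \ref{lem:del-Pezzo-lines-numerical}; and since blowing up one point on $\bbP^1\times\bbP^1$ gives the same surface as blowing up two points on $\bbP^2$, any $S\neq\bbP^1\times\bbP^1$ reaching $\bbF_0$ can instead be presented as a blow-up of $\bbP^2$ at $9-d$ distinct points.

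Then I would show these points must lie in general position. If three of them lie on a line, six on a conic, or a cubic singular at one of the points passes through the remaining required ones, the strict transform of that curve has class $H-E_{i_1}-E_{i_2}-E_{i_3}$, $2H-\sum_{j=1}^{6}E_{i_j}$, or $3H-2E_{i_0}-\sum_{j=1}^{7}E_{i_j}$ respectively, each of which meets $-K_S=3H-\sum E_i$ in degree $0$. This contradicts ampleness by Nakai--Moishezon, so the configuration is in general position.

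The converse --- that blowing up $9-d\leq 8$ points in general position yields an ample $-K_S$ --- is where the real work lies and is the main obstacle. Again by Nakai--Moishezon it suffices to note $(-K_S)^2=9-(9-d)=d>0$ and to prove $-K_S\cdot C>0$ for every irreducible curve $C$. If $C$ is some $E_i$ this is clear. Otherwise $C\sim aH-\sum m_i E_i$ with $a\geq 1$ and $m_i\geq 0$, and I must show $\sum m_i<3a$. Applying the genus formula to $C$ on $S$ gives $2p_a(C)-2=a^2-\sum m_i^2-3a+\sum m_i\geq -2$, hence $\sum_i\binom{m_i}{2}\leq\binom{a-1}{2}$; combined with $r\leq 8$ and the Cauchy--Schwarz bound $\sum m_i^2\geq(\sum m_i)^2/r$ this already forces $\sum m_i<3a$ for every $a\geq 4$, the borderline values $a=4,5$ being excluded by integrality of the $m_i$. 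The low degrees $a=1,2,3$ are exactly those governed by the three general-position conditions: an irreducible line is smooth and meets at most $2$ of the points, so $\sum m_i\leq 2<3$; an irreducible conic is smooth and meets at most $5$ points, so $\sum m_i\leq 5<6$; and an irreducible cubic has at most one double point, whence the cubic condition bounds $\sum m_i\leq 8<9$. Thus $-K_S\cdot C>0$ in every case, proving $-K_S$ ample. The two delicate inputs are the surface-classification theorem used for rationality and this degree-$\leq 3$ analysis, where the precise general-position hypotheses are indispensable; in positive characteristic one checks that $h^1(\mathcal{O}_S)=0$ and Castelnuovo's criterion remain valid, so the argument matches the classical treatment in \cite{ManinCubicForms}.
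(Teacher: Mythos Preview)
The paper does not supply its own proof of this theorem; it merely cites \cite[Chapter IV, Theorems 24.3, 24.4, 26.2]{ManinCubicForms} and moves on. So there is nothing to compare against directly.

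Your outline is a faithful sketch of the classical argument as found in Manin (and in Demazure, Koll\'ar, Dolgachev, etc.), and the overall strategy is correct: contract $(-1)$-curves until minimal, invoke the classification of minimal rational surfaces, then verify general position; for the converse, check Nakai--Moishezon curve-by-curve. Two minor points deserve tightening. First, your appeal to Theorem~\ref{thm:KawamataViehweg} for $h^1(\mathcal{O}_S)=0$ is stated in the paper only over~$\mathbb{C}$; since the paper explicitly works over algebraically closed fields of arbitrary characteristic, you should instead use Serre duality and Riemann--Roch directly (or note that Kodaira vanishing for ample divisors on surfaces holds in all characteristics when the surface is rational, which is circular here, so better to argue via $\chi(\mathcal{O}_S)$ and $h^2(\mathcal{O}_S)=h^0(K_S)=0$). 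Second, the Cauchy--Schwarz step in the converse is more delicate than you indicate: the bound $s\leq 4+\sqrt{16+8(a-1)(a-2)}$ gives exactly $s\leq 3a$ at $a=4$, and ruling out equality requires a genuine integrality argument (or, as in Manin, a direct check that the finitely many classes with $-K_S\cdot C\leq 0$ and small $a$ correspond to the forbidden configurations). These are repairable, but as written the converse is not quite complete.
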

\begin{nota}
\label{nota:del-Pezzos}
Let $S\neq \bbP^1\times \bbP^1$ be a non-singular del Pezzo surface and $\pi\colon S \ra \bbP^2$ be a model of $S$ contracting $n=9-\deg S$ $(-1)$-curves (lines). We will denote these curves by $E_1,\ldots, E_n$ and $p_i=\pi(E_i)$ their images in $\bbP^2$.
\end{nota}
\begin{cor}
For $S\neq \bbP^1\times\bbP^1$ a non-singular del Pezzo surface of degree $d$, we have
$$\Pic(S)\cong \bbZ^{10-d},$$
with generators $\pioplane{1}, E_1, \ldots, E_{9-d}$.

For $S=\bbP^1\times\bbP^1$, $\Pic (S)\cong \bbZ^2$ with generators the fibres of each ruling $\bbP^1\times\bbP^1\ra \bbP^1$.
\end{cor}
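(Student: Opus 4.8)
\section*{Proof proposal}

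The plan is to deduce the statement directly from the classification of del Pezzo surfaces in Theorem~\ref{thm:del-Pezzo-classification-models}, combined with the standard behaviour of the Picard group under blow-ups of smooth points on a surface.

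First I would treat the case $S \neq \bbP^1 \times \bbP^1$. By Theorem~\ref{thm:del-Pezzo-classification-models} there is a model $\pi \colon S \to \bbP^2$ realising $S$ as the blow-up of $\bbP^2$ at $n = 9 - d$ points $p_1, \ldots, p_n$ in general position. Because general position forbids three collinear points (and a fortiori any infinitely near configuration), these are genuinely distinct closed points of $\bbP^2$, so $\pi$ factors as a composition $\pi = \sigma_1 \circ \cdots \circ \sigma_n$ of $n$ blow-ups of smooth points, the $i$-th having exceptional curve with total transform $E_i$, in the notation of Notation~\ref{nota:del-Pezzos}.

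Next I would invoke the standard formula that if $\sigma \colon \widetilde Y \to Y$ is the blow-up of a smooth surface $Y$ at a closed point with exceptional curve $E$, then $\Pic(\widetilde Y) \cong \sigma^\ast \Pic(Y) \oplus \bbZ E$ (see, e.g., \cite[V.3.2]{HartshorneAG}). Starting from $\Pic(\bbP^2) \cong \bbZ$, generated by the class of a line, and applying this formula $n$ times gives
$$\Pic(S) \cong \bbZ \cdot \pioplane{1} \oplus \bigoplus_{i=1}^n \bbZ E_i \cong \bbZ^{1+n} = \bbZ^{10-d},$$
with exactly the asserted generators $\pioplane{1}, E_1, \ldots, E_{9-d}$.

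Finally, for $S = \bbP^1 \times \bbP^1$ I would use that the Picard group of a product of smooth projective varieties splits as a direct sum of the Picard groups of the factors; concretely $\Pic(\bbP^1 \times \bbP^1) \cong \Pic(\bbP^1) \oplus \Pic(\bbP^1) \cong \bbZ^2$, with the two summands generated by the classes of the fibres $\{\mathrm{pt}\} \times \bbP^1$ and $\bbP^1 \times \{\mathrm{pt}\}$ of the two rulings. The whole corollary is essentially a formal consequence of the classification, so there is no genuinely hard step; the only point demanding a little care is confirming that the general position hypothesis forces the blow-up to occur at honest distinct points, which is what allows the iterated blow-up formula to be applied cleanly.
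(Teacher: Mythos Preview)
Your proposal is correct and is exactly the standard argument one would give. The paper itself does not include a proof of this corollary: it is stated as an immediate consequence of Theorem~\ref{thm:del-Pezzo-classification-models}, and your argument (iterated blow-up formula for $\Pic$ starting from $\Pic(\bbP^2)\cong\bbZ$, plus the product decomposition for $\bbP^1\times\bbP^1$) is precisely the intended justification.
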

\begin{cor}
\label{cor:del-Pezzo-anticanonical}
The anticanonical divisor of a non-singular del Pezzo surface $S\neq \bbP^1\times\bbP^1$ of degree $d$ is
$$-K_S\sim\pioplane{3}-\sum^{9-d}_{i=1} E_i.$$
For $S=\bbP^1\times\bbP^1$, let $L_1,L_2$ be the class of a fibre of each of the rulings $S\ra\bbP^1$. Then
$$-K_S\sim 2L_1+2L_2.$$
\end{cor}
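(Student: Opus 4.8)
The plan is to use the model $\pi\colon S \to \bbP^2$ supplied by Theorem \ref{thm:del-Pezzo-classification-models} together with the explicit generators of $\Pic(S)$ from the preceding corollary, and then to pin down the coefficients of $-K_S$ by pure intersection theory, with the genus formula (Lemma \ref{lem:genus-formula}) playing the role of adjunction.

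First suppose $S\neq \bbP^1\times\bbP^1$, so $\pi$ contracts the $n=9-d$ exceptional curves $E_1,\ldots,E_n$. Since $\pioplane{1}, E_1,\ldots,E_n$ generate $\Pic(S)$, I would write $-K_S\sim a\,\pioplane{1}-\sum_{i=1}^n b_iE_i$ with $a,b_i\in\bbZ$ and determine these integers from intersection numbers. The relevant ones are $(\pioplane{1})^2=1$, $\pioplane{1}\cdot E_i=0$ (projection formula, since $\pi_*E_i=0$) and $E_i\cdot E_j=-\delta_{ij}$, the last because the blown-up points are distinct, so the $E_i$ are pairwise disjoint $(-1)$-curves. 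Intersecting with $E_i$ and using that each $E_i$ is a line, hence $-K_S\cdot E_i=1$ (equivalently, the genus formula on the rational curve $E_i$ gives $K_S\cdot E_i=-1$), yields $b_i=1$ for all $i$. To find $a$ I would square: $(-K_S)^2=a^2-n$, and since $(-K_S)^2=K_S^2=\deg S=d$ while $n=9-d$, this forces $a^2=9$, so $a=3$ (the positive root, as $-K_S$ is ample). Hence $-K_S\sim\pioplane{3}-\sum_{i=1}^{9-d}E_i$.

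For $S=\bbP^1\times\bbP^1$ I would argue in the same way with the basis $L_1,L_2$ of $\Pic(S)$ satisfying $L_1^2=L_2^2=0$ and $L_1\cdot L_2=1$. Writing $-K_S\sim cL_1+eL_2$ and applying the genus formula to the smooth rational fibres to obtain $-K_S\cdot L_1=-K_S\cdot L_2=2$, intersection with $L_1$ and with $L_2$ gives $e=2$ and $c=2$ respectively, that is, $-K_S\sim 2L_1+2L_2$.

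I do not expect a serious obstacle: the whole content is bookkeeping of intersection numbers, the one point deserving care being that distinct centres produce pairwise disjoint exceptional $(-1)$-curves, so that the cross terms vanish on squaring. As an alternative to the intersection-theoretic computation one could instead invoke the transformation of the canonical class under blow-up of a smooth point, $K_S\sim\pi^*K_{\bbP^2}+\sum_iE_i$, together with $K_{\bbP^2}\sim\calO_{\bbP^2}(-3)$, to reach the same formula in the first case.
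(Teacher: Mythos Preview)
Your argument is correct. The paper states this corollary without proof, presumably regarding it as immediate from the standard formula $K_S\sim\pi^*K_{\bbP^2}+\sum_i E_i$ for the canonical class of a blow-up (together with $K_{\bbP^2}\sim\calO_{\bbP^2}(-3)$), which is exactly the alternative you mention at the end. Your intersection-theoretic route via the genus formula and $(-K_S)^2=d$ is a valid and self-contained way to recover the same coefficients, just slightly more work than the one-line pull-back formula; the justification that $a=+3$ rather than $-3$ from ampleness (intersecting with the strict transform of a general line) is the only place requiring a word of care, and you handled it.
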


We also have the following useful characterisation of del Pezzo surfaces of low degree.
\begin{thm}
\label{thm:del-Pezzo-classification-embeddings}
Let $S$ be a del Pezzo surface with at worst canonical singularities. Assume that $K_S^2\leq 4$. Denote by
$$S_{d_1,\ldots,d_l}\subset\bbP(a_1,\ldots,a_n)$$
a complete intersection of hypersurfaces of weighted degree $d_1,\ldots,d_l$.
Then
$$S\cong \mathrm{Proj}\left(\sum_{m\geq 0} H^0\left(S,\calO_S\left(-mK_S\right)\right)\right)$$
can be described as follows:
\begin{itemize}
	\item[(i)] If $K_S^2=1$, then $S\cong S_6\subset \bbP(1,1,2,3)$ and $S$ is a $2\colon 1$ cover of the singular quadric cone $\bbP(1,1,2)\subset\bbP^3$ branched at the vertex of the cone and at a sextic curve not passing through the vertex. If $S$ is smooth, then the branching curve is smooth.
	\item[(ii)] If $K_S^2=2$, then $S\cong S_4\subset \bbP(1,1,1,2)$ and $\vert-K_S\vert$ gives a morphism $\phi\colon S \ra \bbP^2$, which realises $S$ as a $2\colon 1$ cover of $\bbP^2$ branched at a quartic curve. If $S$ is smooth, then the branching curve is smooth.
	\item[(iii)]If $K_S^2=3$, then $S\cong S_3\subset \bbP^3$. 
	\item[(iv)]If $K_S^2=4$, then $S\cong S_{2,2}\subset \bbP^4$, the complete intersection of two quadric hypersurfaces. If $S$ is smooth then these hypersurfaces can be chosen to be smooth.
\end{itemize}
Conversely, any weighted complete intersection as above is a del Pezzo surface of the expected degree $K_S^2$. Moreover $-K_S$ is very ample if $\deg S\geq 3$. For $\deg S=2$, $-2K_S$ is very ample. For $\deg S = 1$, $-3K_S$ is very ample.
\end{thm}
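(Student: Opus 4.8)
The plan is to reduce the entire statement to a computation of the dimensions $h^0(S,\calO_S(-mK_S))$ together with an analysis of the maps given by the systems $|-mK_S|$. Since $S$ is rational by Theorem \ref{thm:del-Pezzo-classification-models}, its irregularity and geometric genus vanish, so $\chi(\calO_S)=1$ in every characteristic. Combining Riemann--Roch on the surface with the vanishing $H^i(S,\calO_S(-mK_S))=0$ for $i>0$ and $m\geq 1$ (over $\bbC$ this is Theorem \ref{thm:KawamataViehweg} applied to $-mK_S=K_S+(-(m+1)K_S)$ with $-(m+1)K_S$ ample; in characteristic $p$ one argues directly from the blow-up structure, noting that $H^2$ vanishes by Serre duality) gives
$$h^0(S,\calO_S(-mK_S))=1+\tfrac12 m(m+1)K_S^2.$$
Specialising to $K_S^2\in\{1,2,3,4\}$ yields the low-degree dimensions $2,4,7$; $3,7,13$; $4,10,19$; $5,13$, and these numbers dictate the number of ring generators in each degree as well as the degree of the defining relation.

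I would dispose of the embedding cases $K_S^2=3,4$ first. For $K_S^2=3$, once $-K_S$ is known to be very ample the complete system $|-K_S|$ embeds $S$ into $\bbP^{h^0(-K_S)-1}=\bbP^3$ as a surface of degree $(-K_S)^2=3$, i.e.\ a cubic; comparing $h^0(\bbP^3,\calO(3))=20$ with $h^0(S,-3K_S)=19$ confirms a single cubic relation. For $K_S^2=4$, $|-K_S|$ embeds $S\hookrightarrow\bbP^4$ with degree $4$, and the discrepancy $h^0(\bbP^4,\calO(2))-h^0(S,-2K_S)=15-13=2$ shows that at least a pencil of quadrics contains $S$; since $S$ is irreducible of degree $4=2\cdot 2$, a B\'ezout argument forces $S$ to be the complete intersection $S_{2,2}$ of two of these quadrics.

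For the double-cover cases I would inspect the ring generator by generator. When $K_S^2=2$, the three degree-one sections define a morphism $\phi\colon S\to\bbP^2$ with $\phi^*\calO(1)=-K_S$; as $-K_S$ is ample no curve is contracted, so $\phi$ is finite of degree $(-K_S)^2=2$, and the relation $K_S=\phi^*(K_{\bbP^2}+\tfrac12 B)$ forces the branch divisor $B$ to be a quartic. The inequality $h^0(-2K_S)=7>6=\dim\mathrm{Sym}^2 H^0(-K_S)$ produces one extra degree-two generator $w$ satisfying $w^2=f_4$, realising $S$ as $S_4\subset\bbP(1,1,1,2)$. When $K_S^2=1$ the same bookkeeping gives generators in degrees $1,1,2,3$ (the degree-two and degree-three generators appearing precisely where $h^0$ exceeds the count of monomials built from lower-degree generators), so $|-2K_S|$ maps $S$ two-to-one onto the quadric cone $\bbP(1,1,2)\subset\bbP^3$ and the relation $z^2=F_6$ exhibits $S$ as $S_6\subset\bbP(1,1,2,3)$, ramified over the vertex of the cone and the sextic $\{F_6=0\}$. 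The converse in every case is then immediate from the adjunction formula for a quasi-smooth weighted complete intersection, which gives $K_S=\calO_S(\sum d_j-\sum a_i)=\calO_S(-1)$ and $(-K_S)^2=(\prod d_j)/(\prod a_i)$, matching the expected degree and the ampleness of $-K_S$.

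The genuine difficulty lies not in the dimension count but in two finer points. First are the very ampleness assertions ($-K_S$ for $\deg\geq 3$, $-2K_S$ for $\deg=2$, $-3K_S$ for $\deg=1$) and the projective normality needed to know that the section ring is generated in exactly the stated degrees with a single relation; since Kodaira vanishing cannot be quoted uncritically in characteristic $p$, I would establish separation of points and tangent vectors through the sequences
$$0\to\calO_S(-mK_S-Z)\to\calO_S(-mK_S)\to\calO_Z\to 0$$
for length-two subschemes $Z$, reducing very ampleness to $H^1(S,\calO_S(-mK_S-Z))=0$, which one checks from the explicit numerical class $-K_S=3H-\sum E_i$. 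Second is the passage from $S$ smooth to the branch curve smooth in the degree-$1,2$ cases: this requires controlling the base locus of the relevant system and reading smoothness off the explicit equation $w^2=f_4$ (respectively $z^2=F_6$), where a singular point of the branch curve would force a singularity of $S$. I expect this very ampleness / projective normality step to be the real crux, since the weighted complete-intersection description is forced once the ring is known to be generated in the claimed degrees.
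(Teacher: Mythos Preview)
The paper does not actually prove this theorem: its entire proof reads ``See \cite[III, Thm 3.5]{Kollar-Rational-Curves-Algebraic-Varieties} for the smooth case and \cite{DemazureDelPezzo} for the rest. See as well \cite{DolgTopicsClass} for a very detailed account with historical notes going back to \cite{DuVal1}.'' So there is nothing to compare against except the cited literature, and your outline is in fact the standard route those references take: compute $h^0(-mK_S)$ by Riemann--Roch and vanishing, read off the generators and relations of the anticanonical ring, and identify the resulting Proj with the listed weighted complete intersection.

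Two small points of caution in your write-up. First, you invoke Theorem \ref{thm:del-Pezzo-classification-models} to get rationality, but that theorem is stated only for non-singular $S$, whereas the present statement allows canonical singularities; you would need to pass through the minimal resolution (which is crepant, hence still a smooth del Pezzo) to transport rationality and the $h^0$ computation down to $S$. Second, your appeal to Theorem \ref{thm:KawamataViehweg} is likewise stated for smooth varieties, so in the singular case the vanishing should again be pulled from the resolution. These are easy fixes and you have already flagged the genuinely delicate part (very ampleness and projective normality, especially in positive characteristic), so your plan is sound.
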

\begin{proof}
See \cite[III, Thm 3.5]{Kollar-Rational-Curves-Algebraic-Varieties} for the smooth case and \cite{DemazureDelPezzo} for the rest. See as well \cite{DolgTopicsClass} for a very detailed account with historical notes going back to \cite{DuVal1}.
\end{proof}
No further characterisation is needed for high degrees:
\begin{lem}
\label{lem:del-Pezzo-no-moduli}
For $5\leq d \leq 7$ and $d=9$ there is a unique non-singular del Pezzo surface of degree $d$ up to isomorphism. There are precisely two del Pezzo surfaces of degree $8$ up to isomorphism. These are $\bbF_1=\bbP_{\bbP^1}(\calO_{\bbP^1}(1)+\calO_{\bbP^1})$ (the blow-up of $\bbP^2$ at one point) and $\bbP^1\times\bbP^1$.
\end{lem}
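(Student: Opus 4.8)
The plan is to invoke the classification in Theorem \ref{thm:del-Pezzo-classification-models} to reduce everything to a statement about configurations of points in $\bbP^2$, and then to exploit the transitivity of $\Aut(\bbP^2)\cong\mathrm{PGL}_3(k)$ on small configurations in general position. By Theorem \ref{thm:del-Pezzo-classification-models}, a non-singular del Pezzo surface of degree $d$ is either $\bbP^1\times\bbP^1$ (possible only for $d=8$) or a blow-up $\pi\colon S\to\bbP^2$ at $n=9-d$ points in general position, and conversely every such blow-up is a del Pezzo surface of degree $d$. This already settles existence in every degree, so the whole content is the count of isomorphism classes.

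First I would dispose of the extremal case $d=9$, where $n=0$ and $S=\bbP^2$, so there is nothing to prove. For $5\le d\le 7$, i.e. $n\in\{2,3,4\}$, the claim reduces to showing that any two blow-ups at general-position configurations $P_1,P_2$ of size $n$ are isomorphic. It suffices to produce $g\in\mathrm{PGL}_3(k)$ with $g(P_1)=P_2$: such a $g$ lifts to an isomorphism of the two blow-ups, since blowing up a finite reduced subscheme is determined by its ideal sheaf and $g$ carries one ideal onto the other. Thus the heart of the matter is the transitivity of $\mathrm{PGL}_3(k)$ on configurations in general position, which I would verify case by case. For $n\le 4$ the only relevant general-position condition (reading off the definition above) is that no three points be collinear, because the conic and cubic conditions involve at least six and seven points respectively. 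Hence $\mathrm{PGL}_3(k)$ is transitive on single points ($n=1$), on pairs of distinct points ($n=2$), on triples of non-collinear points ($n=3$), and acts simply transitively on ordered projective frames, i.e. $4$-tuples no three of which are collinear ($n=4$). These are classical facts about the projective linear group; the case $n=4$ is the only one requiring a genuine (if standard) argument, realising any such frame as the standard frame $[1:0:0],[0:1:0],[0:0:1],[1:1:1]$.

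Finally I would treat $d=8$, where $n=1$. The blow-up of $\bbP^2$ at one point is $\bbF_1$, and all such blow-ups are mutually isomorphic by transitivity of $\mathrm{PGL}_3(k)$ on points; together with $\bbP^1\times\bbP^1$ this gives at most two classes. It remains to show $\bbF_1\not\cong\bbP^1\times\bbP^1$. For this I would observe that $\bbF_1$ contains a $(-1)$-curve, namely the exceptional curve of $\pi$, whereas $\bbP^1\times\bbP^1$ contains no irreducible curve of negative self-intersection: writing an effective class as $aL_1+bL_2$ with $a,b\ge 0$ gives self-intersection $2ab\ge 0$. By Lemma \ref{lem:del-Pezzo-lines-numerical} a $(-1)$-curve has self-intersection $-1$, which is impossible on $\bbP^1\times\bbP^1$; hence the two surfaces are non-isomorphic and there are exactly two classes in degree $8$.

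The main obstacle is bookkeeping rather than depth: one must be sure that isomorphism of the blow-ups is governed by $\mathrm{PGL}_3$-equivalence of the point configurations (for uniqueness only the easy direction is needed, that equivalent configurations yield isomorphic surfaces), and one must correctly extract from the general-position definition that for $n\le 4$ only non-collinearity is in play. This is precisely what makes transitivity hold, and it also explains why the statement stops at $d=5$: for $n\ge 5$ the cross-ratios of five or more points furnish genuine moduli, so $\mathrm{PGL}_3(k)$ is no longer transitive and uniqueness fails.
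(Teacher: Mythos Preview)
Your proposal is correct and follows essentially the same approach as the paper: invoke the classification theorem to realise each surface as a blow-up of $\bbP^2$ at $n=9-d\le 4$ points in general position, use transitivity of $\mathrm{PGL}_3(k)$ on such configurations to lift to an isomorphism of the blow-ups, and distinguish $\bbF_1$ from $\bbP^1\times\bbP^1$ by the presence of a $(-1)$-curve. Your treatment is in fact somewhat more detailed than the paper's (you spell out why only non-collinearity matters for $n\le 4$ and give a direct argument that $\bbP^1\times\bbP^1$ has no negative curves, whereas the paper forward-references the line count in Lemma~\ref{lem:del-Pezzo-lines9-d}).
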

\begin{proof}
Let $S$, $S'$ be two non-singular del Pezzo surfaces of degree $5\leq d\leq 9$, and suppose that $S,S'\not\cong \bbP^1$. By Theorem \ref{thm:del-Pezzo-classification-models}, we may assume there are models $\pi\colon S \ra \bbP^2$ and $\pi'\colon S'\ra\bbP^2$ which are blow-up of $\bbP^2$ in points $p_1,\ldots,p_l$ and $p_1',\ldots,p_l'$, respectively, with exceptional divisors $E_i =\pi^{-1}(p_i)$ and $E_i'=(\pi')^{-1}(p_i')$ where $l=9-d$, $0\leq l \leq 4$. Since $l\leq 4$, there is $\sigma \in \mathrm{PGL}(3,k)$ an isomorphism such that $\sigma(p_i)=p_i'$ for $1\leq i \leq l$. Let $\sigma'$ be the inverse of $\sigma$. The morphisms $\sigma$ and $\sigma'$ lift to $\widetilde\sigma\colon S \ra S'$, sending $E_i$ to $E_i'$ and $\widetilde \sigma'\colon S' \ra S$, sending $E_i'$ to $E_i$, respectively. Clearly $\pi\circ \widetilde\sigma'\circ\widetilde\sigma=\pi$, which is bijective away from $E_i$ and $\widetilde\sigma'\circ\widetilde\sigma(E_i)=E_i$ for all $i$, so $\widetilde\sigma\circ\widetilde\sigma'$ is an isomorphism. We conclude that $S$ and $S'$ are isomorphic. It is clear than $\bbF_1$ and $\bbP^1\times \bbP^1$ are not isomorphic since the first one has a $(-1)$-curve and the latter does not (see Lemma \ref{lem:del-Pezzo-lines9-d} below).
\end{proof}

Models are useful ways of understanding del Pezzo surfaces. Since models are defined by contractions of lines from a del Pezzo surface $S$ to $\bbP^2$, in order to choose a good model for $S$, it is important to understand the lines in $S$, as the following result illustrates.
\begin{lem}
\label{lem:del-Pezzo-lines9-d}
The surface $\bbP^1\times \bbP^1$ has no $(-1)$-curves. Let $S\neq \bbP^1\times \bbP^1$ be a non-singular del Pezzo surface of degree $1\leq d \leq 9$. Then the number of $(-1)$-curves (lines) of $S$ is given by Table \ref{tab:del-Pezzo-lines}.
\begin{table}[!ht]
\centering
\begin{tabular}{|c||c|c|c|c|c|c|c|c|c|}
\hline
$\deg S$ &$1$ &$2$ &$3$ &$4$ &$5$ &$6$ &$7$ &$8$ &$9$ \\
\hline
$n=9-\deg S$ &$8$ &$7$ &$6$ &$5$ &$4$ &$3$ &$2$ &$1$ &$0$ \\
\hline
Number of Lines &$240$ &$56$ &$27$ &$16$ &$10$ &$6$ &$3$ &$1$ &$0$  \\
\hline
\end{tabular}
\caption{Lines on a non-singular del Pezzo surface $S\neq \bbP^1\times \bbP^1$.}
\label{tab:del-Pezzo-lines}
\end{table}

Given a model of $S$, $\pi\colon S \ra \bbP^2$, and $n=9-\deg S$, with $0\leq n \leq 8$, the lines of $S$ belong to the rational classes in Table \ref{tab:del-Pezzo-lines-classes}.
\begin{table}[!ht]
\centering
\begin{tabular}{|c|c|c|c|c|}
\hline
Line									&Rational class 																												&Indices rank											&Rank of $n$			&Number 					\\
											&																																				&																	&									&of lines					\\
\hline
$E_i$									&$E_i$																			 														&$1\leq i\leq n$									&$1\leq n\leq 8$ 	&$n$							\\
\hline
$L_{ij}$							&$\pioplane{1}-E_i-E_j$																									&$1\leq i<j\leq n$								&$2\leq n\leq 8$ 	&$\binom{n}{2}$		\\
\hline
$C_{i_1\cdots i_5}$		&$\pioplane{2}-E_{i_1}-\cdots-E_{i_5}$																	&$1\leq i_1<\cdots<i_5\leq n$			&$5\leq n\leq 8$ 	&$\binom{n}{5}$		\\
\hline
$Q_{i_1\cdots i_7,j}$	&$\displaystyle{\pioplane{3}-E_{i_j}-\sum_{k=1}^nE_{i_k}}$							&$1\leq i_1<\cdots<i_7\leq n$			&$7\leq n\leq 8$ 	&$7$ if $n=7$			\\
											&																																				&$j=i_k,\ k\in \{1,\ldots,7\}$		&									&$56$ if $n=8$	  \\
\hline
$R_{ijk}$							&$\displaystyle{\pioplane{4}-E_i-E_j-E_k-\sum_{l=1}^8E_l}$							&$1\leq i<j<k\leq n$							&$n=8$					 	&$\binom{8}{3}=56$\\
\hline
$T_{ij}$							&$\displaystyle{\pioplane{5}-\sum_{\substack{l=1\\ l\neq i,j}}^8E_l-\sum_{l=1}^8E_l}$		&$1\leq i<j\leq 8$&$n=8$					 	&$\binom{8}{2}=28$\\
\hline
$Z_i$									&$\displaystyle{\pioplane{6}-E_i-2\sum_{l=1}^8E_l}$											&$1\leq i\leq 8$									&$n=8$					 	&$8$							\\
\hline
\end{tabular}
\caption{Rational classes of lines on a non-singular del Pezzo surface $S\neq \bbP^1\times \bbP^1$ of degree $\deg S=9-n$.}
\label{tab:del-Pezzo-lines-classes}
\end{table}
$E_i$ are the exceptional divisors, which are mapped by $\pi$ to points $p_i$. $L_{ij}$ is the strict transform of the unique line in $\bbP^2$ passing through $p_i$ and $p_j$. $C_{i_1\cdots i_5}$ is the strict transform of the unique conic in $\bbP^2$ passing through $p_{i_1},\ldots,p_{i_5}$. $Q_{i_1\cdots i_7,j}$ is the strict transform of the unique cubic in $\bbP^2$ passing through $p_{i_1},\ldots, p_{i_7}$ with a double point at $p_{i_j}$. $R_{ijk}$ is the strict transform of the unique quartic in $\bbP^2$ passing through $p_1,\ldots, p_8$ with double points at $p_i, p_j, p_k$. $T_{ij}$ is the strict transform of the unique quintic in $\bbP^2$ passing through $p_1,\ldots, p_8$ with double points at all $p_l$ but $p_i, p_j$. $Z_i$ is the strict transform of the unique quartic in $\bbP^2$ passing through $p_1,\ldots, p_8$ with double points at all $p_l$ and a triple point at $p_i$.
\end{lem}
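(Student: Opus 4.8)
The plan is to translate the statement into a Diophantine enumeration inside $\Pic(S)$ and then match the numerical solutions with explicit plane curves. By Lemma \ref{lem:del-Pezzo-lines-numerical}, a curve on a non-singular del Pezzo surface is a line precisely when it is an irreducible $(-1)$-curve, i.e.\ an irreducible $C$ with $C^2=-1$; such a curve automatically has $-K_S\cdot C=1$ and $C\cong\bbP^1$. I would first dispose of $\bbP^1\times\bbP^1$: any irreducible curve there has class $\alpha L_1+\beta L_2$ with $\alpha,\beta\geq 0$, and using $L_1^2=L_2^2=0$, $L_1\cdot L_2=1$ its self-intersection is $2\alpha\beta\geq 0$, so there are no curves of negative self-intersection and in particular no $(-1)$-curves.

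For $S\neq\bbP^1\times\bbP^1$ I would fix a model $\pi\colon S\to\bbP^2$ as in Theorem \ref{thm:del-Pezzo-classification-models}, set $H=\pi^*\mathcal O_{\bbP^2}(1)$, and write a line as $C=aH-\sum_{i=1}^n b_iE_i$ in the Picard basis, where $n=9-\deg S$. The intersection form ($H^2=1$, $H\cdot E_i=0$, $E_i\cdot E_j=-\delta_{ij}$) together with $-K_S\sim 3H-\sum E_i$ from Corollary \ref{cor:del-Pezzo-anticanonical} turns the two conditions $C^2=-1$ and $-K_S\cdot C=1$ into
$$a^2-\sum b_i^2=-1,\qquad 3a-\sum b_i=1.$$
Moreover $a=C\cdot H\geq 0$; the case $a=0$ forces $C$ to be $\pi$-contracted, hence $C=E_i$ for some $i$, while for $a\geq 1$ one has $b_i=C\cdot E_i\geq 0$ because distinct irreducible curves meet non-negatively. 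The simplifying observation is that subtracting the two relations produces the \emph{excess}
$$\sum_i b_i(b_i-1)=(a^2+1)-(3a-1)=(a-1)(a-2),$$
while Cauchy--Schwarz applied to the first relation gives $(9-n)a^2-6a-(n-1)\leq 0$, which bounds $a$ (for $n\leq 8$) in a short range. For each admissible $a$ I would read off the possible multiplicity profiles from the excess, discarding those that require more than $n$ nonzero $b_i$: this yields exactly the rows $E_i,L_{ij},C_{i_1\cdots i_5},Q_{i_1\cdots i_7,j},R_{ijk},T_{ij},Z_i$ of the second table, and their binomial counts sum to the entries of the first table (for instance $8+28+56+56+56+28+8=240$ when $\deg S=1$). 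The top value $a=7$ contributes nothing, since equality in Cauchy--Schwarz there would force all $b_i=5/2$.

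This shows every line lies in one of the listed classes; it remains to check that each listed class contains exactly one line. Uniqueness is immediate: two distinct irreducible curves in a class $\ell$ with $\ell^2=-1$ would satisfy $0\leq C_1\cdot C_2=\ell^2=-1$, a contradiction. For existence I would produce the explicit representative named in the table — the strict transform of the unique line through $\{p_i,p_j\}$, of the unique conic through five of the $p_i$, of the nodal cubic through seven of them with an assigned double point, and so on — and verify by a dimension count on the corresponding plane linear system that this curve exists, is irreducible and reduced, and passes through the $p_i$ with exactly the prescribed multiplicities, so that by Lemma \ref{lem:del-Pezzo-lines-numerical} its strict transform is a genuine line of the stated class.

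The existence-and-irreducibility step is where I expect the main obstacle to lie: the numerical enumeration above is mechanical, whereas ensuring that, say, the cubic through seven points with a node at one of them is irreducible (rather than breaking off a line or a conic) and meets the configuration with precisely the assigned multiplicities is exactly where the hypothesis that $\{p_1,\dots,p_n\}$ is in general position must be invoked. Handling these cases uniformly — confirming that each of the seven profiles is realised by an irreducible plane curve of the correct degree and singularity type — is the crux, after which the count of lines coincides with the count of numerical classes already tabulated.
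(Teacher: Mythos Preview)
Your approach is essentially the same as the paper's: both set up the Diophantine system $a^2-\sum b_i^2=-1$, $3a-\sum b_i=1$ in the Picard basis and use Cauchy--Schwarz to obtain the bound $(9-n)a^2-6a-(n-1)\leq 0$, then enumerate the finitely many solutions. Your write-up is in fact more careful than the paper's --- the excess identity $\sum b_i(b_i-1)=(a-1)(a-2)$ is a clean organizing device the paper does not use, and you explicitly treat uniqueness and flag the existence/irreducibility issue, whereas the paper simply says ``testing them one by one gives the values in Table~\ref{tab:del-Pezzo-lines-classes}'' without addressing either point.
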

\begin{proof}
If $S=\bbP^1\times\bbP^1$, then $-K_S\sim2F_1+2F_2$ where $F_1,F_2$ are the classes of the fibre for each of the rulings $S\ra \bbP^1$. Since $-K_S$ is ample, for any irreducible curve $C$, we have $-K_S\cdot C>0$, but $-K_S\cdot C=2((F_1+F_2)\cdot C)\geq 2$. Therefore $S$ has no lines.

If $S\neq \bbP^1\times \bbP^1$, let $\pi\colon S \ra \bbP^2$ be a model. It follows from Corollary \ref{cor:del-Pezzo-anticanonical} that all the curves in the statement with classes in Table \ref{tab:del-Pezzo-lines-classes} are lines. We want to show these are all the lines. Suppose there is a line $L\subset S$. If $L\neq E_i$ for some $i$, then $L\sim \pioplane{d}-\sum_{i=1}^n a_i E_i$, where $d\geq 1$ and $a_i\geq 0$. Then
\begin{equation}
1=-K_S\cdot L =3d-\sum a_i,\qquad -1=L^2=d^2-\sum a_i^2.
\label{eq:del-pezzo-lines-classes-proof}
\end{equation}
By Schwarz's inequality, for positive integers $x_i,y_i$,
$$(\sum_{i=1}^n (x_iy_i))^2\leq (\sum_{i=1}^n x_i^2)\cdot (\sum_{i=1}^ny_i^2)$$
holds. In particular, if $x_i=1$ and $y_i=a_i$, since $\sum a_i=3d-1$ and $\sum a_i^2=d^2+1$ from \eqref{eq:del-pezzo-lines-classes-proof}. Then
$$(3d-1)^2\leq n(d^2+1)$$
which implies
$$(9-n)d^2-6d-(n-1)\leq 0.$$
Factorising the quadratic equation of variable $d$, we obtain that for $n\leq 1$, then $d<1$. If $n\leq 4$, then $d<2$. If $n\leq 6$, then $d<3$. If $n=7$, then $d<4$ and for $n=8$, $d<7$. This leaves a finite number of possibilities for $a_i$ to satisfy \eqref{eq:del-pezzo-lines-classes-proof}. Testing them one by one gives the values in Table \ref{tab:del-Pezzo-lines-classes}. In particular, the last column of the table allows us to compute the values in \ref{tab:del-Pezzo-lines}
\end{proof}

In principle when the $\deg S$ is low enough we can find a model that satisfies our needs.
\begin{lem}
\label{lem:del-Pezzo-good-model}
Let $S$ be a non-singular del Pezzo surface of degree $5\leq d\leq 6$ and let $L$ be a line in $S$. We can find a model $\gamma\colon S \ra \bbP^2$ such that $L=E_1$ under this model.
\end{lem}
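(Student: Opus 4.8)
The plan is to build the required model in two stages: first contract $L$ itself, and then compose with a model of the resulting surface.

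First I would contract $L$. Since $L$ is a line, $L\cong\bbP^1$ and $L^2=-1$ by Lemma~\ref{lem:del-Pezzo-lines-numerical}, so by Castelnuovo's contractibility criterion there is a birational morphism $\psi\colon S\ra S'$ onto a non-singular surface $S'$ contracting $L$ to a point $p$, with $-K_S=\psi^*(-K_{S'})-L$. I claim $S'$ is again a del Pezzo surface, of degree $d+1$. Indeed $K_{S'}^2=K_S^2+1=d+1>0$, and for any irreducible curve $C'\subset S'$ with strict transform $C\subset S$ the projection formula gives
$$(-K_{S'})\cdot C'=(-K_S)\cdot C+(L\cdot C)>0,$$
since $-K_S$ is ample and $L\cdot C\geq0$; the Nakai--Moishezon criterion (Theorem~\ref{thm:Nakai-Moishezon-criterion}) then shows that $-K_{S'}$ is ample.

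Because $5\leq d\leq 6$ we have $\deg S'=d+1\in\{6,7\}$, so in particular $S'\neq\bbP^1\times\bbP^1$ (which has degree $8$). Hence Theorem~\ref{thm:del-Pezzo-classification-models} provides a model $\gamma'\colon S'\ra\bbP^2$ contracting $n'=8-d$ pairwise disjoint lines $F_1,\ldots,F_{n'}$. I would then set $\gamma=\gamma'\circ\psi\colon S\ra\bbP^2$ and show it is the desired model. The key point is that $p=\psi(L)$ cannot lie on any $F_i$: if it did, the strict transform $\widetilde{F_i}\subset S$ would be an irreducible curve with $\widetilde{F_i}^2=F_i^2-1=-2$, contradicting Lemma~\ref{lem:del-Pezzo-lines-numerical}, which forbids $(-2)$-curves on a del Pezzo surface. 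Therefore $p\notin F_i$ for every $i$, so each $\widetilde{F_i}$ is again a $(-1)$-curve, disjoint from $L$ and from the remaining $\widetilde{F_j}$. Thus $L,\widetilde{F_1},\ldots,\widetilde{F_{n'}}$ are $9-d$ pairwise disjoint lines contracted by $\gamma$ to distinct points of $\bbP^2$, and these points are automatically in general position (otherwise $S$ would carry an irreducible curve of self-intersection $\leq-2$). Hence $\gamma$ is a model in the sense of Theorem~\ref{thm:del-Pezzo-classification-models}, and relabelling so that $L$ is the first contracted curve gives $L=E_1$.

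The main obstacle is making the two stages fit together cleanly: verifying that contracting $L$ returns us to the del Pezzo world in degree $d+1$, and --- this is where the hypothesis $d\leq 6$ enters, since it keeps $S'$ away from $\bbP^1\times\bbP^1$ --- using the absence of $(-2)$-curves to force $p$ off the lines contracted by $\gamma'$, so that the composite $\gamma$ is a genuine model rather than an arbitrary birational morphism.
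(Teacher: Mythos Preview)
Your argument is correct and takes a genuinely different route from the paper's proof. The paper starts from a fixed model $\pi\colon S\ra\bbP^2$, uses the explicit classification of lines (Lemma~\ref{lem:del-Pezzo-lines9-d}) to identify $L$ as either some $E_i$ or some $L_{ij}$, and in the latter case writes down by hand a new collection of pairwise disjoint $(-1)$-curves containing $L$ (for instance $L_{12},L_{13},L_{23}$ in degree $6$) to contract. Your approach is more structural: you contract $L$ first, invoke Theorem~\ref{thm:del-Pezzo-classification-models} on the resulting surface $S'$, and use the absence of $(-2)$-curves on a del Pezzo (Lemma~\ref{lem:del-Pezzo-lines-numerical}) to guarantee that the image point $p$ avoids the exceptional locus of the second contraction, so the two stages compose to an honest model.

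What each buys: the paper's explicit approach makes the new model completely concrete in terms of the old one, which is handy when one later needs to track specific curves through the change of model. Your approach is cleaner and, more importantly, extends without change to all degrees $1\leq d\leq 6$, whereas the paper remarks after the lemma that its method ``can be extended to $\deg S\leq 4$\ldots only longer since there are more lines to consider''. Note also that your first step is exactly Lemma~\ref{lem:del-pezzo-blowdown-dP}, which you could have cited instead of reproving.
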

\begin{proof}
Let $\pi\colon S \ra \bbP^2$ be a model of $S$. By Lemma \ref{lem:del-Pezzo-lines9-d}, under this model, $L=E_i$ or $L=L_{ij}$. In the first case, an obvious relabel of the $E_i$ gives $L=E_1$. It is enough to consider $L=L_{12}$, by the same argument.

We may define a model $\gamma\colon S\ra \bbP^2$ by contracting the disjoint curves $F_1=L_{12}, F_2=L_{13}, F_3=L_{23}$, since these lines are disjoint, which is easy to check from their rational classes, which are described in Table \ref{tab:del-Pezzo-lines-classes}.
\end{proof}
In fact Lemma \ref{lem:del-Pezzo-good-model} can be extended to $\deg S \leq 4$ and the proof is essentially the same, only longer since there are more lines to consider.

\begin{lem}
\label{lem:del-Pezzo-lines-through-a-point}
Let $S$ be a non-singular del Pezzo surface and $p$ a point in $S$. If there are three lines $L_1,L_2,L_3$ such that $\bigcap L_i=\{p\}$, then $K_S^2\leq 3$. The point $p$ is called an \textbf{Eckardt point}.
\end{lem}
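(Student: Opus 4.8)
The plan is to combine the Hodge index theorem with the fact that the lines $L_i$ are $(-1)$-curves. First I would record from Lemma~\ref{lem:del-Pezzo-lines-numerical} that each $L_i$ satisfies $L_i^2=-1$ and $-K_S\cdot L_i=\deg L_i=1$, so each of the three (necessarily distinct) lines has anticanonical degree one.

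Next, since $L_1,L_2,L_3$ are distinct irreducible curves all passing through $p$, I would observe that $L_i\cdot L_j\geq 1$ for $i\neq j$: two distinct irreducible curves meet non-negatively, and they share the point $p$, so the local intersection multiplicity there contributes at least one. Writing $B=L_1+L_2+L_3$, this yields
$$-K_S\cdot B=3,\qquad B^2=\sum_i L_i^2+2\sum_{i<j}L_i\cdot L_j\geq -3+6=3.$$

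The key step is then to apply the Hodge index theorem to the ample class $-K_S$, which has $(-K_S)^2=K_S^2>0$ because $S$ is del Pezzo. Since $(-K_S)^2>0$, the Hodge index inequality gives
$$K_S^2\cdot B^2=(-K_S)^2\cdot B^2\leq (-K_S\cdot B)^2=9.$$
Using $B^2\geq 3$ together with $K_S^2>0$ to preserve the direction of the inequality, I would conclude
$$3K_S^2\leq B^2\cdot K_S^2\leq 9,$$
hence $K_S^2\leq 3$, as claimed.

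I expect the only real content to be the decision to pair $-K_S$ with the sum $L_1+L_2+L_3$ in the Hodge index inequality; the intersection-number bookkeeping (that $L_i\cdot L_j\geq 1$ and hence $B^2\geq 3$) is routine. The one point that must be handled with care is the final inequality, where positivity of $K_S^2$ (guaranteed by ampleness of $-K_S$) is what allows one to pass from $B^2\geq 3$ to $3K_S^2\leq B^2 K_S^2$. A pleasant feature of this approach is that it is degree-independent and avoids any appeal to the explicit classification of lines in Lemma~\ref{lem:del-Pezzo-lines9-d}.
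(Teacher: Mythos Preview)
Your proof is correct and takes a genuinely different route from the paper. The paper's argument appeals directly to the explicit classification of lines in Lemma~\ref{lem:del-Pezzo-lines9-d}: one checks, using the intersection numbers read off from Table~\ref{tab:del-Pezzo-lines-classes}, that when $\deg S\geq 4$ there is no triple of lines with $L_i\cdot L_j=1$ for all $i\neq j$, which is a necessary condition for three lines to pass through a common point. Your Hodge index argument is more conceptual: it bypasses the classification entirely and works uniformly in all degrees, needing only that lines are $(-1)$-curves of anticanonical degree~$1$ (Lemma~\ref{lem:del-Pezzo-lines-numerical}). The paper's approach has the incidental virtue of showing that for $\deg S\geq 4$ no three lines are even pairwise incident; your approach, on the other hand, generalises immediately (for instance, the same computation with $B=L_1+\cdots+L_n$ gives $K_S^2\leq \lfloor n/(n-2)\rfloor$ whenever $n$ lines meet at a point).
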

\begin{proof}
It follows from Lemma \ref{lem:del-Pezzo-lines9-d}, that when $\deg S\geq 4$, there are no three lines $L_1,L_2,L_3$ in Table \ref{tab:del-Pezzo-lines-classes} such that $L_1\cdot L_2=L_2\cdot L_3=L_1\cdot L_3=1$. This is a necessary condition for the existence of an Eckardt point.
\end{proof}
Eckardt points are named after F. E. Eckardt, who studied them in \cite{Eckardt-points}. We have coined the following definition, which will turn useful when analysing del Pezzo surfaces of high degree.
\begin{dfn}
\label{dfn:del-Pezzo-pseudo-Eckardt-point}
Let $S$ be a non-singular complex del Pezzo surface and $L_1,L_2$ two lines intersecting at a point $p$. We call $p$ a \textbf{pseudo-Eckardt point}.
\end{dfn}
The reason for this name is due to the following result.
\begin{lem}
\label{lem:del-Pezzo-pseudo-Eckardt-point}
Let $S$ be a non-singular del Pezzo surface and of degree $K_S^2\geq 4$ and $p$ a pseudo-Eckardt point of $S$. There are $\deg S-3$ points $\{p_i\}^{\deg S-3}_{i=1}$ in $S$ such that their blow-up $\sigma\colon \widetilde S\ra S$ is a del Pezzo surface of degree $3$, $\sigma$ is an isomorphism near $p$ and $\sigma^{-1}(p)$ is an Eckardt point.
\end{lem}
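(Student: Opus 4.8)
The plan is to produce the required third line directly as the strict transform of a curve in the class $-K_S-L_1-L_2$. First I reduce the range of degrees: by Lemma~\ref{lem:del-Pezzo-lines9-d} a del Pezzo surface with $K_S^2\in\{8,9\}$ carries at most one line, so it has no pair of lines meeting at a point and the statement is vacuous there; hence I may assume $d:=K_S^2\in\{4,5,6,7\}$, and in particular $S\neq\bbP^1\times\bbP^1$. Let $L_1,L_2$ be the two lines with $p=L_1\cap L_2$; two distinct lines meet in at most one point, so $L_1\cdot L_2=1$. Set $C_0:=-K_S-L_1-L_2$. Using $L_i^2=-1$, $(-K_S)\cdot L_i=1$ and $L_1\cdot L_2=1$ one gets $(-K_S)\cdot C_0=d-2\geq 2$ and $C_0^2=d-4\geq 0$. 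Since $2K_S+L_1+L_2$ has negative anticanonical degree it is not effective, so $h^2(S,C_0)=0$, and a Riemann--Roch computation gives $h^0(S,C_0)\geq d-2\geq 2$. Thus $C_0$ is effective, and since passing through $p$ imposes at most one linear condition there is a member $C\in|C_0|$ with $p\in C$.

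The key step is that $C$ may be taken irreducible. For $d=4$ this is forced: $|C_0|$ is a pencil of anticanonical degree $2$ (a conic bundle), and each $L_i$ is a section because $L_i\cdot C_0=1$. If the fibre $C$ through $p$ were reducible, then $p$ would be a smooth point of $C$ lying on a single component $M$ (it cannot be the node, for otherwise $L_1\cdot C\geq 2$), and $L_1\cdot M=L_2\cdot M=1$ with $p\in L_i\cap M$ would force $M$ to be a line through $p$ distinct from $L_1,L_2$; this gives three concurrent lines on $S$, contradicting Lemma~\ref{lem:del-Pezzo-lines-through-a-point}. For $d\geq 5$ one has $h^0(S,C_0)\geq 3$ and $C_0^2>0$, and a general member of the subsystem of $|C_0|$ through $p$ is irreducible by Bertini, any stray line component through $p$ being again excluded by Lemma~\ref{lem:del-Pezzo-lines-through-a-point}.

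Now I choose the $d-3$ points $p_1,\dots,p_{d-3}$ to be distinct smooth points of $C$, none equal to $p$ and none on $L_1\cup L_2$, and in addition in general position. Under a model $\pi\colon S\ra\bbP^2$ (Theorem~\ref{thm:del-Pezzo-classification-models}) the images $\pi(p_i)$ lie on the irreducible plane curve $\pi(C)$ of positive degree, while the conditions defining general position for the resulting six points (no three collinear, no six on a conic) are finitely many proper closed conditions; hence the $p_i$ can be chosen on $C$ so that all six points are in general position. The blow-up $\sigma\colon\widetilde S\ra S$ at $p_1,\dots,p_{d-3}$ then factors a model $\widetilde S\ra\bbP^2$ blowing up six points in general position, so $\widetilde S$ is a del Pezzo surface of degree $3$, and since $p$ is not blown up, $\sigma$ is an isomorphism near $p$. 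Because the $p_i$ avoid $L_1\cup L_2$, the strict transforms satisfy $\widetilde L_i=\sigma^*L_i$, so $\widetilde L_1,\widetilde L_2$ are lines meeting only at $\tilde p:=\sigma^{-1}(p)$, while $\widetilde C=\sigma^*C_0-\sum_{i=1}^{d-3}F_i$ is irreducible with $\widetilde C^2=-1$ and $(-K_{\widetilde S})\cdot\widetilde C=1$, hence a line by Lemma~\ref{lem:del-Pezzo-lines-numerical}, and $\tilde p\in\widetilde C$ since $p\in C$. As $\widetilde L_1+\widetilde L_2+\widetilde C\sim -K_{\widetilde S}$ and all three pass through $\tilde p$, the point $\tilde p$ is an Eckardt point, as required.

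The main obstacle is the second step: securing an irreducible member $C\in|{-K_S-L_1-L_2}|$ through $p$ while simultaneously leaving enough freedom to place the blow-up points on $C$ in general position. The irreducibility is exactly what makes $\widetilde C$ a genuine line through $\tilde p$ rather than a degenerate configuration, and ruling out a line component through $p$ is where Lemma~\ref{lem:del-Pezzo-lines-through-a-point} is essential; the general-position verification is then routine but must check that $\pi(C)$ does not coincide with any of the finitely many lines or conics already determined by the model points.
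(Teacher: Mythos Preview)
Your proof takes essentially the same approach as the paper: both pick an irreducible curve $C\in|{-K_S-L_1-L_2}|$, blow up $d-3$ general points of $C$, and observe that $\widetilde L_1,\widetilde L_2,\widetilde C$ are three lines through $\sigma^{-1}(p)$. You are in fact more careful than the paper on two points: you explicitly impose $p\in C$ (which the paper needs for $\widetilde C$ to pass through $\sigma^{-1}(p)$ but does not state), and you supply an argument for the irreducibility of $C$ rather than simply asserting a nonsingular irreducible member exists.
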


To prove Lemma \ref{lem:del-Pezzo-pseudo-Eckardt-point} we need an auxiliary result which will become useful later for different reasons. Theorem \ref{thm:del-Pezzo-classification-models} implies that del Pezzo surfaces are rational. The following result applies to del Pezzo surfaces:
\begin{prop}
\label{prop:rational-surfaces-sections-rational-curves}
For $S$ a non-singular rational surface and $C$ an effective divisor in $S$ with arithmetic genus $p_a(C)=0$, we have
\begin{equation}
h^0(S,\calO_S(C))\geq(-K_S)\cdot C.
\label{eq:sectionsC}
\end{equation}
\end{prop}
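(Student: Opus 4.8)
The plan is to pin down the Euler characteristic $\chi(S,\calO_S(C))$ exactly via Riemann--Roch and then kill the top cohomology, so that $h^0$ is bounded below by $\chi$. To set up, I would write $d=(-K_S)\cdot C$ and invoke the genus formula (Lemma \ref{lem:genus-formula}) together with the hypothesis $p_a(C)=0$. Since $K_S\cdot C+C^2=2p_a(C)-2=-2$, this records the two intersection numbers I need: $K_S\cdot C=-d$ and $C^2=d-2$.

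Next I would apply Riemann--Roch for surfaces, $\chi(\calO_S(C))=\chi(\calO_S)+\tfrac12\,C\cdot(C-K_S)$. Because $S$ is rational, the irregularity $q=h^1(S,\calO_S)$ and geometric genus $p_g=h^2(S,\calO_S)=h^0(S,\calO_S(K_S))$ both vanish (being birational invariants that are $0$ for $\bbP^2$), so $\chi(\calO_S)=1$. Substituting $C\cdot(C-K_S)=C^2-K_S\cdot C=(d-2)+d=2d-2$ then gives $\chi(\calO_S(C))=1+(d-1)=d$.

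The crucial step, and the only place where rationality is used beyond the value of $\chi(\calO_S)$, is the vanishing $h^2(S,\calO_S(C))=0$. By Serre duality $h^2(S,\calO_S(C))=h^0(S,\calO_S(K_S-C))$. If $K_S-C$ were effective, say $K_S\sim C+D'$ with $D'\geq 0$, then since $C$ is itself effective the canonical class would be represented by the effective divisor $C+D'$, forcing $h^0(S,\calO_S(K_S))\geq 1$; this contradicts $p_g=0$ for a rational surface. Hence $h^2(S,\calO_S(C))=0$. Finally I combine everything: $h^0(S,\calO_S(C))=\chi(\calO_S(C))+h^1(S,\calO_S(C))-h^2(S,\calO_S(C))=d+h^1(S,\calO_S(C))\geq d=(-K_S)\cdot C$, which is exactly the claimed inequality \eqref{eq:sectionsC}. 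The main obstacle is really just establishing the $h^2$-vanishing; the rest is routine bookkeeping with the genus formula and Riemann--Roch, and the effectivity of $C$ is precisely what makes the Serre-duality argument go through.
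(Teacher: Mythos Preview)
Your proof is correct and follows essentially the same route as the paper: kill $h^2$ via Serre duality using $p_g(S)=0$ and the effectivity of $C$, then apply Riemann--Roch with $\chi(\calO_S)=1$ and the genus formula to obtain $h^0\geq\chi=(-K_S)\cdot C$. The paper's version is merely terser, writing the Riemann--Roch output directly as $-K_S\cdot C+p_a(C)$ without introducing the intermediate variable $d$.
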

\begin{proof}
By Serre Duality:
$$h^2(S,\calO_S(C))=h^0(S,\calO_S(K_S-C))=0,$$
since $S$ is rational. By the Riemann-Roch theorem:
$$h^0(S,\calO_S(C))\geq \frac{1}{2}C\cdot(C-K_S)+1=-K_S\cdot C +p_a(C),$$
where we use the genus formula.
\end{proof}
\begin{proof}[Proof of Lemma \ref{lem:del-Pezzo-pseudo-Eckardt-point}]
Since $S$ has lines, then $S\neq\bbP^1\times\bbP^1$. Let $p=L_1\cap L_2$ be the intersection of two lines in $S$. Let $K_S^2=d$. Let $\calC = \vert -K_S -L_1-L_2\vert$. Observe that $\calC$ is not empty, since by Theorem \ref{thm:del-Pezzo-classification-embeddings}, $-K_S$ is very ample, giving an embedding $S\hra \bbP^N$ in  which $L_1,L_2$ are lines in $\bbP^2$. Therefore we may chooose a hyperplane section $H$ containing $L_1,L_2$ and $H=C+L_1+L_2$ for $C\in \calC$ an effective divisor. For $C\sim -K_S-L_1-L_2$, we have
$$C^2=K_S^2-2\deg L_1-2\deg L_2+L_1^2+L_2^2+2L_1\cdot L_2=d-4,$$
$$K_S\cdot C =-d+2,$$
Hence by the genus formula
$$2p_a(C)-2=K_S\cdot C + C^2=-d+2+d-4=-2,$$
so $p_a(C)=0$ and we can apply Proposition \ref{prop:rational-surfaces-sections-rational-curves} to obtain 
$$h^0(S,\calO_S(C))\geq d-2\geq 2.$$
Let $C\in \calC$ be a non-singular irreducible element. Take $\Gamma=\{p_i\}_{i=1}^{d-3}$ general points in $C\subset S$. The generality condition means that all $p_i$ lie in no line and that given a model $\pi\colon S \ra \bbP^2$ the points $$\pi(\Gamma)\cup (\bigcup_{i=1}^d\pi(E_i))$$ are in general position. Let $\sigma\colon \widetilde S \ra S$ be the blow-up of $\Gamma$. By our choice of $\Gamma$, the surface $\widetilde S$ is a non-singular cubic del Pezzo surface. Let $F_i$ be the exceptional divisors of $\Gamma$. The strict transform $\widetilde C$ of $C$ satisfies
$$\widetilde C \sim \sigma^*(C)-\sum F_i,\qquad (\widetilde C)^2 = C^2-(d-3)=-1.$$
Notice that $\sigma$ is an isomorphism around $p$ and $\sigma^{-1}(p)=\widetilde L_1\cap\widetilde L_2\cap\widetilde C$, where $\widetilde L_i$ is the strict transform of $L_i$. Therefore $\sigma^{-1}(p)$ is an Eckardt point.
\end{proof}

We proved Proposition \ref{prop:rational-surfaces-sections-rational-curves} in an attempt to generalise the following:
\begin{lem}[{see \cite[III, Cor. 3.2.5]{Kollar-Rational-Curves-Algebraic-Varieties}}]
\label{lem:del-Pezzo-sections-anticanonical}
Let $S$ be a non-singular del Pezzo surface. Then
$$h^0(S,\calO_S(-mK_S))=\frac{m(m+1)}{2}(K_S^2)+1$$
for $m\geq 0$.
\end{lem}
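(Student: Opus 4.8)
The plan is to compute the Euler characteristic $\chi(S,\calO_S(-mK_S))$ by Riemann--Roch and then show that the higher cohomology vanishes, so that $h^0=\chi$. Since $S$ is a del Pezzo surface it is rational (Theorem \ref{thm:del-Pezzo-classification-models}), and the birational invariants $q=h^1(\calO_S)$ and $p_g=h^2(\calO_S)$ both vanish, so $\chi(\calO_S)=1$. The Riemann--Roch theorem for surfaces then gives
$$\chi(S,\calO_S(-mK_S))=\chi(\calO_S)+\frac{1}{2}(-mK_S)\cdot(-mK_S-K_S)=1+\frac{1}{2}m(m+1)K_S^2,$$
which is exactly the claimed value of $h^0$. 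Everything thus reduces to proving $h^1(S,\calO_S(-mK_S))=h^2(S,\calO_S(-mK_S))=0$ for $m\geq 0$.

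The vanishing of $h^2$ is immediate from Serre duality: $h^2(S,\calO_S(-mK_S))=h^0(S,\calO_S((m+1)K_S))$, and since $-K_S$ is ample the divisor $(m+1)K_S$ is anti-ample for $m\geq 0$, hence has no nonzero global sections. This step is valid in every characteristic.

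For $h^1$ I would first record the clean argument in characteristic zero and then give a characteristic-free variant. In characteristic zero, write $-mK_S=K_S+D$ with $D=(m+1)(-K_S)$; as $-K_S$ is ample, $D$ is ample, hence nef and big, and the Kawamata--Viehweg vanishing theorem (Theorem \ref{thm:KawamataViehweg}) gives $H^1(S,\calO_S(K_S+D))=H^1(S,\calO_S(-mK_S))=0$. In arbitrary characteristic, where Kawamata--Viehweg is unavailable, I would argue by induction on $m$. Riemann--Roch as above already shows $h^0(S,-K_S)\geq\chi(-K_S)=1+K_S^2>0$, so $\vert-K_S\vert$ contains an effective member $C$; as a Cartier divisor on the smooth surface $S$ with $K_S+C\sim 0$, $C$ is Gorenstein of arithmetic genus $1$ with $\omega_C\cong\calO_C$. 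Restricting along $C$ gives the exact sequence
$$0\to\calO_S(-(m-1)K_S)\to\calO_S(-mK_S)\to\calO_C\big((-mK_S)\vert_C\big)\to 0,$$
and since $(-mK_S)\vert_C$ has degree $mK_S^2>0$ its $H^1$ vanishes on the genus-one curve $C$ (by duality, $h^1=h^0$ of a negative-degree sheaf). Feeding in the base case $H^1(S,\calO_S)=q=0$, the long exact cohomology sequence propagates $H^1(S,-mK_S)=0$ inductively.

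The routine parts are the Riemann--Roch bookkeeping and the $h^2$ computation; the one point that genuinely needs care is the vanishing of $h^1$ in positive characteristic, where one must replace Kawamata--Viehweg by the restriction argument and verify the genus-one vanishing on the possibly singular, reducible, or non-reduced anticanonical member $C$ --- this relies on $C$ being of canonical type with trivial dualizing sheaf.
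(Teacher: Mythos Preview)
The paper does not give its own proof of this lemma; it simply cites \cite[III, Cor.~3.2.5]{Kollar-Rational-Curves-Algebraic-Varieties} and moves on. So there is no argument to compare against, and your task was really to supply a proof from scratch.

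Your proof is correct. The Riemann--Roch computation and the $h^2$ vanishing via Serre duality are routine. For the $h^1$ vanishing, your characteristic-zero argument through Kawamata--Viehweg is exactly Theorem~\ref{thm:KawamataViehweg} as stated in the paper, and the characteristic-free induction on $m$ via restriction to $C\in\vert-K_S\vert$ is the standard replacement. You correctly flag the one genuinely non-trivial point: showing $H^1(C,L)=0$ for $L=(-mK_S)\vert_C$ when $C$ may be reducible or non-reduced. The cleanest way to close this is to note that $C$, being a divisor on a smooth surface, is Cohen--Macaulay, so a nonzero section of $L^{-1}$ would give an injection $L\hookrightarrow\calO_C$ realising $L$ as an ideal sheaf of finite colength; this forces $\deg L\leq 0$, contradicting the ampleness of $-K_S\vert_C$. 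Your appeal to ``curves of canonical type'' points in the same direction but is less self-contained than this short argument.
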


\begin{lem}
\label{lem:del-pezzo-blowdown-dP}
Let $S$ be a non-singular del Pezzo surface and $E\cong \bbP^1$ a $(-1)$-curve (line). Let $\gamma\colon S \ra \bar S$ be the contraction of $E$. Then $\bar S $ is a non-singular del Pezzo surface.
\end{lem}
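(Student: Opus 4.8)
The plan is to first invoke Castelnuovo's contractibility criterion to establish that $\bar S$ is non-singular, and then verify the Nakai--Moishezon criterion (Theorem \ref{thm:Nakai-Moishezon-criterion}) to show that $-K_{\bar S}$ is ample. Since $E\cong \bbP^1$ and, being a line, $E^2=-1$ by Lemma \ref{lem:del-Pezzo-lines-numerical}, Castelnuovo's criterion---which is valid over any algebraically closed field irrespective of the characteristic---guarantees that the contraction $\gamma\colon S\ra \bar S$ realises $\bar S$ as a non-singular surface and exhibits $\gamma$ as the blow-up of $\bar S$ at the point $p:=\gamma(E)$, with $E$ the exceptional divisor. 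In particular the blow-up formula gives $K_S=\gamma^*K_{\bar S}+E$, equivalently $\gamma^*(-K_{\bar S})=-K_S+E$.

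It then remains to prove that $-K_{\bar S}$ is ample. First I would record that
$$(-K_{\bar S})^2=K_{\bar S}^2=K_S^2+1>0,$$
which is positive because $S$ is del Pezzo, so $K_S^2\geq 1$ by Theorem \ref{thm:del-Pezzo-classification-models}. For the intersection with curves, let $C\subset \bar S$ be any irreducible curve, let $\widetilde C$ be its strict transform on $S$, and put $m=\mult_p C\geq 0$. Combining the projection formula with the identity above yields
$$-K_{\bar S}\cdot C=\gamma^*(-K_{\bar S})\cdot \widetilde C=(-K_S+E)\cdot \widetilde C=(-K_S)\cdot \widetilde C + E\cdot \widetilde C.$$
Since $\widetilde C$ is a non-zero effective curve distinct from $E$, ampleness of $-K_S$ forces $(-K_S)\cdot \widetilde C>0$, while $E\cdot \widetilde C=m\geq 0$; hence $-K_{\bar S}\cdot C>0$. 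By Nakai--Moishezon, $-K_{\bar S}$ is ample, so $\bar S$ is a non-singular del Pezzo surface.

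The computation is routine; the only points demanding care are the invocation of Castelnuovo's criterion in positive characteristic (legitimate, since the criterion is characteristic-free) and the verification that $\widetilde C$ is genuinely a non-zero effective divisor, so that ampleness of $-K_S$ applies. This last point holds because the birational transform of a curve $C$ lying on $\bar S$ can never coincide with the contracted curve $E$. I do not anticipate any serious obstacle.
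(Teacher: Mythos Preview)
Your proof is correct and follows essentially the same route as the paper: pull back $-K_{\bar S}$ via $\gamma^*(-K_{\bar S})=-K_S+E$, intersect with the strict transform of an arbitrary curve, and apply Nakai--Moishezon. Your version is in fact slightly more complete, since you explicitly verify $(-K_{\bar S})^2>0$ and invoke Castelnuovo's criterion for the smoothness of $\bar S$, both of which the paper leaves implicit.
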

\begin{proof}
Since $-K_S$ is ample, for all irreducible curves $C$, $-K_S\cdot C>0$. Let $\bar C$ be an irreducible curve in $\bar S$ with multiplicity $m\geq 0$ at $p=\gamma(E)$ and strict transform $C$ in $S$. Then
$$-K_{\bar S}\cdot \bar C =\gamma^*(-K_{\bar S})\cdot \gamma^*(C)=\gamma^*(-K_{\bar S})\cdot C=(-K_S+E)\cdot C=-K_S\cdot C +m\geq -K_S\cdot C>0.$$
Therefore $-K_{\bar S}$ is ample by Nakai-Moishezon criterion  (Theorem \ref{thm:Nakai-Moishezon-criterion}).
\end{proof}

\begin{lem}
\label{lem:del-Pezzo-uniquemodel}
Let $S\neq \bbP^1\times\bbP^1$ be a non-singular del Pezzo surface of degree $7\leq d \leq 9$. Then there is a unique model $\pi:S\ra\bbP^2$ up to isomorphism in $\bbP^2$.
\end{lem}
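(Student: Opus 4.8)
The plan is to translate the statement into a combinatorial count of line configurations. Recall from Notation \ref{nota:del-Pezzos} and Theorem \ref{thm:del-Pezzo-classification-models} that a model $\pi\colon S\to \bbP^2$ contracts exactly $n=9-d$ mutually disjoint $(-1)$-curves, namely the exceptional curves $E_1,\ldots,E_n$ lying over the distinct points $p_i=\pi(E_i)$. Conversely, given any collection of $n$ mutually disjoint $(-1)$-curves on $S$, contracting them one at a time yields, after $n$ applications of Lemma \ref{lem:del-pezzo-blowdown-dP}, a non-singular del Pezzo surface of degree $d+n=9$, which by Lemma \ref{lem:del-Pezzo-no-moduli} is $\bbP^2$; hence the collection defines a model. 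Moreover, the contraction of a fixed set of disjoint $(-1)$-curves is unique up to isomorphism of the target (by the universal property of blowing down a $(-1)$-curve), so two models that contract the same set of curves differ by an element of $\mathrm{Aut}(\bbP^2)$. Thus models up to isomorphism in $\bbP^2$ correspond bijectively to \emph{sets} of $n$ mutually disjoint $(-1)$-curves on $S$, and I would reduce the lemma to showing that there is exactly one such set for each degree $7\le d\le 9$.

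The count then follows from the classification of lines in Lemma \ref{lem:del-Pezzo-lines9-d}. For $d=9$ we have $n=0$, so the only set is the empty one and $S=\bbP^2$ with the identity as its unique model. For $d=8$, the hypothesis $S\neq\bbP^1\times\bbP^1$ forces $S=\bbF_1$, which carries exactly one line; since $n=1$, that single line is the unique contractible set, giving a unique model. For $d=7$ we have $n=2$ and, by the table of rational classes, exactly three lines on $S$, namely $E_1$, $E_2$ and $L_{12}\sim\pioplane{1}-E_1-E_2$.

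The heart of the argument is therefore the degree $7$ case, where I would compute the pairwise intersection numbers of these three lines. Using $E_i^2=-1$, $E_1\cdot E_2=0$ and $E_i\cdot\pioplane{1}=0$, one finds $E_1\cdot L_{12}=E_2\cdot L_{12}=1$ while $E_1\cdot E_2=0$; that is, the three lines form a chain $E_1 - L_{12} - E_2$ whose only pair of disjoint members is $\{E_1,E_2\}$. Hence there is a unique set of two disjoint $(-1)$-curves, so a unique model, completing the proof. I do not expect a serious obstacle here: the only point that requires care is the bijection between models and sets of disjoint lines, and specifically the uniqueness of the contraction, which I would justify via the universal property of contracting a $(-1)$-curve together with the fact (Lemma \ref{lem:del-Pezzo-no-moduli}) that a degree $9$ del Pezzo surface is $\bbP^2$.
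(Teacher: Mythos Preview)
Your proposal is correct and follows essentially the same approach as the paper: both reduce to the line classification of Lemma \ref{lem:del-Pezzo-lines9-d} and the intersection numbers among $E_1,E_2,L_{12}$. The only cosmetic difference is that in degree $7$ you argue directly that $\{E_1,E_2\}$ is the unique pair of disjoint lines, whereas the paper phrases the same fact by noting that contracting $L_{12}$ yields $\bbP^1\times\bbP^1$ (since $\bar E_i^2=0$), so no further contraction to $\bbP^2$ is possible.
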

\begin{proof}
If $S=\bbP^2$ the statement is trivial. If $S=\bbF_1$, the blow-up of $\bbP^2$ at one point, $S$ has only one line by Lemma \ref{lem:del-Pezzo-lines9-d} and the statement is trivial too. If $\deg S=7$, by Lemma \ref{lem:del-Pezzo-lines9-d}, there are $3$ lines, namely $E_1,E_2,L_{12}$ with intersection matrix given by $E_1\cdot L_{12}=E_2\cdot L_{12}=1$ and $E_1\cdot E_2=0$. If we contract $L_{12}$ to obtain $\gamma \colon S\ra \bar S$, the images $\bar{E_1}, \bar{E_2}$ of $E_1$ and $E_2$ satisfy $\bar E_i^2=0$. Therefore $S\cong \bbP^1\times \bbP^1$. The only possibility to obtain a model is to contract $E_1$ and $E_2$ and $\pi$ is unique up to an isomorphism of $\bbP^2$ interchanging $p_1$ and $p_2$.
\end{proof}

We can classify all conics for degrees bigger than $2$ with the following result.
\begin{lem}
\label{lem:del-Pezzo-all-conics-rational}
Let $S$ be a non-singular del Pezzo surface of degree $\deg S\geq 3$. Then any integral curve $C$ with $(-K_S\cdot C)\leq 2$ has arithmetic genus $p_a(C)=0$.
\end{lem}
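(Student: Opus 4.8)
The plan is to combine the genus formula (Lemma \ref{lem:genus-formula}) with the Hodge index theorem to pin down the self-intersection $C^2$, and then use a parity argument to force $p_a(C)=0$.

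First I would record two elementary facts. Since $C$ is integral, $p_a(C)=h^1(C,\mathcal{O}_C)\geq 0$. Since $-K_S$ is ample and $C$ is an effective curve on the smooth surface $S$, the intersection number $-K_S\cdot C$ is a positive integer (using Theorem \ref{thm:Nakai-Moishezon-criterion}), so the hypothesis $-K_S\cdot C\leq 2$ leaves exactly two cases, $-K_S\cdot C=1$ and $-K_S\cdot C=2$. The next, and really only substantive, step is to bound $C^2$ from above. Applying the Hodge index theorem to the ample class $-K_S$, which satisfies $(-K_S)^2=K_S^2=d>0$, and to the class $C$ yields $(-K_S\cdot C)^2\geq K_S^2\cdot C^2 = d\,C^2$, hence $C^2\leq (-K_S\cdot C)^2/d$. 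This is where the hypothesis $d\geq 3$ enters decisively, and indeed it must: on a degree-two del Pezzo the anticanonical (elliptic) curve has $-K_S\cdot C=2$ and $p_a(C)=1$, so the statement is genuinely false for $d=2$.

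Finally I would run the case analysis, feeding $K_S\cdot C+C^2=2p_a(C)-2$ into each case. When $-K_S\cdot C=1$ the bound gives $C^2\leq 1/d<1$, so $C^2\leq 0$; the genus formula then forces $C^2=2p_a(C)-1$ to be odd, hence $C^2\leq -1$ and $p_a(C)\leq 0$, so $p_a(C)=0$ (recovering that such a $C$ is a line, consistent with Lemma \ref{lem:del-Pezzo-lines-numerical}). When $-K_S\cdot C=2$ the bound gives $C^2\leq 4/d\leq 4/3<2$, so $C^2\leq 1$; here the genus formula gives $C^2=2p_a(C)$, which is even and nonnegative (one can alternatively invoke Lemma \ref{lem:del-Pezzo-nefcurves} to see $C$ is nef, so $C^2\geq 0$), whence $C^2=0$ and $p_a(C)=0$.

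The main obstacle is not the computation but the single non-formal input, namely the upper bound on $C^2$ via Hodge index; once it is in hand, the integrality and parity bookkeeping from the genus formula is routine. The delicate observation is that $d\geq 3$ is precisely what makes $(-K_S\cdot C)^2/d<2$ in the degree-two case, which is the inequality that rules out the potential arithmetic-genus-one curve.
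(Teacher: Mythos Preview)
Your proof is correct but takes a genuinely different route from the paper. The paper's argument is geometric: since $\deg S\geq 3$, the anticanonical class $-K_S$ is very ample (Theorem~\ref{thm:del-Pezzo-classification-embeddings}), so $S$ embeds in some $\bbP^n$ and $C$ becomes an integral curve of degree $(-K_S\cdot C)\leq 2$ there; such a curve spans at most a plane, hence is a line or a plane conic, and the plane genus formula gives $p_a(C)=0$ directly. Your argument is instead purely numerical: you bound $C^2$ from above via the Hodge index theorem and then use the genus formula on $S$ together with a parity check to force $p_a(C)=0$.

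Each approach has its own merit. The paper's route is shorter and conceptually transparent once one has very ampleness in hand, and it explains \emph{why} the degree hypothesis matters (anticanonical very ampleness fails for $\deg S\leq 2$). Your route avoids invoking the embedding theorem entirely and stays intrinsic to intersection theory on $S$; the trade-off is that it imports the Hodge index theorem, which the paper does not otherwise use or state. Your observation that $d\geq 3$ is exactly what makes $4/d<2$ is the numerical shadow of the paper's geometric threshold, and your counterexample for $d=2$ is a nice sanity check that neither argument can be pushed further.
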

\begin{proof}
By Theorem \ref{thm:del-Pezzo-classification-embeddings} $-K_S$ is very ample. Therefore $\vert -K_S\vert$ gives an embedding $\phi \colon S \hra \bbP^n$ for some $n\geq 3$. The curves $\phi(C)$ have degree $(-K_S\cdot C)$ in $\bbP^n$, understood as the intersection with a general hyperplane section. Therefore $\phi(C)$ can be projected isomorphically onto a plane preserving the degree $(-K_S\cdot C)=d\leq 2$. The genus formula for $\bbP^2$ finishes the proof: $p_a(C)=\frac{(1-d)(2-d)}{2}=0$.
\end{proof}

\begin{lem}
\label{lem:del-Pezzo-deg2-all-lines-rational}
Let $S$ be a non-singular del Pezzo surface of degree $2$. Then any curve $C$ with $(-K_S\cdot C)=1$ has arithmetic genus $p_a(C)=0$.
\end{lem}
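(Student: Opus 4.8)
The plan is to mimic the strategy of Lemma \ref{lem:del-Pezzo-all-conics-rational}, but the embedding argument used there is not available in degree $2$: by Theorem \ref{thm:del-Pezzo-classification-embeddings} the anticanonical system $\vert-K_S\vert$ realises $S$ only as a double cover $\phi\colon S\to\bbP^2$ branched over a quartic, not as an embedding. One tempting route is to push $C$ forward along $\phi$. Since $-K_S=\phi^*\calO_{\bbP^2}(1)$, the projection formula gives $1=-K_S\cdot C=(\deg\phi\vert_C)\cdot\deg\phi(C)$, forcing $\phi\vert_C$ to be birational onto a line in $\bbP^2$, so that $C$ has geometric genus $0$. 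However this only controls the geometric genus, whereas the statement concerns the arithmetic genus $p_a(C)$, so I would abandon this approach in favour of a purely numerical one.

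First I would record that $C$ is automatically integral. Indeed, since $-K_S$ is ample, every irreducible component $C_i$ of $C$ satisfies $-K_S\cdot C_i\geq 1$; writing $C=\sum m_iC_i$ with $m_i\geq 1$, the equality $-K_S\cdot C=1$ forces a single reduced component. Hence $C$ is irreducible and reduced, so $p_a(C)\geq g(C)\geq 0$, exactly as recalled in the proof of Lemma \ref{lem:del-Pezzo-lines-numerical}.

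The key step is to bound $C^2$ from above by the Hodge index theorem applied to the ample class $-K_S$, for which $(-K_S)^2=\deg S=2$. The inequality $(-K_S)^2\,C^2\leq(-K_S\cdot C)^2$ then reads $2C^2\leq 1$, and since $C^2\in\bbZ$ we conclude $C^2\leq 0$. Substituting into the genus formula (Lemma \ref{lem:genus-formula}) gives
$$2p_a(C)-2=K_S\cdot C+C^2=-1+C^2\leq -1,$$
whence $2p_a(C)\leq 1$, i.e. $p_a(C)\leq 0$. Combined with $p_a(C)\geq 0$ from the previous paragraph, this forces $p_a(C)=0$, as desired.

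The only real obstacle is that the argument relies on the Hodge index theorem (equivalently, that the intersection form on $S$ has signature $(1,\rho-1)$), which is not among the tools quoted earlier in the excerpt; everything else is elementary. It is worth noting that the degree hypothesis enters solely through $(-K_S)^2=2$, which is precisely what makes $2C^2\leq 1$ collapse to $C^2\leq 0$; the same Hodge-index computation applied to curves of anticanonical degree $\leq 2$ on surfaces of degree $\geq 3$ reproves the conic case of Lemma \ref{lem:del-Pezzo-all-conics-rational} uniformly, without invoking very ampleness of $-K_S$.
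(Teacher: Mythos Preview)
Your proof is correct but takes a genuinely different route from the paper. The paper does not abandon the embedding strategy: it simply replaces $-K_S$ by $-2K_S$, which \emph{is} very ample on a degree~$2$ del Pezzo (Theorem~\ref{thm:del-Pezzo-classification-embeddings}). Under the resulting embedding $\phi\colon S\hookrightarrow\bbP^n$ the curve $C$ has degree $-2K_S\cdot C=2$, hence projects isomorphically to a plane conic, and the plane curve genus formula gives $p_a(C)=0$. This is a verbatim repetition of the proof of Lemma~\ref{lem:del-Pezzo-all-conics-rational} with $-2K_S$ in place of $-K_S$.

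Your Hodge index argument is cleaner in that it is purely numerical, avoids the projection step, and (as you observe) works uniformly across degrees without invoking very ampleness. The trade-off is that it imports a tool not used elsewhere in the chapter, whereas the paper's argument stays entirely within the machinery already set up. Your preliminary observation that $C$ must be integral is a genuine addition: the paper's proof silently assumes this (the projection-to-a-plane step needs it), and the companion Lemma~\ref{lem:del-Pezzo-all-conics-rational} explicitly hypothesises integrality, so making it explicit here is an improvement.
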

\begin{proof}
By Theorem \ref{thm:del-Pezzo-classification-embeddings} $-2K_S$ is very ample. Therefore $\vert -2K_S\vert$ gives an embedding $\phi \colon S \hra \bbP^n$ for some $n$. The curves $\phi(C)$ have degree $(-2K_S\cdot C)=2$ in $\bbP^n$, understood as the intersection with a general hyperplane section. Therefore $\phi(C)$ can be projected isomorphically onto a plane preserving the degree $d=(-2K_S\cdot C)=2$. The genus formula for $\bbP^2$ finishes the proof: $p_a(C)=\frac{(1-d)(2-d)}{2}=0$.
\end{proof}

For del Pezzo surfaces of degree $2$, we can characterise the lines as bitangents to the smooth quartic curve in $\bbP^2$ given by Theorem \ref{thm:del-Pezzo-classification-embeddings}. More precisely:
\begin{lem}[{see \cite[Section 8.7.1]{DolgTopicsClass}}]
\label{lem:del-Pezzo-deg2-lines}
Let $S$ be a del Pezzo surface of degree $2$ with at worst Du Val singularities and let $\phi\colon S \ra \bbP^2$ be the morphism given by $\vert-K_S\vert$ as in Theorem \ref{thm:del-Pezzo-classification-embeddings} which realises $S$ as a $2:1$ cover of $\bbP^2$ branched at a quartic curve $Q$ of $\bbP^2$. Then the lines of $\bbP^2$ are mapped $2:1$ to bitangent lines to $Q$.
\end{lem}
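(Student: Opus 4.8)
The plan is to transport a line of $S$ through the double cover $\phi$, using the fact from Theorem~\ref{thm:del-Pezzo-classification-embeddings}(ii) that $\phi$ is $2\colon 1$ onto $\bbP^2$, branched along the quartic $Q$, and that the defining linear system $\vert -K_S\vert$ gives $-K_S\sim\phi^*H$ for $H$ a line of $\bbP^2$. First I would fix a line $L\subset S$, so that $-K_S\cdot L=1$ and $L^2=-1$ with $L\cong\bbP^1$ (Lemma~\ref{lem:del-Pezzo-lines-numerical}), and compute its image. By the projection formula, $\phi_*L\cdot H=L\cdot\phi^*H=-K_S\cdot L=1$, so $\ell:=\phi(L)$ is a line of $\bbP^2$ and $\phi|_L\colon L\to\ell$ has degree $1$, hence is an isomorphism of $\bbP^1$'s.

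Next I would analyse the whole preimage of $\ell$. Since $\ell\sim H$, we have $\phi^*\ell\sim\phi^*H\sim -K_S$, and as $L\subseteq\phi^{-1}(\ell)$ we may write $\phi^*\ell=L+M$ with $M\sim -K_S-L$ effective. Then
\[
-K_S\cdot M=K_S^2-(-K_S\cdot L)=2-1=1,\qquad M^2=(-K_S-L)^2=-1,
\]
and since $-K_S$ is ample every irreducible component $C$ of $M$ satisfies $-K_S\cdot C\geq 1$; hence $M$ is a single line $L'$. Thus $\phi^*\ell=L+L'$ splits into two lines, interchanged by the covering involution $\iota$ (the Geiser involution), so $L'=\iota(L)$, and $L\cdot L'=(-K_S-L)\cdot L=1-(-1)=2$.

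The heart of the argument, and the step I expect to be the main obstacle, is to convert the reducibility of $\phi^{-1}(\ell)$ into the bitangency of $\ell$. The restriction $\phi^{-1}(\ell)\to\ell$ is a degree-$2$ cover of $\ell\cong\bbP^1$ whose branch divisor is the degree-$4$ divisor $Q\cap\ell$. A double cover of $\bbP^1$ is reducible if and only if its branch divisor is twice an effective divisor, i.e.\ every point of $Q\cap\ell$ occurs with even multiplicity. Since $\phi^{-1}(\ell)=L\cup L'$ is reducible, $\ell$ meets $Q$ either in two distinct points with multiplicity $2$ each, or in one point with multiplicity $4$; in either case $\ell$ is tangent to $Q$ at each of its intersection points, so $\ell$ is a bitangent. (Equivalently, the two points of $L\cap L'$ lie over the ramification $R$, and their images are the two tangency points.)

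Finally, to obtain the $2\colon 1$ statement I would check that each bitangent is the image of exactly two lines. Any line $N$ with $\phi(N)=\ell$ satisfies $N\subseteq\Supp(\phi^*\ell)=L\cup L'$, so $N\in\{L,L'\}$; and $L\neq L'$, for $L=\iota(L)$ would force $L$ into the ramification locus $R\cong Q$, impossible since $Q$ is an irreducible quartic of arithmetic genus $3$ (Lemma~\ref{lem:genus-formula}) while $L\cong\bbP^1$. Hence $\phi$ carries the lines of $S$ two-to-one onto the bitangents of $Q$, as claimed. In the Du Val case one argues identically, computing the relevant intersection numbers on the minimal resolution so that the numerics above remain valid.
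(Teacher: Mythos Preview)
The paper does not supply its own proof of this lemma; it is stated with a citation to \cite[Section~8.7.1]{DolgTopicsClass} and used as a known fact. There is therefore nothing in the paper to compare your argument against.

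That said, your proof is the standard one and is correct. The projection formula gives $\deg\phi(L)=1$, the decomposition $\phi^*\ell=L+L'$ with $L'$ a line follows from ampleness of $-K_S$, and the equivalence between reducibility of the double cover $\phi^{-1}(\ell)\to\ell$ and the branch divisor $Q\cap\ell$ being twice an effective divisor is exactly the bitangency condition. Your closing sentence on the Du Val case is a little hand-wavy: the step $L\neq L'$ invokes irreducibility of $Q$, which can fail for arbitrary Du Val $S$. However, in the paper's only use of this lemma (inside Lemma~\ref{lem:del-Pezzo-cat-degree-3-aux-deg2}) the surface $S$ has at most two $A_1$-points, so $Q$ is an irreducible quartic with at most two nodes and positive geometric genus, and your argument applies without change.
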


\section{The Cat Property for del Pezzo surfaces of low degree}
\label{sec:delpezzo-cats}
In this section we study the Cat Property on non-singular del Pezzo surfaces. We will show that the Cat Property holds on a non-singular del Pezzo surface $S$ if and only if $1\leq \deg S\leq 3$. We will classify the cats of $S$ precisely. The proofs are valid over algebraically closed fields in all characteristics. However in characteristic $2$ the number of cats increases, although the classification is essentially the same. For instance, when the ground field $k$ has $\charac(k)=2$, a line and a conic curve in the cubic surface $S_3\subset\bbP^3$ always intersect in a tacnode. This is the reason the classification of cats in lemmas \ref{lem:del-Pezzo-cat-list-degree-2} and \ref{lem:del-Pezzo-cat-list-degree-3} is given in terms of local intersections.

\begin{lem}
\label{lem:delPezzo-lc-cod1-deg1-to-3}
Let $S$ be a non-singular del Pezzo surface with $d=K_S^2\leq 3$. Then for all effective $\bbQ$-divisors $D\simq-K_S$ the pair $(S,D)$ is log canonical in codimension $1$.
\end{lem}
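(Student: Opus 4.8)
The plan is to recast the statement as a bound on the coefficients of $D$ and then exclude large coefficients by intersection theory. By Lemma \ref{lem:adjunction} (ii), if $(S,D)$ fails to be log canonical along some irreducible curve $C$ then the coefficient of $C$ in $D$ exceeds $1$; hence it suffices to prove that \emph{every} irreducible component $C$ of $D$ occurs with coefficient $a\le 1$. Suppose not, and write $D=aC+\Omega$ with $a>1$, $\Omega\ge 0$ effective and $C\not\subseteq\Supp(\Omega)$. Pairing with the ample divisor $-K_S$ gives $d=-K_S\cdot D\ge a(-K_S\cdot C)>-K_S\cdot C$, and since $-K_S\cdot C$ is a positive integer this forces $\deg C=-K_S\cdot C\in\{1,2\}$; moreover $d=1$ is impossible, $\deg C=2$ can occur only when $d=3$, and $\deg C=1$ requires $d\in\{2,3\}$.

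The engine of the proof is the auxiliary class $B:=-K_S-C$. First I would record that $p_a(C)=0$ (Lemma \ref{lem:del-Pezzo-all-conics-rational} when $d=3$, Lemma \ref{lem:del-Pezzo-deg2-all-lines-rational} when $d=2$), so by the genus formula (Lemma \ref{lem:genus-formula}) $C^2=\deg C-2$, and then a direct computation yields $p_a(B)=0$, $\deg B=d-\deg C$, $B^2=\deg B-2$ and $C\cdot B=2$. Riemann--Roch together with Serre duality (exactly as in Proposition \ref{prop:rational-surfaces-sections-rational-curves}, using that $2K_S+C$ has negative degree against the ample $-K_S$ and is therefore non-effective) shows $h^0(S,\calO_S(B))\ge\deg B$, so $B$ is effective. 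The key numerical identity is then $\Omega\cdot B=(D-aC)\cdot B=-K_S\cdot B-a(C\cdot B)=\deg B-2a$, which is strictly negative because $\deg B\le 2<2a$. I will also need $\Omega\cdot C=\deg C-a(\deg C-2)$, which equals $2$ when $\deg C=2$ and $1+a$ when $\deg C=1$.

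From $\Omega\cdot B<0$ I would split into two cases. When $d=3$ and $\deg C=1$, the class $B=-K_S-C$ is the base-point-free pencil of the conic bundle obtained by projecting the cubic surface $S\subset\bbP^3$ away from the line $C$; in particular $B$ is nef, so $\Omega\cdot B\ge 0$, contradicting $\Omega\cdot B=2-2a<0$. In the remaining cases ($d=3,\deg C=2$ and $d=2,\deg C=1$) one has $\deg B=1$ and $B^2=-1$, so $B$ is a single line (an integral effective divisor of anticanonical degree $1$) distinct from $C$; here $B$ is not nef, and this rigid case is the main obstacle. Since $\Omega\cdot B<0$ with $B$ irreducible, $B$ must be a component of $\Omega$: write $\Omega=bB+\Omega'$ with $b>0$ and $B,C\not\subseteq\Supp(\Omega')$. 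The two effectivity inequalities $\Omega'\cdot B\ge 0$ and $\Omega'\cdot C\ge 0$ unwind to $b\ge 2a-1$ and $b\le\frac{1}{2}\,\Omega\cdot C$; substituting the value of $\Omega\cdot C$ gives $b\le 1$ (contradicting $b\ge 2a-1>1$) when $\deg C=2$, and $2a-1\le\frac{1}{2}(1+a)$, i.e.\ $a\le 1$, when $\deg C=1$. Either way we contradict $a>1$, so all coefficients of $D$ are at most $1$ and $(S,D)$ is log canonical in codimension $1$.
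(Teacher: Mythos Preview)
Your proof is correct and follows the same overall strategy as the paper: bound $\deg C$ by $d-1$, then work with the residual class $B=-K_S-C$ and the key identity $\Omega\cdot B=\deg B-2a<0$. The execution differs in the $\deg B=1$ cases. The paper bounds the coefficient of $B$ via $D\cdot(-K_S)$ and, in the $d=2$ situation, splits further according to whether the two lines meet in one or two points, invoking convexity (Lemma~\ref{lem:convexity}) in the two-point subcase; you instead extract the pair of inequalities $\Omega'\cdot B\ge 0$ and $\Omega'\cdot C\ge 0$ uniformly, which unifies the two $\deg B=1$ cases and avoids both the subcase split and the appeal to convexity. In the $d=3$, $\deg C=1$ case you cite the conic-bundle projection from the line $C$ to get nefness of $B$, whereas the paper argues internally that the pencil $|B|$ contains an irreducible member (otherwise there would be infinitely many lines) and then applies Lemma~\ref{lem:del-Pezzo-nefcurves}. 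Both routes are valid; yours is a little more streamlined.
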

\begin{proof}
Suppose $\Supp(\LCS(S,D))$ contains an irreducible curve $C$. Then by Lemma \ref{lem:adjunction} (ii) we may write $D=mC+\Omega$ with $m>1$ and $C\not\subseteq \Supp(\Omega)$. We bound $\deg (C)=-K_S\cdot C$:
\begin{equation}
K_S^2=-K_S\cdot D=m(-K_S\cdot C)+(K_S\cdot\Omega)>\deg C\geq 1,
\label{eq:delPezzo-lc-cod1-deg1-to-3-bound}
\end{equation}
since $m>1$ and $-K_S$ is ample. In particular this proves the Lemma when $K_S^2=1$. We will prove cases $K_S^2=2,3$ using the fact that $\deg C$ is a positive natural number.

Suppose $K_S^2=2$. Then \eqref{eq:delPezzo-lc-cod1-deg1-to-3-bound} implies $\deg C=1$. Lemma \ref{lem:del-Pezzo-deg2-all-lines-rational} implies $p_a(C)=0$. By the genus formula $C^2=-1$, so $C$ is a line. It is straight forward to see that $(-K_S-C)^2=-1$ and $(-K_S)\cdot (-K_S-C)=1$. Therefore $p_a(-K_S-C)=0$. By Proposition \ref{prop:rational-surfaces-sections-rational-curves}, $h^0(S, \calO_S(-K_S-C))\geq 1$. Hence we can choose an effective curve $L\sim -K_S-C$. In fact, repeating the argument above with $L$ instead of $C$ it follows that $L$ is a line. Then, since $L+C\sim -K_S$ we obtain that $L\cdot C=2$.

If $L$ and $C$ intersect in two points, then $(S, L+C)$ is log canonical. By Lemma \ref{lem:convexity} we may assume $L\not\subset \Supp(D)$, which gives a contradiction: $1=D\cdot L \geq 2m>2$. Hence, $L$ and $C$ must intersect at one point $p$. Write $D=mC + m'L + \Omega$ with $C, L\not\subseteq \Supp(\Omega)$ and $m'\geq 0$. We may bound $m'$:
$$2=D\cdot (-K_S)\geq m+m'>1+m',$$
so $m'<1$. But this gives a contradiction:
$$1=D\cdot L \geq 2m-m'>2-m'>1,$$
proving the Lemma when $K_S^2=2$.

Suppose $K_S^2=3$. From \eqref{eq:delPezzo-lc-cod1-deg1-to-3-bound} we have that $2\geq \deg C\geq 1$. By Lemma \ref{lem:del-Pezzo-all-conics-rational}, $p_a(C)=0$. Therefore, the genus formula tells us that if $\deg C =1$ then $C^2=-1$, and if $\deg C =2$, then $C^2=0$. We treat each case separately.

If $\deg C=1$, then $(-K_S-C)\cdot (-K_S)=2$ and $(-K_S-C)^2=3-2\deg C +C^2=0$. By the genus formula $p_a(-K_S-C)=0$ and Proposition \ref{prop:rational-surfaces-sections-rational-curves} gives that $h^0(S, \calO_S(-K_S-C))\geq 2$. Therefore we may choose an effective divisor $Q\sim-K_S-C$ with $\deg Q=2$. Observe that $Q\cdot C=2$. Since $\vert-K_S-C\vert$ is at least a pencil, we may choose $Q$ to be irreducible, since otherwise $Q=L_1+L_2$, the union of two lines, with one of them moving free. Therefore $S$ would have an infinite number of lines, contradicting Lemma \ref{lem:del-Pezzo-lines9-d}. By Lemma \ref{lem:del-Pezzo-nefcurves}, $Q$ is nef, and 
$$2=Q\cdot D =m(Q\cdot C)+Q\cdot \Omega\geq 2m>2,$$
a contradiction.

Therefore $\deg C=2$. Then $(-K_S-C)\cdot (-K_S)=1$ and $(-K_S-C)^2=3-2\deg C +C^2=-1$. By the genus formula $p_a(-K_S-C)=0$ and Proposition \ref{prop:rational-surfaces-sections-rational-curves} gives that $h^0(S, \calO_S(-K_S-C))\geq 1$. Therefore we may choose an effective divisor $L\sim-K_S-C$ with $\deg L=1$. Observe that $L\cdot C=2$. Since the degree of an effective divisor is a positive natural number, $L$ is irreducible. We may write $D=mC+m'L+\Omega$ with $m'\geq 0$ and $C, L, \not\subseteq \Supp(\Omega)$. Then
$$3=D\cdot (-K_S)\geq 2m+m'>2+m',$$
giving $m'<1$. This implies a contradiction:
$$1=D\cdot L\geq 2m-m'>2-m'>1,$$
a contradiction, finishing the proof.
\end{proof}

\begin{lem}
\label{lem:del-Pezzo-cat-degree-1}
Let $S$ be a non-singular del Pezzo surface of degree $1$. Then $S$ satisfies the Cat Property.
\end{lem}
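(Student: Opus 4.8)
The plan is to exhibit, for every tiger $D\simq-K_S$, an anticanonical curve whose support lies inside $\Supp(D)$; since $\Cat(S)=1$ by Lemma \ref{lem:del-Pezzo-Cat-index}, such a curve is precisely the cat demanded by Definition \ref{dfn:cat-property}. First I would record two structural facts special to degree one. By Lemma \ref{lem:del-Pezzo-sections-anticanonical} we have $h^0(S,\calO_S(-K_S))=2$, so $|-K_S|$ is a pencil; and any member $T=\sum a_iC_i\in|-K_S|$ satisfies $1=-K_S\cdot T=\sum a_i(-K_S\cdot C_i)$ with every $a_i\geq 1$ and every $-K_S\cdot C_i\geq 1$ (as $-K_S$ is ample), which forces $T$ to be a single reduced irreducible curve with $-K_S\cdot T=1$ and, by the genus formula, $p_a(T)=1$. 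Finally, since $D$ is not log canonical and, by Lemma \ref{lem:delPezzo-lc-cod1-deg1-to-3}, is log canonical in codimension $1$, I may fix a point $p\in S$ at which $(S,D)$ is not log canonical.

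The heart of the argument is a contradiction. Suppose no member of $|-K_S|$ has support contained in $\Supp(D)$. Because $|-K_S|$ is a pencil, passing through $p$ is a single linear condition and cuts out at least a one-dimensional subspace of $H^0(\calO_S(-K_S))$, so I can choose a member $T\in|-K_S|$ with $p\in T$. By the supposition $T\not\subseteq\Supp(D)$, and since $T$ is irreducible it shares no component with $D$; hence the intersection is proper, $T\cdot D=(-K_S)^2=K_S^2=1$, and as all local intersection numbers are nonnegative, $(T\cdot D)\vert_p\leq 1$. Now I split on the local geometry of $T$ at $p$. If $T$ is smooth at $p$, then writing $D=0\cdot T+\Omega$ with $\Omega=D$ and applying Lemma \ref{lem:adjunction}(iii) (with $m=0\leq 1$) gives $(T\cdot D)\vert_p>1$, contradicting $(T\cdot D)\vert_p\leq 1$. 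If instead $T$ is singular at $p$, then $\mult_pT\geq 2$, while Lemma \ref{lem:adjunction}(i) gives $\mult_pD>1$, so bounding the local intersection below by the product of multiplicities yields
$$(T\cdot D)\vert_p\geq(\mult_pT)(\mult_pD)>2,$$
again contradicting $(T\cdot D)\vert_p\leq 1$.

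Either way the supposition is untenable, so some member $T\in|-K_S|$ satisfies $\Supp(T)\subseteq\Supp(D)$, establishing the Cat Property. The one delicate point to get right is the dichotomy at $p$: the quick application of Lemma \ref{lem:adjunction}(iii) covers only the case where the anticanonical member through $p$ is smooth there, and the remaining case — where $p$ is the node or cusp of an irreducible genus-one member — must be dispatched separately by the multiplicity estimate above. I expect no further obstacle, since both horns of the dichotomy collapse against the single numerical fact $T\cdot D=1$; it is the restriction to degree one that makes this intersection number so small that even a double point of $T$ already overshoots it.
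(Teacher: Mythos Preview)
Your proof is correct and follows essentially the same route as the paper: fix a non-log-canonical point $p$, take the anticanonical pencil member $T$ through $p$, and contradict $T\cdot D=K_S^2=1$. The paper collapses your smooth/singular dichotomy into the single inequality $1=T\cdot D\geq(\mult_pT)(\mult_pD)>1$ (valid in both cases, since $\mult_pT\geq1$ and $\mult_pD>1$ by Lemma~\ref{lem:adjunction}(i)), and additionally invokes convexity (Lemma~\ref{lem:convexity}) to upgrade the conclusion to ``$T$ is itself a cat''; but the underlying argument is the same.
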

\begin{proof}
Let $D$ be a tiger of $S$, i.e. $(S,D)$ is not log canonical. It is enough to show that $C\subset \Supp(D)$ for a cat $C\in \vert-K_S\vert$. We have that $\mult_p(D)>1$ by Lemma \ref{lem:adjunction} (i). Consider a curve $C\in \vert-K_S\vert$ through $p$. If $C$ is not a cat with centre $p$, then by Lemma \ref{lem:convexity} we may assume $C\not\subset\Supp(D)$, but then
$$1=C\cdot D\geq \mult_pC\cdot \mult_pD>1.$$
If $C$ is a cat and $C\not\subset\Supp(D)$, we also obtain a contradiction.
\end{proof}
\begin{lem}
\label{lem:del-Pezzo-cat-list-degree-1}
Let $S$ a del Pezzo surface of degree $1$. The cats of $S$ are cuspidal rational curves $C\in\vert-K_S\vert$.
\end{lem}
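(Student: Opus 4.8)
The plan is to exploit that $\Cat(S)=1$ (Lemma \ref{lem:del-Pezzo-Cat-index}), which collapses the notion of a cat to something very concrete. By Definition \ref{dfn:cat} a cat is a tiger $D$ with $mD\in\vert-mK_S\vert$ and $m\leq\Cat(S)=1$; hence necessarily $m=1$ and $D=C\in\vert-K_S\vert$ is an honest anticanonical curve for which $(S,C)$ fails to be log canonical. So the whole statement reduces to describing the anticanonical pencil and deciding the log canonicity of each of its members. The two directions are then immediate once this is done: a cuspidal member is a tiger lying in $\vert-K_S\vert$, hence a cat, and conversely every cat is such a non--log canonical anticanonical curve.

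First I would record the numerics. By Lemma \ref{lem:del-Pezzo-sections-anticanonical} we have $h^0(S,-K_S)=2$, so $\vert-K_S\vert$ is a pencil, and since $K_S\cdot(-K_S)+(-K_S)^2=-1+1=0$, the genus formula (Lemma \ref{lem:genus-formula}) gives $p_a(C)=1$ for every $C\in\vert-K_S\vert$. Next I would show every member is integral: writing $C=\sum a_iC_i$ with the $C_i$ distinct irreducible and $a_i\geq 1$, ampleness of $-K_S$ makes each $-K_S\cdot C_i$ a positive integer, so $1=-K_S\cdot C=\sum a_i(-K_S\cdot C_i)\geq\sum a_i$ forces a single component with $a_1=1$. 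Thus $C$ is irreducible and reduced of arithmetic genus $1$, and passing to the normalisation $\widetilde C$ gives $g(\widetilde C)+\sum_p\delta_p=1$. Hence either $C$ is smooth (an elliptic curve), or $C$ is rational with exactly one singular point of $\delta$--invariant $1$, i.e. an ordinary node or an ordinary cusp.

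It then remains to test log canonicity case by case. A smooth curve on the smooth surface $S$ gives a log canonical (indeed simple normal crossing) pair, so such members are not cats. In the nodal case a single blow-up of the node produces a simple normal crossing log resolution on which the exceptional discrepancy equals $-1$, so $(S,C)$ is log canonical and $C$ is again not a cat. In the cuspidal case the local analytic equation is $y^2-x^3=0$, so Example \ref{exa:P2cuspidal} (equivalently Lemma \ref{lem:Igusa} with $m=2$, $n=3$) yields $\lct_p(S,C)=\frac{5}{6}<1$, whence $(S,C)$ is not log canonical and $C$ is a cat. Combining the two cases identifies the cats of $S$ exactly with the cuspidal rational members of $\vert-K_S\vert$, as claimed. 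The only genuinely delicate point is the singularity classification: it is the arithmetic-genus-one constraint together with irreducibility that pins the local analytic type down to precisely a node or a cusp, after which the conclusion follows from short resolution computations already available in the text.
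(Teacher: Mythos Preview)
Your proof is correct and complete. The approach differs from the paper's in one key step: the classification of the singularity type. After establishing irreducibility (which you and the paper do identically, via $-K_S\cdot C=1$), the paper chooses a model $\pi\colon S\to\bbP^2$, observes that $C\cdot E_i=1$ forces $\pi$ to be an isomorphism near the singular point, and then invokes the classical classification of singular irreducible plane cubics to conclude the singularity is a cusp. You instead work intrinsically: the genus formula gives $p_a(C)=1$, so normalisation forces either a smooth elliptic curve or a rational curve with a single $\delta=1$ singularity, and the classification of $\delta=1$ curve singularities (node or cusp) finishes the job. Your route is arguably cleaner in that it avoids the model entirely and treats both directions of the equivalence explicitly; the paper's route has the virtue of reducing to a very concrete and familiar classification (plane cubics) at the cost of checking that the singular point avoids the exceptional locus. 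Both are short and both rely on classical facts of comparable depth.
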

\begin{proof}
Let $C\in \vert-K_S\vert$ be a cat. Since $-K_S\cdot C =1$, the curve $C$ is irreducible. Since $(S, C)$ is not log canonical, then $C$ must have a singularity at a point $p$, which cannot be nodal (normal crossings). Let $\pi:S\ra \bbP^2$ be a model of $S$ with exceptional divisors $E_1,\ldots,E_8$. Observe that $C\cdot E_i=-K_S\cdot E_i=1$ $\forall i$. Hence $\bar C = \pi(C)$ is smooth at all $p_i=\pi(E_i)$. In particular $p\not\in E_i$ and $\pi$ is an isomorphism near $p$. The curve $\bar C$ is an irreducible curve of degree $3$ with at least one singular point $\pi(p)$ which is not nodal. By the classification of cubic curves in $\bbP^2$, the point $\pi(p)$ must be a cuspidal point. In fact, $\pi(p)$ is the only point at which $\bar C$ is singular.
\end{proof}
\begin{rmk}
\label{rmk:delPezzo-deg1-nocats}
The general non-singular surface $S$ of degree $1$ has $\glct(S)=1$. Therefore, in that case, $S$ has no cats.
\end{rmk}

\begin{lem}
\label{lem:del-Pezzo-cat-degree-2}
Let $S$ be a non-singular del Pezzo surface of degree $2$. Then $S$ satisfies the Cat Property.
\end{lem}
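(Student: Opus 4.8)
The plan is to mirror the degree-one argument of Lemma~\ref{lem:del-Pezzo-cat-degree-1}, but to compensate for the fact that the naive intersection bound no longer closes immediately. By Lemma~\ref{lem:del-Pezzo-Cat-index} we have $\Cat(S)=1$, so a cat is exactly an effective $T\in|-K_S|$ with $(S,T)$ not log canonical, and it suffices to produce such a $T$ with $\Supp(T)\subseteq\Supp(D)$ for an arbitrary tiger $D\simq-K_S$. First I would invoke Lemma~\ref{lem:delPezzo-lc-cod1-deg1-to-3}: since $K_S^2=2\leq 3$, the pair $(S,D)$ is log canonical in codimension~$1$, so it fails to be log canonical only at finitely many points; fix one such point $p$, where $\mult_p D>1$ by Lemma~\ref{lem:adjunction}(i). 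The anticanonical net is base-point free with $\dim|-K_S|=2$ (Lemma~\ref{lem:del-Pezzo-sections-anticanonical}) and realises the double cover $\phi\colon S\to\bbP^2$ of Theorem~\ref{thm:del-Pezzo-classification-embeddings}; the members through $p$ form a pencil whose general member $C$ is smooth at $p$ and is not a component of $D$. Intersecting, $2=-K_S\cdot D=C\cdot D\geq \mult_p C\cdot\mult_p D=\mult_p D$, so that $1<\mult_p D\leq 2$. This last bound is the essential new input compared with degree~$1$, where $C\cdot D=1$ forced a contradiction outright.

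Next I would argue by contradiction, assuming $\Supp(D)$ contains no cat. Then every effective anticanonical divisor $T$ with $\Supp(T)\subseteq\Supp(D)$ is log canonical, and by the convexity Lemma~\ref{lem:convexity} I may subtract such a $T$ from $D$ while preserving both $D\simq-K_S$ and the failure of log canonicity at $p$. Iterating, I reduce to the situation in which $\Supp(D)$ contains no full anticanonical curve at all; in particular the smooth-at-$p$ members of the pencil above are genuinely not components of $D$, so the intersection estimates are available with no fixed part to discard.

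For the crux I would select the member $C\in|-K_S|$ through $p$ that is most tangent to $D$ there, i.e. the pullback $C=\phi^{*}\ell$ of a line $\ell\subset\bbP^2$ through $\phi(p)$ tangent to the branch quartic, so that $C$ acquires multiplicity two (or higher tangency) at $p$. One then blows up $p$ and pushes the non-log-canonical point along the resulting chain of exceptional curves, applying Lemma~\ref{lem:adjunction}(iii) and the symmetric inequality of Theorem~\ref{thm:inequality-Cheltsov} to the two smooth branches meeting at $p$ at each stage. Because the total budget $C\cdot D=2$ is fixed, after finitely many steps this forces a high-multiplicity branch of $D$ that must itself be a line (Lemma~\ref{lem:del-Pezzo-deg2-lines}) or a conic through $p$ lying in $\Supp(D)$; assembling these using the anticanonical relations exploited in Lemma~\ref{lem:delPezzo-lc-cod1-deg1-to-3} (e.g.\ $L+C'\sim-K_S$) yields an effective anticanonical $T\subseteq\Supp(D)$ with a cusp or tacnode at $p$, a cat, contradicting the assumption. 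The distinction recorded in Lemma~\ref{lem:del-Pezzo-cat-list-degree-2} between nodal members (log canonical, hence subtractable) and cuspidal or tacnodal members (true cats) is exactly what separates the ``reduce further'' case from the ``cat found'' case, which is why that classification is phrased via local intersection numbers in characteristic~$2$.

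I expect the main obstacle to be the borderline regime $\mult_p D$ just above $1$ with $D$ tangent to a smooth anticanonical curve at $p$: here the crude bound $C\cdot D=2$ is inadequate, and one must carry out the blow-up bookkeeping, tracking the local intersection multiplicities of the two branches and invoking Theorem~\ref{thm:inequality-Cheltsov} to force either a branch of large enough multiplicity or a direct numerical contradiction. The genuinely delicate step is verifying that the resulting line or conic actually lies in $\Supp(D)$ rather than merely passing through $p$, and then checking that the anticanonical divisor it assembles into is singular enough to be a cat.
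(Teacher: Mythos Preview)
Your setup is right: reducing to an isolated point $p$ with $1<\mult_p D\le 2$ via Lemma~\ref{lem:delPezzo-lc-cod1-deg1-to-3} and the anticanonical pencil through $p$, and arguing by contradiction under the assumption that $\Supp(D)$ contains no cat, all match the paper. But the crux of your plan misidentifies the key curve and the endgame.

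First, you skip a case the paper handles up front. Before any blow-up one asks whether some $T\in|-K_S|$ is \emph{singular} at $p$. If such a $T$ exists and is irreducible, then (cat or not) one may assume $T\not\subset\Supp(D)$ and obtain $2=T\cdot D\ge 2\mult_p D>2$; if $T=L_1+L_2$ with $p\in L_1\cap L_2$, a short argument with either line gives $1=L_i\cdot D\ge\mult_p D>1$. Either way this case closes immediately, and one is reduced to the situation in which every anticanonical curve through $p$ is smooth at $p$. Your ``$C=\phi^*\ell$ with $\ell$ tangent to the branch quartic, so that $C$ acquires multiplicity two at $p$'' seems to be reaching for this case but conflates two different things: tangency of $\ell$ to $Q$ does not in general make $\phi^*\ell$ singular at $p$, and the curve you want is determined by $D$, not by the branch geometry alone.

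Second, once every member through $p$ is smooth there, the paper blows up $p$, locates the point $q\in E$ where $(\widetilde S,\widetilde D+(\mult_pD-1)E)$ is still not log canonical, and takes the \emph{unique} member $C$ of the pencil whose strict transform passes through $q$. This is the correct choice (adapted to $D$), and it makes the numbers close with nothing more than Lemma~\ref{lem:adjunction}(iii): if $C$ is irreducible, $2-\mult_pD=\widetilde C\cdot\widetilde D\ge\mult_q\widetilde D>2-\mult_pD$; if $C=L_1+L_2$ with $p\in L_1$, $p\notin L_2$, then $L_1\subset\Supp(D)$, one writes $D=mL_1+\Omega$, bounds $m\le\tfrac12$ via $1=L_2\cdot D\ge 2m$, and adjunction on $\widetilde L_1$ gives $1<2m\le 1$. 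There is no chain of blow-ups, no appeal to Theorem~\ref{thm:inequality-Cheltsov}, and no ``assembling'' of a cat from components of $D$: the argument terminates in a numerical contradiction, not in exhibiting a cat inside $\Supp(D)$. That last point is where your plan would genuinely go wrong---finding a single line $L_1\subset\Supp(D)$ does not by itself produce an anticanonical divisor in $\Supp(D)$, and nothing in the situation forces the complementary line $L_2$ to lie there as well.
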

\begin{proof}
It is enough to show that if $D$ is a tiger of $S$ and $(S,D)$ is not log canonical, then $T\subset \Supp(D)$ for some curve $T\in \vert-K_S\vert$ such that $(S,T)$ is not log canonical. Suppose this is not the case. By Lemma \ref{lem:delPezzo-lc-cod1-deg1-to-3} $(S,D)$ is not log canonical at an isolated $p\in \LCS(S,D)$. Consider the linear system $\calT\subset \vert-K_S\vert$ given by all curves singular at $p$. If $\exists T\in \calT$, then we may assume that $T\not\subset\Supp(D)$ either because $T$ is a cat or by Lemma \ref{lem:convexity} (when $(S,T)$ is log canonical). If $T$ is irreducible, then $T\not\subset \Supp(D)$ and we obtain a contradiction: $2=T\cdot D\geq 2\mult_pD>2$. Hence $T=L_1+L_2$ is the union of two lines intersecting at $p$, and such that $L_i^2=-1$ and $L_1\cdot L_2=2$. If $(L_1\cdot L_2)\vert_p=1$ then $(S,L_1+L_2)$ is log canonical and we may assume by Lemma \ref{lem:convexity} that $L_1\not\subset \Supp(D)$. If $(L_1\cdot L_2)\vert_{p}=2$, then $L_1+L_2$ is a cat, so without loss of generality $L_1\not\subset \Supp(D)$. We obtain a contradiction by Lemma \ref{lem:adjunction} (i):
$$1=D\cdot L_1\geq \mult_pD>1.$$

Hence all $T\in \vert -K_S\vert$ with $p\in T$ are smooth at $p$. Let $\pi:\widetilde S \ra S$ be the blow-up of $p$ with exceptional divisor $E$. By Lemma \ref{lem:log-pullback-preserves-lc} the log pullback
$$(\widetilde S, \widetilde D + (\mult_pD-1)E)$$
is not log canonical at some $q\in E$. We may take a general $C\in \vert-K_S\vert$ passing through $p$ such that $C\not\subseteq\Supp(D)$ by Lemma \ref{lem:del-Pezzo-sections-anticanonical}. Then
$$\mult_p(D)\leq D\cdot C=K_S^2=2.$$

The linear system $\calL=\vert\pi^*(-K_S)-E\vert$ is a pencil by Lemma \ref{lem:del-Pezzo-sections-anticanonical}. Take $\widetilde C\in \calL$ such that $q\in \widetilde C$ and let $C=\pi_*(\widetilde C)$. Then $p\in C\in\vert-K_S\vert$ and its strict transform $\widetilde C$ contains $q$. Then
\begin{equation}
\widetilde C \cdot \widetilde D = C\cdot D -\mult_pD = 2-\mult_p D.
\label{eq:del-Pezzo-cat-degree-2-aux1}
\end{equation}
If $C$ is irreducible, then $C$ is a curve of genus $1$ smooth at $p$ and by Lemma \ref{lem:convexity} we may assume that $C\not\subset \Supp(D)$. Then
$$\widetilde C \cdot \widetilde D \geq \mult_q \widetilde D >2-\mult_pD $$
by Lemma \ref{lem:adjunction} (i). Together with \eqref{eq:del-Pezzo-cat-degree-2-aux1}, this gives a contradiction. Hence $C$ is reducible and smooth at $p$.

Write $C=L_1+L_2$. We have that $L_1,L_2\neq E$, since otherwise $\bar C = \pi_*(C)\in \vert-K_S\vert$ would be singular at $p$. Hence we may assume $p\in L_1$, $q\in \widetilde L_1$ and $p\not\in L_2$. The line $L_1\subset\Supp(D)$, since otherwise $1=D\cdot L_1 \geq \mult_pD>1$, by Lemma \ref{lem:adjunction} (i). We write $D=mL_1 + \Omega$, where $m>0$ and $L_1\not\subset\Supp(D)$. Since $(S, L_1+L_2)$ is log canonical, by Lemma \ref{lem:convexity}, we may assume that $L_2\not\subset\Supp(\Omega)$. We observe that $m\leq \frac{1}{2}$. Indeed:
$$1=D\cdot L_2\geq 2m + L_2\cdot \Omega\geq 2m.$$
Recall that $(\widetilde S, \widetilde D + (\mult_pD -1)E)$ is not log canonical at $q=\widetilde L_1\cap E$. By Lemma \ref{lem:adjunction} (iii) applied to this pair with $\widetilde L_1$, we obtain a contradiction:
\begin{align*}
1&<\widetilde L_1\cdot (\widetilde \Omega + (\mult_pD-1)E)\\
 &=L_1\cdot \Omega - \mult_p\Omega+m+\mult_p\Omega-1\\
 &=L_1\cdot (D-mL_1)-1+m\\
 &=2m\leq 1.
\end{align*}
Therefore $\Supp(D)$ must contain a cat.
\end{proof}
\begin{lem}
\label{lem:del-Pezzo-cat-list-degree-2}
Let $S$ be a non-singular del Pezzo surface of degree $2$. The cats of $S$ are cuspidal rational curves in $\vert -K_S\vert$ or two lines $L_1+L_2\sim -K_S$ intersecting at a single point.
\end{lem}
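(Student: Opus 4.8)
The plan is to use that $\Cat(S)=1$ (Lemma~\ref{lem:del-Pezzo-Cat-index}), so by Definition~\ref{dfn:cat} a cat of $S$ is precisely an effective divisor $C\in\vert-K_S\vert$ for which $(S,C)$ is not log canonical. Hence the task reduces to classifying the members $C\simq-K_S$ of $\vert-K_S\vert$ that fail to be log canonical. First I would record the numerics: since $C\simq-K_S$ we have $-K_S\cdot C=K_S^2=2$ and $C^2=2$, so the genus formula (Lemma~\ref{lem:genus-formula}) gives $p_a(C)=1$. Moreover $C$ is reduced, for writing $C=\sum m_iC_i$ with each component of anticanonical degree at least $1$, the only non-reduced possibility is $C=2C_1$ with $C_1$ a line, which forces $C^2=-4\neq 2$. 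Thus $C$ has at most two components, each of anticanonical degree $1$; by Lemma~\ref{lem:del-Pezzo-deg2-all-lines-rational} and the genus formula each such component has $p_a=0$ and self-intersection $-1$, hence is a line.

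Next I would treat the irreducible case. If $C$ is irreducible and reduced with $p_a(C)=1$, the relation $p_a(C)=g(C)+\sum_p\delta_p$ forces either $g(C)=1$, so that $C$ is a smooth elliptic curve and $(S,C)$ is log canonical; or $g(C)=0$ and $C$ has a single singular point of delta invariant $1$, which is analytically a node or a cusp. A node is a normal crossing, so the pair is log canonical; a cusp gives a non-log-canonical pair, with $\lct_p=5/6$ as in Example~\ref{exa:P2cuspidal}. Therefore the only irreducible cats are the cuspidal rational curves in $\vert-K_S\vert$.

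Then the reducible case: $C=L_1+L_2$ with $L_1,L_2$ distinct lines, $L_i^2=-1$, and $C^2=2$ forces $L_1\cdot L_2=2$. If $L_1$ and $L_2$ meet at two distinct points, then $\Supp(C)$ has simple normal crossings and $(S,C)$ is log canonical; if they meet at a single point $p$ then necessarily $(L_1\cdot L_2)\vert_p=2$, i.e. they are tangent, and I would show $(S,C)$ is not log canonical by blowing up $p$ twice. The first blow-up produces an exceptional curve $E$ with coefficient $\mult_pC-1=1$ through which $\widetilde L_1,\widetilde L_2,E$ all pass (by tangency); the second blow-up of that common point produces, by Lemma~\ref{lem:log-pullback-preserves-lc}, an exceptional divisor appearing with coefficient $2>1$ in the log pullback, so the pair is not log canonical. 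This, together with the discussion of reducible anticanonical members already carried out in Lemma~\ref{lem:del-Pezzo-cat-degree-2}, gives the second family of cats.

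The numerics are routine; the delicate points are the singularity analysis. The main obstacle is verifying that the tangent two-line configuration is genuinely not log canonical and, more subtly, controlling the characteristic-$2$ behaviour: by Lemma~\ref{lem:del-Pezzo-deg2-lines} the lines of $S$ map to bitangents of the branch quartic, and in characteristic $2$ more pairs of lines meet at a single (tangent) point, so the family of reducible cats is larger although its description via local intersections is unchanged. Phrasing the classification in terms of the local intersection $(L_1\cdot L_2)\vert_p$, rather than a count of intersection points, is precisely what makes the statement characteristic-free.
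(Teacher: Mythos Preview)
Your proposal is correct and follows the same overall structure as the paper: reduce via $\Cat(S)=1$ to classifying non-log-canonical members of $\vert-K_S\vert$, then split into irreducible and reducible cases. The reducible case is handled identically. The irreducible case is where you differ: the paper chooses a model $\pi\colon S\to\bbP^2$, notes that $D\cdot E_i=1$ forces the singular locus of $D$ to avoid the exceptional curves, and pushes forward to an irreducible plane cubic $\bar D$; the classification of singular plane cubics then gives node or cusp, and only the cusp survives. Your intrinsic argument via $p_a(C)=g(\widetilde C)+\sum_p\delta_p$ reaches the same conclusion without invoking a model, which is arguably cleaner but relies on a formula not stated in the paper. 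Both routes are valid in arbitrary characteristic.

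You are also more thorough than the paper on two points: you explicitly rule out the non-reduced possibility $C=2L$, and you verify by a two-blow-up computation that the tangent two-line configuration is genuinely not log canonical. The paper takes both for granted (the latter is implicitly used already in the proof of Lemma~\ref{lem:del-Pezzo-cat-degree-2}), so your additions are welcome. Your closing remarks about characteristic $2$ are accurate commentary but not needed for the proof itself; the statement and argument are already characteristic-free once phrased in terms of local intersection numbers, as you note.
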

\begin{proof}
Since $\Cat(S)=1$ by \cite{JMGlctCharP}, the cats of $S$ are elements $D\in \vert -K_S\vert$ such that $(S, D)$ is not log canonical. In particular, $D$ is singular and $\deg D=2$.

If $D\in \vert-K_S\vert$ is irreducible, we may choose a model $\pi\colon S \ra \bbP^2$ which contracts the lines $E_1,\ldots, E_7$. Since $D\cdot E_i=1$ $\forall i$, the image $\bar D = \pi_*(D)\sim-K_{\bbP^2}$ is an irreducible cubic curve in $\bbP^2$, smooth at $p_1,\ldots, p_7$, where $p_i=\pi(E_i)$. Since $1=D\cdot E_i\geq\mult_{E_i\cap D} D$, the morphism $\pi$ is an isomorphism around $\Sing(D)$. The pair $(\bbP^2, \bar D)$ is not log canonical and $\bar D$ is an irreducible cubic curve. Therefore $\bar D$ has precisely one singularity, a cuspidal point.

If $D\in\vert-K_S\vert$ is reducible, then $D=L_1+L_2$, the union of two lines. Observe that
$$L_1\cdot L_2=-K_S\cdot L_1-L_1^2=2.$$
The lines $L_1$ and $L_2$ must intersect at precisely one point, since otherwise $L_1+L_2$ would have simple normal crossings and $(S, L_1+L_2)$ would be log canonical.
\end{proof}

\begin{lem}
\label{lem:del-Pezzo-cat-list-degree-3}
Let $S\subset \bbP^3$ be a non-singular del Pezzo surface of degree $3$ embedded by $\vert-K_S\vert$. Consider elements in $\vert-K_S\vert$ corresponding to hyperplane sections. Given any point $p\in S$, let $T\in\vert-K_S\vert$ be the unique section which is singular at $p$, corresponding to the tangent hyperplane section. We can classify each point $p\in S$ according to the singularities of $T$:
\begin{itemize}
	\item[(IA)] The curve $T$ is irreducible, rational, and has a nodal singularity at $p$.
	\item[(IB)] The curve $T$ is irreducible, rational, and has a cuspidal singularity at $p$.
	\item[(IIA)] The curve $T$ is reducible, $T=L+C$, the union of a line $L$ and a conic $C$, intersecting with simple normal crossings, one of them at $p$.
	\item[(IIB)] The curve $T$ is reducible, $T=L+C$, the union of a line $L$ and a conic $C$, intersecting only at $p$ with a tacnodal singularity.
	\item[(IIIA)] The curve $T$ is reducible, $T=L_1+L_2+L_3$, the union of three lines intersecting with simple normal crossings, where $L_1\cap L_2=\{p\}$ and $p\not \in L_3$.
	\item[(IIIB)] The curve $T$ is reducible, $T=L_1+L_2+L_3$, the union of three lines intersecting only at $p$. We say that $p$ is an \textbf{Eckardt point}.
\end{itemize}
In the above cases, the pair $(S,T)$ is log canonical for (IA), (IIA) and (IIIA) but not for the rest.
\end{lem}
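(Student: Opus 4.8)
The plan is to use the anticanonical embedding of Theorem~\ref{thm:del-Pezzo-classification-embeddings}(iii), under which $-K_S\cong\calO_S(1)$ realises $S$ as a cubic surface in $\bbP^3$ and the members of $\vert-K_S\vert$ are exactly the plane cubic curves $H\cap S$. Every such section $T$ satisfies $K_S\cdot T+T^2=-3+3=0$, so $p_a(T)=1$ by the genus formula (Lemma~\ref{lem:genus-formula}). For $p\in S$ and a hyperplane $H\ni p$, the section $H\cap S$ is singular at the smooth point $p$ precisely when $H$ equals the tangent plane $H_p$ to $S$ at $p$, since the tangent line $H\cap H_p$ to the section degenerates exactly then; this yields at once the existence and uniqueness of $T:=H_p\cap S$. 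Finally, by Lemma~\ref{lem:delPezzo-lc-cod1-deg1-to-3} the pair $(S,T)$ is log canonical in codimension~$1$, so $T$ is reduced and each of its components occurs with coefficient~$1$.

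Next I would classify the singularity of $T$. Its irreducible components are smooth: a line is $\bbP^1$, and an irreducible conic has $p_a=0$ by Lemma~\ref{lem:del-Pezzo-all-conics-rational}. By degree, $T$ is irreducible of degree $3$, a conic plus a line, or three lines. If $T$ is irreducible and singular, its normalisation has genus $p_a(T)-\delta=1-\delta\geq 0$ with $\delta\geq 1$, forcing $\delta=1$: a single node (IA) or a single cusp (IB). If $T=C+L$ with $C$ an irreducible conic and $L$ a line, then $C\cdot L=(-K_S-L)\cdot L=-K_S\cdot L-L^2=1-(-1)=2$, so $C$ and $L$ meet either at two distinct points, each an ordinary node, with $p$ one of them (IIA), or at one point with a tacnode (IIB). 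If $T=L_1+L_2+L_3$, intersecting with each $L_i$ and using $-K_S\cdot L_i=1$, $L_i^2=-1$ gives $L_i\cdot L_j=1$ for $i\neq j$, so the three lines form either a triangle, with $p$ the vertex $L_1\cap L_2$ and $p\notin L_3$ (IIIA), or a concurrent pencil through $p$ (IIIB). In the reducible cases the singular locus of $T$ is exactly the set of pairwise intersections of its components, so $p$ lies among them.

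To finish I would read off log canonicity from the local analytic type at $p$ and at the remaining singular points. In (IA), (IIA) and (IIIA) every singular point of $T$ consists of two smooth branches meeting transversally, i.e.\ a simple normal crossing, so $(S,T)$ is log canonical. In (IB) the cusp gives $\lct_p(S,T)=\frac{5}{6}<1$ by Example~\ref{exa:P2cuspidal}, so $(S,T)$ is not log canonical. In (IIB) the tacnode is an $A_3$ singularity, so $\lct_p(S,T)=\frac{1}{2}+\frac{1}{4}=\frac{3}{4}<1$ by Lemma~\ref{lem:Igusa}. In (IIIB) the ordinary triple point has $\mult_pT=3$, so a single blow-up $\sigma$ with exceptional curve $E$ yields $a(E,S,T)=1-3=-2<-1$, and $(S,T)$ is not log canonical. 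This proves the last assertion.

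The main obstacle, and the step needing the most care, is the classification: establishing that $T$ is reduced with smooth components, that the six configurations are exhaustive, and---most delicately---correctly identifying the local singularity type at the marked point $p$, in particular using the local intersection multiplicity to separate the transverse A-cases from the tangential or concurrent B-cases. One must also be attentive to characteristic~$2$, where the tacnode in (IIB) cannot be analysed by completing the square and this configuration becomes generic; there the value $\lct_p(S,T)=\frac{3}{4}$ must be obtained instead by an explicit sequence of blow-ups. By contrast, the log canonicity computations are routine consequences of the results already established.
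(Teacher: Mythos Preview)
Your proof is correct and somewhat more detailed than the paper's. The paper's argument is shorter: it simply observes that $T$ lies in the tangent hyperplane $H_p\cong\bbP^2$, hence is a reduced singular plane cubic, and then invokes the classical classification of such cubics (irreducible nodal or cuspidal; line plus conic; three lines) to obtain the six cases directly. The log canonicity assertions are left largely implicit, with only the cuspidal case referenced to Example~\ref{exa:P2cuspidal}.

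Your approach reconstructs this classification intrinsically on $S$: you compute $p_a(T)=1$ from the genus formula, deduce reducedness from Lemma~\ref{lem:delPezzo-lc-cod1-deg1-to-3}, use the $\delta$-invariant to pin down the irreducible case, and compute the pairwise intersection numbers $C\cdot L=2$ and $L_i\cdot L_j=1$ directly via adjunction rather than reading them off in $H_p$. You also verify each lct explicitly (via Example~\ref{exa:P2cuspidal}, Lemma~\ref{lem:Igusa}, and a blow-up for the triple point), and your remark about characteristic~$2$ and the tacnode is a nice addition that the paper does not spell out. Both routes are valid; the paper's is more economical by outsourcing to the plane-cubic classification, while yours stays within the intersection-theoretic toolkit developed in the thesis and makes the log canonicity claims fully explicit.
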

Note that a general cubic surface does not have Eckardt points.
\begin{proof}
Since $T$ corresponds to a tangent hyperplane section of $S\subset \bbP^3$, a non-singular surface of degree $3$, the section $T$ corresponds to a reduced cubic curve in a hyperplane $H$ of $\bbP^3$, i.e. $H\cong \bbP^2$. Therefore $T$ is a reduced singular plane cubic curve. If $T$ is irreducible, it must have precisely one singularity, which must be nodal or cuspidal, corresponding to cases (IA) and (IB). Note that nodal singularities are log canonical, but if $T$ has a cuspidal singularity, then $\lct(\bbP^2, T)=\frac{5}{6}$ by Example \ref{exa:P2cuspidal}.

If $T$ is reducible, since $T$ is a plane cubic curve, it can only split as a conic and a line or $3$ lines. The Lemma follows.
\end{proof}
Numerically subcases A and B in each case of Lemma \ref{lem:del-Pezzo-cat-list-degree-3} are the same, i.e. the intersection matrix is the same. The only difference is in the singularity type at $p$. For this reason sometimes we will omit the letter when it makes essentially no difference.

We will need an auxiliary Lemma on del Pezzo surfaces of degree $2$ with very mild singularities before we can prove that the Cat Property is satisfied on smooth cubic surfaces.
\begin{lem}
\label{lem:del-Pezzo-cat-degree-3-aux-deg2}
Let $S$ be a non-singular del Pezzo surface of degree $2$ with at most two $A_1$ points and let $\phi\colon S \ra \bbP^2$ be the generically finite $2\colon 1$ morphism branched at a quartic curve $Q$ of $\bbP^2$ given by $\vert -K_S\vert$ as stated in Theorem \ref{thm:del-Pezzo-classification-embeddings}. 

Let $D\simq-K_S$ be an effective $\bbQ$-divisor which is log canonical in codimension $1$. The pair $(S,D)$ is log canonical $\forall p\in S$ such that $\phi(p)\not\in Q$.
\end{lem}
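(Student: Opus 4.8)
The plan is to argue by contradiction, exploiting the pencil of anticanonical curves through $p$ coming from the lines of $\bbP^2$ through $\phi(p)$. Since $-K_S=\phi^*\calO_{\bbP^2}(1)$ and $\phi$ is given by the complete linear system $\vert-K_S\vert$, every member of $\vert-K_S\vert$ has the form $\phi^*(\ell)$ for a line $\ell\subset\bbP^2$; in particular $\deg D=(-K_S)^2=2$. The crucial observation is that, because $\phi(p)\not\in Q$, the cover $\phi$ is étale over $\phi(p)$, so $p$ is a smooth point of $S$ (the singularities of $S$ lie over $\Sing(Q)\subset Q$) and, locally near $p$, $\phi$ is an isomorphism. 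Hence the pencil $\calP=\{\phi^*(\ell)\setsep \phi(p)\in\ell\}\subset\vert-K_S\vert$ consists of curves all smooth at $p$, and its members realise every tangent direction at $p$ exactly once. A general member $T_0\in\calP$ is not contained in $\Supp(D)$, so $\mult_pD\leq T_0\cdot D=2$.

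First I would suppose, for contradiction, that $(S,D)$ is not log canonical at some $p$ with $\phi(p)\not\in Q$. By hypothesis $D$ is log canonical in codimension $1$, so this locus is isolated, and $\mult_pD>1$ by Lemma \ref{lem:adjunction} (i). Let $\sigma\colon\widetilde S\ra S$ be the blow-up of the smooth point $p$, with exceptional curve $E$; since $1<\mult_pD\leq 2$, the coefficient $\mult_pD-1$ of $E$ lies in $(0,1]$, so by Lemma \ref{lem:log-pullback-preserves-lc} the pair $(\widetilde S,\widetilde D+(\mult_pD-1)E)$ is log canonical in codimension $1$ and fails to be log canonical at some isolated point $q\in E$. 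Now I would choose the unique member $T\in\calP$ whose strict transform $\widetilde T$ passes through $q$; it is smooth at $p$, so $\widetilde T\cdot E=1$.

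The heart of the argument is an application of Lemma \ref{lem:adjunction} (iii) on $\widetilde S$. Write $D=aT+\Omega$ with $a$ the coefficient of $T$ in $D$ and $T\not\subseteq\Supp(\Omega)$; since $\deg D=\deg T=2$ one has $0\leq a\leq 1$, with $a=1$ forcing $D=T$, which is smooth at $p$ and hence log canonical there, a contradiction. So assume $a<1$. Then $\widetilde D+(\mult_pD-1)E=a\widetilde T+\bigl(\widetilde\Omega+(\mult_pD-1)E\bigr)$, the coefficient of $\widetilde T$ is $a\leq 1$, and $\widetilde T$ is smooth at $q$, so Lemma \ref{lem:adjunction} (iii) gives $\widetilde T\cdot\bigl(\widetilde\Omega+(\mult_pD-1)E\bigr)>1$. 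Using $T^2=2$, $T\cdot D=2$, $\mult_pT=1$ and $E^2=-1$ one computes $\widetilde T\cdot\widetilde\Omega=T\cdot\Omega-\mult_p\Omega=2-a-\mult_pD$, whence $\widetilde T\cdot\bigl(\widetilde\Omega+(\mult_pD-1)E\bigr)=1-a\leq 1$, contradicting the strict inequality. This rules out a non-log-canonical point $p$ with $\phi(p)\not\in Q$.

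The main obstacle, and the step that genuinely uses the hypothesis $\phi(p)\not\in Q$, is guaranteeing simultaneously that $p$ is a smooth point (so that the blow-up calculus and Lemma \ref{lem:adjunction} apply unchanged) and that the anticanonical pencil through $p$ is smooth at $p$ with varying tangent directions; the rest is bookkeeping of intersection numbers. The only case needing separate care is $T\subseteq\Supp(D)$, but this is absorbed by carrying the coefficient $a$ through the computation, so that the single inequality $1-a>1$ disposes of both $T\subseteq\Supp(D)$ and $T\not\subseteq\Supp(D)$ at once.
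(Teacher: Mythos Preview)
Your argument is clean and correct when the chosen member $T\in\calP$ is irreducible, and in that case it is essentially the paper's approach (the paper phrases the contradiction via $\mult_pD+\mult_q\widetilde D>2$ and $\widetilde T\cdot\widetilde D=2-\mult_pD$, but your direct use of Lemma~\ref{lem:adjunction}~(iii) carrying the coefficient $a$ is equivalent and arguably tidier). However, there is a genuine gap: you implicitly treat $T$ as a prime divisor. The line $\ell\subset\bbP^2$ through $\phi(p)$ determined by the tangent direction $q\in E$ may be a bitangent to $Q$, in which case $T=\phi^*(\ell)=L_1+L_2$ splits as a sum of two lines with $p\in L_1$, $p\notin L_2$ (the intersection $L_1\cap L_2$ lies over $\ell\cap Q$, hence away from $p$). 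In that situation ``the coefficient of $T$ in $D$'' is not well-defined, and Lemma~\ref{lem:adjunction}~(iii) cannot be applied with the reducible curve $\widetilde T$: the lemma requires $C$ irreducible. Your numerical identity $\widetilde T\cdot(\widetilde\Omega+(\mult_pD-1)E)=1-a$ is still formally valid, but the inequality $>1$ no longer follows.

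The paper handles this reducible case separately. One writes $D=aL_1+\Omega$ with $L_1\not\subset\Supp(\Omega)$; since $(S,L_1+L_2)$ is log canonical at $p$ (only $L_1$ passes through $p$), convexity (Lemma~\ref{lem:convexity}) lets one assume $L_2\not\subset\Supp(\Omega)$ as well, because removing $L_1$ would force $1=L_1\cdot D\geq\mult_pD>1$. Then $1=L_2\cdot D\geq (L_1\cdot L_2)a$ bounds $a$ from above, while Lemma~\ref{lem:adjunction}~(iii) applied with the irreducible curve $\widetilde L_1$ yields $\widetilde L_1\cdot(\widetilde\Omega+(\mult_pD-1)E)=(1-L_1^2)a>1$. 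Since $L_1\cdot L_2=1-L_1^2=2-\tfrac{x}{2}$ (with $x$ the number of $A_1$ points on $L_1\cup L_2$), these two inequalities contradict each other. This extra step is where the hypothesis ``at most two $A_1$ points'' actually enters, and it cannot be absorbed into your uniform computation.
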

The above Lemma gives some strong indication that del Pezzo surfaces with one or two $A_1$ points should satisfy the Cat Property. Since we did not need such a strong result we decided to restrict to the above statement.
\begin{proof}[Proof of Lemma \ref{lem:del-Pezzo-cat-degree-3-aux-deg2}]
Suppose for contradiction that $(S,D)$ is not log canonical at some $p\in \Supp(D)$ such that $\phi(p)\not\in Q$. In particular $S$ is smooth at $p$. Indeed, by \cite{DuVal1}, \cite{DuVal2}, \cite{DuVal3} or the more recent \cite[Section 8.7.1]{DolgTopicsClass}, the singularities of $S$ are mapped to singularities of the curve $Q$. Moreover $S$ is smooth if and only if $Q$ is smooth, $S$ has one (two) $A_1$ points if and only if $Q$ has one (two) nodal singularities.

Curves $L_\lambda\in\vert-K_S\vert$ containing $p$ are precisely the preimages by $\phi$ of lines $\bar L_\lambda$ containing $\phi(p)$. It follows that $\{L_\lambda\}$ is a pencil. Since $\phi(p)\not\in Q$, then all $L_\lambda$ are smooth at $p$. Take $L_\lambda$ general, then $L_\lambda \not\subset \Supp(D)$. Therefore
\begin{equation}
\mult_pD \leq D\cdot L_\lambda=2.
\label{eq:del-Pezzo-cat-degree-3-aux-deg2-bound1}
\end{equation}
Let $\sigma\colon \widetilde S \ra S$ be the blow-up of $p$ with exceptional curve $E\cong\bbP^1$, $E^2=-1$ since $S$ is smooth at $p$. Let $\widetilde Z$ be the strict transform in $\widetilde S$ of any $\bbQ$-divisor $Z$ in $S$. By Lemma \ref{lem:log-pullback-preserves-lc}, the pair
$$(\widetilde S, \widetilde D + (\mult_pD -1)E)$$
is not log canonical at some $q\in E$ but it is log canonical near $q$ by \eqref{eq:del-Pezzo-cat-degree-3-aux-deg2-bound1}. By Lemma \ref{lem:adjunction} (i) applied to this pair, we obtain
\begin{equation}
\mult_pD + \mult_q\widetilde D >2.
\label{eq:del-Pezzo-cat-degree-3-aux-deg2-bound2}
\end{equation}
Let $L\in \{L_\lambda\}$ be the unique element such that $q\in \widetilde L$. By Lemma \ref{lem:convexity}, we may assume that $L\not\subseteq \Supp(D)$, since $(S,L)$ is smooth at $p$. If $L$ is irreducible, then
$$2-\mult_p D = \widetilde L \cdot \widetilde D \geq \mult_q \widetilde D,$$
contradicting \eqref{eq:del-Pezzo-cat-degree-3-aux-deg2-bound2}.

Therefore $L=L_1+L_2$ the union of two lines with $p\in L_1$ and $q\in \widetilde L_1$ but $p\not\in L_2$ since $L$ is smooth at $p$. Moreover $L_1\cdot L_2=(2-\frac{x}{2})$ where $0\leq x \leq 2$ is the number of $A_1$ points in $S$ that lie in $L_1\cup L_2$. By Lemma \ref{lem:convexity}, we may assume that either $L_1\not\subset\Supp(D)$ or $L_2\not\subset\Supp(D)$. However $L_1\subset\Supp(D)$ since otherwise
$$1=L_1\cdot D \geq \mult_pD>1,$$
by Lemma \ref{lem:adjunction} (i). Therefore we may write $D=aL_1+\Omega$ with $L_1, L_2\not\subset\Supp(\Omega)$ and $a>0$. On one hand
\begin{equation}
1=L_2\cdot D =(2-\frac{x}{2})a + L_2\cdot \Omega\geq (2-\frac{x}{2})a.
\label{eq:del-Pezzo-cat-degree-3-aux-deg2-bound3}
\end{equation}
On the other hand, since $(\widetilde S, a \widetilde L_1 + \widetilde \Omega + (\mult_p D-1)E)$ is not log canonical at $q\not\in E$, then
\begin{align*}
 1&<\widetilde L_1\cdot (\widetilde \Omega +(\mult_p D-1)E)\\
	&=L_1\cdot \Omega-\mult_p\Omega + a + \mult_p\Omega-1\\
	&=L_1\cdot(D-aL_1)+a-1\\
	&=(1-L_1^2)a\\
	&=(2-\frac{1}{2}x)a,
\end{align*}
which contradicts \eqref{eq:del-Pezzo-cat-degree-3-aux-deg2-bound3}.
\end{proof}

\begin{thm}
\label{thm:del-Pezzo-cat-degree-3}
Let $S$ a non-singular del Pezzo surface of degree $3$. Then $S$ satisfies the Cat Property. In particular, the cats of $S$ are the tangent hyperplane sections from Lemma \ref{lem:del-Pezzo-cat-list-degree-3} in cases (IB), (IIB) and (IIIB).
\end{thm}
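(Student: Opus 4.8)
The plan is to prove that every tiger $D\simq -K_S$ has a cat in its support; since $\Cat(S)=1$ by Lemma \ref{lem:del-Pezzo-Cat-index}, it suffices by Corollary \ref{cor:higher-dim-convexity} (as in the proof of Lemma \ref{lem:Cat-property-not-lc-has-cats}) to produce some $T\in\vert-K_S\vert$ with $\Supp(T)\subseteq\Supp(D)$, and by Lemma \ref{lem:del-Pezzo-cat-list-degree-3} the members of $\vert-K_S\vert$ that are not log canonical are exactly the tangent sections of types (IB), (IIB), (IIIB); so the same argument both verifies the Cat Property and identifies the cats. Assume for contradiction that $D$ is a tiger whose support contains no cat. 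By Lemma \ref{lem:delPezzo-lc-cod1-deg1-to-3} the pair $(S,D)$ is log canonical in codimension $1$, hence fails to be log canonical at an isolated point $p$. Since $\dim\vert-K_S\vert=3$ by Lemma \ref{lem:del-Pezzo-sections-anticanonical}, a general $C\in\vert-K_S\vert$ through $p$ is irreducible, smooth, and not contained in $\Supp(D)$, so Lemma \ref{lem:adjunction} gives $\mult_p D\le D\cdot C=K_S^2=3$; a separate argument with the tangent section then sharpens this to $\mult_p D\le 2$.

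The core idea is to reduce to degree $2$. Let $\sigma\colon\widetilde S\ra S$ be the blow-up of $p$ with exceptional curve $E$; by Lemma \ref{lem:log-pullback-preserves-lc} the pair $(\widetilde S,\widetilde D+(\mult_p D-1)E)$ is not log canonical at some $q\in E$, and one checks $\widetilde D+(\mult_p D-1)E\simq -K_{\widetilde S}$, while $E^2=-1$ and $(-K_{\widetilde S})\cdot E=1$. Every line of $S$ through $p$ lies on the tangent section $T_p$ and has strict transform a $(-2)$-curve on $\widetilde S$; contracting these produces a del Pezzo surface $\widehat S$ of degree $2$ with one $A_1$ point for each line through $p$, on which $E$ becomes a line. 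By the classification of Lemma \ref{lem:del-Pezzo-cat-list-degree-3} the number of lines through $p$ is $0$ for types (IA), (IB), one for (IIA), (IIB), two for (IIIA) and three for (IIIB), so outside the Eckardt case $\widehat S$ has at most two $A_1$ points.

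For the non-Eckardt types I would push the pair down to $\widehat S$, obtaining an effective divisor $\simq -K_{\widehat S}$ that is log canonical in codimension $1$ and not log canonical at the image $\widehat q$ of $q$. As the image $\widehat E$ of $E$ is a line of $\widehat S$, Lemma \ref{lem:del-Pezzo-deg2-lines} shows $\phi(\widehat E)$ is a bitangent of the branch quartic $Q$; for a general position of $q$ on $E$ one has $\phi(\widehat q)\notin Q$, and the auxiliary Lemma \ref{lem:del-Pezzo-cat-degree-3-aux-deg2} gives the contradiction. The finitely many special positions of $q$—where $\widehat q$ lands on an $A_1$ point (i.e.\ $q$ meets the strict transform of a line through $p$) or where $\phi(\widehat E)$ touches $Q$—are disposed of by the intersection-theoretic method of Lemmas \ref{lem:del-Pezzo-cat-degree-1} and \ref{lem:del-Pezzo-cat-degree-2}: write $D=\sum_i a_iL_i+\Omega$ over the lines $L_i$ through $p$, bound the $a_i$ by intersecting with suitable lines and conics, and apply Lemma \ref{lem:adjunction} on $\widetilde S$ to violate log canonicity.

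The Eckardt case (IIIB), with $T_p=L_1+L_2+L_3$ all meeting at $p$, must be handled directly, since three $A_1$ points fall outside the auxiliary lemma. Here I would use Lemma \ref{lem:convexity} to peel off the lines one at a time, write $D=a_1L_1+a_2L_2+a_3L_3+\Omega$, bound the $a_i$ from $(-K_S)\cdot L_i=1$ together with $L_i\cdot L_j=1$, and reach a contradiction via Lemma \ref{lem:adjunction} after blowing up $p$. The main obstacle will be the bookkeeping in these special and Eckardt configurations: determining exactly where the non-log-canonical point $q$ may sit and, in each position, selecting the auxiliary curve whose intersection with $D$ makes the adjunction inequality fail.
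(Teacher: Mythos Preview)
Your overall framework matches the paper: blow up $p$, observe that lines through $p$ become $(-2)$-curves, contract them to a degree-$2$ del Pezzo $\widehat S$ with $A_1$ points, and invoke Lemma~\ref{lem:del-Pezzo-cat-degree-3-aux-deg2} to force the bad point $q\in E$ onto $\widetilde T\cap E$. For types (I) and (II) this, together with a short intersection computation against $\widetilde T$ (resp.\ the conic component $\widetilde C$), forces $T\subseteq\Supp(D)$, and you are done.

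However, you have inverted the difficulty of the two type-(III) cases. The Eckardt case (IIIB) is \emph{trivial}: the observation you already made---any line $L$ through $p$ satisfies $L\subseteq\Supp(D)$, else $1=L\cdot D\geq\mult_pD>1$---immediately gives $T=L_1+L_2+L_3\subseteq\Supp(D)$, and $T$ is a cat. No degree-$2$ analysis is needed there.

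The genuine obstacle is (IIIA), where $p=L_1\cap L_2$, $p\notin L_3$, and $T$ is log canonical, so one must derive a contradiction. Here $\widehat S$ has only two $A_1$ points and Lemma~\ref{lem:del-Pezzo-cat-degree-3-aux-deg2} applies, but it only tells you $q\in\{\widetilde L_1\cap E,\widetilde L_2\cap E\}$; your proposed ``bound the $a_i$ and apply adjunction'' does not close this: writing $D=c_1L_1+c_2L_2+c_3L_3+\Omega$, the intersections $D\cdot L_i=1$ do not by themselves bound $c_1+c_2+\mult_p\Omega$ sharply enough. The paper's device is the \emph{Geiser involution}: blow up $p$ and contract $\widetilde L_3$ (the unique $(-1)$-component of $\widetilde T$) to land on a new cubic $\bar S\cong S$ where the pushed-forward pair is again a tiger at a point of type (IIIA), with the three lines now $\bar L_1,\bar L_2,\bar E$. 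The crucial Claim~\ref{clm:del-Pezzo-cat-list-degree-3-aux2} shows that $\deg\bar\Omega<\deg\Omega$ strictly at each step; since the degrees lie in a fixed set $\{\sum a_i n_i:n_i\in\bbZ_{>0}\}$ with the $a_i$ unchanged, iterating violates the descending chain condition. This DCC argument, which the author credits to Cheltsov, is the missing idea in your plan.
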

\begin{proof}
Suppose $(S,D)$ is not log canonical, where $D$ is a tiger. By Lemma \ref{lem:delPezzo-lc-cod1-deg1-to-3} the locus of log canonical singularities $\LCS(S,D)$ consists of isolated points. Let $p\in S$ be one of those points. Let $T\in \vert-K_S\vert$ be the tangent hyperplane section at $p$, classified depending on $p$ as in Lemma \ref{lem:del-Pezzo-cat-list-degree-3}. Note that $\mult_p(T)=2$ for all cases apart from (IIIB) where $\mult_pT=3$. We need to prove that in cases (IB), (IIB) and (IIIB) we have $T\subseteq\Supp(D)$, i.e. exactly when $(S,T)$ is not log canonical, and therefore, \textit{post factum}, $T$ is a cat. Furthermore, we need to prove that in cases (IA), (IIA) and (IIIA), the pair $(S,D)$ is in fact log canonical, i.e. we must achieve a contradiction. Note that, by Lemma \ref{lem:del-Pezzo-cat-list-degree-3} we may assume that $T\neq D$.

First of all notice that if $L$ is a line with $p\in L$, then $L\subset \Supp(D)$, since otherwise
\begin{equation}
1=L\cdot D \geq \mult_pD>1,
\label{eq:del-Pezzo-cat-degree3-aux1}
\end{equation}
by Lemma \ref{lem:adjunction} (i), which is absurd. In particular when $p$ is of type (IIIB) this finishes the proof, since all components of $T$ are in $\Supp(D)$.

Let $\sigma:\widetilde S \ra S$ be the blow-up at $p$ with exceptional divisor $E$. Denote by $\widetilde Q=(\sigma^{-1})_*(Q)$, the strict transform of any $\bbQ$-divisor $Q$ in $S$. The strict transform $\widetilde T$ of $T$ contains exactly one irreducible smooth rational component $\widetilde F$ such that $(\widetilde F)^2=-1$. Indeed, in case (I) we have $F=T$, since $T$ is irreducible. In case (II) we have $F=C$. Finally, in case (IIIA) we have $F=L_3$. Note that in all three cases $(\widetilde F)^2=-1$ and its genus is $g(\widetilde F)=0$. Therefore $F$ is contractible. In general terms $\widetilde S$ is a weak del Pezzo surface, i.e. for all irreducible curves $Q\subset \widetilde S$ we have $-K_{\widetilde S}\cdot Q\geq 0$. In fact $\widetilde S$ is a del Pezzo surface of degree $2$ in case (I). In case (II) $-K_{\widetilde S}\cdot Q=0$ if and only $Q=\widetilde L$, and in case (III) we have $-K_{\widetilde S}\cdot Q=0$ if and only if $Q=\widetilde{L_1},\widetilde{L_2}$. Note that in (II) and (III) $(\widetilde Q)^2=-2$, i.e. the only curves in which $-K_{\widetilde S}$ fails to be ample are the strict transforms of lines passing through $p$.

Let $\bar \sigma : \widetilde S \ra \bar S$ be the contraction of $F$. The surface $\bar{S}$ is smooth since $F$ is a rational $(-1)$-curve. For any $\bbQ$-divisor $\widetilde B$ in $\widetilde S$, denote by $\bar{B}:=\bar{\sigma}_*(\widetilde B)$. Let $\bar{E}:=\bar{\sigma}_*(E)$ and $\bar{p}:=\bar{\sigma}(F)$. The class $-K_{\bar S}$ is ample, since the only $(-2)$-curves $Q$ described above, intersect $F$ with $Q\cdot F=1$ and therefore $\bar{\sigma}_*(Q)$ are rational $(-1)$-curves. Also $K_{\bar S}^2=3$, so $\bar{S}$ is a non-singular del Pezzo surface of degree $3$. In fact, $S$ is abstractly isomorphic to $\bar{S}$ and the composition
$$\bar{\sigma}\circ \sigma^{-1}:S \ldra \bar{S}\cong S$$
is known as the \textit{Geiser involution}, since $\sigma^2=\text{Id}_S$. The Geiser involution is described in detail using this language in \cite[Section 2.6]{Corti-Pukhlikov-Reid}. Roughly speaking we have substituted the curve $F$ with $\bar{E}$. In particular, $T':=\bar{\sigma}_*((\sigma^{-1})_*(T))+ \bar{E}\sim-K_{\bar S}$ has $\bar{p}=\bar{\sigma}(F)$ the same singularity type as $p$, in the sense of Lemma \ref{lem:del-Pezzo-cat-list-degree-3}.

We will consider the log pullback of $D$ under $\sigma$ and $\bar{\sigma}$. By Lemma \ref{lem:log-pullback-preserves-lc}, the pairs
$$(\widetilde S, \widetilde D + (\mult_pD-1)E)\quad \text{and} \quad (\bar{S}, \bar{D}+(\mult_pD-1)\bar{E})$$
are not log canonical at some point $q\in E$, and $\bar{q}=\bar{\sigma}(q)\in \bar{E}=\bar{\sigma}_*(E)$, respectively. Note that
$$\widetilde D +(\mult_pD-1)E \simq -K_{\widetilde S} \quad \text{ and }\quad D':=\bar{D} + (\mult_pD-1)\bar{E}\simq-K_{\bar{S}}.$$
Since $(S,D)$ is not log canonical at $p$, then $\mult_pD-1>0$, by Lemma \ref{lem:adjunction} (i). Hence $D'$ is effective and has non-zero coefficient for $\bar{E}$. On the other hand, by Lemma \ref{lem:delPezzo-lc-cod1-deg1-to-3} we have that $1\geq \mult_pD-1$. Therefore
$$\LCS(\widetilde S, \widetilde D + (\mult_pD-1)E)\quad \text{ and } \quad \LCS(\bar S, \bar D + (\mult_pD-1)\bar E)$$
consists of isolated points, $q$ and $\bar q$, respectively.

We will show that $q\in \widetilde T \cap E$. Recall that $\widetilde D + (\mult_p D-1)E\simq-K_{\widetilde S}$. Suppose $q\not\in \widetilde T\cap E$. We will show that
$$(\widetilde S, \widetilde D +(\mult_pD-1)E)$$
is log canonical at $q\not\in E$. Let $\gamma \colon \widetilde S \ra \hat S$ be the contraction of all $(-2)$-curves in $\widetilde S$. We distinguish the following cases:
\begin{itemize}
	\item[(I)] Since $p\not\in L$ for any line $L\subset S$, the surface $\widetilde S$ is a del Pezzo surface. Therefore $-K_{\widetilde S}$ is ample and $\gamma$ is the identity. 
	\item[(II)] Recall $p\in L\subset T$. Therefore $\widetilde L^2=-2$, $-K_{\widetilde S}\cdot \widetilde L=0$ and $\gamma$ contracts $\widetilde L$ to a point $r_1$, which is a singularity of type $A_1$ is $\hat S$.
	\item[(III)] Recall $p=L_1\cap L_2$, $T=L_1+L_2+L_3$. Therefore $\widetilde L_1^2=L_2^2=-2$, $-K_{\widetilde S}\cdot \widetilde L_1=-K_{\widetilde S}\cdot \widetilde L_1=0$ and $\gamma$ contracts $\widetilde L_1, \widetilde L_2$ to points $r_1, r_2$, respectively which are du Val singularities of type $A_1$ in $\hat S$.
\end{itemize}
The surface $\hat S$ is a del Pezzo surface of degree $2$ with at most $2$ points of type $A_1$. By Theorem \ref{thm:del-Pezzo-classification-embeddings}, there is a generically finite $2:1$ morphism $\phi\colon \hat S \ra \bbP^2$ branched at a quartic curve $Q\subset \bbP^2$. The morphism $\phi$ is given by $\vert-K_{\hat S}\vert$. For any $\bbQ$-divisor $Z$ of $\widetilde S$, denote by $\hat Z=\gamma_*(Z)$. Since $q\not\in \widetilde T$, the morphism $\gamma$ is an isomorphism near $q$. Therefore the pair
$$(\hat S, \hat D + (\mult_p D -1)\hat E)$$
is log canonical at $\hat q =\gamma (q)$ if and only if $(\hat S, \hat D + (\mult_p D -1)\hat E)$ is log canonical at $q$. Observe that
$$\hat D + (\mult_p D -1)\hat E= \sigma_*(\widetilde D +(\mult_pD-1) E)\simq \sigma_*(-K_{\widetilde S})=-K_{\hat S}.$$
We distinguish the three cases:
\begin{itemize}
	\item[(I)]  The divisor
	$$\hat T + \hat E=\gamma_*(\widetilde T + E)\sim \gamma_*(-K_{\widetilde S})\sim-K_{\hat S}.$$
	Therefore $\phi(\hat T)=\phi(\hat E)=H$, a line in $\bbP^2$. Since $\hat T\cdot (-K_{\hat S})=\hat E\cdot (-K_{\hat S})=1$, by Lemma \ref{lem:del-Pezzo-deg2-lines}, $H$ is bitangent to $Q$ at points $\phi(r)$ where $r\in \hat E\cap \hat T$. Since $q\not\in \widetilde T$, then $\hat q\not\in \hat T$ and by Lemma \ref{lem:del-Pezzo-cat-degree-3-aux-deg2} $(\hat S, \hat D + (\mult_p D -1)\hat E)$ is log canonical at $\hat q$.
	\item[(II)] The divisor
	$$\hat C + \hat E=\gamma_*(\widetilde C+\widetilde L + E)\sim \gamma_*(-K_{\widetilde S})\sim-K_{\hat S}.$$
	Therefore $\phi(\hat C)=\phi(\hat E)=H$, a line in $\bbP^2$. Since $\hat C\cdot (-K_{\hat S})=\hat E\cdot (-K_{\hat S})=1$, by Lemma \ref{lem:del-Pezzo-deg2-lines}, $H$ is bitangent to $Q$ at points $\phi(r)$ where $r\in \hat E\cap \hat C$. Since $q\not\in \widetilde C\cup \widetilde L$, then $\hat q\not\in \hat C$ and by Lemma \ref{lem:del-Pezzo-cat-degree-3-aux-deg2} $(\hat S, \hat D + (\mult_p D -1)\hat E)$ is log canonical at $\hat q$.
	\item[(III)]  The divisor
	$$\hat L_3 + \hat E=\gamma_*(\widetilde L_1+\widetilde L_2+\widetilde L_3+ E)\sim \gamma_*(-K_{\widetilde S})\sim-K_{\hat S}.$$
	Therefore $\phi(\hat L_3)=\phi(\hat E)=H$, a line in $\bbP^2$. Since $\hat L_3\cdot (-K_{\hat S})=\hat E\cdot (-K_{\hat S})=1$, by Lemma \ref{lem:del-Pezzo-deg2-lines}, $H$ is bitangent to $Q$ at points $\phi(r)$ where $r\in \hat E\cap \hat L_3$. Since $q\not\in \widetilde L_1\cup \widetilde L_2\cup \widetilde L_3$, then $\hat q\not\in \hat L_3$ and by Lemma \ref{lem:del-Pezzo-cat-degree-3-aux-deg2} $(\hat S, \hat D + (\mult_p D -1)\hat E)$ is log canonical at $\hat q$.
\end{itemize}
Since $\gamma$ is an isomorphism near $q$, the pair $(\widetilde S, \widetilde D + (\mult_pD-1)E)$ is an isomorphism away from $\widetilde T \cap E$. Therefore we may assume $q\in \widetilde T \cap E$.

If $p$ is of type (I), then $T\subseteq\Supp(D)$ since otherwise we obtain a contradiction, using $\mult_p D>1$ and Lemma \ref{lem:adjunction} (i) applied to the pair $(\widetilde S, \widetilde D + (\mult_p D -1)E)$:
$$2-\mult_pD<\mult_q\widetilde D \leq \widetilde T \cdot \widetilde D = T\cdot D -2\mult_pD=3-2\mult_pD\leq 2-\mult_pD.$$
Similarly, if $p$ is of type (II), then $C\subseteq \Supp(D)$, since otherwise
$$2-\mult_pD<\mult_q\widetilde D \leq \widetilde C \cdot \widetilde D = C\cdot D -\mult_pD=2-\mult_pD,$$
again by means of Lemma \ref{lem:adjunction} (i). In particular these two absurdities imply that $p$ is not of types (IA), (IB), (IIA) or (IIB), since in these cases we just proved $T\subseteq\Supp(D)$. Hence, let us assume that $p$ is of type (IIIA). We write
$$D=c_1L_1+c_2L_2+c_3L_3+\Omega \text{ where }L_1,L_2,L_3\not\subseteq \Supp(\Omega),\ c_1,c_2>0 \text{ and } c_3\geq 0,$$
where $c_1, c_2>0$ because of \eqref{eq:del-Pezzo-cat-degree3-aux1}. Write $\Omega=\sum a_i\Omega_i$ where $a_i>0$ for all $i$. Let $m:=\mult_p\Omega$.

We may assume that $b_1\geq b_2$ and $c_1\geq c_2\geq c_3$. Indeed, if this was not the case we may subtract one component of $T$ from $D$ by convexity (Lemma \ref{lem:convexity}).

The pushforward via $\bar{\sigma}$ of the log pullback via $\sigma$ of the above divisors in $\bar{S}$ corresponds to:
$$(\bar{S}, D':=c_1\bar L_1 +c_2\bar L_2 + (c_1+c_2+m-1)\bar{E}+\bar{\Omega}),$$
where $\bar{E},\bar{L_i}\not\subseteq \Supp(\bar\Omega)$ and $\deg(\bar{E})=\ \deg(\bar{L_i})=1$. This pair is not log canonical. Write $\bar\Omega=\sum a_i\bar\Omega_i$. Note that the coefficients $a_i$ for $\bar\Omega_i=\bar\sigma_*\circ\sigma^{-1}_*(\Omega_i)$ in $\bar\Omega$ are the same as for $\Omega_i$ in $\Omega$. Furthermore, note that we are in in the same situation as before we applied the Geiser Transform, i.e. we have a tiger not log canonical at a point $p$ of type (IIIA). Therefore, we may substitute $S$ for $\bar S$, $p$ for $\bar{q}$, $D$ for $D'$, $\Omega$ for $\bar{\Omega}$ $L_1$ for $\bar{L_1}$, $L_2$ for $\bar{L_2}$ and $L_3$ for $\bar{E}$. We could apply again the Geiser involution, repeating the process \textit{ad infinitum} in an inductive fashion. The only assumption we had made is that $(S,D)$ is not log canonical at $p$. We obtain as a result that $(\bar{S}, D')$ is not log canonical at $\bar{q}$. We claim the following:
\begin{clm}
\label{clm:del-Pezzo-cat-list-degree-3-aux2}
For $\Omega$ a $\bbQ$-divisor in $S$ and $\bar\Omega$ a $\bbQ$-divisor in $\bar S$ as above
$$\deg \Omega>\deg \bar{\Omega}$$
holds.
\end{clm}
Observe that, as we already mentioned, $\deg \Omega=\sum a_i\deg \Omega_i$ and $\deg \bar{\Omega}=\sum a_i\deg \bar{\Omega}_i$ share the same coefficients. On the other hand, by construction, no curve in $\Supp(\Omega)$ is contracted when applying the Geiser involution. Also, since $-K_S$ and $-K_{\bar{S}}$ are ample, $\deg \Omega_i, \deg \bar{\Omega}_i\in \bbN_{>0}$. If we consider the set $\bigcup \{\sum a_i \deg\Omega_i\}$ indexed after applying the Geiser transform a countable number of times, we notice that it satisfies the Descending Chain Condition. But this contradicts the claim, finishing the proof.
\end{proof}
The use of Claim \eqref{clm:del-Pezzo-cat-list-degree-3-aux2} is a key step to the proof of the Theorem and it was suggested to the author by I. Cheltsov.
\begin{proof}[Proof of Claim \ref{clm:del-Pezzo-cat-list-degree-3-aux2}]
We will prove $\deg \Omega-\deg \bar{\Omega}>0$. Note that $\deg D=\deg D'=K_S^2=3$ and let $m:=\mult_p\Omega$. 
We have $\mult_pD =c_1+c_2+m$ and then
\begin{equation}
\deg\Omega-\deg \bar{\Omega}=-(c_1+c_2+c_3)+(c_1+c_2+(c_1+c_2+m-1))=c_1+c_2+m-1-c_3
\label{eq:del-Pezzo-cat-list-degree-3-aux2-IIIA}
\end{equation}
However, since $(S,L_1+L_2+L_3)$ is log canonical, then $L_1+L_2+L_3\neq D$ and by Lemma \ref{lem:convexity}, applied to $(S,D)$ and $L_1+L_2+L_3$ we obtain that the pair
$$\left(S,\frac{1}{1-c_3}\left(D-c_3\left(L_1+L_2+L_3\right)\right)\right)=\left(S,\frac{1}{1-c_3}\left(\left(c_1-c_3\right)L_1+(c_2-c_3)L_2+\Omega\right)\right)$$
is not log canonical where we use the fact that $c_1\geq c_2\geq c_3$. Hence using Lemma \ref{lem:adjunction} (i) we have
$$c_1-c_3+c_2-c_3+m=(c_1-c_3)\mult_pL_1+(c_2-c_3)\mult_pL_2+\mult_p\Omega>1-c_3,$$
so $c_1+c_2+m-1-c_3>0$ and from \eqref{eq:del-Pezzo-cat-list-degree-3-aux2-IIIA} the claim is proven.
\end{proof}

Using Observation \ref{obs:cat-property-easy-glct} and the previous results in this section, we can conclude:
\begin{cor}
\label{cor:del-Pezzo-glct-low-degrees}
Let $S$ be a non-singular del Pezzo surface with $K_S^2\leq 3$. Then
$$\omega=\glct(S)=
			\begin{dcases}
					1								&\text{ when } K_S^2=1 \text{ and } \vert -K_S\vert \text{ has no cuspidal curves}\\
					\frac{5}{6}			&\text{ when } K_S^2=1 \text{ and } \vert -K_S\vert \text{ has a cuspidal curve}\\
					\frac{5}{6} 		&\text{ when } K_S^2=2 \text{ and } \vert -K_S\vert \text{ has no tacnodal curves}\\
					\frac{3}{4}			&\text{ when } K_S^2=2 \text{ and } \vert -K_S\vert \text{ has a tacnodal curve}\\
					\frac{3}{4}			&\text{ when } K_S^2=3 \text{ and } \forall C \in \vert -K_S\vert,\ C \text{ has no Eckardt points}\\
					\frac{2}{3}			&\text{ when } K_S^2=3 \text{ and } \exists C \in \vert -K_S\vert \text{ with an Eckardt point.}
			\end{dcases}
$$
\end{cor}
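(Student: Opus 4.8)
The plan is to reduce the computation to a finite local calculation by exploiting the Cat Property. By Lemma \ref{lem:del-Pezzo-Cat-index} we have $\Cat(S)=1$, and by Lemmas \ref{lem:del-Pezzo-cat-degree-1}, \ref{lem:del-Pezzo-cat-degree-2} and Theorem \ref{thm:del-Pezzo-cat-degree-3} the Cat Property holds on $S$ whenever $K_S^2\leq 3$. Hence Observation \ref{obs:cat-property-easy-glct} gives
$$\glct(S)=\min\{\lct(S,D)\setsep D\in\vert-K_S\vert\}.$$
First I would note that this minimum is at most $1$: a general member of $\vert-K_S\vert$ is a smooth (or at worst nodal) curve, for which $\lct(S,D)=1$. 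Moreover, a member $D$ contributes a value strictly below $1$ precisely when $(S,D)$ fails to be log canonical, that is, precisely when $D$ is a cat. Thus it suffices to range over the cats of $S$ and compute $\lct(S,T)=\lct_p(S,T)$ at the unique point $p$ where each cat $T$ is not log canonical.

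Next I would invoke the explicit classification of cats. By Lemmas \ref{lem:del-Pezzo-cat-list-degree-1}, \ref{lem:del-Pezzo-cat-list-degree-2} and \ref{lem:del-Pezzo-cat-list-degree-3}, every cat is reduced with each component of coefficient $1$, and its worst singularity is one of three analytic types: a cusp (type IB, and the cuspidal members in degrees $1,2$), a tacnode (type IIB, and two lines meeting at a single point in degree $2$), or an ordinary triple point arising from three concurrent lines (type IIIB, i.e.\ an Eckardt point). Since each cat is smooth or simple normal crossing away from $p$, its global log canonical threshold equals the local one at $p$, which I would compute with Lemma \ref{lem:Igusa}: the cusp $\{y^2=x^3\}$ gives $\tfrac12+\tfrac13=\tfrac56$; the tacnode $\{y^2=x^4\}$ gives $\tfrac12+\tfrac14=\tfrac34$; and the ordinary triple point $\{xy(x-y)=0\}$ gives $\tfrac23$ by a single blow-up, since the exceptional curve occurs with multiplicity $3$ and the strict transform meets it transversally in three distinct points, so $\lct_p=\max\{\lambda\setsep 1-3\lambda\geq-1\}=\tfrac23$.

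Finally I would assemble the table by taking, in each degree, the minimum of $\lct$ over the cats that actually occur. In degree $1$ the only cats are cuspidal members, so $\glct(S)=\tfrac56$ when such a curve exists and $\glct(S)=1$ otherwise. In degree $2$ the cats are cuspidal members and tacnodal pairs of lines; the presence of a tacnodal curve forces $\glct(S)=\min\{\tfrac56,\tfrac34\}=\tfrac34$, while in its absence the cuspidal members give $\glct(S)=\tfrac56$. In degree $3$ the cats are the tangent hyperplane sections of types IB, IIB and IIIB; an Eckardt point produces a type IIIB section and hence $\glct(S)=\tfrac23$, whereas with no Eckardt point the worst cat is the tacnodal type IIB, giving $\glct(S)=\tfrac34$.

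The main obstacle is not the local threshold computations, which are routine, but verifying that the relevant worst-case cat genuinely exists in each stated case, so that the minimum is attained at the claimed value rather than at a larger one. Concretely, one must check that a degree $2$ surface with no tacnodal configuration still carries a cuspidal anticanonical member (so the value is $\tfrac56$, not $1$), and that a cubic surface without Eckardt points nonetheless admits a type IIB tangent section (so the value is $\tfrac34$, not $\tfrac56$). These existence statements follow from the geometry of the branch quartic through Lemma \ref{lem:del-Pezzo-deg2-lines} in degree $2$, and from a dimension count on the hyperplane sections containing a line of the cubic in degree $3$; they are the only genuinely surface-specific input needed beyond the classification of cats already established.
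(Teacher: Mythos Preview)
Your proposal is correct and follows essentially the same approach as the paper: invoke Observation~\ref{obs:cat-property-easy-glct} together with the Cat Property (Lemmas~\ref{lem:del-Pezzo-cat-degree-1}, \ref{lem:del-Pezzo-cat-degree-2}, Theorem~\ref{thm:del-Pezzo-cat-degree-3}) and the cat classifications (Lemmas~\ref{lem:del-Pezzo-cat-list-degree-1}, \ref{lem:del-Pezzo-cat-list-degree-2}, \ref{lem:del-Pezzo-cat-list-degree-3}), then read off the local thresholds. Your write-up is in fact more explicit than the paper's one-line proof, and you correctly flag the only nontrivial residual point---the existence of the worst-case cat in each subcase (a cuspidal member in degree~$2$, a type~IIB section in degree~$3$)---which the paper leaves implicit in its references.
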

This result can be found in \cite{CheltsovLCTdP} when the ground field is $k=\bbC$ and for algebraically closed fields in \cite{JMGlctCharP}. The proof of Lemma \ref{lem:del-Pezzo-cat-degree-2} is a generalisation of the computation of $\glct(S)$ when $S$ is a del Pezzo surface of degree $2$ in \cite{JMGlctCharP}.

Not all del Pezzo surfaces satisfy the Cat Property. We have seen counter-examples for $\bbP^2$ (Example \ref{exa:plane-not-cat-property}) and $\bbF_1$ (Example \ref{exa:F1-not-cat-property}). However, a better counter-example which gives evidence for Conjecture \ref{conj:cat} is available for degrees 4--9:
\begin{exa}
\label{exa:del-Pezzo-no-cat-codimension1}
Let $S$ be a non-singular del Pezzo surface of degree $4\leq\deg S \leq 9$. We will construct an effective $\bbQ$-divisor $D\simq-K_S$ such that $mD\in \vert-mK_S\vert$ only for $m>\Cat(S)=1$ and $(S,D)$ is not log canonical. In particular, the Cat Property does not hold for $S$.

If $S=\bbP^1\times\bbP^1$, let $F_1$, $F_2$, be two general fibres of each of the projections $S\ra  \bbP^1$. Observe that for any $C\sim -K_S-F_1-F_2$, we have $C^2=2$ and $\deg C=4$, so by the genus formula $p_a(C)=0$. Applying Proposition \ref{prop:rational-surfaces-sections-rational-curves} we have that $h^0(S, \calO_S(-K_S-F_1-F_2))\geq 4$, so we may assume $C$ to be effective, $F_1,F_2\not\subset C$. Let
$$D:=\frac{3}{2}C + \frac{1}{2}(F_1+F_2)\simq-K_S.$$

Suppose $S\neq \bbP^1\times\bbP^1$. Let $\pi\colon S \ra \bbP^2$ be a model of $S$ with exceptional divisors $E_1,\ldots,E_n$ where $n=9-\deg S$, $0\leq n \leq 5$. Let $p_i=\pi(E_i)$. Let $\bar C\sim\pioplane{2}$ be a smooth conic in $\bbP^2$ passing through the points $p_1,\ldots,p_n$. This curve exists since $h^0(\bbP^2, \pioplane{2})=\binom{4}{2}=6>5$. Let $C$ be the strict transform of $\bar C $ in $S$. Then $C\sim\pioplane{2}-\sum_{i=1}^nE_i$. Let
$$D:=\frac{3}{2}C + \frac{1}{2}\sum_{i=1}^n E_i\simq-K_S.$$

In both cases, $(S,D)$ is clearly not log canonical, since $\lct(S,D)\leq\frac{2}{3}$. Moreover $D\not\in \vert-K_S\vert$ but $2D\in \vert-2K_S\vert$.

Finally, observe that $(S,D)$ is not log canonical in codimension $1$. This condition coincides with the absence of the Cat Property and together with the previous results, it is the evidence behind Conjecture \ref{conj:cat}.

Observe that this situation is not possible when $\deg S =3$, since there is no conic passing through $6$ points in general position.
\end{exa}

However the Cat Property is satisfied locally when $\deg S=4$:
\begin{lem}
\label{lem:del-Pezzo-cat-degree-4-generic}
Let $S$ be a non-singular del Pezzo surface of degree $4$ over an algebraically closed field $k$. The surface $S$ satisfies the Cat Property for all $p\in S$ such that $p$ lies in no line.
\end{lem}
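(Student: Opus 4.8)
Since $\Cat(S)=1$ by Lemma \ref{lem:del-Pezzo-Cat-index}, the Cat Property at $p$ reduces to the following: given a tiger $D\simq-K_S$ with $(S,D)$ not log canonical at a point $p$ lying on no line, I must exhibit an effective curve $\bar T\in\vert-K_S\vert$ with $\Supp(\bar T)\subseteq\Supp(D)$. The plan is to pass to a cubic surface by a single blow-up. Let $\sigma\colon\widetilde S\ra S$ be the blow-up of $p$, with exceptional curve $E$. The crucial point is that, because $p$ lies on \emph{no} line, $\widetilde S$ is again a non-singular del Pezzo surface, now of degree $3$: in terms of a model $\pi\colon S\ra\bbP^2$ realising $S$ as the blow-up of $\bbP^2$ at five points in general position (Theorem \ref{thm:del-Pezzo-classification-models}), the condition that $p$ avoid every line of $S$ (that is, every $E_i$, every $L_{ij}$ and the conic $C_{1\cdots5}$ of Lemma \ref{lem:del-Pezzo-lines9-d}) is precisely the statement that $\pi(p)$ together with the five points is again in general position; hence $\widetilde S$ is a smooth cubic del Pezzo surface.

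Next I would transfer the tiger to $\widetilde S$. By Lemma \ref{lem:adjunction}(i) we have $\mult_pD>1$, so
\[
D':=\widetilde D+(\mult_pD-1)E
\]
is an effective $\bbQ$-divisor. Writing $K_{\widetilde S}=\sigma^*K_S+E$ and $\widetilde D=\sigma^*D-(\mult_pD)E$ one checks that $K_{\widetilde S}+D'=\sigma^*(K_S+D)\simq 0$, so $D'\simq-K_{\widetilde S}$. Moreover, by Lemma \ref{lem:log-pullback-preserves-lc} the pair $(\widetilde S,D')$ is not log canonical (at some point of $E$), precisely because $(S,D)$ is not log canonical at $p$. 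Thus $D'$ is a tiger of the non-singular cubic surface $\widetilde S$.

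Finally, I would invoke the degree-$3$ case. Since the Cat Property holds on all of $\widetilde S$ (Theorem \ref{thm:del-Pezzo-cat-degree-3}) and $\Cat(\widetilde S)=1$, there is $\widetilde T\in\vert-K_{\widetilde S}\vert$ with $\Supp(\widetilde T)\subseteq\Supp(D')=\Supp(\widetilde D)\cup E$. Pushing forward, $\bar T:=\sigma_*\widetilde T$ is an effective divisor with $\bar T\sim\sigma_*(-K_{\widetilde S})=-K_S$, hence $\bar T\in\vert-K_S\vert$; and since every component of $\widetilde T$ other than $E$ is the strict transform of a component of $D$, we get $\Supp(\bar T)\subseteq\Supp(D)$. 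This is the desired anticanonical curve, so the Cat Property holds at $p$. The substantive step --- and the only place the hypothesis is used --- is the verification that the blow-up $\widetilde S$ is an \emph{honest} (smooth) del Pezzo surface rather than a weak one: this is exactly what ``$p$ lies on no line'' guarantees, and it is what allows the clean reduction to the already-established cubic case. The matching of anticanonical classes, $\sigma^*(-K_S)-E\simq -K_{\widetilde S}$, is what makes the strict transform of a tiger into a tiger of the correct class, and the behaviour of supports under $\sigma_*$ then transports the cat back down.
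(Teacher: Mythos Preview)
Your argument is correct and follows the same route as the paper: blow up the point $p$ (the hypothesis that $p$ lies on no line is exactly what makes $\widetilde S$ a genuine smooth cubic del Pezzo), transfer the tiger via the log pullback so that $D'=\widetilde D+(\mult_pD-1)E\simq-K_{\widetilde S}$ is again a tiger, invoke the Cat Property on the cubic (Theorem~\ref{thm:del-Pezzo-cat-degree-3}), and push the resulting anticanonical curve back down. The paper bundles this proof with that of Lemma~\ref{lem:del-Pezzo-cat-degree-4-generic-list}, so it additionally invokes Lemma~\ref{lem:del-Pezzo-cat-list-degree-3} to pin down the shape of the cat on $\widetilde S$ (necessarily containing $E$, since the non-lc point lies on $E$); for the bare Cat Property statement your more direct push-forward suffices, and the implicit check that $\bar T\neq 0$ follows automatically from $-K_S\not\sim 0$.
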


\begin{lem}
\label{lem:del-Pezzo-cat-degree-4-generic-list}
Let $S$ be a non-singular del Pezzo surface of degree $3$ and $p\in S$, such that it does not lie in a line. The cats of $S$ at $p$ are cuspidal rational curves in $\vert -K_S\vert$ and curves of the form
$$A+B$$
two conics such that $A+B\sim-K_S$ and $(A\cdot B)\vert_p=2$.
\end{lem}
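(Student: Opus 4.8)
The plan is to classify directly. Since $\Cat(S)=1$ (Lemma \ref{lem:del-Pezzo-Cat-index}), by Definition \ref{dfn:cat} a cat is exactly an effective $T\in\vert-K_S\vert$ with $(S,T)$ not log canonical, and a cat \emph{at} $p$ is one with $(S,T)$ not log canonical at $p$; here $K_S^2=4$, so $\deg T=(-K_S)\cdot T=4$. First I would record two constraints on such a $T$. By Lemma \ref{lem:adjunction}~(i), $\mult_pT>1$, hence $\mult_pT\geq 2$. Secondly, since $p$ lies on no line, every irreducible component of $T$ through $p$ has anticanonical degree at least $2$ (a degree-$1$ irreducible curve is a line by Lemma \ref{lem:del-Pezzo-lines-numerical} together with the genus formula, and $p$ lies on none). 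Writing $T=\sum m_iZ_i$ and summing the degrees of the components meeting $p$, which is at most $\deg T=4$, only the following configurations of components through $p$ are possible: a single irreducible conic, a single irreducible cubic, a single irreducible quartic, or exactly two irreducible conics.

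Next I would eliminate the configurations that cannot make $(S,T)$ fail to be log canonical at $p$. A conic $A$ has $p_a(A)=0$ by Lemma \ref{lem:del-Pezzo-all-conics-rational} and $A^2=0$ by the genus formula (Lemma \ref{lem:genus-formula}); it is therefore smooth at $p$ with $\mult_pA=1$. To reach $\mult_pT\geq2$ from a single conic one would need $T=2A$, but then $T^2=4A^2=0\neq 4=(-K_S)^2$, a contradiction. For an irreducible cubic $Z$ through $p$ we have $-K_S\cdot Z=3$, and the Hodge index theorem gives $4Z^2\leq 9$, so $Z^2\leq 2$; the genus formula then reads $Z^2=2p_a(Z)+1$, forcing $p_a(Z)=0$, hence $Z$ smooth with $\mult_pZ=1$. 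The remaining degree-$1$ part of $T$ is a line disjoint from $p$, so $\mult_pT=1$, again contradicting $\mult_pT\geq2$. Thus only the irreducible quartic and the two-conic configurations can produce cats at $p$.

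For the irreducible quartic $T\sim-K_S$, the genus formula gives $p_a(T)=1$, so $\sum_q\delta_q=p_a(T)-g(T)\leq 1$; a point of multiplicity $m$ contributes $\delta\geq\binom{m}{2}$, so $\mult_pT=2$ and $\delta_p=1$, while $g(T)=0$. A $\delta=1$ double point is either a node or a cusp: a node is simple normal crossings, hence log canonical, whereas a cusp is not, its local threshold being $\frac{5}{6}$ by Example \ref{exa:P2cuspidal} (Lemma \ref{lem:Igusa}). Since $g(T)=0$, this yields precisely the cuspidal rational curves in $\vert-K_S\vert$. For $T=A+B$ with $A,B$ irreducible conics through $p$, we have $A+B\sim-K_S$ and $A^2=B^2=0$, so from $(A+B)^2=(-K_S)^2=4$ we get $A\cdot B=2$. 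If $(A\cdot B)\vert_p=1$ the branches cross transversally and $(S,A+B)$ is log canonical at $p$; if $(A\cdot B)\vert_p=2$ they are tangent, forming a tacnode, which is not log canonical (threshold $\frac{3}{4}$, obtained after one or two blow-ups). This isolates exactly the curves $A+B$ with $(A\cdot B)\vert_p=2$, completing the forward direction.

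Finally I would observe that the converse is already contained in these local computations: the cuspidal curve and the tangent pair of conics are genuinely not log canonical at $p$, so both families really are cats, and no others occur. The main obstacle is making the case analysis exhaustive and pinning down the singularity type in the two surviving cases; concretely, controlling the self-intersection of a hypothetical cubic component (via the Hodge index theorem and the genus formula) so as to force it to be smooth, and checking in the reducible case that \emph{both} conics must pass through $p$, since otherwise $\mult_pT=1$ and $T$ is log canonical at $p$. This last point is exactly where the hypothesis that $p$ lies on no line is indispensable, as it forbids the low-multiplicity line components that would otherwise complicate the decomposition.
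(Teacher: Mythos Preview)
Your argument is correct and takes a genuinely different route from the paper. The paper proves this lemma jointly with Lemma~\ref{lem:del-Pezzo-cat-degree-4-generic} by blowing up $p$: since $p$ lies on no line, $\widetilde S$ is a smooth cubic surface with exceptional line $E$, and the log pullback $\widetilde D+(\mult_pD-1)E\simq-K_{\widetilde S}$ is a tiger on $\widetilde S$. The Cat Property on cubic surfaces (Theorem~\ref{thm:del-Pezzo-cat-degree-3}) then forces a cat $T\subset\Supp(\widetilde D)\cup E$, and since $q\in E$ the classification of Lemma~\ref{lem:del-Pezzo-cat-list-degree-3} gives exactly the two cases $T=E+\widetilde C$ (type IIB) or $T=\widetilde L_1+\widetilde L_2+E$ (type IIIB), which push forward to the cuspidal curve and the tangent pair of conics respectively.

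Your direct classification is more elementary: it uses only Hodge index, the genus formula, and the local thresholds of the node, cusp and tacnode, and in particular does not invoke the substantially harder Theorem~\ref{thm:del-Pezzo-cat-degree-3}. The paper's approach, on the other hand, gets Lemma~\ref{lem:del-Pezzo-cat-degree-4-generic} (the local Cat Property at $p$) for free in the same breath, and illustrates the general philosophy of reducing questions on a del Pezzo surface to the next lower degree by blowing up a point off the lines. Both are valid; your use of Hodge index to force $p_a=0$ for an irreducible cubic component is a nice touch that the paper handles elsewhere only by explicit enumeration (Table~\ref{tab:delPezzo-4-lowdegree}).
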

We provide a joint proof of lemmas \ref{lem:del-Pezzo-cat-degree-4-generic} and \ref{lem:del-Pezzo-cat-degree-4-generic-list}.
\begin{proof}[Proof of lemmas \ref{lem:del-Pezzo-cat-degree-4-generic} and \ref{lem:del-Pezzo-cat-degree-4-generic-list}]
Suppose $(S, D)$ is not log canonical at $p\in S$, where $p$ is a point which is not contained in any line. Let $\sigma\colon \widetilde S \ra S$ be the blow-up of $p$ with exceptional divisor $E$. The surface $\widetilde S$ is a non-singular del Pezzo surface of degree $3$ and the curve $E$ is a line in $\widetilde S$. By Theorem \ref{thm:del-Pezzo-cat-degree-3}, the surface $\widetilde S$ satisfies the Cat Property. Denote by $\widetilde B$ the strict transform in $\widetilde S$ of any $\bbQ$-divisor $B$ in $S$. By Lemma \ref{lem:log-pullback-preserves-lc}, the pair
\begin{equation}
(\widetilde S, \widetilde D +(\mult_p D-1)E)
\label{eq:del-Pezzo-cat-degree-4-generic-proof}
\end{equation}
is not log canonical at some $q\in E$. By Lemma \ref{lem:delPezzo-lc-cod1-deg1-to-3}, the pair \eqref{eq:del-Pezzo-cat-degree-4-generic-proof} is log canonical in codimension $1$. Since
$$K_{\widetilde S}+ \widetilde D +(\mult_p D -1)E \simq \sigma^*(D+K_S)\simq 0$$
we deduce that $\widetilde D +(\mult_p D-1)E\simq -K_{\widetilde S}$. Therefore, by the Cat Property on $\widetilde S$, $\exists T\in \vert-K_{\widetilde S}\vert$ such that $(\widetilde S,T)$ is not log canonical and $T\subseteq\Supp(\widetilde D + (\mult_p D -1)E)$. By Lemma \ref{lem:del-Pezzo-cat-list-degree-3}, since $q\in E$, either:
\begin{itemize}
	\item[(i)] The curve $T=E+\widetilde C$ where $\widetilde C$ is a conic such that $\widetilde C\cap E=\{q\}$.
	\item[(ii)] The point $q$ is an Eckardt point of $\widetilde S$ and $T=\widetilde L_1+\widetilde L_2+E$ where $\widetilde L_1$ and $\widetilde L_2$ are lines such that $\widetilde L_1\cap \widetilde L_2 \cap E=q$.
\end{itemize}
In case (i), let
$$C=\sigma_*(\widetilde C)\sim\sigma_*(\widetilde C + E)\sim \sigma_*(-K_{\widetilde S})\sim -K_S.$$
Since $\mult_pC = \widetilde C \cdot E=2$ and $\widetilde C \cap E=q$, then $C$ is a cuspidal rational curve.

If $T=\widetilde L_1+\widetilde L_2+E$ (case (ii)), let $A=\sigma_*(\widetilde L_1)$ and $B=\sigma_*(\widetilde L_2)$. The curves $A$ and $B$ are conics in $S$. Then
$$A+B=\sigma_*(\widetilde L_1+\widetilde L_2)\sim\sigma_*(\widetilde L_1+\widetilde L_2+E)\sim\sigma_*(-K_{\widetilde S}) \sim -K_S.$$
Moreover $A\cap B =p$ and since $A+B\sim-K_S$, then $A\cdot B=2$.
\end{proof}

Lemmas \ref{lem:delPezzo-lc-cod1-deg1-to-3}, \ref{lem:del-Pezzo-cat-degree-1}, \ref{lem:del-Pezzo-cat-degree-2}, Theorem \ref{thm:del-Pezzo-cat-degree-3} and Example \ref{exa:del-Pezzo-no-cat-codimension1} can be summarised in the following elegant statement, which proves Conjecture \ref{conj:cat} for non-singular surfaces.
\begin{cor}
\label{cor:cat-conjecture-dim2}
Let $S$ be a non-singular del Pezzo surface. Then $S$ satisfies the Cat property if and only if  all effective $\bbQ$-divisors $D$ with $D\simq-K_S$ are log canonical in codimension $1$.
\end{cor}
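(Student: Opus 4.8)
The plan is to prove the biconditional by a dichotomy on the degree $K_S^2\in\{1,\ldots,9\}$, showing that both sides of the equivalence are simultaneously true when $K_S^2\leq 3$ and simultaneously false when $K_S^2\geq 4$. Since every non-singular del Pezzo surface falls into exactly one of these two ranges, and each range forces the two conditions to agree in truth value, the equivalence follows in all cases. This is really a synthesis of the results proved earlier in the section rather than a fresh argument, so the work is in assembling them correctly and checking that the two ranges cover every degree.

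First I would treat the range $1\leq K_S^2\leq 3$. Here Lemma \ref{lem:delPezzo-lc-cod1-deg1-to-3} asserts that \emph{every} effective $\bbQ$-divisor $D\simq-K_S$ is log canonical in codimension $1$, so the right-hand condition of the equivalence holds unconditionally. On the other hand, the Cat Property is established for degree $1$ in Lemma \ref{lem:del-Pezzo-cat-degree-1}, for degree $2$ in Lemma \ref{lem:del-Pezzo-cat-degree-2}, and for degree $3$ in Theorem \ref{thm:del-Pezzo-cat-degree-3}, so the left-hand condition holds unconditionally as well. Both statements being true, the biconditional holds for $K_S^2\leq 3$.

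Next I would treat the range $4\leq K_S^2\leq 9$. Example \ref{exa:del-Pezzo-no-cat-codimension1} produces, for every such $S$, an explicit effective $\bbQ$-divisor $D\simq-K_S$ with $mD\in \vert-mK_S\vert$ only for $m>\Cat(S)=1$, for which $(S,D)$ fails to be log canonical in codimension $1$. The existence of this single $D$ simultaneously falsifies the right-hand condition (not all anticanonical divisors are log canonical in codimension $1$) and, as the same example records, the left-hand condition (the Cat Property fails, since no subdivisor of $\Supp(D)$ is an anticanonical cat). Both statements being false, the biconditional again holds.

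The only point requiring care is that the two degree ranges exhaust all del Pezzo surfaces without overlap or gap, which is immediate from $1\leq K_S^2\leq 9$. Consequently the proof of the corollary is routine bookkeeping; the genuine mathematical obstacle lies entirely in the cited ingredients, most notably in Theorem \ref{thm:del-Pezzo-cat-degree-3}, whose proof via the Geiser involution and the descending-chain argument of Claim \ref{clm:del-Pezzo-cat-list-degree-3-aux2} is the deepest component, together with the sharp local inequalities of Lemma \ref{lem:adjunction} underpinning the low-degree Cat Property lemmas.
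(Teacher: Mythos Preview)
Your proof is correct and follows exactly the paper's approach: the corollary is presented in the paper as a summary of Lemmas~\ref{lem:delPezzo-lc-cod1-deg1-to-3}, \ref{lem:del-Pezzo-cat-degree-1}, \ref{lem:del-Pezzo-cat-degree-2}, Theorem~\ref{thm:del-Pezzo-cat-degree-3}, and Example~\ref{exa:del-Pezzo-no-cat-codimension1}, and you have assembled these ingredients in precisely the intended way, splitting on $K_S^2\leq 3$ versus $K_S^2\geq 4$.
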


\section{Del Pezzo surface of degree $4$. Curves of low degree and log canonical pairs}
\label{sec:delPezzo-4-curves}
As we mentioned before, the proof of Theorem \ref{thm:del-Pezzo-glct-charp} for smooth complex del Pezzo surfaces does not extend to surfaces over algebraically closed fields when $2\leq \deg S\leq 4$. Corollary \ref{cor:del-Pezzo-glct-low-degrees} deals with the cases $1\leq \deg S\leq 3$. In this section we prove the case $\deg S=4$. To do this, we first classify low degree curves on $S$. Then, we construct effective anticanonical $\bbQ$-divisors with certain good properties. These $\bbQ$-divisor are used to provide a proof of the Theorem in Section \ref{sec:delPezzo-4-proof}.

\subsection{Curves of low degree and models of $S$}
Let $\pi\colon S\ra \bbP^2$ be the blow-up at points $p_1,\ldots,p_5\in \bbP^2$ in general position. Let $E_1,\ldots,E_5$ be the exceptional divisors. Recall $-K_S\sim \pioplane{3}-\sum^5_{i=1}E_i$ and $E_i^2=-1$.
\begin{table}[!ht]%
\begin{center}
\begin{tabular}{|c|c|c|c|c|c|}
\hline
Linear system $\calL\calS$																																			&$\deg C$	&$C^2$	&Fix $p$	&Fix $q$	&$C'$\\
\hline \hline
$\vert E_{i}\vert$																																										&$1$			&$-1$		&N					&N			&$E_i$\\
\hline
$\calL_{ij}= \left \vert \pioplane{1} -E_i-E_j\right\vert$																						&$1$			&$-1$		&N					&N			&$L_{ij}$\\
\hline
$\displaystyle{\calC_0= \left \vert \pioplane{2} -\sum^5_{i=1}E_i \right\vert}$											&$1$			&$-1$		&N					&N			&$C_0$\\
\hline
$\calB_{i}= \left \vert \pioplane{1} -E_i\right\vert$																								&$2$			&$0$		&Y					&N			&$B_i$\\
\hline
$\displaystyle{\calA_i = \left \vert \pioplane{2} -\sum^5_{\substack{j=1\\j\neq i}}E_j \right\vert}$	&$2$			&$0$		&Y					&N			&$A_i$\\
\hline
$\displaystyle{\calQ_i = \left \vert \pioplane{3} -E_i -\sum^5_{\substack{j=1}}E_j\right\vert}$			&$3$			&$1$		&Y					&Y			&$Q_i$\\
\hline
$\calR= \left \vert \pioplane{1}\right\vert$																													&$3$			&$1$		&Y					&Y			&$R$\\
\hline
$\calR_{ijk}= \left \vert \pioplane{2} -E_i-E_j-E_k \right\vert$																			&$3$			&$1$		&Y					&Y			&$R_{ijk}$\\
\hline
\end{tabular}
\end{center}
\caption{Catalogue of curves of low degree in $S$.}
\label{tab:delPezzo-4-lowdegree}
\end{table}

Observe Table \ref{tab:delPezzo-4-lowdegree}. In the first column we have defined certain complete linear systems $\calL\calS$ in $S$. Let $C\sim\calL\calS$ be any divisor. Its numerical properties ($C^2, \deg(C)$) are the same for any divisor in a given $\calL\calS$ and are easy to compute. We list them in the second and third columns of table \ref{tab:delPezzo-4-lowdegree}. Note that, by the genus formula, $p_a(C)=0$ in all cases. If $\deg C =1$, then by Proposition \ref{prop:rational-surfaces-sections-rational-curves}, $h^0(\calL\calS)\geq 1$. As we saw in Lemma \ref{lem:del-Pezzo-lines9-d} there is only a finite number of lines in a del Pezzo surface, so $h^0(\calL\calS)=1$ and we can find a unique curve $C'\in \calL \calS$. The notation for each particular $C'$ is in the last column of the table. 

If $\deg C=2$, then by Proposition \ref{prop:rational-surfaces-sections-rational-curves}, $h^0(\calL\calS)\geq 2$. Take $\calL\calS'\subset \calL\calS$ to be the sublinear system fixing $p$. Then $h^0(\calL\calS')\geq 1$ and we can find a curve $C'\in\calL\calS$ with $p\in C'$. The notation for each particular $C'$ is in the last column of the table. 

When the curve $C'$ is irreducible, we can realise it as the strict transform of an irreducible curve in $\bbP^2$ via the model $\pi$. For instance $L_{ij}$ is the strict transform of the unique line through $p_i$ and $p_j$. $C_0$ is the strict transform of the unique conic through all $p_i$. $B_i$ is the strict transform of a line passing through $p_i$ and $A_i$ is the strict transform of a conic through all $p_j$ but $p_i$. The last three rows of Table \ref{tab:delPezzo-4-lowdegree} deal with cubics and they are treated in Lemma \ref{lem:del-Pezzo-deg4-cubics}.

In order to understand the geometry of $S$ we need to understand which are its curves of low degree and how they intersect each other. We have just constructed some of these curves in Table \ref{tab:delPezzo-4-lowdegree}. In this section, among other properties of the low degree curves constructed above, we will show that the lines in Table \ref{tab:delPezzo-4-lowdegree} are all the lines in $S$. Furthermore, we will show that the conics in Table \ref{tab:delPezzo-4-lowdegree} are all the conics in $S$ passing through a given point $p$. Finally, there is more than one model $S\ra \bbP^2$ that characterises $S$ as a blow-up of the plane in $5$ points. We will also show how we can choose a model adequate to our needs.

\begin{lem}
\label{lem:del-Pezzo-deg4-good-curves-smooth}
Let $S$ be a non-singular del Pezzo surface of degree $4$ and $C'$ a curve as in Table \ref{tab:delPezzo-4-lowdegree}. Suppose $C'$ is irreducible. Then $C'$ is non-singular.
\end{lem}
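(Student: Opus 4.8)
The plan is to reduce everything to a single arithmetic genus computation. First I would observe that each row of Table \ref{tab:delPezzo-4-lowdegree} records the two numbers $\deg(C')$ and $(C')^2$, and that these already determine $p_a(C')$ via the genus formula (Lemma \ref{lem:genus-formula}): since $K_S\cdot C' = -\deg(C')$, we have
$$2p_a(C')-2 = K_S\cdot C' + (C')^2 = (C')^2 - \deg(C').$$
Running through the three degree classes that occur — lines with $((C')^2,\deg(C'))=(-1,1)$, conics with $(0,2)$, and cubics with $(1,3)$ — the right-hand side equals $-2$ in every case, so $p_a(C')=0$ for all eight curves in the table.

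Next I would invoke the standard relation between arithmetic and geometric genus for an irreducible curve, which was already used in the proof of Lemma \ref{lem:del-Pezzo-lines-numerical}: namely $p_a(C')\geq g(C')\geq 0$, with equality $p_a(C')=g(C')$ precisely when $C'$ is non-singular. Granting that $C'$ is irreducible and combining with $p_a(C')=0$, the chain $0=p_a(C')\geq g(C')\geq 0$ forces $g(C')=p_a(C')=0$, whence $C'$ is non-singular.

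I do not expect any genuine obstacle here: the content is entirely contained in the genus formula together with the irreducibility hypothesis, and the only thing to check is that $p_a(C')=0$ holds uniformly across all the rows, which the tabulated intersection data makes immediate. The role of the hypothesis that $C'$ is irreducible is what makes the argument work at all — it is exactly what lets us pass from $p_a(C')=0$ to smoothness, since a reducible member of the same linear system (for instance a degenerate conic splitting as two lines, or a cubic splitting off a line) can of course be singular at a point where two of its components meet.
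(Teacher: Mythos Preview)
Your argument is correct and, in fact, more uniform than the paper's. The paper already notes just before the lemma that $p_a(C')=0$ for every row of the table, but its proof of smoothness does not exploit this directly; instead it works through the model $\pi\colon S\to\bbP^2$ case by case. For $C'\neq Q_i$ the image $\pi(C')$ is a line or conic in $\bbP^2$, hence smooth, and the strict transform of a smooth curve under a blow-up of smooth points stays smooth. For $C'=Q_i$ the image is an irreducible plane cubic with a single double point at $p_i$, and one blow-up at $p_i$ resolves it.

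Your route bypasses all of this: the genus formula plus the tabulated data give $p_a(C')=0$ in one stroke, and the inequality $p_a(C')\geq g(C')\geq 0$ with equality precisely when $C'$ is smooth (already invoked in the paper in the proof of Lemma~\ref{lem:del-Pezzo-lines-numerical}) finishes it. What you gain is uniformity and brevity; what the paper's approach gains is an explicit picture of each curve as the strict transform of something concrete in $\bbP^2$, which is useful later when one needs to know, for instance, exactly how $Q_i$ sits relative to the exceptional configuration. Both are perfectly valid here.
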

\begin{proof}
Suppose $C'\neq Q_i$. Then, $\pi(C')$ is an irreducible curve of degree $1$ or $2$ in $\bbP^2$. Therefore $\pi(C')$ is smooth. Since $S$ is just the blow-up of smooth points of $\bbP^2$, if $\pi(C')$ is a smooth curve of $\bbP^2$, then its strict transform $C'$ is a smooth curve in $S$.

The irreducible curve $\pi(Q_i)$ is an irreducible cubic curve in $\bbP^2$ with multiplicity $2$ at $p_i$. Its strict transform $Q_i$ in $S$ must be smooth, since it is enough to blow-up $S$ at $p_i$ once to resolve $\pi(Q_i)$.
\end{proof}

\begin{lem}
\label{lem:del-Pezzo-deg4-lines-list}
The $16$ lines in Table \ref{tab:delPezzo-4-lowdegree} are all the lines in $S$. The intersection of these lines are:
\begin{align*}
&E_i\cdot E_j = -\delta_{ij}, \qquad L_{ij}\cdot E_i = L_{ij}\cdot E_j = 1,\qquad C_0\cdot E_i=1,\qquad C_0^2=-1, \\
&C_0\cdot L_{ij}=0,\qquad {L_{ij}\cdot L_{kl}=\left\{ \begin{array}{rl}
																-1 &\text{if } i=k \text{ and } j=l, \\
																0 & \text{if only two subindices are equal,} \\
																1 & \text{if none of the subindices are equal.} \\
														\end{array} \right.}
\end{align*}
\end{lem}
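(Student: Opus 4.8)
The plan is to split the statement into two independent parts: first that the sixteen classes appearing in Table \ref{tab:delPezzo-4-lowdegree} exhaust the lines of $S$, and then that their pairwise intersection numbers are as claimed. The first part is essentially already contained in the general classification, and the second is a mechanical computation in $\Pic(S)$.

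For the first part I would simply specialize Lemma \ref{lem:del-Pezzo-lines9-d} to $n = 9 - \deg S = 5$. Inspecting the ``Rank of $n$'' column of Table \ref{tab:del-Pezzo-lines-classes}, the only rational classes of lines that can occur on a degree $4$ surface are $E_i$ (for $1 \le i \le 5$), $L_{ij} = \pioplane{1} - E_i - E_j$ (for $1 \le i < j \le 5$), and $C_{i_1 \cdots i_5} = \pioplane{2} - \sum_{i=1}^5 E_i$; the classes $Q_{\cdots}$, $R_{ijk}$, $T_{ij}$, $Z_i$ all require $n \ge 7$ and hence cannot appear. Since $\binom{5}{5} = 1$, the last class is the single curve $C_0 = \pioplane{2} - \sum_{i=1}^5 E_i$ of Table \ref{tab:delPezzo-4-lowdegree}, and as distinct index sets give distinct classes in $\Pic(S)$, the total count is $5 + \binom{5}{2} + 1 = 16$, matching Table \ref{tab:del-Pezzo-lines}. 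This identifies the sixteen lines with those listed.

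For the intersection numbers the plan is to work in $\Pic(S) \cong \bbZ^6$ with the basis $\pioplane{1}, E_1, \ldots, E_5$ furnished by the Corollary to Theorem \ref{thm:del-Pezzo-classification-models}, equipped with the standard intersection form of a blow-up of $\bbP^2$, namely $(\pioplane{1})^2 = 1$, $\pioplane{1} \cdot E_i = 0$, and $E_i \cdot E_j = -\delta_{ij}$. Each asserted value is then a one-line bilinear expansion: for example $L_{ij} \cdot E_i = (\pioplane{1} - E_i - E_j) \cdot E_i = -E_i^2 = 1$, $C_0 \cdot E_i = -E_i^2 = 1$, $C_0^2 = 4 + \sum_{k=1}^5 E_k^2 = 4 - 5 = -1$, and $C_0 \cdot L_{ij} = 2 + E_i^2 + E_j^2 = 0$. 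For the mutual intersections of the $L_{ij}$ I would expand $L_{ij} \cdot L_{kl} = 1 - \delta_{ik} - \delta_{il} - \delta_{jk} - \delta_{jl}$ and read off that the right-hand side equals $-1$, $0$, or $1$ according to whether both, exactly one, or none of the subindex pairs coincide, which is exactly the three-case formula in the statement.

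There is no genuine obstacle: the completeness of the list is packaged in Lemma \ref{lem:del-Pezzo-lines9-d}, and the numbers are forced by the intersection form on $\Pic(S)$. The only point requiring a little care is the final bookkeeping, matching the cases ``same line'', ``exactly two coincident indices'', and ``all indices distinct'' to the values $-1$, $0$, $1$; this is a finite check and presents no real difficulty.
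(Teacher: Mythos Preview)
Your proposal is correct and takes essentially the same approach as the paper: the paper's proof consists of the single sentence ``See Lemma \ref{lem:del-Pezzo-lines9-d} where $C_0=C_{12345}$'', and you have simply unpacked what that reference gives (specializing to $n=5$ and identifying $C_0$ with $C_{12345}$) together with the routine intersection computations in $\Pic(S)$ that the paper leaves implicit.
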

\begin{proof}
See Lemma \ref{lem:del-Pezzo-lines9-d} where $C_0=C_{12345}$.
\end{proof}

\begin{lem}
\label{lem:del-Pezzo-deg4-model-line-choice}
Given a line $L\subset S$, we can choose a model $\gamma\colon S\ra \bbP^2$ such that $L=E_1$. If $p=L_1\cap L_2$, the intersection of two lines, we can choose $\gamma$ such that $L_1=E_1,\ L_2=L_{12}$.
\end{lem}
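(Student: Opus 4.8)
The statement to prove is Lemma \ref{lem:del-Pezzo-deg4-model-line-choice}: given a line $L\subset S$ on a degree $4$ del Pezzo surface, there is a model $\gamma\colon S\to\bbP^2$ with $L=E_1$; and given $p=L_1\cap L_2$ an intersection of two lines, a model with $L_1=E_1$ and $L_2=L_{12}$. This closely parallels Lemma \ref{lem:del-Pezzo-good-model}, which handled degrees $5$--$6$, so the plan is to mimic that argument using the richer intersection data now available in Lemma \ref{lem:del-Pezzo-deg4-lines-list}.

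Let me sketch the approach. A model $\gamma\colon S\to\bbP^2$ corresponds to contracting $5$ pairwise disjoint $(-1)$-curves (lines) $F_1,\dots,F_5$ so that the $F_i$ become the exceptional divisors $E_1,\dots,E_5$ of $\gamma$; to arrange $L=E_1$ I just need $L$ to be one of the five contracted lines, i.e. I need to find four further lines disjoint from $L$ and pairwise disjoint. First I would fix a given model $\pi\colon S\to\bbP^2$ with exceptional curves $E_1,\dots,E_5$. By Lemma \ref{lem:del-Pezzo-deg4-lines-list} every line is one of the $E_i$, an $L_{ij}$ (the strict transform of the line through $p_i,p_j$), or $C_0$ (the strict transform of the conic through all five points). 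Using the explicit intersection numbers recorded in Lemma \ref{lem:del-Pezzo-deg4-lines-list}, I would check case by case that whatever $L$ is, one can exhibit a set of five pairwise disjoint lines containing $L$. For $L=E_1$ there is nothing to do. For $L=L_{12}$, one notes $L_{12}$ is disjoint from $L_{34},L_{35},L_{45},C_0$ (these intersections are $0$ by the formula, since $L_{ij}\cdot L_{kl}=0$ when exactly two subindices agree and $C_0\cdot L_{ij}=0$), and these five are pairwise disjoint, so they can be simultaneously contracted. For $L=C_0$, one checks $C_0$ is disjoint from a suitable quintuple of $L_{ij}$'s. By Lemma \ref{lem:del-pezzo-blowdown-dP} each such contraction lands on a del Pezzo surface, and after contracting all five disjoint lines we reach a surface of degree $9$, namely $\bbP^2$; a relabelling then puts $L$ in the first slot.

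For the second assertion I would start from the model just produced with $L_1=E_1$. By the intersection table, $L_2$ being a line meeting $E_1$ in a single point must be of the form $L_{1j}$ for some $j$ (since $L_{ij}\cdot E_i=1$ while $C_0\cdot E_i=1$ as well, I must also rule out $L_2=C_0$; I can handle that by choosing the contraction so that the second curve becomes an $L_{1j}$, or by re-contracting). After relabelling $p_j$ as $p_2$, this gives $L_1=E_1$ and $L_2=L_{12}$ exactly as required. The key point is simply that the two lines $E_1$ and $L_{12}$ meet in one point (consistent with $L_{12}\cdot E_1=1$), so the data is realizable.

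The main obstacle is bookkeeping rather than conceptual: I must verify that for each of the three types of line $L$ (and for each configuration of a pair $L_1,L_2$ meeting in a point) there genuinely exists a complete set of five pairwise disjoint lines containing the prescribed one(s), and that no degenerate intersection sneaks in. This is purely a finite check against the intersection numbers in Lemma \ref{lem:del-Pezzo-deg4-lines-list}, and the symmetry of the Weyl group action on the $16$ lines lets me reduce to a few representative cases. As the author notes, the argument is essentially that of Lemma \ref{lem:del-Pezzo-good-model} but longer because there are more lines to track, so I expect no real difficulty beyond careful case analysis.
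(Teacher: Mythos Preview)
Your approach is exactly the paper's: fix a reference model, and for each type of line exhibit five pairwise disjoint lines containing it to contract to $\bbP^2$; for the second part, first arrange $L_1=E_1$ and then observe that any line meeting $E_1$ can be brought to $L_{12}$ by a further change of model or relabelling.

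However, your explicit choice in the $L=L_{12}$ case is wrong. You claim $L_{12}$ is disjoint from $L_{34},L_{35},L_{45}$, but by the formula in Lemma~\ref{lem:del-Pezzo-deg4-lines-list} one has $L_{ij}\cdot L_{kl}=1$ when \emph{none} of the subindices agree, so in fact $L_{12}\cdot L_{34}=L_{12}\cdot L_{35}=L_{12}\cdot L_{45}=1$. The correct quintuple (and the one the paper uses) is $L_{12},L_{13},L_{14},L_{15},C_0$: here every pair shares the index $1$ (or involves $C_0$, which is disjoint from every $L_{ij}$), so all intersections vanish. With this fix the argument goes through, and the paper's handling of the second part is the same ``run the lemma again'' step you outline: once $L_1=E_1$, the line $L_2$ is one of $L_{1j}$ or $C_0$, and the case~(i) model (contracting $E_1,E_2,L_{34},L_{35},L_{45}$) sends $C_0$ to the new $L_{12}$ while a relabelling handles the $L_{1j}$ cases.
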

\begin{proof}
We construct $\gamma\colon S \ra \bbP^2$ by contracting $5$ disjoint exceptional curves $F_i$ (i.e. $F_i\cdot F_j=0$ if $i\neq j$). Let $F_1=L$. 
\begin{itemize}
	\item[(i)] If $F_1=E_1$, take $F_2=E_2, F_3 =L_{34}, F_4=L_{35}, F_5=L_{45}$.
	\item[(ii)] If $F_1=C_0$, take $F_j=L_{1j}$.
	\item[(iii)] If $F_1=L_{12}$, take $F_i=L_{1\ (i+1)}$ for $2\leq i\leq 4$, $F_5=C_0$.
\end{itemize}
Obvious relabelling exhausts all possibilities for $L$ among the $16$ lines in Lemma \ref{lem:del-Pezzo-deg4-lines-list}. By Castelnuovo contractibility criterion \cite[V.5.7]{HartshorneAG} we can contract each $F_i$, leaving every other point intact. The image of $\gamma$ is $\bbP^2$, because the relative minimal model of $S$ once $5$ exceptional curves are contracted is unique. For the second part we can assume already $L_1=E_1$ and run this lemma again. In that case we are in case (i) above and we are done.\end{proof}
In a similar fashion to Lemma \ref{lem:del-Pezzo-deg4-lines-list} we can show:
\begin{lem}
\label{lem:del-Pezzo-deg4-conics}
If $C$ is an irreducible conic in $S$ passing through $p$, then $C=A_i$ or $C=B_i$, with $\pi(C)$ either a conic through all marked points but $p_i$ or a line through $p$ and $p_i$, respectively.
\end{lem}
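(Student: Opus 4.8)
The plan is to mimic the purely numerical argument used for lines in Lemma \ref{lem:del-Pezzo-lines9-d}. Since $C$ is an irreducible conic, by definition $\deg C = -K_S\cdot C = 2$, so Lemma \ref{lem:del-Pezzo-all-conics-rational} gives $p_a(C)=0$, and the genus formula (Lemma \ref{lem:genus-formula}) then forces $K_S\cdot C + C^2 = -2$, i.e. $C^2 = 0$. Because $C$ is a conic it cannot equal any of the exceptional lines $E_i$, hence $\pi$ does not contract $C$ and its image $\pi(C)$ is an irreducible plane curve of some degree $a\geq 1$. I would therefore write $C\sim \pioplane{a}-\sum_{i=1}^5 b_iE_i$, where $b_i = C\cdot E_i\geq 0$ since $C\neq E_i$ and both curves are irreducible.

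Next I would record the two numerical constraints. From $-K_S\cdot C = 2$ (using $-K_S\sim\pioplane{3}-\sum E_i$) I get $3a-\sum b_i = 2$, and from $C^2=0$ I get $a^2-\sum b_i^2=0$. Applying Schwarz's inequality exactly as in the proof of Lemma \ref{lem:del-Pezzo-lines9-d}, with $x_i=1$ and $y_i=b_i$, yields $(3a-2)^2 = \big(\sum b_i\big)^2 \leq 5\sum b_i^2 = 5a^2$, which simplifies to $a^2-3a+1\leq 0$. As $a$ is a positive integer, this leaves only $a\in\{1,2\}$.

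The two cases are then immediate. For $a=1$ the equations read $\sum b_i = \sum b_i^2 = 1$, which forces a single $b_i=1$ and the rest zero, so $C\sim\pioplane{1}-E_i$, i.e. $C\in\calB_i$ and $\pi(C)$ is a line through $p_i$. For $a=2$ they read $\sum b_i = \sum b_i^2 = 4$; any $b_i\geq 2$ would already exhaust the sum of squares while falling short of the linear sum, so all $b_i\in\{0,1\}$, whence exactly four of them equal $1$ and one equals $0$. Thus $C\sim\pioplane{2}-\sum_{j\neq i}E_j$, i.e. $C\in\calA_i$, and $\pi(C)$ is a conic through the four marked points other than $p_i$.

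Finally, since $p\in C$ and the pencils $\calB_i$ and $\calA_i$ have $p$ off their base loci (the base points being $p_i$, respectively the $p_j$ with $j\neq i$), each admits a unique irreducible member through $p$, which must then be $C$ itself; this identifies $C$ with the curves $B_i$, $A_i$ of Table \ref{tab:delPezzo-4-lowdegree}. The computation is entirely routine: the only points requiring a moment's care are ruling out $b_i\geq 2$ in the case $a=2$ and the last identification of $C$ with the distinguished representative through $p$, and neither is a genuine obstacle.
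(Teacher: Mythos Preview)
Your proof is correct and follows the same numerical classification strategy as the paper: determine $C^2=0$ via the genus formula, write $C\sim\pioplane{a}-\sum b_iE_i$, and solve the resulting Diophantine system. The one substantive difference is the inequality used to bound the degree $a$. The paper invokes $\sum b_i^2\geq\sum b_i$, which yields $0\leq(a-1)(a-2)$ and hence only $a\leq 1$ or $a\geq 2$; as written this does not actually bound $a$ from above. Your use of Schwarz's inequality, $(\sum b_i)^2\leq 5\sum b_i^2$, mirrors the proof of Lemma~\ref{lem:del-Pezzo-lines9-d} for lines and gives the genuine bound $a^2-3a+1\leq 0$, forcing $a\in\{1,2\}$. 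So your argument is in fact the cleaner one, and it also makes explicit the final identification of $C$ with the distinguished member of $\calB_i$ or $\calA_i$ through $p$, which the paper leaves to the reader via reference to Table~\ref{tab:delPezzo-4-lowdegree}.
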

\begin{proof}
By Lemma \ref{lem:del-Pezzo-all-conics-rational}, $C$ has arithmetic genus $p_a(C)=0$. By the genus formula, this implies $C^2=0$. Let $\bar C =\pi_*(C)\subset\bbP^2$, $\bar C\sim \oplaned$ for some $d\geq 1$. Hence $C\sim \pioplaned-\sum a_i E_i$ for $a_i\geq 0$. This gives
\begin{equation*}
0=C^2=d^2-\sum a_i^2,\ 2=(-K_S)\cdot C=3d-\sum a_i.
\label{eq:del-Pezzo-deg4-conics-proof}
\end{equation*}
given that $a_i$ are non-negative integers $\sum a_i^2\geq \sum a_i$. Hence
$$0=d^2-\sum a_i^2\leq d^2-\sum a_i = d^2-3d+2=(d-1)(d-2),$$
so $d=1$ or $d=2$. The only possibilities for $a_i$ for the second equation in \eqref{eq:del-Pezzo-deg4-conics-proof} to hold are $\sum a_i=2$ when $d=1$ and $\sum a_i=4, 5$, when $d=2$. All these possibilities are classified in Table \ref{tab:delPezzo-4-lowdegree}.
\end{proof}

\begin{lem}
\label{lem:del-Pezzo-deg4-model-conic-choice}
Given $C$ an irreducible conic in $S$, $p\in C$, we can choose a model $\gamma\colon S\ra \bbP^2$ such that under that model the curve $C$ can be realised as $C=A_i$ for any $i$ in Table \ref{tab:delPezzo-4-lowdegree}, unless $p\in E_1$ in which case $i\neq 1$.
\end{lem}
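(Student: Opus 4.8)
The plan is to realise the model $\gamma$ as the contraction of five pairwise disjoint lines, chosen so that exactly four of them meet $C$ and one is disjoint from it, with the disjoint one destined to become $E_i$. First I would record the reduction: to obtain $C = A_i$ it suffices to produce pairwise disjoint lines $F_1,\dots,F_5$ with $C\cdot F_i = 0$ and $C\cdot F_j = 1$ for $j\neq i$. Indeed, by Castelnuovo's criterion (exactly as in the proof of Lemma \ref{lem:del-Pezzo-deg4-model-line-choice}) these contract to a model $\gamma\colon S\ra \bbP^2$ under which each $F_k$ becomes the exceptional curve $E_k$, and the target is $\bbP^2$ with image points in general position since the relative minimal model is unique and $S$ is del Pezzo. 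Writing $C\sim dH-\sum_k (C\cdot F_k)E_k$ and using $-K_S\cdot C = 2 = 3d-4$ forces $d=2$, so $C\sim 2H-\sum_{j\neq i}E_j = A_i$. Since the unique line disjoint from $C$ among the five may be labelled $E_i$ for whichever index $i\in\{1,\dots,5\}$ we please, this produces $C=A_i$ for an arbitrary $i$.

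Next I would exhibit the five lines, splitting into the two possibilities of Lemma \ref{lem:del-Pezzo-deg4-conics}: in the given model $C=A_m$ or $C=B_m$ for some $m$. If $C=A_m$, I take $C_0$ together with the four lines $L_{ml}$, $l\neq m$. By the intersection table of Lemma \ref{lem:del-Pezzo-deg4-lines-list} these five are pairwise disjoint (the $L_{ml}$ all share the index $m$, so meet in $0$, and $C_0$ is disjoint from every line), while $C_0\cdot C = A_m\cdot C_0 = 0$ and each $L_{ml}\cdot C = 1$; here $C_0$ is the line disjoint from $C$. If $C=B_m$, I fix some $a\neq m$ and take $L_{ma}$ together with $C_0$ and the three lines $L_{ab}$, $b\neq a,m$; again these are pairwise disjoint (the four $L$-lines all contain the index $a$, and $C_0$ meets no line), and one checks $B_m\cdot L_{ma} = 0$ while $B_m\cdot C_0 = B_m\cdot L_{ab} = 1$, so now $L_{ma}$ is the line disjoint from $C$. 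In either case, contracting the five lines and labelling the one disjoint from $C$ as $E_i$ gives the asserted model.

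Finally I would address the point $p$ and the stated exception. Since $p\in C = A_i$ and $A_i\cdot E_i = 0$, the point $p$ cannot lie on $E_i$; conversely $p$ lies on some $E_j$ with $j\neq i$ precisely when $p$ is the point at which $C$ meets the contracted line $F_j$. In particular, if the resulting model has $p\in E_1$, then $E_1$ is one of the lines meeting $C$, so $E_1\neq E_i$ and hence $i\neq 1$, which is exactly the excluded case. This shows the freedom in $i$ is unobstructed apart from that automatic incompatibility.

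The only delicate part is the bookkeeping in the middle step: one must verify case by case, using Lemma \ref{lem:del-Pezzo-deg4-lines-list}, that the five chosen lines are genuinely pairwise disjoint and have the claimed intersection numbers with $C$, and that their contraction really lands on $\bbP^2$ with the five image points in general position. Once these intersection numbers are in hand, the passage $C\sim 2H-\sum_{j\neq i}E_j = A_i$ and the handling of $p$ are purely formal.
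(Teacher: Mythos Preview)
Your proof is correct and follows essentially the same approach as the paper: both reduce to exhibiting five pairwise disjoint lines with prescribed intersection numbers with $C$, split into cases according to whether $C=A_m$ or $C=B_m$ in some initial model, and then verify the configuration by direct computation from the intersection table of Lemma~\ref{lem:del-Pezzo-deg4-lines-list}. The paper's construction differs slightly in that it keeps the original $E_1$ among the contracted lines (so that when $p$ lies on a line, that line remains $E_1$ in the new model), whereas your choice of $\{C_0,L_{ml}\}$ or $\{L_{ma},C_0,L_{ab}\}$ does not; this stronger conclusion is what the paper actually uses downstream (e.g.\ in Lemma~\ref{lem:del-Pezzo-deg4-conics-tangency}), but it is not required by the lemma as stated. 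One small imprecision: your parenthetical ``$C_0$ is disjoint from every line'' is false as written since $C_0\cdot E_i=1$, but what you need and use is only $C_0\cdot L_{ij}=0$, which does hold.
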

\begin{proof}
If $p\in L$, a line in $S$, assume $L=E_1$ by Lemma \ref{lem:del-Pezzo-deg4-model-line-choice}. We have $C\neq A_1$ since otherwise  $$0=A_1\cdot E_1 =C \cdot E_1 \geq \mult_p(C)\cdot \mult_p(E_1)=1,$$
a contradiction.

If $C=B_1$, take $F_i$ and $\gamma:S\ra \bbP^2$ as in the proof of Lemma \ref{lem:del-Pezzo-deg4-model-line-choice}, case (i). Because $C$ is irreducible, $\overline C=\gamma(B_1)=\oplaned$ by the genus formula on $\bbP^2$. Moreover:
$$B_1\sim\gamma^*(\oplaned)-\sum_{i=1}^5 (F_i\cdot B_1) F_i= \gamma^*(\oplaned)-F_1-F_3-F_4-F_5,$$
and $2=B_1 \cdot (-K_S)=3d -4,$
so $d=2$.
Therefore under the new blow-up $C$ is $A_2$. By obvious relabelling of the $F_j$ we can consider $C=A_i$ with $i\neq 1$.

If $C=B_i$, with $i\neq 1$, then $p\not\in E_1$ since $C$ is irreducible. If $C=B_2$, the same choice of $F_i$ gives us $C=A_1$ under the new blow-up. If $C=B_i$ for $i=3,4,5$ take $F_1=E_1, F_2=E_i, F_3=L_{jk}, F_4=L_{jl}, F_5=L_{kl}$ for different $j,k,l\in \{1,\ldots,5\} \setminus \{ i\}$ and $C=A_i$ under $\gamma$.
\end{proof}

\begin{lem}
\label{lem:del-Pezzo-deg4-conics-tangency}
Let $p\in L$, where $L$ is a line, and let $C_1,C_2$ be distinct irreducible conics passing through $p$. Then $C_1$ and $C_2$ intersect normally at $p$.
\end{lem}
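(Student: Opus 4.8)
The plan is to pin down exactly which irreducible conics can pass through a point $p$ lying on a line, and then to read off their mutual intersection numbers. First I would invoke Lemma \ref{lem:del-Pezzo-deg4-model-line-choice} to fix a model $\pi\colon S\ra\bbP^2$ in which $L=E_1$, so that the hypothesis $p\in L$ becomes $p\in E_1$. By Lemma \ref{lem:del-Pezzo-deg4-conics}, each of the irreducible conics $C_1,C_2$ is of type $A_i$ or $B_i$ in this model, with classes as recorded in Table \ref{tab:delPezzo-4-lowdegree}.

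The key step, and the only place the hypothesis $p\in L$ is genuinely used, is to intersect each conic with $E_1$ in order to discard most of the candidates. Since $C_1,C_2$ and $E_1$ are pairwise distinct irreducible curves and $p\in C_j\cap E_1$, we must have $C_j\cdot E_1\geq 1$. A direct computation from the classes (using Corollary \ref{cor:del-Pezzo-anticanonical} and Lemma \ref{lem:del-Pezzo-deg4-lines-list}) gives $A_1\cdot E_1=0$ and $B_i\cdot E_1=0$ for every $i\neq 1$, whereas $A_i\cdot E_1=1$ for $i\neq 1$ and $B_1\cdot E_1=1$. Hence the only conic classes meeting $p$ are $A_2,A_3,A_4,A_5$ and $B_1$, so $C_1$ and $C_2$ each lie in this restricted list.

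It then remains to compute the intersection number of any two members of this list. A short bookkeeping of self- and cross-intersections yields
\[
A_i\cdot A_j=1\ (i\neq j),\qquad A_i\cdot B_1=1,\qquad A_i^2=B_1^2=0 .
\]
Thus any two \emph{distinct} classes in the list meet in exactly one point. I would also rule out the possibility that $C_1$ and $C_2$ lie in the same class: two distinct members of a single pencil $\calA_i$ (respectively $\calB_1$) satisfy $C_1\cdot C_2=A_i^2=0$ (respectively $B_1^2=0$), which is incompatible with $p\in C_1\cap C_2$. Therefore $C_1$ and $C_2$ belong to distinct classes and $C_1\cdot C_2=1$.

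Finally, since $p\in C_1\cap C_2$ forces the local intersection multiplicity $(C_1\cdot C_2)\vert_p\geq 1$ while the total intersection number equals $1$, I conclude $(C_1\cdot C_2)\vert_p=1$, i.e.\ $C_1$ and $C_2$ meet transversally (normally) at $p$. I do not expect a serious obstacle here: the whole content is the observation that passing through a point on a line eliminates every conic class except $A_2,\dots,A_5,B_1$, whose pairwise intersections are all $1$, so that the tangential configuration $A+B\simq-K_S$ with $(A\cdot B)\vert_p=2$ of Lemma \ref{lem:del-Pezzo-cat-degree-4-generic-list} simply cannot occur on a line. The only care needed is to verify the candidate list is exhaustive and to get the class intersection numbers right.
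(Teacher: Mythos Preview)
Your proof is correct and follows essentially the same approach as the paper: fix a model with $L=E_1$, use the intersection with $E_1$ to restrict the irreducible conics through $p$ to the list $A_2,\ldots,A_5,B_1$, and then read off that any two distinct members of this list have intersection number~$1$. The only cosmetic difference is that the paper invokes Lemma~\ref{lem:del-Pezzo-deg4-model-conic-choice} to normalise $C_1=A_2$ first (thereby shortening the case check slightly), whereas you keep the full symmetric list and add an explicit remark ruling out two conics in the same pencil; both are equally valid.
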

\begin{proof}
By Lemma \ref{lem:del-Pezzo-deg4-model-conic-choice} we may assume that $E=L_1$ and $C_1=A_i$ for some $2\leq i\leq 5$. Without loss of generality we may assume that $C_1=A_2$. Note that $C_2\neq A_1, B_j$, for $j>1$ since $E_1\cdot A_1=E_1\cdot B_i=0$ and $C_2$ being irreducible would give a contradiction:
$$0=E_1\cdot C_2\geq \mult_pE_1\cdot \mult_pC_2\geq1.$$
By Lemma \ref{lem:del-Pezzo-deg4-conics} we have that $C_2=B_1$ or $C_2=A_i$ for $i\neq 1,2$. In both cases $C_2\cdot A_2=1$, obtaining simple normal crossings at $p$:
$$1=C_2\cdot A_2\geq (C_2\cdot A_2)\vert_p.$$
\end{proof}

\begin{lem}
\label{lem:del-Pezzo-deg4-cubics}
Let $\calL\calS$ be a complete linear system of degree $3$ as in the last three rows of Table \ref{tab:delPezzo-4-lowdegree}. Let $\pi \colon \widetilde S \ra S$ be the blow-up of some point $p\in S$ with exceptional curve $E\subset \widetilde S$. Let $q\in E$. Then $\exists\ C' \in \calL\calS$, a curve with $p\in C'$ satisfying one of the following:
\begin{itemize}
	\item[(i)] $C'$ is smooth at $p$ and its strict transform $\widetilde C'\sim\sigma^*(C)-E$ passes through $q$.
	\item[(ii)] $C'$ is reducible and two of its components intersect at $p$. One of this components is a line $L$. By Lemma \ref{lem:del-Pezzo-deg4-model-line-choice} we choose a model $\pi\colon S \ra \bbP^2$ such that $L=E_1$. Then either:
	\begin{itemize}
	\item[(a)] $C'=E_1+C$ for $C$ an irreducible conic in Table \ref{tab:delPezzo-4-lowdegree} passing through $p$.
	\item[(b)] $C'=E_1+L_{12}+L$ for $L$ a line not passing through $p$.
\end{itemize}
\end{itemize}
Case (ii)(a) is possible only if $\calL\calS=\calR_{ijk}$ for $(i,j,k)\in \{(2,3,4),(2,4,5),(3,4,5)\}$ or $\calL\calS=\calR$. Case (ii)(b) is possible only if $\calL\calS=\calR, \calQ_2,\calR_{2jk},\calR_{12k}$.\newline
In case (ii) (a) we can find $C$:
\begin{itemize}
	\item If $\calL\calS=\calR$, then $C=B_1$.
	\item If $\calL\calS=\calR_{234}$, then $C=A_5$.
	\item If $\calL\calS=\calR_{245}$, then $C=A_3$.
	\item If $\calL\calS=\calR_{345}$, then $C=A_2$.
\end{itemize}
In case (ii) (b) we can find $L$:
\begin{itemize}
	\item If $\calL\calS=\calR$, then $L=E_2$.
	\item If $\calL\calS=\calQ_2$, then $L=C_0$.
	\item If $\calL\calS=\calR_{2jk}$, then $L=L_{jk}$.
	\item If $\calL\calS=\calR_{12k}$, then $L=L_{1k}$.
\end{itemize}
\end{lem}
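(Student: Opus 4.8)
The plan is to study the net $\calL\calS$ together with the pencil it induces after blowing up $p$, reading off the two cases from the member through $q$. To begin, each of $\calR$, $\calQ_i$, $\calR_{ijk}$ has $C^2=1$ and $-K_S\cdot C=3$, so $p_a(C)=0$ by the genus formula (Lemma~\ref{lem:genus-formula}) and $h^0(S,\calO_S(C))\geq -K_S\cdot C=3$ by Proposition~\ref{prop:rational-surfaces-sections-rational-curves}; thus $\calL\calS$ has dimension at least $2$.

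Next, let $\sigma\colon\widetilde S\ra S$ be the blow--up of $p$ with exceptional curve $E$ and put $\calM=\vert\sigma^*C-E\vert$, the system of transforms of the members of $\calL\calS$ through $p$. A direct computation gives $(\sigma^*C-E)^2=0$, $(\sigma^*C-E)\cdot(-K_{\widetilde S})=2$ and $(\sigma^*C-E)\cdot E=1$, and $p_a(\sigma^*C-E)=0$; since $\widetilde S$ is rational, Proposition~\ref{prop:rational-surfaces-sections-rational-curves} gives $h^0(\widetilde S,\calO_{\widetilde S}(\sigma^*C-E))\geq 2$, so $\calM$ has dimension at least $1$. For any $q\in E$, passage through $q$ is one linear condition, hence some member $F_q\in\calM$ contains $q$; setting $C'=\sigma_*(F_q)\in\calL\calS$ gives a curve through $p$ whose total transform contains $q$. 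This proves existence.

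Now I would split according to $\mult_pC'$. If $C'$ is smooth at $p$ then $\mult_pC'=1$, the strict transform $\widetilde{C'}=\sigma^*C-E$ passes through $q$, and we are in case~(i); when $p$ lies on no line this is forced, recovering Lemma~\ref{lem:del-Pezzo-cat-degree-4-generic}. If $C'$ is singular at $p$ then $\mult_pC'\geq 2$, and an irreducible member of $\calL\calS$ cannot have such a point, for its strict transform would then acquire negative arithmetic genus; hence $C'$ is reducible with two components meeting at $p$. Since $-K_S\cdot C'=3$ and $p_a(C')=0$, these components are lines and conics (Lemma~\ref{lem:del-Pezzo-all-conics-rational}), and three lines cannot be concurrent on a quartic del Pezzo surface (Lemma~\ref{lem:del-Pezzo-lines-through-a-point}); so either $C'=L+C$ with $L$ a line and $C$ a conic both through $p$, or $C'=L+L'+L''$ is a union of three lines exactly two of which pass through $p$. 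Normalising the model so that the distinguished line through $p$ is $E_1$ (Lemma~\ref{lem:del-Pezzo-deg4-model-line-choice}) puts these into the shapes~(ii)(a) and~(ii)(b).

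Finally I would pin down, class by class, when case~(ii) occurs and what the residual curves are. The decisive observation is that for a line $L\ni p$ one has $\widetilde L\cdot(\sigma^*C-E)=L\cdot C-1$: when $L\cdot C=0$ the curve $\widetilde L$ is a fixed component of $\calM$, the residual movable system is the conic pencil $\sigma^*\vert C-L\vert$, whose general member yields~(ii)(a) and whose reducible members yield~(ii)(b); when $L\cdot C=1$ the curve $\widetilde L$ is a vertical $(-2)$--curve lying in a single reducible fibre, which yields~(ii)(b). Running this through the three families and naming the pieces with the intersection table of Lemma~\ref{lem:del-Pezzo-deg4-lines-list} and the conic classification of Lemma~\ref{lem:del-Pezzo-deg4-conics} (so that, for instance, $\calR$ produces the conic $B_1$ and the line $E_2$, $\calQ_2$ produces the line $C_0$, and the relevant $\calR_{ijk}$ produce $A_5,A_3,A_2$ and the listed $L_{jk}$) reproduces the statement. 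This last step is the main obstacle: it is elementary but needs careful bookkeeping of which lines pass through $p$, a separate treatment of the pseudo--Eckardt case (two lines through $p$) against the single--line case, and a check that the reducible members found are the only degenerations, so that precisely the classes in the lists occur.
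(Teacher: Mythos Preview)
Your overall architecture matches the paper's proof: both use the dimension count from Proposition~\ref{prop:rational-surfaces-sections-rational-curves} to get a pencil in $|\sigma^*C-E|$, pick a member through $q$, and split according to whether $E$ appears in that member (equivalently, whether $\mult_pC'\geq 2$), with the genus drop forcing reducibility in the singular case.

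The divergence is in the final classification. The paper does not use your fibration picture; it simply subtracts classes. For (ii)(a) one computes the class $C-E_1$ for each $\calL\calS$ and checks against the conic list of Lemma~\ref{lem:del-Pezzo-deg4-conics}; for (ii)(b) one computes $C-E_1-L_{12}$ and checks against the line list of Lemma~\ref{lem:del-Pezzo-deg4-lines-list}. This is a few lines of bookkeeping and gives the tables immediately. Your approach via $\widetilde L\cdot(\sigma^*C-E)=L\cdot C-1$ is a valid organising principle, but your description is slightly off: when $L\cdot C=0$ and $\widetilde L$ is fixed, the residual conic pencil $|\sigma^*(C-L)|$ does \emph{not} have general member yielding (ii)(a). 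The general conic in $|C-L|$ does not pass through $p$, so the corresponding $C'=L+Q$ is smooth at $p$ and lies in case~(i); it is only the unique conic of the pencil through $p$ (forced when $q\neq\widetilde L\cap E$, since then $E$ must appear in $F_q$) that produces (ii)(a), and its possible degenerations produce (ii)(b). Once corrected, your route works, but the paper's direct subtraction is shorter and avoids this subtlety entirely.
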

In each case, denote $C'$ by the letter in the last column of Table \ref{tab:delPezzo-4-lowdegree}. Note that in case (i), $C'$ may still be reducible, but it is irreducible around $p$.

\begin{proof}
Let $\calL\calS'=\{D\in \calL\calS \setsep p\in \Supp(D)\}$ and let $\widetilde{\calL\calS'}=\vert\sigma^*(\calL\calS')-E\vert$. By Proposition \ref{prop:rational-surfaces-sections-rational-curves}
$$h^0(\widetilde{\calL\calS'})=h^0(\calL\calS')=h^0(\calL\calS)-1\geq 2,$$ so we can choose $B\in \widetilde{\calL\calS'}$ an effective divisor passing through $q$. If $E\not\subset\Supp(B)$, then let $C'=\sigma_*(B)$ and $B=\widetilde C' \sim \sigma^*(C')-E$ where $B\cdot E=1$. Clearly, this is case (i) in the statement.

Conversely, if $E\subset \Supp(B)$, let $B=A+bE$ where $E\not\subset\Supp(A)$, $b\geq 1$ is an integer and $A$ is effective. Then $\widetilde C'=A=B-bE\sim\sigma^*(C')-(b+1)E$ for $C'=\sigma_*(B)=\sigma_*(A)$ and $C'$ is singular at $p$. $C'$ is reducible, since otherwise $p_a(\widetilde C')<p_a(C')=0$, which is impossible. Note that if $C'$ is reducible, then $C'=L+F$ for $L$ a line and $F$ a possibly reducible conic. By Lemma \ref{lem:del-Pezzo-deg4-model-line-choice} we may choose a model such that $L=E_1$. This is case (ii) in the statement which can only split in subcases (a) and (b).

In case (b) we can assume the second line is $L_{12}$ by Lemma \ref{lem:del-Pezzo-deg4-model-line-choice}.

We prove (a). If $C'\in \calR$, then $C\sim\pioplane{1}-E_1=B_1$. If $\calR\calS=\calR_{ijk}$, then $C\sim\pioplane{2}-E_i-E_j-E_k-E_1$ which is not an irreducible conic in Lemma \ref{lem:del-Pezzo-deg4-conics} unless it is one of the cases in the statement.

We prove (b). If $\calL\calS=\calR$, then $L\sim\pioplane{1}-E_1-L_{12}=E_2$. If $\calL\calS=Q_i$, then
$$L\sim\pioplane{3}-E_i-\sum^5_{j=1}E_j-E_1-L_{12}\sim\pioplane{2}+E_2-E_i-\sum^5_{j=1}E_j,$$
which is not a line in Lemma \ref{lem:del-Pezzo-deg4-lines-list} unless $i=2$, in which case $L=C_0$. Finally, if $\calL\calS=R_{ijk}$, then
$$L\sim\pioplane{2}-E_i-E_j-E_k-E_1-L_{12}\sim\pioplane{1}-E_i-E_j-E_k+E_2$$ which is not a line unless $i=1,j=2, 3\leq k\leq 5$ or $i=2,3\leq j<k\leq 5$, finishing the proof.
\end{proof}

\subsection{Auxiliary $\bbQ$-divisors}
In this section we will use the rational curves constructed in the previous section for non-singular del Pezzo surfaces of degree $4$ to show the existence of certain effective anti-canonical $\bbQ$-divisors with good local properties and controlled singularities. These $\bbQ$-divisors are used in the proof of Theorem \ref{thm:del-Pezzo-glct-charp}, when $K_S^2=4$.
\begin{lem}
\label{lem:del-Pezzo-deg4-complementary-hyperplane-section}
Given an integral curve $C\subset S$ with $\deg C \leq 2$, there is an irreducible curve $Z$ such that $Z+C\in \vert-K_S\vert$.
\end{lem}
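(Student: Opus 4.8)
The plan is to set $Z := -K_S - C$ as a divisor class and to show that this class contains an integral curve; then $Z + C \in \vert -K_S\vert$ by construction. Since $C$ is integral with $1 \le \deg C \le 2$, it is either a line or an irreducible conic, so by Lemma \ref{lem:del-Pezzo-lines-numerical} (lines have $C^2=-1$) and Lemma \ref{lem:del-Pezzo-all-conics-rational} together with the genus formula (Lemma \ref{lem:genus-formula}) we get $C^2 = -1$ when $\deg C = 1$ and $C^2 = 0$ when $\deg C = 2$. First I would compute the numerical invariants of $Z$: using $-K_S\cdot Z = K_S^2 - \deg C = 4-\deg C$, $K_S\cdot Z = -K_S^2 - K_S\cdot C = -4+\deg C$ and $Z^2 = K_S^2 + 2K_S\cdot C + C^2 = 4 - 2\deg C + C^2$, one finds $2p_a(Z)-2 = K_S\cdot Z + Z^2 = -\deg C + C^2 = -2$ in both cases, so $p_a(Z)=0$. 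Hence Proposition \ref{prop:rational-surfaces-sections-rational-curves} gives $h^0(S,\calO_S(Z)) \ge -K_S\cdot Z = \deg Z \ge 2$, so $\vert Z\vert$ is a nonempty linear system of dimension at least one.

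The remaining and main point is to produce an \emph{irreducible} member of $\vert Z\vert$, not merely an effective one. For this I would fix a model $\pi\colon S \to \bbP^2$ adapted to $C$, using Lemma \ref{lem:del-Pezzo-deg4-model-line-choice} when $C$ is a line and Lemma \ref{lem:del-Pezzo-deg4-model-conic-choice} when $C$ is a conic, so that $C$ becomes one of the standard curves of Table \ref{tab:delPezzo-4-lowdegree}; the lines of $S$ are exhausted by Lemma \ref{lem:del-Pezzo-deg4-lines-list} and its conics through a point by Lemma \ref{lem:del-Pezzo-deg4-conics}. A direct computation in $\Pic(S)$ then identifies the class $Z=-K_S-C$ with another standard class: namely $Z = Q_i$ when $C=E_i$; $Z = R_{klm}$ with $\{k,l,m\}$ the complement of $\{i,j\}$ when $C=L_{ij}$; $Z=R$ when $C=C_0$; $Z=B_i$ when $C=A_i$; and $Z=A_i$ when $C=B_i$. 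In every case $Z$ is the class of the strict transform of a plane curve of degree at most three passing through at most four of the points $p_1,\ldots,p_5$ and carrying at most one ordinary double point among them. Because these points are in general position (Theorem \ref{thm:del-Pezzo-classification-models}), a general such plane curve --- a line, a smooth conic through three or four of the points, or an irreducible nodal cubic through four simple points with a double point at the fifth --- is irreducible, and hence so is its strict transform. This produces the required integral $Z$ with $Z+C\in\vert -K_S\vert$.

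The hard part is precisely the irreducibility, since the Proposition yields only effectiveness and a priori every member of $\vert Z\vert$ could split off one of the finitely many lines of $S$. The case-by-case identification with the standard linear systems of Table \ref{tab:delPezzo-4-lowdegree}, combined with the general-position hypothesis on the blown-up points, is what rules this out. I expect no essential difficulty beyond this bookkeeping once the model is chosen appropriately; in particular, in each case the general member is smooth of arithmetic genus $0$, so the degenerate (reducible) members form a proper closed subfamily that can be avoided.
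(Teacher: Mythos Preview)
Your proposal is correct and follows essentially the same strategy as the paper: set $Z=-K_S-C$, reduce $C$ to a standard form via the model-choice lemmas, identify $\vert Z\vert$ with one of the catalogued linear systems of Table~\ref{tab:delPezzo-4-lowdegree}, and argue that a general member is irreducible because the blown-up points are in general position. The paper's presentation differs only cosmetically: rather than appealing directly to ``the general member of $\vert Z\vert$'', it fixes a point $p\in C$ (away from the other lines) together with a general tangent direction $q$ and invokes the construction of Lemma~\ref{lem:del-Pezzo-deg4-cubics} to produce a specific irreducible $Z=Q_1$ through $(p,q)$ when $\deg C=1$, and similarly picks a specific $Z=B_1$ through a general $p\in A_1$ when $\deg C=2$. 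Your listing of all five cases ($E_i$, $L_{ij}$, $C_0$, $A_i$, $B_i$) is more than strictly necessary, since Lemmas~\ref{lem:del-Pezzo-deg4-model-line-choice} and~\ref{lem:del-Pezzo-deg4-model-conic-choice} already reduce any line to $E_1$ and any irreducible conic to some $A_i$; but the extra cases do no harm.
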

\begin{proof}
Given $p\in C$, denote by $\sigma\colon\widetilde{S} \ra S$ the blow-up at $p$ and $\widetilde C$ the strict transform of $C$.

If $\deg C=1$ we can assume $C=E_1$ by Lemma \ref{lem:del-Pezzo-deg4-model-line-choice}. Consider
$$\calQ_1 = \vert\pioplane{3}-2E_1-E_2-\cdots -E_5\vert.$$
Choose $p\in E_1$ not passing through any other line and $q=\sigma^{-1}(p)\subset \widetilde S$, $q\not\in \widetilde E_1$ a general point. Our choice of $p$ and $q$ defines a curve $Z=Q_1\in \calQ_1$ with $p\in Z$ as in section \ref{sec:delPezzo-4-curves} which is irreducible, since $q$ is general. Indeed, only a finite number of conics pass through $p$ so their strict transforms in $\widetilde S$ cannot pass through $q$ due to the generality condition, but $q\in Q_1$ and has degree $3$, so it is irreducible.

If $\deg C=2$ by Lemma \ref{lem:del-Pezzo-deg4-model-conic-choice} assume $C=A_1$. Choose $p\in A_1$ such that $p$ is not in any line and take $Z=B_1\in \calB_1$ as in section \ref{sec:delPezzo-4-curves}. $Z$ is irreducible by the proof of Lemma \ref{lem:del-Pezzo-deg4-aux-divisors-G-Basic}, subcase 1.

In both cases we have
$$C+Z\sim -K_S.$$
\end{proof}

The following two lemmas are needed in the proof of theorems \ref{thm:del-Pezzo-glct-charp}. We provide a joint proof.
\begin{lem}
\label{lem:del-Pezzo-deg4-aux-divisors-G-Basic}
Let $S$ be a non-singular del Pezzo surface of degree $4$. Let $p\in S$. There is an effective $\bbQ$-divisor $G=\sum{g_i G_i}$ in $S$, $G\simq -K_S$ such that
\begin{itemize}
	\item[(i)] $(S,\frac{2}{3}G)$ is log canonical,
	\item[(ii)] $p\in G_i \ \forall G_i$,
	\item[(iii)] $\deg G_i\leq 2 \ \forall G_i$,
	\item[(iv)] all $G_i$ are irreducible and smooth.
\end{itemize}
\end{lem}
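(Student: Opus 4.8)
The plan is to split according to the number of lines of $S$ passing through $p$. Since $S$ has degree $4$, Lemma~\ref{lem:del-Pezzo-lines-through-a-point} shows at most two lines pass through $p$, so there are exactly three cases: no line, one line, or two lines through $p$. In each case I would exhibit an explicit effective $\bbQ$-divisor $G\simq -K_S$ whose components are taken from the catalogue of lines and conics through $p$ in Table~\ref{tab:delPezzo-4-lowdegree}, after fixing a convenient model via Lemmas~\ref{lem:del-Pezzo-deg4-model-line-choice} and~\ref{lem:del-Pezzo-deg4-model-conic-choice}. Conditions (ii)--(iv) will then be immediate from the construction together with Lemma~\ref{lem:del-Pezzo-deg4-good-curves-smooth} (irreducible catalogue curves are smooth), so the whole difficulty is concentrated in verifying (i), that $(S,\frac{2}{3}G)$ is log canonical.

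When no line passes through $p$, I would take two complementary conics: choosing the model so that a fixed irreducible conic through $p$ is $A_1$ (Lemma~\ref{lem:del-Pezzo-deg4-model-conic-choice}), I set $G=A_1+B_1$, where $B_1\sim -K_S-A_1$ is the conic through $p$ produced by Lemma~\ref{lem:del-Pezzo-deg4-complementary-hyperplane-section}. Both are smooth irreducible conics through $p$ and $A_1+B_1\sim -K_S$. Since $A_1\cdot B_1=2$, at $p$ they meet either transversally, in which case $\Supp(\frac{2}{3}G)$ has simple normal crossings with coefficients $\frac{2}{3}<1$ and the pair is log canonical, or with a simple tangency, in which case the reduced pair $(S,A_1+B_1)$ has $\lct_p=\frac{3}{4}$ (two blow-ups, exactly as in Example~\ref{exa:P2cuspidal}); as $\frac{2}{3}<\frac{3}{4}$ the pair $(S,\frac{2}{3}G)$ is again log canonical. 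When two lines pass through $p$, I would normalise them to $E_1$ and $L_{12}$, which meet transversally at $p$ (Lemmas~\ref{lem:del-Pezzo-deg4-model-line-choice} and~\ref{lem:del-Pezzo-deg4-lines-list}), and set $G=E_1+L_{12}+A_2$; here $A_2$ denotes the unique member of $|A_2|$ through $p$, which is irreducible because $p$ lies on no further line, and $E_1\cdot A_2=L_{12}\cdot A_2=E_1\cdot L_{12}=1$ forces the three curves to meet pairwise only at $p$ and with distinct tangent directions.

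The case of exactly one line through $p$ is the main obstacle. Normalising that line to $E_1$, the only lines and conics through $p$ are $E_1$, $B_1$ and $A_2,A_3,A_4,A_5$ (Lemma~\ref{lem:del-Pezzo-deg4-conics} together with the intersection numbers with $E_1$), and a direct computation in $\Pic(S)$ shows that, using only these curves, the anticanonical class can be represented \emph{only} with the coefficients
$$G=\frac{2}{3}E_1+\frac{1}{3}\left(A_2+A_3+A_4+A_5\right)+\frac{1}{3}B_1 ,$$
so that no two-component divisor is available and the six-component divisor is genuinely forced. Each $A_i$ and $B_1$ is irreducible (otherwise $p$ would lie on a second line), hence smooth, and all six components pass through $p$. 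The intersection numbers of Lemma~\ref{lem:del-Pezzo-deg4-lines-list} give that every pair among these six curves meets only at $p$, transversally; for pairs of conics this is precisely Lemma~\ref{lem:del-Pezzo-deg4-conics-tangency}.

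To check (i) uniformly I would blow up $p$ once, $\sigma\colon\widetilde S\to S$ with exceptional curve $E$, and apply Lemma~\ref{lem:log-pullback-preserves-lc}: $(S,\frac{2}{3}G)$ is log canonical if and only if $(\widetilde S,\frac{2}{3}\widetilde G+(\frac{2}{3}\mult_p G-1)E)$ is. In the one-line case $\mult_p G=\frac{7}{3}$, so $E$ carries coefficient $\frac{5}{9}<1$; since the six strict transforms meet $E$ at six distinct points, transversally and with coefficients $\frac{4}{9}$ or $\frac{2}{9}$, the blown-up pair has simple normal crossings with all coefficients $<1$ and is log canonical, while away from $p$ the six curves are pairwise disjoint and each has coefficient $<1$. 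The two-line case is identical with $\mult_p G=3$, giving $E$ coefficient $1$ and three transversal strict transforms of coefficient $\frac{2}{3}$, again simple normal crossings with coefficients $\le 1$. The essential work is therefore the $\Pic$ computation pinning down the forced divisor in the one-line case and the verification, via the intersection table and Lemma~\ref{lem:del-Pezzo-deg4-conics-tangency}, that all components meet only at $p$.
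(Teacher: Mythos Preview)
Your proof is correct and in the two cases the paper actually treats you use exactly the same divisors: $G=A_1+B_1$ when $p$ lies on no line, and $G=\tfrac{2}{3}E_1+\tfrac{1}{3}B_1+\tfrac{1}{3}\sum_{j=2}^5 A_j$ when $p$ lies on exactly one line. The log canonicity verifications are also the same (one blow-up, discrepancy computed from $\mult_p G$, simple normal crossings upstairs because all pairwise local intersections are $1$).

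You go further than the paper in two respects. First, you handle the pseudo-Eckardt case $p=E_1\cap L_{12}$ with $G=E_1+L_{12}+A_2$; the paper's joint proof only covers ``no line'' and ``exactly one line'' through $p$, so as stated (for arbitrary $p\in S$) the paper's argument is incomplete, though in the application this case is disposed of separately before the lemma is invoked. Your choice is exactly the divisor appearing in the paper's Claim~\ref{clm:del-Pezzo-deg4-upperbound}, and your discrepancy check is correct. Second, your Picard-lattice computation showing that the one-line divisor is \emph{forced} is a nice observation that the paper does not make; it explains why no simpler $G$ is available there. One minor quibble: Lemma~\ref{lem:del-Pezzo-deg4-complementary-hyperplane-section} does not guarantee that the complementary curve passes through your chosen $p$; the correct justification for picking $B_1$ through $p$ is simply that $|\calB_1|$ is a pencil (Table~\ref{tab:delPezzo-4-lowdegree} and Proposition~\ref{prop:rational-surfaces-sections-rational-curves}).
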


\begin{lem}
\label{lem:del-Pezzo-deg4-aux-divisors-H}
Let $S$ be a non-singular del Pezzo surface of degree $4$. Let $p\in S$, $q\in E\subset \widetilde S\stackrel{\sigma}{\lra} S$, where $E$ is the exceptional curve in the blow-up $\sigma$ of $S$ at $p$. Suppose $p$ belongs to, at most, one line. There is $H=\sum{h_i H_i}$, an effective $\bbQ$-divisor in $S$, with $H\simq -K_S$, such that:
\begin{itemize}
	\item[(i)] $(S,\frac{2}{3}H)$ is log canonical,
	\item[(ii)] $p\in H_i\  \forall H_i$,
	\item[(iii)] $\deg H_i\leq 3\ \forall H_i$,
	\item[(iv)] all $H_i$ are irreducible and smooth,
	\item[(v)] the point $q\in \widetilde H_i$, the strict transform of $H_i$ via $\sigma,\ \forall H_i$ such that $\deg H_i>1$.
\end{itemize}
\end{lem}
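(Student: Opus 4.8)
The plan is to realise $H$ as an effective anticanonical divisor through $p$ all of whose components have degree at most $3$, exploiting the fact that among the curves catalogued in Table \ref{tab:delPezzo-4-lowdegree} only the cubic systems are flexible enough to prescribe the tangent direction $q$. Since $(-K_S)^2=4$, no component may have degree $4$, so $H$ must be reducible; and since each conic class on $S$ is only a pencil (Lemma \ref{lem:del-Pezzo-deg4-conics}), the conic in a given class through $p$ has its tangent at $p$ already determined, whereas every cubic system has dimension at least $2$ and therefore contains a member through $p$ whose strict transform passes through \emph{any} prescribed $q\in E$. This flexibility is exactly Lemma \ref{lem:del-Pezzo-deg4-cubics}, which I would use as the engine of the construction; the joint proof with Lemma \ref{lem:del-Pezzo-deg4-aux-divisors-G-Basic} is immediate for $G$ by taking two conics through $p$ meeting transversally (so $\tfrac23 G$ has simple normal crossings).

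First I would treat the case where $p$ lies on exactly one line $L_0$. I choose the unique cubic system $\calL\calS$ whose complementary line is $L_0$, so that $C+L_0\sim-K_S$ for $C\in\calL\calS$, and apply Lemma \ref{lem:del-Pezzo-deg4-cubics} to $\calL\calS$, $p$ and $q$. Because $p$ lies on a single line, alternative (ii)(b) (which needs two lines through $p$) cannot occur, so either $C'$ is irreducible and smooth at $p$ with $\widetilde{C'}\ni q$, or $C'=L_0+A$ with $A$ an irreducible conic. In the first situation I set $H=C'+L_0$. The second only signals that $q$ is the tangent direction of $L_0$, in which case the dimension count still produces an irreducible cubic tangent to that direction, again giving $H=C'+L_0$ (now with $C'$ and $L_0$ tangent at $p$); I avoid the degenerate choice $2L_0+A$, whose coefficient $\tfrac43$ along $L_0$ would destroy log canonicity. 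Note that two conics through $p$ cannot be used here, since by Lemma \ref{lem:del-Pezzo-deg4-conics-tangency} distinct irreducible conics through a point of a line meet transversally and so cannot both be tangent to $q$. In all cases the components are smooth by Lemma \ref{lem:del-Pezzo-deg4-good-curves-smooth}, pass through $p$, and satisfy (iii) and (v).

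Verifying (i), that $(S,\tfrac23 H)$ is log canonical, is where the work lies. Running the minimal log resolution of $\tfrac23 H$ at $p$ via Lemma \ref{lem:log-pullback-preserves-lc}, the only dangerous configuration is when two smooth components are mutually tangent at $p$, equivalently when both strict transforms pass through the same $q$. A single blow-up then produces three smooth branches $\widetilde{C'},\widetilde{L_0},E$ through one point with coefficients $\tfrac23,\tfrac23,\tfrac13$; a second blow-up introduces an exceptional curve of coefficient $\tfrac23+\tfrac23+\tfrac13-1=\tfrac23\le 1$ and separates the branches, so every discrepancy is at least $-1$ and the pair is log canonical. When the two components meet transversally the support of $H$ has simple normal crossings at $p$ and log canonicity is immediate since the coefficients are $\tfrac23<1$.

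For the remaining case, where $p$ lies on no line, there is no line through $p$ to complete a cubic, so I would pass to $\widetilde S=\mathrm{Bl}_pS$, which is a non-singular cubic surface with $E$ a line and $q\in E$. Conics through $p$ tangent to $q$ correspond exactly to lines of $\widetilde S$ meeting $E$ at $q$, so a tangent pair of conics $A,B$ (with $A\cdot B=2$ concentrated at $p$) descends from a tritangent configuration $E+\widetilde A+\widetilde B$ of $\widetilde S$ through $q$; one then sets $H=A+B$ and resolves as above (a chain of two blow-ups, exceptional coefficients $\tfrac13$ and then $\tfrac23$) to obtain (i)--(v). This last case is the main obstacle: reconciling the degree-$\le 3$ and tangency-at-$q$ requirements forces the auxiliary pair of conics to arise from the line structure of the cubic surface $\widetilde S$, so the delicate point is to extract from Theorem \ref{thm:del-Pezzo-cat-degree-3} and Lemma \ref{lem:del-Pezzo-cat-list-degree-3} the configuration on $\widetilde S$ lying over the prescribed $q$, and then to check that the resulting mutually tangent components keep $(S,\tfrac23 H)$ log canonical.
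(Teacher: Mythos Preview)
Your proposal has two genuine gaps, one in each of the two main cases.

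\textbf{When $p$ lies on no line.} Your construction $H=A+B$ requires two conics $A,B$ on $S$ through $p$ whose strict transforms both pass through the prescribed $q\in E$. Passing to the cubic surface $\widetilde S=\mathrm{Bl}_pS$, this means two lines of $\widetilde S$ other than $E$ meet $E$ at $q$, i.e.\ $q$ is an Eckardt point of $\widetilde S$. For a generic $q\in E$ there is no such line at all, let alone two; appealing to Theorem~\ref{thm:del-Pezzo-cat-degree-3} or Lemma~\ref{lem:del-Pezzo-cat-list-degree-3} does not produce one. The paper handles exactly this situation (Case~1.1, where $q$ lies on no conic's strict transform) by taking the genuinely fractional combination $H=\tfrac12 R+\tfrac16\sum_{i=1}^5 Q_i$: six irreducible cubics, each of which can be forced through $p$ with tangent direction $q$ because the cubic systems are two-dimensional. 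No integral anticanonical divisor works here.

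\textbf{When $p$ lies on exactly one line $E_1$.} You set $H=Q_1+E_1$ with $Q_1\in\calQ_1$ the cubic through $p$ whose strict transform contains $q$. You read Lemma~\ref{lem:del-Pezzo-deg4-cubics} as saying that case~(i) yields an \emph{irreducible} $Q_1$, but the paper explicitly warns otherwise: in case~(i) the cubic may still split, only one branch passing through $p$. Concretely, when $q$ lies on the strict transform of a conic (the paper's Case~2.2), the unique $Q_1$ can be $C_0+B_1$ or $L_{1j}+A_j$, with the line component missing $p$; then $H=Q_1+E_1$ has a component violating~(ii). The paper's fix in this subcase is again a non-integral combination, $H=\tfrac35 A_5+\tfrac15(R_{125}+R_{135}+R_{145})+\tfrac15 Q_5+\tfrac25 E_1$, with a separate irreducibility check for each cubic and a dedicated log-canonicity computation (Lemma~\ref{lem:lc2.2}).

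In short, the paper's proof is an unavoidable case split (Cases~1.1, 1.2, 2.1, 2.2, 2.3) precisely because no single integral decomposition $H=C'+L_0$ or $H=A+B$ covers all positions of $q$; the flexibility you need is bought with rational coefficients and several cubic systems at once.
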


\begin{proof}[Proof of lemmas \ref{lem:del-Pezzo-deg4-aux-divisors-G-Basic} and \ref{lem:del-Pezzo-deg4-aux-divisors-H}]
We will construct these $\bbQ$-divisors explicitly, by case analysis, depending on the position of $p\in S$ and $q\in E$, the exceptional divisor of the blow-up of $p$. In order to do this we use curves from Table \ref{tab:delPezzo-4-lowdegree}, which are lines, conics and cubics. These were constructed depending on $p$ and $q$ and were possibly reducible.

Conditions (ii) and (iii) in lemma \ref{lem:del-Pezzo-deg4-aux-divisors-H} will be clear by construction, as well as condition (v), for $H$.

We will check log canonicity (condition (i)). Not it is enough to show that $\mult_p H\leq \frac{3}{2}$, since if $(S, \frac{2}{3}H)$ is not log canonical, then $\mult_pH>\frac{3}{2}$ by Lemma \ref{lem:adjunction} (i).

The biggest task will be to prove that the curves chosen for each particular case are irreducible in each situation (condition (iv)). Ultimately this is the reason for our break down into cases. Smoothness follows from Lemma \ref{lem:del-Pezzo-deg4-good-curves-smooth}.

\begin{case}
\label{case:1}{\ \\}
\assump{$p$ is not in any line}In particular $p\not\in E_i$ for all $i$. Let $G=A_1+B_1\sim -K_S$, and $(S,\frac{2}{3}G)$ is log canonical, since $A_1$ and $B_1$ intersect either in a tacnodal point or with simple normal crossings. Since all curves $C\subset \Supp(G)$ are conics, if $C=L_a+L_b$, the sum of two lines, then $p$ is one line, contradicting \refAssump.

\begin{subcase}
\label{subcase:1.1}{\ \\}
\subassump{$q$ is not in the strict transform of conics in $S$ passing through $p$}Let $H=\frac{1}{2}R+ \frac{1}{6}\sum_{i=1}^5Q_i \simq -K_S.$
Again, \refAssump and \refSubassump assure that $R$ and $Q_i$ are irreducible, and therefore smooth, so we just need to check
\[2\mult_p(H) = 2\left[\frac{1}{2}+5\cdot \frac{1}{6}\right]=\frac{16}{6}<3.\]
\end{subcase}
\begin{subcase}\label{subcase:1.2}{\ \\}
\subassump{The point $q\in \widetilde C$, the strict transform of a conic in $S$}By Assumption 1, $q\not\in \widetilde L$, for $L$ a line in $S$. Without loss of generality assume $C=A_1$, which is irreducible (use Lemma \ref{lem:del-Pezzo-deg4-model-conic-choice}). Observe that given a conic $C'\neq A_1$ with $p\in C'$ in Lemma \ref{lem:del-Pezzo-deg4-conics}, then $A_1\cdot C'=1$ unless $C'=B_1$. If $(A_1\cdot B_1)\vert_p=1$ then $A_1$ is the only conic such that $q\in \widetilde A_1$. Suppose this is the case. Let
\[H=\frac{1}{2}A_1 + \frac{1}{2}R_{125}+ \frac{1}{2}R_{134}\simq -K_S.\]
The point $q\in \widetilde R_{125}\cap \widetilde R_{124}$ by Lemma \ref{lem:del-Pezzo-deg4-cubics}, since $p$ does not lie in a line. We need to show $R_{125}$ and $R_{134}$ are irreducible. By relabelling, it is enough to show it for $R_{125}$. Suppose $R_{125}=C_a+L_b$ where $C_a$ is a conic and $L_b$ is a line. $p\in C_a$ and $p\not\in L_b$, by \refAssump. In particular $C_a=A_1$. Then
$$L_b\sim R_{125}-A_1=-E_1+E_3+E_4$$
which is not a line by Lemma \ref{lem:del-Pezzo-deg4-lines-list}. Finally
\[2\mult_p(H) = 2\left[\frac{1}{2}+\frac{1}{2}+\frac{1}{2}\right]=3.\]

Suppose $(A_1\cdot B_1)\vert_p=2$. Then $q=\widetilde B_1\cap \widetilde A_1$. Let
$$H=A_1+B_1\sim -K_S.$$
Clearly $A_1,B_1$ are irreducible by the assumption an $(S, \frac{2}{3} H)$ is log canonical by Case 1.
\end{subcase}
\end{case}
\begin{case}
\label{case:2}
Suppose $p\in L$, a line in $S$ and no other line. By Lemma \ref{lem:del-Pezzo-deg4-model-line-choice} we can consider $L=E_1$.
\forcenewline\assump{$p\in E_1$ and $p\not\in L$, any other line different than $E_1$}Take
\[G=\frac{1}{3}\sum^5_{j=2}A_j+ \frac{1}{3}B_1 + \frac{2}{3}E_1 \simq -K_S.\]
$B_1$ is irreducible. Since, if it was not irreducible, then
\[\pi^*(\oplane{1})-E_1\sim B_1=L_a+E_1,\]
where $p\not\in L_a\sim B_1-E_1\sim \pi^*(\oplane{1})-2E_1$, a line in $S$, but there is no such a line in $S$ by Lemma \ref{lem:del-Pezzo-deg4-lines-list}. 

The curves $A_j$ are irreducible too. If they were not irreducible, then
\[\pi^*(\oplane{2})-\sum_{\substack{k=1\\k\neq j}}^5 E_k\simq A_j=L_b+E_1,\]
where
$$p\not\in L_b=A_j-E_1 \sim \pi^*(\oplane{2})-2E_1-\sum_{\substack{k=2\\k\neq j}}^5 E_k$$
is a line, but there is no such a line in $S$, by Lemma \ref{lem:del-Pezzo-deg4-lines-list}. 
\begin{clm}
$(S,\frac{2}{3}G)$ is log canonical.
\end{clm}
\begin{proof}
Since
$$E_1\cdot A_j=B_1\cdot E_1 = A_j\cdot B_1 =A_j\cdot A_k=1,\ j\neq 1, k\neq j, k\neq 1,$$
they intersect each other transversely so we blow up once to obtain simple normal crossings:
$$\sigma^*(\lambda G+K_{\widetilde S})\simq\lambda \widetilde G + \left(\left(4\cdot \frac{1}{3}+ \frac{1}{3}+ \frac{2}{3}\right) \lambda-1\right) F_1= \lambda \widetilde G + (\frac{7}{10}\lambda-1)F_1,$$
and $\lambda=\frac{2}{3}$ makes $\disc(S,\lambda D)\geq -1$.
\end{proof}
\begin{subcase}
\label{subcase:2.1}\forcenewline
\subassump{$q\not\in \widetilde C$ for $C$ any line or conic in $S$}In particular $q\not\in \widetilde E_1$. Let
$$H=\frac{1}{8}\sum_{2\leq j<k\leq 5}R_{1jk} + \frac{1}{8}\sum^5_{i=2}Q_i+ \frac{1}{4}E_1\simq -K_S$$
$q\in \widetilde Q_i, \widetilde R_{1jk}$ by Lemma \ref{lem:del-Pezzo-deg4-cubics} since $p$ does not lie in two lines. All $R_{1jk}$ and $Q_i$ are irreducible, since otherwise they would split in a conic $C_a$ and a line $L_b$ and either $q\in\widetilde C_a$ or $q\in \widetilde L_b$, contradicting the assumption. Moreover
$$2(\mult_p(H)) = 2\left(\frac{1}{8}\cdot 6 + \frac{1}{8}\cdot 4+ \frac{1}{4}\cdot 1\right) = 3.$$
\end{subcase}
\begin{subcase}
\label{subcase:2.2}\forcenewline
\subassump{$q\in \widetilde C$, for some conic $C$ in $S$ but $q\not \in \widetilde L$, for all lines $L$ in $S$}In particular $C$ is irreducible and $q\not \in \widetilde E_1$. By lemmas \ref{lem:del-Pezzo-deg4-conics} and \ref{lem:del-Pezzo-deg4-model-conic-choice} we can assume that 
$$C\sim\pi^*(\oplane{2})-\sum_{\substack{i=1\\i\neq k}}^5 E_i~\sim A_k, \text{ for } k\neq1.$$
where $p\in C=A_k,\ k\neq 1$, with $q\in \widetilde C$. Moreover, since $q\in \widetilde A_k$, \refSubassump assures it is irreducible.

Without loss of generality, suppose $k=5$. Suppose there is another conic $C'$ in $S$ such that $p\in C'$, $q\in \widetilde C'$ and $C'\neq A_5$. Since $p\in C'\cap E_1$, by Lemma  \ref{lem:del-Pezzo-deg4-conics} either $C'=B_1$ or $C'=A_j$, for $j\neq 1,5$. However $A_5\cdot B_1=1$, $A_i\cdot A_5=1$ for $i\neq 1,5$. Therefore in both cases $C'$ and $A_5$ intersect transversely and $q\not\in \widetilde C'$. Let
\begin{equation}
H=\frac{3}{5} A_5 + \frac{1}{5}\left(R_{125}+R_{135}+R_{145}\right)+ \frac{1}{5}Q_5 + \frac{2}{5}E_1\simq -K_S.
\label{eq:2.2}
\end{equation}
$q\in \widetilde Q_5, \widetilde R_{1jk}$ by Lemma \ref{lem:del-Pezzo-deg4-cubics} since $p$ does not lie in two lines.

We already know that $A_5$ and $E_1$ are irreducible. Suppose $Q_5$ is reducible. Then $Q_5=A_5+L_b$, where $L_b$ is a line. But then
$$L_b \sim Q_5-A_5\sim \pioplane{1}-2E_5$$
which is not one of the lines in $S$, by Lemma \ref{lem:del-Pezzo-deg4-lines-list}.

If $R_{125}$ is reducible, then there is a line $L_a$ such that
$$L_a \sim R_{125}-A_5 \sim -E_5+E_4+E_3$$
which is not a line in Lemma \ref{lem:del-Pezzo-deg4-lines-list}. By relabelling, it is clear that $R_{135}$ and $R_{145}$ are irreducible too.

\begin{lem}\label{lem:lc2.2}
If $p\in A_5\cap E_1$ and $q\in \widetilde A_5$, $q\not\in \widetilde L$, for any line $L$ in $S$, then $(S,\frac{2}{3}H)$ is log canonical, for $H$ as in \eqref{eq:2.2}.
\end{lem}
\begin{table}[htb]%
\begin{center}
\begin{tabular}{|r||c|c|c|c|c|c|c|}
\hline
										&$\widetilde {A_5}$	&$\widetilde {R_{125}}$	&$\widetilde {R_{135}}$	&$\widetilde {R_{145}}$	&$\widetilde {Q_5}$	&$\widetilde{E_1}$ &$F_1$	\\
\hline \hline
$\widetilde {A_5}$	& &1&1&1&1&0&1\\
\hline
$\widetilde {R_{125}}$	& & &1&1&1&0&1\\
\hline
$\widetilde {R_{135}}$	& & & &1&1&0&1\\
\hline
$\widetilde {R_{145}}$	& & & & &1&0&1\\
\hline
$\widetilde {Q_{5}}$	& & & & & &0&1\\
\hline
$\widetilde {E_{1}}$	& & & & & & &1\\
\hline
\end{tabular}
\caption{Intersection numbers for Lemma \ref{lem:lc2.2}.}
\label{tab:intersectNumbers}
\end{center}
\end{table}
\begin{proof}
Let $\sigma_0:S_0\ra S$ be the blow up at $p$ with exceptional divisor $F_1$ with $q\in F_1$. Table \ref{tab:intersectNumbers} gives the intersection numbers in $S_0$. Since all curves in Table \ref{tab:intersectNumbers} intersect normally and pass through $q$, we just need to blow up this point to obtain simple normal crossings. Let $\sigma:\widetilde S \ra S$ be the composition of both blowing up maps and $F_2$ be the second exceptional divisor. Then:
\begin{align*}
\sigma^*(\lambda H+K_{\widetilde S})\simq\lambda \widetilde H &+ \left(\left(\frac{3}{5}+3\cdot \frac{1}{5}+ \frac{1}{5}+ \frac{2}{5}\right)\lambda -1\right) F_1+\\
&\left(\left(\frac{7}{5}+\frac{9}{5}\right)\lambda -2\right) F_2,
\end{align*}
and for $\lambda=2/3$, $(S,\lambda H)$ is log canonical.
\end{proof}

\end{subcase}

\begin{subcase}
\label{subcase:2.3}{ \ \\ }
Suppose that in \refAssump  $q\in \widetilde L$ for some line $L$ in $S$. Then $L=E_1$.
\forcenewline
\subassump{$q\in \widetilde E_1$}Suppose $q\in \widetilde C$ where $C$ is a conic in $S$. As in case \ref{subcase:2.1} we can assume, by using Lemma \ref{lem:del-Pezzo-deg4-conics} and Lemma \ref{lem:del-Pezzo-deg4-model-conic-choice} that $C= A_5\sim\pi^*(\oplane{2})-\sum_{i=1}^4E_i$.
$C$ is irreducible, since otherwise $L_a\sim A_5-E_1\sim \pioplane{2} -2E_1-\sum_{i=2}^4E_i,$ would be a line.

Since $C$ is irreducible, it intersects $E_1$ transversely at $p$ and since $q\in \widetilde E_1$, then $q\not\in \widetilde C$. So $q$ does not belong to the strict transform of any conic. Now, take
$$H=Q_1+E_1\sim -K_S,$$
and, providing $Q_1$ is irreducible, $(S,\frac{2}{3}H)$ is log canonical, since $Q$ and $E_1$ intersect each other at worst at a tacnodal point. Since $p$ is not the intersection of two lines, $q\in \widetilde Q_1$ by Lemma \ref{lem:del-Pezzo-deg4-cubics}. Since $q$ is not on the strict transform of any conic, if $Q_1$ is not irreducible, then $Q_1=E_1+C_a$, where $C_a$ is a (possibly irreducible) conic such that $q\not\in\widetilde C_a$, $p\in C_a$. But
$$C_a=Q_1-E_1 \sim \pioplane{3} -3 E_1-\sum^5_{i=2}E_i$$
is not an irreducible conic, by Lemma \ref{lem:del-Pezzo-deg4-conics}. If it was the union of two lines, by Lemma \ref{lem:del-Pezzo-deg4-lines-list}, one of them should be $C_0$, but then 
$$L_b=C_a-C_0\sim \pioplane{1}-2E_1$$
is not one of the lines in Lemma \ref{lem:del-Pezzo-deg4-lines-list}.
\end{subcase}
\end{case}
\end{proof}

\subsection{Proof of Theorem \ref{thm:del-Pezzo-glct-charp} in degree $4$}
\label{sec:delPezzo-4-proof}
We prove Theorem \ref{thm:del-Pezzo-glct-charp} in the case $K_S^2=4$.
\begin{clm}
\label{clm:del-Pezzo-deg4-upperbound}
$$\glct(S)\leq \frac{2}{3}$$
\end{clm}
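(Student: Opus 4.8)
The plan is to establish the upper bound by producing a single witness: to prove $\glct(S)\le\frac23$ it suffices, by Definition \ref{dfn:glct}, to exhibit one effective $\bbQ$-divisor $D\simq-K_S$ with $\lct(S,D)\le\frac23$, since $\glct(S)=\sup\{\lambda\setsep(S,\lambda D)\text{ is log canonical }\forall D\simq-K_S\}\le\lct(S,D)$ for any fixed such $D$. So the whole problem reduces to writing down one well-chosen anticanonical $\bbQ$-divisor whose log canonical threshold is visibly at most $\frac23$.

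The divisor I would use is exactly the one constructed in Example \ref{exa:del-Pezzo-no-cat-codimension1}. Fix a model $\pi\colon S\ra\bbP^2$ contracting $E_1,\dots,E_5$, and let $\bar C\sim\pioplane{2}$ be a conic through $p_1,\dots,p_5$; this exists because $h^0(\bbP^2,\pioplane{2})=\binom{4}{2}=6>5$ (alternatively via Proposition \ref{prop:rational-surfaces-sections-rational-curves}), and since the $p_i$ are in general position no three are collinear, so $\bar C$ is irreducible. Let $C$ be its strict transform, so $C\sim\pioplane{2}-\sum_{i=1}^5E_i$ is a prime divisor (indeed $C^2=-1$, $-K_S\cdot C=1$, so $C$ is a line on $S$ by Lemma \ref{lem:del-Pezzo-lines-numerical}). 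Set
$$D:=\tfrac32 C+\tfrac12\sum_{i=1}^5E_i.$$
A one-line check confirms $D\simq-K_S$: $\tfrac32(\pioplane{2}-\sum E_i)+\tfrac12\sum E_i=\pioplane{3}-\sum E_i\sim-K_S$, matching Corollary \ref{cor:del-Pezzo-anticanonical}. Thus $D$ is an effective anticanonical $\bbQ$-divisor.

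Finally I would read off the threshold from the coefficient of $C$. In $\lambda D$ the prime divisor $C$ carries coefficient $\frac{3\lambda}{2}$, which exceeds $1$ precisely when $\lambda>\frac23$; for such $\lambda$ the pair $(S,\lambda D)$ has a component of coefficient $>1$ and so is not log canonical along $C$ (its total discrepancy along $C$ is $1-\frac{3\lambda}{2}<-1$, equivalently $C\subseteq\LCS(S,\lambda D)$, cf.\ Lemma \ref{lem:adjunction}(ii) and Definition \ref{dfn:LCS}). Hence $\lct(S,D)\le\frac23$, and therefore $\glct(S)\le\lct(S,D)\le\frac23$. There is essentially no obstacle here: the only points to verify are the existence of the conic and the numerical identity $D\simq-K_S$, both routine; the argument is deliberately of the same flavour as Example \ref{exa:del-Pezzo-no-cat-codimension1}, the harder content of the degree-$4$ case being the matching lower bound $\glct(S)\ge\frac23$ handled separately via the auxiliary divisors $G$ and $H$ of Lemmas \ref{lem:del-Pezzo-deg4-aux-divisors-G-Basic} and \ref{lem:del-Pezzo-deg4-aux-divisors-H}.
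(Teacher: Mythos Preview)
Your proof is correct, but it takes a different route from the paper's. The paper exhibits an \emph{integral} anticanonical divisor $G=E_1+L_{12}+A_2\in\vert-K_S\vert$, three irreducible curves all passing through the pseudo-Eckardt point $p=E_1\cap L_{12}$; the non--log canonical behaviour is at the isolated point $p$ (one blow-up gives discrepancy $3\lambda-1$ along the exceptional curve, forcing $\lambda\le\frac23$). Your witness $D=\frac32 C_0+\frac12\sum E_i$ is a genuine $\bbQ$-divisor, and the failure of log canonicity is in codimension~$1$ along $C_0$ rather than at a point. Your argument is arguably quicker to verify (just read off a coefficient), while the paper's has the bonus of realising the threshold with an honest member of $\vert-K_S\vert$, which is exactly what is needed later for $\Cat(S)=1$ (Lemma~\ref{lem:del-Pezzo-Cat-index}).

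One small slip: in your parenthetical you write ``total discrepancy along $C$ is $1-\frac{3\lambda}{2}<-1$'', but for $\lambda>\frac23$ this quantity is only $<0$, not $<-1$. The discrepancy of the non-exceptional component $C$ is $-\frac{3\lambda}{2}$, and it is this that is $<-1$ once $\lambda>\frac23$. The conclusion is unaffected, since you correctly cite Lemma~\ref{lem:adjunction}(ii): a coefficient exceeding $1$ already forces the pair to be non--log canonical along that component.
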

\begin{proof}
Take $p=E_1\cap L_{12}$ and the conic $A_2$, which is irreducible (see case $3$ in the proof of Lemma \ref{lem:del-Pezzo-deg4-aux-divisors-G-Basic}) and isomorphic to $\bbP^1$. Consider $G=E_1+L_{12}+A_2\sim K_S$. We are done, since $\lct_p(S, G)=2/3$, and
$$\glct(S)=\inf\{\lct_r(S,D) \setsep r\in S,\ D\simq K_S\}\leq\frac{2}{3}.$$
\end{proof}
We need to show $\glct(S)\geq \frac{2}{3}$. We proceed by contradiction. Suppose there is an effective $\bbQ$-divisor
$$D=\sum{d_i D_i}\simq -K_S, \qquad d_i>0 \ \forall i$$
such that $(S,\lambda D)$ is not log canonical for some $\lambda<\frac{2}{3}$. Then $\LCS(S,\lambda D)\neq \emptyset$. 

\begin{lem}
\label{lem:deg4LCSpoints}
$\LCS (S,\lambda D)$ contains only isolated points.
\end{lem}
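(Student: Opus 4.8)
The plan is to establish the contrapositive statement that $(S,\lambda D)$ is log canonical in codimension $1$, so that its locus of log canonical singularities is zero-dimensional. First I would suppose, for contradiction, that $\LCS(S,\lambda D)$ contains an irreducible curve $C$. By Lemma \ref{lem:adjunction} (ii) we may then write $\lambda D = mC + \Omega$, where $C\not\subseteq\Supp(\Omega)$, $\Omega$ is effective, and $m\geq 1$.

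The first key step is to bound $\deg C = (-K_S)\cdot C$. Intersecting with the ample class $-K_S$ and using $D\simq -K_S$ and $K_S^2 = 4$ gives
\[
4\lambda = (-K_S)\cdot \lambda D = m\,(-K_S\cdot C) + (-K_S\cdot \Omega) \ge m\deg C \ge \deg C,
\]
since $\Omega$ is effective. As $\lambda < \frac{2}{3}$ this forces $\deg C < \frac{8}{3}$, so $\deg C\in\{1,2\}$. In particular $\deg C \le 2$, whence Lemma \ref{lem:del-Pezzo-all-conics-rational} yields $p_a(C)=0$, and the genus formula (Lemma \ref{lem:genus-formula}) gives $C^2 = \deg C - 2$.

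The crucial device is to pair $C$ with a complementary anticanonical curve. By Lemma \ref{lem:del-Pezzo-deg4-complementary-hyperplane-section}, applied to the integral curve $C$ of degree $\le 2$, there is an \emph{irreducible} curve $Z$ with $Z + C \sim -K_S$; hence $\deg Z = 4 - \deg C \in \{2,3\}$, and since $Z$ is irreducible with $-K_S\cdot Z > 1$, Lemma \ref{lem:del-Pezzo-nefcurves} shows $Z$ is nef. Moreover
\[
C\cdot Z = (-K_S)\cdot C - C^2 = \deg C - (\deg C - 2) = 2 .
\]
Intersecting $\lambda D = mC + \Omega$ against the nef class $Z$ then gives
\[
\lambda\deg Z = \lambda D\cdot Z = m\,(C\cdot Z) + \Omega\cdot Z \ge 2m \ge 2,
\]
using $\Omega\cdot Z\ge 0$ by nefness. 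Therefore $\lambda \ge 2/\deg Z \ge \frac{2}{3}$, contradicting $\lambda < \frac{2}{3}$. This contradiction shows $\LCS(S,\lambda D)$ can contain no curve, so it consists of isolated points.

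I do not expect a serious obstacle here; the argument is uniform in the two cases $\deg C \in \{1,2\}$. The only genuine subtlety, and the reason the earlier structural work is needed, is that $Z$ must be \emph{irreducible} for both the nefness (via Lemma \ref{lem:del-Pezzo-nefcurves}) and the global intersection number $C\cdot Z = 2$ to be usable as above; if $Z$ were permitted to split into two lines the nefness step would collapse. This is precisely what Lemma \ref{lem:del-Pezzo-deg4-complementary-hyperplane-section} supplies, so invoking it is the load-bearing part of the proof.
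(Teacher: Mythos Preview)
Your proof is correct and follows essentially the same approach as the paper's. Both arguments bound $\deg C\le 2$ via intersection with $-K_S$, invoke Lemma \ref{lem:del-Pezzo-deg4-complementary-hyperplane-section} to produce an irreducible complementary curve $Z\sim -K_S-C$, use the genus formula to get $C\cdot Z=2$, and derive a contradiction by intersecting $D$ with the nef curve $Z$; your scaling by $\lambda$ (writing $\lambda D=mC+\Omega$ with $m\ge 1$) is a cosmetic repackaging of the paper's use of $d_i>\tfrac{3}{2}$.
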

\begin{proof}
If $C\subset \LCS(S,\lambda D)$, where $C$ is a curve, then $C=D_i$ for some $D_i$ such that $\lambda d_i\geq 1$, by Lemma \ref{lem:adjunction} (ii) i.e. $d_i>\frac{3}{2}$. Then
$$4=-K_S \cdot D = \sum d_i \deg(D_i)>\frac{3}{2}\deg(D_i),$$
so $\deg (D_i) \leq 2$.
Using Lemma \ref{lem:del-Pezzo-deg4-complementary-hyperplane-section} choose a curve $Z$ such that $D_i+Z$ is cut out by a hyperplane section of $S$ passing through $D_i$ such that $Z$ is irreducible. We have $D_i+Z\sim -K_S\simq D$. Hence
\begin{equation}
\deg Z=Z\cdot D = (-K_S+D_i)(-K_S)=4-\deg D_i.
\label{eq:deg4hypA}
\end{equation}
In particular $\deg Z \geq 2$, so $Z\cdot D_j\geq 0$ for all irreducible $D_j$ (since only lines can have negative self-intersection). Then
\begin{equation}
(Z\cdot D)\geq d_i(Z\cdot D_i)=d_i(-K_S-D_i)\cdot D_i =d_i\left( \deg D_i -(\deg D_i-2) \right)=2d_i
\label{eq:deg4hypB}
\end{equation}
by the genus formula, since all lines and conics in $S$ are rational. But  then \eqref{eq:deg4hypA} and \eqref{eq:deg4hypB} give a contradiction:
$$3\geq 4-\deg (D_i)\geq 2 d_i>3.$$
\end{proof}

Let $p\in \LCS(S,\lambda D)$, i.e. the pair $(S, \lambda D)$ is not log canonical at some point $p$.
\begin{lem}
The point $p$ is not in the intersection of two lines.
\end{lem}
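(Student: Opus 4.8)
The plan is to argue by contradiction, assuming $p=L_1\cap L_2$ for two lines and deriving $\lambda>\tfrac23$. First I would normalise the configuration: by Lemma \ref{lem:del-Pezzo-deg4-model-line-choice} choose a model with $L_1=E_1$ and $L_2=L_{12}$, so that $E_1^2=L_{12}^2=-1$ and $E_1\cdot L_{12}=1$, the two lines meeting transversally at the single point $p$. Since $K_S^2=4\ge 4$, Lemma \ref{lem:del-Pezzo-lines-through-a-point} rules out an Eckardt point, so $E_1$ and $L_{12}$ are the only lines through $p$. Writing $D=m_1E_1+m_2L_{12}+\Omega$ with $E_1,L_{12}\not\subseteq\Supp(\Omega)$ and $m_1,m_2\ge 0$, I would first show $m_1,m_2>0$: if, say, $E_1\not\subseteq\Supp(D)$, then applying Lemma \ref{lem:adjunction}(iii) to $(S,\lambda D)$ along $E_1$ (where the multiplicity of $E_1$ is $0\le 1$) gives $\lambda=\lambda(-K_S\cdot E_1)=E_1\cdot\lambda D>1$, which is impossible for $\lambda<\tfrac23$.

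Next I would record three numerical inputs, all obtained by intersecting $D\simq -K_S$ with suitable curves. Intersecting with $E_1$ and $L_{12}$ and using $D\cdot E_1=D\cdot L_{12}=1$ yields $\Omega\cdot E_1=1+m_1-m_2$ and $\Omega\cdot L_{12}=1-m_1+m_2$; since $E_1,L_{12}$ are smooth at $p$ and not in $\Supp(\Omega)$, the local intersections dominate the multiplicity and give $\mult_p\Omega\le\min(\Omega\cdot E_1,\Omega\cdot L_{12})=1-|m_1-m_2|\le 1$. For the coefficients I would use the conic class $A_2=\pioplane{2}-E_1-E_3-E_4-E_5$, which one checks is nef (its intersection with every line in Lemma \ref{lem:del-Pezzo-deg4-lines-list} and with itself is $\ge 0$) and satisfies $A_2\cdot E_1=A_2\cdot L_{12}=1$; nefness gives $A_2\cdot\Omega\ge 0$, whence $m_1+m_2\le A_2\cdot D=-K_S\cdot A_2=2$. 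Finally, because $\LCS(S,\lambda D)$ is isolated (Lemma \ref{lem:deg4LCSpoints}) no curve through $p$ can carry coefficient $\ge 1$, so $\lambda m_1,\lambda m_2<1$.

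The heart of the argument is Theorem \ref{thm:inequality-Cheltsov} applied to $(S,\lambda D)=(S,\lambda m_1E_1+\lambda m_2L_{12}+\lambda\Omega)$, which is not log canonical at $p$ but log canonical nearby. Its hypotheses are exactly what the previous step delivers: $E_1,L_{12}$ are smooth at $p$ with $(E_1\cdot L_{12})\vert_p=1$, the coefficients $\lambda m_1,\lambda m_2$ are positive, and $0<\mult_p(\lambda\Omega)=\lambda\,\mult_p\Omega\le\lambda<1$ (positivity of $\mult_p\Omega$ holds since otherwise $(S,\lambda D)$ would be simple normal crossings at $p$ with coefficients $<1$, hence log canonical there). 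The theorem gives
$$(\lambda\Omega\cdot E_1)\vert_p>2(1-\lambda m_2)\qquad\text{or}\qquad(\lambda\Omega\cdot L_{12})\vert_p>2(1-\lambda m_1).$$
Bounding the left-hand sides by the global intersections $\lambda\Omega\cdot E_1=\lambda(1+m_1-m_2)$ and $\lambda\Omega\cdot L_{12}=\lambda(1-m_1+m_2)$, either inequality rearranges to $\lambda+\lambda m_1+\lambda m_2>2$. Combined with $\lambda(m_1+m_2)\le 2\lambda$ from the conic bound, this forces $3\lambda>2$, i.e.\ $\lambda>\tfrac23$, contradicting $\lambda<\tfrac23$.

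I expect the only genuinely delicate point to be verifying the multiplicity hypothesis $\mult_p\Omega\le 1$ of Theorem \ref{thm:inequality-Cheltsov}; the clean estimate $\mult_p\Omega\le 1-|m_1-m_2|$ coming from the two transverse lines is what makes the whole argument close, and it is worth isolating as the key inequality. Everything else reduces to bookkeeping with the intersection numbers of Lemma \ref{lem:del-Pezzo-deg4-lines-list} and the routine nefness check for $A_2$.
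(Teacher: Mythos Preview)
Your argument is correct and follows essentially the same route as the paper: normalise to $p=E_1\cap L_{12}$, force $E_1,L_{12}\subset\Supp(D)$, bound $\mult_p\Omega\le 1$ via the two transverse lines, bound $m_1+m_2\le 2$ using the conic $A_2$, and finish with Theorem~\ref{thm:inequality-Cheltsov} to reach $3\lambda>2$. The only noteworthy difference is in how you obtain $m_1+m_2\le 2$: you argue that $A_2$ is nef (checking against the finitely many lines, which generate the Mori cone on a del Pezzo surface) so that $A_2\cdot\Omega\ge 0$ directly, whereas the paper first shows the pair $(S,\lambda(A_2+E_1+L_{12}))$ is log canonical, invokes convexity to assume $A_2\not\subset\Supp(D)$, and then intersects; both yield the same bound and the remainder of the two arguments is identical.
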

\begin{proof}
Suppose for contradiction that $p$ is a pseudo-Eckardt point. By Lemma \ref{lem:del-Pezzo-deg4-model-line-choice} we may choose $\pi\colon S \ra \bbP^2$ such that $p=E_1\cap L_{12}$. For $L=E_1,L_{12}$ we have that $L\subseteq\Supp(D)$ since otherwise
$$1=L\cdot D \geq \mult_pD>\frac{1}{\lambda}>1,$$
by Lemma \ref{lem:adjunction} (i). Hence we may write $D=aE_1+bL_{12}+\Omega$ where $a,b>0$ and $E_1,L_{12}\not\subseteq\Supp(\Omega)$.

Observe that the curve $A_2$ in Table \ref{tab:delPezzo-4-lowdegree} with $p\in A_2$ is irreducible, since otherwise there would be lines passing through $p$ with rational classes
$$A_2-E_1\sim\pioplane{2}-2E_1-E_3-E_4-E_5,\qquad \text{ or } \qquad A_2-L_{12}\sim\pioplane{1}+E_2-E_3-E_4-E_5,$$
which is impossible by Lemma \ref{lem:del-Pezzo-deg4-lines-list}. Since
$$A_2\cdot E_1=A_2\cdot L_{12}=L_{12}\cdot E_1=1,$$
the pair $(S, \lambda(A_2+E_1+L_{12}))$ is log canonical for $\lambda\leq \frac{2}{3}$ (see proof of Claim \ref{clm:del-Pezzo-deg4-upperbound}). Therefore, by Lemma \ref{lem:convexity} we may assume that $A_2\not\subset\Supp(D)$. We conclude
\begin{equation}
2\geq D\cdot A_2\geq a+b+\mult_p\Omega\geq a+b.
\label{eq:del-Pezzo-glct-4-proof-pEckardt}
\end{equation}
Now observe that
$$1=E_1\cdot D \geq -a+b+\mult_p\Omega,$$
$$1=L_{12}\cdot D \geq a-b+\mult_p\Omega,$$
and adding these two equations it follows that $\mult_p\Omega\leq 1$. The hypotheses of Theorem \ref{thm:inequality-Cheltsov} are satisfied. Therefore one of the following holds:
$$2(1-\lambda a)<L_{12}\cdot (\lambda\Omega)=\lambda(1-a+b)$$
$$2(1-\lambda b)<E_1\cdot (\lambda\Omega)=\lambda(1+a-b).$$
Since the roles of $a$ and $b$ are symmetric, it is enough to disprove the latter equation to obtain a contradiction. Indeed, the last inequality implies
$$2<\lambda(1+a+b)<3\lambda<2$$
by \ref{eq:del-Pezzo-glct-4-proof-pEckardt}, a contradiction.
\end{proof}

Let $G$ be the effective $\bbQ$-divisor in Lemma \ref{lem:del-Pezzo-deg4-aux-divisors-G-Basic}. Recall that $(S,\lambda G)$ is log canonical and that all irreducible components $G_j\subset \Supp(G)$ satisfy $p\in G_j$.
By Lemma \ref{lem:convexity}, we can assume $\exists G_j\subset \Supp (G)$ an irreducible curve such that $G_j\not\subset \Supp (D)$. Then
$$2\geq \deg G_j= (-K_S)\cdot G_j= D\cdot G_j\geq \mult_p (D) \cdot \mult_p(G_j)\geq \mult_p(D).$$
Therefore, we have bounded the multiplicity of $D$ at $p$:
\begin{equation}
2\geq \mult_p(D) \geq \frac{3}{2}.
\label{eq:boundD}
\end{equation}

Let $\sigma : \widetilde S \lra S$ be the blow-up of $p$ with exceptional divisor $E$. Applying Lemma \ref{lem:log-pullback-preserves-lc} to $(S,\lambda D)$, there is some $q\in E$ such that the pair
$$(\widetilde S,\lambda \widetilde D +(\lambda\mult_p D -1)E)$$
is not log canonical at some point $q\in E$. By \eqref{eq:boundD}, the pair is log canonical near $q\in E$.

Since $\lambda\mult_p D-1\leq 1$ by \eqref{eq:boundD}, applying Lemma \ref{lem:adjunction} (i) to this pair we obtain, that for some $q\in E$:
\begin{equation}
\mult_q(\widetilde D)+\mult_pD>3.
\label{eq:boundDtransform}
\end{equation}

Given $p\in S$ and $q\in E\subset \widetilde S$ as above, we apply Lemma \ref{lem:del-Pezzo-deg4-aux-divisors-H} to obtain an effective $\bbQ$-divisor $H$ on $S$ such that $(S,\lambda H)$ is log canonical with $p\in H_j$ for all irreducible components $H_j\subset\Supp(H)$ and $q\in \widetilde H_j$ whenever $\deg H_j>1$.

By Lemma \ref{lem:convexity} we may assume that $\exists H_i$ such that $H_i\not\subset\Supp(D)$.
\begin{clm}
If $\deg H_i=1$, then $H_i\subset \Supp (D)$.
\end{clm}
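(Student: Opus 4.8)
The plan is to dispose of this claim by a short intersection-theoretic argument by contradiction, leaning on the multiplicity bound \eqref{eq:boundD} that has just been established. So I would begin by supposing that some component $H_i$ of $H$ has $\deg H_i = 1$ and yet $H_i \not\subseteq \Supp(D)$, and aim to contradict $\mult_p(D) \geq \frac{3}{2}$.

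First I would record the geometric facts about $H_i$ that come for free from the construction. Since $\deg H_i = -K_S \cdot H_i = 1$, Lemma \ref{lem:del-Pezzo-lines-numerical} tells us $H_i$ is a line, hence $H_i \cong \bbP^1$ and in particular smooth. By properties (ii) and (iv) of Lemma \ref{lem:del-Pezzo-deg4-aux-divisors-H}, the curve $H_i$ passes through $p$ and is smooth there, so $\mult_p(H_i) = 1$.

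The heart of the argument is the chain of inequalities. Because $H_i$ is irreducible and $H_i \not\subseteq \Supp(D)$, while $D$ is effective, the intersection $H_i \cdot D$ is a proper intersection and is bounded below by its local contribution at $p$, which is itself at least $\mult_p(H_i)\cdot\mult_p(D) = \mult_p(D)$. On the other hand, since $D \simq -K_S$, one has $H_i \cdot D = H_i \cdot (-K_S) = \deg H_i = 1$. Combining these with \eqref{eq:boundD} yields
$$
1 = H_i \cdot D \ \geq\ (H_i \cdot D)\vert_p \ \geq\ \mult_p(H_i)\cdot \mult_p(D) \ =\ \mult_p(D)\ \geq\ \frac{3}{2},
$$
which is absurd. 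Hence no degree-$1$ component of $H$ can lie outside $\Supp(D)$, proving the claim.

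I do not expect any genuine obstacle here: the estimate is of exactly the same shape as the ones already used (for instance in the displayed inequality \eqref{eq:del-Pezzo-cat-degree3-aux1} and in Lemma \ref{lem:deg4LCSpoints}), and it is really just Lemma \ref{lem:adjunction}(i) packaged through the bound \eqref{eq:boundD}. The only points needing a word of justification are that $H_i$ is smooth at $p$ with $\mult_p(H_i)=1$ and that $p \in H_i$, both of which are guaranteed by the construction of $H$ in Lemma \ref{lem:del-Pezzo-deg4-aux-divisors-H}; everything else is a one-line computation.
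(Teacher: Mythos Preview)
Your proof is correct and is essentially identical to the paper's: both argue by contradiction using the chain $1 = (-K_S)\cdot H_i = D\cdot H_i \geq \mult_p(H_i)\cdot\mult_p(D) = \mult_p(D) \geq \frac{3}{2}$, relying on \eqref{eq:boundD} and the fact that $p\in H_i$ with $H_i$ smooth. You simply spell out more explicitly why $\mult_p(H_i)=1$ and why $p\in H_i$, which the paper leaves implicit.
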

\begin{proof}
Suppose $H_i\not\subset \Supp(D)$. Then
$$\frac{3}{2}\leq\mult_p(D)\leq\mult_p(D)\cdot\mult_p(H_i)\leq D\cdot H_i= (-K_S)\cdot H_i=1$$
which is a contradiction.
\end{proof}
Hence, we can assume $\exists H_j\not\subset \Supp (D)$ and $q\in \widetilde H_j$, $p\in H_j$ and $2\leq\deg(H_j)\leq 3$. Then
$$\widetilde H_j \cdot \widetilde D = H_j \cdot D - \mult_p(H_j)\mult_p (D) \leq 3-\mult_p(D).$$
But $\widetilde H_j\not\subset \Supp(\widetilde D)$, so 
$$3-\mult_p(D)\geq \widetilde H_j \cdot \widetilde D\geq \mult_	q (\widetilde D),$$
contradicting \eqref{eq:boundDtransform}. This completes the proof.

\chapter{Dynamic $\alpha$-invariant of smooth del Pezzo surfaces}
\label{chap:dynamic}
Let $S$ be a smooth del Pezzo surface over $k=\bbC$. In this chapter we compute $\alpha(S, (1-\beta)C)$ when $C\in\vert-K_S\vert$ is a smooth curve. In the first section we show which other $C$ we could take. Each of the sections afterwards deals with each non-singular del Pezzo surfaces when $C\in\vert-K_S\vert$, with decreasing degree.

\section{The dynamic $\alpha$-invariant on del Pezzo surfaces}
In order to apply Theorem \ref{thm:Jeffres-Mazzeo-Rubinstein}, we need an effective $\bbQ$-divisor $D$ with simple normal crossings such that $-(K_S+(1-\beta)D)$ is ample for $0<\beta\ll 1$. An obvious choice is to take $D\sim-K_S$. The only other choice when $D$ is a smooth divisor is to take $D\cong\bbP^1$. However not all rational curves satisfy the ampleness condition. The following Theorem classifies which are the possibilities when $D$ is an irreducible smooth curve other than $D\sim-K_S$.
\begin{thm}
\label{thm:del-Pezzo-dynamic-rational-classification}
Let $S$ be a non-singular del Pezzo surface and $C\cong \bbP^1$ a non-singular curve in $S$. The following are equivalent
\begin{itemize}
	\item[(i)] $-(K_S+(1-\beta)C)$ is ample for $0<\beta \ll1$,
	\item[(ii)] $0<-K_S\cdot C \leq K_S^2 -2$.
\end{itemize}
\end{thm}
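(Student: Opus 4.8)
Write $d:=-K_S\cdot C$ and $M:=-K_S-C$, and abbreviate $L_\beta:=-(K_S+(1-\beta)C)=-K_S-(1-\beta)C$. Since $C\cong\bbP^1$, the genus formula (Lemma \ref{lem:genus-formula}) gives $K_S\cdot C+C^2=-2$, hence $C^2=d-2$. Because $-K_S$ is ample and $C$ is a nonzero effective curve, $d=-K_S\cdot C>0$ is automatic, so the lower bound in (ii) carries no content and the whole statement reduces to the equivalence of (i) with $d\le K_S^2-2$. The key algebraic observation, which I will exploit in the sufficiency direction, is the identity
\[
L_\beta=(1-\beta)\,M+\beta(-K_S),
\]
expressing $L_\beta$ as a convex combination of $M$ and the ample class $-K_S$.

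For the necessity (i)$\Rightarrow$(ii) I would argue purely with self-intersection. Expanding gives $L_\beta^2=K_S^2-2d(1-\beta)+(1-\beta)^2(d-2)$, a polynomial in $\beta$ with $L_0^2=K_S^2-d-2$. If $d\ge K_S^2-1$ then $L_0^2\le-1<0$, and by continuity $L_\beta^2<0$ for all sufficiently small $\beta>0$; since the Nakai--Moishezon criterion (Theorem \ref{thm:Nakai-Moishezon-criterion}) forces $L_\beta^2>0$ whenever $L_\beta$ is ample, this contradicts (i). Thus (i) implies $d\le K_S^2-2$.

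For the sufficiency (ii)$\Rightarrow$(i) the plan is to prove that $M=-K_S-C$ is nef; once this is known, $(1-\beta)M$ is nef and $\beta(-K_S)$ is ample for $\beta>0$, so $L_\beta=(1-\beta)M+\beta(-K_S)$ is ample for every $\beta\in(0,1)$, which is much stronger than (i). To establish nefness I would first show $M$ is effective: Serre duality gives $h^2(\calO_S(M))=h^0(\calO_S(2K_S+C))$, and since $(2K_S+C)\cdot(-K_S)=-2K_S^2+d<0$ this vanishes, so Riemann--Roch yields $h^0(\calO_S(M))\ge\chi(\calO_S(M))=K_S^2-d\ge 2$. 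By Lemma \ref{lem:del-Pezzo-lines-numerical} the only irreducible curves with negative self-intersection are lines, so for any irreducible $\Gamma$ that is not a line we have $\Gamma^2\ge 0$; writing $M$ as a sum of its (finitely many) prime components with positive coefficients, effectivity then forces $M\cdot\Gamma\ge 0$ (the cross terms are nonnegative, and the $\Gamma^2$-term is nonnegative). Hence nefness of $M$ comes down to a single claim: $M\cdot\ell\ge 0$, i.e. $C\cdot\ell\le 1$, for every line $\ell$.

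This last claim is the main obstacle. I would prove it by Hodge index: the intersection form on $\operatorname{Num}(S)$ has signature $(1,\rho-1)$, so $(-K_S)^\perp$ is negative definite. Setting $x:=K_S^2\,C-d(-K_S)$ and $y:=K_S^2\,\ell-(-K_S)$, one checks $x,y\in(-K_S)^\perp$, and the Cauchy--Schwarz inequality for the (negative definite) form there reads $(x\cdot y)^2\le x^2y^2$. Substituting $C^2=d-2$, $\ell^2=-1$, $-K_S\cdot\ell=1$ and writing $e:=C\cdot\ell$, this collapses, after dividing out $K_S^4$, to
\[
(K_S^2 e-d)^2\;\le\;(K_S^2+1)\bigl(d^2-K_S^2(d-2)\bigr).
\]
Assuming $e\ge 2$ and $k:=K_S^2-d\ge 2$ (the hypothesis (ii)), a direct computation of the difference ``left minus right'' for $e=2$ is expected to factor as $K_S^2\,(k-1)\,(K_S^2+2-k)$, which is strictly positive since $k\ge2$ and $k=K_S^2-d\le K_S^2-1$; larger $e$ only increases the left side. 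This contradicts the displayed inequality, so no line meets $C$ in two or more points and $M$ is nef, completing the sufficiency. The delicate part is locating the correct elements $x,y$ and verifying that the resulting polynomial factors with a definite sign under exactly the bound $d\le K_S^2-2$; the rest is bookkeeping with the genus formula, Riemann--Roch, and Nakai--Moishezon.
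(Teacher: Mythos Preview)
Your proof is correct and takes a genuinely different route from the paper in both directions.

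For (i)$\Rightarrow$(ii), the paper works harder: it first shows $-(K_S+C)$ is nef (as a limit of amples), then in the big case uses Kawamata--Viehweg vanishing on $-2(K_S+C)$ to lift a section of $-K_C$ to an effective $L\sim -K_S-C$, and finally intersects $L_\beta$ with this curve $L$ to extract the inequality. Your argument bypasses all of this by simply evaluating $L_\beta^2$ at $\beta=0$: since $L_0^2=K_S^2-d-2$, the condition $L_\beta^2>0$ for small $\beta$ forces $d\le K_S^2-2$ at once. This is strictly simpler.

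For (ii)$\Rightarrow$(i), the paper proceeds by induction on $K_S^2$. The base case $K_S^2=3$ (where $C$ is a line) is handled by producing an irreducible conic $Q\sim -K_S-C$ and writing $L_\beta=Q+\beta C$; the inductive step blows up a suitable point, applies the inductive hypothesis on the degree-$(K_S^2-1)$ surface, and pushes the ampleness back down. Your approach is instead uniform: you show $M=-K_S-C$ is nef directly, using Riemann--Roch to get $h^0(M)\ge K_S^2-d\ge 2$, reducing nefness to the line inequality $C\cdot\ell\le 1$, and then settling that via the Hodge index theorem. Your claimed factorisation $K_S^2(k-1)(K_S^2+2-k)$ for the ``left minus right'' quantity at $e=2$ is correct, and since $2\le k\le K_S^2-1$ both factors are positive, so the contradiction goes through; for $e>2$ the left side $(K_S^2 e-d)^2$ only grows because $K_S^2 e-d\ge 2K_S^2-d>0$. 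The edge case $x=0$ (i.e.\ $C$ numerically proportional to $-K_S$) is harmless, since then any line $\ell$ would give $C\cdot\ell=d/K_S^2\in(0,1)$, impossible for an integer, so $S$ has no lines.

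What each approach buys: the paper's induction is more geometric and ties in with the explicit models used elsewhere in the thesis, but it requires case analysis and a separate corollary (Corollary~\ref{cor:del-Pezzo-dynamic-rational-classification}) to upgrade from $0<\beta\ll 1$ to all $\beta\in(0,1]$. Your nefness argument gives that upgrade for free, since $L_\beta=(1-\beta)M+\beta(-K_S)$ is nef plus ample for every $\beta\in(0,1]$; and it avoids both induction and vanishing theorems, at the cost of one Hodge index computation.
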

\begin{proof}
We first prove (i) implies (ii). Consider the short exact sequence
\begin{equation}
0\lra \calO_S(-K_S-2C)\lra \calO_S(-K_S-C)\lra \calO_C(-(K_S+C)\vert_C)\lra 0.
\label{eq:goodDelPezzoSES}
\end{equation}
Since $-(K_S+(1-\beta)C)$ is ample for sufficiently small $\beta$, then $-(K_S+C)$ is nef. Suppose it is also big. Then so is $-2(K_S+C)$. Hence by Kawamata-Viehweg vanishing theorem (see Theorem \ref{thm:KawamataViehweg})
$$H^1(S,\calO_S(-K_S-2C))=H^1(K_S+(-2(K_S+C)))=0.$$
The long exact sequence in cohomology of \eqref{eq:goodDelPezzoSES} is exact:
\begin{align*}
0\lra H^0(S, \calO_S(-K_S-2C))&\lra H^0(S, \calO_S(-K_S-C))\lra\\
															\lra &H^0(C, \calO_C(-(K_S+C)\vert_C)) \lra  H^1(S, \calO_S(-K_S-2C))=0.
\end{align*}
By adjunction 
$$h^0(\calO_C(-(K_S+C)\vert_C))=h^0(\calO_C(-K_C))=h^0(\calO_{\bbP^1}(2))=3$$
so we can lift a global section $s\vert_C\in H^0(C, \calO_C(-(K_S+C)\vert_C))$ to $s\in H^0(S, \calO_S(-K_S-C))$  such that its vanishing locus is $\div(s)=L\sim-K_S-C$.

By Nakai-Moishezon criterion (Theorem \ref{thm:Nakai-Moishezon-criterion}), since $-(K_S +(1-\beta)C)$ is ample, then
\begin{align*}
0&<(-K_S-(1-\beta)C)\cdot L \\
	&=-K_S\cdot(-K_S-C)-(1-\beta)C\cdot(-K_S-C)\\
	&=K^2_S+K_S\cdot C +(1-\beta)K_S\cdot C +(1-\beta)C^2\\
	&=K_S^2+(2-\beta)K_S\cdot C +(1-\beta)(-K_S\cdot C -2)
\end{align*}
by the genus formula applied to $C\cong \bbP^1$:
\begin{equation}
C^2=-K_S\cdot C + 2p_a(C)-2=-K_S\cdot C -2.
\label{eq:del-Pezzo-dynamic-rational-classification-proof-genus}
\end{equation}
Therefore
\begin{align*}
0&<(-K_S-(1-\beta)C)\cdot L \\
	&=K_S^2+(2-\beta)K_S\cdot C +(1-\beta)(-K_S\cdot C -2)\\
	&=K_S^2+(2-\beta+\beta-1)K_S\cdot C -2+2\beta\\
	&=K_S^2+K_S\cdot C-2+2\beta,
\end{align*}
Observe that $K_S^2, K_S\cdot C\in \bbZ$ since $S$ is smooth. If $0<\beta<\frac{1}{2}$, then 
$$0=\ceil*{-2\beta}\leq \ceil*{K_S^2+K_S\cdot C -2}=K^2_S+K_S\cdot C -2.$$
In particular this proves (ii) when $-2(K_S+C)$ is big.

Now suppose that $-2(K_S+C)$ is not big. Since it is nef, then, using \eqref{eq:del-Pezzo-dynamic-rational-classification-proof-genus}.
$$0=(K_S+C)^2=K_S^2+2K_S\cdot C +C^2=K_S^2+K_S\cdot C -2,$$
which upon rearrangement gives the desired result.

Now we will prove (ii) implies (i). For degrees 1 and 2 there are no rational curves satisfying (ii). Indeed,
$$K_S^2-2\leq 0<-K_S\cdot C.$$
If $K_S^2=3$, then $C$ is a line. Consider  divisors $Q\sim-K_S-C$. Since $Q\cdot (-K_S)=K_S^2-\deg C\geq 2$, by Proposition \ref{prop:rational-surfaces-sections-rational-curves}, $h^0(S, \calO_S(-K_S-C))\geq 2$. Therefore we can assume $Q$ is an effective divisor. 

Since by impossing linear conditions, we may assume that $Q\in \calQ$, a pencil of conics, we may choose $Q$ to be irreducible. Indeed, if this was not the case, all $Q\in \calQ$ would split as the union of two lines, but since $\calQ$ is a pencil of conics, there would be a infinite number of lines, contradicting Lemma \ref{lem:del-Pezzo-lines9-d}. 

Since $Q$ is an irreducible conic, by Lemma \ref{lem:del-Pezzo-nefcurves}, $Q$ is nef. Suppose there is an irreducible curve $E\subset S$ such that 
\begin{equation}
0\geq -(K_S+(1-\beta)C)\cdot E = (Q+\beta C)\cdot E \geq \beta (C\cdot E).
\label{eq:GoodDelPezzoAmpleRational-aux}
\end{equation}
If $C=E$, then $C\cdot E=-1$ and $(-K_S)\cdot C =1$, giving a contradiction:
$$0\geq -(K_S+(1-\beta)C)\cdot C=\deg C-(1-\beta)C^2=1+(1-\beta)\geq 1>0.$$
If $C\neq E$, then \eqref{eq:GoodDelPezzoAmpleRational-aux} implies $C\cdot E=0$ and $(-(K_S+(1-\beta)C)\cdot E=\deg E>0$, which is also a contradiction.

We have proven that (ii) implies (i) when $K_S^2\leq 3$. We will carry out an induction on the degree of $S$. Suppose (ii) for $K_S^2>3$. If $C$ is  a $(-1)$-curve, let $\pi\colon \widetilde S \ra S$ be the blow-up of a general point $p\not\in C$. The surface $\widetilde S$ is a del Pezzo surface. Let $\widetilde C$ be the strict transform of $C$ in $S$ and let $E$ be the exceptional divisor. Then $\widetilde C \cong \mathbb{P}^1$ and $-K_{\widetilde S}\cdot \widetilde C=1$. Hence $0<-K_{\widetilde S}\cdot \widetilde C =1\leq K_{\widetilde S}^2-2$ since $K_{\widetilde S}^2\geq 3$. By the induction hypothesis $-(K_{\widetilde S} +(1-\beta) \widetilde C)$ is ample $\forall\beta\in (0,\beta_0)$ where $0<\beta_0\ll 1$. Let $Q\subset S$ be any irreducible curve and $\widetilde Q$ be its strict transform in $\widetilde S$. Let $m=\mult_pQ\geq 0$. If $Q=C$, then $m=0$ and 
$$-(K_S+(1-\beta)C)\cdot Q = \deg C -(1-\beta)C^2=\deg C +(1-\beta)>0.$$
Suppose $Q\neq C$. Then
\begin{align*}
&-(K_S+(1-\beta)C)\cdot Q=\pi^*(-(K_S+(1-\beta)C))\cdot (\widetilde Q + mE) \\
=&-(K_{\widetilde S}-E+(1-\beta)\widetilde C) \\
=& -(K_{\widetilde S} + (1-\beta)\widetilde C)\cdot\widetilde Q + E\cdot \widetilde Q >E\cdot \widetilde Q =m\geq 0,
\end{align*}
by the inductive hypothesis. 
Hence, by Nakai-Moishezon criterion $-(K_S+(1-\beta)C)$ is ample, since $p\not\in C$ and $-K_S$ is ample.

Now suppose $C$ is not a $(-1)$-curve. Let $\pi\colon \widetilde S \ra S$ be the blow-up of a general point $p\in S$. Let $E$ be the exceptional divisor of $\pi$. Since $p$ is general and $C^2\geq 0$, then $\widetilde S$ is a del Pezzo surface. Note that since $C$ is smooth, $\widetilde C\cdot E=1$. Hence
$$\widetilde C \cdot (-K_{\widetilde S})=-C\cdot K_S-1\leq K_S^2-3=K_{\widetilde S}-2,$$
and the pair $(\widetilde S, \widetilde C)$ satisfies (ii).

By the inductive hypothesis $-(K_{\widetilde S} + (1-\beta)\widetilde C)$ is ample for $0<\beta\ll 1$. Let $Q\subset S$ be any irreducible curve and $\widetilde Q$ be its strict transform in $\widetilde S$. By Nakai-Moishezon criterion $-(K_{\widetilde S} + (1-\beta)\widetilde C)\cdot \widetilde Q>0$. Therefore
\begin{align*}
&\ -(K_S +(1-\beta)C)\cdot Q \\
&=\pi^*(-(K_S+(1-\beta)C))(\widetilde Q +mE)\\
&=-(K_{\widetilde S}-E +(1-\beta)\widetilde C +(1-\beta)E)\widetilde Q \\
&=-(K_{\widetilde S}+(1-\beta)\widetilde C)\cdot \widetilde Q + \beta \widetilde Q \cdot E>0,
\end{align*}
and by Nakai-Moishezon criterion, the divisor class $-(K_S +(1-\beta)C)$ is ample.
\end{proof}

\begin{cor}
\label{cor:del-Pezzo-dynamic-rational-classification}
Let $S$ be a non-singular del Pezzo surface and $C\cong \bbP^1$ a non-singular curve in $S$. The following are equivalent
\begin{itemize}
	\item[(i)] $-(K_S+(1-\beta)C)$ is ample $\forall \beta\in (0,1]$,
	\item[(ii)] $0<-K_S\cdot C \leq K_S^2 -2$.
\end{itemize}
\end{cor}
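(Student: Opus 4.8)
The implication (i) $\Rightarrow$ (ii) is immediate: ampleness of $-(K_S+(1-\beta)C)$ for all $\beta\in(0,1]$ in particular forces ampleness for $0<\beta\ll 1$, so Theorem \ref{thm:del-Pezzo-dynamic-rational-classification} already gives (ii). The whole content of the corollary is therefore the reverse implication (ii) $\Rightarrow$ (i), and the plan is to upgrade the ``for $0<\beta\ll1$'' conclusion of Theorem \ref{thm:del-Pezzo-dynamic-rational-classification} to ``for all $\beta\in(0,1]$'' by exploiting the affine dependence of the divisor class on $\beta$ together with the fact that the endpoint $\beta=1$ recovers the ample class $-K_S$. The key algebraic observation is that the class interpolates between two geometrically meaningful endpoints:
\[
-(K_S+(1-\beta)C)=(1-\beta)\,(-K_S-C)+\beta\,(-K_S),
\]
so it is the convex combination of $M_0:=-K_S-C$ (at $\beta=0$) and $M_1:=-K_S$ (at $\beta=1$). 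Since $S$ is del Pezzo, $M_1=-K_S$ is ample.

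The main step is then to show that $M_0=-K_S-C$ is nef. Assuming (ii), Theorem \ref{thm:del-Pezzo-dynamic-rational-classification} tells us $-(K_S+(1-\beta)C)$ is ample for all $\beta$ in some interval $(0,\beta_0)$; as $\beta\to 0^+$ these classes converge in the N\'eron--Severi space to $M_0$, which therefore lies in the closure of the ample cone, i.e.\ in the nef cone. With this in hand, for every $\beta\in(0,1]$ we have $1-\beta\geq 0$ and $\beta>0$, so $(1-\beta)M_0$ is nef (a non-negative multiple of a nef class) while $\beta M_1$ is ample. The sum of a nef class and an ample class is ample (Kleiman's criterion, since the ample cone is the interior of the nef cone; alternatively one checks it directly against the Nakai--Moishezon criterion, Theorem \ref{thm:Nakai-Moishezon-criterion}, using $(\mathrm{nef}\cdot C')\geq 0$ and $(\mathrm{ample}\cdot C')>0$ for all irreducible curves $C'$, plus positivity of the self-intersection). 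Hence $-(K_S+(1-\beta)C)$ is ample for all $\beta\in(0,1]$, which is (i).

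There is no serious obstacle here; the proof is essentially a convexity argument anchored at the ample endpoint $-K_S$. The only point requiring a little care is the passage from ``ample for small $\beta$'' to the \emph{nef}-ness of the boundary class $-K_S-C$, which is handled by the limit/closure argument above, and the elementary but decisive fact that $\mathrm{ample}+\mathrm{nef}=\mathrm{ample}$ then propagates ampleness across the entire half-open segment up to $\beta=1$.
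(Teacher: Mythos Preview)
Your proof is correct and follows essentially the same convexity/interpolation idea as the paper: write $-(K_S+(1-\beta)C)$ as an affine combination along the segment with ample endpoint $-K_S$, and use Theorem~\ref{thm:del-Pezzo-dynamic-rational-classification} to control the other end. The only difference is that the paper interpolates between two \emph{ample} classes (taking $A=-(K_S+(1-\beta_0)C)$ at some small $\beta_0>0$ and $B=-K_S$, then writing $-(K_S+(1-\beta)C)=\alpha A+(1-\alpha)B$ with $\alpha=\tfrac{1-\beta}{1-\beta_0}$), so it only needs convexity of the ample cone, whereas you push to the boundary $\beta=0$ and invoke the slightly stronger fact that $\text{nef}+\text{ample}=\text{ample}$.
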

\begin{proof}
By Theorem \ref{thm:del-Pezzo-dynamic-rational-classification} it is enough to show that the following are equivalent:
\begin{itemize}
	\item[(i)] $-(K_S+(1-\beta)C)$ is ample $\forall \beta\in (0,1]$,
	\item[(ii)] $-(K_S+(1-\beta)C)$ is ample for $0<\beta \ll1$.
\end{itemize}
Clearly (i) implies (ii). To see that (ii) implies (i) observe that if two $\bbQ$-divisors $A,B$ are ample, then $\forall \alpha\in [0,1]$ the $\bbQ$-divisor $\alpha A + (1-\alpha) B$ is ample, since ample divisors form a cone.

Let $A=-(K_S + (1-\beta_0)C$ for $0<\beta_0\ll1$ such that $A$ is ample and let $B=-K_S$, which is ample. Given $\beta\in (0,1]$, we can choose $\alpha=\frac{1-\beta}{1-\beta_0}<1$ whenever $\beta_0\leq \beta$, which is always the case since $\beta_0$ can be chosen to be arbitrarily small. It is clear that $\alpha>0$. Then the $\bbQ$-divisor
$$\alpha A + (1-\alpha) B = -(K_S + \alpha(1-\beta_0)C)=-(K_S + (1-\beta)C)$$
is ample.
\end{proof}

\section{Projective plane}
\begin{lem}
\label{lem:del-Pezzo-deg9-dynamic-upbound}
Let $S=\bbC\bbP^2$ and $C\sim -K_S$ be a smooth cubic curve. Then
		\begin{equation}
				\alpha(S,(1-\beta)C)\leq\omega:=
						\begin{dcases}
								1 												&\text{ for } 0<\beta\leq \frac{1}{6},\\
								\frac{1+3\beta}{9\beta}		&\text{ for } \frac{1}{6}\leq \beta\leq \frac{2}{3},\\
								\frac{1}{3\beta}		&\text{ for } \frac{2}{3}\leq \beta\leq 1.
						\end{dcases}
			\label{eq:del-Pezzo-dynamic-alpha-degree-9-upbound}
		\end{equation}
\end{lem}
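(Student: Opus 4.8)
The goal is to produce, for each range of $\beta$, an explicit effective $\bbQ$-divisor $D\simq -K_S$ together with a point $p$ such that the pair $(S,(1-\beta)C+\lambda\beta D)$ fails to be log canonical at $p$ for every $\lambda$ exceeding the claimed value $\omega$. Since $\alpha(S,(1-\beta)C)=\glct(S,(1-\beta)C)$ is a supremum of thresholds $\lambda$ that work for \emph{all} $D$, exhibiting a single bad $D$ for each $\beta$ gives the stated upper bound. The plan is to build these $D$ from the cuspidal cubic, whose log canonical threshold we already understand from Example \ref{exa:P2cuspidal} and Lemma \ref{lem:Igusa}.

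\textbf{Main construction.} First I would take $\Gamma\subset\bbP^2$ to be a cuspidal cubic with cusp at a point $p$; such a curve lies in $\vert-K_S\vert$ and by Example \ref{exa:P2cuspidal} satisfies $\lct_p(S,\Gamma)=\frac{5}{6}$. I expect two regimes to emerge. For moderate $\beta$ the right test divisor will be $\Gamma$ itself (or a suitable weighting of $\Gamma$ with the tangent line at the cusp), and the condition that $(S,(1-\beta)C+\lambda\beta\Gamma)$ be non-log-canonical at $p$ will reduce, via the explicit resolution of the cusp with exceptional discrepancies $(2\lambda\beta-1),(3\lambda\beta-2),(6\lambda\beta-4)$ adjusted by the $(1-\beta)C$ contribution, to a linear inequality in $\lambda$ and $\beta$ whose boundary is exactly $\frac{1+3\beta}{9\beta}$. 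For large $\beta$ (namely $\beta\geq\frac{2}{3}$) I would instead use a totally degenerate divisor such as $3L$ where $L$ is a line through $p$, or a triple line, so that the coefficient $\lambda\beta\cdot 3$ of a single line forces non-log-canonicity as soon as $3\lambda\beta>1$, giving the bound $\frac{1}{3\beta}$. The flat value $1$ on $0<\beta\leq\frac{1}{6}$ should come automatically since $\lct(S,\Gamma')\geq 1$ for smooth or mildly singular members, so no $D$ beats $\lambda=1$ there; I must check that the two non-trivial branches meet the line $\omega=1$ correctly at $\beta=\frac16$ and glue at $\beta=\frac23$, confirming continuity.

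\textbf{Execution of the estimates.} Concretely, for the middle range I would compute the log pullback of $(S,(1-\beta)C+\lambda\beta\Gamma)$ under the three blow-ups resolving the cusp of $\Gamma$, tracking both the strict transform of $\Gamma$ and the multiplicities picked up from $(1-\beta)C$ along the exceptional chain (here $C$ is smooth, so its strict transform contributes cleanly and the interaction with $\Gamma$ at $p$ is governed by the local intersection multiplicities $1,2,3$ at successive infinitely near points). The decisive discrepancy is that of the last exceptional divisor $E_3$; setting its total coefficient equal to $1$ yields the threshold. I would solve the resulting equation for $\lambda$ in terms of $\beta$ and verify it equals $\frac{1+3\beta}{9\beta}$ on $[\frac16,\frac23]$. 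Then $\alpha\leq\lambda$ follows because any larger $\lambda$ makes the pair non-log-canonical.

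\textbf{Expected obstacle.} The routine part is the cusp resolution arithmetic; the delicate part is choosing the \emph{optimal} test divisor in each range and confirming that the three pieces of $\omega$ are genuinely the envelope, i.e. that the branch active on each subinterval really is the smallest threshold achievable there and that no other degeneration (for instance a conic-plus-line, or a cubic with worse tangency to $C$) produces a strictly smaller value. I would double-check the two transition points $\beta=\frac16$ and $\beta=\frac23$ by evaluating both adjacent formulas and confirming they agree, which certifies that the piecewise function is continuous and that I have not missed a regime where a different divisor dominates.
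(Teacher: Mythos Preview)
Your choice of test divisor is the problem. A cuspidal cubic $\Gamma$ with cusp at $p\in C$ does \emph{not} produce the middle branch $\frac{1+3\beta}{9\beta}$. The paper itself computes (Lemma~\ref{lem:appendix-local-cusp-normal}) that for such a $\Gamma$ one gets
\[
\lct_p\bigl(S,(1-\beta)C,\beta\Gamma\bigr)=\frac{3+2\beta}{6\beta}\quad\text{or}\quad\frac{2+3\beta}{6\beta},
\]
depending on whether $(C\cdot\Gamma)\vert_p=2$ or $3$; both are strictly larger than $\frac{1+3\beta}{9\beta}$ for every $\beta\in(0,1]$. Combined with the codimension-one bound $\frac{1}{3\beta}$ from $3L$, your divisors would only yield $\alpha\leq\min\{1,\tfrac{1}{3\beta}\}$, which is too weak on the whole interval $(\tfrac{1}{6},\tfrac{2}{3})$: for instance at $\beta=\tfrac{1}{4}$ the claimed bound is $\tfrac{7}{9}<1$ but your construction only gives $1$. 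The assertion that the $E_3$-coefficient from the cusp resolution ``equals $\frac{1+3\beta}{9\beta}$'' is simply false---the $9$ in the denominator cannot arise from a cusp, whose deepest exceptional divisor carries the pullback multiplicity $6$, not $9$.

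What the paper actually does is exploit a \emph{flex point} $p\in C$: every smooth plane cubic has nine of them, and the tangent line $L$ at a flex satisfies $(L\cdot C)\vert_p=3$. Taking $D=3L\sim -K_S$, the pair $(S,(1-\beta)C+\lambda\beta\cdot 3L)$ has two smooth branches meeting to order exactly $3$ at $p$; its minimal log resolution is three successive blow-ups, and the coefficient of the last exceptional divisor is $9\lambda\beta-3\beta$ (each blow-up adds $3\lambda\beta$ from $3L$ and $(1-\beta)$ from $C$, minus $1$ for the canonical). This, together with the codimension-one constraint $3\lambda\beta\leq 1$, gives exactly $\omega=\min\{1,\tfrac{1}{3\beta},\tfrac{1+3\beta}{9\beta}\}$. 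The crucial geometric input you are missing is that the test divisor must be adapted to $C$: it is the maximal-order tangency of a \emph{line} with the fixed smooth boundary $C$, not the intrinsic singularity of an auxiliary curve, that drives the sharp bound here.
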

\begin{proof}
It is a well known fact (see \cite[Ex. 5.24]{FultonAlgebraicCurves}) that any smooth and reduced cubic curve $C$ in $S$ can be given local coordinates $(x,y)$ such that around some point $p=(0,0)\in C$ its equation is
$$y^2-x(x-1)(x-\epsilon)=0,\qquad \epsilon\neq 0,1.$$
Such a point $p$ is called a \emph{flex point}, since the local intersection of $C$ with the tangent line $L$ at $p$ is $3$. In fact, there are $9$ such points in a cubic curve (see \cite[Ex. 5.23, Corollary]{FultonAlgebraicCurves}).

Let $L$ be the line with local equation $y=0$. Then
$$(C\cdot L)\vert_p=\dim_\bbC\left( \frac{\bbC[x,y]}{\langle y^2-x(x-1)(x-\epsilon),y\rangle }\right)=\dim_\bbC(\bbC\oplus\bbC\langle x\rangle \oplus\bbC\langle x^2\rangle )=3.$$
The minimal log resolution $f\colon \widetilde S \ra S$ of the log pair $(S,D=(1-\beta)C+\lambda\beta(3L))$ consists of $3$ consecutive blow-ups. The log pullback is:
$$f^*(K_S+(1-\beta)C+\lambda\beta(3L))\simq K_{\widetilde S}+(1-\beta)\widetilde C +\lambda\beta (3 \widetilde L)+ (3\lambda\beta-\beta)E_1 + (6\lambda\beta -2\beta)E_2 + (9\lambda\beta-3\beta)E_3.$$
We conclude
\begin{align*}
\alpha(S,(1-\beta)C)&\leq \min\{\lct(S,(1-\beta)C,\beta C),\lct(S,(1-\beta)C,3\beta L)\}\\
=&\min\{1,\frac{1}{3\beta},\frac{1+\beta}{3\beta},\frac{1+2\beta}{6\beta},\frac{1+3\beta}{9\beta}\}=\omega,
\end{align*}
giving the desired result.
\end{proof}

\begin{thm}
\label{thm:del-Pezzo-dynamic-alpha-degree-9}
Let $S=\bbP^2$ and $C\sim -K_S$ be a smooth cubic curve. Then
		\begin{equation}
				\alpha(S,(1-\beta)C)=\omega:=
						\begin{dcases}
								1 												&\text{ for } 0<\beta\leq \frac{1}{6},\\
								\frac{1+3\beta}{9\beta}		&\text{ for } \frac{1}{6}\leq \beta\leq \frac{2}{3},\\
								\frac{1}{3\beta}		&\text{ for } \frac{2}{3}\leq \beta\leq 1.
						\end{dcases}
			\label{eq:del-Pezzo-dynamic-alpha-degree-9}
		\end{equation}
\end{thm}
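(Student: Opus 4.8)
The plan is to establish the lower bound $\alpha(S,(1-\beta)C)\geq \omega$, since Lemma \ref{lem:del-Pezzo-deg9-dynamic-upbound} already provides the matching upper bound. Concretely, I must show that for every effective $\bbQ$-divisor $D\simq -K_S$ and every $\lambda<\omega$, the pair $(S,(1-\beta)C+\lambda\beta D)$ is log canonical. Since $\bbP^2$ does \emph{not} satisfy the Cat Property (Example \ref{exa:plane-not-cat-property}), I cannot invoke Corollary \ref{cor:cat-property-easy-dynamic-alpha}; instead I will argue directly via intersection theory and the local inequalities of Section \ref{sec:surfaces-intersection-theory}. The key structural input is Lemma \ref{lem:pairs-fixed-boundary-lcs}: since $\lambda\beta\leq \omega\beta$ and one checks $\omega\beta\leq \glct(\bbP^2)=\frac{1}{3}$ on each of the three ranges, any failure of log canonicity is confined to a single point $p\in C$, and the pair is log canonical in codimension $1$. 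By Corollary \ref{cor:higher-log-convexity} (log convexity) applied with the smooth boundary $C$, I may also assume $C\not\subseteq \Supp(D)$.

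First I would fix $\lambda<\omega$ and a hypothetical non-log-canonical point $p\in C$, and write $D=\sum d_iD_i$ with $C\not\subseteq\Supp(D)$. The first step is to bound $(C\cdot D)\vert_p$ from above: since $C$ and $D$ are both anticanonical, $C\cdot D = (-K_S)^2 = 9$, and more usefully $\mult_p D\le C\cdot D$ locally is controlled because any line $\ell$ through $p$ satisfies $\ell\cdot D=3$, forcing $\mult_p D\le 3$ unless some line lies in $\Supp(D)$ with large coefficient. The natural tool is Theorem \ref{thm:inequality-local-blowup-bound} applied to the pair $(S,(1-\beta)C+\lambda\beta D)$ with $\Delta=\lambda\beta D$: choosing the integer $n$ adapted to the range of $\beta$ gives the inequality $(\lambda\beta D\cdot C)\vert_p>1+n\beta$, and combining with $\mult_p(\lambda\beta D)\le \min\{1,\tfrac1n+\beta\}$ produces a numerical contradiction with $(C\cdot D)\vert_p\le 9$ once $\lambda<\omega$. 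The three cases $0<\beta\le\frac16$, $\frac16\le\beta\le\frac23$, $\frac23\le\beta\le 1$ correspond exactly to the thresholds at which the optimal $n$ in Theorem \ref{thm:inequality-local-blowup-bound} changes, and the breakpoints $\beta=\frac16,\frac23$ are where $\omega$ is continuous but non-smooth.

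The main obstacle, as is typical for these computations, is the case where $\Supp(D)$ contains a line $L$ through $p$ with large coefficient $d$ --- precisely the configuration realising the upper bound, where $L$ is tangent to $C$ to order $3$ at a flex. Here I would write $D=dL+\Omega$ with $L\not\subseteq\Supp(\Omega)$, bound $d$ by intersecting $D$ with a second line ($L'\cdot D=3$ gives $d\le 3$), and then run the blow-up procedure of Theorem \ref{thm:pseudo-inductive-blow-up} along the chain of infinitely near points $q_0=p,q_1,q_2,\dots$ where $C$, $L$ and the successive exceptional curves meet. Because $C$ meets $L$ with multiplicity $3$ at a flex, the tangency persists for exactly three blow-ups, and tracking the coefficients $(\ j\lambda\beta d + \lambda\beta(\sum m_k)-j\beta\ )$ of the exceptional divisors $F_j$ and demanding each stay $\le 1$ reproduces exactly the five candidate values $\{1,\frac1{3\beta},\frac{1+\beta}{3\beta},\frac{1+2\beta}{6\beta},\frac{1+3\beta}{9\beta}\}$ from Lemma \ref{lem:del-Pezzo-deg9-dynamic-upbound}. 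The verification that $\lambda<\omega$ keeps all these discrepancies $\ge -1$ is the delicate bookkeeping step; once it is done, log canonicity at $p$ follows from Lemma \ref{lem:log-pullback-preserves-lc}, contradicting the assumption and yielding $\alpha(S,(1-\beta)C)\geq\omega$, which together with Lemma \ref{lem:del-Pezzo-deg9-dynamic-upbound} completes the proof.
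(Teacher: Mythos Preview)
Your overall plan is right and close to the paper's, but you overcomplicate it in one place and misdiagnose the case structure in another.

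\textbf{The ``main obstacle'' is illusory.} Lines through $p$ form a pencil, so a \emph{general} line $\ell$ through $p$ satisfies $\ell\not\subseteq\Supp(D)$ and hence $\mult_pD\leq \ell\cdot D=3$, regardless of whether some \emph{particular} line lies in $\Supp(D)$ with large coefficient. This single bound is all that is needed to feed Theorem \ref{thm:inequality-local-blowup-bound}: with $\Delta=\lambda\beta D$ and $n=3$ one has $\lambda\beta\,\mult_pD\leq 3\lambda\beta<3\omega\beta$, and on every range of $\beta$ one checks $3\omega\beta\leq\min\{1,\tfrac{1}{3}+\beta\}$. The theorem then yields $(\lambda\beta D\cdot C)\vert_p>1+3\beta$; since $(D\cdot C)\vert_p\leq D\cdot C=9$, this forces $9\lambda\beta>1+3\beta$, and $9\omega\beta\leq 1+3\beta$ on each range gives the contradiction. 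No separate line-in-support analysis, and in particular no appeal to Theorem \ref{thm:pseudo-inductive-blow-up}, is needed. The three pieces of $\omega$ are \emph{not} thresholds at which the optimal $n$ changes; they are simply where the minimum in Lemma \ref{lem:del-Pezzo-deg9-dynamic-upbound} switches, and $n=3$ works uniformly.

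\textbf{What the paper actually does.} The paper splits into two cases. For $\beta\leq\tfrac{2}{3}$ it runs exactly the argument above with $n=3$. For $\tfrac{2}{3}\leq\beta\leq 1$ it does \emph{not} invoke Theorem \ref{thm:inequality-local-blowup-bound}; instead it blows up $p$ once, shows the non-lc point $q$ lies on $\widetilde C$, takes the unique line $L$ with $q\in\widetilde L$ (the tangent line to $C$ at $p$), and applies log-convexity (Lemma \ref{lem:log-convexity}) against $(\lambda\beta\,\mult_pD)\widetilde L$ to pass to a $\widetilde D'$ with $\widetilde L\not\subseteq\Supp(\widetilde D')$, then closes with a direct multiplicity estimate. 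This is closer in spirit to your proposed obstacle treatment, but lighter than running the full chain of Theorem \ref{thm:pseudo-inductive-blow-up}. In fact the $n=3$ argument of Case 1 already extends to $\beta\geq\tfrac{2}{3}$ (since there $\tfrac{1}{3}+\beta\geq 1$, $3\omega\beta=1$, and $1+3\beta\geq 3=9\omega\beta$), so the paper's Case 2 is an alternative rather than a necessity; either route is acceptable.
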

\begin{proof}
By Lemma \ref{lem:del-Pezzo-deg9-dynamic-upbound} $\alpha(S,(1-\beta)C)\leq\omega$. If $\alpha(S,(1-\beta)C)<\omega$, then there is an effective $\bbQ$-divisor $D\simq-K_S$, such that 
\begin{equation}
(S,(1-\beta)C+\lambda\beta D)
\label{eq:del-Pezzo-dynamic-alpha-degree-9-proof-pair}
\end{equation}
is not log canonical at some point $p\in S$ for some $\lambda<\omega$. We have
\begin{equation}
\lambda\beta<\begin{dcases}
								\beta 												&\text{ for } 0<\beta\leq \frac{1}{6},\\
								\frac{1+3\beta}{9}						&\text{ for } \frac{1}{6}\leq \beta\leq \frac{2}{3},\\
								\frac{1}{3}										&\text{ for } \frac{2}{3}\leq \beta\leq 1.
						\end{dcases}\leq
						\begin{dcases}
								\frac{1}{6}										&\text{ for } 0<\beta\leq \frac{1}{6},\\
								\frac{1}{3}										&\text{ for } \frac{1}{6}\leq \beta\leq 1.
						\end{dcases}\leq \frac{1}{3}
\label{eq:del-Pezzo-dynamic-alpha-degree-9-proof-basic-inequality}
\end{equation}
Since $\glct(S)=\frac{1}{3}$, by Lemma \ref{lem:pairs-fixed-boundary-lcs} the pair \eqref{eq:del-Pezzo-dynamic-alpha-degree-9-proof-pair} is not log canonical at $p\in C$ and it is log canonical in codimension $1$.

\textbf{Case 1: Suppose $\beta\leq \frac{2}{3}$.} We have
\begin{equation}
3\lambda\beta<\frac{1}{3}+\beta.
\label{eq:del-Pezzo-dynamic-alpha-degree-9-proof-basic-inequality-2}
\end{equation}
Indeed, for $\beta\leq \frac{1}{6}$ we have
$$3\lambda\beta<3\beta=\beta+2\beta\leq \beta + \frac{1}{3}.$$
For $\frac{1}{6}\leq \beta\leq \frac{2}{3}$ we have
$$3\lambda\beta<\frac{1+3\beta}{3}=\frac{1}{3}+\beta\leq 1.$$
Take a general line $L$ through $p$. Then $L\not\subseteq\Supp(D)$ and
$$\lambda\beta\mult_pD\leq \lambda\beta L \cdot D \leq 3\lambda\beta<\frac{1}{3}+\beta\leq 1,$$
by \eqref{eq:del-Pezzo-dynamic-alpha-degree-9-proof-basic-inequality-2}. Applying Theorem \ref{thm:inequality-local-blowup-bound} with $n=3$ we obtain
\begin{equation}
9\lambda\beta= (\lambda\beta C\cdot D)>1+3\beta.
\label{eq:del-Pezzo-dynamic-alpha-degree-9-proof-basic-inequality-3}
\end{equation}
If $\beta\leq \frac{1}{6}$, then we have $\lambda<1$ and \eqref{eq:del-Pezzo-dynamic-alpha-degree-9-proof-basic-inequality-3} gives a contradiction: $1\geq 6\beta>1$.
If $\frac{2}{3}\geq \beta > \frac{1}{6}$, then $\lambda\beta<\frac{1+3\beta}{9}$ and \eqref{eq:del-Pezzo-dynamic-alpha-degree-9-proof-basic-inequality-3} gives a contradiction as well:
$$1+3\beta>9\lambda\beta>1+3\beta.$$
\textbf{Case 2: $\frac{2}{3}\leq \beta\leq 1$.} Then $\lambda<\frac{2}{3\beta}$.

Let $\pi\colon \widetilde S \ra S$ be the blow-up of $p$ with exceptional curve $E$. The pair
\begin{equation}
(\widetilde S, (1-\beta)\widetilde C+\lambda\beta\widetilde D + (\lambda\beta\mult_pD-\beta)E)
\label{eq:del-Pezzo-dynamic-alpha-degree-9-proof-log-pullback}
\end{equation}
is not log canonical at some $q\in E$. Observe that $\lambda\beta\mult_pD-\beta\leq 1$. In fact, if this was not the case then
$$\mult_pD>\frac{1+\beta}{\lambda\beta}>3(1+\beta)>3,$$
by \eqref{eq:del-Pezzo-dynamic-alpha-degree-9-proof-basic-inequality}. But then, taking a general line $L\subset S$ through $p$ we obtain
$$3=L\cdot D\geq \mult_pD>3$$
which is absurd.

We claim that $q\in \widetilde C$. If this was not the case, the pair 
$$(\widetilde S, (\lambda\beta)\widetilde D + (\lambda\beta\mult_pD-\beta)E)$$
would not be log canonical at $q\in E$, but then by Lemma \ref{lem:adjunction} (iii) we would obtain the following inequality:
$$\frac{1}{3}\mult_pD>\lambda\beta\mult_pD=E\cdot(\lambda\beta D)>1$$
where we use \eqref{eq:del-Pezzo-dynamic-alpha-degree-9-proof-basic-inequality}. Now we take a general line $L$ through $p$ to obtain a contradiction:
$$3=L\cdot D\geq \mult_p D>3.$$

Hence $q=E\cap C$. Let $L$ be the unique line through $p$ such that $\widetilde L \cap E =\{q\}$. Note $3L\sim-K_S$. By the proof of Lemma \ref{lem:del-Pezzo-deg9-dynamic-upbound} the pair $(S, (1-\beta)C + \lambda\beta 3L)$ is log canonical. Since $\mult_pD\leq 3$, the pair
$$(S, (1-\beta)C+(\lambda\beta\mult_pD) L)$$
is also log canonical, by \eqref{eq:del-Pezzo-dynamic-alpha-degree-9-proof-basic-inequality}. By lemma \ref{lem:log-pullback-preserves-lc} the pair
\begin{equation}
(\widetilde S, (1-\beta)\widetilde C + (\lambda\beta\mult_pD- \beta)E +(\lambda\beta \mult_pD )\widetilde L)
\label{eq:del-Pezzo-dynamic-alpha-degree-9-proof-line-pullback}
\end{equation}
is log canonical. We apply Lemma \ref{lem:log-convexity} to pairs \eqref{eq:del-Pezzo-dynamic-alpha-degree-9-proof-log-pullback} and \eqref{eq:del-Pezzo-dynamic-alpha-degree-9-proof-line-pullback} with $(1-\beta)\widetilde C + (3\lambda\beta - \beta)E$ as fixed boundary to obtain a pair 
\begin{equation}
(\widetilde S, (1-\beta)\widetilde C + \lambda \beta \widetilde D' + (\lambda\beta\mult_pD -\beta)E)
\label{eq:del-Pezzo-dynamic-alpha-degree-9-proof-log-pullback-convexity}
\end{equation}
which is not log canonical at $q\in E$ where $E, \widetilde L, \widetilde C\not\subseteq\Supp(\widetilde D')$ and such that $\widetilde D'\simq \widetilde D$ with $\widetilde D'$ effective. By linear equivalence we deduce that $\mult_pD =\mult_p D'$ where $D'=\pi_*(\widetilde D')$. On one hand we have
\begin{equation}
3-\mult_pD'=\widetilde D' \cdot \widetilde L \geq \mult_q \widetilde D'.
\label{eq:del-Pezzo-dynamic-alpha-degree-9-proof-basic-inequality-4}
\end{equation}
On the other hand, by Lemma \ref{lem:adjunction} (i) applied to the pair \eqref{eq:del-Pezzo-dynamic-alpha-degree-9-proof-log-pullback-convexity} we deduce
$$1-2\beta+\lambda\beta(\mult_pD'+\mult_q\widetilde D')>1$$
which implies
$$\mult_pD'+\mult_q\widetilde D'>\frac{2\beta}{\lambda\beta}>\frac{2\beta}{3}\geq 4$$
where we first use \eqref{eq:del-Pezzo-dynamic-alpha-degree-9-proof-basic-inequality} and then $\beta\geq \frac{2}{3}$. This gives a contradiction by the means of \eqref{eq:del-Pezzo-dynamic-alpha-degree-9-proof-basic-inequality-4}, finishing the proof.
\end{proof}

\section{Smooth quadric surface $\bbP^1\times\bbP^1$}
\begin{lem}
\label{lem:del-Pezzo-dynamic-alpha-degree-8-F0-upbound}
Let $S=\bbP^1\times\bbP^1$ and $C\in\vert-K_S\vert$ a smooth elliptic curve. Then
		\begin{equation}
				\alpha(S,(1-\beta)C)\leq
						\begin{dcases}
								1 												&\text{ for } 0<\beta\leq \frac{1}{4},\\
								\frac{1+2\beta}{6\beta}		&\text{ for } \frac{1}{4}\leq \beta\leq 1.
						\end{dcases}
			\label{eq:del-Pezzo-dynamic-alpha-degree-8-F0-upbound}
		\end{equation}
\end{lem}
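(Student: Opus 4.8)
The plan is to bound $\alpha(S,(1-\beta)C)$ from above by exhibiting explicit effective anticanonical $\bbQ$-divisors, mimicking the argument for $\bbP^2$. Since $\alpha(S,(1-\beta)C)$ is an infimum over all effective $D\simq -K_S$ of $\lct(S,(1-\beta)C,\beta D)$, any single divisor gives an upper bound, and the minimum of several such bounds is again an upper bound. The trivial choice $D=C$ gives $(S,(1-\beta)C+\lambda\beta C)=(S,(1-\beta+\lambda\beta)C)$, which is log canonical if and only if $\lambda\le 1$; hence $\alpha(S,(1-\beta)C)\le 1$ for every $\beta$.

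The main choice of divisor would exploit a tangent ruling. Let $L_1,L_2$ denote the two rulings and consider the projection $S\to\bbP^1$ along one of them; its restriction to $C$ is a morphism of degree $C\cdot L_i=2$ from an elliptic curve to $\bbP^1$. By Riemann--Hurwitz, $0=2g(C)-2=2(0-2)+\deg R$, so the ramification divisor has degree $4$ and ramification points exist; as a degree two cover is simply ramified, at such a point $p$ the fibre $F_2$ through $p$ satisfies $(F_2\cdot C)\vert_p=2$, i.e. $F_2$ is tangent to $C$ at $p$. Let $F_1$ be the fibre of the other ruling through $p$, necessarily transverse to $C$ there. I would take $D=2F_1+2F_2\simq -K_S$, so that $\mult_p D=4$, and choose local coordinates with $F_1=\{x=0\}$, $F_2=\{y=0\}$ and $C=\{y=c_2x^2+\cdots\}$, $c_2\neq 0$.

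Next I would resolve $(S,(1-\beta)C+\lambda\beta D)$ by two blow-ups over $p$. The first blow-up, with exceptional curve $E_1$, produces the coefficient $\mult_p((1-\beta)C+\lambda\beta D)-1=4\lambda\beta-\beta$ on $E_1$; the tangency forces both strict transforms $\widetilde C$ and $\widetilde F_2$ to meet $E_1$ at one common point $p_1$ (while $\widetilde F_1$ meets $E_1$ elsewhere). Blowing up $p_1$, the multiplicities of $C$ and $D$ accumulate, and a direct computation gives the coefficient $(1-\beta)+2\lambda\beta+(4\lambda\beta-\beta)-1=6\lambda\beta-2\beta$ on $E_2$. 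After this second blow-up the three curves $E_1,\widetilde F_2,\widetilde C$ separate and meet $E_2$ transversally at distinct points, so the configuration has simple normal crossings and no further blow-ups are needed. The divisor $E_2$ is the one of smallest log discrepancy: the constraints $6\lambda\beta-2\beta\le 1$, $4\lambda\beta-\beta\le 1$ and $2\lambda\beta\le 1$ (the last two from $E_1$ and from the coefficients of $F_1,F_2$) translate to $\lambda\le\frac{1+2\beta}{6\beta}$, $\lambda\le\frac{1+\beta}{4\beta}$ and $\lambda\le\frac{1}{2\beta}$, and one checks $\frac{1+2\beta}{6\beta}\le\min\{\frac{1+\beta}{4\beta},\frac{1}{2\beta}\}$ for all $0<\beta\le 1$. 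Hence $\lct(S,(1-\beta)C,\beta D)=\frac{1+2\beta}{6\beta}$, giving $\alpha(S,(1-\beta)C)\le\frac{1+2\beta}{6\beta}$. Combining the two bounds yields $\alpha(S,(1-\beta)C)\le\min\{1,\frac{1+2\beta}{6\beta}\}$, which equals $1$ precisely when $\beta\le\frac14$ and $\frac{1+2\beta}{6\beta}$ when $\beta\ge\frac14$, as claimed.

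The main obstacle is the local resolution bookkeeping: one must be sure the tangency of $F_2$ with $C$ is of order exactly two, so that $\widetilde C$ and $\widetilde F_2$ pass through the same point $p_1\in E_1$ (making the $D$-multiplicities sum to $6$ at $E_2$ rather than stalling at $E_1$), and one must verify that $E_2$, not $E_1$ nor the components $F_i$, is the divisor forcing the threshold. Establishing the existence of the tangent fibre via Riemann--Hurwitz and controlling the higher-order terms of $C$ in the affine charts is where the care is needed; everything else is the routine discrepancy computation sketched above.
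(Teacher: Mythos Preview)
Your proposal is correct and follows essentially the same approach as the paper: you choose the same test divisor $D=2F_1+2F_2$ with one fibre tangent to $C$ at $p$, perform the same two-blow-up resolution yielding coefficients $4\lambda\beta-\beta$ and $6\lambda\beta-2\beta$, and combine with the trivial bound from $D=C$. Your version is slightly more detailed (justifying the tangent fibre via Riemann--Hurwitz and checking which exceptional divisor is binding), and you swap the labels of $F_1$ and $F_2$ relative to the paper, but the argument is the same.
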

\begin{proof}
$S$ has two rulings $S\ra \bbP^1$. Let $F_1$ be a fibre of any ruling $S\ra \bbP^1$ such that $(F_1\cdot C)\vert_p=2$ at a point $p$. Let $F_2$ be the fibre through $p$ of the other ruling. Observe that $(F_2\cdot C)\vert_p=1$, since $F_2\cdot F_1=1$. Notice that $2(F_1+F_2)\sim-K_S$. the minimal log resolution $f\colon \widetilde S \ra S$ of $(S,(1-\beta)C+\lambda\beta(2F_1+2F_2)$ is obtained after two consecutive blow-ups. We compute the log pullback:
\begin{align*}
f^*(K_S+(1-\beta)C+\lambda\beta(2F_1+2F_2))\simq K_{\widetilde S}&+ (1-\beta)\widetilde C +\lambda\beta(2\widetilde F_1+2\widetilde F_2)\\
&+(4\lambda\beta-\beta)E_1+(6\lambda\beta-2\beta)E_2.
\end{align*}
Therefore
\begin{align*}
\alpha(S,(1-\beta)C&\leq\min\{\lct(S,(1-\beta)C,\beta C),\lct(S,(1-\beta)C,\beta (2F_1+2F_2)\}\\
&=\min\{1,\frac{1+\beta}{4\beta},\frac{1+2\beta}{6\beta}\}=\min\{1,\frac{1+2\beta}{6\beta}\},
\end{align*}
finishing the proof.
\end{proof}

\begin{thm}
\label{thm:del-Pezzo-dynamic-alpha-degree-8-F0}
Let $S=\bbP^1\times\bbP^1$ and $C\in\vert-K_S\vert$ a smooth elliptic curve. Then
		\begin{equation}
				\alpha(S,(1-\beta)C)=\omega=
						\begin{dcases}
								1 												&\text{ for } 0<\beta\leq \frac{1}{4},\\
								\frac{1+2\beta}{6\beta}		&\text{ for } \frac{1}{4}\leq \beta\leq1.
						\end{dcases}
			\label{eq:del-Pezzo-dynamic-alpha-degree-8-F0}
		\end{equation}
\end{thm}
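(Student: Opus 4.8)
The upper bound $\alpha(S,(1-\beta)C)\le\omega$ is Lemma~\ref{lem:del-Pezzo-dynamic-alpha-degree-8-F0-upbound}, so the plan is to prove the reverse inequality by contradiction, closely following the template of Theorem~\ref{thm:del-Pezzo-dynamic-alpha-degree-9}. Suppose $\alpha(S,(1-\beta)C)<\omega$; then there is an effective $\bbQ$-divisor $D\simq-K_S$ and a point $p\in S$ with $(S,(1-\beta)C+\lambda\beta D)$ not log canonical at $p$ for some $\lambda<\omega$. Since $\omega\le 1$ throughout the interval, $\lambda<1$; and a short computation gives $\lambda\beta<\tfrac{1+2\beta}{6}\le\tfrac12=\glct(S)$ (the value from Theorem~\ref{thm:del-Pezzo-glct-charp}). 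Hence Lemma~\ref{lem:pairs-fixed-boundary-lcs} applies and forces $p\in C$ with the pair log canonical in codimension~$1$; by Lemma~\ref{lem:convexity} I may also assume $C\not\subseteq\Supp(D)$.

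Next I would bound the multiplicity of $D$ at $p$. Let $F_1,F_2$ be the two fibres (one from each ruling) through $p$, so that $2F_1+2F_2\sim-K_S$ is the anticanonical ``cat'' used in the upper bound. The crucial observation is that $(S,(1-\beta)C+\lambda\beta(2F_1+2F_2))$ is log canonical precisely because $\lambda<\omega$: when one of the fibres is tangent to $C$ at $p$ its log canonical threshold is exactly $\omega$ (this is the resolution computed in Lemma~\ref{lem:del-Pezzo-dynamic-alpha-degree-8-F0-upbound}), and otherwise it is even larger. Applying convexity (Corollary~\ref{cor:higher-log-convexity}) against this cat, I may assume that at least one fibre through $p$, say $F_1$, is not contained in $\Supp(D)$, whence $\mult_pD\le F_1\cdot D=(-K_S)\cdot F_1=2$. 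With this bound $\lambda\beta\,\mult_pD-\beta<\tfrac{1+2\beta}{3}-\beta=\tfrac{1-\beta}{3}\le1$, so after blowing up $p$ (Lemma~\ref{lem:log-pullback-preserves-lc}) the pair $(\widetilde S,(1-\beta)\widetilde C+\lambda\beta\widetilde D+(\lambda\beta\,\mult_pD-\beta)E)$ is not log canonical at a single point $q\in E$ and is log canonical along $E$. A one-line application of Lemma~\ref{lem:adjunction}(iii) to $E$, using $\lambda\beta\,\mult_pD\le2\lambda\beta<1$, rules out $q\notin\widetilde C$, so $q=\widetilde C\cap E$ is the tangent direction of $C$ at $p$.

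The contradiction should then come from a curve through $p$ tangent to $C$ (so that its strict transform passes through $q$) of the smallest possible anticanonical degree. When $q$ is a fibre direction, i.e. the fibre $F^\ast$ through $p$ satisfies $(F^\ast\cdot C)|_p=2$, I would take $Z=F^\ast$: it has $Z\cdot D=2$, and after arranging $F^\ast\not\subseteq\Supp(D)$ one gets $\mult_q\widetilde D\le\widetilde{F^\ast}\cdot\widetilde D=2-\mult_pD$, that is $\mult_pD+\mult_q\widetilde D\le2$. On the other hand Lemma~\ref{lem:adjunction}(i) applied at the non--log-canonical point $q\in\widetilde C\cap E$ gives $(1-\beta)+\lambda\beta\,\mult_q\widetilde D+(\lambda\beta\,\mult_pD-\beta)>1$, i.e. $\mult_pD+\mult_q\widetilde D>\tfrac2\lambda>2$, a contradiction. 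This is where $\omega$ is genuinely used: the validity of the convexity reduction (hence of the whole argument) is equivalent to $\lambda<\omega$, while the final adjunction only needs $\lambda<1$. The two-part shape of $\omega$ reflects merely the comparison of $\tfrac{1+2\beta}{6\beta}$ with $1$ and not a change in geometry, so the argument is essentially uniform in $\beta$.

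The hard part will be the two places where $\bbP^1\times\bbP^1$ differs from $\bbP^2$. First, when $q$ is \emph{not} a fibre direction (the generic situation) there is no fibre tangent to $C$ at $p$; the smallest curve realising an arbitrary tangent direction is a smooth $(1,1)$-curve, of anticanonical degree $4$, and the bound $\mult_pD+\mult_q\widetilde D\le4$ it produces is too weak to contradict $>\tfrac2\lambda$. I expect to treat this case not by a single blow-up but through the iterated blow-up inequality of Theorem~\ref{thm:inequality-local-blowup-bound} (equivalently Theorem~\ref{thm:pseudo-inductive-blow-up}) with $n=2$, which yields $(\lambda\beta D\cdot C)|_p>1+2\beta$ and hence $(D\cdot C)|_p>6$; the remaining task is to contradict this by showing that the local intersection of $D$ with $C$ at $p$ cannot consume all of $(-K_S)\cdot C=8$, the components of $D$ meeting $C$ elsewhere forcing $(D\cdot C)|_p\le6$. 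Second, the cat $2F_1+2F_2$ has two components, so convexity only guarantees the removal of \emph{one} fibre; ensuring that the \emph{tangent} fibre $F^\ast$ can be taken out of $\Supp(D)$ (there being only two fibres through $p$) will require either a short iteration of convexity against suitable anticanonical divisors or a direct analysis of the triple point $\widetilde C\cap\widetilde{F^\ast}\cap E$ by a further blow-up. These two technical points are the crux; everything else parallels the projective-plane case almost verbatim.
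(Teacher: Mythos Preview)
Your setup through the multiplicity bound $\mult_pD\le 2$ is exactly the paper's approach: the upper bound via Lemma~\ref{lem:del-Pezzo-dynamic-alpha-degree-8-F0-upbound}, the reduction to $p\in C$ via $\lambda\beta<\tfrac12=\glct(S)$ and Lemma~\ref{lem:pairs-fixed-boundary-lcs}, and the convexity argument against $2F_1+2F_2$ to remove one fibre from $\Supp(D)$ all match the paper verbatim.

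The genuine gap is at the last step. You invoke Theorem~\ref{thm:inequality-local-blowup-bound} with $n=2$, obtain $(D\cdot C)\vert_p>6$, and then correctly observe that this alone does not contradict $D\cdot C=8$; you propose to close the gap by a case split on whether $q$ is a fibre direction and by somehow forcing $(D\cdot C)\vert_p\le 6$, but you do not see how. The paper avoids all of this by applying the \emph{same} theorem with $n=4$ rather than $n=2$. The point is that the hypothesis
\[
\lambda\beta\,\mult_pD\;\le\;\min\Bigl\{1,\;\tfrac14+\beta\Bigr\}
\]
is satisfied: from $\mult_pD\le 2$ one gets $\lambda\beta\,\mult_pD\le 2\omega\beta$, and a two-line case check (for $\beta\le\tfrac14$ this is $2\beta\le\beta+\tfrac14$, for $\beta\ge\tfrac14$ it is $\tfrac{1+2\beta}{3}\le\tfrac14+\beta$) shows $2\omega\beta\le\tfrac14+\beta\le 1$. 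Theorem~\ref{thm:inequality-local-blowup-bound} with $n=4$ then yields
\[
8\lambda\beta\;\ge\;(\lambda\beta D\cdot C)\vert_p\;>\;1+4\beta,
\]
and this is already the contradiction: for $\beta\le\tfrac14$ it forces $\beta>\tfrac14$, and for $\beta\ge\tfrac14$ it forces $\lambda\beta>\tfrac{1+4\beta}{8}\ge\tfrac{1+2\beta}{6}=\omega\beta$. There is no need to analyse the tangent direction $q$, no need to remove the \emph{tangent} fibre from $\Supp(D)$, and no separate argument bounding $(D\cdot C)\vert_p$ below $8$; the whole case analysis you flagged as ``the crux'' simply disappears once you take $n=4$.
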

\begin{proof}
By Lemma \ref{lem:del-Pezzo-dynamic-alpha-degree-8-F0-upbound} $\alpha(S, (1-\beta)C) \leq \omega$. If $\alpha(S,(1-\beta)C)<\omega$, then there is an effective $\bbQ$-divisor $D\simq-K_S$ such that
\begin{equation}
(S,(1-\beta)C+\lambda\beta D)
\label{eq:del-Pezzo-dynamic-alpha-degree-8-F0-proof-pair}
\end{equation}
is not log canonical for some $0<\lambda<\omega$. First observe that
\begin{equation}
\lambda\beta<\omega\beta = \begin{dcases}
								\beta 								&\text{ for } 0<\beta\leq \frac{1}{4},\\
								\frac{1+2\beta}{6}		&\text{ for } \frac{1}{4}\leq \beta\leq1
								\end{dcases}\leq\begin{dcases}
								\frac{1}{4}		&\text{ for } 0<\beta\leq \frac{1}{4},\\
								\frac{1}{2}		&\text{ for } \frac{1}{4}\leq \beta\leq1
								\end{dcases}\leq \frac{1}{2}.
\label{eq:del-Pezzo-dynamic-alpha-degree-8-F0-proof-inequality0}
\end{equation}
Since $\glct(S)=\frac{1}{2}$, by Lemma \ref{lem:pairs-fixed-boundary-lcs}, the pair \eqref{eq:del-Pezzo-dynamic-alpha-degree-8-F0-proof-pair} is not log canonical at an isolated $p\in C$ and is log canonical in codimension $1$. Let $F_1, F_2$ be the fibres through $p$ of each of the rulings $S\ra \bbP^1$. By the proof of Lemma \ref{lem:del-Pezzo-dynamic-alpha-degree-8-F0-upbound}, the pair
$$(S,(1-\beta)C + 2\lambda\beta(F_1+F_2))$$
is log canonical at $p$. By Lemma \ref{lem:log-convexity} applied to this pair and \eqref{eq:del-Pezzo-dynamic-alpha-degree-8-F0-proof-pair}, we may assume, without loss of generality, that $F_1\not\subseteq\Supp(D)$. This implies that 
\begin{equation}
\mult_pD\leq D\cdot F_1=(-K_S)\cdot F_1=2.
\label{eq:del-Pezzo-dynamic-alpha-degree-8-F0-proof-inequality1}
\end{equation}
As a consequence
\begin{equation}
\lambda\beta\mult_pD<\frac{1}{4}+\beta.
\label{eq:del-Pezzo-dynamic-alpha-degree-8-F0-proof-inequality2}
\end{equation}
Indeed if $0<\beta\leq \frac{1}{4}$, we have
$$\lambda\beta\mult_pD<\beta\mult_pD\leq 2\beta\leq \beta + \frac{1}{4}.$$
On the contrary, if $\frac{1}{4}\leq\beta\leq 1$, using \eqref{eq:del-Pezzo-dynamic-alpha-degree-8-F0-proof-inequality1} we obtain
$$\lambda\beta\mult_pD<\frac{1+2\beta}{6}\cdot 2=\frac{2}{3}\beta+ \frac{1}{3}+ \frac{1}{4}- \frac{1}{4}= \frac{1}{4} + \frac{2}{3}\beta + \frac{1}{3}\cdot \frac{1}{4}\leq \beta + \frac{1}{4},$$
and \eqref{eq:del-Pezzo-dynamic-alpha-degree-8-F0-proof-inequality2} is proven for all $0<\beta\leq 1$. Notice that using \eqref{eq:del-Pezzo-dynamic-alpha-degree-8-F0-proof-inequality0} and \eqref{eq:del-Pezzo-dynamic-alpha-degree-8-F0-proof-inequality1} we obtain
$$\lambda\beta\mult_pD<1.$$
Together with \eqref{eq:del-Pezzo-dynamic-alpha-degree-8-F0-proof-inequality2} we are in the hypotheses of Theorem \ref{thm:inequality-local-blowup-bound} for $n=4$, i.e.
$$\lambda\beta\mult_pD\leq \min \{1,\beta+ \frac{1}{4}\}$$
and we conclude
\begin{equation}
8\lambda\beta=(\lambda\beta D\cdot C)\vert_p>1+4\beta.
\label{eq:del-Pezzo-dynamic-alpha-degree-8-F0-proof-inequality3}
\end{equation}
If $0<\beta\leq \frac{1}{4}$, then $\lambda<1$ and \eqref{eq:del-Pezzo-dynamic-alpha-degree-8-F0-proof-inequality3} becomes
$$8\beta>1+4\beta$$
implying $\beta>\frac{1}{4}$, a contradiction. On the other hand, if $\frac{1}{4}\leq \beta\leq 1$, then \eqref{eq:del-Pezzo-dynamic-alpha-degree-8-F0-proof-inequality3} implies
$$8\cdot \frac{1+2\beta}{6}>8\lambda\beta>1+4\beta,$$
giving as a result $\beta<\frac{1}{4}$, which is also impossible, finishing the proof.
\end{proof}

\section{Hirzebruch surface $\bbF_1$}
\begin{lem}
\label{lem:del-Pezzo-dynamic-alpha-degree-8-F1-upbound}
Let $S=\bbF_1$ and $C\in\vert-K_S\vert$ smooth. Let $E$ be the unique curve in $S$ such that $E^2=-1$. Let $\gamma \colon S\ra\bbP^1$ be the ruling given by the linear system $\vert \pi^*(L)-E\vert$ where $\pi:S\ra \bbP^2$ is the contraction of $E$. Let $F$ be the unique fibre of $\gamma$ passing through $r=E\cap C$. If $F\cap C=r$ (i.e. $(F\cdot C)\vert_p=2$), then
	\begin{equation}
			\alpha(S,(1-\beta)C)\leq\omega_1:=
					\begin{dcases}
							1 												&\text{ for } 0<\beta\leq \frac{1}{6},\\
							\frac{1+2\beta}{8\beta}		&\text{ for } \frac{1}{6}\leq \beta\leq \frac{5}{6},\\
							\frac{1}{3\beta}					&\text{ for } \frac{5}{6}\leq \beta\leq 1.\\
					\end{dcases}
		\label{eq:del-Pezzo-dynamic-alpha-degree-8-F1-upbound-special}
	\end{equation}
Otherwise
	\begin{equation}
			\alpha(S,(1-\beta)C)\leq\omega_2:=
					\begin{dcases}
							1 												&\text{ for } 0<\beta\leq \frac{1}{4},\\
							\frac{1+\beta}{5\beta}		&\text{ for } \frac{1}{4}\leq \beta\leq \frac{2}{3},\\
							\frac{1}{3\beta}					&\text{ for } \frac{2}{3}\leq \beta\leq 1.\\
					\end{dcases}
		\label{eq:del-Pezzo-dynamic-alpha-degree-8-F1-upbound-generic}
	\end{equation}
\end{lem}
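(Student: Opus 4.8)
The plan is to bound $\alpha(S,(1-\beta)C)$ from above by exhibiting, for each $\beta$, explicit effective $\bbQ$-divisors $D\simq -K_S$ and computing $\lct(S,(1-\beta)C,\beta D)$ for them; since by definition $\alpha(S,(1-\beta)C)\le \lct(S,(1-\beta)C,\beta D)$ for every such $D$, the minimum of these thresholds over the chosen divisors gives the desired bound. Following the pattern of Lemmas \ref{lem:del-Pezzo-deg9-dynamic-upbound} and \ref{lem:del-Pezzo-dynamic-alpha-degree-8-F0-upbound}, I would use only two divisors: the smooth boundary $C$ itself, and the reducible anticanonical divisor $D=2E+3F$, where $F$ is the fibre of $\gamma$ through $r=E\cap C$. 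The first gives the constant bound: since $C$ is smooth, $(S,(1-\beta)C+\lambda\beta C)$ is log canonical exactly when $(1-\beta)+\lambda\beta\le 1$, i.e. $\lambda\le 1$, contributing the factor $1$ (and the region $0<\beta$ small). The key point for the second divisor is that on $\bbF_1$ the anticanonical class is $-K_S\sim 2E+3F$ (here $E$ is the negative section with $E^2=-1$ and $F$ a fibre with $F^2=0$, $E\cdot F=1$), so $2E+3F$ is a genuine effective representative of $\vert-K_S\vert$ whose components $E$ and $F$ both pass through $r$, with $\deg E=1$, $\deg F=2$, $E\cdot C=1$ and $F\cdot C=2$.

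Next I would resolve $(S,(1-\beta)C+\lambda\beta(2E+3F))$ by blowing up at $r$ and its infinitely near points, reading off the discrepancies. The local intersection pattern at $r$ splits into exactly the two cases of the statement. All of $E$, $F$, $C$ pass through $r$ with $E\cdot F=E\cdot C=1$, so $E$ is transverse to both at $r$; the remaining datum is the contact of $F$ with $C$. Writing $m_0=\mult_r(2E+3F)=5$, the first blow-up $\sigma_1$ with exceptional curve $E_1$ produces the coefficient $5\lambda\beta-\beta$ on $E_1$, together with coefficients $2\lambda\beta$ on $\widetilde E$ and $3\lambda\beta$ on $\widetilde F$. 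In the generic case $(F\cdot C)\vert_r=1$, the three curves have pairwise distinct tangent directions at $r$, so their strict transforms meet $E_1$ in three distinct points and the pair already has simple normal crossings; the binding constraints are $2\lambda\beta\le 1$, $3\lambda\beta\le 1$ and $5\lambda\beta-\beta\le 1$, giving $\lct=\min\{\frac{1}{2\beta},\frac{1}{3\beta},\frac{1+\beta}{5\beta}\}$. In the special case $(F\cdot C)\vert_r=2$, the curves $\widetilde C$ and $\widetilde F$ share the fibre tangent direction and so meet $E_1$ at a common point $q_1$ (now transversally), requiring a second blow-up $\sigma_2$ at $q_1$; since $\widetilde C$, $\widetilde F$ and $E_1$ all pass through $q_1$ but $\widetilde E$ does not, the coefficient of the new exceptional $E_2$ is $(1-\beta)+3\lambda\beta+(5\lambda\beta-\beta)-1=8\lambda\beta-2\beta$, after which the pair has normal crossings.

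Taking the minimum of the resulting thresholds together with the bound from $C$ gives $\min\{1,\frac{1}{3\beta},\frac{1+\beta}{5\beta}\}$ in the generic case and $\min\{1,\frac{1}{3\beta},\frac{1+2\beta}{8\beta}\}$ in the special case; a routine comparison of these elementary functions (in particular $\frac{1+\beta}{5\beta}>\frac{1+2\beta}{8\beta}$ and $\frac{1}{2\beta}>\frac{1}{3\beta}$ for $0<\beta\le 1$) reproduces the piecewise formulas and breakpoints in \eqref{eq:del-Pezzo-dynamic-alpha-degree-8-F1-upbound-special} and \eqref{eq:del-Pezzo-dynamic-alpha-degree-8-F1-upbound-generic}. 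I expect the main obstacle to be bookkeeping rather than conceptual: correctly pinning down the local geometry at $r$ — verifying that $E$ is transverse to both $C$ and $F$ while the contact of $F$ with $C$ is precisely what distinguishes the two cases — and then confirming that the deepest exceptional divisor ($E_1$ in the generic case, $E_2$ in the special case) governs the threshold, so that the weaker constraints $2\lambda\beta\le 1$ and $5\lambda\beta-\beta\le 1$ indeed drop out of the minimum in the stated ranges of $\beta$.
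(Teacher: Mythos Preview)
Your proposal is correct and follows essentially the same approach as the paper: both use the test divisor $D=2E+3F\sim -K_S$, compute the log pullback under one blow-up (generic case) or two consecutive blow-ups (special case, when $C$ and $F$ are tangent at $r$), obtain the exceptional coefficients $5\lambda\beta-\beta$ and $8\lambda\beta-2\beta$, and take the minimum of the resulting thresholds together with the trivial bound from $C$ itself. Your bookkeeping of the local geometry at $r$ and of which constraints survive in each range of $\beta$ matches the paper's computation.
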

\begin{proof}
Recall that $C\sim-K_S\sim3F+2E$ and $F\cdot E =1$, $F^2=0$. Let $D=3F+2E_1$. Let $f\colon \widetilde S \ra S$ be the minimal log resolution of $$(S,(1-\beta)C+\lambda\beta(3F+2E)).$$ Since $F\cdot C=2$, if $F\cdot C=\{r\}$, then the minimal log resolution consists of two consecutive blow-ups and the log pullback is
\begin{align*}
f^*(K_S+(1-\beta)C+&\lambda\beta(3F+2E))\simq K_{\widetilde S}+(1-\beta)\widetilde C +\lambda \beta(3\widetilde F +2\widetilde E)\\
&+(5\lambda\beta-\beta)F_1+(8\lambda\beta-2\beta)F_2,
\end{align*}
where $F_1,F_2$ are exceptional divisors of $f$. Therefore
\begin{align*}
\alpha(S,(1-\beta)C)&\leq\min\left\{\lct\left(S,\left(1-\beta\right)C,\beta C\right),\lct\left(S,\left(1-\beta\right)C,\beta\left(3F+2E\right)\right)\right\}\\
&=\min\left\{1,\frac{1}{2\beta},\frac{1}{3\beta},\frac{1+\beta}{5\beta},\frac{1+2\beta}{8\beta}\right\}=\omega_1
\end{align*}
giving the desired result.

Conversely, if $(F\cdot C)\vert_r=1$, then the minimal log resolution consists of one blow-up. In this case, the log pullback is
$$f^*(K_S+(1-\beta)C+\lambda\beta(3F+2E))\simq K_{\widetilde S}+(1-\beta)\widetilde C +\lambda \beta(3\widetilde F +2\widetilde E)+(5\lambda\beta-\beta)F_1.$$
Hence
\begin{align*}
\alpha(S,(1-\beta)C)&\leq\min\left\{\lct\left(S,\left(1-\beta\right)C,\beta C\right),\lct\left(S,\left(1-\beta\right)C,\beta\left(3F+2E\right)\right)\right\}\\
&=\min\left\{1,\frac{1}{2\beta},\frac{1}{3\beta},\frac{1+\beta}{5\beta},\right\}=\omega_2,
\end{align*}
finishing the proof.
\end{proof}

We need the following auxiliary result:
\begin{lem}
\label{lem:del-Pezzo-dynamic-alpha-degree-8-F1-inequality}
The following inequalities
\begin{itemize}
	\item[(i)] $2\omega_2\beta\leq\frac{1}{4}+\beta$
	\item[(ii)] $1+4\beta\geq 8\omega_2\beta$
\end{itemize}
hold for $\omega_1, \omega_2$ as in the statement of Lemma \ref{lem:del-Pezzo-dynamic-alpha-degree-8-F1-upbound}.
\end{lem}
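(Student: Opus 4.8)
The plan is to verify each of the two inequalities directly by substituting the three branches of the piecewise definition of $\omega_2$ and reducing, in every case, to an elementary linear inequality in $\beta$ restricted to the relevant subinterval. Recall from Lemma \ref{lem:del-Pezzo-dynamic-alpha-degree-8-F1-upbound} that $\omega_2 = 1$ on $(0,\tfrac14]$, $\omega_2 = \tfrac{1+\beta}{5\beta}$ on $[\tfrac14,\tfrac23]$, and $\omega_2 = \tfrac{1}{3\beta}$ on $[\tfrac23,1]$. Thus each of (i) and (ii) splits into three cases, giving six elementary checks in total. Note that $\omega_1$ plays no role in the statement despite being mentioned, so I would simply observe that only $\omega_2$ enters.

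For inequality (i), $2\omega_2\beta \le \tfrac14 + \beta$: on the first branch this becomes $2\beta \le \tfrac14 + \beta$, i.e. $\beta \le \tfrac14$, which holds throughout $(0,\tfrac14]$; on the second branch the factor $\beta$ cancels against the denominator, leaving $\tfrac{2(1+\beta)}{5} \le \tfrac14 + \beta$, which clears to $\beta \ge \tfrac14$; on the third branch it reduces to the constant inequality $\tfrac23 \le \tfrac14 + \beta$, i.e. $\beta \ge \tfrac{5}{12}$, which is implied by $\beta \ge \tfrac23$. For inequality (ii), $1 + 4\beta \ge 8\omega_2\beta$: the first branch gives $1+4\beta \ge 8\beta$, i.e. $\beta \le \tfrac14$; the second gives, after cancelling $\beta$, the inequality $1 + 4\beta \ge \tfrac{8(1+\beta)}{5}$, equivalent to $\beta \ge \tfrac14$; the third gives $1 + 4\beta \ge \tfrac83$, i.e. $\beta \ge \tfrac{5}{12}$, again implied by $\beta \ge \tfrac23$.

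I would present these six checks compactly, since each collapses to a single linear condition that matches precisely the boundary of its branch; the recurring endpoint conditions $\beta = \tfrac14$ and $\beta \ge \tfrac{5}{12}$ reflect the fact that $\omega_2$ is exactly the minimum appearing in Lemma \ref{lem:del-Pezzo-dynamic-alpha-degree-8-F1-upbound}, so both inequalities are tight at the breakpoints. There is no genuine obstacle here: the only care needed is to clear the positive denominators $5\beta$ and $3\beta$ correctly using $\beta > 0$, and to confirm that each derived linear condition holds on the whole corresponding subinterval rather than merely at an endpoint.
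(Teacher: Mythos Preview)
Your proof is correct and follows essentially the same approach as the paper: both verify each inequality by splitting into the three branches of the piecewise definition of $\omega_2$ and reducing to an elementary linear check on the relevant subinterval. Your presentation is in fact slightly cleaner, since you identify the exact threshold ($\beta=\tfrac14$ or $\beta=\tfrac{5}{12}$) at which each linear inequality becomes tight, whereas the paper performs more ad hoc algebraic rearrangements; your observation that $\omega_1$ plays no role is also accurate.
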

\begin{proof}
We prove (i):
If $0<\beta\leq \frac{1}{4}$, then $\omega_2=1$ and
$$2\omega_2\beta\leq 2\beta=\beta+\beta\leq \frac{1}{4} + \beta.$$
If $\frac{1}{4}\leq \beta\leq \frac{2}{3}$, then $\omega_2= \frac{1+\beta}{\beta}$ and
$$2\omega_2\beta=\frac{1}{5}(2+2\beta)=\beta- \frac{3}{5}\beta+ \frac{2}{5}\leq \frac{1}{4}+\beta.$$
If $\frac{2}{3}\leq \beta\leq 1$, then $\omega_2= \frac{1}{3\beta}$ and
$$2\omega_2\beta = \frac{2}{3}\leq \beta <\frac{1}{4}+\beta.$$

We prove (ii):
If $0<\beta\leq \frac{1}{4}$, then $\omega_2=1$ and
$$8\omega_2\beta=8\beta=4\beta+4\beta\leq 1+4\beta.$$
If $\frac{1}{4}\leq \beta\leq \frac{2}{3}$, then $\omega_2=\frac{1+\beta}{5\beta}$ and 
$$8\omega_2\beta = 8 \frac{1+\beta}{5}= 1+4\beta + \frac{3}{5} - \frac{12}{5}\beta\leq 1+4\beta - \frac{12}{5}\cdot\frac{2}{3} + \frac{9}{15}< 1+4\beta.$$
If $\frac{2}{3}\leq \beta \leq 1$, then $\omega_2=\frac{1}{3\beta}$ and 
$$8\omega_2\beta=\frac{8}{3} = 1+ \frac{5}{3} \leq 1+ \frac{5}{2}\beta < 1 + 4\beta.$$
\end{proof}

\begin{thm}
\label{thm:del-Pezzo-dynamic-alpha-degree-8-F1}
Let $S=\bbF_1$ and $C\in\vert-K_S\vert$ smooth. Let $E$ be the unique curve in $S$ such that $E^2=-1$. Let $\gamma\colon S\ra\bbP^1$ be the ruling given by the linear system $\vert \pioplane{1}-E\vert$ for $\pi:S\ra \bbP^2$, the contraction of $E$. Let $F$ be the unique fibre of $\gamma$ passing through $r=E\cap C$. If $F\cap C=r$ (i.e. $(F\cdot C)\vert_r=2$), then
	\begin{equation}
			\alpha(S,(1-\beta)C)=\omega_1:=
					\begin{dcases}
							1 												&\text{ for } 0<\beta\leq \frac{1}{6},\\
							\frac{1+2\beta}{8\beta}		&\text{ for } \frac{1}{6}\leq \beta\leq \frac{5}{6},\\
							\frac{1}{3\beta}					&\text{ for } \frac{5}{6}\leq \beta\leq 1.\\
					\end{dcases}
		\label{eq:del-Pezzo-dynamic-alpha-degree-8-F1-special}
	\end{equation}
Otherwise
	\begin{equation}
			\alpha(S,(1-\beta)C)=\omega_2:=
					\begin{dcases}
							1 												&\text{ for } 0<\beta\leq \frac{1}{4},\\
							\frac{1+\beta}{5\beta}		&\text{ for } \frac{1}{4}\leq \beta\leq \frac{2}{3},\\
							\frac{1}{3\beta}					&\text{ for } \frac{2}{3}\leq \beta\leq 1.\\
					\end{dcases}
		\label{eq:del-Pezzo-dynamic-alpha-degree-8-F1-generic}
	\end{equation}
\end{thm}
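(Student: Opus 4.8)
The plan is to establish the lower bound $\alpha(S,(1-\beta)C)\geq\omega_i$ for each case, since the upper bounds are already supplied by Lemma \ref{lem:del-Pezzo-dynamic-alpha-degree-8-F1-upbound}. I would argue by contradiction: suppose there is an effective $\bbQ$-divisor $D\simq-K_S$ such that $(S,(1-\beta)C+\lambda\beta D)$ fails to be log canonical at some point $p$ for some $\lambda<\omega_i$. Since $\glct(\bbF_1)=\frac{1}{3}$ by Theorem \ref{thm:del-Pezzo-glct-charp}, I would first verify that $\lambda\beta\leq\frac{1}{3}$ in the relevant range (as in the $\bbP^2$ and $\bbP^1\times\bbP^1$ proofs), so that Lemma \ref{lem:pairs-fixed-boundary-lcs} forces $p\in C$ and forces the pair to be log canonical in codimension $1$. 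This reduces everything to controlling an isolated non-log-canonical point lying on the boundary curve $C$.

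The heart of the argument, following the template of Theorems \ref{thm:del-Pezzo-dynamic-alpha-degree-9} and \ref{thm:del-Pezzo-dynamic-alpha-degree-8-F0}, is to bound $\mult_pD$ by intersecting $D$ with a well-chosen curve through $p$ that is not contained in $\Supp(D)$, and then to apply Theorem \ref{thm:inequality-local-blowup-bound}. The natural test curve is a fibre $F$ of the ruling $\gamma$ through $p$, giving $\mult_pD\leq D\cdot F=-K_S\cdot F=2$; using log-convexity (Lemma \ref{lem:log-convexity}) against the log canonical pair built from $F$ in Lemma \ref{lem:del-Pezzo-dynamic-alpha-degree-8-F1-upbound}, I can arrange $F\not\subseteq\Supp(D)$. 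The technical inequalities packaged in Lemma \ref{lem:del-Pezzo-dynamic-alpha-degree-8-F1-inequality}, namely $2\omega_2\beta\leq\frac{1}{4}+\beta$ and $1+4\beta\geq 8\omega_2\beta$, are precisely what is needed to check the hypothesis $\lambda\beta\mult_pD\leq\min\{1,\frac{1}{n}+\beta\}$ with $n=4$ and to then derive the contradiction $1+4\beta<(\lambda\beta C\cdot D)\vert_p<1+4\beta$ from the conclusion $(C\cdot D)\vert_p>\frac{1+4\beta}{\lambda\beta}$, at least in the generic case $\omega_2$.

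The main obstacle is the special case $\omega_1$, where $C$ is tangent to the fibre $F$ at $r=E\cap C$ (so $(F\cdot C)\vert_r=2$) and the local intersection geometry is more degenerate; here the naive $n=4$ application is not sharp enough, and I expect the correct threshold is $n=6$ or requires a two-stage blow-up analysis analogous to Case 2 of Theorem \ref{thm:del-Pezzo-dynamic-alpha-degree-9}. In that regime I would blow up $p$ via $\sigma\colon\widetilde S\ra S$ with exceptional curve $F_1$, apply Lemma \ref{lem:log-pullback-preserves-lc} to locate the non-log-canonical point $q\in F_1$, show $q\in\widetilde C$ by ruling out $q\notin\widetilde C$ through Lemma \ref{lem:adjunction}(iii), and then play the strict transforms of $C$, $F$, and $E$ against each other using Lemma \ref{lem:log-convexity} to subtract components and Lemma \ref{lem:adjunction}(i) to close the contradiction. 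The most delicate point will be handling the high-$\beta$ plateau where $\omega_i=\frac{1}{3\beta}$, since there $\lambda\beta$ can approach $\frac{1}{3}$ and the multiplicity bounds become tight; I anticipate this forces the careful separate treatment of whether $E\subseteq\Supp(D)$, exactly because $E$ is the unique negative curve and $E^2=-1$ makes the intersection estimates borderline.
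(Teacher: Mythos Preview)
Your opening moves are correct and match the paper: reduce to an isolated non--log canonical point $p\in C$ via Lemma~\ref{lem:pairs-fixed-boundary-lcs} using $\lambda\beta\leq\omega_i\beta\leq\frac{1}{3}=\glct(S)$, and then try to bound $\mult_pD$ against a fibre so that Theorem~\ref{thm:inequality-local-blowup-bound} with $n=4$ and Lemma~\ref{lem:del-Pezzo-dynamic-alpha-degree-8-F1-inequality} close the argument. This is exactly how the paper handles the point $r=E\cap C$ (its Step~2) and the subcase where a degree--$2$ curve through $p$ stays out of $\Supp(D)$.

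However, there is a genuine gap. You assert that log-convexity against the pair from Lemma~\ref{lem:del-Pezzo-dynamic-alpha-degree-8-F1-upbound} lets you ``arrange $F\not\subseteq\Supp(D)$''. It does not: that auxiliary divisor is $3F+2E$, and Lemma~\ref{lem:log-convexity} only guarantees that \emph{one} irreducible component leaves $\Supp(D)$. If it happens to be $E$, then at a point $p\neq r$ (so $p\notin E$) you get no multiplicity bound at all, and the direct route via Theorem~\ref{thm:inequality-local-blowup-bound} stalls. Your proposal never distinguishes $p=r$ from $p\neq r$, and the latter is where the real work lies.

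The paper's key idea, which you are missing entirely, is a birational reduction (its Step~4): for $p\notin E$ one performs an elementary transformation of $\bbF_1$---blow up $p$, then contract the strict transform of $E$---to land in a new copy $\bar S\cong\bbF_1$ in which the image $\bar q$ of the non--log canonical point is precisely the intersection of $\bar C$ with the \emph{new} $(-1)$-curve. This manufactures the situation of Step~2/Step~3 (the analysis at $r$) in the new model, and moreover tracks which curve is absent from the support after the transformation. The argument uses the auxiliary curve $H\in\vert\pioplane{1}\vert$ tangent to $C$ at $p$, splitting into cases according to whether $H$ is irreducible or $H=E+L_p$; in each branch the elementary transformation feeds directly into the already--established local analysis at $r$. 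Your proposed ``two-stage blow-up'' for the $\omega_1$ case and the guess $n=6$ are both off-track: the paper treats $\omega_1$ and $\omega_2$ uniformly via this reduction (using $\omega_1\leq\omega_2$), with the case distinction entering only through which auxiliary divisor is log canonical at $r$.
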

\begin{obs}
\label{obs:del-Pezzo-dynamic-alpha-degree-8-F1}
The first case of Theorem \ref{thm:del-Pezzo-dynamic-alpha-degree-8-F1} is special, since not always we have $F\cap C = F\cap E$ for any fibre $F$, whereas the second case is clearly the general one. Since the del Pezzo surfaces of degree $8$ do not have moduli by Lemma \ref{lem:del-Pezzo-no-moduli}, which case we are in depends only on the choice of curve $C\sim-K_{\bbF_1}$.
\end{obs}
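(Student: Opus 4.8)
The plan is to read the two cases of Theorem~\ref{thm:del-Pezzo-dynamic-alpha-degree-8-F1} as a single tangency condition on $C$, and then to establish that this condition is (a) intrinsic to $C$, (b) satisfied only on a proper subfamily of $\vert-K_S\vert$, yet (c) non-vacuous. First I would record the relevant intersection numbers. Since $-K_S\sim 3F+2E$ with $F^2=0$, $E^2=-1$ and $F\cdot E=1$, one computes $C\cdot E=(3F+2E)\cdot E=1$ and $C\cdot F=(3F+2E)\cdot F=2$. Thus $C$ meets the $(-1)$-curve $E$ in a single point $r$, so $r$ — and hence the fibre $F$ of $\gamma$ through $r$ — is uniquely determined by $C$; and $C$ meets this $F$ in a length-two scheme containing $r$. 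The dichotomy of the theorem is then precisely whether the residual intersection of $F$ with $C$ is again $r$ (the special case $(F\cdot C)\vert_r=2$, i.e.\ $F$ tangent to $C$ at $r$, equivalently $F\cap C=F\cap E=\{r\}$) or a second distinct point (the general case $(F\cdot C)\vert_r=1$).

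Next I would justify that the case depends only on $C$. By Lemma~\ref{lem:del-Pezzo-no-moduli} there is, up to isomorphism, a unique del Pezzo surface of degree $8$ carrying a $(-1)$-curve, namely $\bbF_1$; moreover $E$, the ruling $\gamma$, and the fibre $F$ through $r$ are all canonically attached to $S$. Hence once $S\cong\bbF_1$ is fixed there is no remaining parameter, and the tangency condition $(F\cdot C)\vert_r=2$ is an $\Aut(S)$-invariant condition on the anticanonical member $C$ alone. This is the precise content of the second sentence of the Observation.

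Finally I would prove genericity together with non-vacuousness by a dimension count inside $\vert-K_S\vert$. By Lemma~\ref{lem:del-Pezzo-sections-anticanonical}, $h^0(S,\calO_S(-K_S))=\frac{1\cdot2}{2}\cdot 8+1=9$, so $\dim\vert-K_S\vert=8$. I would fibre the analysis over $E\cong\bbP^1$: for a fixed $r\in E$ the anticanonical curves through $r$ form a hyperplane $\bbP^7\subset\vert-K_S\vert$ (one linear condition, which by base-point-freeness forces $E\cap C=\{r\}$ automatically since $C\cdot E=1$), and imposing in addition tangency at $r$ to the fixed fibre $F_r$ is one further linear condition, cutting out a $\bbP^6$. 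Letting $r$ vary over the one-dimensional $E$ shows the locus of special $C$ has dimension at most $7<8$, so it is a proper closed subset and the general $C$ falls in the second case. To see the special case genuinely occurs, I would invoke that $-K_S$ is very ample (Theorem~\ref{thm:del-Pezzo-classification-embeddings}, as $\deg S=8\geq 3$): very ampleness guarantees that tangency is a genuine, non-trivial linear condition and, via Bertini applied to the $\bbP^6$ of tangent members, that a smooth such $C$ exists. The main obstacle is exactly this last point — verifying that the tangency condition drops the dimension by precisely one and that the resulting subsystem still contains a smooth member rather than forcing $C$ to degenerate — and very ampleness of $-K_S$ (which separates tangent directions) together with Bertini is what makes both checks clean.
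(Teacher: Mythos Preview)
The paper does not prove this Observation; it is stated as a self-evident remark, the only justification being the reference to Lemma~\ref{lem:del-Pezzo-no-moduli} already embedded in the statement. Your proposal therefore supplies considerably more than the paper does, and your argument is correct: the intersection numbers $C\cdot E=1$, $C\cdot F=2$ pin down $r$ and $F$ from $C$ alone, and the dimension count shows the tangent locus is a proper subfamily of $\vert-K_S\vert$.

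One remark on the non-vacuousness step, where you invoke very ampleness and Bertini. This works, but there is a more transparent route that avoids checking smoothness along a base locus. Under $\pi\colon\bbF_1\to\bbP^2$, a smooth $C\in\vert-K_S\vert$ is the strict transform of a smooth plane cubic $\bar C$ through $o=\pi(E)$, and the fibre $F$ through $r=E\cap C$ is the strict transform of the tangent line $\ell$ to $\bar C$ at $o$. Since both $\bar C$ and $\ell$ are smooth at $o$, one has $(F\cdot C)\vert_r=(\ell\cdot\bar C)\vert_o-1$, so the special case $(F\cdot C)\vert_r=2$ holds exactly when $(\ell\cdot\bar C)\vert_o=3$, i.e.\ when $o$ is an inflection point of $\bar C$. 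Genericity is then the statement that a general point of a smooth cubic is not a flex, and existence follows because smooth plane cubics with a prescribed flex certainly exist (indeed the paper uses this in the proof of Lemma~\ref{lem:del-Pezzo-deg9-dynamic-upbound}). This buys you a concrete geometric description of the special locus and sidesteps the Bertini verification.
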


\begin{proof}[Proof of Theorem \ref{thm:del-Pezzo-dynamic-alpha-degree-8-F1}]
By Lemma \ref{lem:del-Pezzo-dynamic-alpha-degree-8-F1-upbound} in each case we have $\alpha(S,(1-\beta)C)\leq \omega_i$. Suppose that $\alpha(S,(1-\beta)C)<\omega_i$. Then there is an effective $\bbQ$-divisor $D\simq-K_S$ such that the pair
\begin{equation}
(S,(1-\beta)C+\lambda\beta D)
\label{eq:del-Pezzo-dynamic-alpha-degree-8-F1-proof-pair}
\end{equation}
is not log canonical at some $p\in S$ where $0<\lambda<\omega_i$.

\textbf{Step 1: We reduce to the case $p\in C$.}

First observe that $\omega_1\leq \omega_2$ and 
\begin{equation}
\omega_1\beta\leq\omega_2\beta = \begin{dcases}
								\beta 								&\text{ for } 0<\beta\leq \frac{1}{4},\\
								\frac{1+\beta}{5}			&\text{ for } \frac{1}{4}\leq \beta\leq \frac{2}{3}\\
								\frac{1}{3}						&\text{ for } \frac{2}{3}\leq \beta\leq1
								\end{dcases}\leq\begin{dcases}
								\frac{1}{4}						&\text{ for } 0<\beta\leq \frac{1}{4},\\
								\frac{1}{3}						&\text{ for } \frac{1}{4}\leq \beta \leq\frac{2}{3}\\
								\frac{1}{3}						&\text{ for } \frac{2}{3}\leq \beta \leq 1\\
								\end{dcases}\qquad\leq \frac{1}{3}.
\label{eq:del-Pezzo-dynamic-alpha-degree-8-F1-proof-inequality0-generic}
\end{equation}
Hence, in each case $\lambda\beta\leq \glct(S)$ by Theorem \ref{thm:del-Pezzo-glct-charp}. Applying Lemma \ref{lem:pairs-fixed-boundary-lcs} we conclude that the pair \eqref{eq:del-Pezzo-dynamic-alpha-degree-8-F1-proof-pair} is log canonical in codimension $1$ and not log canonical at an isolated $p\in C$. By Theorem \ref{thm:del-Pezzo-classification-models} and Lemma \ref{lem:del-Pezzo-lines9-d} there is a unique $(-1)$-curve $E$ and a unique model $\pi\colon S\ra \bbP^2$ which contracts $E$ to a point $o$.

\textbf{Step 2:  We show that if $E\not\subset \Supp(D)$ or $F\not\subset \Supp(D)$, then $(S, (1-\beta)C + \omega_2\beta D)$ is log canonical at $r=E\cap C\cap F$.}

Suppose $(S, (1-\beta)C + \omega_2\beta D)$  is not log canonical at $r$.

If $E\not\subset \Supp(D)$ then, by Lemma \ref{lem:adjunction} (i),
\begin{equation*}
1=E\cdot D \geq \mult_rD>\frac{1-(1-\beta)}{\omega_2\beta}=\frac{1}{\omega_2}\geq 1,
\end{equation*}
which is a contradiction.

If $F\not\subset\Supp(D)$, then 
\begin{equation}
1\geq\frac{2}{3}\geq 2\omega_2\beta=F\cdot(\omega_2\beta D)\geq\omega_2\beta\mult_rD.
\label{eq:del-Pezzo-dynamic-alpha-degree-8-F1-proof-bound-mult3}
\end{equation}
By Lemma \ref{lem:del-Pezzo-dynamic-alpha-degree-8-F1-inequality} (i)
$$2\omega_2\beta\leq\frac{1}{4}+\beta.$$
Hence, by \eqref{eq:del-Pezzo-dynamic-alpha-degree-8-F1-proof-bound-mult3}
$$\omega_2\beta\mult_r D \leq \min\{1,\frac{1}{4}+\beta\}.$$
We apply Theorem \ref{thm:inequality-local-blowup-bound} with $n=4$ to conclude
$$8\omega_2\beta= (C\cdot (\omega_2\beta D))\vert_r > 1+4\beta.$$
This contradicts Lemma \ref{lem:del-Pezzo-dynamic-alpha-degree-8-F1-inequality} (ii).

\textbf{Step 3: If $(F\cdot C)\vert_r=1$, where $r=E\cap C\cap F$, then $(S, (1-\beta)C + \omega_2\beta D)$ is log canonical at $r$. }

Suppose $(S, (1-\beta)C + \omega_2\beta D)$ is not log canonical.

Observe that if the pair \eqref{eq:del-Pezzo-dynamic-alpha-degree-8-F1-proof-pair} is not log canonical at $r$ we have
\begin{equation}
\mult_pD>\frac{\beta}{\omega_2\beta}=\frac{1}{\omega_2}>1,
\label{eq:del-Pezzo-dynamic-alpha-degree-8-F1-proof-3-bound-mult}
\end{equation}
by Lemma \ref{lem:adjunction} (i), since $r\in C$.
We have
$$3F+2E\sim-K_S.$$
The pair
$$(S,(1-\beta)C + \omega_2\beta(3F+2E))$$
is log canonical by the proof of Lemma \ref{lem:del-Pezzo-dynamic-alpha-degree-8-F1-upbound}. By Lemma \ref{lem:log-convexity}, we may assume that either $E\not\subseteq\Supp(D)$ or $F\not\subseteq\Supp(D)$. These are precisely the hypotheses of Step 2. Step 3 follows.

\textbf{Step 4: We reduce to steps 2 and 3.}

Suppose $p\in C$ and $p\not\in E$. In particular $p\neq r$. Suppose
$$(S, (1-\beta)C + \lambda\beta D)$$
is not log canonical at $p$. Let $L_p$ be the only fibre of $\gamma\colon S\ra\bbP^1$ passing through $p$. Recall that $L_p\sim\pioplane{1}-E$. Moreover $L_p$ is irreducible. Let $H$ be the unique element in $\vert\pioplane{1}\vert$ such that $p\in H$ and $(H\cdot C)\vert_p\geq 2$. The curve $H$ is the strict transform via $\pi$ of the unique line in $\bbP^2$ tangent to $\pi(C)$ at $\pi(p)$. Observe that $\deg H=3$.

\textbf{Case (a): Suppose that $H$ is reducible.} Then it splits as the union of a line and a conic. However, the only line in $S$ is $E$, so $H=E+L_p$ with $(C\cdot L_p)\vert_p=2$. 

Let $L$ be the strict transform of a general line $\hat{L}\subset \bbP^2$ through $\pi(p)\in \bbP^2$. Then $L$ is a cubic and $L\not\subseteq \Supp(D)$, since the sublinear system of $\vert\hat L \vert$ fixing $\pi(p)$ is a pencil. Then
\begin{equation}
1>3\lambda\beta=\lambda\beta D \cdot L \geq \lambda\beta \mult_pD
\label{eq:del-Pezzo-dynamic-alpha-degree-8-F1-proof-bound-a-mult1}
\end{equation}
by \eqref{eq:del-Pezzo-dynamic-alpha-degree-8-F1-proof-inequality0-generic}. Let $\sigma\colon\widetilde S \ra S$ be the blow-up of $p$ with exceptional divisor $F_1$. Since $p\not\in E$, the surface $\widetilde S $ is a del Pezzo surface. By Lemma \ref{lem:log-pullback-preserves-lc} the pair
$$(\widetilde S, (1-\beta)\widetilde C + \lambda\beta \widetilde D + (\lambda\beta\mult_p D -\beta) F_1)$$
is not log canonical at some $q\in F_1$. By \eqref{eq:del-Pezzo-dynamic-alpha-degree-8-F1-proof-bound-a-mult1} we have that such $q$ is isolated, i.e. the discrepancy of the pair \eqref{eq:del-Pezzo-dynamic-alpha-degree-8-F1-proof-pair} along $F_1$ is smaller or equal than $1$. Note that
\begin{align}
			&\widetilde D + (\mult_p D -1)F_1 \nonumber \\
\simq & \sigma^*(D)-(\mult_pD)F_1 + (\mult_pD-1)F_1 \label{eq:del-Pezzo-dynamic-alpha-degree-8-F1-proof-pair-correction} \\
\simq &\sigma^*(-K_S)-F_1\simq-K_{\widetilde S}. \nonumber
\end{align}
Let $\bar\sigma\colon \widetilde S \ra \bar S $ be the contraction of $\widetilde E$. Observe that since $\widetilde E^2=\sigma^*(E)^2=E^2=-1$, then $\bar S$ is a non-singular del Pezzo surface by Lemma \ref{lem:del-pezzo-blowdown-dP}. By Lemma \ref{lem:del-Pezzo-no-moduli} we have $S\cong \bar S$ and since $\widetilde E\cap F_1=\emptyset$, the morphism $\bar\sigma$ is an isomorphism around $q$. Notice that since $\lambda<1$, the $\bbQ$-divisor $\beta(1-\lambda)F_1$ is effective. Therefore by Lemma \ref{lem:disc-monotonous}, the pair
\begin{equation}
(\bar S, (1-\beta)\bar{C} + \lambda \beta (\bar D + (\mult_pD-1)\bar F_1))
\label{eq:del-Pezzo-dynamic-alpha-degree-8-F1-proof-pair-correction3}
\end{equation}
is not log canonical at $\bar q = \bar\sigma(q)\in \bar F_1$ where $\bar C = \bar\sigma_*(\widetilde C)$, $\bar D = \bar\sigma_*(\widetilde D)$ and $\bar F_1 = \bar\sigma_*(F_1)$ and $\bar L_p=\bar\sigma_*(\widetilde L_p)$. By \eqref{eq:del-Pezzo-dynamic-alpha-degree-8-F1-proof-pair-correction}
$$\bar D + (\mult_pD-1)\bar F_1 \simq\sigma_*(-K_{\widetilde S })\sim-K_{\bar S}.$$
Therefore \eqref{eq:del-Pezzo-dynamic-alpha-degree-8-F1-proof-pair-correction3} is log canonical around $\bar q=\bar F_1\cap \bar C$ by Step 1. Hence $\bar q=\bar F_1 \cap \bar C$ where $\bar F_1$ is the only $(-1)$-line in $\bar S \cong \bbF_1$.

Now, observe that since $(L_p\cdot C)\vert_p=2$, then $q=\widetilde L_p\cap \widetilde C \cup F_1$ and $\widetilde L_p\cdot \widetilde C=1$. Therefore, since $\bar\sigma$ is an isomorphism near $q$, $(\bar L_p\cdot \bar C)\vert_{\bar q}=1$. Moreover $(\bar F_1)^2=-1$. Therefore we may substitute $\bar S$ for $S$, $\bar C$ for $C$, $\bar L_p$ for $F$, $\bar q$ for $r$ and $\bar F_1$ for $E$ and $\bar D + (\mult_pD-1)\bar F_1$ for $D$. We are in the situation of Step 3, giving a contradiction, since $\lambda<\omega_i\leq \omega_2$.

\textbf{Case (b): Suppose that the curve $H\sim\pioplane{1}$ is irreducible.} Recall that $3\geq (H\cdot C)\vert_p\geq 2$, $p\in H$, $p\not\in E$, and $L_p\sim\pioplane{1}-E$ is the only fibre of $\gamma \colon\bbF_1\ra \bbP^1$ passing through $p$.

We claim that $(L_p\cdot C)\vert_p=1$. Observe that $\hat C =\pi_*(C)$ is a smooth cubic in $\bbP^2$ and $\hat L_p=\pi_*(L_p)$ and $\hat H =\pi_*(H)$ are smooth lines passing through $\hat p=\pi(p)\neq \pi(E)$. Moreover $\hat H$ is tangent to to $\hat C$ at $\hat p$. Since there is only one tangent line at $C$ at $\pi(p)$, if $(L_p\cdot C)\vert_p=2$, then $\hat L_p$ is tangent to $\hat C$ at $\hat p$. But then $\hat H=\hat L_p$ and $H=L_p+E_1$, a contradiction with our assumption. Therefore $(L_p\cdot C)\vert_p=1$.

Observe that $2H+L_p\sim-K_S$. The pair
\begin{equation*}
(S, (1-\beta)C +\omega_2\beta(2H+L_p))
\end{equation*}
is log canonical at $p$ by Lemma \ref{lem:appendix-F1-pair1}. By Lemma \ref{lem:log-convexity} we may assume that either $H\not\subset\Supp(D)$ or $L_p\not\subset\Supp(D)$.

Suppose $L_p\not\subset\Supp(D)$. Then
\begin{equation}
1\geq\frac{2}{3}\geq 2\omega_2\beta=L_p\cdot(\omega_2\beta D)\geq\omega_2\beta\mult_rD.
\label{eq:del-Pezzo-dynamic-alpha-degree-8-F1-proof-lp-bound-mult3}
\end{equation}
By Lemma \ref{lem:del-Pezzo-dynamic-alpha-degree-8-F1-inequality} (i)
$$2\omega_2\beta\leq\frac{1}{4}+\beta.$$
Hence, by \eqref{eq:del-Pezzo-dynamic-alpha-degree-8-F1-proof-lp-bound-mult3}
$$\omega_2\beta\mult_r D \leq \min\{1,\frac{1}{4}+\beta\}.$$
We apply Theorem \ref{thm:inequality-local-blowup-bound} with $n=4$ to conclude
$$8\omega_2\beta= (C\cdot (\omega_2\beta D))\vert_r > 1+4\beta.$$
This contradicts Lemma \ref{lem:del-Pezzo-dynamic-alpha-degree-8-F1-inequality} (ii).

Hence we conclude that $H\not\subseteq \Supp(D)$. Then
\begin{equation}
1>3\lambda\beta=\lambda\beta D \cdot H \geq \lambda\beta \mult_pD
\label{eq:del-Pezzo-dynamic-alpha-degree-8-F1-proof-H-bound-mult1}
\end{equation}
by \eqref{eq:del-Pezzo-dynamic-alpha-degree-8-F1-proof-inequality0-generic}. Let $\sigma\colon\widetilde S \ra S$ be the blow-up of $p$ with exceptional divisor $F_1$. Since $p\not\in E$, the surface $\widetilde S $ is a del Pezzo surface. By Lemma \ref{lem:log-pullback-preserves-lc} the pair
$$(\widetilde S, (1-\beta)\widetilde C + \lambda\beta \widetilde D + (\lambda\beta\mult_p D -\beta) F_1)$$
is not log canonical at some $q\in F_1$. By \eqref{eq:del-Pezzo-dynamic-alpha-degree-8-F1-proof-3-bound-mult} we have that such $q$ is isolated, i.e. the discrepancy of the pair \eqref{eq:del-Pezzo-dynamic-alpha-degree-8-F1-proof-pair} along $F_1$ is smaller or equal than $1$. Note that
\begin{align}
			&\widetilde D + (\mult_p D -1)F_1 \nonumber \\
\simq & \sigma^*(D)-(\mult_pD)F_1 + (\mult_pD-1)F_1 \label{eq:del-Pezzo-dynamic-alpha-degree-8-F1-proof-pair-correction2} \\
\simq &\sigma^*(-K_S)-F_1\simq-K_{\widetilde S}. \nonumber
\end{align}
Let $\bar\sigma\colon \widetilde S \ra \bar S $ be the contraction of $\widetilde E$. Observe that since $\widetilde E^2=\sigma^*(E)^2=E^2=-1$, then $\bar S$ is a non-singular del Pezzo surface by Lemma \ref{lem:del-pezzo-blowdown-dP}. By Lemma \ref{lem:del-Pezzo-no-moduli} we have $S\cong \bar S$ and since $\widetilde E\cap F_1=\emptyset$, the morphism $\bar\sigma$ is an isomorphism around $q$. Notice that since $\lambda<1$, the $\bbQ$-divisor $\beta(1-\lambda)F_1$ is effective. Therefore by Lemma \ref{lem:disc-monotonous}, the pair
$$(\bar S, (1-\beta)\bar{C} + \lambda \beta (\bar D + (\mult_pD-1)\bar F_1))$$
is not log canonical at $\bar q = \bar\sigma(q)\in \bar F_1$ where $\bar C = \bar\sigma_*(\widetilde C)$, $\bar D = \bar\sigma_*(\widetilde D)$ and $\bar F_1 = \bar\sigma_*(F_1)$ and $\bar H=\bar\sigma_*(\widetilde H)$. By \eqref{eq:del-Pezzo-dynamic-alpha-degree-8-F1-proof-pair-correction2}
$$\bar D + (\mult_pD-1)\bar F_1 \simq\sigma_*(-K_{\widetilde S })\sim-K_{\bar S}.$$
Hence the pair \eqref{eq:del-Pezzo-dynamic-alpha-degree-8-F1-proof-pair-correction3} is log canonical in codimension $1$ and $\bar q\in \bar C $, by step 1. Hence $\bar q=\bar F_1 \cap \bar C$ where $\bar F_1$ is the only $(-1)$-line in $\bar S \cong \bbF_1$.

Since $q=\widetilde H\cap \widetilde C \cup F_1$ and $\bar\sigma$ is an isomorphism near $q$, we have
$$(\bar H\cdot \bar C)\vert_{\bar q}=(\widetilde H \cdot \widetilde C)\vert_q = (H\cdot C)\vert_p-1\geq 1.$$
Moreover $(\bar F_1)^2=-1$. 

Since $E\cdot H =0$, then $(\bar H)^2=(\widetilde H)^2=H^2-1=0$. Therefore we may substitute $\bar S$ for $S$, $\bar C$ for $C$, $\bar H$ for $F$, $\bar q$ for $r$ and $\bar F_1$ for $E$ and $\bar D + (\mult_pD-1)\bar F_1$ for $D$ we are in the situation of Step 2, where $F\not\subset\Supp(D)$, giving a contradiction, since $\lambda<\omega_i\leq \omega_2$. This finishes the proof.

\end{proof}

\section{Del Pezzo surface of degree $7$}
\begin{nota}
\label{nota:del-Pezzo-deg7}
Let $S$ be a non-singular del Pezzo surface of degree $7$. By Lemma \ref{lem:del-Pezzo-no-moduli}, $S$ is unique up to isomorphism. By Lemma \ref{lem:del-Pezzo-uniquemodel} there is a unique morphism $\pi:S\ra \bbP^2$, up to isomorphism in $\bbP^2$, that contracts two $(-1)$-curves $E_1,E_2$ to points $p_1,p_2$ in $\bbP^2$. By Lemma \ref{lem:del-Pezzo-lines9-d} there is a unique line $L\neq E_1,E_2$ with
$$L\sim\pioplane{1}-E_1-E_2,$$
corresponding to the strict transform of the unique line in $\bbP^2$ passing through $p_1$ and $p_2$. We have $L\cdot E_1=L\cdot E_2=1$. Let $C$ be a smooth curve, $C\in\vert-K_S\vert$. The curve $C$ intersects each of $E_1,E_2$ and $L$ at precisely one point. At most two of these points coincide.

Let $L_i$ be the unique curve
$$L_i\sim\pioplane{1}-E_i,$$
containing $r_i=E_i\cap C$, for $i=1,2$, which is precisely the strict transform of the unique line $\bar{L_i}$ in $\bbP^2$ tangent to $\bar{C}=\pi_*(C)$ at $p_i$. If $L\cap C \neq r_1,r_2$, then let $r=L\cap C$ and $R$ be the unique curve passing through $r$ such that 
$$R\sim \pioplane{1}$$
and $R$ is tangent to $C$ at $r$, i.e. $(C\cdot R)\vert_r\geq 2$.

Alternatively, we can realise $R$ as the strict transform of the unique line in $\bbP^2$ tangent to $\pi(C)$ at $\pi(r)$. since $\pi$ is an isomorphism around $r$, then
$$(\pi(L)\cdot\pi(C))\vert_r=(C\cdot L)\vert_r=C\cdot L=1.$$
In particular, $R\neq L$.

The different intersections of these curves will give different values for $\alpha(S, (1-\beta)C)$. These are described precisely in Lemma \ref{lem:del-Pezzo-dynamic-alpha-degree-7} and Theorem \ref{thm:del-Pezzo-dynamic-alpha-degree-7}, but we refer to the reader to figures in Table \ref{tab:deg7} for a graphical interpretation.
\end{nota}
\begin{table}[!t]%
\centering
\begin{tabular}{c|c}
	\begin{tikzpicture}[scale=0.8]
					\tikzset{mypoints/.style={fill=white,draw=black,thick}}
					\def\ptsize{2.0pt}
					\def\etx{-2.0} \def\ety{2.0} 
					\coordinate (ET) at (\etx,\ety);
					\coordinate[label=below:{$E_1$}] (EB) at (\etx,-1*\ety);
					\draw[name path=lineE,black, very thick] (ET)--(EB);
					\coordinate (FT) at (-1*\etx,\ety);
					\coordinate[label=below:{$E_2$}]  (FB) at (-1*\etx,-1*\ety);
					\draw[name path=lineF,black, very thick] (FT)--(FB);
					\coordinate (EF1) at (\etx-1.0,\ety-1.0);
					\coordinate (EF2) at (-1*\etx+1.0,\ety-1.0);
					\draw[name path=lineF,blue, very thick] (EF1)--(EF2);
					\coordinate (startC) at (\etx-1.0,-1*\ety);
					\coordinate (PSE) at (intersection of ET--EB and EF1--EF2);
					\coordinate (midC) at (-1*\etx,\ety-0.5);
					\coordinate (FinC) at (-1*\etx+1.0,\ety);
					\draw [red, xshift=4cm] plot [smooth, tension=1] coordinates { (startC) (PSE) (midC) (FinC)};
					\draw[red](-1*\etx-2.0,\ety-0.3) node[left=1pt,fill=white]{$C$};
					\draw[blue](-1*\etx-2.0,\ety-1.2) node[left=1pt,fill=white]{$L$};
					
					\coordinate (caption) at (0.0,-1*\ety);
					
					\node [below=1cm, align=flush center,text width=8cm] at (caption)
        {
            $\alpha(S, (1-\beta)C)=\omega_4$
        };

	\end{tikzpicture}

& \begin{tikzpicture}[scale=0.8]

 \tikzset{mypoints/.style={fill=white,draw=black,thick}}
	\def\ptsize{2.0pt}
	\def\etx{-2.0} \def\ety{2.0} 
	\coordinate (ET) at (\etx,\ety);
	\coordinate[label=below:{$E_1$}] (EB) at (\etx,-1*\ety);
	\draw[name path=lineE,black, very thick] (ET)--(EB);
	\coordinate (FT) at (-1*\etx,\ety);
	\coordinate[label=below:{$E_2$}]  (FB) at (-1*\etx,-1*\ety);
	\draw[name path=lineF,black, very thick] (FT)--(FB);
	\coordinate (EF1) at (\etx-1.0,\ety-1.0);
	\coordinate (EF2) at (-1*\etx+1.0,\ety-1.0);
	\draw[name path=lineF,blue, very thick] (EF1)--(EF2);
	\coordinate (startC) at (\etx-1.0,-1*\ety+2.0);
	\coordinate (Ri) at (\etx,- 1*\ety+2.0);
	\coordinate (PSE) at (intersection of ET--EB and EF1--EF2);
	\coordinate (tangency) at (\etx,-1*\ety+1.0);
	\coordinate (midL) at (\etx+1.0, \ety-1.0);
	\coordinate (midC) at (-1*\etx,\ety-0.5);
	\coordinate (FinC) at (-1*\etx+1.0,\ety);
	\draw [name path=curveC, red, xshift=4cm] plot [smooth, tension=1] coordinates { (startC) (tangency) (midL) (midC) (FinC)};
	\draw[red](-1*\etx-2.0,\ety-0.3) node[left=1pt,fill=white]{$C$};
	\draw[blue](-1*\etx-2.0,\ety-1.2) node[left=1pt,fill=white]{$L$};
	\path [name intersections={of = curveC and lineE}];
	\coordinate[label=below right:{$r_1$}] (R1) at (intersection-1);
	\coordinate (Focus) at (\etx-1.0,-1.3);
	\coordinate[label=below:{$L_1$}] (Focus2) at ($(Focus)!4.0!(R1)$);
	\draw[very thick,green!50!black!50] (Focus)--(R1)--(Focus2);
	\coordinate (caption) at (0.0,-1*\ety);
					
					\node [below=1cm, align=flush center,text width=8cm] at (caption)
        {
            $\alpha(S, (1-\beta)C)=\omega_3$
        };
	\end{tikzpicture}\\
	
\hline
\\ [-2.5ex]
	\begin{tikzpicture}[scale=0.8]

 \tikzset{mypoints/.style={fill=white,draw=black,thick}}
	\def\ptsize{2.0pt}
	\def\etx{-2.0} \def\ety{2.0} 
	\coordinate (ET) at (\etx,\ety);
	\coordinate[label=below:{$E_1$}] (EB) at (\etx,-1*\ety);
	\draw[name path=lineE,black, very thick] (ET)--(EB);
	\coordinate (FT) at (-1*\etx,\ety);
	\coordinate[label=below:{$E_2$}]  (FB) at (-1*\etx,-1*\ety);
	\draw[name path=lineF,black, very thick] (FT)--(FB);
	\coordinate (EF1) at (\etx-1.0,\ety-1.0);
	\coordinate (EF2) at (-1*\etx+1.0,\ety-1.0);
	\draw[name path=lineL,blue, very thick] (EF1)--(EF2);
	\coordinate (startC) at (\etx-1.0,-1*\ety+2.0);
	\coordinate (Ri) at (\etx,- 1*\ety+2.0);
	\coordinate (PSE) at (intersection of ET--EB and EF1--EF2);
	\coordinate (tangency) at (\etx,-1*\ety+1.0);
	\coordinate (midL) at (\etx+1.5, \ety-1.0);
		\coordinate (FinC) at (-1*\etx+1.0,\ety-0.5);
	\draw [name path=curveC, red, xshift=4cm] plot [smooth, tension=1] coordinates { (startC) (tangency) (midL) (FinC)};
	\draw[red](-1*\etx-0.7,\ety-0.3) node[left=1pt,fill=white]{$C$};
	\draw[blue](-1*\etx-2.0,\ety-1.2) node[left=1pt,fill=white]{$L$};
	\path [name intersections={of = curveC and lineE}];
	\coordinate[label=below right:{$r_1$}] (R1) at (intersection-1);
	\coordinate (Focus) at (\etx-1.3, -0.2);
	\coordinate[label=below:{$L_1$}] (Focus2) at ($(Focus)!1.8!(R1)$);
	\draw[very thick,green!50!black!50] (Focus)--(R1)--(Focus2);
	\path [name intersections={of = curveC and lineL}];
	\coordinate[label=above left:{$r$}] (R) at (intersection-1);
	\coordinate (FocusB) at (\etx+0.3, 0.2);
	\coordinate[label=above:{$R$}] (FocusB2) at ($(FocusB)!1.8!(R)$);
	\draw[very thick,orange!50!black!50] (FocusB)--(R)--(FocusB2);
	
	\coordinate (caption) at (0.0,-1*\ety);
					
					\node [below=1cm, align=flush center,text width=8cm] at (caption)
        {
            $\alpha(S, (1-\beta)C)=\omega_2$
        };
	\end{tikzpicture}

&	\begin{tikzpicture}[scale=0.8]

 \tikzset{mypoints/.style={fill=white,draw=black,thick}}
	\def\ptsize{2.0pt}
	\def\etx{-2.0} \def\ety{2.0} 
	\coordinate (ET) at (\etx,\ety);
	\coordinate[label=below:{$E_1$}] (EB) at (\etx,-1*\ety);
	\draw[name path=lineE,black, very thick] (ET)--(EB);
	\coordinate (FT) at (-1*\etx,\ety);
	\coordinate[label=below:{$E_2$}]  (FB) at (-1*\etx,-1*\ety);
	\draw[name path=lineF,black, very thick] (FT)--(FB);
	\coordinate (EF1) at (\etx-1.0,\ety-1.0);
	\coordinate (EF2) at (-1*\etx+1.0,\ety-1.0);
	\draw[name path=lineL,blue, very thick] (EF1)--(EF2);
	\coordinate (startC) at (\etx-1.0,-1*\ety+2.0);
	\coordinate (Ri) at (\etx,- 1*\ety+2.0);
	\coordinate (PSE) at (intersection of ET--EB and EF1--EF2);
	\coordinate (tangency) at (\etx,-1*\ety+1.0);
	\coordinate (midL) at (\etx+1.5, \ety-1.0);
	\coordinate (FinC) at (-1*\etx+1.0,\ety-0.5);
	\draw [name path=curveC, red, xshift=4cm] plot [smooth, tension=1] coordinates { (startC) (tangency) (midL) (FinC)};
	\draw[red](-1*\etx-0.7,\ety-0.3) node[left=1pt,fill=white]{$C$};
	\draw[blue](-1*\etx-2.0,\ety-1.2) node[left=1pt,fill=white]{$L$};
	\path [name intersections={of = curveC and lineE}];
	\coordinate[label=below right:{$r_1$}] (R1) at (intersection-1);
	\coordinate (Focus) at (\etx-1.3, -0.2);
	\coordinate[label=below:{$L_1$}] (Focus2) at ($(Focus)!1.8!(R1)$);
	\draw[very thick,green!50!black!50] (Focus)--(R1)--(Focus2);
	\path [name intersections={of = curveC and lineL}];
	\coordinate[label=above right:{$r$}] (R) at (intersection-1);
	\coordinate (FocusB) at (\etx+0.3, \ety);
	\coordinate[label=above:{$R$}] (FocusB2) at ($(FocusB)!1.8!(R)$);
	\draw[very thick,orange!50!black!50] (FocusB)--(R)--(FocusB2);
	\coordinate (caption) at (0.0,-1*\ety);
					
					\node [below=1cm, align=flush center,text width=8cm] at (caption)
        {
            $\alpha(S, (1-\beta)C)=\omega_1$
        };
	
	\end{tikzpicture}

\end{tabular}
\caption{Arrangements of lines and intersection of relevant curves in the non-singular del Pezzo surface of degree $7$.}
\label{tab:deg7}
\end{table}

\begin{lem}
\label{lem:del-Pezzo-dynamic-alpha-degree-7}
Let $S$ be the non-singular del Pezzo surface of degree $7$ and $C\in\vert-K_S\vert$ be a smooth curve. We follow notation \ref{nota:del-Pezzo-deg7}.
\begin{itemize}
	\item[(i)] If $C$ contains a pseudo-Eckardt point, then
		\begin{equation}
			\alpha(S,(1-\beta)C)\leq\omega_4:=
					\begin{dcases}
							1 												&\text{ for } 0<\beta\leq \frac{1}{4},\\
							\frac{1+\beta}{5\beta}		&\text{ for } \frac{1}{4}\leq \beta\leq \frac{2}{3},\\
							\frac{1}{3\beta}					&\text{ for } \frac{2}{3}\leq \beta\leq 1.\\
					\end{dcases}
		\label{eq:del-Pezzo-dynamic-alpha-degree-7-special-upbound}
	\end{equation}
	\item[(ii)] If $C$ contains no pseudo-Eckardt point but $(C\cdot L_i)\vert_{r_i}=2$, for some $i=1,2$, then
		\begin{equation}
			\alpha(S,(1-\beta)C)\leq\omega_3:=
					\begin{dcases}
							1 												&\text{ for } 0<\beta\leq \frac{1}{4},\\
							\frac{1+2\beta}{6\beta}		&\text{ for } \frac{1}{4}\leq \beta\leq \frac{1}{2},\\
							\frac{1}{3\beta}					&\text{ for } \frac{1}{2}\leq \beta\leq 1.\\
					\end{dcases}
		\label{eq:del-Pezzo-dynamic-alpha-degree-7-special3-upbound}
	\end{equation}
	\item[(iii)] If $C$ contains no pseudo-Eckardt point, $(C\cdot L_i)\vert_{r_i}=1$, for both $i=1,2$ and $(R\cdot C)\vert_{r}=3$, then
		\begin{equation}
			\alpha(S,(1-\beta)C)\leq\omega_2:=
					\begin{dcases}
							1 												&\text{ for } 0<\beta\leq \frac{1}{4},\\
							\frac{1+3\beta}{7\beta}		&\text{ for } \frac{1}{4}\leq \beta\leq \frac{4}{9},\\
							\frac{1}{3\beta}					&\text{ for } \frac{4}{9}\leq \beta\leq 1.\\
					\end{dcases}
		\label{eq:del-Pezzo-dynamic-alpha-degree-7-special2-upbound}
	\end{equation} 
	\item[(iv)] If $C$ contains no pseudo-Eckardt point, $(C\cdot L_i)\vert_{r_i}=1$, for both $i=1,2$ and $(R\cdot C)\vert_{r}\leq 2$, then
		\begin{equation}
			\alpha(S,(1-\beta)C)\leq\omega_1:=
					\begin{dcases}
							1 												&\text{ for } 0<\beta\leq \frac{1}{3},\\
							\frac{1}{3\beta}					&\text{ for } \frac{1}{3}\leq \beta\leq 1.\\
					\end{dcases}
			\label{eq:del-Pezzo-dynamic-alpha-degree-7-generic-upbound}
		\end{equation}
\end{itemize} 
\end{lem}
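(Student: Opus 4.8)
The plan is to prove each of the four inequalities by producing one or two explicit effective anticanonical $\bbQ$-divisors $D\simq -K_S$ and invoking the elementary bound $\alpha(S,(1-\beta)C)\le \lct(S,(1-\beta)C,\beta D)$. Since $C$ is smooth we have $\lct(S,(1-\beta)C,\beta C)=1$, so it suffices to compute $\lct(S,(1-\beta)C,\beta D)$ for the auxiliary divisors and intersect the resulting constraints with $1$. All divisors I would use are supported on the special curves of Notation~\ref{nota:del-Pezzo-deg7}; using Corollary~\ref{cor:del-Pezzo-anticanonical} one checks at once that
$$3L+2E_1+2E_2\simq -K_S,\qquad 2L_1+2E_1+L\simq -K_S,\qquad 2R+L\simq -K_S,$$
all three effective. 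The single divisor $D_0=3L+2E_1+2E_2$ will handle cases (i) and (iv) simultaneously, the only difference being the position of $C$ relative to the pseudo-Eckardt points.

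For the threshold computations I would resolve $(S,(1-\beta)C+\lambda\beta D)$ by a chain of blow-ups centred at the point $p\in C$ where the bad behaviour concentrates, repeatedly applying Lemma~\ref{lem:log-pullback-preserves-lc}. The key book-keeping is as follows: if a component $B\subseteq\Supp(D)$ of coefficient $c$ in $D$ is tangent to $C$ to order $t$ at $p$ (i.e.\ $(B\cdot C)|_p=t$), while further components of total coefficient $s$ pass through $p$ transversally to $C$, then after $k\le t$ blow-ups the exceptional curve $F_k$ acquires discrepancy coefficient $a_k=(kc+s)\lambda\beta-k\beta$, and $t$ blow-ups are exactly what is needed to reach simple normal crossings. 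The constraints $a_k\le 1$, together with the component constraints $(\text{coeff})\cdot\lambda\beta\le 1$, determine $\lct(S,(1-\beta)C,\beta D)$.

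Applied to $D_0$: in case (iv), where $C$ meets $L,E_1,E_2$ transversally at three distinct non-pseudo-Eckardt points, the configuration is already simple normal crossings and the only binding constraint is $3\lambda\beta\le1$, giving $\min\{1,\tfrac{1}{3\beta}\}=\omega_1$. In case (i), where $C$ passes through $p=E_1\cap L$, the curves $C,L,E_1$ are concurrent and pairwise transversal, so one blow-up (with $c=0$, $s=5$, $t=1$) produces $a_1=5\lambda\beta-\beta$, whence $\min\{1,\tfrac{1+\beta}{5\beta},\tfrac{1}{3\beta}\}=\omega_4$. For case (ii) I would use $2L_1+2E_1+L$: here $L_1$ is tangent to $C$ to order $2$ at $r_1$ while $E_1$ passes through $r_1$ transversally ($c=2$, $s=2$, $t=2$), so $a_2=6\lambda\beta-2\beta$ gives $\min\{1,\tfrac{1+2\beta}{6\beta}\}$; intersecting this with the bound $\min\{1,\tfrac{1}{3\beta}\}$ coming from $D_0$ yields $\omega_3$. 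Case (iii) is analogous with $2R+L$: now $R$ is tangent to $C$ to order $3$ at $r$ and $L$ passes through $r$ transversally ($c=2$, $s=1$, $t=3$), giving $a_3=7\lambda\beta-3\beta$ and $\min\{1,\tfrac{1+3\beta}{7\beta}\}$, which combined with $D_0$ produces $\omega_2$.

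The main obstacle I anticipate is not any individual discrepancy computation but the verification that the stated $\omega_i$ is genuinely the minimum: one must confirm that every earlier exceptional discrepancy $a_1,\dots,a_{k-1}$ in the chain, every component-coefficient constraint, and every other point where $\Supp(D)$ meets $C$ (and where the components meet each other, e.g.\ at the pseudo-Eckardt points lying off $C$) imposes a constraint no stronger than the claimed one. These comparisons reduce to elementary inequalities in $\beta$ on $(0,1]$. A secondary point needing care is the incidence geometry underlying the resolutions --- that $E_1\cap L_1=r_1$, that $L$ (respectively $R$) and its transversal partner are indeed transversal to $C$ and to each other at the relevant point, and that the auxiliary curves meet $C$ only where asserted --- all of which follow from the intersection numbers recorded in Notation~\ref{nota:del-Pezzo-deg7} together with the genus formula (Lemma~\ref{lem:genus-formula}).
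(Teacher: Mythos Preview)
Your proposal is correct and follows essentially the same approach as the paper: both use the test divisors $3L+2E_1+2E_2$, $2L_1+2E_1+L$, and $2R+L$, compute $\lct(S,(1-\beta)C,\beta D)$ via explicit log resolutions at the distinguished point, and combine the resulting bounds to obtain $\omega_i$. Your book-keeping formula $a_k=(kc+s)\lambda\beta-k\beta$ is a clean way to package what the paper writes out case by case, and your observation that cases (ii) and (iii) require intersecting with the bound from $D_0$ to pick up the $\tfrac{1}{3\beta}$ piece is exactly what the paper does.
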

\begin{proof}
First observe that $\omega_4\leq\omega_3\leq\omega_2\leq\omega_1$ which will explain the statement logic once we find an effective $\bbQ$-divisor $D\simq -K_S$for each case, such that $\lct(S, (1-\beta)C, \beta D)=\omega_i$.

Suppose $C$ has a pseudo-Eckardt point $p$. Without loss of generality let $p=E_1\cap L$. Observe that $3L+2E_1+2E_2\sim-K_S$ and let $f\colon \widetilde S \ra S$ be the minimal log resolution of the pair $(S,(1-\beta)C+\lambda\beta(3L+2E_1+2E_2))$, consisting of the blow-up of $p$ with exceptional divisor $F_1$. The log pullback is
$$f^*(K_S+(1-\beta)C+\lambda\beta( 3L+2E_1+2E_2))\simq K_{\widetilde S}+(1-\beta)\widetilde C +\lambda\beta (3\widetilde L + 2\widetilde E_1 + 2\widetilde E_2) + (5\lambda\beta-\beta)F_1.$$
Therefore
\begin{align*}
\alpha(S,(1-\beta)C)&\leq\min\left\{\lct\left(S,\left(1-\beta\right)C,\beta C\right),\lct\left(S,\left(1-\beta\right)C,\beta\left(3L+2E_1+2E_2\right)\right)\right\}\\
&=\min\left\{1,\frac{1}{2\beta},\frac{1}{3\beta},\frac{1+\beta}{5\beta}\right\}=\omega_4.
\end{align*}

Suppose $C$ contains no pseudo-Eckardt point. Since $C\cdot L= C\cdot E_i=L\cdot E_i=1$  and $E_1\cdot E_2=0$, the pair
$$(S, (1-\beta)C + \lambda\beta (3L+2E_1+2E_2))$$
has simple normal crossings. Hence
\begin{align*}
\alpha(S,(1-\beta)C)&\leq\min\left\{\lct\left(S,\left(1-\beta\right)C,\beta C\right),\lct\left(S,\left(1-\beta\right)C,\beta\left(3L+2E_1+2E_2\right)\right)\right\}\\
&=\min\left\{1,\frac{1}{2\beta},\frac{1}{3\beta},\right\}=\omega_1.
\end{align*}

Suppose $(C\cdot L_i)\vert_{p_i}=2$ for some $i=1,2$. Without loss of generality assume $i=1$. Observe that $2L_1+2E_1+L\sim-K_S$ and let $f\colon \widetilde S \ra S$ be the minimal log resolution of the pair $(S,(1-\beta)C+\lambda\beta(2L_1+2E_1+L))$, consisting of two consecutive blow-ups of $r_1$ with exceptional divisors $F_1, F_2$. The log pullback is
$$f^*(K_S+(1-\beta)C+\lambda\beta (2L_1+2E_1+L))\simq K_{\widetilde S}+(1-\beta)\widetilde C +\lambda\beta (2\widetilde L_1 + 2\widetilde E_1 + \widetilde L) + (4\lambda\beta-\beta)F_1+(6\lambda\beta-2\beta)F_2.$$
Therefore
\begin{align*}
&\alpha(S,(1-\beta)C)\\
\leq&\min\{\lct\left(S,\left(1-\beta\right)C,\beta C\right),\lct\left(S,\left(1-\beta\right)C,\beta\left(3L+2E_1+2E_2\right)\right),\\
& \qquad\qquad \lct\left(S,\left(1-\beta\right)C,\beta\left(2L_1+2E_1+L\right)\right)\}\\
=&\min\left\{1,\frac{1}{2\beta},\frac{1}{3\beta},\frac{1+\beta}{5\beta}, \frac{1+2\beta}{6\beta}, \frac{1+\beta}{4\beta}\right\}=\omega_3.
\end{align*}

Finally, suppose $(R\cdot C)\vert_{p_i}=3$. Observe that $L+2R\sim-K_S$ and let $f\colon \widetilde S \ra S$ be the minimal log resolution of the pair $(S,(1-\beta)C+\lambda\beta(L+2R))$, consisting of three consecutive blow-ups of $r$ with exceptional divisors $F_1, F_2, F_3$. The log pullback is
$$f^*(K_S+(1-\beta)C+\lambda\beta (L+2R))\simq K_{\widetilde S}+(1-\beta)\widetilde C +\lambda\beta (\widetilde L+2\widetilde R) + (3\lambda\beta-\beta)F_1+(5\lambda\beta-2\beta)F_2+(7\lambda\beta-3\beta)F_3.$$
Therefore
\begin{align*}
&\alpha(S,(1-\beta)C)\\
\leq&\min\{\lct\left(S,\left(1-\beta\right)C,\beta C\right),\lct\left(S,\left(1-\beta\right)C,\beta\left(3L+2E_1+2E_2\right)\right),\\
&\qquad\qquad \lct\left(S,\left(1-\beta\right)C,\beta\left(L+2R\right)\right)\}\\
=&\min\left\{1,\frac{1}{2\beta},\frac{1}{3\beta},\frac{1+\beta}{3\beta},\frac{1+2\beta}{5\beta},\frac{1+3\beta}{7\beta}\right\}=\omega_2.
\end{align*}
\end{proof}

\begin{thm}
\label{thm:del-Pezzo-dynamic-alpha-degree-7}

Let $S$ be the non-singular del Pezzo surface of degree $7$ and $C\in\vert-K_S\vert$ be a smooth curve. We follow notation \ref{nota:del-Pezzo-deg7}.
\begin{itemize}
	\item[(i)] If $C$ contains a pseudo-Eckardt point, then
		\begin{equation}
			\alpha(S,(1-\beta)C)=\omega_4:=
					\begin{dcases}
							1 												&\text{ for } 0<\beta\leq \frac{1}{4},\\
							\frac{1+\beta}{5\beta}		&\text{ for } \frac{1}{4}\leq \beta\leq \frac{2}{3},\\
							\frac{1}{3\beta}					&\text{ for } \frac{2}{3}\leq \beta\leq 1.\\
					\end{dcases}
		\label{eq:del-Pezzo-dynamic-alpha-degree-7-special}
	\end{equation}
	\item[(ii)] If $C$ contains no pseudo-Eckardt point but $(C\cdot L_i)\vert_{r_i}=2$, for some $i=1,2$, then
		\begin{equation}
			\alpha(S,(1-\beta)C)=\omega_3:=
					\begin{dcases}
							1 												&\text{ for } 0<\beta\leq \frac{1}{4},\\
							\frac{1+2\beta}{6\beta}		&\text{ for } \frac{1}{4}\leq \beta\leq \frac{1}{2},\\
							\frac{1}{3\beta}					&\text{ for } \frac{1}{2}\leq \beta\leq 1.\\
					\end{dcases}
		\label{eq:del-Pezzo-dynamic-alpha-degree-7-special3}
	\end{equation}
	\item[(iii)] If $C$ contains no pseudo-Eckardt point, $(C\cdot L_i)\vert_{r_i}=1$, for both $i=1,2$ and $(R\cdot C)\vert_{r}=3$, then
		\begin{equation}
			\alpha(S,(1-\beta)C)=\omega_2:=
					\begin{dcases}
							1 												&\text{ for } 0<\beta\leq \frac{1}{4},\\
							\frac{1+3\beta}{7\beta}		&\text{ for } \frac{1}{4}\leq \beta\leq \frac{4}{9},\\
							\frac{1}{3\beta}					&\text{ for } \frac{4}{9}\leq \beta\leq 1.\\
					\end{dcases}
		\label{eq:del-Pezzo-dynamic-alpha-degree-7-special2}
	\end{equation} 
	\item[(iv)] If $C$ contains no pseudo-Eckardt point, $(C\cdot L_i)\vert_{r_i}=1$, for both $i=1,2$ and $(R\cdot C)\vert_{r}\leq 2$, then
		\begin{equation}
			\alpha(S,(1-\beta)C)=\omega_1:=
					\begin{dcases}
							1 												&\text{ for } 0<\beta\leq \frac{1}{3},\\
							\frac{1}{3\beta}					&\text{ for } \frac{1}{3}\leq \beta\leq 1.\\
					\end{dcases}
			\label{eq:del-Pezzo-dynamic-alpha-degree-7-generic}
		\end{equation}
\end{itemize}
\end{thm}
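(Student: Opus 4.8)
The upper bounds $\alpha(S,(1-\beta)C)\le\omega_i$ are supplied by Lemma \ref{lem:del-Pezzo-dynamic-alpha-degree-7}, so in each case it remains to prove the reverse inequality. The plan is to argue by contradiction in the usual way: if $\alpha(S,(1-\beta)C)<\omega_i$ there is an effective $\bbQ$-divisor $D\simq-K_S$ and some $\lambda<\omega_i$ for which the pair $(S,(1-\beta)C+\lambda\beta D)$ fails to be log canonical at a point $p$. First I would record the monotonicity $\omega_4\le\omega_3\le\omega_2\le\omega_1$ and check that $\lambda\beta<\omega_1\beta\le\frac{1}{3}=\glct(S)$ over every $\beta$-range (the elementary case-check already used for $\bbF_1$, relying on Theorem \ref{thm:del-Pezzo-glct-charp}). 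Then Lemma \ref{lem:pairs-fixed-boundary-lcs} forces $p\in C$, makes the pair log canonical in codimension $1$, and makes $p$ an isolated non-log-canonical point. Because $\omega_4\le\cdots\le\omega_1$, it suffices to prove a local statement at each \emph{type} of point of $C$ and then take the minimum: the value drops to $\omega_4,\omega_3,\omega_2$ exactly at a pseudo-Eckardt point $E_i\cap L$, at a tangency point $r_i=E_i\cap C$ with $(C\cdot L_i)\vert_{r_i}=2$, and at a flex point $r=L\cap C$ with $(R\cdot C)\vert_r=3$ respectively, and equals $\omega_1$ at every remaining point.

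At each such \emph{anchor} point I would run the standard local argument. Using Notation \ref{nota:del-Pezzo-deg7}, take the auxiliary anticanonical divisor $T$ that produced the corresponding upper bound in Lemma \ref{lem:del-Pezzo-dynamic-alpha-degree-7} --- namely $3L+2E_1+2E_2$, $2L_1+2E_1+L$, or $L+2R$ --- for which $(S,(1-\beta)C+\omega_i\beta T)$ is log canonical at $p$. Applying log-convexity (Lemma \ref{lem:log-convexity}) with fixed boundary $(1-\beta)C$ lets me assume one component of $T$ is not contained in $\Supp(D)$; intersecting that line or conic with $D$ bounds $\mult_pD$ together with the relevant tangential local intersection. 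I would then feed these bounds into Theorem \ref{thm:inequality-local-blowup-bound}, choosing $n$ equal to the denominator $5,6,7$ appearing in $\omega_4,\omega_3,\omega_2$ (for the pseudo-Eckardt configuration I would instead use Theorem \ref{thm:inequality-Cheltsov} on the two lines $E_i$ and $L$, exactly as in the degree-$4$ computation), to obtain a lower bound on $(C\cdot D)\vert_p$ that contradicts $\lambda<\omega_i$. For the generic type this yields $\omega_1$: through $p$ one has the smooth conic $M_i\sim\pioplane{1}-E_i$ with $M_i\cdot D=2$, whence $\mult_pD\le2$, and the local inequality closes the argument.

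For a point $p\in C$ that is \emph{not} an anchor point I would reduce to the previous paragraph by a blow-up/blow-down, mirroring Step~4 of the proof of Theorem \ref{thm:del-Pezzo-dynamic-alpha-degree-8-F1}. Blowing up $p$ gives a del Pezzo surface $\widetilde S$ of degree $6$ and, by Lemma \ref{lem:log-pullback-preserves-lc}, a non-log-canonical point $q$ on the exceptional curve; contracting a suitable $(-1)$-curve of $\widetilde S$ (Lemma \ref{lem:del-pezzo-blowdown-dP}) returns a degree-$7$ del Pezzo surface $\bar S$, isomorphic to $S$ by Lemmas \ref{lem:del-Pezzo-no-moduli} and \ref{lem:del-Pezzo-uniquemodel}. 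Since $\lambda<1$ the exceptional coefficient can be absorbed using Lemma \ref{lem:disc-monotonous}, transporting the pair to $(\bar S,(1-\beta)\bar C+\lambda\beta\bar D')$ with its non-log-canonical point now lying on the anchor line $\bar E$ of $\bar S$ and with the local intersection behaviour of $C$ along the lines preserved. The anchor analysis then applies and produces the required contradiction, with the surviving tangency data selecting the correct $\omega_i$.

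The main obstacle I anticipate is the flex case $(R\cdot C)\vert_r=3$ giving $\omega_2$ (denominator $7$): here the triple tangency means the minimal log resolution needs three infinitely-near blow-ups, so I must control how the non-log-canonical point migrates along the exceptional chain. This is where Theorem \ref{thm:pseudo-inductive-blow-up} and the ``number of blow-ups'' clause of Theorem \ref{thm:inequality-local-blowup-bound} do the real work, and where I must verify that after subtracting the appropriate multiples of $L$ and $R$ via Lemma \ref{lem:log-convexity} the residual divisor still obstructs log canonicity. A secondary, purely bookkeeping difficulty is confirming that each auxiliary divisor remains log canonical relative to the fixed boundary $(1-\beta)C$, and that the blow-down lands on the anchor with the correct local intersection multiplicity; these verifications are of the type collected in Appendix \ref{app:lcts}.
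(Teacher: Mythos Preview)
Your setup (upper bounds from Lemma~\ref{lem:del-Pezzo-dynamic-alpha-degree-7}, the inequality $\lambda\beta<\glct(S)=\tfrac13$, Lemma~\ref{lem:pairs-fixed-boundary-lcs} forcing $p\in C$) matches the paper, and your pseudo-Eckardt treatment via Theorem~\ref{thm:inequality-Cheltsov} is exactly the alternative proof the paper records in Remark~\ref{rmk:delPezzo-dynamic-alpha-degree-7-pseudo-Eckardt}. But from there on your route diverges from the paper's in two substantial ways, and one of them contains a real gap.

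First, the paper does \emph{not} do a local analysis on $S$ for cases (i) and (ii). Instead it contracts a line to a \emph{lower}-degree del Pezzo: for $q$ away from the pseudo-Eckardt point one contracts $L$ to land on $\bbP^1\times\bbP^1$, and for $q$ equal to the pseudo-Eckardt point one contracts $E_2$ to land on $\bbF_1$; the already-computed $\alpha$-invariants (Theorems~\ref{thm:del-Pezzo-dynamic-alpha-degree-8-F0} and~\ref{thm:del-Pezzo-dynamic-alpha-degree-8-F1}) then give the contradiction directly. This is cleaner than a local argument and reuses earlier work.

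Second, and more seriously, your scheme of applying Theorem~\ref{thm:inequality-local-blowup-bound} with a single $n$ does not close the remaining cases. The hypothesis $\mult_p(\lambda\beta D)\le \tfrac1n+\beta$ becomes restrictive as $n$ grows, and with only the crude bound $\mult_pD\le 2$ from a conic it already fails on the interval $\tfrac14<\beta\le\tfrac13$ for $n=4$; the arithmetic does not match the piecewise form of the $\omega_i$. The paper's substitute is a careful four-step blow-up tracked by Theorem~\ref{thm:pseudo-inductive-blow-up} (Lemmas~\ref{lem:del-Pezzo-dynamic-alpha-degree-7-proof-in-line}, \ref{lem:del-Pezzo-dynamic-alpha-degree-7-L}, \ref{lem:del-Pezzo-dynamic-alpha-degree-7-proof-case1-L}), where one writes $D=aE_1+bL_1+\Omega$ (or $D=aL+\Omega$, or uses auxiliary cubics $H,H_1,H_2$), bounds $a,b,x_i$ via several intersections, and verifies the running inequality $\lambda\beta\big(\sum_{j<i}m_j\big)-(i{-}1)\beta\le 1$ at each stage before finishing with Lemma~\ref{lem:adjunction}(iii) on $C^i$. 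Your blow-up/blow-down idea cannot avoid this: blowing up a point off all lines and contracting some $\widetilde E_j$ lands $\bar q$ on the new line $\bar F_1$, but there is no reason $\bar q$ is a pseudo-Eckardt, tangency, or flex point of $\bar C$, so you are thrown back precisely onto the ``generic point of a line'' case that the paper handles by the iterated blow-up calculus and that your sketch leaves open.
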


In the rest of this section we prove Theorem \ref{thm:del-Pezzo-dynamic-alpha-degree-7} by case analysis.

First observe that by Lemma \ref{lem:del-Pezzo-dynamic-alpha-degree-7}, $\alpha(S,(1-\beta)C)\leq \omega_i$. Suppose $\alpha(S,(1-\beta)C)<\omega_i$. Then there is an effective $\bbQ$-divisor $D\simq-K_S$ such that the pair
\begin{equation}
(S,(1-\beta)C+\lambda\beta D)
\label{eq:del-Pezzo-dynamic-alpha-degree-7-proof-pair}
\end{equation}
is not log canonical at some $q=q_0\in S$ for $\lambda<\omega_i$ in each case.

Observe that
\begin{equation}
\omega_4\leq\omega_3\leq\omega_2\leq\omega_1
\label{eq:del-Pezzo-dynamic-alpha-degree-7-proof-thresholds-ineq}
\end{equation}

\begin{lem}
\label{lem:del-Pezzo-dynamic-alpha-degree-7-proof-in-C}
One has that the point $q\in C$, the pair $(S,(1-\beta)C+\lambda\beta D)$ is log canonical in codimension $1$ and $\lambda\omega_i<\frac{1}{3}$.
\end{lem}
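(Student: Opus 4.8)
The plan is to derive all three assertions from a single application of Lemma \ref{lem:pairs-fixed-boundary-lcs}, whose hypotheses will be satisfied once the numerical bound $\lambda\beta\le\glct(S)$ is verified. Recall that $S$ is a non-singular del Pezzo surface with $K_S^2=7$, so $\glct(S)=\frac13$ by Theorem \ref{thm:del-Pezzo-glct-charp}, and that $C\in\vert-K_S\vert$ is smooth, hence a non-singular, irreducible and reduced anticanonical divisor. The divisor $D\simq-K_S$ is effective and, by assumption, the pair $(S,(1-\beta)C+\lambda\beta D)$ fails to be log canonical at $q=q_0$ for some $\lambda<\omega_i$, with $0<\beta\le 1$. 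Thus Lemma \ref{lem:pairs-fixed-boundary-lcs} applies, with $\Delta=C$, provided we check that $0<\lambda\le 1$ and $\lambda\beta\le\frac13$.

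First I would check $\lambda\le 1$. Since $\omega_4\le\omega_3\le\omega_2\le\omega_1$ by \eqref{eq:del-Pezzo-dynamic-alpha-degree-7-proof-thresholds-ineq}, it suffices to note that $\omega_1\le 1$, which is immediate from its definition: $\omega_1=1$ for $\beta\le\frac13$, and $\omega_1=\frac{1}{3\beta}\le 1$ for $\frac13\le\beta\le1$. Hence $\lambda<\omega_i\le\omega_1\le 1$.

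The key step is the inequality $\omega_i\beta\le\frac13$, which I would establish by a direct case-by-case inspection of each piece of the piecewise definitions of $\omega_1,\dots,\omega_4$. On each initial interval one has $\omega_i\beta=\beta$, bounded by the right endpoint of that interval, namely $\frac13$ for $\omega_1$ and $\frac14$ for $\omega_2,\omega_3,\omega_4$; on each terminal interval $\omega_i\beta=\frac13$ identically; and on each intermediate interval $\omega_i\beta$ equals $\frac{1+3\beta}{7}$, $\frac{1+2\beta}{6}$ or $\frac{1+\beta}{5}$ respectively, a linear increasing function of $\beta$ taking the value $\frac14$ at the left endpoint and $\frac13$ at the right endpoint. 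In every case $\omega_i\beta\le\frac13$, whence the required bound $\lambda\beta<\omega_i\beta\le\frac13=\glct(S)$.

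With $0<\lambda\le 1$, $0<\beta\le 1$ and $\lambda\beta\le\glct(S)$ all verified, Lemma \ref{lem:pairs-fixed-boundary-lcs} yields at once that $q\in C$ and that $(S,(1-\beta)C+\lambda\beta D)$ is log canonical in codimension $1$, which is exactly the statement. I expect no real obstacle here: the only genuine work is the elementary verification of $\omega_i\beta\le\frac13$ on each interval, and the structural content is entirely borrowed from Lemma \ref{lem:pairs-fixed-boundary-lcs}.
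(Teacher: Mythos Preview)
Your proposal is correct and follows essentially the same approach as the paper: verify $\lambda\beta<\omega_i\beta\le\tfrac13=\glct(S)$ and invoke Lemma~\ref{lem:pairs-fixed-boundary-lcs}. The only difference is one of efficiency: the paper checks $\omega_1\beta\le\tfrac13$ directly and then uses the chain $\omega_4\le\omega_3\le\omega_2\le\omega_1$ from \eqref{eq:del-Pezzo-dynamic-alpha-degree-7-proof-thresholds-ineq} to cover all cases at once, whereas you inspect each $\omega_i$ piecewise---both are fine.
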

\begin{proof}
First we claim that
\begin{equation}
\lambda\beta<\omega_i\beta\leq \frac{1}{3}=\glct(S)
\label{eq:del-Pezzo-dynamic-alpha-degree-7-proof-inequality0}
\end{equation}
where the last equality comes from Theorem \ref{thm:del-Pezzo-glct-charp}.
If $\lambda<\omega_1$, then inequality \eqref{eq:del-Pezzo-dynamic-alpha-degree-7-proof-inequality0} is trivial. For the rest $\omega_i$ it follows from \eqref{eq:del-Pezzo-dynamic-alpha-degree-7-proof-thresholds-ineq}.

By Lemma \ref{lem:pairs-fixed-boundary-lcs} the pair \eqref{eq:del-Pezzo-dynamic-alpha-degree-7-proof-pair} is log canonical in codimension $1$ and not log canonical at some isolated $q\in C$.
\end{proof}

Let $f_1\colon S_1\ra S$ be the blow-up of $q$ with exceptional divisor $F_1$. Let $A^1$ be the strict transform of any $\bbQ$-divisor $A$ in $S$. By Lemma \ref{lem:log-pullback-preserves-lc} the pair
\begin{equation}
(S_1, (1-\beta) C^1 + \lambda\beta D^1 + (\lambda\beta\mult_qD - \beta)F_1)
\label{eq:del-Pezzo-dynamic-alpha-degree-7-proof-bad-pair-blowup}
\end{equation}
is not log canonical at some point $t_1\in F_1$. Let $m_1=\mult_{t_1}D$ and $m=m_0=\mult_qD$.

\begin{lem}
\label{lem:del-Pezzo-dynamic-alpha-degree-7-proof-bad-pair-blowup-inC}
If $\mult_qD\leq 3$, then
$$(S_1, (1-\beta) C^1 + \lambda\beta D^1 + (\lambda\beta\mult_qD - \beta)F_1)$$
is log canonical in codimension $1$ and not log canonical at $t_1=q_1:=C^1\cap F_1$.
\end{lem}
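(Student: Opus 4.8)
The plan is to prove the lemma by applying the machinery of Theorem \ref{thm:pseudo-inductive-blow-up} (the pseudo-inductive blow-up result), specialised to the case $\lambda\beta\le\frac{1}{3}$. The setup is exactly that of Theorem \ref{thm:pseudo-inductive-blow-up} with $i=1$: we start from the pair $(S_0,(1-\beta)C^0+\lambda\beta D^0)$ of \eqref{eq:del-Pezzo-dynamic-alpha-degree-7-proof-pair}, which by Lemma \ref{lem:del-Pezzo-dynamic-alpha-degree-7-proof-in-C} is not log canonical at the isolated point $q\in C$ and is log canonical in codimension $1$ near $q$. The blow-up $f_1$ with exceptional divisor $F_1$ produces the pair \eqref{eq:del-Pezzo-dynamic-alpha-degree-7-proof-bad-pair-blowup}, which by Lemma \ref{lem:log-pullback-preserves-lc} is not log canonical at some $t_1\in F_1$. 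This is precisely conclusion (i) of Theorem \ref{thm:pseudo-inductive-blow-up} for $i=1$.

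First I would verify the two numerical conditions needed to invoke part (ii) of Theorem \ref{thm:pseudo-inductive-blow-up}. Condition (ii) for $i=1$ requires $\lambda\beta m_0-\beta\le 1$, which will guarantee that \eqref{eq:del-Pezzo-dynamic-alpha-degree-7-proof-bad-pair-blowup} is log canonical \emph{along} $F_1$ (log canonical in codimension $1$) by Lemma \ref{lem:adjunction} (ii). Using the hypothesis $\mult_qD=m_0\le 3$ together with \eqref{eq:del-Pezzo-dynamic-alpha-degree-7-proof-inequality0}, namely $\lambda\beta\le\frac{1}{3}$, I compute $\lambda\beta m_0-\beta\le \frac{1}{3}\cdot 3-\beta=1-\beta\le 1$, so the codimension-$1$ assertion follows at once.

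It remains to show $t_1=q_1:=C^1\cap F_1$. This is the content of conclusion (ii) of Theorem \ref{thm:pseudo-inductive-blow-up}: if $r_1\not\in C^1$ then the pair $(S_1,\lambda\beta D^1+(\lambda\beta m_0-\beta)F_1)$ is not log canonical at $r_1$, and applying Lemma \ref{lem:adjunction} (iii) with $F_1$ gives $1<\lambda\beta D^1\cdot F_1=\lambda\beta m_0\le\frac{1}{3}\cdot 3=1$, a contradiction. Hence $t_1\in C^1\cap F_1$, and since $C^0$ is smooth at $q$ we have $C^1\cdot F_1=1$, so this intersection is the single point $q_1$. I should double-check that the hypothesis $\lambda\beta m_0\le 1$ required in part (ii) holds: indeed $\lambda\beta m_0\le\frac{1}{3}\cdot 3=1$, so the contradiction argument is valid.

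The main (and only mild) obstacle will be bookkeeping: ensuring that the specialisation $A^1\mapsto D^1, C^1, F_1$ and the coefficients match exactly the hypotheses of Theorem \ref{thm:pseudo-inductive-blow-up} with $\lambda\le 1$, $0<\beta\le 1$, and that the bound $m_0\le 3$ is used in precisely the two places where it is needed (the codimension-$1$ claim and the isolation of $t_1$ at $q_1$). Since all the hard inductive work is already encapsulated in Theorem \ref{thm:pseudo-inductive-blow-up} and Lemma \ref{lem:adjunction}, the proof reduces to checking these two elementary inequalities; there is no genuinely difficult step, only a careful invocation of the earlier results.
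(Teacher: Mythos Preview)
Your proposal is correct and essentially identical to the paper's approach: the paper argues directly (assume $t_1\notin C^1$, drop $(1-\beta)C^1$, and use Lemma~\ref{lem:adjunction} with $F_1$ to get $1<\lambda\beta D^1\cdot F_1=\lambda\beta\mult_qD\le 1$), while you route the same argument through Theorem~\ref{thm:pseudo-inductive-blow-up}(ii) for $i=1$, whose proof is verbatim this computation. Your care in checking both $\lambda\beta m_0-\beta\le 1$ and $\lambda\beta m_0\le 1$ is appropriate, as both are needed.
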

\begin{proof}
Suppose $t_1\not\in C^1$, then the pair
$$(S_1, \lambda\beta D^1 + (\lambda\beta\mult_qD - \beta)F_1)$$
is not log canonical at some $t_1\in F_1$. Since $\lambda\beta\mult_qD\leq 1$, then the locus of log canonical singularities of the pair consists of isolated points. By Lemma \ref{lem:adjunction} (i) we have
$$1\geq\lambda\beta\mult_qD=\lambda\beta D^1\cdot F_1>1,$$
a contradiction.
\end{proof}

For the rest of the proof of Theorem \ref{thm:del-Pezzo-dynamic-alpha-degree-7} we rule out the different positions of $q$, the point at which
$$(S, (1-\beta)C + \lambda\beta D )$$
is not log canonical. We will apply Theorem \ref{thm:pseudo-inductive-blow-up} several times on successive blow-ups of $q$ and points over $q$. 

Let $S_0=S$, $C_0=C$, $D_0=D$ and $q_0=q$ as in Theorem \ref{thm:pseudo-inductive-blow-up}. Let $i\geq 1$ and let $f_i\colon S_i\ra S_{i-1}$ be the blow-up of the point $q_{i-1}=C^i\cap F_{i-1}$ with exceptional curve $F_i$. Let $A^{i-1}$ or $A$ be any $\bbQ$-divisor in $S_{i-1}$. We will denote its strict transform in $S^i$ by $A^i$. Let $m_i=\mult_{q_i} D_i$. Recall from \eqref{eq:del-Pezzo-dynamic-alpha-degree-7-proof-bad-pair-blowup} that the pair
$$(S_1, (1-\beta)C^1 + \lambda\beta D^1 + (\lambda\beta m_0-\beta)F_1)$$
is not log canonical at some $t_1\in F_1$.

\begin{lem}
\label{lem:del-Pezzo-dynamic-alpha-degree-7-proof-in-line}
The point $q\in E_1\cup E_2\cup L$.
\end{lem}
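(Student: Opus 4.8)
The plan is to argue by contradiction: suppose $q=q_0$ lies on none of $E_1, E_2, L$. By Lemma \ref{lem:del-Pezzo-lines9-d} these are the only three lines of $S$, so $q$ then lies on no line, $\pi$ is an isomorphism near $q$, and $\pi(q)$ lies off the line $\overline{p_1p_2}$. Recall from Lemma \ref{lem:del-Pezzo-dynamic-alpha-degree-7-proof-in-C} that $q\in C$, that \eqref{eq:del-Pezzo-dynamic-alpha-degree-7-proof-pair} is log canonical in codimension $1$, and that $\lambda\beta<\tfrac13=\glct(S)$. The first normalisation is to remove $C$: since $C$ is smooth, $(S,(1-\beta)C+\lambda\beta C)=(S,(1-\beta+\lambda\beta)C)$ is log canonical because $\lambda\le 1$, so by log-convexity (Lemma \ref{lem:log-convexity}) with fixed boundary $(1-\beta)C$ and $\lambda\beta C\simq\lambda\beta D$ I may assume $C\not\subseteq\Supp(D)$; hence $(C\cdot D)\vert_q\le C\cdot D=K_S^2=7$.

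Next I would bound $\mult_q D$. The cubics $R\sim\pioplane{1}$ through $q$ are the strict transforms of lines in $\bbP^2$ through $\pi(q)$; they form a pencil whose general member is irreducible (a line avoiding $p_1,p_2$), so such an $R$ can be chosen with $R\not\subseteq\Supp(D)$, giving $\mult_q D\le R\cdot D=-K_S\cdot R=3$ and $\mult_q(\lambda\beta D)=\lambda\beta\mult_q D\le 3\lambda\beta<1$. A sharper bound is available via the two conics $B_i\in|\pioplane{1}-E_i|$ through $q$ (irreducible since $\pi(q)\notin\overline{p_1p_2}$), which are smooth and meet transversally at $q$ ($B_1\cdot B_2=1$): taking a cubic $R'\sim\pioplane{1}$ with $q\notin R'$, the divisor $T:=B_1+B_2+R'\sim-K_S$ has only the branches $B_1,B_2$ at $q$, so $(S,(1-\beta)C+\lambda\beta T)$ is log canonical at $q$ (the coefficients are small, $\lambda\beta<\tfrac13$). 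Applying log-convexity again against the non-log-canonical pair \eqref{eq:del-Pezzo-dynamic-alpha-degree-7-proof-pair}, whenever the component expelled from $\Supp(D)$ is a conic one obtains the improved bound $\mult_q D\le B_i\cdot D=2$.

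Finally, with $C\not\subseteq\Supp(D)$ smooth at $q$ and $\mult_q(\lambda\beta D)<1$, I would feed the pair into the local inequality Theorem \ref{thm:inequality-local-blowup-bound} (equivalently, run the blow-up tower of Theorem \ref{thm:pseudo-inductive-blow-up} along $C$, tracking $q_i=C^i\cap F_i$ and the decreasing multiplicities $m_0\ge m_1\ge\cdots$). For each of the four ranges of $\beta$ I would pick $n$ as large as the condition $\lambda\beta\mult_q D\le\min\{1,\tfrac1n+\beta\}$ allows, obtaining $(C\cdot D)\vert_q>\tfrac{1+n\beta}{\lambda\beta}$, and then contradict $(C\cdot D)\vert_q\le 7$ using $\lambda<\omega_i$.

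The hard part is exactly this last numerical step. Near the transition values — for instance $\beta\approx\tfrac13$ with $\lambda$ close to $\omega_i=1$ — even the bound $\mult_q D\le 2$ is not sharp enough to force $\tfrac{1+n\beta}{\lambda\beta}>7$, so one cannot close the estimate from the multiplicity at $q$ alone. The resolution must come from the finer data carried by the blow-up tower: the successive multiplicities $m_i$ together with the fact (built into Theorem \ref{thm:pseudo-inductive-blow-up}) that each $C^i$ meets the new exceptional curve $F_i$ transversally at the unique point where log canonicity fails. Extracting enough from these iterated local intersections to push $(C\cdot D)\vert_q$ above $7$ in every sub-range, rather than the geometry of locating $q$, is where the genuine work of this lemma lies.
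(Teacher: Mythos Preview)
Your setup is right and you have correctly located the difficulty: bounding $\mult_q D$ alone (even by $2$) is not enough to close the argument via Theorem~\ref{thm:inequality-local-blowup-bound}, and the real work is in controlling the partial sums $m_0+m_1,\ m_0+m_1+m_2,\ldots$ that feed into Theorem~\ref{thm:pseudo-inductive-blow-up}. But you stop exactly where the key idea is needed.

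What you are missing is the choice of auxiliary curve through both $q_0$ \emph{and} $q_1=C^1\cap F_1$. Among the pencil of cubics $R\sim\pioplane{1}$ through $q_0$, there is a unique one, call it $H$, whose strict transform $H^1$ passes through $q_1$; equivalently, $H$ is tangent to $C$ at $q_0$, so $(H\cdot C)\vert_{q_0}\ge 2$. This curve may be irreducible or may split as $H=H_i+E_i$ with $H_i\sim\pioplane{1}-E_i$ the conic satisfying $(H_i\cdot C)\vert_{q_0}=2$. In either case one forms an anticanonical divisor (e.g.\ $H+H_1+H_2$ or $2H_1+H_2+E_1$), checks via Lemmas~\ref{lem:appendix-delPezzo-deg7-dynamic-lct1}--\ref{lem:appendix-delPezzo-deg7-dynamic-lct3} that the corresponding pair is log canonical, and uses log-convexity. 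If the expelled component is $H$ (resp.\ $H_1$), the fact that its strict transform passes through $q_1$ yields $m_0+m_1\le 3$ (resp.\ $m_0+m_1\le 2$), not merely a bound on $m_0$. If instead $H$ (or $H_1$) stays in $\Supp(D)$, one writes $D=aH+\Omega$ and bounds the coefficient $a$ by intersecting with a line such as $L$ or $E_1$.

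These bounds on $m_0+m_1$ (and, by $m_j\le m_{j-1}$, on higher sums) are precisely what allow the hypotheses of Theorem~\ref{thm:pseudo-inductive-blow-up} parts (ii)--(iv) to be verified at each of four successive blow-ups. After the fourth blow-up, Lemma~\ref{lem:adjunction}~(iii) with $C^4$ gives $7\lambda\beta-4\beta>1$, contradicting $7\omega_1\beta-4\beta\le 1$. Your divisor $B_1+B_2+R'$ does not serve this purpose: neither $B_i$ is chosen to be tangent to $C$, so even when a conic is expelled you only control $m_0$; and the expelled component may be $R'$, which misses $q_0$ entirely.
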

\begin{proof}
Suppose $q$ is not contained in any $(-1)$-curve. First observe it is enough to show that the pair $(S,(1-\beta)C + \lambda\beta D)$ is log canonical at $q$ for $\lambda<\omega_1$, by \eqref{eq:del-Pezzo-dynamic-alpha-degree-7-proof-thresholds-ineq}.

Let $Z\sim\pioplane{1}$ be the strict transform of a general line in $\bbP^2$ through $\pi(q)$. Then $Z\cdot (-K_S)=3$ and $Z\not\subseteq\Supp(D)$, so
\begin{equation}
3=Z\cdot D\geq\mult_qD.
\label{eq:del-Pezzo-dynamic-alpha-degree-7-proof-multBound}
\end{equation}
By Lemma \ref{lem:del-Pezzo-dynamic-alpha-degree-7-proof-bad-pair-blowup-inC} the pair \eqref{eq:del-Pezzo-dynamic-alpha-degree-7-proof-bad-pair-blowup} is not log canonical at $t_1=q_1=C^1\cap F_1$.

We want to apply Theorem \ref{thm:pseudo-inductive-blow-up} for increasing $i$. Observe that for $i=2$ the hypothesis of Theorem \ref{thm:pseudo-inductive-blow-up} (ii) is satisfied by \eqref{eq:del-Pezzo-dynamic-alpha-degree-7-proof-multBound}, since then $\lambda\beta m_0\leq 1$, using Lemma \ref{lem:del-Pezzo-dynamic-alpha-degree-7-proof-in-C}. Moreover, this is a necessary condition to apply Theorem \ref{thm:pseudo-inductive-blow-up} (ii)-(iv) whenever $i\geq 1$ and we will assume it from now onwards without mentioning it for the rest of the proof.  In particular, by Theorem \ref{thm:pseudo-inductive-blow-up} (ii) when $i=2$, the pair
\begin{equation}
(S_2, (1-\beta)C^2+\lambda\beta D^2 + (\lambda\beta m_0 -\beta)F_1^2 + (\lambda\beta(m_0+m_1)-2\beta)F_2)
\label{eq:del-Pezzo-dynamic-alpha-degree-7-proof-genericpoint-pair2}
\end{equation} 
is not log canonical at some $t_2\in F_2$. 

Let $H_1$ and $H_2$ be the unique curves such that
$$H_i\sim\pioplane{1}-E_i$$
with $q\in H_i$. They correspond to the strict transform of the unique lines in $\bbP^2$ containing $\pi(q)$ and $p_i$, respectively. Let $H\in \vert \pioplane{1}\vert$ be the unique curve such that $q\in H$ and $q_1\in H^1$.
The curve $H$ is a cubic tangent to $C$ at $q$. Observe that $H+H_1+H_2\sim-K_S$.

\textbf{Case 1: we show that if $H$ is irreducible, then}
$$(S_4, (1-\beta)C^4 + \lambda\beta D^4 +(\lambda\beta(m_0+m_1+m_2+m_3)-4\beta)F_4)$$
\textbf{is not log canonical at $q_4=C^4\cap F_4$.}

If $H$ is irreducible, by Lemma \ref{lem:appendix-delPezzo-deg7-dynamic-lct1}, the pair $(S,(1-\beta)C + \frac{1}{3}(H+H_1+H_2))$ is log canonical. Applying Lemma \ref{lem:log-convexity} we may assume one irreducible component of $(H+H_1+H_2)$ is not in $\Supp(D)$.

\textbf{Case 1a: Suppose $H\not\subseteq\Supp(D)$}. Then $(3-m_0)=H^1\cdot D^1\geq m_1,$ implying
\begin{equation}
m_0+m_1\leq 3
\label{eq:del-Pezzo-dynamic-alpha-degree-7-proof-genericpoint-bound1}
\end{equation}
and $\lambda\beta(m_0+m_1)\leq 1$ by Lemma \ref{lem:del-Pezzo-dynamic-alpha-degree-7-proof-in-C}, which in particular implies the hypothesis of Theorem \ref{thm:pseudo-inductive-blow-up} (ii) and (iii) when $i=2$, thus $t_2=F_2\cap(C^2)=q_2$. Hence we may apply Theorem \ref{thm:pseudo-inductive-blow-up} (i) with $i=3$ to conclude that the pair
$$(S_3, (1-\beta)C^3 + \lambda\beta D^3 +(\lambda\beta(m_0+m_1)-2\beta)F_2 + (\lambda\beta(m_0+m_1+m_2)-3\beta)F_3)$$
is not log canonical at some $t_3\in F_3$. To apply parts (ii) and (iii) of Theorem \ref{thm:pseudo-inductive-blow-up} when $i=3$, it is enough to show that
\begin{equation}
\lambda\beta(m_0+m_1+m_2)-3\beta\leq \lambda\beta(m_0+m_1+m_2)-2\beta\leq 1.
\label{eq:del-Pezzo-dynamic-alpha-degree-7-proof-genericpoint-bound2}
\end{equation}
Since $m_{j+1}\leq m_j$ for all $j$, \eqref{eq:del-Pezzo-dynamic-alpha-degree-7-proof-genericpoint-bound1} implies that $m_{j+1}+m_j\leq 3$ for $j=0,1,2$. Hence adding this $3$ inequalities we obtain $2(m_0+m_1+m_2)\leq 9$, which we use to claim
$$\lambda\beta(m_0+m_1+m_2)-2\beta\leq \frac{9}{2}\lambda\beta -2\beta<\frac{9}{2}\omega_1\beta\leq 1.$$
Indeed, if $\beta\leq \frac{1}{3}$, then
$$\frac{9}{2}\lambda\beta-2\beta<\frac{9}{2}\beta-2\beta=\frac{5}{2}\beta\leq \frac{5}{6}<1.$$
If $\beta>\frac{1}{3}$, then $\lambda<\frac{1}{3\beta}$ and
$$\frac{9}{2}\lambda\beta-2\beta<\frac{3}{2}-2\beta=\frac{3}{2}-\frac{2}{3}=\frac{5}{6}<1.$$
Hence \eqref{eq:del-Pezzo-dynamic-alpha-degree-7-proof-genericpoint-bound2} is proven and Theorem \ref{thm:pseudo-inductive-blow-up} (iii) with $i=3$ implies $t_3=q_3=F_3\cap C^3$. Thus, we may apply Theorem \ref{thm:pseudo-inductive-blow-up}, part (i) with $i=4$ to conclude that the pair
$$(S_4, (1-\beta)C^4 + \lambda\beta D^4 +(\lambda\beta(m_0+m_1+m_2)-3\beta)F_3 + (\lambda\beta(m_0+m_1+m_2+m_3)-4\beta)F_4)$$
is not log canonical at some $t_4\in F_4$. Since $m_j\leq m_{j-1}$ for $j=1,2,3$, then, by \eqref{eq:del-Pezzo-dynamic-alpha-degree-7-proof-genericpoint-bound1} we conclude that 
\begin{equation}
\lambda\beta(\sum_{i=0}^3 m_i)-3\beta\leq 6\lambda\beta-3\beta<6\omega_1\beta-3\beta \leq 1.
\label{eq:del-Pezzo-dynamic-alpha-degree-7-proof-genericpoint-bound3}
\end{equation}
Indeed, if $\beta\leq\frac{1}{3}$, then $6\lambda\beta-3\beta<6\beta-3\beta=3\beta\leq 1$, whereas if $\frac{1}{3}\leq \beta\leq 1$, then $6\lambda\beta-3\beta<2-3\beta\leq 1$. Hence we can use Theorem \ref{thm:pseudo-inductive-blow-up}, parts (ii) and (iii) to conclude that $t_4=q_4=F_4\cap C^4$. Hence the pair
$$(S_4, (1-\beta)C^4 + \lambda\beta D^4 +(\lambda\beta(m_0+m_1+m_2+m_3)-4\beta)F_4)$$
is not log canonical at $q_4$.

\textbf{Case 1b: Suppose $H\subseteq \Supp(D)$}. We may write $D=aH+\Omega$ where $H\not\subseteq\Supp(\Omega)$ and $a>0$. Let $x=x_0=\mult_q\Omega$ and $x_i=\mult_q\Omega^i$. Then $m_0=a+x_0$ and $m_1=a+x_1$. We bound these multiplicities. Notice that $H^2=1$, so $3-a -x_0 = \Omega\cdot H -x_0=\Omega^1\cdot H^1\geq x_1$. Hence:
\begin{equation}
2=D\cdot H_i\geq a+x_0=m_0,  \text{ and } 3\geq a + x_0 + x_1=m_0+x_1.
\label{eq:del-Pezzo-dynamic-alpha-degree-7-proof-genericpoint-bound4}
\end{equation}
Observe that $2H + L \sim -K_S$. By Lemma \ref{lem:appendix-delPezzo-deg7-dynamic-lct3}, the pair
$$(S, (1-\beta)C + \lambda\beta(2H+L))$$
is log canonical. By Lemma \ref{lem:log-convexity} we may assume that $L\not\subseteq\Supp(D)$, since $H\subseteq\Supp(D)$ by assumption. Then
\begin{equation}
1=L\cdot D \geq a
\label{eq:del-Pezzo-dynamic-alpha-degree-7-proof-genericpoint-bound4b}
\end{equation}
Recall from \eqref{eq:del-Pezzo-dynamic-alpha-degree-7-proof-genericpoint-pair2} that the pair
$$(S_2, (1-\beta)C^2+\lambda\beta D^2 + (\lambda\beta m_0 -\beta)F_1^2 + (\lambda\beta(m_0+m_1)-2\beta)F_2)$$
is not log canonical at some $t_2\in F_2$ by Theorem \ref{thm:pseudo-inductive-blow-up} (i) with $i=2$. We claim that
\begin{equation}
\lambda\beta(m_0 + m_1)-\beta<\omega_1\beta(m_0+m_1)-\beta\leq 1.
\label{eq:del-Pezzo-dynamic-alpha-degree-7-proof-genericpoint-bound5}
\end{equation}
This is a necessary condition to apply parts (ii) and (iii) of Theorem \ref{thm:pseudo-inductive-blow-up} with $i\geq 2$ and sufficient when $i=2$, concluding $t_2=F_2\cap C^2=q_2$. Indeed using the first equation in \eqref{eq:del-Pezzo-dynamic-alpha-degree-7-proof-genericpoint-bound4} we see
$$\lambda\beta(m_0+m_1)-\beta = \lambda\beta(2a+x_0+x_1)-\beta'\leq 2\lambda\beta(a+x_0)-\beta\leq 4\lambda\beta-\beta<4\omega_1\beta-\beta.$$
To prove \eqref{eq:del-Pezzo-dynamic-alpha-degree-7-proof-genericpoint-bound5} it is enough to show $4\omega_1\beta-\beta\leq 1$. Indeed, if $0<\beta\leq \frac{1}{3}$, then
$$4\omega_1\beta-\beta\leq4\beta-\beta=3\beta\leq 1,$$
whereas if $\frac{1}{3}\leq \beta\leq 1$, then $4\omega_1\beta-\beta=\frac{4}{3}-\beta\leq 1$.

Hence, using \eqref{eq:del-Pezzo-dynamic-alpha-degree-7-proof-genericpoint-bound5} we may apply Theorem \ref{thm:pseudo-inductive-blow-up} (i) with $i=3$ to deduce that the pair
$$(S_3, (1-\beta)C^3 + \lambda\beta D^3 +(\lambda\beta(m_0+m_1)-2\beta)F_2 + (\lambda\beta(m_0+m_1+m_2)-3\beta)F_3)$$
is not log canonical at some $t_3\in F_3$. 

Observe that since $C^1\cdot H^1=2$ either $(H^1\cdot C^1)\vert_{q_1}=1$ or $(H^1\cdot C^1)\vert_{q_1}=2$.

\textbf{Subcase 1b (i): Suppose $(H^1\cdot C^1)\vert_{q_1}=1$.} Then we have $q_2\not\in H^2$ so $m_2=x_2$ and
\begin{align*}
 &\lambda\beta(m_0+m_1+m_2)-2\beta=\lambda\beta(2a+x_0+x_1+x_2)-2\beta\\
\leq &\lambda\beta(a+x_0+a+x_0+x_1)-2\beta<5\omega_1\beta-2\beta.
\end{align*}
If $\beta\leq \frac{1}{3}$, then $5\omega_1\beta-2\beta =3\beta\leq 1$. If $\beta\geq \frac{1}{3}$, then $5\omega_1\beta-2\beta=\frac{5}{3}-2\beta\leq 1$. Hence
$$\lambda\beta(m_0+m_1+m_2)-2\beta\leq 1$$
holds and we may apply part (iii) of Theorem \ref{thm:pseudo-inductive-blow-up} with $i=3$ to conclude that $t_3=q_3=C^3\cap F_3$. Moreover, since
$$\lambda\beta(m_0+m_1+m_2)-3\beta\leq \lambda\beta(m_0+m_1+m_2)-2\beta\leq 1$$
holds, we may also apply Theorem \ref{thm:pseudo-inductive-blow-up} part (i) with $i=4$ to conclude that the pair
$$(S_4, (1-\beta)C^4 + \lambda\beta D^4 +(\lambda\beta(m_0+m_1+m_2)-3\beta)F_3 + (\lambda\beta(m_0+m_1+m_2+m_3)-4\beta)F_4)$$
is not log canonical at some $t_4\in F_4$.

\textbf{Subcase 1b (ii): Suppose $(H^1\cdot C^1)\vert_{q_1}=2$.} Then we have $q_2\in H^2$ so $m_2=a+x_2$ and
\begin{align*}
 &\lambda\beta(m_0+2m_1)-3\beta=\lambda\beta(3a+x_0+2x_1)-3\beta\\
\leq &3\lambda\beta(a+x_0)-3\beta\leq 6\lambda\beta-3\beta<6\omega_1\beta-3\beta\leq 1,
\end{align*}
by \eqref{eq:del-Pezzo-dynamic-alpha-degree-7-proof-genericpoint-bound4}. The last inequality follows by case analysis. Indeed, if $0<\beta\leq \frac{1}{3}$, then $6\omega_1\beta-3\beta=3\beta\leq 1$. If $\frac{1}{3}\leq\beta\leq 1$, then $6\omega_1\beta-3\beta=2-3\beta\leq 1$.

Now we can apply parts (ii) and (iv) of Theorem \ref{thm:pseudo-inductive-blow-up} with $i=3$ to conclude that $t_3=C^3\cap F_3=q_3$. Moreover, since
$$\lambda\beta(m_0+m_1+m_2)-3\beta\leq \lambda\beta(m_0+m_1+m_2)-2\beta\leq 1$$
we may also apply Theorem \ref{thm:pseudo-inductive-blow-up} part (i) with $i=4$ to conclude that the pair
$$(S_4, (1-\beta)C^4 + \lambda\beta D^4 +(\lambda\beta(m_0+m_1+m_2)-3\beta)F_3 + (\lambda\beta(m_0+m_1+m_2+m_3)-4\beta)F_4)$$
is not log canonical at some $t_4\in F_4$.

\textbf{Finishing case 1b. }Observe that
\begin{align*}
&\lambda\beta(m_0+m_1+2m_2)-4\beta=\\
\leq &\lambda\beta(3a+x_0+x_1+x_2+x_3)-4\beta
\leq 2\lambda\beta(a+x_0+x_1+a)-4\beta\\
\leq&7\lambda\beta-4\beta<7\omega_i\beta-4\beta\leq 7\omega_1\beta-4\beta\leq 1
\end{align*}
where we apply \eqref{eq:del-Pezzo-dynamic-alpha-degree-7-proof-genericpoint-bound4} and \eqref{eq:del-Pezzo-dynamic-alpha-degree-7-proof-genericpoint-bound4b}. The last inequality follows from case analysis. If $0<\beta\leq \frac{1}{3}$, then $7\omega_1\beta-4\beta=3\beta\leq 1$, whereas if $\frac{1}{3}\leq\beta\leq 1$, then $7\omega_1\beta-4\beta=\frac{7}{4}-4\beta\leq 1$.

Hence by Theorem \ref{thm:pseudo-inductive-blow-up} (ii) and (iv) with $i=4$ we obtain that $t_4=F_4\cap C^4=q_4$. In particular, the pair
$$(S_4, (1-\beta)C^4 + \lambda\beta D^4 +(\lambda\beta(m_0+m_1+m_2+m_3)-4\beta)F_4)$$
is not log canonical at $q_4=F_4\cap C^4$.

\textbf{Case 2: we show that if $H$ is reducible, then}
$$(S_4, (1-\beta)C^4 + \lambda\beta D^4 +(\lambda\beta(m_0+m_1+m_2+m_3)-4\beta)F_4)$$
\textbf{is not log canonical at $q_4=C^4\cap F_4$.}

If $H$ is reducible, then it must be the union of a line and a conic. In fact $H=H_i+E_i$ for some $i$. Without loss of generality assume $i=1$. Since $q\not\in E_1$ and
$$2\leq (H\cdot C)\vert_p=(H_1\cdot C)\vert_p\leq H_1\cdot C_1=2,$$
then $q_1=H_1^1\cap C^1\cap F_1$. By Lemma \ref{lem:appendix-delPezzo-deg7-dynamic-lct2}, the pair
$$(S,(1-\beta) C + \omega_1\beta(2H_1+H_2+E_1) ) $$
is log canonical. Then by Lemma \ref{lem:log-convexity} (log convexity) we may assume that some component of $2H_1+H_2+E_1$ is not in $\Supp(D)$.

\textbf{Case 2a: Suppose $H_1\not\subseteq \Supp(D)$.} Then $2-m_0=H^1_1\cdot D^1\geq m_1$, so
\begin{equation}
m_0+m_1\leq 2.
\label{eq:del-Pezzo-dynamic-alpha-degree-7-proof-genericpoint-bound7}
\end{equation}
In particular, this implies $\lambda\beta(m_0+m_1)-\beta\leq 1$, which is enough to satisfy the hypothesis of Theorem \ref{thm:pseudo-inductive-blow-up} parts (ii) and (iii) with $i\geq 2$. Therefore $t_2=F_2\cap C^2=q_2$. Condition (i) of Theorem \ref{thm:pseudo-inductive-blow-up}, when $i=3$ is also satisfied, since
$$\lambda\beta(m_0+m_1)-2\beta\leq \lambda\beta(m_0+m_1)-\beta\leq 1.$$
Hence, the pair
$$(S_3, (1-\beta)C^3 + \lambda\beta D^3 +(\lambda\beta(m_0+m_1)-2\beta)F_2 + (\lambda\beta(m_0+m_1+m_2)-3\beta)F_3)$$
is not log canonical at $t_3\in F_3$. Since $m_{i+1}\leq m_i$ for all $i$, \eqref{eq:del-Pezzo-dynamic-alpha-degree-7-proof-genericpoint-bound7} gives $m_i+m_j\leq 2$ for all $1\leq i<j\leq 2$. Adding the 3 distinct possibilities we get $2(m_0+m_1+m_2)\leq 6$, so $m_0+m_1+m_2\leq 3$. By Lemma \ref{lem:del-Pezzo-dynamic-alpha-degree-7-proof-in-C}, this implies $\lambda\beta(m_0+m_1+m_2)-2\beta\leq 1-2\beta\leq 1$, which precisely gives the hypothesis of parts (ii) and (iii) of Theorem \ref{thm:pseudo-inductive-blow-up} when $i=3$, so $t_3=C^3\cap F_3=q_3$. Since
$$\lambda\beta(m_0+m_1+m_2)-3\beta\leq \lambda\beta(m_0+m_1+m_2)-2\beta\leq 1,$$
then Theorem \ref{thm:pseudo-inductive-blow-up} (i) with $i=4$, the pair
$$(S_4, (1-\beta)C^4 + \lambda\beta D^4 +(\lambda\beta(m_0+m_1+m_2)-3\beta)F_3 + (\lambda\beta(m_0+m_1+m_2+m_3)-4\beta)F_4)$$
is not log canonical at some $t_4\in F_4$. Observe that 
$$\lambda\beta(\sum_{j=0}^3 m_j)-3\beta\leq 2\lambda\beta (m_0+m_1)-3\beta\leq 4\lambda\beta-3\beta<4\omega_1\beta-3\beta$$
by  \eqref{eq:del-Pezzo-dynamic-alpha-degree-7-proof-genericpoint-bound7}.
We claim that $4\omega_1\beta-3\beta\leq 1$. Indeed, if $\beta\leq \frac{1}{3}$, then $4\omega_1\beta-3\beta<4\beta-3\beta\leq \beta\leq 1$ and if $\frac{1}{3}\leq \beta$, then
$$4\omega_1\beta-3\beta\leq \frac{4}{3}-3\beta\leq \frac{4}{3}-1\leq \frac{1}{3}\leq 1.$$
Conditions (ii) and (iii) of Theorem \ref{thm:pseudo-inductive-blow-up} are therefore satisfied for $i=4$ and $t_4=C^4\cap F_4=q_4$. In particular the pair
$$(S_4, (1-\beta)C^4 + \lambda\beta D^4 +(\lambda\beta(m_0+m_1+m_2+m_3)-4\beta)F_4)$$
is not log canonical at $q_4=F_4\cap C^4$.

\textbf{Case 2b: Suppose $H_1\subseteq\Supp(D)$.}

Write $D=aH_1+\Omega$ where $H_1\not\subseteq\Supp(\Omega)$ and $a>0$. Recall that
$$H_1\sim \pioplane{1}-E_1$$
with $q_1=C^1\cap H^1_1$, since $C^1\cdot H^1_1=1$. Let $x_i=\mult_{q_i}\Omega^i$. Then $m_0=a+x_0$, $m_1=a+x_1$ and $m_i=x_i$ for $i\geq 2$. We bound the multiplicities of $D$:
\begin{equation}
2=H_1\cdot D \geq x_0+x_1
\label{eq:del-Pezzo-dynamic-alpha-degree-7-proof-genericpoint-boundA}
\end{equation}

By Lemma \ref{lem:appendix-delPezzo-deg7-dynamic-lct2}, the pair
$$(S, (1-\beta)C + \lambda\beta(2H_1+H_2+E_1))$$
is log canonical. By Lemma \ref{lem:log-convexity} either $E_1\not\subseteq\Supp(D)$ or $H_2\not\subseteq\Supp(D)$. In the first case
$$1=D\cdot E_1 \geq a \text{ and } m_0+m_1 = 2a+x_0+x_1\leq 4$$
by \eqref{eq:del-Pezzo-dynamic-alpha-degree-7-proof-genericpoint-boundA}.
If $H_2\not\subseteq\Supp(D)$, then 
$$2=D\cdot H_2\geq a+x_0 \text{ and } m_0+m_1 \leq 2(a+x_0)\leq 4.$$
In both cases, we have proved
\begin{equation}
m_0+m_1\leq 4.
\label{eq:del-Pezzo-dynamic-alpha-degree-7-proof-genericpoint-boundB}
\end{equation}
Observe that for $0<\beta\leq \frac{1}{3}$ we have
$$4\lambda\beta-\beta<4\omega_1\beta-\beta=3\beta\leq 1$$
and for $\frac{1}{3}\leq\beta\leq 1$ inequality
$$4\omega\beta-\beta<4\omega_1\beta-\beta=\frac{4}{3}-\beta\leq 1$$
also holds. Therefore, using \eqref{eq:del-Pezzo-dynamic-alpha-degree-7-proof-genericpoint-boundB} we have proven
$$\lambda\beta(m_0+m_1)-\beta\leq 4\lambda\beta-\beta\leq 1.$$
This is a necessary condition to apply Theorem \ref{thm:pseudo-inductive-blow-up} (ii)--(iv) for $i\geq 2$. We apply Theorem \ref{thm:pseudo-inductive-blow-up} (ii) with $i=2$ to deduce that $t_2=F_2\cap C^2=q_2$. 

Furthermore, since
$$\lambda\beta(m_0+m_1)-2\beta\leq \lambda\beta(m_0+m_1)-\beta\leq 1,$$
Theorem \ref{thm:pseudo-inductive-blow-up} gives that the pair
$$(S_3, (1-\beta)C^3 + \lambda\beta D^3 +(\lambda\beta(m_0+m_1)-2\beta)F_2 + (\lambda\beta(m_0+m_1+m_2)-3\beta)F_3)$$
is not log canonical at some $t_3\in F_3$. 

Observe that \eqref{eq:del-Pezzo-dynamic-alpha-degree-7-proof-genericpoint-boundA} implies
\begin{equation}
2\geq x_0+x_1\geq2x_2
\label{eq:del-Pezzo-dynamic-alpha-degree-7-proof-genericpoint-boundC}
\end{equation}
so $x_2\leq 1$, but using \eqref{eq:del-Pezzo-dynamic-alpha-degree-7-proof-genericpoint-boundB} the following inequality holds:
\begin{equation}
m_0+m_1+m_2\leq 4+x_2\leq 5.
\label{eq:del-Pezzo-dynamic-alpha-degree-7-proof-genericpoint-boundC2}
\end{equation}
This implies
\begin{equation}
\lambda\beta(m_0+m_1+m_2)-3\beta\leq \lambda\beta(m_0+m_1+m_2)-2\beta\leq 5\lambda\beta-2\beta<5\omega_1\beta-2\beta\leq 1.
\label{eq:del-Pezzo-dynamic-alpha-degree-7-proof-genericpoint-boundD}
\end{equation}
Indeed, for $0<\beta\leq \frac{1}{3}$, we have $5\omega_1\beta-2\beta=3\beta\leq 1$ and for $\frac{1}{3}\leq \beta\leq 1$ we have $5\omega_1\beta-2\beta=\frac{5}{3}-2\beta\leq 1$. 

Inequality \eqref{eq:del-Pezzo-dynamic-alpha-degree-7-proof-genericpoint-boundD} and Theorem \ref{thm:pseudo-inductive-blow-up} (ii) with $i=3$ give that $t_3=C^3\cap F_3=q_3$. Again, \eqref{eq:del-Pezzo-dynamic-alpha-degree-7-proof-genericpoint-boundD} and Theorem \ref{thm:pseudo-inductive-blow-up} (i) with $i=4$ gives that the pair
$$(S_4, (1-\beta)C^4 + \lambda\beta D^4 +(\lambda\beta(m_0+m_1+m_2)-3\beta)F_3^4+(\lambda\beta(m_0+m_1+m_2+m_3)-4\beta)F_4)$$
is not log canonical at some $t_4\in F_4$. By \eqref{eq:del-Pezzo-dynamic-alpha-degree-7-proof-genericpoint-boundC} and \eqref{eq:del-Pezzo-dynamic-alpha-degree-7-proof-genericpoint-boundC2} we obtain
\begin{equation}
\lambda\beta(m_0+m_1+2m_2)-4\beta\leq 6\lambda\beta-4\beta<6\omega_1\beta-4\beta\leq 1.
\label{eq:del-Pezzo-dynamic-alpha-degree-7-proof-genericpoint-boundE}
\end{equation}
We check the last inequality: for $0<\beta\leq \frac{1}{3}$ we have $6\omega_1\beta-4\beta=2\beta\leq 1$ and for $\frac{1}{3}\leq \beta \leq 1$ we have $6\omega_1\beta-4\beta=2-4\beta\leq 1$. Inequality \eqref{eq:del-Pezzo-dynamic-alpha-degree-7-proof-genericpoint-boundE} and Theorem \ref{thm:pseudo-inductive-blow-up} (iv) with $i=4$ give that $t_4=C^4\cap F_4=q_4$.

\textbf{Finishing all cases.} We have that
$$(S_4, (1-\beta)C^4 + \lambda\beta D^4 +(\lambda\beta(m_0+m_1+m_2+m_3)-4\beta)F_4)$$
is not log canonical at $t_4=C^4\cap F_4=q_4$. Applying Lemma \ref{lem:adjunction} (iii) with $C^4$ we deduce
\begin{align*}
1&<C^4\cdot(\lambda\beta D^4 + (\lambda\beta(m_0+m_1+m_2+m_3)-4\beta)F_4)\\
 &=7\lambda\beta-\lambda\beta(\sum^3_{i=0}m_i)+\lambda\beta(\sum^3_{i=0}m_i)-4\beta=7\lambda\beta-4\beta\leq 7\omega_1-4\beta.
\end{align*}
However we claim that $7\omega_1\beta-4\beta\leq 1$ which gives a contradiction. Indeed, if $0<\beta\leq \frac{1}{3}$, then $7\omega_1\beta-4\beta=3\beta\leq 1$, whereas if $\frac{1}{3}\leq \beta\leq 1$, then $7\omega_1\beta-4\beta=\frac{7}{3}-4\beta\leq 1$.
\end{proof}

We prove case (i) of Theorem \ref{thm:del-Pezzo-dynamic-alpha-degree-7}:
\begin{lem}
\label{lem:del-Pezzo-dynamic-alpha-degree-7-proof-case4}
If $C$ contains a pseudo-Eckardt point, then $\alpha(S,(1-\beta)C)=\omega_4$.
\end{lem}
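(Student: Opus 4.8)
The upper bound $\alpha(S,(1-\beta)C)\le\omega_4$ is Lemma~\ref{lem:del-Pezzo-dynamic-alpha-degree-7}(i), so the task is the reverse inequality. I would argue by contradiction: assume there is an effective $\mathbb{Q}$-divisor $D\simq-K_S$ and some $\lambda<\omega_4$ with $(S,(1-\beta)C+\lambda\beta D)$ not log canonical at a point $q$. Since $\omega_4\le\omega_1$, the estimate \eqref{eq:del-Pezzo-dynamic-alpha-degree-7-proof-inequality0} still gives $\lambda\beta<\tfrac13=\glct(S)$, so Lemmas~\ref{lem:del-Pezzo-dynamic-alpha-degree-7-proof-in-C} and~\ref{lem:del-Pezzo-dynamic-alpha-degree-7-proof-in-line} apply verbatim: the pair is log canonical in codimension $1$, $q\in C$, and $q\in E_1\cup E_2\cup L$. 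Normalising the pseudo--Eckardt point to $p=E_1\cap L\in C$, the equalities $C\cdot E_1=C\cdot L=1$ force $C\cap E_1=C\cap L=\{p\}$, so the only possibilities are $q=p$ or $q=r_2:=C\cap E_2$ (these are distinct, as $E_1\cap E_2=\emptyset$, and there is a unique pseudo--Eckardt point on $C$ since $C\cdot L=1$).

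The heart of the proof is the case $q=p$, where the three smooth curves $E_1,L,C$ are concurrent. First I would show $E_1,L\subseteq\Supp(D)$: otherwise, say for $E_1$, Lemma~\ref{lem:adjunction}(i) gives $1=E_1\cdot D\ge\mult_pD>1/\lambda>1$ (using $(1-\beta)+\lambda\beta\mult_pD>1$ and $\lambda<1$), a contradiction, and likewise for $L$. The proof of Lemma~\ref{lem:del-Pezzo-dynamic-alpha-degree-7} shows $(S,(1-\beta)C+\omega_4\beta(3L+2E_1+2E_2))$ is log canonical, and since $-K_S\simq3L+2E_1+2E_2$ and $\lambda<\omega_4$, log convexity (Lemma~\ref{lem:log-convexity}) lets me replace $D$ by a linearly equivalent divisor whose support omits one of $E_1,L,E_2$; as the first two are forced into $\Supp(D)$, I may assume $E_2\not\subseteq\Supp(D)$. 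Writing $D=aE_1+bL+\Omega$ with $E_1,L,E_2\not\subseteq\Supp(\Omega)$ and $a,b>0$, intersection with $E_2$ gives $b+\Omega\cdot E_2=1$, hence $b\le1$, while intersecting with a general member of the free nef pencil $|H-E_1|$ (for which $(H-E_1)\cdot E_1=1$, $(H-E_1)\cdot L=0$, $(H-E_1)\cdot(-K_S)=2$, and which is nef by Lemma~\ref{lem:del-Pezzo-nefcurves}) gives $a+(H-E_1)\cdot\Omega=2$, hence $a\le2$. The same two lines yield $\Omega\cdot E_1=1+a-b$ and $\Omega\cdot L=1-a+b$; adding the bounds $\mult_p\Omega\le\Omega\cdot E_1$ and $\mult_p\Omega\le\Omega\cdot L$ forces $\mult_p\Omega\le1$.

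With these bounds I would invoke Theorem~\ref{thm:inequality-Cheltsov} with $C_1=E_1$, $C_2=L$, coefficients $a_1=\lambda\beta a$, $a_2=\lambda\beta b$, and residual divisor $\Omega'=\lambda\beta\Omega+(1-\beta)C$; the hypothesis $0<\mult_p\Omega'=\lambda\beta\mult_p\Omega+(1-\beta)\le1$ holds because $\lambda<1$, and $C_1,C_2$ are smooth at $p$ with $(E_1\cdot L)|_p=1$. The theorem asserts $(\Omega'\cdot E_1)|_p>2(1-a_2)$ or $(\Omega'\cdot L)|_p>2(1-a_1)$. But $(\Omega'\cdot E_1)|_p\le\lambda\beta(1+a-b)+(1-\beta)$ and symmetrically for $L$, and a direct rearrangement shows each of the two sought inequalities is equivalent to the single estimate $\lambda\beta(1+a+b)\le1+\beta$. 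Using $a+b\le3$ together with $\lambda<\omega_4$ I would verify $\lambda\beta(1+a+b)\le\lambda\beta\cdot4<1+\beta$ separately on $\beta\in(0,\tfrac14]$ (where $\lambda\beta<\beta$), on $[\tfrac14,\tfrac23]$ (where $\lambda\beta<\tfrac{1+\beta}{5}$), and on $[\tfrac23,1]$ (where $\lambda\beta<\tfrac13$), contradicting Theorem~\ref{thm:inequality-Cheltsov} and ruling out $q=p$.

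Finally, at $q=r_2=C\cap E_2$ only the single line $E_2$ passes through the point, transverse to $C$, so the local geometry is that of the milder configurations treated in cases (ii)--(iv): one forces $E_2\subseteq\Supp(D)$, strips a component of the log canonical auxiliary divisor $2L_2+2E_2+L\simq-K_S$ by log convexity, bounds $\mult_{r_2}D$ via a nef class, and applies Theorem~\ref{thm:inequality-local-blowup-bound}; since $\lambda<\omega_4\le\omega_3$, this yields a numerical contradiction, so the pair is log canonical at $r_2$ as well. Hence $\alpha(S,(1-\beta)C)=\omega_4$. I expect the main obstacle to be the case $q=p$: correctly folding the boundary $(1-\beta)C$ into the residual term of Theorem~\ref{thm:inequality-Cheltsov} and checking its multiplicity hypothesis, producing the sharp coefficient bounds $b\le1$ and $a\le2$ from the conic pencil $|H-E_1|$, and verifying the resulting inequality $\lambda\beta(1+a+b)\le1+\beta$ uniformly across the three $\beta$-ranges defining $\omega_4$.
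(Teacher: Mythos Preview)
Your argument is correct, but it follows a different route from the paper's main proof. The paper handles both positions of $q$ by \emph{birational contraction} to a degree-$8$ surface: for $q=r_2$ it contracts $L$ to land on $\bbP^1\times\bbP^1$ and invokes Theorem~\ref{thm:del-Pezzo-dynamic-alpha-degree-8-F0} (using $\lambda<\omega_4\le\omega_3=\alpha(\bbP^1\times\bbP^1,(1-\beta)\bar C)$), while for $q=p$ it contracts $E_2$ to land on $\bbF_1$ and invokes Theorem~\ref{thm:del-Pezzo-dynamic-alpha-degree-8-F1} (noting that $(\bar L\cdot\bar C)|_{\bar q}=1$ so $\alpha(\bbF_1,(1-\beta)\bar C)=\omega_4$). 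This reduces the lemma to already-established invariants with no new local computation. Your direct approach at $q=p$ is precisely the alternative sketched in Remark~\ref{rmk:delPezzo-dynamic-alpha-degree-7-pseudo-Eckardt}; in fact your bound $a+b\le3$ (from $E_2\cdot D=1$ and a general member of $|H-E_1|$) is sharper than the remark's and makes the closing inequality $4\lambda\beta<1+\beta$ hold cleanly on all three $\beta$-ranges.

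One small correction to your $q=r_2$ sketch: Theorem~\ref{thm:inequality-local-blowup-bound} is not the right tool here. The paper's local analysis at a single-line point (Lemma~\ref{lem:del-Pezzo-dynamic-alpha-degree-7-proof-case1-L}) instead uses Theorem~\ref{thm:pseudo-inductive-blow-up} to track discrepancies through several blow-ups, and shows log canonicity for $\lambda<\omega_3$ (resp.\ $\lambda<\omega_1$) depending on $(C\cdot L_2)|_{r_2}$; since $\omega_4\le\omega_3$ this suffices. Your deferral to ``the milder configurations in cases (ii)--(iv)'' is the correct idea, just with the theorem reference swapped. The contraction argument buys brevity and conceptual clarity; your local argument is self-contained but requires importing the lengthy case analysis of Lemma~\ref{lem:del-Pezzo-dynamic-alpha-degree-7-proof-case1-L}.
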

\begin{proof}
Recall Notation \ref{nota:del-Pezzo-deg7}. We may assume that $\lambda<\omega_4$ and assume for contradiction that the pair
$$(S, (1-\beta)C + \lambda\beta D)$$
is log canonical in codimension $1$ and not log canonical at $q\in C$ by Lemma \ref{lem:del-Pezzo-dynamic-alpha-degree-7-proof-in-C}.

Let $p\in C$ be the pseudo-Eckardt point. The surface $S$ has has two pseudo-Eckardt points, both of them in $L$. Since $C\cdot L =1$, only one of them, $p$, lies in $C$. Without loss of generality assume that $p=C\cap E_1$.

By Lemma \ref{lem:del-Pezzo-dynamic-alpha-degree-7-proof-in-line}, the point $q$ where the pair
$$(S, (1-\beta) C +\lambda D)$$
is not log canonical belongs to a line.

\textbf{Case 1: Suppose $q\neq p$.} Then $q=E_2\cap C = r_2\not\in L$. There is a birational morphism $\sigma\colon S \ra \bbP^1\times \bbP^1$ which contracts precisely $L$ to a point. Since $q\not \in L$, the morphism $\sigma$ is an isomorphism around $q$. Let $\bar D = \sigma_*(D)\simq -K_{\bbP^1\times\bbP^1}, \bar C =\sigma_*(C)\sim -K_{\bbP^1\times\bbP^1}$, and $\bar C$ is a smooth curve. Since $\bar{\sigma}$ is an isomorphism around $q$, the pair
$$(\bbP^1\times\bbP^1, (1-\beta)\bar C + \lambda\beta\bar D)$$
is not log canonical at $\bar q=\sigma(q)$. However
$$\lambda<\omega_4\leq \omega_3\leq\alpha(\bbP^1\times\bbP^1, (1-\beta)\bar C)$$
which contradicts Theorem \ref{thm:del-Pezzo-dynamic-alpha-degree-8-F0}.

\textbf{Case 2: $p=q$.} There is a birational morphism $\sigma\colon S \ra \bbF_1$ which contracts $E_2$ to a point. Since $q\not\in E_2$, the morphism $\sigma$ is an isomorphism around $q$. Let $\bar D = \sigma_*(D)\simq-K_{\bbF_1}$ and $\bar C = \sigma_*(C)\sim-K_S$. Observe the curve $\bar C$ is smooth since $C \cdot E_2=1$. The unique $(-1)$-curve of $\bbF_1$ is $\bar E_1=\sigma_*(E_1)$. The curve $\bar L =\sigma_*(L)$ is the unique fibre through the point $\bar q = \bar C \cap \bar E$ of the unique $\bbP^1$-fibration $\gamma\colon\bbF_1\ra \bbP^1$. Observe that $(\bar L \cdot \bar C ) \vert_{\bar q}= (L\cdot C )\vert_{q}=1$, since $\sigma$ is an isomorphism around $q$ and $\bar q$. Therefore, Theorem \ref{thm:del-Pezzo-dynamic-alpha-degree-8-F1} gives that
\begin{equation}
\alpha(\bbF_1, (1-\beta)\bar C)=\omega_4.
\label{eq:del-Pezzo-dynamic-alpha-degree-7-proof-pEckardt}
\end{equation}

On the other hand, since $\sigma$ is an isomorphism near $q$ and $\bar q$, the pair
$$(\bbF_1, (1-\beta)\bar C + \lambda\beta \bar D)$$
is not log canonical at $\bar q = \sigma_*(q)$, but $\lambda<\omega_4$. This contradicts \eqref{eq:del-Pezzo-dynamic-alpha-degree-7-proof-pEckardt}.
\end{proof}
\begin{rmk}
\label{rmk:delPezzo-dynamic-alpha-degree-7-pseudo-Eckardt}
There is an alternative proof to Lemma \ref{lem:del-Pezzo-dynamic-alpha-degree-7-proof-case4} for the case in which $p=q$. The new proof is somewhat independent of the classification of del Pezzo surfaces and the computation of their dynamic alpha--invariants. Since $p=E_1\cap L$ and $(S, (1-\beta)C +\lambda\beta D)$ is not log canonical at $p$ for some $\lambda<\omega_3$ then $E_1, L\subseteq \Supp(D)$, since otherwise
$$1\geq 1-\beta+\lambda\beta=((1-\beta)C + \lambda\beta D)\cdot L>1,$$
by Lemma \ref{lem:adjunction} (i), which is clearly a contradiction. The proof for $E_1$ is the same. Therefore we write $D=aE_1+bL+\Omega$ where $a,b>0$ and $E_1,L\not\subseteq\Supp(D)$. Then
$$1=D\cdot E_1\geq -a+b+\mult_p\Omega\geq -a+b,$$
$$1=D\cdot E_1\geq a-b+\mult_p\Omega\geq a-b.$$
Adding these two equation we see that inequality $1\geq \mult_p \Omega$ holds. Let $Q$ be the strict transform of a general conic in $\bbP^2$ passing through $p_1$ and such that $\pi(Q)\cdot \pi(L)\vert_{p_1}\geq 2$ (i.e. $\pi(Q)$ and $\pi(L)$ are tangent at $p_1$). Equivalently, $Q\in \vert\pioplane{2}-E_1\vert$ is a general element. The curve $Q\not\subseteq\Supp(D)$, since $Q$ is general. Hence 
$$4=Q\cdot D\geq a+b+x_0\geq a+b.$$
Since $E\cdot L=1$ and $\mult_p\Omega\leq 1$, then we may apply Theorem \ref{thm:inequality-Cheltsov} to $(S, (1-\beta)C + \lambda \beta D)$ at $p=L\cap E_1$ and conclude that either
$$2(1-\lambda\beta a)<L\cdot ((1-\beta)C + \lambda\beta\Omega), \text{ or}$$
$$2(1-\lambda\beta b)<E_1\cdot ((1-\beta)C + \lambda\beta\Omega)$$
hold. Since the roles of $a$ and $b$ are symmetric, it is enough to disprove the first equation to achieve a contradiction and finish the proof. Indeed if the first equation holds, then, since $\lambda<\omega_4$ we have
$$2(1-\omega_4\beta a)<1-\beta+\omega_4\beta(1+b-a).$$
In particular
$$1+\beta<\omega_4\beta(1+a+b)\leq 5\omega_4\beta\leq 1+\beta,$$
a contradiction. The last inequality is easy to see. If $0<\beta\leq \frac{1}{4}$, then $5\omega_1\beta=5\beta=4\beta+\beta\leq 1+\beta$. If $\frac{1}{4}\leq\beta\leq \frac{2}{3}$, then $5\omega_1\beta=1+\beta$ and if $\frac{2}{3}\leq \beta\leq 1$, then $5\omega_1\beta=\frac{5}{3}=1+\frac{2}{3}\leq 1+\beta$.
\end{rmk}

We prove case (ii) of Theorem \ref{thm:del-Pezzo-dynamic-alpha-degree-7}:
\begin{lem}
\label{lem:del-Pezzo-dynamic-alpha-degree-7-proof-case3}
If $C$ does not contains a pseudo-Eckardt point but $(C\cdot L_i)\vert_{r_i}=2$ for some $i=1,2$ then $\alpha(S,(1-\beta)C)=\omega_3$.
\end{lem}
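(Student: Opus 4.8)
The plan is to combine the upper bound already established in Lemma~\ref{lem:del-Pezzo-dynamic-alpha-degree-7}~(ii), namely $\alpha(S,(1-\beta)C)\le\omega_3$, with a lower bound obtained by contradiction. So suppose $\alpha(S,(1-\beta)C)<\omega_3$; then there is an effective $\bbQ$-divisor $D\simq-K_S$ and some $\lambda<\omega_3$ for which $(S,(1-\beta)C+\lambda\beta D)$ fails to be log canonical at a point $q$. Since $\omega_3\beta\le\frac{1}{3}=\glct(S)$, Lemmas~\ref{lem:del-Pezzo-dynamic-alpha-degree-7-proof-in-C} and~\ref{lem:del-Pezzo-dynamic-alpha-degree-7-proof-in-line} apply verbatim, so the pair is log canonical in codimension $1$ and $q\in C\cap(E_1\cup E_2\cup L)$. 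As $C$ carries no pseudo-Eckardt point, the three points $r_1=C\cap E_1$, $r_2=C\cap E_2$ and $r=C\cap L$ are distinct and each lies on a single line; hence $q\in\{r_1,r_2,r\}$, and it remains to rule out each position, following the notation of~\ref{nota:del-Pezzo-deg7}.

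When $q=r_1$ or $q=r_2$, the key observation is that the unique $(-1)$-curve $L\sim\pioplane{1}-E_1-E_2$ does not pass through $q$ (again because $q$ is not a pseudo-Eckardt point). I would therefore contract $L$ by a morphism $\sigma\colon S\ra\bbP^1\times\bbP^1$, which by Lemmas~\ref{lem:del-pezzo-blowdown-dP} and~\ref{lem:del-Pezzo-no-moduli} lands on the smooth quadric; since $C\cdot L=1$ the image $\bar C=\sigma_*(C)\sim-K_{\bbP^1\times\bbP^1}$ is smooth, and because $\sigma$ is an isomorphism near $q$ the pair $(\bbP^1\times\bbP^1,(1-\beta)\bar C+\lambda\beta\bar D)$ is not log canonical at $\bar q=\sigma(q)$ with $\bar D\simq-K_{\bbP^1\times\bbP^1}$. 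This forces $\lambda\ge\alpha(\bbP^1\times\bbP^1,(1-\beta)\bar C)$, which is computed in Theorem~\ref{thm:del-Pezzo-dynamic-alpha-degree-8-F0}. A direct comparison of the piecewise formulae shows $\omega_3\le\alpha(\bbP^1\times\bbP^1,(1-\beta)\bar C)$ for every $\beta$ (the two agree on $[\frac{1}{4},\frac{1}{2}]$, and one checks $\frac{1}{3\beta}\le\frac{1+2\beta}{6\beta}$ exactly when $\beta\ge\frac{1}{2}$), and this contradicts $\lambda<\omega_3$.

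The remaining and genuinely harder case is $q=r$, where $L$ passes through $q$ and the quadric contraction is unavailable: contracting $E_1$ or $E_2$ instead only produces $\bbF_1$ in the \emph{tangent} case of Theorem~\ref{thm:del-Pezzo-dynamic-alpha-degree-8-F1}, whose invariant is strictly smaller than $\omega_3$ and so yields nothing. Here I would argue locally at $r$, where $C$ meets $L$ transversally. Using that $(S,(1-\beta)C+\lambda\beta(L+2R))$ is log canonical (the appendix computation for the divisor $L+2R\sim-K_S$, with $R$ the cubic tangent to $C$ at $r$, as in Lemma~\ref{lem:appendix-delPezzo-deg7-dynamic-lct3}), log-convexity (Lemma~\ref{lem:log-convexity}, together with Lemma~\ref{lem:convexity}) lets me assume $L\not\subseteq\Supp(D)$; then $\mult_rD\le L\cdot D=1$, so $\mult_r\bigl((1-\beta)C+\lambda\beta D\bigr)\le(1-\beta)+\lambda\beta<1$, contradicting Lemma~\ref{lem:adjunction}~(i). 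The delicate point, where I expect the main difficulty to lie, is precisely guaranteeing that it is $L$, rather than the tangent cubic $R$, that can be cleared from $\Supp(D)$; in the residual situation where only $R$ is removed one instead bounds $\mult_rD\le R\cdot D=3$ and feeds the tangency $(R\cdot C)\vert_r\ge2$ into Theorem~\ref{thm:inequality-local-blowup-bound} to reach the same contradiction.
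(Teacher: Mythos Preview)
Your treatment of $q\in\{r_1,r_2\}$ matches the paper exactly: contract $L$ to $\bbP^1\times\bbP^1$ and invoke Theorem~\ref{thm:del-Pezzo-dynamic-alpha-degree-8-F0}, whose invariant coincides with $\omega_3$ on $(0,\tfrac12]$ and dominates it on $[\tfrac12,1]$. For $q=r\in L$ the paper instead contracts $E_2$ to $\bbF_1$ and appeals to Theorem~\ref{thm:del-Pezzo-dynamic-alpha-degree-8-F1}, whereas you attempt a purely local argument with the divisor $L+2R$; so the two routes diverge here.

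There is, however, a genuine gap in your subcase~(b). First a minor point: the relevant appendix lemma is \ref{lem:appendix-delPezzo-deg7-dynamic-lct5} (which treats $r\in L$), not \ref{lem:appendix-delPezzo-deg7-dynamic-lct3} (whose hypothesis is that $q$ lies on no line). More substantively, Theorem~\ref{thm:inequality-local-blowup-bound} does not accept ``tangency'' as input. From $R\not\subseteq\Supp(D)$ you only obtain $\mult_rD\le R\cdot D=3$, and applying the theorem on $S$ with $n=2$ yields merely $7\lambda\beta>1+2\beta$, strictly weaker than $\lambda\ge\omega_3$ on $\beta\in(\tfrac14,\tfrac12)$; with $n=3$ the hypothesis $\lambda\beta\,\mult_rD\le\tfrac13+\beta$ fails near $\beta=\tfrac14$, since there $3\omega_3\beta=\tfrac34>\tfrac13+\tfrac14$. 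The tangency $(R\cdot C)\vert_r\ge2$ only becomes useful \emph{after} a blow-up, where it forces $q_1\in R^1$ and hence $m_0+m_1\le3$; from that point one must iterate via Theorem~\ref{thm:pseudo-inductive-blow-up} (exactly as the paper does in the later Lemma~\ref{lem:del-Pezzo-dynamic-alpha-degree-7-L}) to reach $1<7\lambda\beta-4\beta$, which indeed contradicts $\lambda<\omega_3$ because $7\omega_3\beta-4\beta\le1$ for all $\beta$. Your one-line sketch does not supply this computation.
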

\begin{proof}
Recall Notation \ref{nota:del-Pezzo-deg7}. Suppose $\lambda<\omega_3$. By Lemma \ref{lem:del-Pezzo-dynamic-alpha-degree-7-proof-in-line}, the point $q\in C$ where the pair
$$(S, (1-\beta) C +\lambda D)$$
is not log canonical belongs to a line.

\textbf{Case 1: Suppose $q\in E_i$ for $i=1$ or $i=2$}. Without loss of generality suppose $q\in E_1$. Then, by assumption $q\not\in L$. There is a birational morphism $\sigma \colon S \ra \bbP^1\times\bbP^1$ which contracts $L$ to a point. Since $q\not \in L$, the morphism $\sigma$ is an isomorphism around $q$. Let $\bar D = \sigma_*(D)\simq -K_{\bbP^1\times\bbP^1}, \bar C =\sigma_*(C)\sim -K_{\bbP^1\times\bbP^1}$, and $\bar C$ is a smooth curve, since $C\cdot L=1$. Since $\bar{\sigma}$ is an isomorphism around $q$, the pair
$$(\bbP^1\times\bbP^1, (1-\beta)\bar C + \lambda\beta\bar D)$$
is not log canonical at $\bar q=\sigma(q)$. However
$$\lambda<\omega_3=\alpha(\bbP^1\times\bbP^1, (1-\beta)\bar C)$$
which contradicts Theorem \ref{thm:del-Pezzo-dynamic-alpha-degree-8-F0}.

\textbf{Case 2: Suppose $q\not\in E_i$.} Then $q\in L$ by Lemma \ref{lem:del-Pezzo-dynamic-alpha-degree-7-proof-in-line}. There is a birational morphism $\sigma\colon S \ra \bbF_1$ which contracts $E_2$ to a point. Since $q\not\in E_2$, the morphism $\sigma$ is an isomorphism around $q$. Let $\bar D = \sigma_*(D)\simq-K_{\bbF_1}$ and $\bar C = \sigma_*(C)\sim-K_{\bbF_1}$. Observe the curve $\bar C$ is smooth since $\bar C \cdot E_2=1$. The unique $(-1)$-curve of $\bbF_1$ is $\bar E_1=\sigma_*(E_1)$. Let $\bar L=\sigma_*(L)$. Observe that $\sigma$ is an isomorphism around $r_1$, since $r_1\not\in E_2$. The curve $\bar L_1=\sigma_*(L_1)$ is the unique fibre through $\bar r_1=\sigma_*(r_1)=\bar C \cap \bar E_1$ of the unique $\bbP^1$-fibration $\gamma\colon\bbF_1\ra \bbP^1$. Observe that $(\bar L_1 \cdot \bar C ) \vert_{\bar r_1}= (L_1\cdot C )\vert_{r_1}=2$, since $\sigma$ is an isomorphism around $q$ and $\bar q$. Therefore, Theorem \ref{thm:del-Pezzo-dynamic-alpha-degree-8-F1} gives that
\begin{equation}
\alpha(\bbF_1, (1-\beta)\bar C)=\epsilon:=
					\begin{dcases}
							1 												&\text{ for } 0<\beta\leq \frac{1}{6},\\
							\frac{1+2\beta}{8\beta}		&\text{ for } \frac{1}{6}\leq \beta\leq \frac{5}{6},\\
							\frac{1}{3\beta}					&\text{ for } \frac{5}{6}\leq \beta\leq 1.\\
					\end{dcases}\geq\omega_3.
\label{eq:del-Pezzo-dynamic-alpha-degree-7-proof-psemispecial}
\end{equation}

On the other hand, since $\sigma$ is an isomorphism near $q$ and $\bar q$, the pair
$$(\bbF_1, (1-\beta)\bar C + \lambda\beta \bar D)$$
is not log canonical at $\bar q = \sigma_*(q)$, but we may choose $\lambda$ so that $\epsilon\leq\lambda<\omega_3$. This contradicts \eqref{eq:del-Pezzo-dynamic-alpha-degree-7-proof-psemispecial}.
\end{proof}

For the rest of the proof of Theorem \ref{thm:del-Pezzo-dynamic-alpha-degree-7} we rule out the different positions of $q$, the point at which
$$(S, (1-\beta)C + \lambda\beta D )$$
is not log canonical. We will apply Theorem \ref{thm:pseudo-inductive-blow-up} several times on successive blow-ups of $q$ and points over $q$. Since $q\in C$, by Lemma \ref{lem:del-Pezzo-dynamic-alpha-degree-7-proof-case4} we may assume $q\neq E_1\cap L$.

Let $S_0=S$, $C_0=C$, $D_0=D$ and $q_0=q$ as in Theorem \ref{thm:pseudo-inductive-blow-up}. Let $i\geq 1$ and let $f_i\colon S_i\ra S_{i-1}$ be the blow-up of the point $q_{i-1}=C^i\cap F_{i-1}$ with exceptional curve $F_i$. Let $A^{i-1}$ or $A$ be any $\bbQ$-divisor in $S_{i-1}$. We will denote its strict transform in $S^i$ by $A^i$. Let $m_i=\mult_{q_i} D_i$. Recall from \eqref{eq:del-Pezzo-dynamic-alpha-degree-7-proof-bad-pair-blowup} that the pair
$$(S_1, (1-\beta)C^1 + \lambda\beta D^1 + (\lambda\beta m_0-\beta)F_1)$$
is not log canonical at some $t_1\in F_1$.

\begin{lem}
\label{lem:del-Pezzo-dynamic-alpha-degree-7-L}
The point $q\not\in L$.
\end{lem}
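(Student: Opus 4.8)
The plan is to argue by contradiction, following the local blow-up strategy already used for Lemma~\ref{lem:del-Pezzo-dynamic-alpha-degree-7-proof-in-line}. Suppose the pair $(S,(1-\beta)C+\lambda\beta D)$ is not log canonical at some $q\in L$ with $\lambda<\omega_i$. By Lemma~\ref{lem:del-Pezzo-dynamic-alpha-degree-7-proof-in-C} we have $q\in C$, and since $C\cdot L=1$ the point $q$ must be the unique intersection $q=r=L\cap C$. Moreover $q\notin E_1\cup E_2$, for otherwise $q=L\cap E_i$ would be a pseudo-Eckardt point, a situation already excluded in Lemma~\ref{lem:del-Pezzo-dynamic-alpha-degree-7-proof-case4}. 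As cases (i) and (ii) of the theorem are settled by Lemmas~\ref{lem:del-Pezzo-dynamic-alpha-degree-7-proof-case4} and \ref{lem:del-Pezzo-dynamic-alpha-degree-7-proof-case3}, only cases (iii) and (iv) of Notation~\ref{nota:del-Pezzo-deg7} remain; these are separated by whether $(R\cdot C)\vert_r=3$ (case iii) or $(R\cdot C)\vert_r=2$ (case iv), where $R$ is tangent to $C$ at $r$ and $L+2R\sim-K_S$.

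Before running the induction I would observe why a reduction argument does \emph{not} suffice here: contracting $E_2$ (or applying $\pi$ itself), which is an isomorphism near $q$ since $q\notin E_1\cup E_2$, lands on $\bbF_1$ (resp.\ $\bbP^2$), but the resulting threshold $\alpha(\bbF_1,(1-\beta)\bar C)$ is strictly smaller than $\omega_2$ for $\beta>\tfrac14$ (compare Theorem~\ref{thm:del-Pezzo-dynamic-alpha-degree-8-F1}). The large value of $\omega_i$ is a genuinely local feature of the tangency of $R$ with $C$ at $r$, so a direct local analysis at $r$ via Theorem~\ref{thm:pseudo-inductive-blow-up}, with $q_0=r$ and successive centres $q_j=C^j\cap F_j$, is forced.

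The essential preparatory step is to bound the multiplicities $m_j=\mult_{q_j}D^j$. Using the explicit log canonical pair furnished by the upper-bound computation of Lemma~\ref{lem:del-Pezzo-dynamic-alpha-degree-7}, namely $(S,(1-\beta)C+\lambda\beta(L+2R))$ in case (iii) and the simple-normal-crossings pair $(S,(1-\beta)C+\lambda\beta(3L+2E_1+2E_2))$ in case (iv), both log canonical because $\lambda\beta<\omega_i\beta$, together with log convexity (Lemma~\ref{lem:log-convexity}), I may assume that some component of the auxiliary divisor is not contained in $\Supp(D)$. Intersecting $D$ with $L$ (giving $L\cdot D=1$) and with a general member of $|R|$ (giving $R\cdot D=3$) then bounds $\mult_rD$ and, after each blow-up, the $m_j$; the subcase $R\subseteq\Supp(D)$ I would treat separately by writing $D=aR+\Omega$ and bounding $a$ through $L\cdot D$, exactly as the $H\subseteq\Supp(D)$ subcase was handled in Lemma~\ref{lem:del-Pezzo-dynamic-alpha-degree-7-proof-in-line}. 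The value of $(R\cdot C)\vert_r$ controls how long the strict transform $R^j$ keeps passing through $q_j$, which is precisely what distinguishes the three-blow-up analysis of case (iii) from the shorter analysis of case (iv).

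With these bounds I would verify, blow-up by blow-up, the numerical inequalities $\lambda\beta\bigl(\sum_j m_j\bigr)-k\beta\le 1$ that are the hypotheses of Theorem~\ref{thm:pseudo-inductive-blow-up}(i)--(iv), so that the non--log canonical point remains at $q_k=C^k\cap F_k$ at each stage. After the final blow-up, applying Lemma~\ref{lem:adjunction}(iii) with the strict transform $C^N$ gives $1<\lambda\beta\,(C\cdot D)\vert_r-N\beta\le 7\lambda\beta-N\beta$, whence $\lambda>\tfrac{1+N\beta}{7\beta}$; with $N$ dictated by the tangency of $R$ this forces $\lambda\ge\omega_i$, the desired contradiction. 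The main obstacle is exactly this bookkeeping: keeping the chain of multiplicity inequalities consistent through all blow-ups while separating the flex case from the simple-tangency case and the two subcases $R\subseteq\Supp(D)$ versus $R\not\subseteq\Supp(D)$. The geometric input is light, but, as in the previous lemmas, the final inequalities must be checked on each subinterval of $\beta$.
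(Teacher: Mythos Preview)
Your approach is essentially the paper's. The paper streamlines two of your steps: it uses the single auxiliary divisor $L+2R$ for \emph{both} tangency cases (Lemma~\ref{lem:appendix-delPezzo-deg7-dynamic-lct5} shows $(S,(1-\beta)C+\lambda\beta(L+2R))$ is log canonical for $\lambda\le\omega_1$ when $(R\cdot C)\vert_r=2$ and for $\lambda\le\omega_2$ when $(R\cdot C)\vert_r=3$), and since $L\cdot D=1<\mult_rD$ forces $L\subseteq\Supp(D)$, log-convexity with $L+2R$ necessarily removes $R$ from $\Supp(D)$, so no $R\subseteq\Supp(D)$ subcase ever arises; with $R\not\subseteq\Supp(D)$ the bounds from $R\cdot D=3$ and $L\cdot D=1$ feed four applications of Theorem~\ref{thm:pseudo-inductive-blow-up}, and Lemma~\ref{lem:adjunction}(iii) with $C^4$ yields $7\lambda\beta-4\beta>1$, contradicting $\lambda<\omega_i\le\omega_1$.
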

\begin{proof}
Suppose $q\in L$. Recall Notation \ref{nota:del-Pezzo-deg7}. If $(C\cdot L_i)\vert_{r_i}=1$ for $i=1$ or $i=2$, then $\lambda<\omega_1$ and if $(C\cdot L_1)\vert_{r_1}=2$, or equivalently for $L_2$, then $\lambda<\omega_2$. Observe that since $q\in C$ and $C\cap L=r$, then $q=r$.

The curve $L\subseteq \Supp(D)$, since otherwise
$$1=L\cdot D \geq \mult_{q} D>\frac{1-(1-\beta)}{\lambda\beta} = \frac{1}{\lambda}>1,$$
by Lemma \ref{lem:adjunction} (i). Therefore, we write $D=aL+\Omega$ where $L\not\subseteq \Supp(\Omega).$ Let $x_i=\mult_{q_i}\Omega^i$ for $i\geq 0$. Since $L\cdot C=1$, then $m_0=a+x_0$ and $m_i=x_i$ for $i\geq 1$.

By Lemma \ref{lem:appendix-delPezzo-deg7-dynamic-lct5}, the pair
$$(S, (1-\beta)C + \lambda\beta (L+2R))$$
is log canonical. By Lemma \ref{lem:log-convexity}, we may assume that $R\not\subseteq\Supp(D)$. Hence
\begin{align}
&3=R\cdot D \geq a+ x_0+x_1=m_0+m_1  &\text{ if } (R\cdot C)\vert_{q_0}=2,\nonumber\\
&3=R\cdot D \geq a+ x_0+x_1+x_2=m_0+m_1 +m_2 &\text{ if } (R\cdot C)\vert_{q_0}=3, 
\label{eq:del-Pezzo-dynamic-alpha-degree-7-proof-L-boundA}
\end{align}
since $R\cdot L=1$ and $3\geq (R\cdot C)\vert_{q_0}\geq 2$. In particular
$$\lambda\beta m_0<\omega_1\beta m_0\leq 3\omega_1\beta\leq 1$$
by Lemma \ref{lem:del-Pezzo-dynamic-alpha-degree-7-proof-in-C}. This is one of the hypotheses of Theorem \ref{thm:pseudo-inductive-blow-up} (ii)-(iv) when$ i\geq2$ ande we will assume it from now onwards. Observe that
$$1=L\cdot D \geq -a+x_0$$
which adding it to \eqref{eq:del-Pezzo-dynamic-alpha-degree-7-proof-L-boundA} implies
\begin{align*}
&4\geq 2x_0+x_1\geq 3x_1 &\text{ if } (R\cdot C)\vert_{q_0}&=2 \text{ and }\\
&4\geq 2x_0+x_1+x_2\geq 4x_2 &\text{ if } (R\cdot C)\vert_{q_0}&=3.
\end{align*}
Therefore
\begin{equation}
\frac{4}{3}\geq m_1= x_1\geq x_2=m_2 \text{ if } (R\cdot C)\vert_{q_0}=2 \text{ and }1\geq x_2\geq x_3=m_3 \text{ if } (R\cdot C)\vert_{q_0}=3.
\label{eq:del-Pezzo-dynamic-alpha-degree-7-proof-L-boundB}
\end{equation}

As we will see, \eqref{eq:del-Pezzo-dynamic-alpha-degree-7-proof-L-boundA} and \eqref{eq:del-Pezzo-dynamic-alpha-degree-7-proof-L-boundB} are very strong inequalities. 

\textbf{Case 1: Suppose $(R\cdot C)\vert_{q_0}=3$.} Then
$$\lambda\beta(\sum_{j=0}^{i-1} m_j)-(i-1)\beta\leq \lambda\beta(\sum_{j=0}^{i-1} m_j)\leq 3\lambda\beta\leq 1$$
for $i\leq 3$. Moreover:
$$\lambda\beta(\sum_{j=0}^{3} m_j)-3\beta<4\omega_i\beta-3\beta\leq 4\omega_1\beta-3\beta\leq1.$$
Indeed, if $0<\beta\leq \frac{1}{3}$, then $4\omega_1\beta-3\beta=\beta\leq 1$, while if $\frac{1}{3}\leq \beta\leq 1$, then $4\omega_1\beta-3\beta=\frac{4}{3}-3\beta\leq 1$. Hence, by Theorem \ref{thm:pseudo-inductive-blow-up} (iii) with $i=1,\ldots, 4$, the pair
$$(S_i, (1-\beta)C^i+ \lambda\beta D^i + (\lambda\beta(\sum_{j=0}^{i-2}m_j)-(i-1)\beta)F_{i-1}^i+(\lambda\beta(\sum_{j=0}^{i-1}m_j)-i\beta)F_i)$$
is not log canonical at $q_i=F_i\cap C^i$ for $i=1,\ldots,4$. In particular, for $i=4$, the pair
$$(S_4, (1-\beta)C^4 + \lambda\beta D^4 + (\lambda\beta(m_0+m_1+m_2+m_3)-4\beta)F_4)$$
is not log canonical at $q_4=F_4\cap C^4$. But then we apply Lemma \ref{lem:adjunction} (iii) with $C^4$ to obtain a contradiction:
\begin{align}
1&<C^4\cdot (\lambda\beta D^4 + (\lambda\beta(m_0+m_1+m_2+m_3)-4\beta)F_4)\nonumber\\
&=7\lambda\beta-4\beta\leq 7\omega_i\beta-4\beta\leq 7\omega_1\beta-4\beta\leq 1.
\label{eq:del-Pezzo-dynamic-alpha-degree-7-proof-L-boundC}
\end{align}
The last inequality is easy to obtain: if $0<\beta\leq\frac{1}{3}$, then $7\omega_1\beta-4\beta=3\beta\leq 1$ and if $\frac{1}{3}\leq\beta\leq 1$, then $7\omega_1\beta-4\beta=\frac{7}{3}-4\beta\leq 1$. Thus, the lemma is proven when $(R\cdot C)\vert_{q_0}=3$.

 \textbf{Suppose that $(R\cdot C)\vert_{q_0}=2$.} Inequalities \eqref{eq:del-Pezzo-dynamic-alpha-degree-7-proof-L-boundA} and \eqref{eq:del-Pezzo-dynamic-alpha-degree-7-proof-L-boundB} together with Lemma \ref{lem:del-Pezzo-dynamic-alpha-degree-7-proof-in-C} imply
\begin{align*}
&\lambda\beta(m_0+m_1)-\beta< 3\omega_1\beta-\beta\leq 3\omega_1\beta\leq 1		&(i=2),\\
&\lambda\beta(m_0+m_1+m_2)-2\beta<\frac{13}{3}\omega_1\beta-2\beta \leq1			&(i=3),\\
&\lambda\beta(m_0+m_1+m_2+m_3)-3\beta<\frac{17}{3}\omega_1\beta-3\beta\leq1		&(i=4).
\end{align*}
Indeed, if $0<\beta\leq \frac{1}{3}$, then $\frac{13}{3}\omega_1\beta-2\beta=\frac{7}{3}\beta\leq \frac{7}{9}\leq 1$ and $\frac{17}{3}\omega_1\beta-3\beta=\frac{8}{3}\beta\leq \frac{8}{9}\leq 1$, while if $\frac{1}{3}\leq \beta\leq 1$, then $\frac{13}{3}\omega_1\beta-2\beta=\frac{13}{9}-2\beta \leq \frac{7}{9}\leq 1$ and $\frac{17}{3}\omega_1\beta-3\beta=\frac{17}{9}-3\beta\leq \frac{8}{9}\leq 1$.

Hence, by Theorem \ref{thm:pseudo-inductive-blow-up} (iii) with $i=1,\ldots, 4$, the pair
$$(S_i, (1-\beta)C^i+ \lambda\beta D^i + (\lambda\beta(\sum_{j=0}^{i-2}m_j)-(i-1)\beta)F_{i-1}^i+(\lambda\beta(\sum_{j=0}^{i-1}m_j)-i\beta)F_i)$$
is not log canonical at $q_i=F_i\cap C^i$ where $i=1,\ldots,4$. In particular, for $i=4$, the pair
$$(S_4, (1-\beta)C^4 + \lambda\beta D^4 + (\lambda\beta(m_0+m_1+m_2+m_3)-4\beta)F_4)$$
is not log canonical at $q_4=F_4\cap C^4$. But then we apply Lemma \ref{lem:adjunction} (iii) with $C^4$ to obtain a contradiction:
\begin{align*}
1&<C^4\cdot (\lambda\beta D^4 + (\lambda\beta(m_0+m_1+m_2+m_3)-4\beta)F_4)\nonumber\\
&=7\lambda\beta-4\beta< 7\omega_1\beta-4\beta\leq 1,
\end{align*}
as in \eqref{eq:del-Pezzo-dynamic-alpha-degree-7-proof-L-boundC}. The last inequality is easy to obtain: if $0<\beta\leq\frac{1}{3}$, then $7\omega_1\beta-4\beta=3\beta\leq 1$ and if $\frac{1}{3}\leq\beta\leq 1$, then $7\omega_1\beta-4\beta=\frac{7}{3}-4\beta\leq 1$.
\end{proof}

Therefore $q\in (E_1\cup E_2)\setminus L$, since we may relabel these exceptional curves, we may assume without loss of generality that $ q\in E_1\cap C$. The following Lemma finishes the proof of Theorem \ref{thm:del-Pezzo-dynamic-alpha-degree-7}.
\begin{lem}
\label{lem:del-Pezzo-dynamic-alpha-degree-7-proof-case1-L}
The point $q\not\in E_1$.
\end{lem}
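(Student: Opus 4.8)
The plan is to rule out $q\in E_1\cap C$ by reducing to a surface of lower degree where the dynamic alpha--invariant is already known, exactly as in Lemmas \ref{lem:del-Pezzo-dynamic-alpha-degree-7-proof-case4} and \ref{lem:del-Pezzo-dynamic-alpha-degree-7-proof-case3}, and when such a reduction is blocked, to resort to the explicit inductive blow-up machinery of Theorem \ref{thm:pseudo-inductive-blow-up} together with a complementary divisor from the appendix. Recall the standing hypotheses of case (iii)/(iv): $C$ has no pseudo-Eckardt point and $(C\cdot L_i)\vert_{r_i}=1$ for both $i$, so in particular $q\in E_1\cap C=r_1$ and the line $L_1$ through $r_1$ meets $C$ transversally there. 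Since $q\notin L$ (by Lemma \ref{lem:del-Pezzo-dynamic-alpha-degree-7-L}) and $q\notin E_2$ (as the three lines $E_1,E_2,L$ are disjoint or meet only along $L$), the only relevant lines through $q$ are $E_1$ and $L_1$.

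The first step I would take is the contraction approach. There is a birational morphism $\sigma\colon S\ra\bbF_1$ contracting $E_2$; since $q\notin E_2$, $\sigma$ is an isomorphism near $q$, and $\bar C=\sigma_*(C)$ is smooth with $\bar C\sim -K_{\bbF_1}$. Under $\sigma$ the curve $E_1$ becomes the unique $(-1)$--curve $\bar E_1$, and $\bar L_1=\sigma_*(L_1)$ is the fibre of $\gamma\colon\bbF_1\ra\bbP^1$ through $\bar r_1=\bar C\cap\bar E_1$. Because $(L_1\cdot C)\vert_{r_1}=1$ is preserved, Theorem \ref{thm:del-Pezzo-dynamic-alpha-degree-8-F1} gives $\alpha(\bbF_1,(1-\beta)\bar C)=\omega_2$ (the generic case $(\bar F\cdot\bar C)\vert_{\bar r}=1$). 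The pushforward pair $(\bbF_1,(1-\beta)\bar C+\lambda\beta\bar D)$ is not log canonical at $\bar q$ with $\lambda<\omega_i\le\omega_1$. This immediately yields a contradiction in case (iv), where $\lambda<\omega_1=\omega_2$, closing that subcase completely.

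The hard part will be case (iii), where $(R\cdot C)\vert_r=3$ and $\omega_2<\omega_1$: here $\lambda<\omega_2$ need not be below $\alpha(\bbF_1,(1-\beta)\bar C)=\omega_2$, so the contraction to $\bbF_1$ is inconclusive and I cannot simply invoke Theorem \ref{thm:del-Pezzo-dynamic-alpha-degree-8-F1}. For this subcase I would argue directly. Since $q=r_1\in E_1\cap C$ and the pair is not log canonical at $q$, Lemma \ref{lem:adjunction} (i) forces $E_1\subseteq\Supp(D)$ (otherwise $1=E_1\cdot D\ge\mult_qD>1/\lambda>1$). Write $D=aE_1+\Omega$ with $E_1\not\subseteq\Supp(\Omega)$ and set $x_i=\mult_{q_i}\Omega^i$; since $E_1\cdot C=1$ we get $m_0=a+x_0$ and $m_i=x_i$ for $i\ge1$. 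Using $L_1\sim\pioplane{1}-E_1$ with $q_1=C^1\cap L_1^1$ (as $(L_1\cdot C)\vert_{r_1}=1$) and the appropriate complementary anticanonical divisor from the appendix (a pair analogous to $2L_1+2E_1+L\sim-K_S$, shown log canonical against $(1-\beta)C$), I would apply Lemma \ref{lem:log-convexity} to remove some component and thereby bound $m_0+m_1$ and $m_0+m_1+m_2$ by small integers coming from intersection numbers such as $E_1\cdot D$, $L_1\cdot D$, $L\cdot D$ and $R\cdot D$.

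With those multiplicity bounds in hand, the endgame mirrors Lemma \ref{lem:del-Pezzo-dynamic-alpha-degree-7-proof-in-line}: I would feed the bounds into Theorem \ref{thm:pseudo-inductive-blow-up}, checking at each stage $i=1,\dots,4$ that the numerical conditions $\lambda\beta(\sum_{j=0}^{i-1}m_j)-(i-1)\beta\le1$ hold (verified by the usual split into $0<\beta\le\tfrac14$, $\tfrac14\le\beta\le\tfrac49$, $\tfrac49\le\beta\le1$ using $\lambda<\omega_2$), so that the non-log-canonical point persists at $q_i=F_i\cap C^i$. After four blow-ups the pair
$$\bigl(S_4,(1-\beta)C^4+\lambda\beta D^4+(\lambda\beta(m_0+m_1+m_2+m_3)-4\beta)F_4\bigr)$$
is not log canonical at $q_4=F_4\cap C^4$, and Lemma \ref{lem:adjunction} (iii) applied with $C^4$ gives
$$1<C^4\cdot\bigl(\lambda\beta D^4+(\lambda\beta(m_0+m_1+m_2+m_3)-4\beta)F_4\bigr)=7\lambda\beta-4\beta<7\omega_2\beta-4\beta.$$
The final contradiction then follows from the arithmetic inequality $7\omega_2\beta-4\beta\le1$ for all $\beta\in(0,1]$, checked piecewise against the definition of $\omega_2$. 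The main obstacle throughout is organizing the case analysis so that exactly one of $E_1,L_1$ can be assumed outside $\Supp(D)$ and that the resulting multiplicity bounds are tight enough to survive all four applications of Theorem \ref{thm:pseudo-inductive-blow-up}; I expect the bookkeeping, rather than any single conceptual step, to be the delicate part.
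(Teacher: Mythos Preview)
Your contraction step is where the argument breaks. When you contract $E_2$ and land on $\bbF_1$, the fibre through $\bar r_1=\bar E_1\cap\bar C$ is $\bar L_1=\sigma_*(L_1)$, and since $(L_1\cdot C)\vert_{r_1}=1$ you are in the \emph{generic} case of Theorem~\ref{thm:del-Pezzo-dynamic-alpha-degree-8-F1}. But the invariant you obtain there,
\[
\alpha(\bbF_1,(1-\beta)\bar C)=\min\Bigl\{1,\tfrac{1+\beta}{5\beta},\tfrac{1}{3\beta}\Bigr\},
\]
is exactly the degree~$7$ quantity $\omega_4$, not $\omega_1$. Your assertion ``$\omega_1=\omega_2$'' conflates the $\bbF_1$ notation with the degree~$7$ notation and is false: for every $\beta\in(\tfrac14,\tfrac23)$ one has $\omega_4=\tfrac{1+\beta}{5\beta}<\min\{1,\tfrac{1}{3\beta}\}=\omega_1$. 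Hence $\lambda<\omega_1$ does \emph{not} imply $\lambda<\alpha(\bbF_1,(1-\beta)\bar C)$, and no contradiction follows from the pushforward. (Contracting $L$ to reach $\bbP^1\times\bbP^1$ is blocked for the same reason: that would need $\lambda<\omega_3$.) This is why the paper abandons the contraction trick here and runs the blow-up machinery directly in \emph{both} cases (iii) and (iv), not only in (iii).

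Your sketch of the direct argument also needs repair. If $(L_1\cdot C)\vert_{r_1}=1$ then $L_1$ and $C$ are transversal at $q_0$, so after one blow-up $L_1^1\cap C^1=\emptyset$; thus $q_1=F_1\cap C^1$ does \emph{not} lie on $L_1^1$, contrary to what you wrote. The paper instead decomposes $D=aE_1+bL_1+\Omega$ (keeping the $L_1$ term), invokes the complementary divisor $2L_1+L+2E_1\sim-K_S$ of Lemma~\ref{lem:appendix-delPezzo-deg7-dynamic-lct6} to force either $b=0$ or $a\le 1$ via $L\not\subset\Supp(D)$, and uses the resulting bounds $a+x_0+x_1\le 2$ and $-a+b+x_0\le 1$ to feed Theorem~\ref{thm:pseudo-inductive-blow-up} through four successive blow-ups, finishing with $7\omega_1\beta-4\beta\le 1$. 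Your outline for case (iii) is morally in this direction, but you will need it for case (iv) as well, with the correct decomposition and the correct location of $q_1$.
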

\begin{proof}
Suppose $q=E_1\cap C$. Notice that $E_1\subseteq\Supp(D)$, since otherwise
$$1=E_1\cdot D \geq \mult_{q} D>\frac{1-(1-\beta)}{\lambda\beta} = \frac{1}{\lambda}>1,$$
by Lemma \ref{lem:adjunction} (i). Therefore, we write $D=aE_1+bL_1+\Omega$ where $E_1, L_1\not\subseteq \Supp(\Omega)$, $a>0$ and $b\geq 0$. Let $x_i=\mult_{q_i}\Omega^i$.

Recall from \eqref{eq:del-Pezzo-dynamic-alpha-degree-7-proof-bad-pair-blowup} that the pair
$$(S_1, (1-\beta)C^1+\lambda\beta D^1+ \lambda\beta(m_0-\beta)F_1)$$
is not log canonical at some $t_1\in F_1$, where $\lambda<\omega_1$ if $(C\cdot L_1)\vert_{r_1}=1$ and $(C\cdot L_1)\vert_{r_1}=2$ if $\lambda<\omega_2$.

By Lemma \ref{lem:appendix-delPezzo-deg7-dynamic-lct6}, the pair
$$(S, (1-\beta)C + \lambda\beta(2L_1+L+2E_1))$$
is log canonical. Observe that $2L_1+L+2E_1\sim-K_S$. By Lemma \ref{lem:log-convexity}, either $L\not\subseteq\Supp(D)$ or $L_1\not\subseteq\Supp(D)$ and either
\begin{equation}
1=L\cdot D \geq a, \text{ or } b=0.
\label{eq:del-Pezzo-dynamic-alpha-degree-7-proof-final-bound1}
\end{equation}
In either case, since $(L_1)^2=0$ we have
\begin{align}
2&=L_1\cdot D \geq a+x_0+x_1, \label{eq:del-Pezzo-dynamic-alpha-degree-7-proof-final-bound2}\\
1&=E_1\cdot D \geq -a+b+x_0. \label{eq:del-Pezzo-dynamic-alpha-degree-7-proof-final-bound3}
\end{align}
Observe that if $a\leq 1$, adding \eqref{eq:del-Pezzo-dynamic-alpha-degree-7-proof-final-bound1} twice and \eqref{eq:del-Pezzo-dynamic-alpha-degree-7-proof-final-bound3}, we obtain $m_0=a+b+x_0\leq 3$ while if $b=0$, then \eqref{eq:del-Pezzo-dynamic-alpha-degree-7-proof-final-bound2} gives $m_0=a+b+x_0+x_1\leq 2<3$. Therefore
$$\lambda\beta m_0-\beta\leq\lambda\beta m_0<3\omega_1\beta\leq 1,$$
by Lemma \ref{lem:del-Pezzo-dynamic-alpha-degree-7-proof-in-C}. This is a necessary condition for Theorem \ref{thm:pseudo-inductive-blow-up} (ii), (iii) and (iv) when $i\geq 2$ and we will assume it from now on when invoking that Lemma.

By Theorem \ref{thm:pseudo-inductive-blow-up} (ii) with $i=1$ we have $t_1=F_1\cap C^1=q_1$. Moreover, by Theorem \ref{thm:pseudo-inductive-blow-up} (i), the pair
$$(S_2, (1-\beta)C^2+\lambda\beta D^2 + (\lambda\beta m_0 -\beta)F_1^2 + (\lambda\beta(m_0+m_1)-2\beta)F_2)$$
is not log canonical at some $t_2\in F_2$. 

\textbf{Case 1:  Suppose that $(C\cdot L_1)_{r_1}=1$.}
Since $(C\cdot L_1)_{r_1}=1$, then $\lambda<\omega_1$. If $b=0$, then
$$\lambda\beta(m_0+m_1)-\beta \leq \lambda\beta(a+x_0+x_1)-\beta<2\omega_1\beta-\beta\leq 4\omega_1\beta-\beta\leq 1$$
by \eqref{eq:del-Pezzo-dynamic-alpha-degree-7-proof-final-bound1}. If $a\leq 1$, then
\begin{equation}
\lambda\beta(m_0+m_1)-\beta \leq \lambda\beta(a+b+2x_0+x_1)-\beta<4\omega_1\beta-\beta\leq 1
\label{eq:del-Pezzo-dynamic-alpha-degree-7-proof-final-bound4}
\end{equation}
by \eqref{eq:del-Pezzo-dynamic-alpha-degree-7-proof-final-bound1}, \eqref{eq:del-Pezzo-dynamic-alpha-degree-7-proof-final-bound2} and \eqref{eq:del-Pezzo-dynamic-alpha-degree-7-proof-final-bound3}. The last inequalities follow by case analysis: if $0<\beta\leq \frac{1}{3}$, then $4\omega_1\beta-\beta=3\beta\leq 1$ while if $\frac{1}{3}\leq\beta\leq1$, then $4\omega_1\beta-\beta=\frac{4}{3}-\beta\leq 1$. Hence, by Theorem \ref{thm:pseudo-inductive-blow-up} (iii) with $i=2$, we have $t_2= F_2\cap C^2=q_2$. Moreover, since 
$$\lambda\beta(m_0+m_1)-2\beta<\lambda\beta(m_0+m_1)-\beta\leq 1$$
then Theorem \ref{thm:pseudo-inductive-blow-up} (i) with $i=3$ gives that the pair
$$(S_3, (1-\beta)C^3 + \lambda\beta D^3 +(\lambda\beta(m_0+m_1)-2\beta)F_2 + (\lambda\beta(m_0+m_1+m_2)-3\beta)F_3)$$
is not log canonical at some point $t_3\in F_3$. 

If $b=0$, then
$$\lambda\beta(m_0+m_1+m_2)-2\beta\leq \lambda\beta(a+x_0+x_1+x_2)-2\beta\leq \lambda\beta(2a+2x_0+2x_1)-2\beta\leq 4\omega_1\beta-2\beta\leq 1$$
by \eqref{eq:del-Pezzo-dynamic-alpha-degree-7-proof-final-bound1} and \eqref{eq:del-Pezzo-dynamic-alpha-degree-7-proof-final-bound4}. If $a\leq 1$, then
$$\lambda\beta(m_0+m_1+m_2)-2\beta\leq \lambda\beta(a+b+2x_0+x_1)-2\beta\leq 4\omega_1\beta-2\beta\leq 1$$
by \eqref{eq:del-Pezzo-dynamic-alpha-degree-7-proof-final-bound1}, \eqref{eq:del-Pezzo-dynamic-alpha-degree-7-proof-final-bound2}, \eqref{eq:del-Pezzo-dynamic-alpha-degree-7-proof-final-bound3} and \eqref{eq:del-Pezzo-dynamic-alpha-degree-7-proof-final-bound4}. Applying Theorem \ref{thm:pseudo-inductive-blow-up} (iii) with $i=3$, we conclude that $t_3=F_3\cap C^3=q_3$.

Now, notice that if $b=0$, then
$$\lambda\beta(m_0+m_1+m_2+m_3)-3\beta<\omega_1 \beta(a+x_0+x_1+x_2+x_3)-3\beta\leq 6\omega_1\beta-3\beta\leq 1$$
by \eqref{eq:del-Pezzo-dynamic-alpha-degree-7-proof-final-bound1} while if $a\leq 1$, then
\begin{equation}
\lambda\beta(m_0+m_1+m_2+m_3)-3\beta<\omega_1 \beta(a+b+x_0+x_1+2x_2)-3\beta\leq 6\omega_1\beta-3\beta\leq 1
\label{eq:del-Pezzo-dynamic-alpha-degree-7-proof-final-bound7}
\end{equation}
by \eqref{eq:del-Pezzo-dynamic-alpha-degree-7-proof-final-bound1}, \eqref{eq:del-Pezzo-dynamic-alpha-degree-7-proof-final-bound2} and \eqref{eq:del-Pezzo-dynamic-alpha-degree-7-proof-final-bound3}. The last part of each inequality follows by case analysis: if $0<\beta\leq\frac{1}{3}$, then $6\omega_1\beta-3\beta=3\beta\leq 1$, while if $\frac{1}{3}\leq \beta\leq 1$, then $6\omega_1\beta-3\beta=2-3\beta\leq 1$.

Hence, by Theorem \ref{thm:pseudo-inductive-blow-up} (iii) with $i=4$, the pair
$$(S_4, (1-\beta)C^4 + \lambda\beta D^4 +(\lambda\beta(m_0+m_1+m_2+m_3)-3\beta)F_3 + (\lambda\beta(m_0+m_1+m_2+m_3)-4\beta)F_4)$$
is not log canonical at $F_4\cap C^4=q_4$. But then, the pair
$$(S_4, (1-\beta)C^4 + \lambda\beta D^4 + (\lambda\beta(m_0+m_1+m_2+m_3)-4\beta)F_4)$$
is not log canonical at $q_4$. Lemma \ref{lem:adjunction} (iii) applied with $C^4$ gives:
\begin{align*}
1&<C^4\cdot(\lambda\beta D^4 + (\lambda\beta(m_0+m_1+m_2+m_3)-4\beta)F_4)\\
 &=7\lambda\beta-\lambda\beta(\sum^3_{i=0}m_i)-+\lambda\beta(\sum^3_{i=0}m_i)-4\beta<7\omega_1\beta-4\beta.
\end{align*}
However we claim that $7\omega_1\beta-4\beta\leq 1$ which gives a contradiction. Indeed, if $0<\beta\leq \frac{1}{3}$, then $7\omega_1\beta-4\beta<3\beta\leq 1$, whereas if $\frac{1}{3}\leq \beta\leq 1$, then $7\omega_1\beta-4\beta<\frac{7}{3}-4\beta\leq 1$.

\textbf{Case 2:  Suppose that $(C\cdot L_1)_{r_1}=2$.}
Notice that $\lambda<\omega_3$ by the statement of Theorem \ref{thm:del-Pezzo-dynamic-alpha-degree-7}. Recall that $D=aE_1+bL_1+\Omega$ and $q_i=C^i\cap F_i$. Then $m_0=a+b+x_0$, $m_1=b+x_1$ and $m_i=x_i$ for $i\geq 2$.

Recall \eqref{eq:del-Pezzo-dynamic-alpha-degree-7-proof-final-bound1}. If $a\leq 1$, then adding \eqref{eq:del-Pezzo-dynamic-alpha-degree-7-proof-final-bound3} and two times \eqref{eq:del-Pezzo-dynamic-alpha-degree-7-proof-final-bound1} we obtain
$$m_0=a+b+x_0\leq 3,$$
whereas if $b=0$, then by \eqref{eq:del-Pezzo-dynamic-alpha-degree-7-proof-final-bound2}, then
$$m_0=a+x_0+\leq a+x_0+x_1\leq 2<3.$$
In both cases we have
$$\lambda\beta m_0-2\beta<6\omega_3\beta-2\beta\leq 1.$$
Indeed, for $0<\beta\leq \frac{1}{4}$, then $6\omega_3\beta-2\beta=4\beta\leq 1$. For $\frac{1}{4}\leq \beta\leq \frac{1}{2}$, we have $6\omega_3\beta-2\beta=1+2\beta-2\beta=1$ and for $\frac{1}{2}\leq \beta\leq 1$, we have $6\omega_3\beta-2\beta=2-2\beta\leq 1$.

Therefore we may apply Theorem \ref{thm:pseudo-inductive-blow-up} (iv) with $i=2$ to conclude that $t_2=C^2\cap F_2=q_2$. Since
$$\lambda\beta(m_0+m_1)-2\beta\leq 2\lambda\beta m_0-2\beta\leq 1$$
we may apply Theorem \ref{thm:pseudo-inductive-blow-up} (i) with $i=3$ to conclude that the pair
$$(S_3, (1-\beta)C^3+\lambda\beta D^3 +(\lambda\beta(m_0+m_1)-2\beta)F_2^3 +(\lambda\beta(m_0+m_1+m_2)-3\beta)F_3)$$
is not log canonical at some $t_3\in F_3$. Observe \eqref{eq:del-Pezzo-dynamic-alpha-degree-7-proof-final-bound1}. If $b=0$, then \eqref{eq:del-Pezzo-dynamic-alpha-degree-7-proof-final-bound2} gives
$$m_0+m_1+m_2=a+x_0+x_1+x_2\leq2(a+x_0+x_1)\leq 4<6 $$
while if $a\leq 1$, then adding \eqref{eq:del-Pezzo-dynamic-alpha-degree-7-proof-final-bound1}, \eqref{eq:del-Pezzo-dynamic-alpha-degree-7-proof-final-bound2} and two times \eqref{eq:del-Pezzo-dynamic-alpha-degree-7-proof-final-bound3}, we obtain
$$m_0+m_1+m_2=a+2b+x_0+x_1+x_2\leq6.$$
Therefore we have proven
$$\lambda\beta(m_0+m_1+m_2)-2\beta<6\omega_3-2\beta\leq 1.$$
The last inequality follows by case analysis. If $0<\beta\leq \frac{1}{4}$, then $6\omega_3\beta-2\beta=4\beta\leq 1$. If $\frac{1}{4}\leq \beta\leq \frac{1}{2}$, then $6\omega_3\beta-2\beta=1+2\beta-2\beta=1$ and if $\frac{1}{2}\leq \beta\leq 1$, then $6\omega_3\beta-2\beta=2-2\beta\leq 1$.

Hence we may apply Theorem \ref{thm:pseudo-inductive-blow-up} (iii) with $i=3$ to obtain that the pair
$$(S_3, (1-\beta)C^3 + \lambda\beta D^3 +(\lambda\beta(m_0+m_1)-2\beta)F_2 + (\lambda\beta(m_0+m_1+m_2)-3\beta)F_3)$$
is not log canonical only at $t_3=F_3\cap C^3=q_3$. In particular, the pair
$$(S_3, (1-\beta)C^3 + \lambda\beta D^3 + (\lambda\beta(m_0+m_1+m_2)-3\beta)F_3)$$
is not log canonical at $q_3$. 

Lemma \ref{lem:adjunction} (iii) applied with $C^4$ gives:
\begin{align*}
1&<C^3\cdot(\lambda\beta D^3 + (\lambda\beta(m_0+m_1+m_2)-3\beta)F_3)\\
 &=7\lambda\beta-\lambda\beta(\sum^2_{i=0}m_i)-+\lambda\beta(\sum^2_{i=0}m_i)-3\beta=7\omega_3\beta-3\beta.
\end{align*}
However we claim that $7\omega_3\beta-3\beta\leq 1$ which gives a contradiction. Indeed, if $0<\beta\leq \frac{1}{4}$, then $7\omega_3\beta-3\beta=4\beta\leq 1$, whereas if $\frac{1}{3}\leq \beta\leq \frac{1}{2}$, then $7\omega_3\beta-3\beta=\frac{7(1+2\beta)}{6}-3\beta=\frac{7-4\beta}{6}\leq 1$. Finally, if $\frac{1}{2}\leq\beta\leq 1$, then $7\omega_3\beta-3\beta=\frac{7}{3}-3\beta\leq \frac{5}{6}<1$.
\end{proof}

Therefore Theorem \ref{thm:del-Pezzo-dynamic-alpha-degree-7} is proven. Together with Theorem \ref{thm:Jeffres-Mazzeo-Rubinstein}, we obtain the following Corollary:
\begin{cor}
Let $S$ be a non-singular del Pezzo surface of degree $7$ and $C$ a general curve in $\vert-K_S\vert$. Then $(S, (1-\beta)C)$ has a K\"ahler--Einstein metric with edge singularities of angle $2\pi\beta$ along $C$ for all $0<\beta<\frac{1}{2}$.
\end{cor}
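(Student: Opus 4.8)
The plan is to combine the main dynamic $\alpha$-invariant computation (Theorem~\ref{thm:del-Pezzo-dynamic-alpha-degree-7}) with the existence criterion of Jeffres--Mazzeo--Rubinstein (Theorem~\ref{thm:Jeffres-Mazzeo-Rubinstein}). The latter requires two hypotheses: that $-(K_S+(1-\beta)C)$ be ample, and that the threshold inequality $\alpha(S,(1-\beta)C)>\frac{\dim S}{\dim S+1}=\frac{2}{3}$ hold. Since $C\in\vert-K_S\vert$ is effective and $-K_S$ is ample, the first hypothesis is automatic for $0<\beta\leq 1$: indeed $-(K_S+(1-\beta)C)\sim-(K_S+(1-\beta)(-K_S))=-\beta K_S=\beta(-K_S)$, which is ample. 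So the ampleness condition is satisfied for the whole range and requires no work.

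The substance of the proof is therefore to verify $\alpha(S,(1-\beta)C)>\frac{2}{3}$ for $0<\beta<\frac{1}{2}$ when $C$ is general. First I would invoke the phrase ``general curve in $\vert-K_S\vert$'': for a generic member $C$, the surface contains no pseudo-Eckardt point on $C$, and the tangency conditions $(C\cdot L_i)\vert_{r_i}=1$ and $(R\cdot C)\vert_r\leq 2$ hold, placing us in case~(iv) of Theorem~\ref{thm:del-Pezzo-dynamic-alpha-degree-7}, where
\begin{equation*}
\alpha(S,(1-\beta)C)=\omega_1=
\begin{dcases}
1 &\text{ for } 0<\beta\leq \tfrac{1}{3},\\
\tfrac{1}{3\beta} &\text{ for } \tfrac{1}{3}\leq \beta\leq 1.
\end{dcases}
\end{equation*}
Then the verification is a direct computation: for $0<\beta\leq\frac{1}{3}$ we have $\omega_1=1>\frac{2}{3}$ immediately; for $\frac{1}{3}\leq\beta<\frac{1}{2}$ we have $\omega_1=\frac{1}{3\beta}$, and $\frac{1}{3\beta}>\frac{2}{3}$ is equivalent to $\beta<\frac{1}{2}$, which holds by hypothesis. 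This shows the strict inequality holds precisely on $0<\beta<\frac{1}{2}$, matching the stated range.

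The ``hard part'' here is not in the corollary itself — the corollary is a short deduction — but in justifying which genericity conditions are needed and that they are indeed generic. I would argue that a very general $C\in\vert-K_S\vert$ meets each of the finitely many lines $E_1,E_2,L$ transversally at distinct points (avoiding the two pseudo-Eckardt points, which lie on $L$ and form a proper closed condition), and is tangent to $R$ to order exactly $2$ rather than $3$ (a flex-type condition, again closed of positive codimension in the linear system). Each failure is a nonempty proper closed subvariety of the projective space $\vert-K_S\vert$, so their complement is a dense open set, establishing the genericity. Once case~(iv) is secured, the inequality $\omega_1>\frac{2}{3}$ for $\beta\in(0,\tfrac12)$ follows as above, and applying Theorem~\ref{thm:Jeffres-Mazzeo-Rubinstein} yields the K\"ahler--Einstein edge metric of angle $2\pi\beta$ along $C$, completing the proof.
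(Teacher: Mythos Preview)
Your proposal is correct and follows exactly the approach the paper indicates: the paper does not give a separate proof but simply states the corollary follows from Theorem~\ref{thm:del-Pezzo-dynamic-alpha-degree-7} together with Theorem~\ref{thm:Jeffres-Mazzeo-Rubinstein}. You have spelled out the details the paper leaves implicit---checking ampleness of $-(K_S+(1-\beta)C)=\beta(-K_S)$, identifying that a general $C$ falls in case~(iv) so $\alpha(S,(1-\beta)C)=\omega_1$, and verifying $\omega_1>\tfrac{2}{3}$ exactly on $(0,\tfrac{1}{2})$---and your genericity discussion (avoiding the two pseudo-Eckardt points on $L$ and the flex-type tangency conditions) is a reasonable elaboration of what ``general'' means here.
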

This result is of special interest, given that $S$ does not accept a K\"ahler--Einstein metric.
\section{Del Pezzo surface of degree $6$}
\label{sec:dynamic-alpha-degree6}
In this section we will follow and extend Notation \ref{nota:del-Pezzos}. Let $S$ be a non-singular del Pezzo surface of degree $6$. Given any model $\pi\colon S \ra \bbP^2$ we have exceptional curves $E_1,E_2,E_3\subset S$ mapping to points $p_1,p_2,p_3\in \bbP^2$, respectively. The other $3$ lines in $S$ (see Lemma \ref{lem:del-Pezzo-lines9-d} correspond to strict transforms of lines in $\bbP^2$ through $p_i,p_j$. We will denote them by
$$L_{ij}\sim\pioplane{1}-E_i-E_j\text{ for } 1\leq i <j\leq 3.$$

\begin{lem}
\label{lem:del-Pezzo-dynamic-alpha-degree-6-upbound}
Let $S$ be a non-singular del Pezzo surface of degree $6$ and $C\in\vert-K_S\vert$ be a smooth curve.
\begin{itemize}
	\item[(i)] If $C$ contains a pseudo-Eckardt point of $S$, then
	\begin{equation}
			\alpha(S,(1-\beta)C)\leq\omega_3:=
					\begin{dcases}
							1 												&\text{ for } 0<\beta\leq \frac{1}{3},\\
							\frac{1+\beta}{4\beta}		&\text{ for } \frac{1}{3}\leq \beta\leq 1.
					\end{dcases}
		\label{eq:del-Pezzo-dynamic-alpha-degree-6-special-upbound}
	\end{equation}
	\item[(ii)]If $C$ contains no pseudo-Eckardt points but there is a model $\pi\colon S\ra \bbP^2$ such that through $p=C\cap E_1$ there is a smooth rational curve $L\sim\pioplane{1} -E_1$ satisfying $(C\cdot L )\vert_{p}=2$, then
				\begin{equation}
						\alpha(S,(1-\beta)C)\leq\omega_2:=
								\begin{dcases}
										1 												&\text{ for } 0<\beta\leq \frac{1}{3},\\
										\frac{1+2\beta}{5\beta}					&\text{ for } \frac{1}{3}\leq \beta\leq \frac{3}{4},\\
										\frac{1}{2\beta}					&\text{ for } \frac{3}{4}\leq \beta\leq 1.
								\end{dcases}
					\label{eq:del-Pezzo-dynamic-alpha-degree-6-semispecial-upbound}
				\end{equation}
	
	\item[(iii)]
						If $C$ contains no pseudo-Eckardt points and for all models $\pi\colon S \ra \bbP^2$ the unique irreducible curve $L\in \vert\pioplane{1} -E_1\vert$ passing through $p=C\cap E_1$ has simple normal crossings with $C$ (i.e. $(C\cdot L)\vert_{p}=E_1$), then
				\begin{equation}
						\alpha(S,(1-\beta)C)\leq\omega_1:=
								\begin{dcases}
										1 												&\text{ for } 0<\beta\leq \frac{1}{2},\\
										\frac{1}{2\beta}					&\text{ for } \frac{1}{2}\leq \beta\leq 1.
								\end{dcases}
					\label{eq:del-Pezzo-dynamic-alpha-degree-6-generic-upbound}
				\end{equation}
\end{itemize}	
\end{lem}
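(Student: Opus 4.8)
The plan is to prove each inequality by exhibiting a single effective anticanonical $\bbQ$-divisor $D\simq -K_S$ and reading the bound off the minimal log resolution of the pair $(S,(1-\beta)C+\lambda\beta D)$. Since $C\simq -K_S$ is smooth, the pair $(S,(1-\beta)C+\lambda\beta C)=(S,(1-\beta+\lambda\beta)C)$ is log canonical exactly when $\lambda\leq 1$, so $\lct(S,(1-\beta)C,\beta C)=1$, and for any chosen $D$ one has
$$\alpha(S,(1-\beta)C)\leq\min\{1,\lct(S,(1-\beta)C,\beta D)\}.$$
It therefore suffices to pick $D$ so that the right-hand side equals the claimed $\omega_i$. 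The building blocks are the six lines $E_1,E_2,E_3,L_{12},L_{13},L_{23}$ of Lemma \ref{lem:del-Pezzo-lines9-d}, which form a hexagon satisfying $E_1+E_2+E_3+L_{12}+L_{13}+L_{23}\simq -K_S$, together with the conic pencils $\vert\pioplane{1}-E_i\vert$.

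For the generic case (iii), with $p=C\cap E_1$ not a vertex and $L\in\vert\pioplane{1}-E_1\vert$ the conic through $p$ meeting $C$ transversally, I would take $D=2L+E_1+L_{23}\simq -K_S$, which is effective with $\mult_p D=3$ (only $L$ and $E_1$ pass through $p$). Near $p$ the curves $C,L,E_1$ form an ordinary triple point, resolved by a single blow-up of $p$; by Lemma \ref{lem:log-pullback-preserves-lc} the exceptional divisor acquires coefficient $3\lambda\beta-\beta$, while $L$ carries coefficient $2\lambda\beta$. The binding constraint is $2\lambda\beta\leq 1$, giving $\lct(S,(1-\beta)C,\beta D)=\frac{1}{2\beta}$ and hence $\omega_1$. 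For the tangency case (ii) I keep the same $D=2L+E_1+L_{23}$ but now $L$ is tangent to $C$ at $p$, i.e. $(C\cdot L)\vert_p=2$; the tangency forces a second blow-up, and the second exceptional divisor acquires coefficient $5\lambda\beta-2\beta$, which together with the surviving constraint $2\lambda\beta\leq 1$ yields the two-piece bound $\min\{\frac{1+2\beta}{5\beta},\frac{1}{2\beta}\}$, i.e. $\omega_2$. For the pseudo-Eckardt case (i), with $p=E_1\cap L_{12}$ a vertex lying on $C$, the conic pencils through $p$ degenerate, so I would instead weight the two lines through $p$ and complete to $-K_S$ by lines: $D=2E_1+2L_{12}+L_{13}+E_2\simq -K_S$, effective with $\mult_p D=4$ (as $L_{13}$ and $E_2$ avoid $p$). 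A single blow-up of the ordinary triple point $C\cup E_1\cup L_{12}$ produces an exceptional divisor of coefficient $4\lambda\beta-\beta$, binding over the coefficient-$2$ constraint, giving $\lct=\frac{1+\beta}{4\beta}$ and $\omega_3$.

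The remaining work is bookkeeping: checking that each $D$ lies in $\vert -K_S\vert$ and is effective (a one-line Picard-group computation), that the conic $L$ is irreducible and smooth in cases (ii)--(iii) (via the conic classification and the genus formula, Lemmas \ref{lem:del-Pezzo-all-conics-rational} and \ref{lem:genus-formula}), and that the candidate thresholds (the coefficient-$2$ bound $\frac{1}{2\beta}$, the exceptional-divisor bounds, and the cap $1$) assemble into exactly the stated piecewise $\omega_i$ on each $\beta$-range. The one genuinely delicate point is the resolution in case (ii): I must verify that after blowing up $p$ the strict transforms of $C$ and $L$ still meet, at a single point of the first exceptional curve, precisely because their local intersection was $2$, so that exactly two blow-ups are required and the second exceptional divisor carries coefficient $5\lambda\beta-2\beta$ rather than something smaller; this is where the distinction between $\omega_1$ and $\omega_2$ originates. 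Finally, since we only seek an upper bound, I note that any further non-log-canonical points of $(S,\lambda\beta D)$ away from $p$ can only lower the threshold and are therefore harmless.
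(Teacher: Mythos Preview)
Your proof is correct and follows essentially the same strategy as the paper: exhibit an explicit test divisor $D\simq -K_S$, resolve the pair, and read off the threshold. Your choices for cases (i) and (ii) coincide exactly with the paper's ($D=2E_1+2L_{12}+L_{13}+E_2$ and $D=2L+L_{23}+E_1$ respectively), and your resolution computations match.

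The only minor divergence is in case (iii). You reuse the divisor $D=2L+E_1+L_{23}$ from case (ii), now with $L$ meeting $C$ transversally at $p$, and resolve the ordinary triple point. The paper instead reuses the divisor $D=2E_1+2L_{12}+L_{13}+E_2$ from case (i): since $C$ avoids all pseudo-Eckardt points, this $D$ already has simple normal crossings with $C$ everywhere, so no blow-up is needed and the bound $\frac{1}{2\beta}$ is immediate from the coefficient $2$. Both routes work; the paper's is marginally cleaner because the same divisor handles (i) and (iii) simultaneously, the case split arising only from whether the vertex $E_1\cap L_{12}$ lies on $C$. Your approach has the small virtue of making (ii) and (iii) parallel instead. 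One small imprecision: in case (iii) you assert ``the binding constraint is $2\lambda\beta\leq 1$'', but for $\beta<\tfrac12$ the constraint $3\lambda\beta-\beta\leq 1$ is actually tighter than $2\lambda\beta\leq 1$; however, since $\tfrac{1+\beta}{3\beta}\geq 1$ on that range, the cap of $1$ absorbs it and the final answer $\omega_1$ is unaffected, as you note in your bookkeeping paragraph.
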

\begin{proof}
If $C$ contains a pseudo-Eckardt point $p$, by Lemma \ref{lem:del-Pezzo-good-model} we can choose a model $\pi\colon S \ra \bbP^2$ such that $p=E_1\cap L$, where $L$ is the other line intersecting $p$ normally. Let $\bar L=\pi_*(L)\subset \bbP^2$. Since $L\neq E_1$, then $\bar L \sim \oplane{d}$ for some $d>0$ and
\begin{equation}
L\sim \pi^*(\bar L)-\sum_{i=1}^3 \mult_{p_i}\bar L \sim \pioplane{d}-\sum_{i=1}^3 a_i E_i
\label{eq:del-Pezzo-dynamic-alpha-degree-6-upbound-line}
\end{equation}
where $p_i=\pi(E_i)$ and $a_i=\mult_{p_i}\bar L =E_i\cdot L$. In particular $a_1=1$. Since $L$ is a line, we have
\begin{align*}
1&=L\cdot(-K_S)=3d-1-a_2-a_3,\\
-1&=L^2=d^2-1-a_2-a_3.
\end{align*}
Subtracting the first equation from the second one we obtain
$$d^2-3d+2=0$$
with roots $d=1,2$. If $d=2$, then $a_2+a_3=4$, so $\exists a_j\geq 2$ since all $a_i\geq 0$ but all irreducible curves of degree $2$ in $\bbP^2$ are smooth. Hence $d=1$ and either $(a_2,a_3)=(1,0)$ or $(a_2,a_3)=(0,1)$. Without loss of generality suppose the former. Then $L=L_{12}$ is the strict transform of the line in $\bbP^2$ through $p_1$ and $p_2$. Let $L_{13}$ be the strict transform of the line in $\bbP^2$ through $p_1$ and $p_3$. Then
$$L_{13}\sim \pioplane{1}-E_1-E_3.$$
Notice that
$$D:=2E_1+2L_{12}+L_{13}+E_2\sim-K_S$$
is a divisor with simple normal crossings. Let $f\colon \widetilde S \ra S $ be the blow-up of $p=E_1\cap L_{12}$ with exceptional divisor $E$. Then $f$ is a log resolution of $(S,(1-\beta)C+\lambda\beta D)$, since
$$C\cdot E_1=C\cdot L_{12 } =C\cdot L_{13}=C\cdot E_2=1.$$
Its log pullback is
$$f^*(K_S+(1-\beta)C+\lambda\beta D)\sim-K_{\widetilde S}+(1-\beta)\widetilde C +\lambda\beta \widetilde D  + (4\lambda\beta-\beta)E$$
and therefore
\begin{align*}
\alpha(S,(1-\beta)C)\leq \min\{\lct(S,(1-\beta)C,\beta C),\lct(S,(1-\beta)C,\beta D)\}
\leq \min\{1,\frac{1}{2\beta},\frac{1+\beta}{4\beta}\}=\omega_3.
\end{align*}
If $C$ contains no pseudo-Eckardt points, the pair
$$(S,(1-\beta)C+\lambda\beta D)$$
has simple normal crossings and we obtain
\begin{align*}
\alpha(S,(1-\beta)C)\leq \min\{\lct(S,(1-\beta)C,\beta C),\lct(S,(1-\beta)C,\beta D)\}\leq \min\{1,\frac{1}{2\beta}\}=\omega_1.
\end{align*}
Finally, suppose $C$ contains no pseudo-Eckardt points but there is an irreducible curve $L\sim\pioplane{1}-E_1$ as in (ii) in the statement, for some model $\pi\colon S \ra \bbP^2$ and such that $(L\cdot C)\vert_p=2$. Let $$D=2L+L_{23}+E_1$$ where $L_{23}$ is the unique line such that $L_{23}\sim\pioplane{1}-E_1-E_2$. Then the pair $(S,(1-\beta)C + \lambda\beta D)$ has simple normal crossings away from $p=C\cap E_1\cap L$ where $(L\cdot C )\vert_{p}=2$. Let $\sigma\colon \widetilde S \ra S$ be the minimal log resolution of $(S,(1-\beta)C + \lambda\beta D)$ with exceptional divisors $F_1,F_2$. The log pullback is
$$\sigma^*(K_S+(1-\beta)C+\lambda\beta D)\simq K_{\widetilde S} + (1-\beta)\widetilde C + \lambda\beta \widetilde D + (3\lambda\beta -\beta) F_1 + (5\lambda\beta - 2\beta) F_2.$$
Therefore, we conclude
\begin{align*}
\alpha(S,(1-\beta)C)&\leq \min\{\omega_1,\lct(S,(1-\beta)C,\beta D)\}\\
&\leq \min\{1,\frac{1}{2\beta},\frac{1+2\beta}{5\beta}, \frac{1+\beta}{3\beta}\}=\min\{1,\frac{1}{2\beta},\frac{1+2\beta}{5\beta}\}=\omega_2.
\end{align*}
\end{proof}

\begin{thm}
\label{thm:del-Pezzo-dynamic-alpha-degree-6}
Let $S$ be a non-singular del Pezzo surface of degree $6$ and $C\in\vert-K_S\vert$ be a smooth curve.
\begin{itemize}
	\item[(i)] If $C$ contains a pseudo-Eckardt point of $S$, then
	\begin{equation}
			\alpha(S,(1-\beta)C)=\omega_3:=
					\begin{dcases}
							1 												&\text{ for } 0<\beta\leq \frac{1}{3},\\
							\frac{1+\beta}{4\beta}		&\text{ for } \frac{1}{3}\leq \beta\leq 1.
					\end{dcases}
		\label{eq:del-Pezzo-dynamic-alpha-degree-6-special}
	\end{equation}
	\item[(ii)]If $C$ contains no pseudo-Eckardt points but there is a model $\pi\colon S\ra \bbP^2$ such that through $p=C\cap E_1$ there is a smooth rational curve $L\sim\pioplane{1} -E_1$ satisfying $(C\cdot L )\vert_{p}=2$, then
				\begin{equation}
						\alpha(S,(1-\beta)C)=\omega_2:=
								\begin{dcases}
										1 												&\text{ for } 0<\beta\leq \frac{1}{3},\\
										\frac{1+2\beta}{5\beta}					&\text{ for } \frac{1}{3}\leq \beta\leq \frac{3}{4},\\
										\frac{1}{2\beta}					&\text{ for } \frac{3}{4}\leq \beta\leq 1.
								\end{dcases}
					\label{eq:del-Pezzo-dynamic-alpha-degree-6-semispecial}
				\end{equation}
	
	\item[(iii)]
						If $C$ contains no pseudo-Eckardt points and for all models $\pi\colon S \ra \bbP^2$ the unique irreducible curve $L\in \vert\pioplane{1} -E_1\vert$ passing through $p=C\cap E_1$ has simple normal crossings with $C$ (i.e. $(C\cdot L)\vert_{p}=E_1$), then
				\begin{equation}
						\alpha(S,(1-\beta)C)=\omega_1:=
								\begin{dcases}
										1 												&\text{ for } 0<\beta\leq \frac{1}{2},\\
										\frac{1}{2\beta}					&\text{ for } \frac{1}{2}\leq \beta\leq 1.
								\end{dcases}
					\label{eq:del-Pezzo-dynamic-alpha-degree-6-generic}
				\end{equation}
\end{itemize}	
\end{thm}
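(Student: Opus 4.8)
The plan is to follow the template already used for degrees $7$, $8$ and $9$. The upper bound $\alpha(S,(1-\beta)C)\le\omega_i$ is exactly Lemma \ref{lem:del-Pezzo-dynamic-alpha-degree-6-upbound}, so only the reverse inequality remains, and I would argue it by contradiction: assume there is an effective $\bbQ$-divisor $D\simq-K_S$ and a rational $\lambda<\omega_i$ such that $(S,(1-\beta)C+\lambda\beta D)$ fails to be log canonical at some point $p$. One checks directly from the formulas that $\omega_3\le\omega_2\le\omega_1$ and that $\lambda\beta<\omega_i\beta\le\omega_1\beta\le\tfrac12=\glct(S)$ on every relevant $\beta$-subinterval, where $\glct(S)=\tfrac12$ comes from Theorem \ref{thm:del-Pezzo-glct-charp}. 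Hence Lemma \ref{lem:pairs-fixed-boundary-lcs} forces $p\in C$ and makes the pair log canonical in codimension $1$, reducing everything to a local study at an isolated non-log-canonical point on the smooth elliptic curve $C$.

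The core is then a case analysis on the position of $p$ with respect to the six lines of $S$, which form the hexagon $E_1,L_{12},E_2,L_{23},E_3,L_{13}$, mirroring Lemmas \ref{lem:del-Pezzo-dynamic-alpha-degree-7-proof-in-line}--\ref{lem:del-Pezzo-dynamic-alpha-degree-7-proof-case1-L}. When $p$ lies on no line I would bound $\mult_pD$ by intersecting $D$ with a general curve of low anticanonical degree through $\pi(p)$ that avoids $\Supp(D)$, and feed this into Theorem \ref{thm:inequality-local-blowup-bound} to contradict log canonicity; where a contraction is an isomorphism near $p$ I would instead contract a $(-1)$-curve disjoint from $p$ (Lemma \ref{lem:del-pezzo-blowdown-dP}, using Lemma \ref{lem:del-Pezzo-good-model} to arrange coordinates) to land on a degree-$7$ surface and invoke Theorem \ref{thm:del-Pezzo-dynamic-alpha-degree-7}, which in turn descends to $\bbP^1\times\bbP^1$ or $\bbF_1$ via Theorems \ref{thm:del-Pezzo-dynamic-alpha-degree-8-F0} and \ref{thm:del-Pezzo-dynamic-alpha-degree-8-F1}, provided the contracted configuration lands in a case whose threshold dominates $\omega_i$. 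When $p$ is a pseudo-Eckardt point, say $p=E_1\cap L_{12}$, both lines must lie in $\Supp(D)$ (otherwise Lemma \ref{lem:adjunction}(i) is violated), so I write $D=aE_1+bL_{12}+\Omega$, bound $a$, $b$ and $\mult_p\Omega$ by intersecting $D$ with $E_2$, $L_{13}$ and a suitable low-degree curve tangent to $L_{12}$ at $p$, and finish with Theorem \ref{thm:inequality-Cheltsov} applied at $p$, exactly in the spirit of Remark \ref{rmk:delPezzo-dynamic-alpha-degree-7-pseudo-Eckardt}; the bound $1+a+b\le 4$ is what produces the denominator $4$ in $\omega_3=\tfrac{1+\beta}{4\beta}$. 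The intermediate tangency cases (ii) and (iii) are handled by repeatedly applying the inductive blow-up Theorem \ref{thm:pseudo-inductive-blow-up} along the chain of infinitely near points $q_j=C^j\cap F_j$, after using log convexity (Lemma \ref{lem:log-convexity}) against anticanonical divisors whose log canonicity is recorded in auxiliary computations of the type collected in Appendix \ref{app:lcts}.

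I expect the main obstacle to be twofold. First, the degree-$6$ thresholds $\tfrac{1+\beta}{4\beta}$, $\tfrac{1+2\beta}{5\beta}$ and $\tfrac{1}{2\beta}$ coincide with none of the degree-$7$ or degree-$8$ values; comparing them shows, for instance, $\tfrac{1+\beta}{4\beta}>\tfrac{1+\beta}{5\beta}$ on $[\tfrac13,\tfrac23]$, so even the reduction of a pseudo-Eckardt point to the degree-$7$ pseudo-Eckardt case is numerically insufficient. Consequently the reductions above can only dispose of the positions of $p$ that are numerically irrelevant, while the configurations actually attaining equality — the four-line divisor $2E_1+2L_{12}+L_{13}+E_2$ for case (i), and the tangent configurations $2L+L_{23}+E_1$ for cases (ii)--(iii) — must be ruled out by direct, purely two-dimensional inductive arguments. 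Second, as in degree $7$, the delicate part of those direct arguments is the bookkeeping: after writing $D$ with explicit coefficients on the lines through $p$, one must track the multiplicities $m_j=\mult_{q_j}D^j$ through the tower of blow-ups, control them by a handful of intersection inequalities, and then verify the discrepancy conditions $\lambda\beta(\sum_j m_j)-i\beta\le 1$ needed to keep applying Theorem \ref{thm:pseudo-inductive-blow-up}, splitting into the $\beta$-subintervals dictated by each $\omega_i$. Producing the sharp auxiliary log-canonicity statements that feed the convexity step — the degree-$6$ analogues of Lemmas \ref{lem:appendix-delPezzo-deg7-dynamic-lct1}--\ref{lem:appendix-delPezzo-deg7-dynamic-lct6} — is where most of the computational effort will go.
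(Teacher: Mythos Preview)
Your plan is essentially the paper's own strategy: the upper bound is Lemma~\ref{lem:del-Pezzo-dynamic-alpha-degree-6-upbound}, the reduction to an isolated $p\in C$ via Lemma~\ref{lem:pairs-fixed-boundary-lcs}, the pseudo-Eckardt case via Theorem~\ref{thm:inequality-Cheltsov} exactly as you describe (this is Claim~\ref{clm:del-Pezzo-dynamic-alpha-degree-6-proof-pEckardt}), and the remaining positions of $p$ via the inductive blow-up machinery of Theorem~\ref{thm:pseudo-inductive-blow-up} fed by log convexity against auxiliary anticanonical divisors whose log canonicity is checked in Appendix~\ref{app:lcts}. You also correctly diagnose that the degree-$7$ reduction is numerically too weak, so the paper never attempts it.

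One point where your outline diverges: for $p$ lying on \emph{no} line you propose Theorem~\ref{thm:inequality-local-blowup-bound}. The paper does not use that result here; instead it runs Theorem~\ref{thm:pseudo-inductive-blow-up} directly in this case as well (Lemma~\ref{lem:del-Pezzo-dynamic-alpha-degree-6-pointgeneric}), splitting on which of the five conics $L_1,L_2,L_3$ or the cubics $G,H$ through $p$ carries the infinitely near point $q_1$. The reason is numerical: with the conic bound $\mult_pD\le 2$ one gets $\lambda\beta\mult_pD\le 2\omega_1\beta\le 1$, but the hypothesis $\lambda\beta\mult_pD\le\tfrac{1}{n}+\beta$ of Theorem~\ref{thm:inequality-local-blowup-bound} fails on a nontrivial $\beta$-interval for each $n\ge 3$, and $n\le 2$ gives only $(C\cdot\lambda\beta D)\vert_p>1+2\beta$, which is not sharp enough against $6\omega_1\beta$. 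So for the generic-point case you should plan from the start on the same four-step blow-up tower the paper uses for the one-line case, not on Theorem~\ref{thm:inequality-local-blowup-bound}.
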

\begin{proof}
By Lemma \ref{lem:del-Pezzo-dynamic-alpha-degree-6-upbound} we have that $\alpha(S,(1-\beta)C)\leq \omega_i$ for each case. If $\alpha(S,(1-\beta)C)<\omega_i$, then $\exists \lambda<\omega_i$ and an effective $\bbQ$-divisor $D\simq-K_S$ such that the pair
\begin{equation}
(S,(1-\beta)C + \lambda \beta D)
\label{eq:del-Pezzo-dynamic-alpha-degree-6-proof-badpair}
\end{equation}
is not log canonical at some $q\in S$. Observe that
\begin{equation}
\omega_3\leq \omega_2\leq\omega_1 \text{ for all } 0<\beta\leq 1.
\label{eq:del-Pezzo-dynamic-alpha-degree-6-proof-omegas}
\end{equation}
By Theorem \ref{thm:del-Pezzo-glct-charp} this implies
$$\lambda\beta<\omega_i\beta\leq \omega_1\beta\leq \frac{1}{2}=\glct(S).$$
Therefore by Lemma \ref{lem:pairs-fixed-boundary-lcs}, the pair \eqref{eq:del-Pezzo-dynamic-alpha-degree-6-proof-badpair} is log canonical in codimension $1$ and $q\in C$.

Assume the following:
\begin{clm}
\label{clm:del-Pezzo-dynamic-alpha-degree-6-proof-pEckardt}
The point $q$ is not a pseudo-Eckardt point.
\end{clm}

We want to apply Theorem \ref{thm:pseudo-inductive-blow-up} in each blow-up of the minimal log resolution of \eqref{eq:del-Pezzo-dynamic-alpha-degree-6-proof-badpair}. We will define a birational morphism $f\colon S_4\ra S_0$ as a sequence of blow-ups over $q$ which will be biregular away from $q$.

Let $f_1\colon S_1\ra S_0$ be the blow-up of $q_0:=q$ with exceptional curve $F_1$. For any $\bbQ$-divisor $A=A^0$ in $S_0$, let $A^1$ be the strict transform of $A^0$. Since $C=C^0$ is smooth at $q=q_0$, then $C^1\cdot F_1=1$ and therefore $C^1\cap F_1=q_1$, a unique point.

For $i\geq 2$, let $f_i\colon S_i \ra S_{i-1}$ be the blow-up of $q_{i-1}$ with exceptional curve $F_i$. Let $A^{i}$ be the strict transform of any $\bbQ$-divisor $A^{i-1}$ of $S^{i-1}$. Let $q_i=C^{i}\cap F_i$. Define $m_i=\mult_{q_i}D^i$ for $i\geq 0$. Define $f\colon S_4\ra S$ by $f=f_1\circ f_2\circ f_3\circ f_4$. Assume the following
\begin{clm}
\label{clm:del-Pezzo-dynamic-alpha-degree-6-proof}
If $\lambda<\omega_2$ or $\lambda<\omega_3$, then the pair
$$(S_3, (1-\beta)C^3+ \lambda\beta D^3+(\lambda\beta(m_0+m_1+m_2)-3\beta)F_3)$$
is not log canonical only at $q_3=F_3\cap C^3$.
If $<\lambda<\omega_1$, then the pair 
$$(S_4, (1-\beta)C^4+ \lambda\beta D^4+(\lambda\beta(m_0+m_1+m_2+m_3)-4\beta)F_4)$$
is not log canonical only at $q_4=F_4\cap C^4$.
\end{clm}
If $\lambda<\omega_1$, then we apply Lemma \ref{lem:adjunction} (iii) to $C^4$ and we obtain
\begin{align*}
1&<C^4\cdot (\lambda\beta D^4 + (\lambda\beta(m_0+m_1+m_2+m_3)-4\beta)F_4)=\\
&=\lambda\beta(C\cdot D -(m_0+m_1+m_2+m_3))+\lambda\beta(m_0+m_1+m_2+m_3)-4\beta=\\
&=6\lambda\beta-4\beta<6\omega_i\beta-4\beta\leq 6\omega_1\beta-4\beta
\end{align*}
by \eqref{eq:del-Pezzo-dynamic-alpha-degree-6-proof-omegas}. This gives a contradiction and finishes the proof, since $6\omega_1\beta-4\beta\leq 1$. Indeed, if $0<\beta\leq \frac{1}{2}$, then
$$6\omega_1\beta-4\beta\leq 6\beta-4\beta=2\beta\leq 1.$$
If $\frac{1}{2}\leq \beta \leq 1$, then
$$6\omega_1\beta-4\beta=3-4\beta\leq 3-2 =1.$$

If $\lambda<\omega_2$ or $\lambda<\omega_3$, then we apply Lemma \ref{lem:adjunction} (iii) to $C^3$ and we obtain
\begin{align*}
1&<C^3\cdot (\lambda\beta D^3 + (\lambda\beta(m_0+m_1+m_2)-3\beta)F_3)=\\
&=\lambda\beta(C\cdot D -(m_0+m_1+m_2))+\lambda\beta(m_0+m_1+m_2)-3\beta=\\
&=6\lambda\beta-3\beta<6\omega_i\beta-3\beta\leq 6\omega_2\beta-3\beta \text{ for }i=2,3.
\end{align*}
by \eqref{eq:del-Pezzo-dynamic-alpha-degree-6-proof-omegas}. This gives a contradiction and finishes the proof since $6\omega_2\beta-3\beta\leq 1$. Indeed, for $\lambda<\omega_2$, if $0<\beta\leq \frac{1}{3}$, then
$$6\omega_2\beta-3\beta\leq 6\beta-3\beta=3\beta\leq 1.$$
If $\frac{1}{3}\leq \beta \leq \frac{3}{4}$, then 
$$6\omega_2\beta-3\beta=\frac{6(1+2\beta)}{5\beta}\beta-3\beta=\frac{6}{5} -\frac{3\beta}{5} < 1.$$
If $\frac{3}{4}\leq \beta \leq 1$, then
$$6\omega_1\beta-3\beta=3-3\beta\leq 3-\frac{9}{4} <1.$$

Finally, suppose $\lambda<\omega_3$, if $0<\beta\leq \frac{1}{3}$, then $6\omega_3\beta-3\beta\leq 6\beta-3\beta=3\beta\leq 1$. If $\frac{1}{3}\leq \beta \leq1$, then $6\omega_3\beta-3\beta=\frac{6(1+\beta)}{4\beta}\beta-3\beta=\frac{3}{2} -\frac{3\beta}{2} \leq 1$.
\end{proof}

\begin{proof}[Proof of Claim \ref{clm:del-Pezzo-dynamic-alpha-degree-6-proof-pEckardt}]
Suppose $(S, (1-\beta)C + \lambda\beta D)$ is not log canonical at a pseudo-Eckardt point $q_0\in C$. Then $\lambda<\omega_3$ and by Lemma \ref{lem:del-Pezzo-good-model} we may assume that $q_0=E_1\cap L_{12}$. Since $E_1$ and $L_{12}$ are lines passing through $p$, then $E_1, L_{12}\subseteq \Supp(D)$, by Lemma \ref{lem:adjunction} (i). For instance, if $E_1\not\subset\Supp(D)$, then
$$1=E_1\cdot D \geq \mult_{q_0} D > \frac{1-(1-\beta)}{\lambda\beta}=\frac{1}{\lambda}>1.$$
Hence, we may write $D=aE_1+bL_{12}+\Omega$ where $a, b>0$ and $E_1, L \not\subseteq\Supp(\Omega)$. Let $x_i=\mult_{q_i}\Omega^i$ and $m_i=\mult_{q_i} D_i$. Observe that $m_0=a+b+x_0$. Let $A$ be the strict transform of a general conic in $\bbP^2$ passing through $p_1,p_3$ such that $q\in A$. Since there is a pencil of such conics, we can choose $A$ such that $A\not\subset D$. Observe that $A\sim\pioplane{2}-E_1-E_3$. Intersecting with $D$ we get the following inequalities:
\begin{align}
4&=A\cdot D \geq a+b, \label{eq:del-Pezzo-dynamic-alpha-degree-6-special-bound1} \\
1&=E_1\cdot D \geq -a+b+x_0, \label{eq:del-Pezzo-dynamic-alpha-degree-6-special-bound2}\\
1&=L_{12}\cdot D \geq a-b+x_0. \label{eq:del-Pezzo-dynamic-alpha-degree-6-special-bound3}
\end{align}
In particular, adding \eqref{eq:del-Pezzo-dynamic-alpha-degree-6-special-bound2} and \eqref{eq:del-Pezzo-dynamic-alpha-degree-6-special-bound3} we obtain
\begin{equation}
1\geq x_0.
\label{eq:del-Pezzo-dynamic-alpha-degree-6-special-bound4}
\end{equation}
Now we may apply Theorem \ref{thm:inequality-Cheltsov} to conclude that either
$$2(1-\lambda\beta a) <E_1\cdot (\lambda\beta\Omega+ (1-\beta)C)$$
holds or 
$$2(1-\lambda\beta b) <L_{12}\cdot (\lambda\beta\Omega+ (1-\beta)C)$$
holds. Since the roles of $E_1$ and $L_{12}$ in this problem are symmetric, it is enough to disprove the latter equation. Indeed, if it holds, then
\begin{align*}
2(1-\lambda\beta a) < L_{12}\cdot (\lambda\beta\Omega+ (1-\beta)C)=\lambda\beta(1-a+b) + (1-\beta).
\end{align*}
Hence, by \eqref{eq:del-Pezzo-dynamic-alpha-degree-6-special-bound1}
$$1+\beta<\lambda\beta(a+b)<4\omega_3\beta.$$
But then
$$\frac{1+\beta}{4\beta}<\omega_3=\min\{1,\frac{1+\beta}{4\beta}\}\leq\frac{1+\beta}{4\beta},$$
which is absurd.
\end{proof}

In the rest of this section we will prove Claim \ref{clm:del-Pezzo-dynamic-alpha-degree-6-proof} by case analysis, depending on the position of $q_0$.
\begin{lem}
\label{lem:del-Pezzo-dynamic-alpha-degree-6-pointgeneric}
If $q_0$ does not belong to any $(-1)$-curve, then
$$(S_3, (1-\beta)C^3 + \lambda\beta D^3 + (\lambda\beta(m_0+m_1+m_2)-3\beta)F_3)$$
is not log canonical only at $q_4=C^4\cap F_4$ for $\lambda<\omega_1$.
$$(S_4, (1-\beta)C^4 + \lambda\beta D^4 + (\lambda\beta(m_0+m_1+m_2+m_3)-4\beta)F_4)$$
is not log canonical only at $q_4=C^4\cap F_4$ for $\lambda<\omega_1$.
\end{lem}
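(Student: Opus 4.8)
The plan is to follow exactly the same template that the degree-$7$ argument (Lemma \ref{lem:del-Pezzo-dynamic-alpha-degree-7-proof-in-line}) used for a point lying on no line, adapting the numerology to degree $6$, where $-K_S\cdot Z=3$ for the strict transform $Z$ of a general line, and where the relevant thresholds are $\omega_1,\omega_2,\omega_3$ from Lemma \ref{lem:del-Pezzo-dynamic-alpha-degree-6-upbound}. First I would bound the multiplicity: since $q_0$ lies on no $(-1)$-curve, take $Z\sim\pioplane{1}$ the strict transform of a general line in $\bbP^2$ through $\pi(q_0)$; then $Z\not\subseteq\Supp(D)$ and $\mult_{q_0}D\le Z\cdot D=3$, giving $\lambda\beta m_0\le\omega_1\beta\cdot 3\le 1$ (using $\omega_1\beta\le\frac12$ from the proof of Theorem \ref{thm:del-Pezzo-dynamic-alpha-degree-6}). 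This is precisely the standing hypothesis needed to invoke parts (ii)--(iv) of Theorem \ref{thm:pseudo-inductive-blow-up}, so I would record it once and use it throughout.

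The engine of the proof is Theorem \ref{thm:pseudo-inductive-blow-up}, applied on the chain of blow-ups $f_i\colon S_i\to S_{i-1}$ defined before the claim. The key is to control the running sums $\lambda\beta(\sum_{j=0}^{i-1}m_j)-(i-1)\beta$ and verify they stay $\le 1$ at each stage, so that part (iii) (or part (iv)) forces $t_i=q_i=C^i\cap F_i$ and the exceptional locus remains a chain. As in degree $7$, the argument splits according to the reducibility of the auxiliary cubic $H\in\vert\pioplane{1}\vert$ tangent to $C$ at $q_0$: when $H$ is irreducible I would use the log canonicity of $(S,(1-\beta)C+\tfrac13(H+H_1+H_2))$ (an appendix lemma) and Lemma \ref{lem:log-convexity} to peel off one component, subdividing further into the cases $H\not\subseteq\Supp(D)$ and $H\subseteq\Supp(D)$; in the latter I would write $D=aH+\Omega$, bound $a$ and $x_i=\mult_{q_i}\Omega^i$ via intersection with the auxiliary divisors, and push the induction to $S_4$ (for $\lambda<\omega_1$) or $S_3$ (for $\lambda<\omega_2,\omega_3$). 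The reducible case $H=H_i+E_i$ is handled symmetrically. Each numerical inequality of the form $c\,\omega_j\beta-d\beta\le 1$ is verified by the same two-regime case analysis ($0<\beta\le$ threshold versus threshold $\le\beta\le1$) that pervades the degree-$7$ proof.

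Finally, once the pair is shown to be not log canonical at $q_3$ (resp.\ $q_4$) with the single exceptional coefficient, I would hand off to the ambient proof of Theorem \ref{thm:del-Pezzo-dynamic-alpha-degree-6}, where Lemma \ref{lem:adjunction} (iii) applied with $C^3$ (resp.\ $C^4$) produces the contradiction $1<6\lambda\beta-3\beta$ (resp.\ $1<6\lambda\beta-4\beta$). I expect the main obstacle to be not any single computation but the bookkeeping of the auxiliary linear systems specific to degree $6$: I must confirm that the cubic $H$, when reducible, can only degenerate as $H_i+E_i$ (there being no curve of the excluded numerical class, checked against Lemma \ref{lem:del-Pezzo-lines9-d}), and that the appendix lemmas asserting log canonicity of the relevant anticanonical combinations $\tfrac13(H+H_1+H_2)$ and $2H+L$-type divisors are available in the degree-$6$ setting. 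The degree-$6$ surface has only six lines with a very rigid hexagonal intersection pattern, which should make these verifications cleaner than in degree $7$, but I would need to track carefully that $q_0\notin E_i\cup L_{ij}$ genuinely rules out the tangential degenerations that would otherwise break the chain structure.
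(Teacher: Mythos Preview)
Your proposal has two related gaps that stem from porting the degree-$7$ template too literally.

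First, an arithmetic slip: from $m_0\le 3$ and $\omega_1\beta\le\tfrac12$ you conclude $\lambda\beta m_0\le 3\omega_1\beta\le 1$, but $3\cdot\tfrac12=\tfrac32$. The paper avoids this by using \emph{conics}, not cubics, for the first multiplicity bound: the three irreducible curves $L_i\in\lvert\pioplane{1}-E_i\rvert$ through $q_0$ have degree $2$, satisfy $L_1+L_2+L_3\sim-K_S$, and (after Lemma \ref{lem:appendix-delPezzo-deg6-dynamic-lct1} and log-convexity) give $m_0\le L_1\cdot D=2$, hence $\lambda\beta m_0\le 2\omega_1\beta\le 1$ as required.

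Second, your auxiliary divisor $\tfrac13(H+H_1+H_2)$ is not $\bbQ$-linearly equivalent to $-K_S$ in degree $6$: with $H\sim\pioplane{1}$ and $H_i\sim\pioplane{1}-E_i$ one gets $H+H_1+H_2\sim 3\pioplane{1}-E_1-E_2$, which misses $-E_3$. The degree-$7$ combination worked precisely because only two points were blown up; here the correct analogue is the three-conic sum $L_1+L_2+L_3$. Consequently the paper's case split is not on reducibility of a cubic $H$ but on the position of $q_1$ relative to the strict transforms $L_1^1,L_2^1,L_3^1$ (which are pairwise disjoint on $S_1$), giving three cases: $q_1\in L_1^1$, $q_1\in L_2^1\cup L_3^1$, or $q_1$ in neither. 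Only in the last case does a cubic $H\in\lvert\pioplane{1}\rvert$ enter, and it is paired with another cubic $G\in\lvert\pioplane{2}-E_1-E_2-E_3\rvert$ so that $G+H\sim-K_S$ (Lemma \ref{lem:appendix-delPezzo-deg6-dynamic-lct2}), not with conics $H_i$. Your overall shape (iterate Theorem \ref{thm:pseudo-inductive-blow-up}, verify the running inequalities, hand off to Lemma \ref{lem:adjunction} (iii)) is right, but the auxiliary linear systems and the case structure need to be rebuilt for degree $6$ along these lines.
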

\begin{proof}
Fix a model $\pi\colon S \ra \bbP^2$ for $S$. We will follow the notation at the beginning of this section. Since $q_0$ lies in no $(-1)$-curve and all $(-1)$-curves have degree $1$, we may define irreducible conics
\begin{equation}
L_i\in \vert\pioplane{1}-E_i\vert \text{ with }q_0\in L_i\text{ for } i=1,2,3.
\label{eq:del-Pezzo-dynamic-alpha-degree-6-pointgeneric-cubics}
\end{equation}
Such curves are unique and they correspond to  the strict transform of the unique line in $\bbP^2$ passing through $\pi(q_0)$ and $p_i$. Since there is no curve of degree $1$ through $q_0$ and the point $q_0\in L_i$, then $L_i$ is irreducible. Observe that $L_i^2=0$ and $(-K_S)\cdot L_i=2$. Furthermore
$$L_1+L_2+L_3\sim -K_S.$$
By Lemma \ref{lem:appendix-delPezzo-deg6-dynamic-lct1}  the pair
\begin{equation*}
(S,(1-\beta)C + \lambda\beta (L_1+L_2+L_3))
\end{equation*}
is log canonical. Therefore we may assume by Lemma \ref{lem:log-convexity} that there is $L_i\not\subseteq\Supp(D)$. Without loss of generality assume that $L_i=L_1$. Let $m_0=\mult_qD=\mult_{q_0}D^0$.
\begin{equation}
2=D\cdot L_1 \geq m_0
\label{eq:del-Pezzo-dynamic-alpha-degree-6-pointgeneric-bound-m0up}
\end{equation}
and therefore
\begin{equation}
\lambda\beta m_0 < \omega_i\beta m_0\leq \omega_1\beta m_0\leq 1,
\label{eq:del-Pezzo-dynamic-alpha-degree-6-pointgeneric-bound-m0}
\end{equation}
since $\omega_1\beta\leq \frac{1}{2}$.
Hence, by Theorem \ref{thm:pseudo-inductive-blow-up} (i) with $i=1$, we have that the pair
$$(S_1, (1-\beta)C^1 +\lambda\beta D^1 + (\lambda\beta m_0-\beta)F_1)$$
is not log canonical at some $t_1\in F_1$. Observe \eqref{eq:del-Pezzo-dynamic-alpha-degree-6-pointgeneric-bound-m0} is a necessary condition for Theorem \ref{thm:pseudo-inductive-blow-up} (ii)-(iv) whenever $i\geq 2$. We will assume this condition fromnow onwards. Also, by Theorem \ref{thm:pseudo-inductive-blow-up} (ii) when $i=1$ we conclude that $t_1=F_1\cap C^1=q_1$. But then since
$$\lambda\beta m_0-\beta<\lambda\beta m_0<1,$$
by part (i) of Theorem \ref{thm:pseudo-inductive-blow-up} when $i=2$ we obtain that
\begin{equation}
(S_2, (1-\beta) C^1 + \lambda\beta D^1 + (\lambda\beta m_0-\beta)F^2_1 +(\lambda\beta(m_0+m_1)-2\beta)F_2)
\label{eq:del-Pezzo-dynamic-alpha-degree-6-pointgeneric-badpair2}
\end{equation}
is not log canonical at some $t_2\in F_2$. Observe from \eqref{eq:del-Pezzo-dynamic-alpha-degree-6-pointgeneric-cubics} that $L_i^1\cdot L_j^1=0$ for $1\leq i<j\leq 3$. Therefore these curves are disjoint. We distinguish 3 distinct cases:
\begin{itemize}
	\item[(1)] $q_1=L^1_1\cap F_1$,
	\item[(2)] $q_1\in (L_2^1 \cup L_3^1)\cap F_1$,
	\item[(3)] $q_1\in F_1\setminus (L_1^1\cup L_2^1\cup L_3^1)$.
\end{itemize}
We will analyse each case separately.

\textbf{Case (1): }$q_1=L^1_1\cap F_1$. 

Since $L_1\not\subseteq\Supp(D)$ and $q_1\in L^1_1$, then
\begin{equation}
2=L_1\cdot D\geq m_0+m_1.
\label{eq:del-Pezzo-dynamic-alpha-degree-6-pointgeneric-bound-m0m1}
\end{equation}
This implies
$$\lambda\beta(m_0+m_1)-\beta\leq \lambda\beta(m_0+m_1)\leq 2\omega_1\beta\leq 1,$$
since $\omega\beta\leq \frac{1}{2}$. Using the above inequality, Theorem \ref{thm:pseudo-inductive-blow-up} part (iii) with $i=2$ implies $t_2 = F_2\cap C^2=q_2$ and part (i) when $i=3$ implies the pair
$$(S_3, (1-\beta)C^3 + \lambda\beta D^3 + (\lambda\beta(m_0+m_1) -2\beta)F^3_2 + (\lambda\beta(m_0+m_1+m_2)-3\beta)F_3)$$
is not log canonical at some $t_3\in F_3$. Then \eqref{eq:del-Pezzo-dynamic-alpha-degree-6-pointgeneric-bound-m0up} and \eqref{eq:del-Pezzo-dynamic-alpha-degree-6-pointgeneric-bound-m0m1} imply
\begin{equation}
\lambda\beta(m_0+m_1+m_2)-2\beta<\omega_1\beta(m_0+m_1+m_0)-2\beta=4\omega_1\beta-2\beta\leq 1
\label{eq:del-Pezzo-dynamic-alpha-degree-6-pointgeneric-bound-blowup3}
\end{equation}
since $m_j\leq m_0$ for all $j\geq 0$. Indeed, if $0<\beta\leq \frac{1}{2}$, then
$$4\omega_1\beta-2\beta\leq 4\beta-2\beta=2\beta\leq 1$$
and if $\frac{1}{2}\leq\beta\leq 1$, then
$$4\omega_1\beta-2\beta\leq 2-2\beta\leq 2-1\leq 1.$$
Hence, applying \eqref{eq:del-Pezzo-dynamic-alpha-degree-6-pointgeneric-bound-blowup3} to Theorem \ref{thm:pseudo-inductive-blow-up} (iii) with $i=3$ gives us that $t_3=F_3\cap C^3=q_3$. Since
$$\lambda\beta(m_0+m_1+m_2)-3\beta<\lambda\beta(m_0+m_1+m_2)-2\beta<1,$$
by Theorem \ref{thm:pseudo-inductive-blow-up} (i) with $i=4$, we conclude that
\begin{equation}
(S_4, (1-\beta)C^4 + \lambda\beta D^4 + (\lambda\beta(m_0+m_1+m_2)-3\beta)F_3^4 + (\lambda\beta(m_0+m_1+m_2+m_3)-4\beta)F_4)
\label{eq:del-Pezzo-dynamic-alpha-degree-6-pointgeneric-badpair4}
\end{equation}
is not log canonical at some $t_4\in F_4$. We use \eqref{eq:del-Pezzo-dynamic-alpha-degree-6-pointgeneric-bound-m0m1} to show that
\begin{equation}
\lambda\beta(m_0+m_1+m_2+m_3)-3\beta\leq 4\lambda\beta-3\beta\leq 4\omega_1\beta-3\beta\leq 1
\label{eq:del-Pezzo-dynamic-alpha-degree-6-pointgeneric-bound-blowup4}
\end{equation}
holds. Indeed for $0<\beta\leq \frac{1}{2}$ we have
$$4\omega_1\beta-3\beta\leq 4\beta-3\beta=\beta\leq \frac{1}{2}<1,$$
while for $\frac{1}{2}\leq \beta\leq 1$ we have
$$4\omega_1\beta-3\beta\leq 2 -3\beta\leq \frac{1}{2}<1.$$
Now we may use \eqref{eq:del-Pezzo-dynamic-alpha-degree-6-pointgeneric-bound-blowup4} in Theorem \ref{thm:pseudo-inductive-blow-up} (iii) with $i=4$ to prove that pair \eqref{eq:del-Pezzo-dynamic-alpha-degree-6-pointgeneric-badpair4} is not log canonical only at $t_4=F_4\cap C^4=q_4$, which implies the claim in case (1).

\textbf{Case (2):} $q_1\in (L_2^1\cup L_3^1)\cap F_1$.

If $q_1\in (L_2^1\cup L_3^1)\cap F_1$, we may assume without loss of generality that $q_1=L_2^1\cap F_1$ and $q_1\not\in L_3^1$. We may write $D=aL_2+\Omega$ where $L_1, L_2\not\subseteq\Supp(\Omega)$ and $a\geq 0$. Let $x_i=\mult_{q_i}\Omega^i$ for $i\geq 0$. Then
$$m_0 = a+x_0, m_1=a+x_1, m_i=x_i \text{ for } i\geq 2.$$
Since $L_2\cdot L_1 = 1$ and $(L_2)^2=0$, then
\begin{equation}
2=D\cdot L_1\geq a+x_0=m_0,
\label{eq:del-Pezzo-dynamic-alpha-degree-6-pointgeneric-special-boundm0}
\end{equation}
\begin{equation}
2=D\cdot L_2\geq x_0+x_1.
\label{eq:del-Pezzo-dynamic-alpha-degree-6-pointgeneric-special-boundmx0x1}
\end{equation}
The pair $(S, (1-\beta)C + \lambda\beta(L_{13}+2L_2+E_2))$ is log canonical by Lemma \ref{lem:appendix-delPezzo-deg6-dynamic-lct0}. Therefore by Lemma \ref{lem:log-convexity}, either $L=L_{13}\not\subset\Supp(D)$ or $L=E_2\not\subset\Supp(D)$. In both cases
\begin{equation}
1=D\cdot L\geq a.
\label{eq:del-Pezzo-dynamic-alpha-degree-6-pointgeneric-special-boundm-a}
\end{equation}

From \eqref{eq:del-Pezzo-dynamic-alpha-degree-6-pointgeneric-special-boundm0} we obtain
\begin{equation}
\lambda\beta(2m_0)-2\beta \leq 4\omega_1\beta-2\beta\leq 1
\label{eq:del-Pezzo-dynamic-alpha-degree-6-pointgeneric-special-bound-blowup2}
\end{equation}
where the last inequality follows from the last inequality in \eqref{eq:del-Pezzo-dynamic-alpha-degree-6-pointgeneric-bound-blowup3}. By Theorem \ref{thm:pseudo-inductive-blow-up} (iv) with $i=2$, the pair \eqref{eq:del-Pezzo-dynamic-alpha-degree-6-pointgeneric-badpair2} is not log canonical at $t_2=F_2\cap C^2=q_2$. Since $\lambda\beta(m_0+m_1)-2\beta\leq 2\lambda\beta m_0-2\beta\leq 1$, then Theorem \ref{thm:pseudo-inductive-blow-up} (i) with $i=3$ implies the pair
$$(S_3, (1-\beta)C^3 + \lambda\beta( aL_2^3+\Omega^3 )+ (\lambda\beta(m_0+m_1) -2\beta)F^3_2 + (\lambda\beta(m_0+m_1+m_2)-3\beta)F_3)$$
is not log canonical at some $t_3 \in F_3$. Observe that
$$\lambda\beta(m_0+m_1+m_2)-3\beta<\omega_1\beta(2a+x_0+x_1+x_2)-3\beta\leq 1,$$
by  \eqref{eq:del-Pezzo-dynamic-alpha-degree-6-pointgeneric-special-boundm0}, \eqref{eq:del-Pezzo-dynamic-alpha-degree-6-pointgeneric-special-boundmx0x1} and \eqref{eq:del-Pezzo-dynamic-alpha-degree-6-pointgeneric-special-boundm-a}. In particular the point $t_3$ is isolated. The last inequality follows from case analysis. 
Indeed, if $0<\beta\leq \frac{1}{2}$, then $5\omega_1\beta-3\beta=2\beta\leq 1$ while if $\frac{1}{2}\leq\beta\leq 1$, then $5\omega_1\beta-3\beta=\frac{5}{2}-3\beta\leq 1$.

Suppose $t_3\not\in F_2^3\cup C^3$. Then the pair
$$(S_3, \lambda\beta( aL_2^3+\Omega^3 )+ (\lambda\beta(m_0+m_1+m_2)-3\beta)F_3)$$
is not log canonical at $t_3$. But then, applying Lemma \ref{lem:adjunction} (iii) with $F_3$ we obtain that
$$1<\lambda\beta(aL_2^3+\Omega^3)\cdot F_3<\omega_1\beta x_2\leq 1$$
by \eqref{eq:del-Pezzo-dynamic-alpha-degree-6-pointgeneric-special-boundmx0x1}. This is absurd.

Suppose $t_3= F_2^3\cap F_3$. Since $F_3\cap L_2^3=F_2^3\cap C^3=\emptyset$, the pair
$$(S_3, \lambda\beta( \Omega^3 )+ (\lambda\beta(m_0+m_1) -2\beta)F^3_2 + (\lambda\beta(m_0+m_1+m_2)-3\beta)F_3)$$
is not log canonical at $t_3\in F_3$. By applying Lemma \ref{lem:adjunction} (iii) with $F_2^3$ we obtain
\begin{align*}
1&<(\lambda\beta(\Omega^3)+(\lambda\beta(m_0+m_1+m_2)-3\beta)F_3)\cdot F_2^3\\
&=\lambda\beta(x_1-x_2+a+x_0+a+x_1+x_2)-3\beta\\
&< 5\omega_1\beta-3\beta\leq 1
\end{align*}
by  \eqref{eq:del-Pezzo-dynamic-alpha-degree-6-pointgeneric-special-boundm0}, \eqref{eq:del-Pezzo-dynamic-alpha-degree-6-pointgeneric-special-boundmx0x1} and \eqref{eq:del-Pezzo-dynamic-alpha-degree-6-pointgeneric-special-boundm-a}, giving a contradiction. 

Hence we conclude that $t_3=F_3\cap C^3=q_3$. Moreover, since  $\lambda\beta(m_0+m_1+m_2)-3\beta\leq 1$, as we have seen above, we apply Theorem \ref{thm:pseudo-inductive-blow-up} (i) with $i=4$, to show that
\begin{equation}
(S_4, (1-\beta)C^4 + \lambda\beta D^4 +(\lambda\beta(m_0+m_1+m_2)-3\beta)F_3 + (\lambda\beta(m_0+m_1+m_2+m_3)-4\beta)F_4)
\label{eq:del-Pezzo-dynamic-alpha-degree-6-pointgeneric-special-badpair4}
\end{equation}
is not log canonical at some $t_4 \in F_4$.
Observe that 
\begin{align*}
&\lambda\beta(m_0+m_1+2m_2)-4\beta\leq \lambda\beta(2a+x_0+x_1+2x_2)-4\beta\leq\\
\leq& (\lambda\beta(2a+2x_0+x_0+x_1)-4\beta\leq 6\lambda\beta-4\beta<6\omega_1\beta-4\beta\leq 1
\end{align*}
by \eqref{eq:del-Pezzo-dynamic-alpha-degree-6-pointgeneric-special-boundmx0x1} and \eqref{eq:del-Pezzo-dynamic-alpha-degree-6-pointgeneric-special-boundm0}. The last inequality follows from case analysis. If $0<\beta\leq \frac{1}{2}$, then $6\omega_1\beta-4\beta=6\beta-4\beta=2\beta\leq 1$. If $\frac{1}{2}\leq \beta\leq 1$, then $6\omega_1\beta-4\beta\leq 3-4\beta\leq 3-2=1$. Hence $t_4=C^4\cap F_4=q_4$ and the pair  \eqref{eq:del-Pezzo-dynamic-alpha-degree-6-pointgeneric-special-badpair4} is not log canonical only at $q_4=F_4\cap C^4$, which implies the claim in case (2).

\textbf{Case (3):} $q_1\not\in (L_1^1\cup L_2^1\cup L_3^1)\cap F_1$.

Let $H^1\in \vert f_1^*(\pioplane{1})-F_1\vert$ be the unique member of this linear system which passes through $q_1$. There are two ways to see that such $H^1$ exists. One of them is to realise that $S_1$ is a del Pezzo surface of degree $5$ and let $H^1$ be the strict transform of a line in $\bbP^2$ under the model $\pi\circ f_1$. The other way to see $H^1$ exists is to notice that $-K_{S_1}\cdot H^1=2$ and $(H^1)^2=0$, $p_a(H^1)=0$ and apply Proposition \ref{prop:rational-surfaces-sections-rational-curves} to its complete linear system, to show that is a pencil. $H^1$ is irreducible. Indeed, if $H^1=A+B$ then $A$ and $B$ are curves of del Pezzo degree $1$ and $q_1\in A$. By assumption, the only curve of degree $1$ is $F_1$. But then $B\sim\pioplane{1}-2F_1$ and there is no such line in $S_1$ by Lemma \ref{lem:del-Pezzo-lines9-d}. Let $H=H_0:=(f_1)_*(H^{1})$. Note that $H^1$ is the strict transform of $H$ and $q_0\in H$. Let
$$G^1\in \vert f_1^*(\pioplane{2})-F_1-E_1-E_2-E_3\vert$$
be the unique member of this linear system which passes through $q_1$. There are two ways to see that such $G^1$ exists. One of them is to realise that $S_1$ is a del Pezzo surface of degree $5$ and let $G^1$ be the strict transform of a conic in $\bbP^2$ passing through $p_1,p_2,p_3$ under the model $\pi\circ f_1$. The other way to see that $G^1$ exists is to notice that $-K_{S_1}\cdot G^1=2$ and $(G^1)^2=0$, $p_a(G^1)=0$ and apply Proposition \ref{prop:rational-surfaces-sections-rational-curves} to its complete linear system, to show that is a pencil. $G^1$ is irreducible. Indeed, if $G^1=A+B$ then $A$ and $B$ are curves of del Pezzo degree $1$ and $q_1\in A$. By assumption, the only curve of degree $1$ is $F_1$. But then $B\sim\pioplane{1}-2F_1$ and there is no such line in $S_1$ by Lemma \ref{lem:del-Pezzo-lines9-d}. Let $G=G_0:=(f_1)_*(G^{1})$. Note that $G^1$ is the strict transform of $G$ and $q_0\in G$.

Observe that $G+H\sim -K_S$. By Lemma \ref{lem:appendix-delPezzo-deg6-dynamic-lct2} the pair
\begin{equation}
(S,(1-\beta)C + \lambda\beta (G+H))
\label{eq:del-Pezzo-dynamic-alpha-degree-6-pointgeneric-2-auxpair1}
\end{equation}
is log canonical. Therefore we may assume by Lemma \ref{lem:log-convexity} that either $G\not\subseteq\Supp(D)$ or $H\not\subseteq\Supp(D)$. In any case there is an irreducible curve $Q$ in $S$ with del Pezzo degree $3$ such that $q_0\in Q$, $q_1\in Q^1$ and $Q\not\subset\Supp(D)$. Recall from \eqref{eq:del-Pezzo-dynamic-alpha-degree-6-pointgeneric-bound-m0up}
\begin{equation}
m_0\leq 2.
\label{eq:del-Pezzo-dynamic-alpha-degree-6-pointgeneric-generic-boundm0}
\end{equation}
Intersecting $Q$ and $D$:
\begin{equation}
3=Q\cdot D \geq m_0+m_1.
\label{eq:del-Pezzo-dynamic-alpha-degree-6-pointgeneric-generic-boundm0m1}
\end{equation}
This gives us:
\begin{equation}
\lambda\beta(2m_0)-2\beta < 4\omega_1\beta-2\beta\leq 1
\label{eq:del-Pezzo-dynamic-alpha-degree-6-pointgeneric-generic-bound-blowup2}
\end{equation}
where the last inequality follows from the last inequality in \eqref{eq:del-Pezzo-dynamic-alpha-degree-6-pointgeneric-bound-blowup3}. By Theorem \ref{thm:pseudo-inductive-blow-up} (iv) with $i=2$, the pair \eqref{eq:del-Pezzo-dynamic-alpha-degree-6-pointgeneric-badpair2} is not log canonical at $q_2=F_2\cap C^2$.

Since
$$\lambda\beta(m_0+m_1)-2\beta<4\omega_1\beta m_0-2\beta\leq 1,$$
by \eqref{eq:del-Pezzo-dynamic-alpha-degree-6-pointgeneric-bound-m0} and \eqref{eq:del-Pezzo-dynamic-alpha-degree-6-pointgeneric-bound-blowup3}, then Theorem \ref{thm:pseudo-inductive-blow-up} (i) with $i=3$ implies the pair
$$(S_3, (1-\beta)C^3 + \lambda\beta D^3 + (\lambda\beta(m_0+m_1) -2\beta)F^3_2 + (\lambda\beta(m_0+m_1+m_2)-3\beta)F_3)$$
is not log canonical at some $t_3 \in F_3$. Moreover, given that $m_1\leq m_0$, \eqref{eq:del-Pezzo-dynamic-alpha-degree-6-pointgeneric-generic-boundm0} and \eqref{eq:del-Pezzo-dynamic-alpha-degree-6-pointgeneric-generic-boundm0m1} give us
$$\lambda\beta(m_0+2m_1)-3\beta<5\omega_1\beta-3\beta\leq 1,$$
where the last inequality follows by case analysis: if $0<\beta\leq \frac{1}{2}$, then $5\omega_1\beta-3\beta=2\beta\leq 1$ and if $\frac{1}{2}\leq \beta\leq 1$, then $5\omega_1\beta-3\beta=\frac{5}{2}-3\beta\leq 1$.

Hence we may apply Lemma (iv) with $i=3$ to conclude that $t_3=F_3\cap C^3=q_3$. Moreover
$$\lambda\beta(m_0+m_1+m_2)-3\beta\leq \lambda\beta(m_0+2m_1)-3\beta\leq 1,$$
which we use to apply Theorem \ref{thm:pseudo-inductive-blow-up} (i) with $i=4$, to show that
\begin{equation}
(S_4, (1-\beta)C^4 + \lambda\beta D^4 +(\lambda\beta(m_0+m_1+m_2)-3\beta)F_3 + (\lambda\beta(m_0+m_1+m_2+m_3)-4\beta)F_4)
\label{eq:del-Pezzo-dynamic-alpha-degree-6-pointgeneric-generic-badpair4}
\end{equation}
is not log canonical at some $t_4 \in F_4$.
Since
\begin{align*}
&\lambda\beta(m_0+m_1+2m_2)-4\beta\leq 2\lambda\beta(m_0+m_1)-4\beta<6\omega_1\beta-4\beta\leq 1
\end{align*}
by \eqref{eq:del-Pezzo-dynamic-alpha-degree-6-pointgeneric-generic-boundm0m1}. The last inequality follows from case analysis. If $0<\beta\leq \frac{1}{2}$, then $6\omega_1\beta-4\beta=6\beta-4\beta=2\beta\leq 1$. If $\frac{1}{2}\leq \beta\leq 1$, then $6\omega_1\beta-4\beta\leq 3-4\beta\leq 3-2=1$. Hence $t_4=C^4\cap F_4=q_4$ and the pair  above is not log canonical only at $q_4=F_4\cap C^4$, which implies the claim in case (3), finishing the proof of this lemma.
\end{proof}

\begin{lem}
\label{lem:del-Pezzo-dynamic-alpha-degree-6-pointline}
If $q_0$ belongs precisely to one $(-1)$-curve and $\lambda<\omega_1$, then
$$(S_4, (1-\beta)C^4 + \lambda\beta D^4 + (\lambda\beta(m_0+m_1+m_2+m_3)-4\beta)F_4)$$
is not log canonical only at $q_4=C^4\cap F_4$.

If $q_0$ belongs precisely to one $(-1)$-curve and $\lambda<\omega_2$, then
$$(S_3, (1-\beta)C^3 + \lambda\beta D^3 + (\lambda\beta(m_0+m_1+m_2)-3\beta)F_3)$$
is not log canonical only at $q_3=C^3\cap F_3$.
\end{lem}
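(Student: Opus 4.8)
The plan is to mirror the argument of Lemma \ref{lem:del-Pezzo-dynamic-alpha-degree-6-pointgeneric}, but to exploit the single $(-1)$-curve through $q_0$ to organise the intersection bounds, exactly as was done for degree $7$ in Lemma \ref{lem:del-Pezzo-dynamic-alpha-degree-7-proof-case1-L}. By Claim \ref{clm:del-Pezzo-dynamic-alpha-degree-6-proof-pEckardt} the point $q_0$ is not a pseudo-Eckardt point, so by Lemma \ref{lem:del-Pezzo-good-model} I would fix a model $\pi\colon S\ra\bbP^2$ for which the unique $(-1)$-curve through $q_0$ is $E_1$; then $q_0=C\cap E_1$ and $q_0$ lies on no other line. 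First I would record that $E_1\subseteq\Supp(D)$, since otherwise $1=E_1\cdot D\geq\mult_{q_0}D>\frac{1}{\lambda}>1$ by Lemma \ref{lem:adjunction} (i). Letting $L\in\vert\pioplane{1}-E_1\vert$ be the unique conic through $q_0$, which is irreducible (any splitting would by Lemma \ref{lem:del-Pezzo-lines9-d} force a second line through $q_0$), I write $D=aE_1+bL+\Omega$ with $a>0$, $b\geq0$ and $E_1,L\not\subseteq\Supp(\Omega)$, and set $x_i=\mult_{q_i}\Omega^i$. Then $m_0=a+b+x_0$, while $m_1=b+x_1$ in the tangent case $(C\cdot L)\vert_{q_0}=2$ (the regime $\lambda<\omega_2$) and $m_1=x_1$ in the transverse case (the regime $\lambda<\omega_1$), and $m_i=x_i$ for $i\geq2$, because $E_1$ and $L$ both meet $C$ only at $q_0$.

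Next I would extract the numerical bounds from intersection theory. Using $L^2=0$ and $L\cdot E_1=1$ gives $2=L\cdot D\geq a+x_0$, improved to $a+x_0+x_1\leq2$ in the tangent case, and $E_1^2=-1$ gives $1=E_1\cdot D\geq-a+b+x_0$. The anticanonical divisor $2L+E_1+L_{23}\sim-K_S$, with $L_{23}\in\vert\pioplane{1}-E_2-E_3\vert$, yields a pair that is log canonical together with $(1-\beta)C$ for $\lambda<\omega_2$ (this is exactly the configuration computing the upper bound $\omega_2$, established in the appendix in the spirit of lemmas \ref{lem:appendix-delPezzo-deg6-dynamic-lct0}--\ref{lem:appendix-delPezzo-deg6-dynamic-lct2}). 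Applying log convexity (Lemma \ref{lem:log-convexity}) and $E_1\subseteq\Supp(D)$, I may then assume either $b=0$ (if $L\not\subseteq\Supp(D)$), in which case $L\cdot D=2\geq m_0$ and in the tangent case even $m_0+m_1\leq2$; or else $L_{23}\not\subseteq\Supp(D)$, whence $1=L_{23}\cdot D\geq b$, so $b\leq1$ and $m_0\leq3$. These are precisely the two sub-cases ($b=0$ versus $b>0$) that drive the bookkeeping below.

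The core of the proof is the iterated application of Theorem \ref{thm:pseudo-inductive-blow-up} to the blow-up sequence $f_i\colon S_i\ra S_{i-1}$ set up in the proof of Theorem \ref{thm:del-Pezzo-dynamic-alpha-degree-6}, starting from the pair obtained via Lemma \ref{lem:log-pullback-preserves-lc} after blowing up $q_0$ in \eqref{eq:del-Pezzo-dynamic-alpha-degree-6-proof-badpair}. At each stage $i$ I would verify the hypotheses $\lambda\beta(\sum_{j=0}^{i-1}m_j)-(i-1)\beta\leq1$ together with the disjunction ``$\lambda\beta m_0\leq1$ or $\lambda\beta(m_0+m_1)-\beta\leq1$'' and, where the tangency makes $m_1=b+x_1$, the part-(iv) inequality $\lambda\beta((\sum_{j=0}^{i-3}m_j)+2m_{i-2})-i\beta\leq1$; each is checked by splitting $\beta$ at the breakpoints of $\omega_1$ and $\omega_2$, so as to conclude successively $t_i=q_i=F_i\cap C^i$ and to rule out $t_i\in F_{i-1}^i\cap F_i$. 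Carrying this through three blow-ups when $\lambda<\omega_2$ and four when $\lambda<\omega_1$ gives that the asserted final pair is not log canonical only at $q_3$, respectively $q_4$.

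The hard part will be the sub-case $b>0$, where $L\subseteq\Supp(D)$ and the bound degrades to $m_0\leq3$, so that $\lambda\beta m_0\leq1$ can fail for $\beta$ in the middle of its range, given that $\omega_i\beta\leq\tfrac12$ rather than $\leq\tfrac13$ as in degree $7$. There the argument must route through $\lambda\beta(m_0+m_1)-\beta\leq1$ instead, which forces a careful split on whether $a\leq1$ or $b=0$, together with a precise accounting of whether tangency ($\omega_2$) or transversality ($\omega_1$) places the coefficient $b$ into $m_1$. Keeping all these intersection bounds simultaneously consistent with the piecewise definitions of $\omega_1$ and $\omega_2$ across every blow-up, so that the relevant inequality stays $\leq1$ uniformly in $\beta$, is the genuine obstacle, exactly as it was in the proof of Lemma \ref{lem:del-Pezzo-dynamic-alpha-degree-7-proof-case1-L}.
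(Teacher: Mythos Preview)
Your outline has the right skeleton---blow-up tower, log convexity, the decomposition $D=aE_1+bL+\Omega$, and the iterated application of Theorem \ref{thm:pseudo-inductive-blow-up}---and you correctly flag that the case $b>0$ with $L\subseteq\Supp(D)$ is the crux. But there is a genuine missing ingredient: you never introduce a curve through \emph{both} $q_0$ and $q_1$, and without one you cannot bound $m_0+m_1$ (or the sums at later steps) well enough to run the induction when $L$ is transverse to $C$ at $q_0$. In that regime $q_1\notin L^1$, so $L\cdot D\geq a+x_0$ only, and the best you extract from $L,E_1,L_{23}$ is $m_0\leq3$ and $x_0\leq\tfrac32$; this yields e.g.\ $m_0+m_1+2m_2\leq\tfrac{13}{2}$, which fails the hypothesis of Theorem \ref{thm:pseudo-inductive-blow-up}(iv) at $i=4$ for $\beta$ near $\tfrac12$. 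The degree-$7$ analogue you cite gets away with this because there the threshold satisfies $\omega\beta\leq\tfrac13$; here $\omega_1\beta\leq\tfrac12$, and the extra slack is not there.

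The paper closes this gap by bringing in the cubic $C_q\in\vert\pi^*\mathcal O_{\bbP^2}(2)-E_1-E_2-E_3\vert$ with $q_0\in C_q$ and $q_1\in C_q^1$, and splitting on whether $C_q$ is irreducible. If $C_q$ is reducible it must contain $L_q$, forcing $(L_q\cdot C)\vert_{q_0}=2$ and hence $\lambda<\omega_2$; this is your tangent case, handled with $2L_q+E_1+L_{23}$ together with a second auxiliary divisor $L_q+L_{12}+L_{13}+2E_1$ (Lemma \ref{lem:appendix-delPezzo-deg6-dynamic-lct5}) to bound $a$. If $C_q$ is irreducible, the anticanonical divisor $E_1+L_q+C_q$ (Lemma \ref{lem:appendix-delPezzo-deg6-dynamic-lct3}) lets one assume $L_q\notin\Supp(D)$ or $C_q\notin\Supp(D)$, and in either sub-case $C_q\cdot D=3$ supplies the missing inequality $m_0+x_1\leq3$ linking the first two multiplicities. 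That cubic is what makes the four-blow-up chain go through for $\lambda<\omega_1$; your proposal, built only on $L$, $E_1$ and $L_{23}$, does not have a substitute for it.
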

\begin{proof}
We will follow the notation at the beginning of this section. Recall $\omega_2\leq \omega_1$. By Lemma \ref{lem:del-Pezzo-good-model} we may choose a model $\pi\colon S \ra \bbP^2$ for $S$ such that $q_0\in E_1$ and $q_0$ lies in no other line. Let $L_q$ be the strict transform of the unique line of $\bbP^2$ through $p_1$ such that $q_0\in L_q$, i.e.
$$L_q\in \vert \pioplane{1}-E_1\vert.$$
Since $E_1$ is the only line containing $q_0$, then $L_q$ is irreducible. Otherwise there would be a line $L\subset S$ with class $L\sim\pioplane{1}-2E_1$ such that $E_1+L\sim L_q$, which is not possible by Lemma \ref{lem:del-Pezzo-lines9-d}. Observe that $(L_q)^2=0$ and $-K_S\cdot L_q=2$.

Let $S_1$ be the surface resulting from the blow-up of $q_0$. Let $F_1$ be the exceptional divisor. Let $q_1=F_1\cap C^1$.  Let $C_q$ be the strict transform of the unique conic in $\bbP^2$ through $p_1,p_2,p_3$ such that $q_0\in C_q$ and $q_1\in C_q^1\subset S_1$. Therefore
$$C_q\in \vert \pioplane{2}-E_1-E_2-E_3\vert.$$
Observe that $(C_q)^2=1$ and $-K_S\cdot C_q=3$.

\textbf{Case (A):} Suppose $C_q$ is reducible.
Then it decomposes in a line and a (possibly reducible) conic. One of these curves $M\subset\Supp(C_q)$ has $q_0\in M$ and $q_1\in M^1$ where $M^1\subset S_1$ is its strict transform in $S_1$. Since $q_1\in C^1$ by construction and $C^1\cdot E_1^1=C\cdot E_1-1=0$, then $q_1\not\in E^1_1$ and we conclude $M\neq E_1$. But $E_1$ is the only line through $q_0$, so $M$ is an irreducible conic, and in fact $M=L_q$, since there is no other conic through $q_0$. Observe that by construction we are in the situation in which $(L_q\cdot C)\vert_{q_0}=2$. Hence $\lambda<\omega_2$. In particular $q_1\in L_q^1$ but $q_2\not\in L_q^2$, since $q_2\in C^2$ and $C^2\cdot L_q^2=C\cdot L_q-2=0$. Let $L$ be the unique line such that $L_q+L\sim C_q$. Then it is clear that $L=L_{23}\sim\pioplane{1}-E_2-E_3$. Let $B=2L_q+L_{23}+E_1\sim-K_S$. By Lemma \ref{lem:appendix-delPezzo-deg6-dynamic-lct4}, the pair
$$(S, (1-\beta)C + \lambda\beta B)$$
is log canonical. Hence, by Lemma \ref{lem:log-convexity} we may assume there is an irreducible curve $Z\subseteq\Supp(B)$ such that $Z\not\subseteq\Supp(D)$. Notice that $E_1\subset \Supp(D)$, since otherwise we get the following contradiction
$$1\geq 1-\beta + \lambda\beta=((1-\beta)C + \lambda\beta D)\cdot E_1\geq \mult_{q_0}((1-\beta)C + \lambda\beta D))>1$$
where we use Lemma \ref{lem:adjunction} (i). Therefore either
\begin{itemize}
	\item[(A1)]$L_q\not\subseteq \Supp(D)$, or 
	\item[(A2)]$L_{23}\not\subseteq\Supp(D)$ and $L_q\subseteq\Supp(D)$.
\end{itemize}

\textbf{Case (A1):} $L_q\not\subseteq \Supp(D)$.

We may write $D=aE_1+\Omega$ where $L_q, E_1, \not\subseteq \Supp(\Omega)$. Let $x_i=\mult_{q_i}\Omega^i$. Then $m_0=a+x_0$. Since $q_1\in C^1$ but $E_1\cdot C=1$, then $E_1^1\cdot C^1=0$ so $q_1\not\in E_1$. Hence $m_1=x_1$. 

We bound the multiplicity of $D$:
\begin{align}
1&=D\cdot E_1 \geq -a+x_0 \label{eq:del-Pezzo-dynamic-alpha-degree-6-pointline-0-bound1}\\
2&=D\cdot L_q \geq a+x_0+x_1= m_0+m_1 \label{eq:del-Pezzo-dynamic-alpha-degree-6-pointline-0-bound2}
\end{align}
In particular
\begin{equation}
\lambda\beta m_0-\beta\leq \lambda\beta (m_0+m_1)-\beta\leq 2\lambda\beta -\beta \leq 2\omega_2\beta -\beta \leq 2\omega_1\beta-\beta\leq 1-\beta \leq 1.
\label{eq:del-Pezzo-dynamic-alpha-degree-6-pointline-0-m0}
\end{equation}
where we use \eqref{eq:del-Pezzo-dynamic-alpha-degree-6-pointline-0-bound2}.

Hence, by Theorem \ref{thm:pseudo-inductive-blow-up} (i) with $i=1$, we have that the pair
$$(S_1, (1-\beta)C^1 +\lambda\beta D^1 + (\lambda\beta m_0-\beta)F_1)$$
is not log canonical at some $t_1\in F_1$. Observe by inspection in \eqref{eq:del-Pezzo-dynamic-alpha-degree-6-pointline-0-m0} that $\lambda\beta(m_0+m_1)-\beta\leq 1$, which is a necessary condition for Theorem \ref{thm:pseudo-inductive-blow-up} (ii)-(iv) whenever $i\geq 2$. Therefore, by Theorem \ref{thm:pseudo-inductive-blow-up} (ii) when $i=1$ we conclude that $t_1=F_1\cap C^1=q_1$. But then part (iii) of Theorem \ref{thm:pseudo-inductive-blow-up} when $i=2$ gives that
\begin{equation}
(S_2, (1-\beta) C^1 + \lambda\beta D^1 + (\lambda\beta m_0-\beta)F^2_1 +(\lambda\beta(m_0+m_1)-2\beta)F_2)
\label{eq:del-Pezzo-dynamic-alpha-degree-6-pointline-0-badpair2}
\end{equation}
is not log canonical only at $q_2=F_2\cap C^2$.

From inequality \eqref{eq:del-Pezzo-dynamic-alpha-degree-6-pointline-0-m0} we deduce
$$\lambda\beta(m_0+m_1)-2\beta\leq \lambda\beta(m_0+m_1)-\beta\leq 1$$
which is the condition in Theorem \ref{thm:pseudo-inductive-blow-up} (i) when $i=3$. Therefore the pair
$$(S_3, (1-\beta)C^3 + \lambda\beta D^3 + (\lambda\beta(m_0+m_1) -2\beta)F^3_2 + (\lambda\beta(m_0+m_1+m_2)-3\beta)F_3)$$
is not log canonical at some $t_3\in F_3$.

Using \eqref{eq:del-Pezzo-dynamic-alpha-degree-6-pointline-0-bound2} we obtain
$$\lambda\beta (m_0+2m_1)-3\beta<4\omega_2\beta-3\beta\leq 4\omega_1\beta-3\beta\leq 1.$$
The last inequality follows from case analysis: $0<\beta\leq \frac{1}{2}$ we have $4\omega_1\beta-3\beta=4\beta-3\beta=\beta<1$. For $\frac{1}{2}\leq\beta\leq 1$, $4\omega_1\beta-3\beta=2-3\beta<1.$
Hence, by Theorem \ref{thm:pseudo-inductive-blow-up} (iv) with $i=3$ we conclude $t_3=F_3\cap C^3=q_3$, which implies the claim in case (A1).

\textbf{Case (A2):} $L_q\subseteq \Supp(D)$ and $L_{23}\not\subseteq \Supp(D)$.

We may write $D=aE_1+bL_q+\Omega$ where $L_q, E_1, \not\subseteq \Supp(\Omega)$. Let $x_i=\mult_{q_i}\Omega^i$. Then $m_0=a+b+x_0$. 

Observe that $B=L_q + L_{12}+L_{13}+2E_1\sim -K_S$. By Lemma \ref{lem:appendix-delPezzo-deg6-dynamic-lct5}, the pair
$$(S,(1-\beta)C + \lambda\beta B)$$
is log canonical for $\lambda<\omega_1$. Therefore, using Lemma \ref{lem:log-convexity}, we may assume there is one irreducible component $F\subseteq\Supp(B)$ such that $F\not\subseteq\Supp(D)$. Since $E_1\subset\Supp(B)$ and $L_p\subset\Supp(B)$, we have that either $F=L_{12}$ or $F=L_{13}$. In any case $1=F\cdot D\geq a$.

We bound the multiplicities of $D$:
\begin{align}
1&=D\cdot F \geq a \label{eq:del-Pezzo-dynamic-alpha-degree-6-pointline-0b-bound0}\\
1&=D\cdot E_1 \geq -a+b+x_0 \label{eq:del-Pezzo-dynamic-alpha-degree-6-pointline-0b-bound1}\\
1&=D\cdot L_{23} \geq b \label{eq:del-Pezzo-dynamic-alpha-degree-6-pointline-0b-bound3}
\end{align}

Using \eqref{eq:del-Pezzo-dynamic-alpha-degree-6-pointline-0b-bound0} and \eqref{eq:del-Pezzo-dynamic-alpha-degree-6-pointline-0b-bound1} it is immediate to obtain:
\begin{equation}
\lambda\beta m_0 -\beta=\lambda\beta (a+b+x_0+x_1)-\beta \leq \omega_2\beta (2+1)-\beta \leq 3\omega_1\beta-\beta\leq 1.
\label{eq:del-Pezzo-dynamic-alpha-degree-6-pointline-0b-m0}
\end{equation}
Indeed, if $0<\beta\leq \frac{1}{2}$, then $3\omega_1\beta-\beta=2\beta\leq 1$, whereas if $\frac{1}{2}\leq\beta\leq 1$, then $3\omega_1\beta-\beta\leq \frac{3}{2}-\beta\leq 1$.

By Theorem \ref{thm:pseudo-inductive-blow-up} (i) with $i=1$, we have that the pair
$$(S_1, (1-\beta)C^1 +\lambda\beta (aE_1^1+ bL_q^1 +\Omega^1) + (\lambda\beta m_0-\beta)F_1)$$
is not log canonical at some $t_1\in F_1$ and is log canonical in codimension $1$.

Since $L_q\cdot E_1=C\cdot E_1=1$, then $L_q^1\cap E_1=\emptyset$ and $C^1\cap E_1=\emptyset$. Suppose $t_1\not\in (E_1^1\cup L_q^1)$. Then $t_1\not\in C^1$, and the pair
$$(S_1, \lambda\beta (\Omega^1) + (\lambda\beta m_0-\beta)F_1)$$
is not log canonical at $t_1$. Applying Lemma \ref{lem:adjunction} (i) we obtain
$$1<\mult_{t_1}(\lambda\beta\Omega^1 + (\lambda\beta m_0-\beta)F_1)=\lambda\beta x_1+\lambda\beta(a+b+x_0)-\beta\leq 3\lambda\beta-\beta< 3\omega_1\beta-\beta\leq 1,$$
which is a contradiction, where the second inequality follows from \eqref{eq:del-Pezzo-dynamic-alpha-degree-6-pointline-0b-bound0}
and \eqref{eq:del-Pezzo-dynamic-alpha-degree-6-pointline-0b-bound1}. The last inequality follows by case analysis: if $0<\beta\leq \frac{1}{2}$, then $3\omega_1\beta-\beta=2\beta\leq 1$ and if $\frac{1}{2}\leq\beta\leq 1$, then $3\omega_1\beta-\beta=\frac{3}{2}-\beta\leq 1$. Therefore $t_1\in ((C^1\cap L_{q}^1)\cup(E_1^1))\cap F_1$. If $t_1=E_1^1\cap F_1$, then $t_1\neq C^1\cap L_{q}^1$ and the pair
$$(S_1, \lambda\beta (aE_1^1+\Omega^1) + (\lambda\beta m_0-\beta)F_1)$$
is not log canonical at $t_1$. Applying Lemma \ref{lem:adjunction} (iii) with $E_1^1$, we obtain
$$1<E_1^1\cdot (\lambda\beta\Omega^1 + (\lambda\beta m_0-\beta)F_1)=\lambda\beta(1+2a)-\beta\leq 3\lambda\beta-\beta\leq 1,$$
applying \eqref{eq:del-Pezzo-dynamic-alpha-degree-6-pointline-0b-bound0}. Therefore the pair
$$(S_1, (1-\beta)C^1 +\lambda\beta (bL_q^1 +\Omega^1) + (\lambda\beta m_0-\beta)F_1)$$
is not log canonical at $t_1=C^1\cap L_{q}^1\cap F_1$.

Since $t_1\in C^1$ but $E_1\cdot C=1$, then $E_1^1\cdot C^1=0$ so $t_1\not\in E_1$. Hence $m_1=b+x_1$, since $t_1\in L_q^1$, given that $C^1\cdot L_q^1=1$. We conclude $m_i=x_i$ for $i\geq 2$. Moreover
\begin{equation}
2=D\cdot L_q \geq a+x_0+x_1
\label{eq:del-Pezzo-dynamic-alpha-degree-6-pointline-0b-bound2}
\end{equation}

Applying Lemma \ref{lem:log-pullback-preserves-lc} the pair
\begin{equation}
(S_2, (1-\beta)C^2 +\lambda\beta (bL_q^2 +\Omega^2) + (\lambda\beta m_0-\beta)F_1^2+(\lambda\beta (m_0+m_1)-2\beta)F_2)
\label{eq:del-Pezzo-dynamic-alpha-degree-6-pointline-0b-badpair2}
\end{equation}
is not log canonical at some (possibly all) $t_2\in F_2$. By \eqref{eq:del-Pezzo-dynamic-alpha-degree-6-pointline-0b-bound3} and \eqref{eq:del-Pezzo-dynamic-alpha-degree-6-pointline-0b-bound2} inequality 
\begin{equation}
\lambda\beta(m_0+m_1)-2\beta=\lambda\beta(a+2b+x_0+x_1)-2\beta\leq 4\lambda\beta-2\beta<4\omega_i\beta-2\beta\leq 4\omega_1\beta-2\beta\leq 1
\label{}
\end{equation}
holds. The last inequality follows by case analysis. If $0<\beta\leq \frac{1}{2}$, then $4\omega_1\beta-2\beta=2\beta\leq 1$. If $\frac{1}{2}\leq\beta\leq 1$, then $4\omega_1\beta-2\beta\leq 2-2\beta\leq 1$.

Therefore, by Lemma \ref{lem:adjunction} (ii), the pair \eqref{eq:del-Pezzo-dynamic-alpha-degree-6-pointline-0b-badpair2} is log canonical in codimension $1$, and as a result $t_2$ is isolated. Observe that $L_q^2\cdot C^2=(F_1^2)\cdot C^2=L_q^2 \cdot C^2 = 0$. Therefore we have essentially $4$ distinct situations. Either
\begin{itemize}
	\item[(i)] the point $t_2\not\in F_2\cap (L_{q}^2\cup F_1^2\cup C^2)$, or
	\item[(ii)] the point $t_2=F_2\cap L_{q}^2$, or
	\item[(iii)] the point $t_2=F_2\cap F_1^2$, or
	\item[(iv)] the point $t_2=F_2\cap C^2=q_2$.
\end{itemize}
\textbf{Case (i):} the pair
$$(S_2, \lambda\beta (\Omega^2) + (\lambda\beta (m_0+m_1)-2\beta)F_2)$$
is not log canonical. But then, by Lemma \ref{lem:adjunction} (i), and adding \eqref{eq:del-Pezzo-dynamic-alpha-degree-6-pointline-0b-bound0}, \eqref{eq:del-Pezzo-dynamic-alpha-degree-6-pointline-0b-bound1}, \eqref{eq:del-Pezzo-dynamic-alpha-degree-6-pointline-0b-bound2}, \eqref{eq:del-Pezzo-dynamic-alpha-degree-6-pointline-0b-bound3} we obtain a contradiction:
\begin{align}
1&<\mult_{t_2}(\lambda\beta (\Omega^2) + (\lambda\beta (m_0+m_1)-2\beta)F_2)=\lambda\beta(m_0+m_1+x_2)-2\beta \label{eq:del-Pezzo-dynamic-alpha-degree-6-pointline-0b-bound-m0m1m2-trick}\\
&=\lambda\beta(a+2b+x_0+x_1+x_2)-2\beta\leq \lambda\beta(a+2b+2x_0+x_1)-2\beta\leq \nonumber\\
&\leq5\lambda\beta-2\beta<5\omega_i\beta-2\beta\leq 5\omega_2\beta-2\beta\leq 1.\nonumber
\end{align}
The last inequality follows by case analysis on $\beta$. If $0<\beta\leq \frac{1}{3}$, then $5\omega_1\beta-2\beta=3\beta\leq 1$. If $\frac{1}{3}\leq\beta\leq \frac{3}{4}$, then $5\omega_1\beta-2\beta=1$.  If $\frac{3}{4}\leq\beta\leq 1$, then $5\omega_1\beta-2\beta=\frac{5}{2}-2\beta\leq 1$. 

\textbf{Case (ii):} $t_2=F_2\cap L_{q}^2$. Then the pair
$$(S_2, \lambda\beta (bL_q^2 +\Omega^2) +(\lambda\beta (m_0+m_1)-2\beta)F_2)$$
is not log canonical at $t_2$. It follows from \eqref{eq:del-Pezzo-dynamic-alpha-degree-6-pointline-0b-bound0} and \eqref{eq:del-Pezzo-dynamic-alpha-degree-6-pointline-0b-bound1} that $b+x_2\leq b+x_0\leq 2$. Since $\lambda\beta< \frac{1}{2}$, applying Lemma \ref{lem:adjunction} (iii) to our pair, we obtain a contradiction:
$$1<\lambda\beta(bL_q^2+\Omega^2)\cdot F_2=\lambda\beta (b+x_2)\leq 2\omega_1\beta\leq 1.$$

\textbf{Case (iii):} $t_2=F_2\cap F_1^2$. Then the pair
$$(S_2, \lambda\beta (\Omega^2) + (\lambda\beta m_0-\beta)F_1^2+(\lambda\beta (m_0+m_1)-2\beta)F_2)$$
is not log canonical at $t_2$. Applying Lemma \ref{lem:adjunction} (iii) to this pair with $F_2$, then
\begin{align*}
1<&\lambda\beta (\Omega^2) +(\lambda\beta m_0-\beta)F_1^2)\cdot F_2\\
\leq&\lambda\beta(a+b+x_0+x_1)-\beta<\\
\leq&3\omega_1\beta-\beta\leq 1,
\end{align*}
using \eqref{eq:del-Pezzo-dynamic-alpha-degree-6-pointline-0b-bound2} and \eqref{eq:del-Pezzo-dynamic-alpha-degree-6-pointline-0b-bound3}. The last inequality is easy to see case by case: if $0<\beta\leq\frac{1}{2}$, then $3\omega_1\beta-\beta=2\beta\leq 1$ and if $\frac{1}{2}\leq\beta\leq1$, then $3\omega_1\beta-\beta=\frac{3}{2}-\beta\leq 1$.

Therefore the pair
$$(S_2, (1-\beta)C^2 +\lambda\beta (\Omega^2) + (\lambda\beta (m_0+m_1)-2\beta)F_2)$$
is not log canonical at $t_2=F_2 \cap C^2=q_2$ (which is precisely case (iv) above). By Lemma \ref{lem:log-pullback-preserves-lc}, the pair
$$(S_3, (1-\beta)C^3 +\lambda\beta (\Omega^3) + (\lambda\beta (m_0+m_1)-2\beta)F_2^3+(\lambda\beta (m_0+m_1+m_2)-3\beta)F_3)$$
is not log canonical at some $t_3 \in F_3$. If $\lambda\beta(m_0+m_1+m_2)-3\beta\leq 1$, then such $t_3$ is isolated. This is indeed the case:
$$\lambda\beta (m_0+m_1+m_2)-3\beta=\lambda\beta(m_0+m_1+x_2)-3\beta\leq \lambda\beta(m_0+m_1+x_2)-2\beta\leq 1$$
as seen in \eqref{eq:del-Pezzo-dynamic-alpha-degree-6-pointline-0b-bound-m0m1m2-trick}. Observe that since $F_2\cdot C^2=1$, then $F_2^3\cap C^3=\emptyset$. The point $t_3$ can be in essentially $3$ different positions: $t_3\not\in (F_2^3\cup C^3)$, $t_3=F_2^3\cap F_3$ or $t_3=C^3\cap F_3=q_3$.

If $t_3\not\in (F_2^3\cup C^3)$ then the pair 
$$(S_3, \lambda\beta (\Omega^3) +(\lambda\beta (m_0+m_1+m_2)-3\beta)F_3)$$
is not log canonical at $t_3$. By Lemma \ref{lem:adjunction} (iii) applied with $F_3$ we obtain a contradiction:
$$1<\lambda\beta\Omega\cdot F_3<\omega_1\beta x_3\leq \frac{1}{2}(a+x_0+x_1)\leq 1,$$
by \eqref{eq:del-Pezzo-dynamic-alpha-degree-6-pointline-0b-bound2}.

If $t_3= (F_2^3\cup F_3)$ then the pair 
$$(S_3, \lambda\beta (\Omega^3) + (\lambda\beta (m_0+m_1)-2\beta)F_2^3+(\lambda\beta (m_0+m_1+m_2)-3\beta)F_3)$$
is not log canonical at $t_3$. By Lemma \ref{lem:adjunction} (iii) applied with $F^3_2$ we obtain a contradiction:
\begin{align*}
1<&F^3_2\cdot(\lambda\beta (\Omega^3) +(\lambda\beta (m_0+m_1+m_2)-3\beta)F_3)\leq\\
\leq& \lambda\beta(x_1-x_2+m_0+m_1+m_2)-3\beta\\
=&\lambda\beta(a+2b+x_0+2x_1)-3\beta\\
<& 5\omega_2\beta-3\beta\leq 5\omega_2\beta-2\beta\leq 1,
\end{align*}
where we add \eqref{eq:del-Pezzo-dynamic-alpha-degree-6-pointline-0b-bound0}, \eqref{eq:del-Pezzo-dynamic-alpha-degree-6-pointline-0b-bound1}, \eqref{eq:del-Pezzo-dynamic-alpha-degree-6-pointline-0b-bound2} and \eqref{eq:del-Pezzo-dynamic-alpha-degree-6-pointline-0b-bound3}. The last inequality follows from \eqref{eq:del-Pezzo-dynamic-alpha-degree-6-pointline-0b-bound-m0m1m2-trick}.

Hence $t_3=C^3\cap F_3=q_3$, but that implies the pair
$$(S_3, (1-\beta)C^3 +\lambda\beta (\Omega^3) + (\lambda\beta (m_0+m_1+m_2)-3\beta)F_3)$$
is not log canonical only at $q_3$, finishing the proof in case (A2).

\textbf{Case (B):} We may assume the curve $C_q$ is irreducible, where $C_q\sim \pioplane{2}-E_1-E_2-E_3$ with $q_0\in C_q$ and $q_1\in C_q^1$. Recall $\lambda<\omega_i\leq \omega_1$ and $L_q\sim\pioplane{1}-E_1$ is irreducible with $q_0\in E_1$.

Notice that 
$$B:=E_1+L_q+C_q\sim -K_S.$$
By Lemma \ref{lem:appendix-delPezzo-deg6-dynamic-lct3} the pair
$$(S, (1-\beta)C + \lambda\beta B)$$
is log canonical. Hence, by Lemma \ref{lem:log-convexity} we may assume there is an irreducible curve $Z\subseteq\Supp(B)$ such that $Z\not\subseteq\Supp(D)$. Notice that $E_1\subset \Supp(D)$, since otherwise we get the following contradiction
$$1\geq 1-\beta + \lambda\beta=((1-\beta)C + \lambda\beta D)\cdot E_1\geq\mult_{q_0}((1-\beta)C + \lambda\beta D))>1$$
where we use Lemma \ref{lem:adjunction} (i). Therefore, by Lemma \ref{lem:log-convexity}, either
\begin{itemize}
	\item[(B1)]$L_q\not\subseteq \Supp(D)$, or 
	\item[(B2)]$C_q\not\subseteq\Supp(D)$,
\end{itemize}
since $E_1\subseteq\Supp(D)$.

\textbf{Case (B1):} $L_q\not\subseteq \Supp(D)$.

We may write $D=aE_1+bC_q+\Omega$ where $L_q, E_1, C_q\not\subseteq \Supp(\Omega)$. Let $x_0=\mult_{q_0}\Omega$. Then $m_0=a+b+x_0$. We bound the multiplicity of $D$:
\begin{align}
1&=D\cdot E_1 \geq -a+b+x_0 \label{eq:del-Pezzo-dynamic-alpha-degree-6-pointline-1-bound1}\\
2&=D\cdot L_q \geq a+b+x_0=m_0\label{eq:del-Pezzo-dynamic-alpha-degree-6-pointline-1-bound3}
\end{align}

In particular
\begin{equation}
\lambda\beta m_0 \leq \omega_1\beta m_0\leq 1.
\label{eq:del-Pezzo-dynamic-alpha-degree-6-pointline-1-m0}
\end{equation}
This is a necessary hypothesis for Theorem \ref{thm:pseudo-inductive-blow-up} (ii)-(iv) for $i\geq 2$ and we will assume it from now onwards.

Hence, by Theorem \ref{thm:pseudo-inductive-blow-up} (i) with $i=1$, we have that the pair
$$(S_1, (1-\beta)C^1 +\lambda\beta D^1 + (\lambda\beta m_0-\beta)F_1)$$
is not log canonical at some $t_1\in F_1$. Observe \eqref{eq:del-Pezzo-dynamic-alpha-degree-6-pointline-1-m0} is a necessary condition for Theorem \ref{thm:pseudo-inductive-blow-up} (ii)-(iv) whenever $i\geq 2$. Also, by Theorem \ref{thm:pseudo-inductive-blow-up} (ii) when $i=1$ we conclude that $t_1=F_1\cap C^1=q_1$. But then part (i) of Theorem \ref{thm:pseudo-inductive-blow-up} when $i=2$ gives that
\begin{equation}
(S_2, (1-\beta) C^1 + \lambda\beta D^1 + (\lambda\beta m_0-\beta)F^2_1 +(\lambda\beta(m_0+m_1)-2\beta)F_2)
\label{eq:del-Pezzo-dynamic-alpha-degree-6-pointline-badpair2}
\end{equation}
is not log canonical at some $t_2\in F_2$.

If $0<\beta\leq \frac{1}{2}$, then $4\omega_1\beta-2\beta=4\beta-2\beta=2\beta\leq 1$. If $\frac{1}{2}\leq \beta\leq 1$ then $4\omega_1\beta-2\beta=2-2\beta\leq 1$. Therefore we have proven
\begin{equation}
2\lambda\beta m_0-2\beta\leq 4\lambda\beta-2\beta\leq 4\omega_1\beta-2\beta\leq 1,
\label{eq:del-Pezzo-dynamic-alpha-degree-6-pointline-1-bound4}
\end{equation}
by means of \eqref{eq:del-Pezzo-dynamic-alpha-degree-6-pointline-1-bound3}. 

Notice that by \eqref{eq:del-Pezzo-dynamic-alpha-degree-6-pointline-1-bound3} 
$$\lambda\beta(2m_0)-2\beta\leq 4\lambda\beta-2\beta< 4\omega_1\beta-2\beta\leq 1.$$
Indeed, if $0<\beta\leq \frac{1}{2}$, then $4\omega_1\beta-2\beta=2\beta\leq 1$ while if $\frac{1}{2}\leq \beta\leq 1$, then $4\omega_1\beta-2\beta=2-2\beta\leq 1$.
Using this, Theorem \ref{thm:pseudo-inductive-blow-up} (iv) with $i=2$ implies $t_2 = F_2\cap C^2=q_2$.

Let $x_i=\mult_{q_i}\Omega^i$. Since $q_1\in C^1$ but $E_1\cdot C=1$, then $E_1^1\cdot C^1=0$ so $q_1\not\in E_1$. Hence $m_1=b+x_1$. 
Observe that since $q_1\in C_q^1\cap C^1$, then $3=C_q\cdot C\geq(C_q\cdot C)\vert_{q_0}\geq 2$. If $(C_q\cdot C)\vert_{q_0}=2$, then $C^3\cdot (C_q)^2=0$ so $q_2\not \in C_q^2$ and then $m_2=x_2$. Otherwise $q_2\in C^2_q$, $C^2_q \cdot C^2 =1$ and $m_2=b+x_2$. In both cases $m_i=x_i$ for $i\geq 3$. 
\begin{equation}
3=D\cdot C_q\geq a+b+x_0+x_1=m_0+x_1.
\label{eq:del-Pezzo-dynamic-alpha-degree-6-pointline-1-bound2}
\end{equation}

Since
$$\lambda\beta(m_0+m_1)-2\beta\leq \lambda\beta(2m_0)-2\beta\leq 1$$
we can apply Theorem \ref{thm:pseudo-inductive-blow-up} (i) when $i=3$ and conclude that the pair
$$(S_3, (1-\beta)C^3 + \lambda\beta D^3 + (\lambda\beta(m_0+m_1) -2\beta)F^3_2 + (\lambda\beta(m_0+m_1+m_2)-3\beta)F_3)$$
is not log canonical at some $t_3\in F_3$.

Observe that using \eqref{eq:del-Pezzo-dynamic-alpha-degree-6-pointline-1-bound1} and \eqref{eq:del-Pezzo-dynamic-alpha-degree-6-pointline-1-bound3} the inequality
$$m_0+2m_1\leq a+3b+x_0+x_1\leq a+3b+3x_0=(2a+2b+2x_0)+(-a+b+x_0)\leq 4+1=5,$$
holds. Therefore
$$\lambda\beta(m_0+2m_1)-3\beta< 5\omega_1\beta-3\beta\leq 1$$
since for $0<\beta\leq\frac{1}{2}$, we have $5\omega_1\beta-3\beta=	2\beta\leq 1$, and for $\frac{1}{2}\leq \beta\leq 1$ we have $5\omega_1\beta-3\beta\leq \frac{5}{2}-3\beta\leq 1$.
Hence, by Theorem \ref{thm:pseudo-inductive-blow-up} (iii) with $i=3$ we conclude $t_3=F_3\cap C^3=q_3$. Moreover, since
$$\lambda\beta(m_0+m_1+m_2)-3\beta\leq \lambda\beta (m_0+2m_1)-3\beta\leq 1$$
we can apply Theorem \ref{thm:pseudo-inductive-blow-up} (i) with $i=4$, and show that
\begin{equation}
(S_4, (1-\beta)C^4 + \lambda\beta D^4 + (\lambda\beta(m_0+m_1+m_2)-3\beta)F_3^4 + (\lambda\beta(m_0+m_1+m_2+m_3)-4\beta)F_4)
\label{eq:del-Pezzo-dynamic-alpha-degree-6-pointline-badpair4}
\end{equation}
is not log canonical at some $t_4\in F_4$.

Suppose $(C_q\cdot C)\vert_{q_0}=2$. Then $m_2=x_2$. and using \eqref{eq:del-Pezzo-dynamic-alpha-degree-6-pointline-1-bound2} and \eqref{eq:del-Pezzo-dynamic-alpha-degree-6-pointline-1-bound3} we obtain
$$m_0+m_1+2m_2\leq 2b+a+x_0+x_1+2x_2\leq 3+2x_2+a-a\leq 5+x_2-a\leq 6.$$
Therefore
$$\lambda\beta(m_0+m_1+2m_2)-4\beta\leq 6\lambda\beta-4\beta<6\omega_1\beta-4\beta\leq 1$$
since for $0<\beta\leq \frac{1}{2}$ $6\omega_1\beta-4\beta=2\beta\leq 1$ and for $\frac{1}{2}\leq \beta\leq 1$ we have $6\omega_1\beta-4\beta=3-4\beta\leq 1$. Now we can use Theorem \ref{thm:pseudo-inductive-blow-up} (iii) with $i=4$ to prove that pair \eqref{eq:del-Pezzo-dynamic-alpha-degree-6-pointline-badpair4} is not log canonical only at $t_4=F_4\cap C^4=q_4$, which implies the claim in case (B1) when $(C_q\cdot C)\vert_{q_0}=2$.

Now suppose $(C_q\cdot C)\vert_{q_0}=3$. Then
\begin{equation}
\lambda\beta(m_0+m_1+m_2+m_3)-4\beta\leq 1.
\label{eq:del-Pezzo-dynamic-alpha-degree-6-pointline-crazycase}
\end{equation}
Indeed, notice that
\begin{align}
m_0+m_1+m_2+m_3	&=a+3b+x_0+x_1+x_2+x_3\leq  a+3b+x_0+x_1+2x_2\label{eq:del-Pezzo-dynamic-alpha-degree-6-pointline-crazycase2}\\
								&\leq3+2b+2x_1-a+a\nonumber\\
								&\leq 3+1+a+b+x_0\nonumber\\
								&\leq 6,\nonumber
\end{align}
by \eqref{eq:del-Pezzo-dynamic-alpha-degree-6-pointline-1-bound2}, \eqref{eq:del-Pezzo-dynamic-alpha-degree-6-pointline-1-bound1} and \eqref{eq:del-Pezzo-dynamic-alpha-degree-6-pointline-1-bound3} in each step. Now, if $0<\beta\leq \frac{1}{2}$, then $6\omega_1\beta-4\beta=2\beta\leq1$ and $\frac{1}{2}\leq \beta\leq 1$, then $6\omega_1\beta-4\beta=3-4\beta\leq 1$. Inequality \eqref{eq:del-Pezzo-dynamic-alpha-degree-6-pointline-crazycase} is proven. In particular \eqref{eq:del-Pezzo-dynamic-alpha-degree-6-pointline-badpair4} is log canonical in codimension $1$, i.e. $t_4$ is isolated.

Since $F_4$ has empty intersection with $C_q$ and $E_1$, if $t_4\neq C^4\cap F_4=q_4$, then the pair
$$(S_4, \lambda\beta \Omega^4 + (\lambda\beta(m_0+m_1+m_2)-3\beta)F_3^4 + (\lambda\beta(m_0+m_1+m_2+m_3)-4\beta)F_4)$$
is not log canonical at $q_4\in F_4$. Applying Lemma \ref{lem:adjunction} (iii) to this pair with $F_3^4$ we obtain a contradiction:
\begin{align*}
1&<(\lambda\beta\Omega^4+\lambda\beta(m_0+m_1+m_2+m_3)-4\beta)F_4)\cdot F_3^4\\
&=\lambda\beta(x_2-x_3+a+3b+x_0+x_1+x_2+x_3)-4\beta\\
&\leq6\omega_1\beta-4\beta\leq 1,
\end{align*}
by \eqref{eq:del-Pezzo-dynamic-alpha-degree-6-pointline-crazycase2} and \eqref{eq:del-Pezzo-dynamic-alpha-degree-6-pointline-crazycase}, which gives a contradiction. Therefore $t_4=C^4\cap F_4=q_4$ and the Lemma is proven in case (B1).

\textbf{Case (B2):} $C_q\not\subseteq \Supp(D)$.

We may write $D=aE_1+bL_q+\Omega$ where $L_q, E_1, C_q\not\subseteq \Supp(\Omega)$. Let $x_0=\mult_{q_0}\Omega$. 

We bound the multiplicities of $D$:
\begin{align}
1&=D\cdot E_1 \geq -a+b+x_0 \label{eq:del-Pezzo-dynamic-alpha-degree-6-pointline-1-bound6}\\
2&=D\cdot L_q \geq a+x_0.\label{eq:del-Pezzo-dynamic-alpha-degree-6-pointline-1-bound8}
\end{align}
Notice that $3=D\cdot C_q\geq m_0$ so
\begin{equation}
\lambda\beta m_0-\beta<3\omega_1\beta -\beta \leq 1.
\label{eq:del-Pezzo-dynamic-alpha-degree-6-pointline-2-m0}
\end{equation}
For the last inequality, observe that if $0<\beta\leq \frac{1}{2}$, then $3\omega_1\beta-\beta=2\beta\leq 1$, while if $\frac{1}{2}\leq \beta\leq 1$, then $3\omega_1\beta-\beta=\frac{3}{2}-\beta\leq 1$.

Hence, by Theorem \ref{thm:pseudo-inductive-blow-up} (i) with $i=1$, we have that the pair
$$(S_1, (1-\beta)C^1 +\lambda\beta D^1 + (\lambda\beta m_0-\beta)F_1)$$
is not log canonical at some $t_1\in F_1$.

Since $L_q\cdot E_1=1$, then $L_q^1\cap E_1^1=\emptyset$. Suppose $t_1\neq C^1\cap F_1=q_1$. If $t_1\neq L_q^1\cap F_1$, then the pair
$$(S_1,\lambda\beta(aE_1^1+\Omega^1)+(\lambda\beta m_0 -\beta)F_1)$$
is not log canonical at $t_1$. By Lemma \ref{lem:adjunction} (iii) and \eqref{eq:del-Pezzo-dynamic-alpha-degree-6-pointline-1-bound8}
$$1<F_1\cdot (\lambda\beta(aE_1+\Omega^1)=\lambda\beta(a+x_0)<2\omega_1\beta\leq 1,$$
which is absurd.

If $t_1=L_q^1\cap F_1, t_1\neq q_1$, the pair
$$(S_1,\lambda\beta(bL_q^1+\Omega^1)+(\lambda\beta m_0 -\beta)F_1)$$
is not log canonical at $t_1$. Then, by Lemma \ref{lem:adjunction} (iii) applied with $F_1$ we get
$$1<\lambda\beta F_1(bL_q^1+\Omega^1)<\omega_1\beta(b+x_0)\leq 2\omega_1\beta\leq 1,$$
which is absurd. We have used $b+x_0\leq 2$, which follows from adding \eqref{eq:del-Pezzo-dynamic-alpha-degree-6-pointline-1-bound6} to $3=D\cdot C_1\geq a+b+x_0$. Therefore $t_1=q_1=C^1\cap F_1$.

Let $x_i=\mult_{q_i}\Omega^i$. Then $m_0=a+b+x_0$. Since $E_1\cdot C=E_1\cdot L_q = E_1\cdot C_q=L_q\cdot C_q=1$, then $E_1^1\cdot C^1=C^1\cdot L_q^1=C_q^1\cdot L_q^1=0$ so $q_1\not\in E_1$, $q_1\not\in L_q$. Hence $m_i=x_i$ for $i\geq 1$. Then
\begin{equation}
3=D\cdot C_q\geq a+b+x_0+x_1=m_0+m_1.
\label{eq:del-Pezzo-dynamic-alpha-degree-6-pointline-1-bound7}
\end{equation}
Observe that \eqref{eq:del-Pezzo-dynamic-alpha-degree-6-pointline-1-bound7} and inspection in \eqref{eq:del-Pezzo-dynamic-alpha-degree-6-pointline-2-m0} give that $\lambda\beta(m_0+m_1)-\beta\leq 1$, which is a necessary condition for Theorem \ref{thm:pseudo-inductive-blow-up} (ii)-(iv) whenever $i\geq 2$. We will assume it from now on.

Observe that
\begin{equation}
\lambda\beta(m_0+m_1)-\beta\leq 3\omega_1\beta-\beta\leq 1
\label{eq:del-Pezzo-dynamic-alpha-degree-6-pointline-2-tiredofthis}
\end{equation}
by \eqref{eq:del-Pezzo-dynamic-alpha-degree-6-pointline-1-bound7} and \eqref{eq:del-Pezzo-dynamic-alpha-degree-6-pointline-2-m0}. Now we apply 
Theorem \ref{thm:pseudo-inductive-blow-up} (iii) when $i=2$ to get that
\begin{equation}
(S_2, (1-\beta) C^1 + \lambda\beta D^1 + (\lambda\beta m_0-\beta)F^2_1 +(\lambda\beta(m_0+m_1)-2\beta)F_2)
\label{eq:del-Pezzo-dynamic-alpha-degree-6-pointline-2-badpair2}
\end{equation}
is not log canonical only at $q_2=F_2\cap C^2$.

From inequality \eqref{eq:del-Pezzo-dynamic-alpha-degree-6-pointline-2-tiredofthis} we deduce
$$\lambda\beta(m_0+m_1)-2\beta\leq \lambda\beta(m_0+m_1)-\beta\leq 1$$
which is the condition in Theorem \ref{thm:pseudo-inductive-blow-up} (i) when $i=3$. Therefore the pair
$$(S_3, (1-\beta)C^3 + \lambda\beta D^3 + (\lambda\beta(m_0+m_1) -2\beta)F^3_2 + (\lambda\beta(m_0+m_1+m_2)-3\beta)F_3)$$
is not log canonical at some $t_3\in F_3$.

Using \eqref{eq:del-Pezzo-dynamic-alpha-degree-6-pointline-1-bound7} and \eqref{eq:del-Pezzo-dynamic-alpha-degree-6-pointline-1-bound8} we obtain
$$\lambda\beta(m_0+2m_1)-3\beta\leq \lambda\beta (3 + 2-a)-3\beta\leq 5\omega_1\beta-3\beta\leq1.$$
To prove the last inequality observe that for $0<\beta\leq \frac{1}{2}$ we have $5\omega_1\beta-3\beta=5\beta-3\beta\leq 1$. For $\frac{1}{2}\leq\beta\leq 1$, $5\omega_1\beta-3\beta=\frac{5}{2}-3\beta\leq 1.$
Hence, by Theorem \ref{thm:pseudo-inductive-blow-up} (iv) with $i=3$ we conclude $t_3=F_3\cap C^3=q_3$. Moreover, since
$$\lambda\beta(m_0+m_1+m_2)-3\beta\leq \lambda\beta (m_0+2m_1)-3\beta\leq 1$$
we can apply Theorem \ref{thm:pseudo-inductive-blow-up} (i) with $i=4$, and show that
\begin{equation}
(S_4, (1-\beta)C^4 + \lambda\beta D^4 + (\lambda\beta(m_0+m_1+m_2)-3\beta)F_3^4 + (\lambda\beta(m_0+m_1+m_2+m_3)-4\beta)F_4)
\label{eq:del-Pezzo-dynamic-alpha-degree-6-pointline-2-badpair4}
\end{equation}
is not log canonical at some $t_4\in F_4$.

We use \eqref{eq:del-Pezzo-dynamic-alpha-degree-6-pointline-1-bound6}, \eqref{eq:del-Pezzo-dynamic-alpha-degree-6-pointline-1-bound7} and \eqref{eq:del-Pezzo-dynamic-alpha-degree-6-pointline-1-bound8} to show
$$\lambda\beta(m_0+m_1+2m_2)-4\beta\leq \lambda\beta(3+2x_0)-4\beta\leq \lambda\beta(3-a+b+x_0+a+x_0)-4\beta\leq 6\lambda\beta-4\beta\leq 6\omega_1\beta-4\beta\leq1.$$
To verify the last inequality observe that for $0<\beta\leq \frac{1}{2}$ we have $6\omega_1\beta-4\beta=6\beta-4\beta\leq 1$. For $\frac{1}{2}\leq\beta\leq 1$, $6\omega_1\beta-4\beta=\frac{6}{2}-4\beta\leq 1.$

Now we can use Theorem \ref{thm:pseudo-inductive-blow-up} (iv) with $i=4$ to prove that the pair \eqref{eq:del-Pezzo-dynamic-alpha-degree-6-pointline-2-badpair4} is not log canonical only at $t_4=F_4\cap C^4=q_4$, which implies the claim in case (B2).
\end{proof}

Lemmas \ref{lem:del-Pezzo-dynamic-alpha-degree-6-pointgeneric} and \ref{lem:del-Pezzo-dynamic-alpha-degree-6-pointline} cover all possible $q_0\in S$, proving Claim \ref{clm:del-Pezzo-dynamic-alpha-degree-6-proof} and finishing the proof of Theorem \ref{thm:del-Pezzo-dynamic-alpha-degree-6}.

\section{Del Pezzo surface of degree $5$}
The method for this surface is different than in higher degrees and more similar to the proof of Theorem \ref{thm:del-Pezzo-glct-charp} when $K_S^2=4$. Therefore first we need to find and classify low degree curves in $S$ in order to construct certain $\bbQ$-divisors with good properties that we will use in the proof of Theorem \ref{thm:del-Pezzo-dynamic-alpha-degree-5}.
\subsection{Curves of low degree and models of $S$}
\label{sec:delPezzo-5-curves}
Let $\pi\colon S\ra \bbP^2$ be the blow-up at $p_1,\ldots,p_4\in \bbP^2$ in general position. Let $E_1,\ldots,E_4$ be the exceptional divisors. Recall that
$-K_S\sim \pioplane{3}-\sum^4_{i=1}E_i$ and $E_i^2=-1$.
\begin{table}[!ht]%
\begin{center}
\begin{tabular}{|c|c|c|c|c|c|}
\hline
Linear system $\calL\calS$																																						&$\deg C$	&$C^2$	&Fix $p$	&Fix $q$	&$C'$\\
\hline \hline
$\vert E_{i}\vert$																																										&$1$			&$-1$		&N					&N			&$E_i$\\
\hline
$\calL_{ij}= \left \vert \pioplane{1} -E_i-E_j\right\vert$																						&$1$			&$-1$		&N					&N			&$L_{ij}$\\
\hline
$\calB_{i}= \left \vert \pioplane{1} -E_i\right\vert$																									&$2$			&$0$		&Y					&N			&$B_i$\\
\hline
$\displaystyle{\calA = \left \vert \pioplane{2} -\sum^4_{j=1}E_j \right\vert}$												&$2$			&$0$		&Y					&N			&$A$\\
\hline
$\calR= \left \vert \pioplane{1}\right\vert$																													&$3$			&$1$		&Y					&Y			&$R$\\
\hline
$\calR_{i}= \left \vert \pioplane{2} -\sum^4_{\substack{j=1\\ j\neq i}}E_j\right\vert$								&$3$			&$1$		&Y					&Y			&$R_i$\\
\hline
\end{tabular}
\end{center}
\caption{Catalogue of curves of low degree of the del Pezzo surface of degree $5$.}
\label{tab:del-Pezzo-5-lowdegree-curves}
\end{table}

Observe Table \ref{tab:del-Pezzo-5-lowdegree-curves}. In the first column we have defined certain complete linear systems $\calL\calS$ in $S$. Let $C\in \calL\calS$ be any divisor. Its numerical properties ($C^2, \deg(C)$) are the same for any divisor in a given $\calL\calS$ and are easy to compute. We list them in the second and third columns of Table \ref{tab:del-Pezzo-5-lowdegree-curves}. Note that, by the genus formula, $p_a(C)=0$ in all cases. 

If $\deg C=2$, then by Proposition \ref{prop:rational-surfaces-sections-rational-curves}, $h^0(\calL\calS)\geq 2$. Take $\calL\calS'\subset \calL\calS$ to be the sublinear system fixing a given point $p\in S$. Then $h^0(\calL\calS')\geq 1$ and we can find a curve $C'\in\calL\calS$ with $p\in C'$. The notation for each particular $C'$ is in the last column of the table. 

When the curve $C'$ is irreducible, we can see it as the strict transform of an irreducible curve in $\bbP^2$ via the model $\pi$. For instance $L_{ij}$ is the strict transform of the unique line through $p_i$ and $p_j$. $B_i$ is the strict transform of a line passing through $p_i$ and $p$ and $A$ is the strict transform of a conic through all $p_j$.

If $\deg C=3$, then let $\sigma\colon \widetilde S \ra S$ be the blow-up of a point $p\in S$ with exceptional divisor $E$. Let $\calL\calS'=\{D\in \calL\calS \setsep p\in \Supp(D)\}$ and let $\widetilde{\calL\calS'}=\vert\sigma^*(\calL\calS')-E\vert$. By Proposition \ref{prop:rational-surfaces-sections-rational-curves}
$$h^0(\widetilde{\calL\calS'})=h^0(\calL\calS')=h^0(\calL\calS)-1\geq 2,$$
so we can choose $B\in \widetilde{\calL\calS'}$ an effective divisor passing through $q$. If $E\not\subset\Supp(B)$, then let $C'=\sigma_*(B)$ and $B=\widetilde C' \sim \sigma^*(C')-E$ where $B\cdot E=1$.

Conversely, if $E\subset \Supp(B)$, let $B=A+nE$ where $E\not\subset\Supp(A)$, $n\geq1$ and $A$ is effective. Then $\widetilde C'=A=B-nE\sim\sigma^*(C')-(n+1)E$ for $C'=\sigma_*(B)=\sigma_*(A)$ and $C'$ is singular at $p$. $C'$ is reducible, since otherwise $p_a(\widetilde C')<p_a(C')=0$, which is impossible. If $\calL \calS = \cal R$, then denote $C'=R$. If $\calL \calS = \calR_i$, then denote $C'=R_i$. 

When the curve $C'$ is irreducible, we can see it as the strict transform of an irreducible curve in $\bbP^2$ via the model $\pi$. For instance $L$ is the strict transform of the unique line through $\pi(p)$ such that its strict transform in $\widetilde S$ passes through $q$. $R_i$ is the strict transform of a conic passing through $\pi(p)$ and all $p_j$  for $j\neq i$ and such that its strict transform in $\widetilde S$ contains $q$.

In order to understand the geometry of $S$ we need to understand which are its curves of low degree and how they intersect each other. We have seen some of these curves. We want to show that all the lines in $S$ are the ones described in Table \ref{tab:del-Pezzo-5-lowdegree-curves}. Furthermore, we will show that the conics described in the table are all the conics in $S$ passing through a given point $p$. Moreover, since there is more than one model $S\ra \bbP^2$ to characterise $S$ as a blow-up of $\bbP^2$ in $4$ points, we need to show that we can choose a model adequate to our needs.

\begin{lem}
\label{lem:del-Pezzo-deg5-good-curves-smooth}
Let $S$ be a non-singular del Pezzo surface of degree $5$ and $C'$ a curve as in Table \ref{tab:del-Pezzo-5-lowdegree-curves}. Suppose $C'$ is irreducible. Then $C'$ is smooth.
\end{lem}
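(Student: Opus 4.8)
The plan is to follow the strategy of Lemma \ref{lem:del-Pezzo-deg4-good-curves-smooth}: push each curve $C'$ down to $\bbP^2$ through the model $\pi\colon S\ra\bbP^2$ and observe that its image is a smooth plane curve, so that $C'$ itself is smooth, being the strict transform of a smooth curve under the blow-up of distinct points. First I would dispose of the exceptional case $C'=E_i$, which is a $(-1)$-curve and hence isomorphic to $\bbP^1$, so smooth (Lemma \ref{lem:del-Pezzo-lines-numerical}).

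For every other entry of Table \ref{tab:del-Pezzo-5-lowdegree-curves} the curve $C'$ is not contracted by $\pi$, so $\pi(C')$ is an irreducible plane curve whose degree I read off from the rational class by intersecting it with $\pioplane{1}$: this gives $\deg\pi(C')=1$ for $L_{ij}$, $B_i$ and $R$, and $\deg\pi(C')=2$ for $A$ and $R_i$. An irreducible plane curve of degree $1$ or $2$ (a line or a conic) is smooth, so $\pi(C')$ is smooth in each case. Since $S$ is obtained from $\bbP^2$ by blowing up the four distinct points $p_1,\dots,p_4$, and the strict transform of a smooth curve under the blow-up of a point is again smooth, each of the blow-ups either misses $\pi(C')$ or is centred at a smooth point of it; hence the strict transform $C'$ is smooth.

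There is essentially no obstacle here, and this is the point I would emphasise. The genuine difficulty in the degree $4$ analogue came from the cubics $Q_i$, whose plane image was an irreducible cubic with a double point requiring a single blow-up to resolve. In degree $5$ the curves $R$ and $R_i$ carry del Pezzo degree $3$ but only plane degree $1$ and $2$ respectively, so no singular plane curve ever appears and the argument collapses to the two observations above. The only care needed is the quick verification of the plane degrees from the classes in Table \ref{tab:del-Pezzo-5-lowdegree-curves}, together with the remark that irreducibility of $C'$ forces $\pi(C')$ to be a genuine (reduced, irreducible) line or conic rather than a degenerate one.
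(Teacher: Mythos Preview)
Your proposal is correct and follows essentially the same approach as the paper: handle $E_i$ separately via Lemma \ref{lem:del-Pezzo-lines-numerical}, and for every other curve push down to $\bbP^2$, observe the image has degree $1$ or $2$ and is therefore smooth, then conclude that the strict transform under blow-ups of smooth points is smooth. Your remark contrasting with the degree $4$ case (where the cubics $Q_i$ require extra care) is apt but not part of the paper's proof.
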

\begin{proof}
Suppose $C'\neq E_i$. Then $\pi(C')$ is an irreducible curve of degree $1$ or $2$ in $\bbP^2$. Therefore $\pi(C')$ is smooth. Since $S$ is just the blow-up of smooth points of $\bbP^2$, if $\pi(C')$ is a smooth curve of $\bbP^2$, then its strict transform $C'$ is a smooth curve in $S$.

If $C'=E_i$, then $C'$ is a line, which is smooth by Lemma \ref{lem:del-Pezzo-lines-numerical}.
\end{proof}

\begin{lem}
\label{lem:del-Pezzo-deg5-lines-list}
The $10$ lines in Table \ref{tab:delPezzo-4-lowdegree} are all the lines in $S$. The intersection of these lines are:
\begin{align*}
&E_i\cdot E_j = -\delta_{ij}, \qquad L_{ij}\cdot E_i = L_{ij}\cdot E_j = 1,\\
&{L_{ij}\cdot L_{kl}=\left\{ \begin{array}{rl}
																-1 &\text{if } i=k \text{ and } j=l \\
																0 & \text{if precisely two subindices are the same} \\
																1 & \text{if none of the subindices are the same.} \\
														\end{array} \right.}
\end{align*}
\end{lem}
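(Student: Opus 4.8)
The plan is to prove Lemma \ref{lem:del-Pezzo-deg5-lines-list} by the same two-part strategy already established for degree $4$ in Lemma \ref{lem:del-Pezzo-deg4-lines-list}, which itself defers the counting to Lemma \ref{lem:del-Pezzo-lines9-d}. The cleanest approach is to invoke the general classification of lines on del Pezzo surfaces directly. First I would note that $S$ arises as the blow-up of $\bbP^2$ at $n=9-\deg S=4$ points in general position (Theorem \ref{thm:del-Pezzo-classification-models}), so I can apply Lemma \ref{lem:del-Pezzo-lines9-d} with $n=4$. Reading off Table \ref{tab:del-Pezzo-lines-classes}, only the rows whose rank-of-$n$ condition is satisfied by $n=4$ contribute: these are the $E_i$ (row valid for $1\leq n\leq 8$, giving $4$ lines) and the $L_{ij}$ (row valid for $2\leq n\leq 8$, giving $\binom{4}{2}=6$ lines). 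The $C_{i_1\cdots i_5}$ row requires $n\geq 5$ and all higher rows require $n\geq 7$ or $n=8$, so none of them apply. This gives exactly $4+6=10$ lines, matching the count in Table \ref{tab:del-Pezzo-lines} for $\deg S=5$, and these are precisely the $E_i$ and $L_{ij}$ listed in Table \ref{tab:del-Pezzo-5-lowdegree-curves}. Hence the enumeration is complete.

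The second part is the intersection table, which is a routine computation in $\Pic(S)\cong\bbZ^5$ with basis $\pioplane{1},E_1,\dots,E_4$ and intersection form $(\pioplane{1})^2=1$, $E_i^2=-1$, $\pioplane{1}\cdot E_i=0$, $E_i\cdot E_j=0$ for $i\neq j$. The relation $E_i\cdot E_j=-\delta_{ij}$ is immediate from the basis. For $L_{ij}=\pioplane{1}-E_i-E_j$ one computes $L_{ij}\cdot E_i=-E_i\cdot(-E_i)=1$ (and symmetrically for $E_j$), while $L_{ij}\cdot E_k=0$ for $k\neq i,j$. For two lines $L_{ij}$ and $L_{kl}$ the product is
$$
L_{ij}\cdot L_{kl}=(\pioplane{1}-E_i-E_j)\cdot(\pioplane{1}-E_k-E_l)=1-\lvert\{i,j\}\cap\{k,l\}\rvert,
$$
which evaluates to $-1$ when $\{i,j\}=\{k,l\}$ (recovering $L_{ij}^2=-1$, consistent with Lemma \ref{lem:del-Pezzo-lines-numerical}), to $0$ when the pairs share exactly one index, and to $1$ when the pairs are disjoint. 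This is exactly the three-case formula in the statement, so the intersection numbers follow.

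I do not anticipate a genuine obstacle here: the enumeration is a direct specialization of Lemma \ref{lem:del-Pezzo-lines9-d} to $n=4$, and the intersection numbers are a mechanical bilinear-form calculation. The only point requiring a modicum of care is confirming that \emph{no other} classes of lines arise for $n=4$; this is guaranteed by the rank conditions in Table \ref{tab:del-Pezzo-lines-classes} rather than needing the Schwarz-inequality argument reproven, since that argument is already carried out once and for all in the proof of Lemma \ref{lem:del-Pezzo-lines9-d}. Accordingly, the proof can be kept as short as its degree-$4$ analogue, simply citing Lemma \ref{lem:del-Pezzo-lines9-d} and recording the intersection computation.
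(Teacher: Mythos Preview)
Your proposal is correct and follows exactly the paper's approach: the paper's proof is simply ``See Lemma \ref{lem:del-Pezzo-lines9-d}'', deferring both the enumeration and (implicitly) the intersection numbers to the general classification. Your write-up is just a more explicit version of the same argument, spelling out the $n=4$ specialization of Table \ref{tab:del-Pezzo-lines-classes} and the bilinear-form computation that the paper leaves to the reader.
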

\begin{proof}
See Lemma \ref{lem:del-Pezzo-lines9-d}.
\end{proof}

\begin{lem}
\label{lem:del-Pezzo-deg5-model-line-choice}
Given a line $L\subset S$, we can choose a model $\gamma\colon S\ra \bbP^2$ such that $L=E_1$. If $p=L_1\cap L_2$, the intersection of two lines, we can choose $\gamma$ such that $L_1=E_1,\ L_2=L_{12}$.
\end{lem}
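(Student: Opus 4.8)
The plan is to mimic the proof of Lemma \ref{lem:del-Pezzo-deg4-model-line-choice}: a model $\gamma\colon S\ra\bbP^2$ is exactly a contraction of four pairwise disjoint $(-1)$-curves (lines), so to arrange $L=E_1$ it suffices to exhibit a quadruple of pairwise disjoint lines one of which is $L$, contract them by Castelnuovo's contractibility criterion \cite[V.5.7]{HartshorneAG}, and observe that the resulting relative minimal model is forced to be $\bbP^2$ (the image is smooth, rational, and has $K^2=5+4=9$, exactly as in the degree-$4$ argument, where uniqueness of the relative minimal model is used).

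First I would reduce to the two types of lines supplied by Lemma \ref{lem:del-Pezzo-deg5-lines-list}, the exceptional curves $E_i$ and the $L_{ij}$. If $L=E_i$, nothing is needed beyond relabelling the exceptional curves of the fixed model $\pi$, so $L=E_1$. If $L=L_{ij}$, after relabelling I may assume $L=L_{12}$, and the crucial step is to read off from the intersection numbers in Lemma \ref{lem:del-Pezzo-deg5-lines-list} a quadruple of pairwise disjoint lines containing $L_{12}$. The quadruple $\{L_{12},L_{14},L_{24},E_3\}$ works: the three $L_{1j}$-type curves pairwise share exactly one subindex (so meet in intersection number $0$), and $E_3$ is disjoint from each $L_{kl}$ with $3\notin\{k,l\}$. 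Contracting these four lines defines $\gamma$, under which $L_{12}$ becomes an exceptional curve; a last relabelling makes it $E_1$.

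For the second assertion I would invoke the first part to fix a model with $L_1=E_1$, and then use that $p=L_1\cap L_2\in E_1$, so $L_2$ meets $E_1$. By Lemma \ref{lem:del-Pezzo-deg5-lines-list} the only lines meeting $E_1$ are $L_{12},L_{13},L_{14}$, whence $L_2=L_{1j}$ for some $j\in\{2,3,4\}$; relabelling the indices $2,3,4$ (which merely permutes the names of the remaining exceptional curves and leaves the model $\gamma$ itself intact) turns this into $L_2=L_{12}$, giving $L_1=E_1$ and $L_2=L_{12}$ simultaneously.

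The only genuine difficulty is the combinatorial bookkeeping of disjointness: one must verify that the chosen quadruple really is pairwise disjoint and, more importantly, avoid the natural but wrong choice $\{L_{12},L_{13},L_{14}\}$, which shares the common index $1$ and \emph{cannot} be completed to four pairwise disjoint lines (every remaining line either meets one of the three $L_{1j}$ or meets $L_{12}$). This is entirely mechanical once the intersection table of Lemma \ref{lem:del-Pezzo-deg5-lines-list} is in hand, and the geometric input (Castelnuovo contractibility together with uniqueness of the relative minimal model) is identical to the degree-$4$ case, so I expect no substantive obstruction beyond this check.
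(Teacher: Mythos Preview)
Your proposal is correct and follows essentially the same approach as the paper: exhibit four pairwise disjoint lines containing $L$, contract them via Castelnuovo, and identify the image as $\bbP^2$ by uniqueness of the relative minimal model; then for the second part use the first to set $L_1=E_1$ and relabel so that $L_2=L_{1j}$ becomes $L_{12}$. The only cosmetic difference is the quadruple chosen for $L=L_{12}$: you take $\{L_{12},L_{14},L_{24},E_3\}$ while the paper takes $\{L_{12},L_{13},L_{23},E_4\}$, but both are valid and the argument is otherwise identical.
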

\begin{proof}
We construct $\gamma\colon S \ra \bbP^2$ by contracting $4$ disjoint exceptional curves $F_i$ (i.e. $F_i\cdot F_j=0$ if $i\neq j$, $-K_S\cdot F_i=1$ and $F^2_i=-1$ for all $i$). Let $F_1=L$. 
\begin{itemize}
	\item[(i)] If $F_1=E_1$, take $F_i=E_i$, for $i=2,3,4$.
	\item[(ii)] If $F_1=L_{12}$, take $F_2=L_{13}$, $F_3=L_{23}$ and  $F_4=E_4$.
\end{itemize}
Obvious relabelling exhausts all $10$ lines in Lemma \ref{lem:del-Pezzo-deg5-lines-list}. By Castelnuovo contractibility criterion \cite[V.5.7]{HartshorneAG} we can contract each $F_i$, leaving every other point intact. Th	e image of $\gamma$ is $\bbP^2$, because the relative minimal model of $S$, once $4$ exceptional curves are contracted, is unique. For the second part we can assume already $L_1=E_1$ and run this lemma again. Then $L_2=L_{1j}$, since $L_1\cdot L_2=1$ and by relabelling the lines $F_i$, we may assume that $L_2=L_{12}$.
\end{proof}

\begin{lem}
\label{lem:del-Pezzo-deg5-conics}
If $C$ is a conic in $S$ passing through $p$, then $C=A$ or $C=B_i$, with $\pi(C)$ either a conic through all marked points but $p_i$ or a line through $p$ and $p_i$, respectively.
\end{lem}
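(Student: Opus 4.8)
The plan is to mimic the proof of Lemma~\ref{lem:del-Pezzo-deg4-conics} almost verbatim, reducing the statement to a short Diophantine classification in $\Pic(S)$. I treat $C$ as an integral conic through $p$ (a reducible conic would split as a union of two lines, already classified in Lemma~\ref{lem:del-Pezzo-deg5-lines-list}, so it cannot be one of the curves $A$, $B_i$ and is excluded). Since $\deg S=5\geq 3$ and $(-K_S)\cdot C=2$ by the definition of a conic, Lemma~\ref{lem:del-Pezzo-all-conics-rational} gives $p_a(C)=0$. The genus formula (Lemma~\ref{lem:genus-formula}) then yields $K_S\cdot C+C^2=-2$, and combined with $(-K_S)\cdot C=2$ this forces $C^2=0$.

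Next I would push $C$ down to the plane via a fixed model $\pi\colon S\ra\bbP^2$. Writing $\bar C=\pi_*(C)\sim\oplaned$ and $C\sim\pioplaned-\sum_{i=1}^4 a_iE_i$ with $a_i=\mult_{p_i}\bar C\geq 0$ integers (using Corollary~\ref{cor:del-Pezzo-anticanonical}), the two numerical identities $C^2=d^2-\sum a_i^2=0$ and $(-K_S)\cdot C=3d-\sum a_i=2$ must hold. Since $\sum a_i^2\geq\sum a_i$ for non-negative integers, I get $0=d^2-\sum a_i^2\leq d^2-(3d-2)=(d-1)(d-2)$, hence $d\in\{1,2\}$.

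Finally comes the case analysis. For $d=1$ the equations give $\sum a_i=1$ and $\sum a_i^2=1$, so exactly one $a_i$ equals $1$ and the rest vanish, whence $C\sim\pioplane{1}-E_i=B_i$; since $p\in C$, the image $\bar C$ is the line through $p_i$ and $\pi(p)$. For $d=2$ one has $\sum a_i=\sum a_i^2=4$: any $a_i\geq 2$ would already exhaust the sum of squares while leaving $\sum a_i<4$, a contradiction, so all four $a_i=1$ and $C\sim\pioplane{2}-\sum_{j=1}^4 E_j=A$, with $\bar C$ a conic through all four base points. Both classes appear in Table~\ref{tab:del-Pezzo-5-lowdegree-curves}, finishing the proof.

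There is no genuine conceptual difficulty here; the argument is the degree-$5$ specialisation of the degree-$4$ computation. The only points requiring care are that $C$ be integral, so that Lemma~\ref{lem:del-Pezzo-all-conics-rational} and the genus formula apply, and that the finitely many integer solutions of the two equations be exhausted exactly by the listed ones. The mild ``obstacle'', such as it is, is keeping the bookkeeping of the Diophantine enumeration tight—in particular ruling out the $a_i\geq 2$ possibilities in the $d=2$ case—rather than any substantive step.
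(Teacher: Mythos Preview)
Your proof is correct and follows the paper's approach essentially verbatim: apply Lemma~\ref{lem:del-Pezzo-all-conics-rational} and the genus formula to get $C^2=0$, write $C\sim\pioplaned-\sum a_iE_i$, use $\sum a_i^2\geq\sum a_i$ to bound $d\in\{1,2\}$, and finish by enumerating the integer solutions. Your case analysis is in fact slightly more explicit than the paper's (which simply asserts the possibilities are those in the table), and your computation $\sum a_i=1$ for $d=1$ is correct---the paper's text records $\sum a_i=2$ there, which appears to be a typo.
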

\begin{proof}
By Lemma \ref{lem:del-Pezzo-all-conics-rational}, $C$ has arithmetic genus $p_a(C)=0$. By the genus formula, this implies $C^2=0$. Let $\bar C =\pi_*(C)\subset\bbP^2$, $\bar C\sim \oplaned$ for some $d\geq 1$. Hence $C\sim \pioplaned-\sum a_i E_i$ for $a_i\geq 0$. This gives
\begin{equation}
0=C^2=d^2-\sum a_i^2,\qquad \qquad 2=(-K_S)\cdot C=3d-\sum a_i.
\label{eq:del-Pezzo-deg5-conics-proof}
\end{equation}
Given that $a_i$ are non-negative integers, we have $\sum a_i^2\geq \sum a_i$. Hence
$$0=d^2-\sum a_i^2\leq d^2-\sum a_i = d^2-3d+2=(d-1)(d-2),$$
so $d=1$ or $d=2$. The only possibilities for $a_i$ for \eqref{eq:del-Pezzo-deg5-conics-proof} to hold are $\sum a_i=2$ when $d=1$ and $\sum a_i=4$, when $d=2$. All these possibilities are precisely the ones classified in Table \ref{tab:delPezzo-4-lowdegree}, proving the Lemma.
\end{proof}

\begin{lem}
\label{lem:del-Pezzo-deg5-model-conic-choice}
There are no irreducible conics passing through pseudo-Eckardt points. Given an irreducible conic $C$ in $S$ and a point $p\in C$, we can choose a model $\gamma\colon S\ra \bbP^2$ such that $C=B_1$.
\end{lem}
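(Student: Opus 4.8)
The plan is to read off both assertions from the classification of irreducible conics in Lemma~\ref{lem:del-Pezzo-deg5-conics} together with the intersection numbers of the ten lines recorded in Lemma~\ref{lem:del-Pezzo-deg5-lines-list}. Fix a model $\pi\colon S\ra\bbP^2$; by Lemma~\ref{lem:del-Pezzo-deg5-conics} the irreducible conic $C$ is either $A\sim\pioplane{2}-E_1-E_2-E_3-E_4$ or $B_i\sim\pioplane{1}-E_i$ for some $i$. Both are smooth by Lemma~\ref{lem:del-Pezzo-deg5-good-curves-smooth}.

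For the first assertion I would first determine, for each conic, exactly which lines meet it. A direct computation gives $A\cdot E_i=1$ for every $i$ and $A\cdot L_{jk}=0$ for all $j<k$, so the lines meeting $A$ are $\{E_1,E_2,E_3,E_4\}$; and $B_1\cdot E_1=1$, $B_1\cdot E_i=0$ for $i\neq1$, $B_1\cdot L_{1k}=0$ and $B_1\cdot L_{jk}=1$ whenever $1\notin\{j,k\}$, so the lines meeting $B_1$ are $\{E_1,L_{23},L_{24},L_{34}\}$ (and symmetrically for the other $B_i$). The crucial point is that in each case these four lines are \emph{pairwise disjoint}: indeed $E_i\cdot E_j=0$, $E_1\cdot L_{jk}=0$ when $1\notin\{j,k\}$, and $L_{jk}\cdot L_{j'k'}=0$ whenever the index pairs share exactly one element --- which is precisely the situation for $L_{23},L_{24},L_{34}$. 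Now a pseudo-Eckardt point is, by Definition~\ref{dfn:del-Pezzo-pseudo-Eckardt-point}, the common point of two distinct lines $L_a,L_b$, so $L_a\cdot L_b=1$. If $C$ passed through such a point $p$, then (both curves being smooth at $p$) $C\cdot L_a\geq1$ and $C\cdot L_b\geq1$, so $L_a$ and $L_b$ would both belong to the meeting set of $C$; pairwise disjointness then forces $L_a\cdot L_b=0$, a contradiction. This proves no irreducible conic passes through a pseudo-Eckardt point.

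For the second assertion I would split into the two cases. If $C=B_i$, relabelling the blow-up points $p_1,\dots,p_4$ --- which is realised by an element of $\mathrm{PGL}_3$, since four points in general position can be sent to any ordering --- carries $B_i$ to $B_1$ while fixing $p$, so $C$ becomes the member of $\vert\pioplane{1}-E_1\vert$ through $p$, i.e.\ $C=B_1$. If $C=A$, I would instead pass to the model $\gamma\colon S\ra\bbP^2$ obtained by contracting the four pairwise-disjoint $(-1)$-curves $F_1=E_1$, $F_2=L_{23}$, $F_3=L_{24}$, $F_4=L_{34}$; this is a legitimate model by the argument of Lemma~\ref{lem:del-Pezzo-deg5-model-line-choice} (Castelnuovo contractibility together with Lemma~\ref{lem:del-pezzo-blowdown-dP}, the contraction landing on the unique del Pezzo surface of degree $9$, namely $\bbP^2$). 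Writing the line class of $\gamma$ as $M$, so that $-K_S\sim 3M-F_1-F_2-F_3-F_4$ and any curve $\Gamma\sim dM-\sum_i(\Gamma\cdot F_i)F_i$, the intersection numbers above give $A\cdot F_1=A\cdot E_1=1$ and $A\cdot F_2=A\cdot F_3=A\cdot F_4=0$, while $2=(-K_S)\cdot A=3d-1$ forces $d=1$. Hence $A\sim M-F_1$, which is exactly the $B_1$-class for $\gamma$, and since $p\in A$ the conic $A$ is the member of this pencil through $p$; thus $C=B_1$ under $\gamma$.

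The arguments are essentially bookkeeping; the only places demanding care are keeping the two meeting sets straight and confirming that $\{E_1,L_{23},L_{24},L_{34}\}$ is genuinely a contractible model configuration (both the pairwise disjointness and the fact that the contraction produces $\bbP^2$ rather than another surface). That verification, rather than any deep difficulty, is the main obstacle.
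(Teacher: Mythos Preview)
Your proof is correct and, for the first assertion, takes a genuinely cleaner route than the paper. You observe structurally that for every irreducible conic the set of lines meeting it (namely $\{E_1,E_2,E_3,E_4\}$ for $A$ and $\{E_i,L_{jk}:i\notin\{j,k\}\}$ for $B_i$) is pairwise disjoint, so no two of them can intersect at a common point on the conic; this dispatches all pseudo-Eckardt points at once. The paper instead fixes a model with $p=E_1\cap L_{12}$, argues that an irreducible conic through $p$ would have to be $A$ or $B_1$ (using $B_j\cdot E_1=0$ for $j\ge 2$), and then reaches a contradiction from $B_1\cdot L_{12}=A\cdot L_{12}=0$. Your argument avoids this model-dependent case check and makes the combinatorial reason transparent.

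For the second assertion the two proofs are essentially the same idea with a different choice of contraction: you contract $\{E_1,L_{23},L_{24},L_{34}\}$ to turn $A$ directly into a $B_1$-class, whereas the paper contracts $\{L_{12},L_{13},L_{23},E_4\}$ (case~(ii) of Lemma~\ref{lem:del-Pezzo-deg5-model-line-choice}) and lands on the $B_4$-class before relabelling. Your version is slightly more direct and does not need the auxiliary case split on whether $p$ lies on a line that the paper interposes; either route is fine.
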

\begin{proof}
First we prove the last assertion by case analysis on the position of $p\in C$.

If $C$ is reducible, then $C=L_1+L_2$ the union of two lines intersecting at a point $r=L_1\cap L_2$. By Lemma \ref{lem:del-Pezzo-deg5-model-line-choice} we can choose a model such that $C=E_1+L_{12}=B_1$.

If $C$ is irreducible we distinguish two cases: $p\in L$, a line or $p$ is in no line. If $p\in L$ a line, we may assume by Lemma \ref{lem:del-Pezzo-deg5-model-line-choice} that $L=E_1$. Then $C\neq B_j$ for $j\geq2$, since otherwise:
\begin{equation}
0=B_{j}\cdot E_1 =C \cdot E_1 \geq \mult_p(C)\cdot \mult_p(E_1)=1,
\label{eq:del-Pezzo-deg5-model-conic-choice-proof}
\end{equation}
a contradiction.

If $C=A$, take $F_i$ and $\gamma:S\ra \bbP^2$ as in the proof of Lemma \ref{lem:del-Pezzo-deg5-model-line-choice}, case (ii). Because $C$ is irreducible, $\overline C=\gamma(A)\sim\oplaned$ by the genus formula on $\bbP^2$. Moreover:
$$A\sim\gamma^*(\oplaned)-\sum_{i=1}^4 (F_i\cdot A) F_i= \gamma^*(\oplaned)-F_4,$$
and $2=A \cdot (-K_S)=3d -1,$
so $d=1$.
Therefore under the new blow-up $C$ is $B_4$. By obvious relabelling of the $F_j$ we can consider $C=B_1$.

We prove the first assertion. Suppose that $p=E_1\cap L_{12}$ but $C$ is irreducible. By Lemma \ref{lem:del-Pezzo-deg5-conics} and \eqref{eq:del-Pezzo-deg5-model-conic-choice-proof}, $C=A$ or $C=B_1$. However if $L_{12}\not\subseteq \Supp(C)$, then
$$0=B_1\cdot L_{12}\geq\mult_{p}B_1 \cdot \mult_{p}L_{12}\geq 1$$
$$0=B_1\cdot A\geq\mult_{p}B_1 \cdot \mult_{p}A\geq 1$$
which is absurd.
\end{proof}

The following two lemmas are needed in the proof of Theorem \ref{thm:del-Pezzo-dynamic-alpha-degree-5}. We give a joint proof after the statements.
\begin{lem}
\label{lem:del-Pezzo-deg5-aux-divisors-G}
Let $S$ be a non-singular del Pezzo surface of degree $5$. Let $C\sim -K_S$ be a non-singular curve and $p\in C$ be a point which belongs to at most one line. Let $0<\beta\leq 1$. Then there is an effective $\bbQ$-divisor $G=\sum g_i G_i\simq -K_S$ satisfying the following:
\begin{itemize}
	\item[(i)] The pair $(S,(1-\beta)C + \omega\beta G)$ is log canonical where $\omega=\min\{1, \frac{1}{2\beta}\}$.
	\item[(ii)] The point $p\in G_i$ for all $G_i$.
	\item[(iii)] Each $G_i$ is irreducible and smooth and has $\deg G_i\leq 2$.
\end{itemize}
\end{lem}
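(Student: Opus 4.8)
The plan is to mimic the construction of Lemma~\ref{lem:del-Pezzo-deg4-aux-divisors-G-Basic} in degree~$4$, producing $G$ explicitly by a case analysis on the position of $p$. Since $p$ lies on at most one line it is never a pseudo-Eckardt point, so there are exactly two cases: either $p$ lies on no line, or $p$ lies on exactly one line, which by Lemma~\ref{lem:del-Pezzo-deg5-model-line-choice} I may normalise to be $E_1$. In each case I would first list the curves of degree $\leq 2$ through $p$ using Lemma~\ref{lem:del-Pezzo-deg5-conics} (conics) and Lemma~\ref{lem:del-Pezzo-deg5-lines-list} (lines), and then solve for a positive $\bbQ$-combination of their classes equal to $-K_S$ in $\Pic(S)\otimes\bbQ$. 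The requirement $G\simq-K_S$, and not merely $\deg G=5$, is what pins the coefficients down.

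If $p$ lies on no line, the conics through $p$ are exactly $A\sim\pioplane{2}-\sum_{j=1}^4 E_j$ and $B_i\sim\pioplane{1}-E_i$ for $i=1,\dots,4$, and there is no line through $p$; imposing $aA+\sum b_iB_i\simq-K_S$ forces $a=b_i=\frac12$, so I would take
\[
G=\tfrac12\bigl(A+B_1+B_2+B_3+B_4\bigr)\simq-K_S,
\]
which has $\deg G=5$ and $\mult_p G=\frac52$. If instead $p\in E_1$ and $p$ lies on no other line, the only conics through $p$ are $A$ and $B_1$ and the only line through $p$ is $E_1$; solving $g_0E_1+g_AA+g_BB_1\simq-K_S$ forces $g_0=g_A=g_B=1$, so I would take
\[
G=E_1+A+B_1\simq-K_S,
\]
with $\deg G=5$ and $\mult_p G=3$. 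Conditions (ii) and the degree bound in (iii) hold by construction.

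The irreducibility and smoothness in (iii) would follow from Lemma~\ref{lem:del-Pezzo-deg5-good-curves-smooth} together with the hypothesis on $p$: any reducible member of $|A|$ or $|B_1|$ splits as a union of lines (for instance $B_1$ could degenerate to $E_j+L_{1j}$, and $A$ to $L_{ij}+L_{kl}$), and such a member meets $E_1$ only at a pseudo-Eckardt point; hence the members through $p$ are irreducible precisely because $p$ lies on at most one line. Smoothness then follows from Lemma~\ref{lem:del-Pezzo-deg5-good-curves-smooth}.

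For the log canonicity in (i) I would resolve $(S,(1-\beta)C+\omega\beta G)$ by blowing up $p$. All relevant intersection numbers among $A,B_i,E_1$ equal $1$, so the chosen components meet pairwise transversally at $p$ and their strict transforms meet the exceptional curve $F_1$ at distinct points; by Lemma~\ref{lem:log-pullback-preserves-lc} the coefficient of $F_1$ is $\omega\beta\,\mult_p G-\beta$, and using $\omega\beta=\min\{\beta,\frac12\}\leq\frac12$ one checks this is $\leq\frac34$ in the first case and $\leq1$ in the second. The one remaining possibility is that $C$ is tangent at $p$ to one conic $Z\subseteq\Supp(G)$; it can be tangent to at most one, since $C$ has a single tangent direction, and it is always transverse to $E_1$ because $C\cdot E_1=1$. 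In that subcase $\widetilde C$ and $\widetilde Z$ still meet on $F_1$ and one further blow-up is needed. I expect the main obstacle to be exactly this tangency subcase in the case $\mult_p G=3$: there the coefficient of $F_1$ reaches the boundary value $1$ at the critical weight $\beta=\frac12$, and I would have to verify that the coefficient of the second exceptional curve, namely $4\omega\beta-2\beta$, remains $\leq1$ for all $0<\beta\leq1$ (which it does, since it equals $2\beta$ for $\beta\leq\frac12$ and $2-2\beta$ for $\beta\geq\frac12$). After this second blow-up the simple tangency is separated, the total transform acquires simple normal crossings, and log canonicity follows from the discrepancy computation together with Lemma~\ref{lem:adjunction}.
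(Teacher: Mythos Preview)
Your proposal is correct and matches the paper's proof essentially line for line: the paper constructs exactly $G=\tfrac12(A+B_1+B_2+B_3+B_4)$ in Case~1 and $G=E_1+A+B_1$ in Case~2, verifies irreducibility by the same line-class analysis, and checks log canonicity via the same two-step blow-up with the tangency subcase giving the worst discrepancies $3\omega\beta-\beta$ and $4\omega\beta-2\beta$ in Case~2. The only cosmetic difference is that the paper argues irreducibility in Case~2 by showing the residual classes $A-E_1$ and $B_1-E_1$ are not line classes, whereas you phrase it via pseudo-Eckardt points; both are valid.
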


\begin{lem}
\label{lem:del-Pezzo-deg5-aux-divisors-H}
Let $S$ be a non-singular del Pezzo surface of degree $5$. Let $p\in S$, $q\in E\subset \widetilde S\stackrel{\sigma}{\lra} S$, the exceptional curve $E$ in the blow-up $\sigma$ of $S$ at $p$. Let $C\sim -K_S$ be a non-singular curve and $p\in C$. Assume that $p$ belongs to at most one $(-1)-$curve and that $q\not\in \widetilde L$, the strict transform of any $(-1)$-curve $L\subset S$. Moreover, if $p\in L$, a $(-1)$-curve, then $q\not\in \widetilde B$, the strict transform in $\widetilde S$ of any irreducible conic $B\subset S$. Let $0<\beta\leq 1$. Then there is an effective $\bbQ$-divisor $H=\sum h_i H_i\simq -K_S$ satisfying the following:
\begin{itemize}
	\item[(i)] The pair $(S,(1-\beta)C + \omega\beta H)$ is log canonical where $\omega=\min\{1, \frac{1}{2\beta}\}$.
	\item[(ii)] The point $p\in H_i$ for all $H_i$.
	\item[(iii)] Each $H_i$ is irreducible and smooth and has $\deg H_i\leq 3$.
	\item[(iv)] the point $q\in \widetilde H_i$, the strict transform of $H_i$ via $\sigma,\ \forall H_i$ such that $\deg H_i>1$.
\end{itemize}
\end{lem}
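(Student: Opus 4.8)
The plan is to mimic the degree-$4$ construction of Lemmas \ref{lem:del-Pezzo-deg4-aux-divisors-G-Basic} and \ref{lem:del-Pezzo-deg4-aux-divisors-H}, building $G$ and $H$ explicitly out of the low-degree curves catalogued in Table \ref{tab:del-Pezzo-5-lowdegree-curves} and organising the argument by the position of $p$ (whether or not it lies on a line) and, for $H$, by the position of $q$. The common starting observation is that $\omega\beta=\min\{\beta,\tfrac12\}\leq\tfrac12=\glct(S)$ by Theorem \ref{thm:del-Pezzo-glct-charp}, so that every log-canonicity check reduces to a multiplicity bound that is comfortably met once the relevant curves are known to meet transversally at $p$. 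Throughout, irreducibility of a chosen member is verified by the rational-class bookkeeping of Lemmas \ref{lem:del-Pezzo-deg5-lines-list} and \ref{lem:del-Pezzo-deg5-conics} (a reducible conic or cubic would force $p$ onto an extra line, excluded in the relevant case), and smoothness then follows from Lemma \ref{lem:del-Pezzo-deg5-good-curves-smooth}.

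For $G$, when $p$ lies on no line I would fix a model and take the five conics through $p$, setting $G=\tfrac12(A+B_1+B_2+B_3+B_4)$; one checks $G\simq-K_S$ using Corollary \ref{cor:del-Pezzo-anticanonical}, and every component is an irreducible smooth conic through $p$. When $p$ lies on exactly one line I would use Lemma \ref{lem:del-Pezzo-deg5-model-line-choice} to arrange $L=E_1$ and take $G=E_1+A+B_1$, where $A$ and $B_1$ are the unique members of the pencils $|A|$ and $|\pioplane{1}-E_1|$ through $p$ (these exist because each of these pencils restricts isomorphically to $E_1$); again $G\simq-K_S$ and conditions (ii)--(iii) hold. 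For log canonicity (condition (i)) I would pass to the minimal log resolution via Lemma \ref{lem:log-pullback-preserves-lc}: after blowing up $p$ the coefficient acquired by the exceptional curve is $\omega\beta\cdot\mult_pG-\beta$, and since $\mult_pG\leq\tfrac52$ and $\omega\beta\leq\tfrac12$ this is at most $1$; when the chosen curves meet transversally at $p$ this single blow-up already yields simple normal crossings, so the pair is log canonical.

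For $H$ the degree budget rises to $3$, so I would build $H$ from the cubics $R,R_i$ of Table \ref{tab:del-Pezzo-5-lowdegree-curves} together with a conic, the prototype being $H=R+A\simq-K_S$, and choose $R$ and $A$ by the constructions of Section \ref{sec:delPezzo-5-curves} so that $p\in R\cap A$ and both strict transforms pass through $q$, which is exactly condition (iv). The hypotheses $q\notin\widetilde L$ for every line $L$ (and $q\notin\widetilde B$ for every irreducible conic $B$ when $p$ lies on a line) are precisely what guarantees that the cubic and conic realising the prescribed tangent direction at $p$ can be taken irreducible rather than splitting off a line (or a conic); these hypotheses drive the case split, and in the degenerate positions of $q$ one replaces $R+A$ by the appropriate variant assembled from $R_i$ and $B_j$. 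Log canonicity is again checked on the minimal log resolution, now involving at most two infinitely-near blow-ups over $p$, with the same inequality $\omega\beta\leq\tfrac12$ keeping every exceptional coefficient at most $1$.

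The main obstacle I anticipate is not the class computation but the verification of log canonicity in the tangency sub-cases: because the members of $|A|$, $|B_i|$ and the cubic systems passing through $p$ (or with strict transform through $q$) are essentially unique, their mutual tangencies and their tangency with $C$ are forced by the geometry and cannot be removed by a generic choice, so the resolution may require several successive blow-ups and a careful bookkeeping of the accumulated discrepancies. Keeping every chosen component irreducible simultaneously with the incidence conditions at $p$ and $q$ is what forces the detailed case analysis, exactly as in the degree-$4$ arguments; the bound $\omega\beta\leq\tfrac12$ is what ultimately makes all of these discrepancy inequalities hold.
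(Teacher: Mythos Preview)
Your overall strategy is exactly the paper's: build $G$ and $H$ by case analysis on the position of $p$ (on no line or on one line) and, for $H$, on the position of $q$, using the low-degree curves of Table~\ref{tab:del-Pezzo-5-lowdegree-curves}; your choices for $G$ match the paper verbatim.

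For $H$, however, there is a genuine gap in your case split. Your prototype $H=R+A$ and the suggested variants ``assembled from $R_i$ and $B_j$'' all presuppose that some conic through $p$ has strict transform through $q$. But each conic pencil $\lvert A\rvert$, $\lvert B_i\rvert$ has a \emph{unique} member through $p$, so you cannot also impose $q\in\widetilde A$ or $q\in\widetilde{B_i}$; for the generic $q\in E$ no conic strict transform contains $q$ at all. This is not a degenerate position but the main one: it is the paper's Case~1a when $p$ lies on no line, and it is forced by the lemma's own hypotheses when $p$ lies on a line (the assumption $q\notin\widetilde B$ for every irreducible conic $B$ rules out any conic component satisfying condition~(iv)). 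In these cases the cubic\,+\,conic ansatz is impossible, and one must instead take a $\mathbb Q$-linear combination of \emph{several} cubics, namely $H=\tfrac13\bigl(R+\sum_{i=1}^4 R_i\bigr)$ when $p$ lies on no line and $H=\tfrac12 E_1+\tfrac12\sum_{i=2}^4 R_i$ when $p\in E_1$; the hypotheses on $q$ then serve, as you correctly note, to force each $R_i$ irreducible. Minor point: the tangency sub-cases require three successive blow-ups, not two (e.g.\ when $(C\cdot R_1)\vert_p=3$).
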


\begin{proof}[Proof of lemmas \ref{lem:del-Pezzo-deg5-aux-divisors-G} and \ref{lem:del-Pezzo-deg5-aux-divisors-H} ]
We will construct the $\bbQ$-divisors $G$ and $H$ explicitly, by case analysis on the position of $p\in S$ and $q\in E$. We will use curves from Table \ref{tab:del-Pezzo-5-lowdegree-curves}. These were constructed depending on $p$ and $q$ and were possibly reducible. The degree condition of each $G_i$ and $H_i$ in (iii) will be clear from the construction we give, as well as condition (ii) on $p\in G, H$ and condition (iv), for $H$. We will check log canonicity (condition (i)). The most involved part of the proof will consist on showing that the curves chosen in each particular case are irreducible (condition (iii)). Smoothnes follows from Lemma \ref{lem:del-Pezzo-deg5-good-curves-smooth}.

\textbf{Case 1:} The point $p$ is not in any line.

In particular $p\not\in E_i$ for all $i$. Let
$$G=\frac{1}{2}\sum_{i=1}^4 B_i +\frac{1}{2} A\simq-K_S.$$

The $\bbQ$-divisor $G$ only contains conics in its support. These conics are irreducible, since otherwise they would be the union of two lines, with one of them passing through $p$, contradicting the assumption.

Observe that $B_i\cdot B_j=1=A\cdot B_i=1$ for all $1\leq i < j \leq 4$. Furthermore $C\cdot B_i = C\cdot A=2$. Therefore $C$ can be tangent to at most one conic at $p$and in that case with multiplicity $2$. Assume that is the case, since it is when the discrepancy is the worst. the minimal log resolution $f\colon \widetilde S \ra S$ of $(S, (1-\beta) C +\omega\beta G)$ consists of two blow-ups. The log pullback is
$$f^*(K_S + (1-\beta) C +\omega\beta G) \simq-K_{\widetilde S}+ (1-\beta) \widetilde C + \omega\beta\widetilde G + (\frac{5}{2}\omega\beta-\beta) F_1 + (\frac{6\omega\beta}{2}-2\beta)F_2.$$
If $0<\beta\leq \frac{1}{2}$, then
$$\frac{5}{2}\omega\beta-\beta= \frac{5}{2}\beta-\beta = \frac{3}{2}\beta\leq \frac{3}{4}<1,$$
and
$$\frac{6\omega\beta}{2}-2\beta=3\beta-2\beta=\beta\leq \frac{1}{2}<1.$$
If $\frac{1}{2}\leq \beta\leq 1$, then
$$\frac{5}{2}\omega\beta-\beta=\frac{5}{4}-\beta\leq \frac{3}{4}<1,$$
and
$$\frac{6}{2}\omega\beta-2\beta=\frac{3}{2}-2\beta\leq \frac{1}{2}<1.$$
Therefore $(S, (1-\beta)C + \omega\beta G)$ is log canonical.

\textbf{Case 1a:} The point $q$ does not lie in the strict transform of any conic of $S$.

Let $$H=\frac{1}{3} \sum_{i=1}^4 R_i +\frac{1}{3} R\simq-K_S.$$ This $\bbQ$-divisor contains only cubics in its support. Let $H_i$ be any of these cubics. If $H_i$ was reducible, then $H_i=\sum H_i'+H_i''$ with $H_i'$ being a line and $H_i''$ being a conic. Since $p\in H_i$ and $q\in \widetilde H_i$, then either $p\in H_i'$ or $p\in H_i''$ and $q\in \widetilde H_i''$, contradicting the hypotheses 1 and 1a, respectively. Therefore $H_i$ is irreducible for all $i$.

Observe that $R_i\cdot R_j=2$ for $1\leq i< j\leq 4$. Moreover $R\cdot R_i=2$. Since $C\cdot R = C\cdot R_i=3$ for all $i$, the \emph{worst} situation regarding computing the discrepancy of the pair $(S, (1-\beta)C + \omega\beta H)$ arises when $(R\cdot R_i)\vert_{p}=2$, $(C\cdot R_i)\vert_p=2$ for all $1\leq i\leq 4$, $(C\cdot R)\vert_{p}=3$ and $(R_i\cdot R_j)\vert_p=2$ for all $1\leq i < j\leq 4$, since $C$ cannot intersect with multiplicity $3$ locally at $p$ two curves which intersect each other at $p$ with multiplicity $2$.

In this case the minimal log resolution $f\colon \widetilde S \ra S$ of $(S, (1-\beta) C +\omega\beta H)$ consists of $3$ consecutive blow-ups. The log pullback is
\begin{align*}
f^*(-K_S + (1-\beta) C + \omega\beta H)&\simq -K_{\widetilde S} + (1-\beta) \widetilde C +\omega\beta \widetilde H + \\
&(\frac{5}{3}\omega\beta-\beta) F_1 + (\frac{10}{3}\omega\beta-2\beta) F_2 + (\frac{11}{3}\omega\beta-3\beta)F_3.
\end{align*}
If $0<\beta\leq \frac{1}{2}$, then
$$\frac{5}{3}\omega\beta-\beta=\frac{2}{3}\beta<1, \ \frac{10}{3}\omega\beta-2\beta=\frac{4}{3}\beta\leq \frac{2}{3}<1 \text{ and } \frac{11}{3}\omega\beta-3\beta=\frac{2}{3}\beta<1.$$
If $\frac{1}{2}\leq\beta\leq 1$, then
$$\frac{5}{3}\omega\beta-\beta=\frac{5}{6}-\beta<1, \ \frac{10}{3}\omega\beta-2\beta=\frac{5}{3}-2\beta\leq \frac{2}{3}<1 \text{ and } \frac{11}{3}\omega\beta-3\beta=\frac{11}{6}-3\beta\leq\frac{1}{3}<1.$$
Therefore the pair $(S, (1-\beta) C + \omega\beta H)$ is log canonical.

\textbf{Case 1b:} The point $q\in \widetilde Q$ for $Q$ a conic of $S$. By Lemma \ref{lem:del-Pezzo-deg5-model-conic-choice} we may assume $Q=B_1$.

Let $$H=R_1+B_1\simq-K_S.$$
$B_1$ is irreducible since $p\in B_1$ and $p$ lies in no line. If $R_1$ was reducible, then $R_1=Q+L$, the union of a conic $Q$ and a line $L$ with $p\in Q$, $q\in \widetilde Q$. Since all conics intersect each other normally (see Table \ref{tab:del-Pezzo-5-lowdegree-curves}), then $Q=B_1$. But then $L\sim R_1-B_1\sim\pioplane{1}-E_1-E_2-E_3-E_4$, which is not one of the lines in $S$, as classified in Lemma \ref{lem:del-Pezzo-deg5-lines-list}.

Observe that $R_1\cdot B_1=2$. Therefore $R_1$ and $B_1$ are, at worst, tangent at $p$ with multiplicity $2$. Moreover $C\cdot B_1=2$. Since $C\cdot R_1=3$, then the \emph{worst} situation regarding computing the discrepancy of the pair $(S, (1-\beta)C + \omega\beta H)$ arises when $(C\cdot R_1)\vert_{p}=3$, $(C\cdot B_1)\vert_{p}=2$ and $(B_1\cdot R_1)\vert_p=2$. In that case the minimal log resolution $f\colon \widetilde S \ra S$ of $(S, (1-\beta) C +\omega\beta H$ consists of $3$ consecutive blow-ups. The log pullback is
$$f^*(-K_S + (1-\beta) C + \omega\beta H)\simq -K_{\widetilde S} + (1-\beta) \widetilde C +\omega\beta \widetilde H + (2\omega\beta-\beta) F_1 + (4\omega\beta-2\beta) F_2 + (5\omega\beta-3\beta)F_3.$$
If $0<\beta\leq \frac{1}{2}$, then
$$2\omega\beta-\beta=\beta<1, \ 4\omega\beta-2\beta=2\beta\leq 1 \text{ and } 5\omega\beta-3\beta=2\beta\leq 1.$$
If $\frac{1}{2}\leq\beta\leq 1$, then
$$2\omega\beta-\beta=1-\beta<1, \ 4\omega\beta-2\beta=2-2\beta\leq 1 \text{ and } 5\omega\beta-3\beta=\frac{5}{2}-3\beta\leq 1.$$
Therefore the pair $(S, (1-\beta) C + \omega\beta H)$ is log canonical.

\textbf{Case 2:} The point $p$ is in exactly one line. By Lemma \ref{lem:del-Pezzo-deg5-model-line-choice} we may assume that $p\in E_1$. Let
$$G=A+B_1+E_1\sim -K_S.$$
If $A$ or $B_1$ were reducible, then there would be lines with rational classes
\begin{align*}
&A-E_1\sim\pioplane{2}-2E_1-E_2-E_3-E_4,\ \text{or}\\
&B_1-E_1\sim\pioplane{1}-2E_1,
\end{align*}
which is not possible by Lemma \ref{lem:del-Pezzo-deg5-lines-list}. Since
$$A\cdot E_1=B_1\cdot E_1=B_1\cdot A = C\cdot E_1=1 \quad \text{ and }\quad A\cdot C = B_1\cdot C=2,$$
the worst situation regarding the computation of the discrepancies takes place when $(A\cdot C)\vert_p=2$ and $(B_1\cdot C)\vert_p=1$. Let $f\colon\widetilde S \ra S$ be the minimal log resolution of $(S, (1-\beta) C + \omega\beta G)$. It consists of $2$ blow-ups over $p$ with exceptional divisors $F_1,F_2$. Therefore the log pullback is 
$$f^*(K_S +(1-\beta)C + \omega \beta G) =K_{\widetilde S} + (1-\beta)\widetilde C + \omega\beta \widetilde G + (3\omega\beta-\beta)F_1 + (4\omega\beta-2\beta)F_2.$$
If $0<\beta\leq \frac{1}{2}$, then $3\omega\beta-\beta =2\beta\leq 1$ and $4\omega\beta-2\beta=2\beta\leq 1$. If $\frac{1}{2}\leq\beta\leq 1$, then $3\omega\beta-\beta\leq \frac{3}{2}-\beta\leq 1$ and $4\omega\beta-2\beta\leq 2-2\beta\leq 1$.

Therefore the pair $(S, (1-\beta) C + \omega\beta G)$ is log canonical.

\textbf{Case 2a:} The point $q\not\in \widetilde Q$ for $Q$ a line or a conic in $S$. In particular $q\not\in \widetilde E_1$. Let
$$H=\frac{1}{2}E_1+\frac{1}{2}\sum_{i=2}^4 R_i\simq-K_S.$$
By assumption $E_1$ is the only line which contains $p$. By Lemma \ref{lem:del-Pezzo-deg5-conics} there is a finite number of conics containing $p$.  Therefore $R_2,R_3,R_4$ are all irreducible, since each of them has degree $3$, their strict transform in $S$ contains $q$ and there is no conic or line whose strict transform passes through $q$.

Observe that $R_i\cdot R_j=2$ for $i\neq j$, $E_1\cdot R_i=1$ for $i\geq 2$ and $C\cdot R_i=3,\ \forall i$. Therefore in order to resolve $(S, (1-\beta)C + \omega\beta G)$, the worst situation arises when $(C \cdot R_2)\vert_p=3$ and $(R_i\cdot R_j)\vert_p=2,\ \forall i\neq j$. Let $f\colon\widetilde S \ra S$ be the minimal log resolution. It consists of $3$ blow-ups over $p$ with exceptional divisors $F_1,F_2,F_3$ and after the second blow-up the strict transforms of $R_i$ are disjoint. Therefore the log pullback is
$$f^*(K_S +(1-\beta)C + \omega \beta G) =K_{\widetilde S} + (1-\beta)\widetilde C + \omega\beta \widetilde G + (2\omega\beta-\beta)F_1 + (\frac{7}{2}\omega\beta-2\beta)F_2 + (4\omega\beta-3\beta)F_3.$$
Observe that $2\omega\beta-\beta\leq 1-\beta\leq 1$. If $0<\beta\leq \frac{1}{2}$, then $\frac{7}{2}\omega\beta-2\beta \leq \frac{3}{2}\beta\leq \frac{3}{4}\leq 1$ and $4\omega\beta-3\beta\leq \beta\leq 1$. If $\frac{1}{2}\leq\beta\leq 1$, then $\frac{7}{2}\omega\beta-2\beta\leq \frac{7}{4}-2\beta\leq \frac{3}{4}\leq 1$ and $4\omega\beta-3\beta\leq 2-3\beta\leq 1$.

Therefore the pair
$$(S, (1-\beta)C+\omega\beta H)$$
is log canonical.

\end{proof}

\subsection{Computation of the dynamic $\alpha$--invariant}
\begin{lem}
\label{lem:del-Pezzo-dynamic-alpha-degree-5-upbound}
Let $S$ be a non-singular del Pezzo surface of degree $5$ and $C\in\vert-K_S\vert$ be a smooth curve. Then
	\begin{equation}
			\alpha(S,(1-\beta)C)\leq\omega:=
					\begin{dcases}
							1 												&\text{ for } 0<\beta\leq \frac{1}{2},\\
							\frac{1}{2\beta}					&\text{ for } \frac{1}{2}\leq \beta\leq 1.
					\end{dcases}
		\label{eq:del-Pezzo-dynamic-alpha-degree-5-upbound}
	\end{equation}
\end{lem}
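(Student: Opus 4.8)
The plan is to follow the same template used for the higher-degree surfaces. Since $\alpha(S,(1-\beta)C)=\glct(S,(1-\beta)C)$ is, by Definition \ref{dfn:glct}, the infimum of $\lct(S,(1-\beta)C,\beta D)$ taken over all effective $\mathbb{Q}$-divisors $D\simq-K_S$, it suffices to exhibit two convenient anticanonical divisors and bound the invariant above by the minimum of their thresholds. The term $1$ in $\omega$ will come from testing against $C$ itself, while the term $\frac{1}{2\beta}$ will come from testing against a configuration of lines carrying a component of multiplicity $2$.

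First I would fix a model $\pi\colon S\to\bbP^2$ as in Theorem \ref{thm:del-Pezzo-classification-models}, contracting the four $(-1)$-curves $E_1,\dots,E_4$, and set
$$D:=2E_1+L_{12}+L_{13}+L_{14}.$$
Using Corollary \ref{cor:del-Pezzo-anticanonical} together with the classes $L_{1j}\sim\pioplane{1}-E_1-E_j$ from Lemma \ref{lem:del-Pezzo-deg5-lines-list}, a direct computation gives
$$2E_1+\sum_{j=2}^4 L_{1j}\sim 3\pioplane{1}-E_1-E_2-E_3-E_4\sim\pioplane{3}-\sum_{i=1}^4 E_i\sim -K_S,$$
so $D$ is an effective anticanonical divisor. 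By Lemma \ref{lem:del-Pezzo-deg5-lines-list} the $L_{1j}$ are pairwise disjoint and each meets $E_1$ transversally, at three distinct points since $p_1,\dots,p_4$ are in general position; thus $D$ is a nodal curve whose only multiple component is the double component $E_1$.

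Next I would compute the two thresholds. For the first, $C$ is a smooth irreducible curve of genus $1$ (by adjunction via Lemma \ref{lem:genus-formula}), in particular $C\neq E_1,L_{1j}$; since $(1-\beta)C+\lambda\beta C=(1-\beta+\lambda\beta)C$ with $C$ smooth, this pair is log canonical exactly when $1-\beta+\lambda\beta\le 1$, i.e.\ when $\lambda\le 1$, whence $\lct(S,(1-\beta)C,\beta C)=1$. For the second, the coefficient of the prime divisor $E_1$ in the boundary $(1-\beta)C+\lambda\beta D$ equals $2\lambda\beta$ (as $C$ and the $L_{1j}$ are distinct from $E_1$); by Definition \ref{dfn:MMP-singularities} log canonicity of a pair forces every such divisorial coefficient to be at most $1$, since $a(E_1,S,(1-\beta)C+\lambda\beta D)=-2\lambda\beta\ge -1$. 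Hence $2\lambda\beta\le 1$ and $\lct(S,(1-\beta)C,\beta D)\le\frac{1}{2\beta}$. Combining the two, taking the minimum over these test divisors yields
$$\alpha(S,(1-\beta)C)\le\min\Big\{\lct(S,(1-\beta)C,\beta C),\,\lct(S,(1-\beta)C,\beta D)\Big\}\le\min\Big\{1,\tfrac{1}{2\beta}\Big\}=\omega,$$
which is the claim.

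I do not expect a genuine obstacle here: the only real content is locating an anticanonical line configuration carrying a coefficient-$2$ component, namely $2E_1+L_{12}+L_{13}+L_{14}$, and the double component \emph{alone} pins down the bound $\frac{1}{2\beta}$, with no need to run the minimal log resolution. A full log-pullback computation—which would confirm that $\frac{1}{2\beta}$ is attained exactly, with the remaining exceptional discrepancies staying $\le 1$ when $C$ meets the components in simple normal crossings—is only required for the matching lower bound in the subsequent theorem, not for the present inequality.
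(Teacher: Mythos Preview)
Your proof is correct and uses the same test divisor $D=2E_1+L_{12}+L_{13}+L_{14}$ as the paper. The paper additionally splits into cases according to whether $C$ passes through a pseudo-Eckardt point and computes the full log pullback in that case, obtaining $\min\{1,\frac{1}{2\beta},\frac{1+\beta}{3\beta}\}=\omega$; your observation that the coefficient $2\lambda\beta$ of $E_1$ alone already forces $\lct(S,(1-\beta)C,\beta D)\le\frac{1}{2\beta}$ is a legitimate shortcut that makes the case split and the log resolution unnecessary for the upper bound.
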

\begin{proof}
If $C$ contains a pseudo-Eckardt point $p$, by Lemma \ref{lem:del-Pezzo-deg5-model-line-choice} we can choose a model $\pi\colon S \ra \bbP^2$ such that $p=E_1\cap L_{12}.$

Notice that
$$D:=2E_1+L_{12}+L_{13}+L_{14}\sim-K_S$$
is a divisor with simple normal crossings. Let $f\colon \widetilde S \ra S $ be the blow-up of $p=E_1\cap L$ with exceptional divisor $E$. Then $f$ is a log resolution of $(S,(1-\beta)C+\lambda\beta D)$, since $C\cdot E_1=C\cdot L =C\cdot L'=C\cdot E_2=1$. Its log pullback is
$$f^*(K_S+(1-\beta)C+\lambda\beta D)\sim-K_{\widetilde S}+(1-\beta)\widetilde C +\lambda\beta \widetilde D  + (3\lambda\beta-\beta)E$$
and therefore
\begin{align*}
\alpha(S,(1-\beta)C)&\leq \min\{\lct(S,(1-\beta)C,\beta C),\lct(S,(1-\beta)C,\beta D)\}\\
&\leq \min\{1,\frac{1}{2\beta},\frac{1+\beta}{3\beta}\}=\min\{1,\frac{1}{2\beta}\}=\omega.
\end{align*}
If $C$ contains no pseudo-Eckardt points, the pair
$$(S,(1-\beta)C+\lambda\beta D)$$
has simple normal crossings and we obtain
\begin{align*}
\alpha(S,(1-\beta)C)&\leq \min\{\lct(S,(1-\beta)C,\beta C),\lct(S,(1-\beta)C,\beta D)\}\leq \min\{1,\frac{1}{2\beta}\}=\omega.
\end{align*}
\end{proof}

\begin{thm}
\label{thm:del-Pezzo-dynamic-alpha-degree-5}
Let $S$ be a non-singular del Pezzo surface of degree $5$ and $C\in\vert-K_S\vert$ be a smooth curve. Then
	\begin{equation}
			\alpha(S,(1-\beta)C)=\omega:=
					\begin{dcases}
							1 												&\text{ for } 0<\beta\leq \frac{1}{2},\\
							\frac{1}{2\beta}					&\text{ for } \frac{1}{2}\leq \beta\leq 1.
					\end{dcases}
		\label{eq:del-Pezzo-dynamic-alpha-degree-5}
	\end{equation}
\end{thm}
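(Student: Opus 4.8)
The upper bound $\alpha(S,(1-\beta)C)\le\omega$ is exactly Lemma \ref{lem:del-Pezzo-dynamic-alpha-degree-5-upbound}, so the whole content is the reverse inequality, which I would prove by contradiction. Suppose $\alpha(S,(1-\beta)C)<\omega$; then there are $\lambda<\omega$ and an effective $\mathbb{Q}$-divisor $D\simq -K_S$ with $(S,(1-\beta)C+\lambda\beta D)$ not log canonical at some point $p$. Since $\lambda\beta<\omega\beta\le\frac{1}{2}=\glct(S)$ by Theorem \ref{thm:del-Pezzo-glct-charp}, Lemma \ref{lem:pairs-fixed-boundary-lcs} forces $p\in C$ and makes the pair log canonical in codimension $1$. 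Moreover Lemma \ref{lem:adjunction}(i), applied to the whole boundary at $p\in C$, gives $(1-\beta)+\lambda\beta\,\mult_pD>1$, hence $\mult_pD>1/\lambda>1$ in all cases; this lower bound will repeatedly force lines and conics through $p$ to lie in $\Supp(D)$.

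First I would dispose of the case where $p$ is a pseudo-Eckardt point. By Lemma \ref{lem:del-Pezzo-deg5-model-line-choice} I may assume $p=E_1\cap L_{12}$, and by Lemma \ref{lem:adjunction}(i) both lines lie in $\Supp(D)$, so I write $D=aE_1+bL_{12}+\Omega$ with $a,b>0$ and $E_1,L_{12}\not\subseteq\Supp(\Omega)$. Intersecting $D$ with $E_1$ and with $L_{12}$ and adding the resulting inequalities bounds $\mult_p\Omega\le1$, while intersecting with a general conic through the two relevant marked points bounds $a+b$. These are precisely the hypotheses of Theorem \ref{thm:inequality-Cheltsov}, and by the symmetry of the roles of $a$ and $b$ one of its two conclusions yields $1+\beta<\omega\beta(1+a+b)$, which contradicts the explicit value of $\omega$. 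This is the same mechanism as in Claim \ref{clm:del-Pezzo-dynamic-alpha-degree-6-proof-pEckardt}.

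The main case is $p$ lying on at most one line, where the plan is to exploit the two auxiliary divisors. Since $\lambda<\omega$ and $(S,(1-\beta)C+\omega\beta G)$ is log canonical by Lemma \ref{lem:del-Pezzo-deg5-aux-divisors-G}, convexity (Lemma \ref{lem:convexity-forward}) shows $(S,(1-\beta)C+\lambda\beta G)$ is log canonical; log convexity (Lemma \ref{lem:log-convexity}) with fixed boundary $(1-\beta)C$ then lets me replace $D$ by a linearly equivalent divisor whose support misses some component $G_j$, so that $\mult_pD\le D\cdot G_j=\deg G_j\le2$. I would then blow up $p$, producing by Lemma \ref{lem:log-pullback-preserves-lc} a non-log-canonical point $q$ on the exceptional curve $E$ of $(\widetilde S,(1-\beta)\widetilde C+\lambda\beta\widetilde D+(\lambda\beta\mult_pD-\beta)E)$; Lemma \ref{lem:adjunction}(iii) applied with $E$ rules out $q\notin\widetilde C$, so $q=\widetilde C\cap E$. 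After checking that $q$ avoids the strict transforms of all lines (and, when $p$ lies on a line, of all conics) by means of Lemma \ref{lem:del-Pezzo-deg5-model-conic-choice} and direct intersection estimates, the hypotheses of Lemma \ref{lem:del-Pezzo-deg5-aux-divisors-H} hold, and I repeat the log-convexity argument with $H$ to subtract a component $H_j$ of degree $\ge2$ with $q\in\widetilde H_j$. This gives $\mult_q\widetilde D\le\widetilde H_j\cdot\widetilde D=\deg H_j-\mult_pD\le3-\mult_pD$, which I would play against the local inequality of Theorem \ref{thm:inequality-local-blowup-bound} (with $n$ chosen so that $\lambda\beta\,\mult_pD\le\min\{1,\frac{1}{n}+\beta\}$) to force a contradiction for every $0<\beta\le1$.

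The hard part will be the position analysis of $q$ and making the final numerical estimate uniform in $\beta$. The naive multiplicity inequality $\mult_q\widetilde D+\mult_pD>2/\lambda$ beats the bound $3-\mult_pD$ coming from $H$ only when $\lambda\le\frac{2}{3}$, i.e. for $\beta\ge\frac{3}{4}$; in the intermediate range one must instead use the sharper conclusion $(C\cdot D)|_p>(1+n\beta)/(\lambda\beta)$ of Theorem \ref{thm:inequality-local-blowup-bound} together with the degree-$\le3$ contact bound supplied by $H$, and the delicate tangency and avoidance conditions built into Lemma \ref{lem:del-Pezzo-deg5-aux-divisors-H} are exactly what guarantee that the curve subtracted at $q$ meets $C$ with the right contact order. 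Tracking these contact orders through the blow-up, and arranging the bookkeeping so that a single choice of $n$ (hence a single chain of inequalities) closes the argument simultaneously on $0<\beta\le\frac{1}{2}$ and on $\frac{1}{2}\le\beta\le1$, is where the genuine work lies.
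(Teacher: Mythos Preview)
Your overall architecture matches the paper's proof closely: upper bound from the lemma, contradiction setup, Lemma \ref{lem:pairs-fixed-boundary-lcs} to put $p\in C$, the pseudo-Eckardt case via Theorem \ref{thm:inequality-Cheltsov}, the $G$-divisor to bound $\mult_pD\le2$, the blow-up to produce $q\in\widetilde C\cap E$, and then the $H$-divisor followed by Theorem \ref{thm:inequality-local-blowup-bound}. Two points deserve correction.

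First, the step ``checking that $q$ avoids \ldots\ all conics when $p$ lies on a line, by means of Lemma \ref{lem:del-Pezzo-deg5-model-conic-choice} and direct intersection estimates'' is a genuine gap. This is not a quick intersection check; in the paper it is Claim \ref{clm:del-Pezzo-deg5-awkwardcase}, and its proof requires writing $D=aE_1+bB_1+\Omega$, invoking a further auxiliary divisor $A+B_1+E_1\sim-K_S$ (Lemma \ref{lem:appendix-delPezzo-deg5-dynamic-lct}) to strip $A$ from $\Supp(D)$, and then running Theorem \ref{thm:pseudo-inductive-blow-up} through two more blow-ups before Lemma \ref{lem:adjunction}(iii) on $C^3$ closes the case. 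Lemma \ref{lem:del-Pezzo-deg5-model-conic-choice} alone only normalises which conic is involved; it does not by itself exclude $q\in\widetilde B_1$.

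Second, your last paragraph misidentifies both where and how Theorem \ref{thm:inequality-local-blowup-bound} is used. You propose applying it on $S$ with hypothesis $\lambda\beta\,\mult_pD\le\min\{1,\tfrac1n+\beta\}$, and you anticipate splitting into $\beta$-ranges. The paper instead applies the theorem on $\widetilde S$, with curve $\widetilde C$ and $\Delta=\lambda\beta\widetilde D+(\lambda\beta\,\mult_pD-\beta)E$, taking $n=2$. The $H$-bound $\mult_pD+\mult_q\widetilde D\le3$ is precisely what verifies $\mult_q\Delta\le\omega\beta(\mult_pD+\mult_q\widetilde D)-\beta\le3\omega\beta-\beta\le\min\{1,\tfrac12+\beta\}$, uniformly in $\beta$. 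The theorem then gives $(\widetilde C\cdot\Delta)\vert_q>1+2\beta$, which contradicts $\widetilde C\cdot\Delta=5\lambda\beta-\beta<5\omega\beta-\beta\le1+2\beta$. No contact-order tracking and no range-splitting are needed; the role of $H$ is solely to bound $\mult_q\Delta$, not to control tangency with $C$.
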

\begin{proof}
By Lemma \ref{lem:del-Pezzo-dynamic-alpha-degree-5-upbound} we have that $\alpha(S,(1-\beta)C)\leq \omega$. We proceed by \emph{reductio ad absurdum}. Suppose that $\alpha(S, (1-\beta) C)< \omega$. Then, there is an effective $\bbQ$-divisor $D\simq-K_S$ such that the pair
\begin{equation}
(S, (1-\beta)C + \lambda\beta D)
\label{eq:del-Pezzo-dynamic-alpha-degree-5-proof-badpair}
\end{equation}
is not log canonical for some $\lambda<\omega$ at some point $p\in S$. Observe that
\begin{equation}
\lambda\beta<\omega\beta\leq \frac{1}{2}=\glct(S), \quad \forall 0<\beta\leq 1
\label{eq:del-Pezzo-dynamic-alpha-degree-5-proof-basicinequality}
\end{equation}
by Theorem \ref{thm:del-Pezzo-dynamic-alpha-degree-5}. By Lemma \ref{lem:pairs-fixed-boundary-lcs}, the pair \eqref{eq:del-Pezzo-dynamic-alpha-degree-5-proof-badpair} is log canonical in codimension $1$ and $p\in C$. Observe that by Lemma \ref{lem:adjunction} (i), we have that
$$(1-\beta)+\lambda\beta\mult_pD>1,$$
which implies
\begin{equation}
\mult_pD>\frac{1}{\lambda}>1.
\label{eq:del-Pezzo-dynamic-alpha-degree-5-proof-boundbelow}
\end{equation}

\textbf{Step 1: We show the point $p$ is not a pseudo-Eckardt point.}

Suppose for contradiction that $p$ is a pseudo-Eckardt point. By Lemma \ref{lem:del-Pezzo-deg5-model-line-choice} we may choose $\pi\colon S \ra \bbP^2$ such that $p=E_1\cap L_{12}$. For $L=E_1,L_{12}$ we have that $L\subseteq\Supp(D)$ since otherwise
$$1=L\cdot D \geq \mult_pD>1,$$
by \eqref{eq:del-Pezzo-dynamic-alpha-degree-5-proof-boundbelow}. Hence we may write $D=aE_1+bL_{12}+\Omega$ where $a,b>0$ and $E_1,L_{12}\not\subseteq\Supp(\Omega)$. We claim that the pair
\begin{equation}
(S, (1-\beta)C + \lambda\beta(2E_1+L_{12}+L_{13}+L_{14}))
\label{eq:del-pezzo-dynamic-alpha-degree-5-proof-lc-logpair}
\end{equation}
is log canonical. Indeed, if $\lambda\beta\leq \frac{1}{2}$ it is log canonical in codimension $1$ and the only point in which the pair does not have simple normal crossings is $p$. However, blowing up $p$ with exceptional divisor $E$ it is easy to see that the discrepancy along $E$ is $a(E)=3\lambda\beta-\beta<3\omega\beta-\beta\leq 1$. Indeed if $0<\beta\leq \frac{1}{2}$, then $3\omega\beta-\beta=2\beta\leq 1$ and if $\frac{1}{2}\leq \beta\leq 1$, then $3\omega\beta-\beta=\frac{3}{2}-\beta\leq 1$. Since $2E_1+L_{12}+L_{13}+L_{14}\sim-K_S$, by Lemma \ref{lem:log-convexity} we may assume that for $L=L_{13}$ or $L=L_{14}$, $L\not\subseteq\Supp(D)$. Therefore $1=D\cdot L \geq a$. Similary the pair
$$(S, (1-\beta)C + \lambda\beta(E_1+2L_{12}+E_2+L_{34}))$$
is log canonical. Given that $E_1+2L_{12}+E_2+L_{34}\sim-K_S$, by Lemma \ref{lem:log-convexity}, we may assume that for $L=L_{34}$ or $L=E_2$, $L\not\subset\Supp(D)$. Therefore $1=D\cdot L\geq b$. We conclude
\begin{equation}
a+b\leq 2.
\label{eq:del-Pezzo-dynamic-alpha-degree-5-proof-linecoefficients}
\end{equation}
Now observe that
$$1=E_1\cdot D \geq -a+b+\mult_p\Omega,$$
$$1=L_{12}\cdot D \geq a-b+\mult_p\Omega,$$
and adding these two equations it follows that $\mult_p\Omega\leq 1$. Therefore
$$\mult_p((1-\beta)C + \lambda\beta\Omega)\leq 1-\beta+\lambda\beta<1-\beta+\beta\leq 1.$$
The hypotheses of Theorem \ref{thm:inequality-Cheltsov} are satisfied. Therefore one of the following holds:
$$2(1-\lambda\beta a)<L_{12}\cdot ((1-\beta)C +\lambda\beta\Omega)=1-\beta+\lambda\beta(1-a+b)$$
$$2(1-\lambda\beta b)<E_1\cdot ((1-\beta)C +\lambda\beta\Omega)=1-\beta+\lambda\beta(1+a-b).$$
Since the roles of $a$ and $b$ are symmetric, it is enough to disprove the latter equation to obtain a contradiction. Indeed, the last inequality implies
$$1<\lambda\beta(1+a+b)-\beta<3\omega\beta-\beta$$
by \eqref{eq:del-Pezzo-dynamic-alpha-degree-5-proof-linecoefficients}. We show that this is impossible. Indeed, if $0<\beta\leq \frac{1}{2}$, then $3\omega\beta-\beta=2\beta\leq 1$ and if $\frac{1}{2}\leq \beta\leq 1$, then $3\omega\beta-\beta=\frac{3}{2}-\beta\leq 1$.

\textbf{Step 2: The blow-up setting.}

Let $\sigma\colon \widetilde S \ra S$ be the blow-up of $p$ with exceptional divisor $E$. By Lemma \ref{lem:log-pullback-preserves-lc} the pair
\begin{equation}
(\widetilde S, (1-\beta)\widetilde C + \omega \beta \widetilde D + (\lambda \beta \mult_p D -\beta)E)
\label{eq:del-Pezzo-dynamic-alpha-degree-5-proof-badpair-blow-up}
\end{equation}
is not log canonical at some $q\in E$. By Lemma \ref{lem:del-Pezzo-deg5-aux-divisors-G} there is an effective $\bbQ$-divisor $G=\sum a_i G_i\simq-K_S$ with all $G_i$ irreducible and satisfying $\deg G_i\leq 2$ and $p\in G_i$. Moreover the pair
$$(S,(1-\beta) C + \lambda\beta G)$$
is log canonical at $p$. Then by Lemma \ref{lem:log-convexity}, we may assume there is an irreducible curve $G_i\subseteq \Supp(G)$ such that $G_i\not\subseteq \Supp(D)$. Then
\begin{equation}
2\geq D\cdot G_j\geq\mult_p D.
\label{eq:del-Pezzo-dynamic-alpha-degree-5-proof-boundm}
\end{equation}
This implies
\begin{equation}
\lambda \beta \mult_p D -\beta<2\omega\beta-\beta\leq 1.
\label{eq:del-Pezzo-dynamic-alpha-degree-5-proof-firstbound}
\end{equation}
Indeed, if $0<\beta\leq \frac{1}{2}$, then
$$3\omega\beta-\beta=2\beta\leq 1.$$
If $\frac{1}{2}\leq\beta\leq 1$, then
$$3\omega\beta-\beta=\frac{3}{2}-\beta\leq 1.$$
Therefore the pair \eqref{eq:del-Pezzo-dynamic-alpha-degree-5-proof-badpair-blow-up} is log canonical along $E$, and therefore log canonical in codimension $1$, i.e. the point $q$ is an isolated centre of non-log canonical singularities.

In fact, $q\in \widetilde C$, since otherwise the pair
$$(\widetilde S, \lambda \beta \widetilde D + (\lambda\beta\mult_pD-\beta)E)$$
is not log canonical at $q\in E$, but this implies
$$1<\lambda\beta \widetilde D \cdot E =\lambda\beta\mult_pD\leq 1$$
by Lemma \ref{lem:adjunction} (iii) and \eqref{eq:del-Pezzo-dynamic-alpha-degree-5-proof-boundm}, which is absurd.

Suppose $p\in L$ a line in $S$. Since $C\cdot L=\deg L =1$, then $\widetilde C \cdot \widetilde L=0$, so $\widetilde C \cap \widetilde L =\emptyset$ and as a result $q$ does not belong to the strict transform of $\widetilde L$.

To ease the readibility, we leave the proof of the following claim for later.
\begin{clm}
\label{clm:del-Pezzo-deg5-awkwardcase}
If $p$ belongs to a $(-1)$-curve, then $q$ does not belong to the strict transform of a conic.
\end{clm}

Now we are on the hypothesis of Lemma \ref{lem:del-Pezzo-deg5-aux-divisors-H} and we conclude that there is an effective $\bbQ$-divisor $H=\sum h_i H_i\simq-K_S$ such that for all $H_i$ the point $p\in H_i$ and $\deg H_i\leq 3$. Moreover $q\in \widetilde H_i$ for all $H_i$ with $\deg H_i>1$ and the pair
$$(S, (1-\beta) C + \lambda\beta H)$$
is log canonical. Then, by Lemma \ref{lem:log-convexity} we may assume there is $H_i\not\subseteq\Supp(D)$. Observe that if $\deg H_i=1$ then
$$1= H_i \cdot D\geq \mult_p D>1$$
by \eqref{eq:del-Pezzo-dynamic-alpha-degree-5-proof-boundbelow}, which is absurd.

We use $H_i$ to bound $\mult_q\widetilde D$:
$$3-\mult_p D\geq \widetilde H_i \widetilde D \geq \mult_q \widetilde D,$$
which gives
\begin{equation}
3\geq\mult_p D + \mult_q \widetilde D.
\label{eq:del-Pezzo-dynamic-alpha-degree-5-proof-boundm-above}
\end{equation}
We claim that
\begin{equation}
\mult_q(\lambda\beta \widetilde D + (\lambda\beta\mult_p D -\beta) E)<\omega\beta(\mult_p D + \mult_q \widetilde D) -\beta\leq 1.
\label{eq:del-Pezzo-dynamic-alpha-degree-5-proof-boundpair-above}
\end{equation}
Indeed, if $1\geq \beta\geq \frac{1}{2}$, then \eqref{eq:del-Pezzo-dynamic-alpha-degree-5-proof-boundm-above} and \eqref{eq:del-Pezzo-dynamic-alpha-degree-5-proof-basicinequality} give
\begin{equation}
\omega\beta(\mult_q \widetilde D + \mult_p D)-\beta\leq \frac{3}{2}-\beta \leq 1.
\label{eq:del-Pezzo-dynamic-alpha-degree-5-proof-funnylemma1}
\end{equation}
If $0<\beta\leq\frac{1}{2}$, then
$$\omega\beta(\mult_q+\mult_p D)-\beta\leq 3\beta-\beta=2\beta\leq 1.$$

We will apply Theorem \ref{thm:inequality-local-blowup-bound} to the pair \eqref{eq:del-Pezzo-dynamic-alpha-degree-5-proof-badpair-blow-up} with $n=2$. First we use \eqref{eq:del-Pezzo-dynamic-alpha-degree-5-proof-boundm-above} and claim
\begin{equation}
\mult_q(\lambda\beta \widetilde D + (\lambda\beta\mult_p D -\beta)E)< \omega\beta(\mult_p D + \mult_q \widetilde D) -\beta\leq 3\omega\beta-\beta\leq \frac{1}{2}+\beta.
\label{eq:del-Pezzo-dynamic-alpha-degree-5-proof-funnylemma2}
\end{equation}
Indeed, if $0<\beta\leq \frac{1}{2}$, then
$$3\omega\beta-\beta=2\beta = \beta+\beta \leq \frac{1}{2}+\beta.$$
If $\frac{1}{2}\leq \beta\leq 1$, then
$$3\omega\beta-\beta = \frac{3}{2}-\beta = \frac{1}{2} + (1-\beta)\leq \frac{1}{2}+2\beta-\beta=\frac{1}{2}+\beta.$$
Inequalities \eqref{eq:del-Pezzo-dynamic-alpha-degree-5-proof-funnylemma1} and \eqref{eq:del-Pezzo-dynamic-alpha-degree-5-proof-funnylemma2} are the hypotheses of Theorem \ref{thm:inequality-local-blowup-bound} when $n=2$. The Lemma gives
$$\widetilde C \cdot (\lambda\beta \widetilde D + (\lambda\beta\mult_p D -\beta)E)>1+2\beta.$$
We claim
$$\widetilde C \cdot (\lambda\beta \widetilde D + (\lambda \beta \mult_pD -\beta)E)\leq 1+2\beta,$$
which will give a contradiction, finishing the proof.

Observe that
\begin{align*}
&\widetilde C\cdot (\lambda\beta \widetilde D + (\lambda\beta\mult_p D -\beta)E)\\
<&\omega\beta(C\cdot D -\mult_p D +\mult_p D)-\beta\\
=&5\omega\beta-\beta.
\end{align*}
If $0<\beta\leq \frac{1}{2}$, then
$$5\omega\beta-\beta=4\beta=2\beta+2\beta\leq 1+2\beta.$$
If $\frac{1}{2}\leq \beta\leq 1$, then
$$5\omega\beta-\beta = \frac{5}{2}-\beta=1+\frac{3}{2}-\beta\leq 1+3\beta-\beta=1+2\beta.$$
\end{proof}

\begin{proof}[Proof of Claim \ref{clm:del-Pezzo-deg5-awkwardcase}]
Suppose for contradiction that $q_0:=p\in L$, where $L$ is a line. By Lemma \ref{lem:del-Pezzo-deg5-model-line-choice} we may assume that $L=E_1$. Let $q_1:=q\in E=:F_1$, the exceptional curve of $S_1:=\widetilde S$, the blow-up of $q_0$. Suppose for contradiction that $q_1\in \widetilde Z$ for $\widetilde Z$, the strict transform of a conic $Z\subset S$ with $q_0\in Z$. Then, by Lemma \ref{lem:del-Pezzo-deg5-model-conic-choice}, we may assume that $Z=B_1$. The pair
$$(S_1, (1-\beta)C^1 + \lambda\beta D^1 + \lambda\beta(\mult_pD-\beta)F_1)$$
is not log canonical at some point $q_1=B^1\cap C^1\cap F_1$ where for any $\bbQ$-divisor $A$ we denote by $A^1$ its strict transform in $S_1$. The curve $B_1\not\Supp(D)$, since otherwise, by Lemma \ref{lem:adjunction} (i), we obtain a contradiction:
\begin{align*}
 1&<\mult_p((1-\beta)C^1 + \lambda\beta D^1 + (\lambda\beta \mult_p D-\beta)F_1)\\
	&\leq B^1\cdot ((1-\beta)C^1 + \lambda\beta D^1 + (\lambda\beta\mult_p D -\beta)F_1)\\
	&=(1-\beta) + \lambda\beta(2-\mult_p D) + (\lambda\beta \mult_p D -\beta)\\
	&=1+2\lambda\beta-2\beta<1+2\omega\beta-2\beta\leq 1.
\end{align*}
Indeed, if $0<\beta\leq \frac{1}{2}$, then $1+2\omega\beta-2\beta=1$ and if $\frac{1}{2}\leq\beta\leq 1$, then $1+2\omega\beta -2\beta\leq 1+1-2\beta\leq 1$.

Therefore we may write $D=aE_1+bB_1+\Omega$ where $E_1, B_1\not\Supp(\Omega)$ and $a>0$, $b\geq 0$. Let $S_0=S$, $C_0=C$, $D_0=D$ and $q_0=q$ as in Theorem \ref{thm:pseudo-inductive-blow-up}. Let $i\geq 1$ and let $f_i\colon S_i\ra S_{i-1}$ be the blow-up of the point $q_{i-1}=C^i\cap F_{i-1}$ with exceptional curve $F_i$. Let $A^{i-1}$ or $A$ be any $\bbQ$-divisor in $S_{i-1}$. We will denote its strict transform in $S^i$ by $A^i$. Let $m_i=\mult_{q_i} D_i$. Recall that the assumption for contradiction is that the pair
\begin{equation}
(S_1, (1-\beta)C^1 + \lambda\beta D^1 + (\lambda\beta m_0-\beta)F_1)
\label{eq:del-Pezzo-dynamic-alpha-degree-5-proof-pair1}
\end{equation}
is not log canonical at $q_1=B_1^1 \cap C^1\cap F_1$ and is log canonical near $q_1$. Let $x+0=\mult_{q_i}\Omega^i$. Then $m_0=a+b+x_0$ and $m_1=b+x_1$. Recall from \eqref{eq:del-Pezzo-dynamic-alpha-degree-5-proof-firstbound} that $\lambda\beta m_0\leq 1$. This is one of the hypothesis of Theorem \ref{thm:pseudo-inductive-blow-up} and we will assume it from now onwards. Since the pair is log canonical near $q_1$, but not at $q_1$ by Lemma \ref{lem:log-pullback-preserves-lc}, the pair
$$(S_2, (1-\beta)C^2 + \lambda\beta D^2 + (\lambda\beta m_0 -\beta)F_1^2 + (\lambda\beta(m_0+m_1)-2\beta)F_2)$$
is not log canonical at some (possibly all) $t_2\in F_2$. We bound the multiplicities of $D$:
\begin{equation}
1=E_1\cdot D \geq -a+b+x_0.
\label{eq:del-Pezzo-dynamic-alpha-degree-5-proof-bound1}
\end{equation}
Let $A$ be the only conic in
$$\calA = \vert \pioplane{2} -E_1-E_2-E_3-E-4\vert$$
with $q_0\in A$, as constructed in section \ref{sec:delPezzo-5-curves}. The curve $A$ is irreducible since otherwise there would be a line in $S$ with class
$$A-E_1 \sim\pioplane{2}-2E_1-E_2-E_3-E_4$$
which is impossible by Lemma \ref{lem:del-Pezzo-deg5-lines-list}. The pair
$$(S, (1-\beta) C + \lambda\beta (A + B_1+ E_1))$$
is log canonical by Lemma \ref{lem:appendix-delPezzo-deg5-dynamic-lct}.

Therefore by Lemma \ref{lem:log-convexity}, we may assume that $A\not\subseteq\Supp(D)$ and
\begin{equation}
2=A\cdot D \geq a+ b+ x_0 = m_0.
\label{eq:del-Pezzo-dynamic-alpha-degree-5-proof-bound3}
\end{equation}
This implies
$$2\lambda\beta m_0-2\beta\leq 4\lambda\beta-2\beta<4\omega\beta-2\beta\leq 1.$$
Indeed, if $0<\beta\leq \frac{1}{2}$, then $4\omega\beta-2\beta=2\beta\leq 1$, while if $\frac{1}{2}\leq \beta\leq 1$, then $4\omega\beta-2\beta=2-2\beta\leq 1$.

Now, by Theorem \ref{thm:pseudo-inductive-blow-up} (iv) with $i=2$, we may conclude that $t_2=F_2\cap C^2=q_2$. Since
$$\lambda\beta(m_0+m_1)-2\beta\leq 2\lambda\beta m_0 -\beta\leq 1,$$
by Theorem \ref{thm:pseudo-inductive-blow-up} (i) with $i=3$, we conclude that the pair
$$(S_3, (1-\beta)C^3 + (\lambda\beta(m_0+m_1)-2\beta)F_2^3 + (\lambda\beta(m_0+m_1+m_2)-3\beta)F_3)$$
is not log canonical at some $t_3\in F_3$. Since $(C\cdot B_1)_{q_0}=2$, $C\cdot E_1=1$, then $m_1= b+x_1$ and $m_i=x_i$ for $i\geq 2$. From \eqref{eq:del-Pezzo-dynamic-alpha-degree-5-proof-bound1} and \eqref{eq:del-Pezzo-dynamic-alpha-degree-5-proof-bound3} we obtain
\begin{align*}
			&\lambda\beta(m_0+2m_1)-3\beta\\
<			&\omega\beta(a+3b+x_0+2x_1)-3\beta\\
\leq	&\omega\beta(a+3b+3x_0)-3\beta\\
\leq	&5\omega\beta-3\beta\leq 1.
\end{align*}
Indeed, if $0<\beta\leq \frac{1}{2}$, then $5\omega\beta-3\beta=2\beta\leq 1$ while if $\frac{1}{2}\leq\beta\leq 1$, then $5\omega\beta-3\beta=\frac{5}{2}-3\beta\leq 1$.

Therefore by Theorem \ref{thm:pseudo-inductive-blow-up} (iv) with $i=3$, we conclude that $t_3=C^3\cap F_3=q_3$. Therefore the pair
$$(S_3, (1-\beta)C^3 + \lambda\beta D^3 +(\lambda\beta(m_0+m_1+m_2)-3\beta)F_3)$$
Now, by Lemma \ref{lem:adjunction} (iii) applied with $C^3$, we obtain a contradiction:
\begin{align*}
 1&<C^3\cdot (\lambda\beta D^3 + (\lambda\beta(m_0+m_1+m_2)-3\beta)F_3)\\
	&<5\omega\beta-3\beta \leq 1.
\end{align*}
\end{proof}

\section{Smooth intersection of two quadrics}
In this section we will use notation and curves from Section \eqref{sec:delPezzo-4-curves}.
\begin{thm}
\label{thm:del-Pezzo-dynamic-alpha-degree-4}
Let $S$ be a smooth del Pezzo surface of degree $4$ and let $C$ be a smooth elliptic curve in $S$, $C\simq-K_S$. The dynamic $\alpha$-invariant $\alpha(S,(1-\beta)C)$ is as follows:
\begin{itemize}
	\item[(1)] Suppose $C$ does not contain any pseudo-Eckardt point nor any point $p$ with two irreducible conics $A$ and $B$ satisfying $A+B\sim-K_S$ and $A\cap B=\{p\}$ with $(A\cdot C)\vert_p=(B\cdot C)\vert_p=2$. Then
		\begin{equation}
				\alpha(S,(1-\beta)C)=\omega_1:=
						\begin{dcases}
								1 									&\text{ for } 0<\beta\leq \frac{2}{3},\\
								\frac{2}{3\beta}		&\text{ for } \frac{2}{3}\leq \beta\leq 1.
						\end{dcases}
			\label{eq:del-Pezzo-dynamic-alpha-degree-4-very-general}
		\end{equation}
	\item[(2)] Suppose $C$ contains no pseudo-Eckardt points but there is a point $p \in C$ such that there are two irreducible conics $A$ and $B$ satisfying $A+B\sim-K_S$ and $A\cap B=\{p\}$ with $(A\cdot C)\vert_p=(B\cdot C)\vert_p=2$. Then
		\begin{equation}
				\alpha(S,(1-\beta)C)=\omega_2:=
						\begin{dcases}
									1 									&\text{ for } 0<\beta\leq \frac{1}{2},\\
									\frac{1+2\beta}{4\beta}		&\text{ for } \frac{1}{2}\leq \beta\leq \frac{5}{6},\\
									\frac{2}{3\beta}		&\text{ for } \frac{5}{6}\leq \beta\leq 1.
						\end{dcases}
			\label{eq:del-Pezzo-dynamic-alpha-degree-4-specific1}
		\end{equation}
	\item[(3)] Suppose $C$ contains a pseudo-Eckardt point. Then
		\begin{equation}
				\alpha(S,(1-\beta)C)=\omega_3:=
						\begin{dcases}
								1 									&\text{ for } 0<\beta\leq \frac{1}{2},\\
								\frac{1+\beta}{3\beta}		&\text{ for } \frac{1}{2}\leq \beta\leq 1.
						\end{dcases}
			\label{eq:del-Pezzo-dynamic-alpha-degree-4-specific2}
		\end{equation}
\end{itemize}
\end{thm}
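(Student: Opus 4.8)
The plan is to establish the three values as upper and lower bounds separately, following the template of the degree $5$ computation (Theorem \ref{thm:del-Pezzo-dynamic-alpha-degree-5}) and the proof of Theorem \ref{thm:del-Pezzo-glct-charp} in degree $4$. For the upper bounds I would first prove, in a separate lemma, that $\alpha(S,(1-\beta)C)\leq\omega_i$ in each case by exhibiting an explicit effective $\bbQ$-divisor $D\simq-K_S$ supported on the curves of Table \ref{tab:delPezzo-4-lowdegree} and computing $\lct(S,(1-\beta)C,\beta D)$ from its minimal log resolution. In case (3) I would take $D=E_1+L_{12}+A_2$ with $p=E_1\cap L_{12}\in C$ a pseudo-Eckardt point and $A_2$ an irreducible conic through $p$; the three components meet pairwise transversally at $p$, so a single blow-up resolves the pair and produces the exceptional coefficient $3\lambda\beta-\beta$, giving the threshold $\frac{1+\beta}{3\beta}$. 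In case (2) I would take $D=A+B$ with $A,B$ the two conics tangent to $C$ at $p$; the tacnodal intersection $(A\cdot B)|_p=2$ together with $(A\cdot C)|_p=(B\cdot C)|_p=2$ forces two blow-ups and the coefficient $4\lambda\beta-2\beta$, giving $\frac{1+2\beta}{4\beta}$. In case (1) the relevant configuration is a cuspidal anticanonical curve (or, off the special locus, a simple-normal-crossings divisor), yielding $\min\{1,\tfrac{2}{3\beta}\}$.

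For the lower bounds I would argue by contradiction: if $\alpha(S,(1-\beta)C)<\omega_i$ there is an effective $D\simq-K_S$ with $(S,(1-\beta)C+\lambda\beta D)$ not log canonical at some $p$ for some $\lambda<\omega_i$. Since $\omega_i\beta\leq\tfrac{2}{3}=\glct(S)$ in every case (a direct check on each of the three piecewise formulae), Lemma \ref{lem:pairs-fixed-boundary-lcs} shows the pair is log canonical in codimension $1$ and that $p\in C$. The next step is to construct dynamic analogues of the auxiliary divisors of Lemmas \ref{lem:del-Pezzo-deg4-aux-divisors-G-Basic} and \ref{lem:del-Pezzo-deg4-aux-divisors-H}: an effective $G\simq-K_S$ of conics through $p$ with $(S,(1-\beta)C+\omega_i\beta G)$ log canonical, and an $H\simq-K_S$ of cubics through $p$ whose strict transforms pass through the infinitely near point $q$. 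By Lemma \ref{lem:log-convexity} I may then assume some component of $G$ (resp.\ $H$) is not contained in $\Supp(D)$, which bounds $\mult_pD\leq2$ and, after blowing up $p$, controls $\mult_q\widetilde D$.

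The heart of the proof is the case analysis on the position of $p$. When $p$ is a pseudo-Eckardt point (case (3)) both lines through $p$ lie in $\Supp(D)$ by Lemma \ref{lem:adjunction}(i); writing $D=aE_1+bL_{12}+\Omega$, an auxiliary conic bounds $a+b\leq2$ while the line intersections force $\mult_p\Omega\leq1$, so that $\mult_p((1-\beta)C+\lambda\beta\Omega)\leq1$ and Theorem \ref{thm:inequality-Cheltsov} applies to $E_1$ and $L_{12}$. By symmetry it suffices to contradict one alternative, and the resulting inequality simplifies to $\lambda\beta(1+a+b)>1+\beta$, hence $\lambda>\frac{1+\beta}{3\beta}=\omega_3$, a contradiction. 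When $p$ lies on no line I would blow up $p$ to obtain a smooth cubic surface $\widetilde S$ on which $\widetilde C\simq-K_{\widetilde S}$ is smooth and $\widetilde D+(\mult_pD-1)E\simq-K_{\widetilde S}$, and then invoke the Cat Property on cubics (Theorem \ref{thm:del-Pezzo-cat-degree-3} and Corollary \ref{cor:cat-property-easy-dynamic-alpha}): the only cats meeting the exceptional line $E$ are the tacnodal conic-plus-line and the three-line Eckardt configurations, which by Lemma \ref{lem:del-Pezzo-cat-degree-4-generic-list} correspond downstairs precisely to a cuspidal anticanonical curve (case (1)) and to a pair of conics $A+B\sim-K_S$ with $(A\cdot B)|_p=2$ (case (2)). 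The distinction between cases (1) and (2) is then exactly whether $C$ is tangent to these conics at $p$. The remaining subcase, $p$ on a single line, I would treat directly through the iterated blow-up machinery of Theorem \ref{thm:pseudo-inductive-blow-up} together with Theorem \ref{thm:inequality-local-blowup-bound}, following the pattern of Claim \ref{clm:del-Pezzo-deg5-awkwardcase}.

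The main obstacle is the generic/tangent case (cases (1) and (2)): because $\mult_pD$ can be as large as $2$ and $2\omega_i\beta$ exceeds $1$ on part of the range $\tfrac12<\beta<\tfrac23$, a single application of Theorem \ref{thm:inequality-local-blowup-bound} does not close the argument, unlike in degree $5$. One is forced to iterate blow-ups, at each stage re-bounding the successive multiplicities $m_i=\mult_{q_i}\widetilde D$ against the intersection numbers of the conics and cubics through $p$ and $q$, and to keep the coefficients $\lambda\beta(\sum m_j)-i\beta$ below $1$ via the piecewise estimates on $\omega_i$. Organising this bookkeeping so that it simultaneously covers the tangency/non-tangency dichotomy of $C$ with the special conics, and dovetails with the cubic Cat Property reduction, is where the real work lies; the pseudo-Eckardt case, by contrast, is dispatched cleanly by the symmetric Cheltsov inequality.
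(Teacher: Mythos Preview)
Your proposal is broadly on track and invokes the right machinery, but you miss a structural simplification that the paper exploits, and there is a small error in your upper bound for case~(1).

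For the upper bound in case~(1), a cuspidal anticanonical curve with cusp off $C$ gives threshold $\tfrac{5}{6\beta}$, not $\tfrac{2}{3\beta}$. The paper instead takes $D_1=L_1+L_2+Q$ at a pseudo-Eckardt point \emph{not on} $C$ (such points always exist on $S$); by Lemma~\ref{lem:appendix-delPezzo-deg4-dynamic-lct1} this gives exactly $\min\{1,\tfrac{2}{3\beta}\}$.

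The more significant difference concerns the organisation of the lower bound. After establishing $p\in C$ and handling the pseudo-Eckardt case via Theorem~\ref{thm:inequality-Cheltsov} (your argument here matches the paper), the paper does \emph{not} separate cases (1) and (2). It first shows that $p$ must lie on some line: when $p$ lies on no line, the Cat Property holds at $p$ by Lemma~\ref{lem:del-Pezzo-cat-degree-4-generic} (this is exactly your blow-up-to-a-cubic idea, already packaged as a statement about $S$), so $\Supp(D)$ contains a cat $T$; but Lemmas~\ref{lem:appendix-delPezzo-deg4-dynamic-lct4} and~\ref{lem:appendix-delPezzo-deg4-dynamic-lct2} show $(S,(1-\beta)C+\lambda\beta T)$ is log canonical for each possible cat, so log-convexity strips $T$ out, contradicting Lemma~\ref{lem:Cat-property-not-lc-has-cats}. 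Hence $p$ lies on a unique line $L$. Now comes the observation you did not make (the paper's Step~3): if $A,B$ are irreducible conics with $A+B\sim-K_S$ and $A\cap B=\{p\}$, then $1=L\cdot(A+B)\geq (L\cdot A)\vert_p+(L\cdot B)\vert_p=2$, which is absurd. So once $p$ is on a line the tangent-conic configuration of case~(2) \emph{cannot occur at $p$}, and the distinction between cases (1) and (2) disappears from the lower-bound argument. Everything reduces to a single technical lemma (Lemma~\ref{lem:del-Pezzo-dynamic-alpha-degree-4-line}) treating $p$ on a unique line with $\lambda<\omega_1$; since $\omega_3\le\omega_2\le\omega_1$, this covers all three cases at once. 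The difficulty you anticipate of tracking cases (1) and (2) separately through the iterated blow-ups therefore does not arise.
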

\begin{proof}[Proof of Theorem \ref{thm:del-Pezzo-dynamic-alpha-degree-4}]
We choose an effective $\bbQ$-divisor $D_i\simq -K_S$ for each case in the statement:
\begin{itemize}
	\item[(1)] Let $L_1$ and $L_2$ be two lines intersecting each other at a pseudo-Eckardt point $p$ and $Q$ be the unique conic satisfying $D_1=L_1+L_2+Q\sim -K_S$ and such that $p\in Q$. It is clear that $Q$ exists as a divisor. We will show we can choose $Q$ to be effective. Observe that $Q^2=0$ and $K\cdot Q=-2$. Therefore, by the genus formula, $p_a(Q)=0$. By Proposition \ref{prop:rational-surfaces-sections-rational-curves}, $h^0(S,\calO_S(Q))\geq 2$. Hence we can choose $Q$ to be effective and such that $p\in Q$.
	\item[(2)] Let $D_2=A+B\sim -K_S$ if $\beta\leq \frac{5}{6}$. If $\beta>\frac{5}{6}$ let $D_2=D_1$.
	\item[(3)] Let $D_3=L_1+L_2+Q\sim-K_S$ be the two lines intersecting each other at the pseudo-Eckardt point $p\in C$ and $Q$ be the unique conic satisfying $D_1=L_1+L_2+Q\sim -K_S$ and such that $p\in Q$, as above.
\end{itemize}
By lemmas \ref{lem:appendix-delPezzo-deg4-dynamic-lct1}, \ref{lem:appendix-delPezzo-deg4-dynamic-lct2} and \ref{lem:appendix-delPezzo-deg4-dynamic-lct3}, $\alpha(S,(1-\beta)C)\leq \lct(S,(1-\beta)C, D_i)=\omega_i$ holds for $\omega_i$ as in the statement of the Theorem.
We will need the following arithmetic observation.
\begin{clm}
The following inequality holds
\begin{equation}
				\omega_3\beta\leq\omega_2\beta\leq\omega_1\beta=
						\begin{dcases}
								\frac{1}{2} 									&\text{ for } 0<\beta\leq \frac{1}{2},\\
								\frac{2}{3}										&\text{ for } \frac{1}{2}\leq \beta\leq 1,
						\end{dcases}\leq \frac{2}{3}=\glct(S).
			\label{eq:del-Pezzo-dynamic-alpha-degree-4-proof-basic-inequality}
		\end{equation}
Moreover $\omega_3\leq\omega_2\leq\omega_1$.
\end{clm}
\begin{proof}
The statement is trivial for $0<\beta\leq \frac{1}{2}$. Suppose $\beta\geq \frac{1}{2}$. Then it is straight forward to check that $\frac{1+\beta}{3\beta}\leq \frac{1+2\beta}{4\beta}$. If $\beta\geq \frac{2}{3}$, then $\frac{1+2\beta}{4\beta}\leq \frac{2}{3\beta}$ also holds. Therefore $\omega_3\leq\omega_2\leq\omega_1$ holds. The claim follows by Theorem \ref{thm:del-Pezzo-glct-charp}.
\end{proof}

Suppose for contradiction that $\exists D\simq-K_S$ an effective $\bbQ$-divisor and $0< \lambda<\omega_i$ depending on $\beta$ and $\omega_i$ such that the pair $(S,(1-\beta)C+\lambda\beta D)$ is not log canonical at some $p\in S$. By Lemma \ref{lem:pairs-fixed-boundary-lcs}, this pair is log canonical near $p$ and $p\in C$.

Since $(S,C)$ is log canonical, by Lemma \ref{lem:log-convexity}, we may assume that $C\not\subseteq\Supp(D)$. We need to show that the pair
\begin{equation}
\label{eq:del-Pezzo-dynamic-alpha-degree-4-proof-bad-pair}
(S,(1-\beta)C+\lambda\beta D)
\end{equation}
is log canonical.

\textbf{Step 1: We show $p=C\cap L$ where $L$ is some line.}

Suppose there is no line $L$ such that $p\in L$. Since $\lambda<\omega\leq 1$, by Lemma \ref{lem:disc-monotonous}, the pair $(S, (1-\beta)C + \beta D)$ is not log canonical. Then by Lemma \ref{lem:del-Pezzo-cat-degree-4-generic}, the surface $S$ satisfies the Cat Property at $p$ and therefore $\exists T\in \vert-K_S\vert$ with $p\in T$ such that $T\subset\Supp((1-\beta)C+\beta D)$. By Lemma \ref{lem:convexity} we may assume $(S,T)$ is not log canonical and therefore $T\subset \Supp(D)$ (i.e. $T$ is a cat). The cats of $S$ at $p$ are classified in Lemma \ref{lem:del-Pezzo-cat-degree-4-generic-list}.

If $T$ is irreducible, then the pair $(S,(1-\beta)C + \lambda \beta T)$ is log canonical by Lemma \ref{lem:appendix-delPezzo-deg4-dynamic-lct4}.

If $T=A+B$, the union of two conics intersecting only at $p$, then by lemma \ref{lem:appendix-delPezzo-deg4-dynamic-lct2} the pair $(S,(1-\beta)C + \lambda \beta T)$ is log canonical.

In any case we may assume, using Lemma \ref{lem:log-convexity} that $T\not\subset\Supp(D)$. But then $\Supp(D)$ contains no cats, contradicting Lemma \ref{lem:Cat-property-not-lc-has-cats}. Hence, $p=L\cap C$ for some line $L$.

\textbf{Step 2: We show $L$ is the only line through $p$ (proof of case (3)).}

Suppose $p=L\cap M$ is a pseudo-Eckardt point, i.e. $L$ and $M$ are lines intersecting at $p$. Notice that since $C\cdot L =-K_S\cdot L =1$, the curves $C$ and $L$ intersect with simple normal crossings. By hypothesis, in cases (1) and (2) in the statement of the Theorem the point $p\not\in L'$ for any other line $L'$. Therefore we are in case (3) in the statement of the Theorem and $\lambda<\omega_3$.

The line $M\subseteq \Supp(D)$, since otherwise
$$1=M\cdot D\geq\mult_p D>\frac{\beta}{\lambda\beta}>1$$
where we apply Lemma \ref{lem:adjunction} (i) to the pair \eqref{eq:del-Pezzo-dynamic-alpha-degree-4-proof-bad-pair}. In the same fashion $L\subset\Supp(D)$. We write
$$D=a_1 L + a_2 M + \Omega$$
where $1\geq\lambda\beta a_i>0$ since the pair \eqref{eq:del-Pezzo-dynamic-alpha-degree-4-proof-bad-pair} is log canonical in codimension $1$. Let $Q\sim -K_S-L-M$ be a divisor. Since $L\cdot M=1$, then $Q\cdot -K_S=2$ and $Q^2=0$. Therefore by Lemma \ref{lem:del-Pezzo-all-conics-rational}, $p_a(Q)=0$ and by Proposition \ref{prop:rational-surfaces-sections-rational-curves}, we may take $Q$ to be effective and such that $p\in Q$. The conic $Q$ is irreducible, since otherwise there would be a third line $N$ through $p$, which would be an Eckardt point. This is impossible by Lemma \ref{lem:del-Pezzo-lines-through-a-point}. 

The pair $(S, (1-\beta)C + \omega_3\beta (L+M+Q))$ is log canonical by Lemma \ref{lem:appendix-delPezzo-deg4-dynamic-lct3}. Therefore by Lemma \ref{lem:log-convexity} we may assume $Q\not\subseteq\Supp(D)$ and 
\begin{equation}
2=D\cdot Q \geq a+b+ \mult_p\Omega\geq a+b.
\label{eq:del-Pezzo-dynamic-alpha-degree-4-proof-bound-pEckardt}
\end{equation}
Note that $1=D\cdot L\geq -a+b+\mult_p\Omega$ and $1=D\cdot M\geq a-b+\mult_p\Omega$. Adding these two inequalities it follows that $\mult_p\Omega\leq 1$. As a result
$$\mult_p\Omega((1-\beta)C + \lambda\beta\Omega)\leq 1-\beta+\beta\mult_p\Omega\leq 1-\beta+\beta\leq 1.$$

Finally, since $L\cdot M=1$ the pair
$$(S,(1-\beta)C + \lambda\beta(a_1L+a_2M+\Omega))$$
satisfies the hypotheses of Theorem \ref{thm:inequality-Cheltsov}, so 
\begin{equation}
\left(\left(1-\beta\right)C+\lambda\beta\Omega\right)\cdot L>2(1-\lambda\beta a_2)\quad \text{ or } \quad\left(\left(1-\beta\right)C+\lambda\beta\Omega\right)\cdot M>2(1-\lambda\beta a_1).
\label{eq:del-Pezzo-dynamic-alpha-degree-4-proof-bound-pEckardt-contradiction}
\end{equation}
The first inequality implies
\begin{align*}
2(1-\lambda\beta a_2)<\left(\left(1-\beta\right)C+\lambda\beta\Omega\right)\cdot L&=\\
(1-\beta)+\lambda\beta(-K_S-a_1L-a_2M)\cdot L&=\\
(1-\beta)+\lambda\beta(1+a_1-a_2).
\end{align*}
Rearranging this inequality we obtain
$$3\omega_3\beta>3\lambda\beta\geq\lambda\beta(1+a_1+a_2)>1+\beta,$$
by \eqref{eq:del-Pezzo-dynamic-alpha-degree-4-proof-bound-pEckardt}
However, this implies an absurdity:
$$\frac{(1+\beta)}{3\beta}<\omega_3=\min\{1, \frac{(1+\beta)}{3\beta}\}\leq \frac{(1+\beta)}{3\beta}.$$
Since the roles of $L$ and $M$ are symmetric in Theorem \ref{thm:inequality-Cheltsov} we also deduce a contradiction from the second inequality in \eqref{eq:del-Pezzo-dynamic-alpha-degree-4-proof-bound-pEckardt-contradiction}. We conclude that $L$ is the only line with $p\in L$.

\textbf{Step 3: Show there are no irreducible conics $A+B\sim-K_S$ such that $A\cap B=\{p\}$.}

It is easy to obtain a contradiction:
$$1=L\cdot -K_S = L\cdot (A+B)\geq (L\cdot A)\vert_p + (L\cdot B)\vert_p=1+1=2.$$

Therefore, we may assume that $\lambda<\omega_1$ and $(S,(1-\beta)C + \lambda\beta(aL+\Omega)$ is not log canonical at $p=L\cap C$ such that $L$ is the only line which contains $p$. The Theorem is proven thanks to the following Lemma \ref{lem:del-Pezzo-dynamic-alpha-degree-4-line}.
\end{proof}

\begin{lem}
\label{lem:del-Pezzo-dynamic-alpha-degree-4-line}
Let $S$ be a non-singular del Pezzo surface of degree $4$. Let $D\simq-K_S$ be an effective $\bbQ$-divisor. Let $C\in\vert-K_S\vert$ be a smooth curve and $L$ be a line in $S$. Let $q_0=L\cap C$ and assume $L$ is the only line which contains $q_0$. Let $\lambda<\omega_1=\min\{1,\frac{1}{2\beta}\}$ where $0<\beta\leq 1$. If the pair
$$(S, (1-\beta)C + \lambda\beta D)$$
is log canonical in codimension $1$, then it is also log canonical at $q_0$.
\end{lem}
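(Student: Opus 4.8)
The plan is to argue by contradiction and to reduce everything to a single application of Theorem~\ref{thm:inequality-local-blowup-bound} with $n=2$. So suppose $(S,(1-\beta)C+\lambda\beta D)$ is not log canonical at $q_0$. The hypothesis $\lambda<\omega_1=\min\{1,\tfrac1{2\beta}\}$ forces $\lambda\beta<\tfrac12$ for every $0<\beta\le1$ (use $\lambda<1$ when $\beta\le\tfrac12$ and $\lambda<\tfrac1{2\beta}$ when $\beta\ge\tfrac12$), so $\lambda\beta<\tfrac12<\tfrac23=\glct(S)$ by Theorem~\ref{thm:del-Pezzo-glct-charp}. In particular Lemma~\ref{lem:pairs-fixed-boundary-lcs} guarantees automatically that the pair is log canonical in codimension $1$ and that the non--log-canonical point lies on $C$, consistent with $q_0=L\cap C$. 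Since $(S,C)$ is log canonical, Lemma~\ref{lem:log-convexity} lets me subtract $C$ and assume from now on that $C\not\subseteq\Supp(D)$.

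The crux is to bound $\mult_{q_0}D\le2$. For this I would invoke the auxiliary effective $\bbQ$-divisor $G=\sum g_iG_i\simq-K_S$ of Lemma~\ref{lem:del-Pezzo-deg4-aux-divisors-G-Basic} (in the case where $q_0$ lies on the single line $L$), all of whose components $G_i$ are irreducible and smooth, pass through $q_0$, and have $\deg G_i\le2$. Applying Lemma~\ref{lem:log-convexity} to the $\bbQ$-linearly equivalent divisors $\lambda\beta D\simq\lambda\beta G$ with fixed boundary $(1-\beta)C$, I may replace $D$ by a $\bbQ$-divisor $\simq-K_S$ whose support still misses $C$ and in addition misses at least one component $G_j$, while the non--log-canonicity at $q_0$ is preserved. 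As $G_j$ is smooth through $q_0$,
\[
\mult_{q_0}D\le (G_j\cdot D)\vert_{q_0}\le G_j\cdot D=(-K_S)\cdot G_j=\deg G_j\le2 .
\]
It is harmless if the component dropped is the line $L$ itself, since then the bound only improves to $\mult_{q_0}D\le1$.

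With $\mult_{q_0}D\le2$ and $C\not\subseteq\Supp(D)$ secured, I put $\Delta=\lambda\beta D$ and verify the hypothesis of Theorem~\ref{thm:inequality-local-blowup-bound} for $n=2$: indeed $\mult_{q_0}\Delta=\lambda\beta\,\mult_{q_0}D\le2\lambda\beta\le\min\{1,\tfrac12+\beta\}$, using $2\lambda\beta<1$ together with $2\lambda\beta<2\beta\le\tfrac12+\beta$ when $\beta\le\tfrac12$. The theorem then gives $(\lambda\beta D\cdot C)\vert_{q_0}>1+2\beta$, so that
\[
4\lambda\beta=\lambda\beta\,(D\cdot C)\ge (\lambda\beta D\cdot C)\vert_{q_0}>1+2\beta .
\]
This is the contradiction: for $\beta\le\tfrac12$ one has $4\lambda\beta<4\beta\le1+2\beta$, while for $\beta\ge\tfrac12$ one has $4\lambda\beta<2\le1+2\beta$. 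Hence the pair is log canonical at $q_0$.

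The main obstacle is the log canonicity of the auxiliary pair $(S,(1-\beta)C+\lambda\beta G)$, which is what makes the log-convexity step legitimate, and it must hold throughout the range $\lambda<\tfrac1{2\beta}$. For $\lambda\le\tfrac23$ this comes for free: $(S,\tfrac23G)$ is log canonical by Lemma~\ref{lem:del-Pezzo-deg4-aux-divisors-G-Basic} and $(S,C)$ is log canonical, so $(S,(1-\beta)C+\tfrac23\beta G)$ is log canonical by Lemma~\ref{lem:convexity-forward}, and Lemma~\ref{lem:disc-monotonous} then removes the excess coefficient. For $\tfrac23<\lambda<\tfrac1{2\beta}$ (which forces $\beta<\tfrac34$) one instead checks log canonicity directly, resolving the at most tacnodal intersections of the components of $G$ with one another and with $C$ at $q_0$ and bounding the resulting discrepancies exactly as in the proof of Lemma~\ref{lem:del-Pezzo-deg4-aux-divisors-G-Basic}, now with coefficient $\lambda\beta<\tfrac12$ in place of $\tfrac23$. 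This local verification is the one genuinely computational point of the argument, and it is the degree $4$ analogue of the dynamic auxiliary estimates relegated to the appendix.
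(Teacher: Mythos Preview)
Your argument is correct for the statement as literally written, with $\omega_1=\min\{1,\tfrac1{2\beta}\}$, and it is considerably shorter than the paper's. You bound $\mult_{q_0}D\le2$ via the auxiliary divisor $G$ of Lemma~\ref{lem:del-Pezzo-deg4-aux-divisors-G-Basic} (the log canonicity of $(S,(1-\beta)C+\lambda\beta G)$ that you need is in fact Lemma~\ref{lem:appendix-delPezzo-deg4-dynamic-lct5}, so no separate hand computation is required), and then a single invocation of Theorem~\ref{thm:inequality-local-blowup-bound} with $n=2$ closes the argument. By contrast, the paper writes $D=aE_1+\Omega$, bounds $a$, $x_0=\mult_{q_0}\Omega$, $x_1$ separately using several auxiliary divisors from Section~\ref{sec:delPezzo-4-curves} and the appendix, and then follows the non--log-canonical centre through three successive blow-ups, ruling out in turn the cases where it lands on $\widetilde E_1$, on the strict transform of a conic through $q_0$, or elsewhere.

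There is, however, a typo in the lemma's statement: the $\omega_1$ actually used throughout the paper's proof and required for the application to Theorem~\ref{thm:del-Pezzo-dynamic-alpha-degree-4} is $\min\{1,\tfrac{2}{3\beta}\}$, not $\min\{1,\tfrac1{2\beta}\}$ (compare the case check ``if $\tfrac23\le\beta\le1$ then $2\omega_1\beta-\beta=\tfrac43-\beta$'' inside the paper's own proof, which only makes sense for $\omega_1\beta=\tfrac23$). Your approach does not survive this correction. With the intended $\omega_1$ one only has $\lambda\beta<\tfrac23$, so $\lambda\beta\,\mult_{q_0}D$ can be as large as nearly $\tfrac43$, violating the hypothesis $\mult_{q_0}\Delta\le1$ of Theorem~\ref{thm:inequality-local-blowup-bound}; and even the would-be final contradiction $4\lambda\beta\le1+2\beta$ fails, e.g.\ at $\beta=\tfrac23$ with $\lambda$ close to $1$ one gets $4\lambda\beta\to\tfrac83>\tfrac73=1+2\beta$. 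The paper's longer blow-up analysis is what is actually needed for the full range $\lambda<\min\{1,\tfrac{2}{3\beta}\}$.
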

\begin{proof}
\textbf{Step 1: Setting in $S$.}

Suppose $(S, (1-\beta)C + \lambda\beta D)$ is not log canonical at $q_0$. By Lemma \ref{lem:del-Pezzo-deg4-model-line-choice} we may assume $L=E_1$. Observe that $E_1\subset\Supp(D)$, since otherwise we obtain a contradiction by Lemma \ref{lem:adjunction} (i):
$$1=E_1\cdot D \geq \mult_{q_0}D>\frac{1-(1-\beta)}{\lambda\beta}=\frac{1}{\lambda}>1.$$
Write $D=aE_1+\Omega$ where $a>0$ and $E_1\not\subseteq\Supp(\Omega)$.

Let $S_0=S$, $q_0=q$, $\Omega_0=\Omega$, $E_1^0=E_1$ and $x_0=\mult_{q_0}\Omega^0$. For $i\geq1$, let $S_i\ra S_{i-1}$ be the blow-up of a point $t_{i-1}$ with exceptional curve $F_i$ where $t_{i-1}\in F_{i-1}$ for $i\geq 2$ and $q_0=E_1\cap C$ as above. Given any $\bbQ$-divisor $A$ or $A_{i-1}$ in $S_{i-1}$, denote its strict transform in $S_i$ by $A^i$. Let $x_i=\mult_{t_i}\Omega^i$. Let $q_1=C^1\cap F_1$. If $t_{i-1}=q_{i-1}$ for $i\geq 2$, let $q_i=C^i\cap F_i$. Step 2 will take care of the case $t_1\neq q_1$ and Steps 3 and 4 will analyse the case $t_1=q_1$.

Given a point $p\in S$, we constructed several curves of low degree which contained $p$ in Section \ref{sec:delPezzo-4-curves}. In particular we found all conics through a given point. Let $p=q_0$. Table \ref{tab:delPezzo-4-lowdegree} includes all the conics through $q_0$, its rational class and numerical properties. In particular the conics $B_1\sim \pioplane{1}-E_1$ and $A_i\sim\pioplane{2}-\sum_{\substack{j=1\\j\neq i}}^5 E_j$ with $2\leq i\leq 5$ are irreducible. Indeed, if they were reducible then they would split as $B_1=E_1+L$ or $A_i=E_i+L$ for some line $L$ not passing through $q_0$, since $L$ is the only line which contains $q_0$. However, the rational class of $L$ must be
$$L\sim B_1-E_1\sim \pioplane{2}-2E_1 \text{ or } L\sim A_i -E_1 \sim \pioplane{2}-2E_1$$
and by Lemma \ref{lem:del-Pezzo-deg4-lines-list} and Table \ref{tab:delPezzo-4-lowdegree} there is no line with such rational class.

Moreover $B_1$ and $A_i$ with $2\leq i\leq 5$ are all irreducible conics containing $q_0$. Indeed, by Lemma \ref{lem:del-Pezzo-deg4-conics}, every other conic must have class $B_i\sim \pioplane{1}-E_i$ for $2\leq i\leq 5$ or $A_1\sim\pioplane{2}-\sum_{i=2}^5$ (see Table \ref{tab:delPezzo-4-lowdegree}). However all these conics split in the union of two lines, one of them being $E_1$ and $q_0\in E_1$:
$$B_i=E_i+L_{1i} \text{ for } i\geq 2 \text{, and } A_1=C_0+E_1.$$

We define the $\bbQ$-divisor
$$G=\frac{1}{3}(B_1+\sum_{i=2}^5 A_i)+\frac{2}{3}E_1\simq-K_S$$
whose support consists of conics passing through $q_0$. By Lemma \ref{lem:appendix-delPezzo-deg4-dynamic-lct5} the pair $(S, (1-\beta)C + \lambda\beta G)$ is log canonical. Hence, by Lemma \ref{lem:log-convexity} we may assume there is some curve $M\subseteq\Supp(G)$ such that $M\not\subseteq\Supp(D)$. Since $\deg M= 2$, by Lemma \ref{lem:del-Pezzo-deg4-model-conic-choice} we may assume that $M=B_1$. Hence
\begin{align}
2&=D\cdot B_1\geq a+x_0 \label{eq:del-Pezzo-dynamic-alpha-degree-4-line-bound1}\\
1&=D\cdot E_1\geq -a+x_0 \label{eq:del-Pezzo-dynamic-alpha-degree-4-line-bound2}\\
\frac{3}{2}&\geq x_0 \label{eq:del-Pezzo-dynamic-alpha-degree-4-line-bound3}
\end{align}
where \eqref{eq:del-Pezzo-dynamic-alpha-degree-4-line-bound3} follows from adding \eqref{eq:del-Pezzo-dynamic-alpha-degree-4-line-bound1} and \eqref{eq:del-Pezzo-dynamic-alpha-degree-4-line-bound2}.

Now let
$$B=\frac{1}{2}C_0+\frac{1}{2}(L_{12}+L_{13}+L_{14}+L_{15})+\frac{3}{2}E_1\simq-K_S.$$
By Lemma \ref{lem:appendix-delPezzo-deg4-dynamic-lct7} the pair $(S, (1-\beta)C + \lambda\beta B)$ is log canonical. Therefore we may assume that for some irreducible component $L\subseteq\Supp(B)$, we have $L\not\subseteq\Supp(D)$ by Lemma \ref{lem:log-convexity}. Observe that all possible $L\neq E_1$ are lines and $L\cdot E_1=1$. Hence
\begin{equation}
1=D\cdot L \geq a.
\label{eq:del-Pezzo-dynamic-alpha-degree-4-line-bound4}
\end{equation}

By \eqref{eq:del-Pezzo-dynamic-alpha-degree-4-line-bound2}
\begin{equation}
\lambda\beta(a+x_0)-\beta\leq \lambda\beta D \cdot B_1\leq 2\lambda\beta-\beta\leq 2\omega_1\beta-\beta\leq 1
\label{eq:del-Pezzo-dynamic-alpha-degree-4-line-bound0}
\end{equation}
holds. Indeed, if $0<\beta\leq \frac{2}{3}$, then $2\omega_1\beta-\beta=\beta\leq \frac{2}{3}<1$ and if $\frac{2}{3}\leq\beta\leq 1$, then $2\omega_1\beta-\beta=\frac{4}{3}\beta\leq \frac{2}{3}<1$.

By  Lemma \ref{lem:log-pullback-preserves-lc}, and \eqref{eq:del-Pezzo-dynamic-alpha-degree-4-line-bound0} the pair
\begin{equation}
(S_1, (1-\beta)C^1+\lambda\beta D^1 + (\lambda\beta (a+x_0)-\beta)F_1)
\label{eq:del-Pezzo-dynamic-alpha-degree-4-line-pair1}
\end{equation}
is not log canonical at some $t_1\in E_1$ but it is log canonical near $t_1$. Observe that $C^1\cdot E_1=\emptyset$, since $C\cdot E_1=1$. If $t_1\not\in (C^1\cup E^1_1)\cap F_1$, then the pair
$$(S, \lambda\beta\Omega^1 + (\lambda\beta (a+x_0)-\beta)F_1)$$
is not log canonical at $t_1$, but Lemma \ref{lem:adjunction} (iii) and \eqref{eq:del-Pezzo-dynamic-alpha-degree-4-line-bound3} give a contradiction:
$$1<\lambda\beta\Omega^1 \cdot F_1<\omega_1\beta x_0 \leq \frac{2}{3}\cdot \frac{3}{2}\leq 1.$$

\textbf{Step 2: we show that the point $t_1\neq E_1^1\cap F_1$.}
Suppose for contradiction that$t_1=E^1_1\cap F_1$. Then the pair
\begin{equation}
(S_1, (1-\beta)C^1+\lambda\beta D^1 + (\lambda\beta (a+x_0)-\beta)F_1)
\label{eq:del-Pezzo-dynamic-alpha-degree-4-line-pair1-A}
\end{equation}
is not log canonical at $t_1$. 
In section \eqref{sec:delPezzo-4-curves} we constructed the cubic $Q_1\sim \pioplane{3}-2E_1-\sum^5_{i=2} E_i$ as the curve passing through a point $p=q_0$ such that its strict transform $Q_1^1$ contains $t_1$. We claim the curve $Q_1$ is irreducible. If this was not the case, then $Q_1=L+M$, the union of a line and a conic with one of them containing $t_1$. But $t_1\in E_1^1$, $A_i\cdot E_1=1$ for $i\geq 2$ and $B_1\cdot E_1=1$, then $t_1\not\in B_1^1\cup A_i^1$ for $i\geq 2$, which are all conics passing through $q_0$ by Lemma \ref{lem:del-Pezzo-deg4-conics} and the fact that $A_1$ is reducible. Therefore $L=E_1$ and 
$$M\sim Q_1-E_1\sim\pioplane{3}-3E_1-\sum^5_{i=2} E_i.$$
However there is no such conic in $S$ by Lemma \ref{lem:del-Pezzo-deg4-conics} and Table \ref{tab:delPezzo-4-lowdegree}. 

By Lemma \ref{lem:appendix-delPezzo-deg4-dynamic-lct6} the pair $$(S, (1-\beta)C + \lambda\beta(Q_1+E_1))$$ is log canonical. Since $Q_1+E_1\sim-K_S$, by Lemma \ref{lem:log-convexity}, we may assume that $Q_1\not\subseteq\Supp(D)$. Hence
\begin{equation}
3=Q_1\cdot D\geq 2a+x_0+x_1.
\label{eq:del-Pezzo-dynamic-alpha-degree-4-line-bound-E1-5}
\end{equation}
By Lemma \ref{lem:log-pullback-preserves-lc}, since \eqref{eq:del-Pezzo-dynamic-alpha-degree-4-line-pair1-A} is not log canonical, then the pair
\begin{equation}
(S_2, \lambda\beta(aE_1^2+\Omega^2) + (\lambda\beta(a+x_0)-\beta)F_1^2 + (\lambda\beta(2a+x_0+x_1)-\beta-1)F_2)
\label{eq:del-Pezzo-dynamic-alpha-degree-4-line-pair2-A}
\end{equation}
is not log canonical at some isolated $t_2\in F_2$. Indeed, the coefficient of $F_2$ satisfies
$$\lambda\beta(2a+x_0+x_1)-2\beta\leq 3\lambda\beta-2\beta<3\omega_1\beta-2\beta\leq 1$$
by \eqref{eq:del-Pezzo-dynamic-alpha-degree-4-line-bound-E1-5} and case analysis on $\beta$: for $0<\beta\leq \frac{2}{3}$, $3\omega_1\beta-2\beta=\beta\leq \frac{2}{3}$ and for $\frac{2}{3}\leq \beta\leq 1$, $3\omega_1\beta-2\beta=2-2\beta\leq \frac{2}{3}$.

If $t_2\not\in (F^2_1\cup E_1^2)\cap F_2$, then the pair
$$(S_2, \lambda\beta\Omega^2 + (\lambda\beta(2a+x_0+x_1)-2\beta)F_2)$$
is not log canonical at $t_2$ and Lemma \ref{lem:adjunction} (iii) gives a contradiction, using \eqref{eq:del-Pezzo-dynamic-alpha-degree-4-line-bound3}:
$$1<\lambda\beta\Omega^2\cdot F_2 = \lambda\beta x_1 \leq \lambda\beta x_0<\omega_1\beta x_0\leq 1.$$

If $t_2=E_1^2\cap F_2$, then the pair
$$(S_2, \lambda\beta(aE_1^2+\Omega^2)+(\lambda\beta(2a+x_0+x_1)-\beta-1)F_2)$$
is not log canonical. By Lemma \ref{lem:adjunction} (iii) with $E_1^2$, we obtain
\begin{align*}
1<&E_1^2\cdot (\lambda\beta\Omega^2+(\lambda\beta(2a+x_0+x_1)-\beta-1)F_2)\\
=&\lambda\beta(1+a-x_1-x_0+2a+x_0+x_1)-\beta -1\\
=&\lambda\beta(3a+1)-\beta-1<\omega_1\beta(3a+1)-\beta-1.
\end{align*}
We claim that $\omega_1\beta(3a+1)-\beta-1\leq 1$, which gives a contradiction ruling out the case when $t_2=E^2_1\cap F_2$. Indeed, if $0<\beta\leq \frac{2}{3}$, then
$$\omega_1\beta(3a+1)-\beta-1=3a\beta-1\leq 3\beta\leq 1$$
by \eqref{eq:del-Pezzo-dynamic-alpha-degree-4-line-bound4}. If $\frac{2}{3}\leq\beta\leq 1$, then
$$\omega_1\beta(3a+1)-\beta-1=2a-\beta-\frac{1}{3}\leq 2-1=1$$
again by \eqref{eq:del-Pezzo-dynamic-alpha-degree-4-line-bound4}. Therefore $t_2=F_1^2\cap F_2$. But then, the pair
$$(S_2, \lambda\beta\Omega^2 + (\lambda\beta(a+x_0)-\beta)F^2_1 + (\lambda\beta(2a+x_0+x_1)-\beta-1)F_2)$$
is not log canonical at $t_2$. By Lemma \ref{lem:adjunction} (iii) applied to $F^2_1$, we obtain
\begin{align*}
1<&(\lambda\beta\Omega^2 + (\lambda\beta(2a+x_0+x_1)-\beta-1)F_2)\cdot F^2_1\\
=&\lambda\beta(x_0-x_1+2a+x_0+x_1)-\beta-1\\
<&\omega_1\beta(2a+2x_0)-\beta-1 \leq 4\omega_1\beta-\beta-1,
\end{align*}
by \eqref{eq:del-Pezzo-dynamic-alpha-degree-4-line-bound1}. We claim that $4\omega_1-\beta\leq 2$ which contradicts the above statement. Indeed, if $0<\beta\leq \frac{2}{3}$, then $4\omega_1\beta-\beta=3\beta\leq 2$. If $\frac{2}{3}\leq\beta\leq 1$, then $4\omega_1\beta-\beta=\frac{8}{3}-\beta\leq 2$.

Therefore the pair \eqref{eq:del-Pezzo-dynamic-alpha-degree-4-line-pair2-A} is log canonical, which is impossible unless the pair \eqref{eq:del-Pezzo-dynamic-alpha-degree-4-line-pair1} is log canonical at $E^1_1\cap F_1$. Therefore the pair \eqref{eq:del-Pezzo-dynamic-alpha-degree-4-line-pair1} is not log canonical at $t_1=C^1\cap F_1=q_1$. We may now apply Lemma \ref{lem:log-pullback-preserves-lc} to conclude that the pair
\begin{equation}
(S_2, (1-\beta)C^2 + \lambda\beta\Omega^2 + (\lambda\beta(a_0+x_0)-\beta)F^1_1+(\lambda\beta(a+x_0+x_1)-2\beta)
\label{eq:del-Pezzo-dynamic-alpha-degree-4-line-pair2}
\end{equation}
is not log canonical at some (possibly all) $t_2\in F_2$.

\textbf{Step 3: We show that $q_1\not\in Q^1$ for any irreducible conic $Q\subset S$ such that $q_0\in Q$.}
Suppose for contradiction that $q_1\in Q^1$ for some irreducible conic $Q\subset S$. By Lemma \ref{lem:del-Pezzo-deg4-conics}, we may assume that $Q=B_1$ or $Q=A_i$. Recall from Step 1 that we chose a model such that $B_1\not\subset\Supp(D)$. Therefore we need to analyse each case separately.

\textbf{Step 3a: We show $q_1\not\in B_1^1$.}
Suppose for contractiction that $q_1=B_1^1\cap C^1\cap F_1$. This means $(B_1\cdot C)\vert_{q_0}=2$, i.e. $C$ and $B_1$ are tangent at $q_0$. Since $B_1\not\subset\Supp(\Omega)$, then
\begin{equation}
2-a=\Omega\cdot B_1\geq x_0+x_1
\label{eq:del-Pezzo-dynamic-alpha-degree-4-line-bound-B1-5}
\end{equation}
holds. In particular
$$\lambda\beta(a+x_0+x_1)-\beta<2\omega_1\beta-\beta\leq 1.$$
Indeed if $0<\beta\leq \frac{2}{3}$, then $2\omega_1\beta-\beta=\beta\leq \frac{2}{3}<1$ and if $\frac{2}{3}\leq \beta\leq 1$, then $2\omega_1\beta-\beta=\frac{4}{3}-\beta\leq \frac{2}{3}\leq 1.$

This is a necessary condition to apply Theorem \ref{thm:pseudo-inductive-blow-up} (ii)-(iv) when $i\geq 2$, since $\mult_{q_i} D^i=x_i$ for $i\geq 1$ and $\mult_q D=a+x_0$, so $\lambda\beta(\mult_{q_0}D+\mult_{q_1}D^1)-\beta\leq 1$. We will assume this condition from now onwards. Therefore by Theorem \ref{thm:pseudo-inductive-blow-up} (iii) with $i=2$ we conclude that $t_2=C^2\cap F_2$. Moreover
$$\lambda\beta(\mult_{q_0} D +\mult_{q_1}D^1)-2\beta \leq 1-\beta\leq 1$$
holds. Therefore the pair
$$(S_3, (1-\beta)C^3 + \lambda\beta\Omega^3 + (\lambda\beta(a+x_0+x_1)-2\beta)F_2^3 + (\lambda\beta(a+x_0+x_1+x_2)-3\beta)F_3)$$
is not log canonical at some $t_3\in F_3$ by Theorem \ref{thm:pseudo-inductive-blow-up} (i). By \eqref{eq:del-Pezzo-dynamic-alpha-degree-4-line-bound3} and \eqref{eq:del-Pezzo-dynamic-alpha-degree-4-line-bound-B1-5}
\begin{equation}
\lambda\beta(a+x_0+x_1+x_2)-2\beta<\frac{7}{2}\omega_1\beta-2\beta\leq 1.
\label{eq:del-Pezzo-dynamic-alpha-degree-4-line-aux1}
\end{equation}
Indeed, if $0<\beta\leq \frac{2}{3}$, then $\frac{7}{2}\omega_1\beta-2\beta=\frac{3}{2}\beta\leq 1$. If $\frac{2}{3}\leq \beta\leq 1$, then $\frac{7}{2}\omega_1\beta-2\beta=\frac{7}{3}-2\beta\leq 1$.

Now we can apply Theorem \ref{thm:pseudo-inductive-blow-up} (iii) with $i=3$ and conclude that $t_3=C^3\cap F_3=q_3$. Since $F_2^3\cdot C^3=0$, then $q_3\not\in F_2^3$ and the pair
$$(S_3, (1-\beta)C^3 + \lambda\beta\Omega^3 + (\lambda\beta(a+x_0+x_1+x_2)-3\beta) F_3)$$
is not log canonical at $q_3=F_3\cap C^3$. By Lemma \ref{lem:adjunction} (iii) with $C^3$, we obtain a contradiction:
\begin{align*}
1<&C^3\cdot (\lambda\beta\Omega^3+\lambda\beta(a+x_0+x_1+x_2)-3\beta F_3)=\\
<&\omega_1\beta(4-a-x_0-x_1-x_2+a+x_0+x_1+x_2)-3\beta\\
=&4\omega_1\beta-3\beta\leq 1.
\end{align*}
Indeed, if $0<\beta\leq \frac{2}{3}$, then $4\omega_1-3\beta=3\beta<1$ and if $\frac{2}{3}\leq \beta\leq 1$, then $4\omega_1\beta-3\beta=\frac{8}{3}-3\beta\leq \frac{2}{3}<1.$

\textbf{Step 3b: We show $q_1\not\in A_i^1$ for $i\geq 2$.}
We proceed by \emph{reductio ad absurdum}. Suppose $q_1\in A_i^1$. This means $(A_i\cdot C)\vert_{q_0}=2$, i.e. $C$ and $A_i$ are tangent at $q_0$. Moreover, since $B_1\cdot A_i=1$ for $i\geq 2$ and $A_i\cdot A_j=1$ for $i\neq j$, there is no other conic in $S$ such that its strict transform contains $q_1$. Without loss of generality assume $q_1=A_5^1\cap F_1$.

Write $D=aE_1+bA_2+\Gamma$ where $a>0$, $b\geq 0$, $\Omega=bA_5 + \Gamma$, and $E_1,B_1,A_5\not\subseteq\Gamma$. Let $y_i=\mult_{q_i}\Gamma^i$. Then $m_0=a+b+y_0$, $m_1=b+y_1$ and $m_i=y_i$ for $i\geq 2$, since $C^1\cdot A_1^5=1$. We note
\begin{equation}
2-a\geq A_5\cdot \Gamma\geq y_0+y_1.
\label{eq:del-Pezzo-dynamic-alpha-degree-4-line-bound-A5-5}
\end{equation}

The pair \eqref{eq:del-Pezzo-dynamic-alpha-degree-4-line-pair2}, at which some $t_2\in F_2$ is not log canonical,  may be rewritten as
\begin{equation}
(S_2, (1-\beta)C^2 + \lambda\beta(bA^2_5+\Gamma^2)+(\lambda\beta(a+b+y_0)-\beta)F_1^1+(\lambda\beta(a+2b+y_0+y_1)-2\beta)F_2).
\label{eq:del-Pezzo-dynamic-alpha-degree-4-line-pair2-A5}
\end{equation}

Recall the curves constructed in section \ref{sec:delPezzo-4-curves} and Table \ref{tab:delPezzo-4-lowdegree} using points $p=q_0\in S_0$ and $q=q_1\in F_1\subset S_1$. We use these curves to define the $\bbQ$-divisor
$$H=\frac{3}{5}A_5+\frac{1}{5}(R_{125}+R_{135}+R_{145})+\frac{1}{5}Q_5+\frac{2}{5}E_1\simq-K_S.$$
We claim that all the curves used to define $H$ are irreducible. It is enough to check it for the cubics $R_{125},R_{135},R_{145}$ and $Q_5$. If any such cubic $M$ was reducible, then it would split as $M=A_5+L$, where $L$ is a line, since $q_1\in M^1$ and the only line or cubic such that its strict transform contains $q_1$ is $A_5$. However by Lemma \ref{lem:del-Pezzo-deg4-lines-list} and Table \ref{tab:delPezzo-4-lowdegree} there is no line $L$ with any of the following expected rational classes:
\begin{align*}
&R_{125}-A_5\sim E_3+E_4-E_5,
&R_{135}-A_5\sim E_2+E_4-E_5,\\
&R_{125}-A_5\sim E_2+E_3-E_5,
&Q_5-A_5\sim\pioplane{1}-2E_5.
\end{align*}

By Lemma \ref{lem:appendix-delPezzo-deg4-dynamic-lct8}, the pair $(S, (1-\beta)C + \lambda\beta H)$ is log canonical. Therefore by Lemma \ref{lem:log-convexity} we may assume there is some irreducible component $H_i\subset\Supp(H)$ such that $H_i\not\subseteq\Supp(D)$. Since $E_1\subseteq\Supp(D)$, then either $H_i=A_5$ or $H_i$ is one of the irreducible cubic curves in $\Supp(H)$. Note that all these cubics have very similar numerical properties. In particular $H_i\cdot A_5=2$ and $H_i\cdot E_1=1$. If $A_5\not\subset\Supp(D)$, then $b=0$ and
$$a+2b+y_0+y_1\leq 2 \leq \frac{5}{2}<3$$
by \eqref{eq:del-Pezzo-dynamic-alpha-degree-4-line-bound-A5-5}. If $H_i$ is a cubic as above, then
$$3-a-2b=H_i\cdot \Gamma\geq y_0+y_1.$$
Therefore, we have proved that if $q_1\in A_5$, then
\begin{equation}
a+2b+y_0+y_1\leq 3.
\label{eq:del-Pezzo-dynamic-alpha-degree-4-line-bound-A5-6}
\end{equation}
If $0<\beta\leq \frac{2}{3}$, then $3\omega_1\beta-2\beta=\beta\leq \frac{2}{3}<1$. If $\frac{2}{3}\leq \beta 1$, then $3\omega_1\beta-2\beta=2-2\beta\leq \frac{2}{3}<1$. Therefore, using \eqref{eq:del-Pezzo-dynamic-alpha-degree-4-line-bound-A5-6} we have proven
$$\lambda\omega\beta(a+2b+y_0+y_1)-2\beta\leq 1$$
and the pair \eqref{eq:del-Pezzo-dynamic-alpha-degree-4-line-pair2-A5} is log canonical in codimension $1$. We analyse the position of $t_2\in F_2$. For this, notice that
$$A_5^1\dot C^1 = F_1\cdot A_5^1=F_1\cdot C^1=1$$
which gives the following possibilities for $t_2$:
\begin{itemize}
	\item[(i)] The point $t_2\in F_2\setminus (A_5^2\cup H_i^2\cup F_1^2\cup C^2)$ and
		$$(S_2, \lambda\beta\Gamma^2 + (\lambda\beta(a+2b+y_0+y_1)-2\beta)F_2)$$
		is not log canonical at $t_2$.
	\item[(ii)] The point $t_2=F_2\cup A_5^2$ and
		$$(S_2, \lambda\beta(b A_5^2 + \Gamma^2) + (\lambda\beta(a+2b+y_0+y_1)-2\beta)F_2)$$
		is not log canonical at $t_2$.
	\item[(iii)] The point $t_2=F_2\cup F_1^2$ and
		$$(S_2, \lambda\beta\Gamma^2 + (\lambda\beta(a+b+y_0)-2\beta)F_1^2+ (\lambda\beta(a+2b+y_0+y_1)-2\beta)F_2)$$
		is not log canonical at $t_2$.
	\item[(ii)] The point $t_2=q_2=F_2\cup C^2$ and
		$$(S_2, (1-\beta)C^2 + \lambda\beta\Gamma^2 + (\lambda\beta(a+2b+y_0+y_1)-2\beta)F_2)$$
		is not log canonical at $t_2$.
\end{itemize}
For (i) and (ii) as above we may apply Lemma \ref{lem:adjunction} with $F_2$ and obtain a contradiction using \eqref{eq:del-Pezzo-dynamic-alpha-degree-4-line-bound3}:
$$1<\lambda\beta(bA_5^2+\Gamma^2)\cdot F_2=\lambda\beta x_1<\frac{3}{2}\omega_1\beta \leq 1.$$
In case (iii) we apply Lemma \ref{lem:adjunction} (iii) with $F_1^2$ and obtain a contradiction via \eqref{eq:del-Pezzo-dynamic-alpha-degree-4-line-bound1} and \eqref{eq:del-Pezzo-dynamic-alpha-degree-4-line-bound2}:
\begin{align*}
1<&F_1^2\cdot (\lambda\beta \Gamma + (\lambda\beta(a+2b+y_0+y_1)-2\beta)F_2)=\\
=&\lambda\beta(y_0+y_1+a+2b+y_0+y_1)-2\beta
<&\omega_1\beta(a+2b+2y_0)-2\beta\leq \frac{7}{2}\omega_1\beta-2\beta\leq 1,
\end{align*}
where for the last inequality we proceed as for \eqref{eq:del-Pezzo-dynamic-alpha-degree-4-line-aux1}.

Therefore only case (iv) is possible. By Lemma \ref{lem:log-pullback-preserves-lc}, the pair
$$(S_3, (1-\beta)C^3+\lambda\beta\Gamma^3 + (\lambda\beta(a+2b+y_0+y_1)-2\beta)F_2^3)$$
is not log canonical at some $t_3\in F_3$ and log canonical near $t_3$, which follows from
$$\lambda\beta(a+2b+y_0+y_1+y_2)-3\beta<\frac{9}{2}\omega_1\beta-3\beta\leq 1,$$
where we apply \eqref{eq:del-Pezzo-dynamic-alpha-degree-4-line-bound3} and \eqref{eq:del-Pezzo-dynamic-alpha-degree-4-line-bound-A5-6}. The last inequality follows from case analysis: if $0<\beta\leq \frac{2}{3}$, then $\frac{9}{2}\omega_1\beta-3\beta=\frac{3}{2}\beta\leq 1$. If $\frac{2}{3}\leq \beta\leq 1$, then $\frac{9}{2}\omega_1\beta-3\beta=3-3\beta\leq 1$.

We rule out possible positions for $t_3\in F_3$. If $t_3\not\in (F_2^3\cup C^3)\cap F_3$, then
$$(S_3, \lambda\beta\Gamma^3 + (\lambda\beta(a+2b+y_0+y_1+y_2)-3\beta)F_3)$$
is not log canonical and Lemma \ref{lem:adjunction} (iii) with $F_3$ gives
$$1<\lambda\beta\Gamma^3\cdot F_3=\lambda\beta y_3<\omega_1\beta x_0\leq \frac{3}{2}\omega_1\beta\leq 1$$
by \eqref{eq:del-Pezzo-dynamic-alpha-degree-4-line-bound3}.

If $t_3=F_2^3\cap F_3$, then $t_3\not\in C^3$, but Lemma \ref{lem:adjunction} (iii) with $F_2^3$ gives a contradiction
\begin{align*}
1<&F_2^3\cap(\lambda\beta\Gamma^3 + (\lambda\beta(a+2b+y_0+y1+y_2)-3\beta)F_3)\\
=&\omega_1\beta(a+2b+2y_1+y_0)-3\beta\leq \frac{9}{2}\omega_1\beta-3\beta\leq 1
\end{align*}
using \eqref{eq:del-Pezzo-dynamic-alpha-degree-4-line-bound3} and \eqref{eq:del-Pezzo-dynamic-alpha-degree-4-line-bound-A5-6}.

Therefore $t_3=F_3\cap C^3=q_3$. Lemma \ref{lem:adjunction} (iii) applied with $C^3$ gives
\begin{align*}
1<&C^3\cdot (\lambda\beta\Gamma^3 + \lambda\beta(a+2b+y_0+y_1+y_2)-3\beta)F_3)=\\
=&4\lambda\beta-3\beta<4\omega_1\beta-3\beta\leq 1.
\end{align*}
which is impossible, hence the initial assumption of Step 3b, alas $q_1\in A_i^1$ for some $i\geq 2$, is not correct.

\textbf{Step 4: We finish the proof.}
From steps 2 and 3 the pair
$$(S_1, (1-\beta) C^1 + \lambda\beta\Omega^1 + (\lambda\beta(a+x_0)-\beta)F_1)$$
is not log canonical at $q_1=F_1\cap C^1$ and there is no conic or line $Z$ in $S$ such that $q_1\in Z^1$. Recall that $\Omega\simq-K_S-aE_1$ and $x_i=\mult_{q_i}\Omega^i$.

Let $Q_1\sim\pioplane{3}-2E_1-E_2-E_3-E_4-E_5$ be the cubic curve with $q_1\in Q_1^1$. By Lemma \ref{lem:appendix-delPezzo-deg4-dynamic-lct6} the pair $(S, (1-\beta)C + \lambda\beta (E_1+Q_1))$ is log canonical. Since there are no lines or conics whose strict transform in $S_1$ contains $q_1$, the cubic $Q_1$ is irreducible. Therefore by Lemma \ref{lem:log-convexity} we may assume that $Q_1\not\subseteq \Supp(D)$. Hence
\begin{equation}
3=Q_1\cdot D \geq a+x_0+x_1.
\label{eq:del-Pezzo-dynamic-alpha-degree-4-line-bound5}
\end{equation}
By Lemma \ref{lem:log-pullback-preserves-lc}, the pair
$$(S_2, (1-\beta)C^2 + \lambda\beta\Omega^2 + (\lambda\beta(a+x_0)-\beta)F^2_1 + (\lambda\beta(a+x_0+x_1)-2\beta)F_2)$$
is not log canonical at some $t_2\in F_2$. Since \eqref{eq:del-Pezzo-dynamic-alpha-degree-4-line-bound5} holds the pair is log canonical near $t_2$. Indeed, observe that
$$\lambda\beta(a+x_0+x_1)-2\beta<3\omega_1\beta-2\beta\leq 1$$
where the last inequality follows by case analysis for $\beta$: if $0<\beta\leq \frac{2}{3}$, then $3\omega_1\beta-2\beta=\beta\leq \frac{2}{3}<1$ and if $\frac{2}{3}\leq\beta\leq 1$, then $3\omega_1\beta-2\beta=2-2\beta\leq \frac{2}{3}<1$.

If $t_2\neq C^2\cap F_2=q_2$ and $t_2\neq F_1^2\cap F_2$, then we apply Lemma \ref{lem:adjunction} (iii) with $F_2$ and obtain a contradiction:
$$1<(F_2\cdot \lambda\beta\Omega^2)\vert_{t_2}=\lambda\beta x_1<\omega_1\beta x_1\leq 1$$
by \eqref{eq:del-Pezzo-dynamic-alpha-degree-4-line-bound3}.

If $t_2=F^2_1\cap F_2$, then Lemma \ref{lem:adjunction} (iii) with $F_1^2$ also gives a contradiction:
\begin{align}
1<&(F_1^2\cdot (\lambda\beta\Omega^2 + (\lambda\beta(a+x_0+x_1)-2\beta)F_2))\vert_{t_2}\nonumber \\
<&\omega_1\beta(a+2x_0)-2\beta \leq\frac{7}{2}\omega_1\beta-2\beta\leq 1 \label{eq:del-Pezzo-dynamic-alpha-degree-4-line-easy-inequality}
\end{align}
by \eqref{eq:del-Pezzo-dynamic-alpha-degree-4-line-bound1} and \eqref{eq:del-Pezzo-dynamic-alpha-degree-4-line-bound3}. Hence $t_2=C^2\cap F_2=q_2$.

Applying Lemma \ref{lem:log-pullback-preserves-lc}, the pair
$$(S_3, (1-\beta)C^3+\lambda\beta \Omega^3 + (\lambda\beta(a+x_0+x_1)-2\beta)F_2^3+(\lambda\beta(a+x_0+x_1+x_2)-3\beta)F_3)$$
is not log canonical at some $t_3\in F_3$ and log canonical near $t_3$. Indeed
$$\lambda\beta(a+x_0+x_1+x_2)-3\beta\leq \frac{7}{2}\omega_1\beta-3\beta< \frac{7}{2}\omega_1\beta-2\beta\leq 1$$
by by \eqref{eq:del-Pezzo-dynamic-alpha-degree-4-line-bound5},\eqref{eq:del-Pezzo-dynamic-alpha-degree-4-line-bound3} and \eqref{eq:del-Pezzo-dynamic-alpha-degree-4-line-easy-inequality}. 

If $t_3\neq C^3\cap F_3=q_3$ and $t_3\neq F_2^3\cap F_3$, then we apply Lemma \ref{lem:adjunction} (iii) with $F_3$ and obtain a contradiction:
$$1<(F_3\cdot (\lambda\beta\Omega^3))\vert_{t_3}=\lambda\beta x_3<\omega_1\beta x_3\leq 1$$
by \eqref{eq:del-Pezzo-dynamic-alpha-degree-4-line-bound3}.

If $t_3=F^3_2\cap F_3$, then Lemma \ref{lem:adjunction} (iii) with $F_2^3$ also gives a contradiction:
\begin{align*}
1<&(F_2^3\cdot (\lambda\beta\Omega^3 + (\lambda\beta(a+x_0+x_1+x_2)-3\beta)F_3))\vert{t_3}\\
<&\omega_1\beta(a+x_0+2x_1)-3\beta \leq\frac{9}{2}\omega_1\beta-3\beta\leq 1
\end{align*}
by \eqref{eq:del-Pezzo-dynamic-alpha-degree-4-line-bound5} and \eqref{eq:del-Pezzo-dynamic-alpha-degree-4-line-bound3}. The last inequality follows by case analysis. Indeed, if $0<\beta\leq \frac{2}{3}$, then $\frac{9}{2}\omega_1\beta-3\beta=\frac{3}{2}\beta\leq 1$ and if $\frac{2}{3}\leq \beta\leq 1$, then $\frac{9}{2}\omega_1\beta-3\beta=3-3\beta\leq 1$. Hence $t_3=C^3\cap F_3=q_3$ and
$$(S_3, (1-\beta)C^3+\lambda\beta\Omega^3 + (\lambda\beta(a+x_0+x_1+x_2)-3\beta)F_3)$$
is not log canonical at $q_3$. This is impossible by Lemma \ref{lem:adjunction} (iii) with $C^3$, finishing the proof:
\begin{align*}
1&<C^3\cdot (\lambda\beta\Omega^3 + (\lambda\beta(a+x_0+x_1+x_2)-3\beta)F_3)<\\
 &\omega_1\beta(4-a-x_0-x_1-x_2+a+x_0+x_1+x_2)-3\beta<4\omega_1\beta-3\beta\leq 1.
\end{align*}
The last inequality follows by case analysis. If $0<\beta\leq \frac{2}{3}$, then $\frac{9}{2}\omega_1\beta-3\beta=\frac{3}{2}\beta\leq 1$. If $\frac{2}{3}\leq\beta\leq 1$, then $\frac{9}{2}\omega_1\beta-3\beta=3-3\beta\leq 1$.
\end{proof}

\section{Smooth cubic surface}
\label{sec:dynamic-alpha-degree3}
\begin{thm}
\label{thm:del-Pezzo-dynamic-alpha-degree-3}
Let $S$ be a smooth del Pezzo surface of degree $3$ and let $C$ be a smooth elliptic curve in $S$, $C\simq-K_S$.
Suppose $S$ contains at least an Eckardt point.
\begin{itemize}
	\item[(i)] If for some Eckardt point $p$, we have $p\in C$ then
		\begin{equation}
				\alpha(S,(1-\beta)C)=\omega_5=
						\begin{dcases}
								1 									&\text{ for } 0<\beta\leq \frac{1}{2},\\
								\frac{1+\beta}{3\beta}		&\text{ for } \frac{1}{2}\leq \beta\leq 1.
						\end{dcases}
			\label{eq:del-Pezzo-dynamic-alpha-degree-3-Eckardt-special}
		\end{equation}
	\item[(ii)] If $C$ contains no Eckardt points, then
		\begin{equation}
				\alpha(S,(1-\beta)C)=\omega_4=
						\begin{dcases}
								1 									&\text{ for } 0<\beta\leq \frac{2}{3},\\
								\frac{2}{3\beta}		&\text{ for } \frac{2}{3}\leq \beta\leq 1.
						\end{dcases}
			\label{eq:del-Pezzo-dynamic-alpha-degree-3-Eckardt-general}
		\end{equation}
\end{itemize}

Suppose $S$ contains no Eckardt points.
\begin{itemize}
	\item[(i)] If $p\in C$ for $p\in B\in \vert-K_S\vert$, where $B=L+M$, a line and an irreducible conic intersecting only at $p$, then
		\begin{equation}
				\alpha(S,(1-\beta)C)=\omega_3=
						\begin{dcases}
								1 									&\text{ for } 0<\beta\leq \frac{2}{3},\\
								\frac{2+\beta}{4\beta}		&\text{ for } \frac{2}{3}\leq \beta\leq 1.
						\end{dcases}
			\label{eq:del-Pezzo-dynamic-alpha-degree-3-general-special}
		\end{equation}
	\item[(ii)] If for all $B\in \vert-K_S\vert$ where $B=L+M$, a line and an irreducible conic intersecting only at $p_B$, we have that $p_B\not\in C$ but there is an irreducible $T\in \vert-K_S\vert$ with a cuspidal point $p_T\in T$ such that $p_T\in C$ and $(T\cdot C)\vert_{p_T},=3$ then
		\begin{equation}
				\alpha(S,(1-\beta)C)=\omega_2=
						\begin{dcases}
								1 									&\text{ for } 0<\beta\leq \frac{2}{3},\\
								\frac{2+3\beta}{6\beta}		&\text{ for } \frac{2}{3}\leq \beta\leq \frac{5}{6}.\\
								\frac{3}{4\beta}		&\text{ for } \frac{5}{6}\leq \beta\leq 1.
						\end{dcases}
			\label{eq:del-Pezzo-dynamic-alpha-degree-3-general-semispecial}
		\end{equation}
	\item[(iii)] In any other case
		\begin{equation}
				\alpha(S,(1-\beta)C)=\omega_1=
						\begin{dcases}
								1 									&\text{ for } 0<\beta\leq \frac{3}{4},\\
								\frac{3}{4\beta}		&\text{ for } \frac{3}{4}\leq \beta\leq 1.
						\end{dcases}
			\label{eq:del-Pezzo-dynamic-alpha-degree-3-general-general}
		\end{equation}

\end{itemize}

\end{thm}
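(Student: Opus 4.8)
The plan is to exploit the fact that, since $S$ is a smooth cubic surface, the Cat Property holds on $S$ by Theorem \ref{thm:del-Pezzo-cat-degree-3}, while $\Cat(S)=1$ by Lemma \ref{lem:del-Pezzo-Cat-index}. Hence Corollary \ref{cor:cat-property-easy-dynamic-alpha} applies to the smooth anticanonical curve $C$ and collapses the whole computation into a finite minimisation over cats:
$$\alpha(S,(1-\beta)C)=\min\Bigl\{1,\ \min_{T}\lct(S,(1-\beta)C,\beta T)\Bigr\},$$
where $T$ runs over the cats of $S$, i.e.\ the singular members of $\vert-K_S\vert$. By Lemma \ref{lem:del-Pezzo-cat-list-degree-3} these are exactly of three types: (IB) an irreducible cuspidal cubic, (IIB) a line plus an irreducible conic meeting in a single tacnodal point, and (IIIB) three lines through an Eckardt point. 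Thus the proof is organised entirely around computing $\lct(S,(1-\beta)C,\beta T)$ for each type and each possible position of the singular locus of $T$ relative to $C$, and then reading off the minimum. In the style of the other degrees, the explicit upper-bound $\bbQ$-divisors are precisely these cats, so the $\leq$ direction merely records the existence of the relevant cat, and the $\geq$ direction is the assertion that no cat does strictly worse.

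First I would assemble the dictionary of local thresholds. For a cat $T$ with singular point $z$, the value $\lct(S,(1-\beta)C,\beta T)$ comes from the minimal log resolution (via Lemma \ref{lem:log-pullback-preserves-lc} and Lemma \ref{lem:adjunction}) and splits according to whether $z\in C$. If $z\notin C$ the boundary $(1-\beta)C$ is invisible at $z$, so the contribution is $\tfrac{1}{\beta}\lct_z(S,T)$, equal to $\tfrac{2}{3\beta}$, $\tfrac{3}{4\beta}$, $\tfrac{5}{6\beta}$ for the Eckardt, tacnodal and cuspidal types (the last by Example \ref{exa:P2cuspidal}). If $z\in C$ I would resolve by hand. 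Since $C\cdot L=1$ for a line and $C$ is smooth, a single blow-up separates all four branches of an Eckardt cat and the binding discrepancy $3\mu\beta-\beta\le 1$ gives $\lct=\tfrac{1+\beta}{3\beta}$; for a tacnodal cat two blow-ups give $4\mu\beta-\beta-1\le 1$, so $\lct=\tfrac{2+\beta}{4\beta}$; and for a cuspidal cat with $(T\cdot C)\vert_z=3$ the threefold blow-up of the cusp yields $6\mu\beta-3\beta-1\le 1$, so $\lct=\tfrac{2+3\beta}{6\beta}$ (a transversal cusp, $(T\cdot C)\vert_z=2$, gives the strictly larger $\tfrac{3+2\beta}{6\beta}$). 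These are the resolution computations I expect to outsource to the appendix.

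With this dictionary in hand, each case of the theorem is a comparison of finitely many rational functions of $\beta$, using $\glct(S)=\tfrac23$ or $\tfrac34$ from Theorem \ref{thm:del-Pezzo-glct-charp}. If $S$ has an Eckardt point on $C$, the on-$C$ Eckardt cat gives $\tfrac{1+\beta}{3\beta}$, which one checks is $\le$ every other contribution (e.g.\ $\tfrac{2+\beta}{4\beta}\ge\tfrac{1+\beta}{3\beta}$ and $\tfrac{1+\beta}{3\beta}\le\tfrac{2}{3\beta}$), giving $\omega_5$; if $S$ has Eckardt points but none on $C$, the off-$C$ Eckardt cat $\tfrac{2}{3\beta}$ dominates after capping at $1$, giving $\omega_4$. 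When $S$ has no Eckardt point the cats are only cuspidal and tacnodal: a tacnode on $C$ gives $\tfrac{2+\beta}{4\beta}$, hence $\omega_3$; otherwise a cuspidal cat tangent to $C$ gives $\tfrac{2+3\beta}{6\beta}$, which competes with the off-$C$ tacnodal value $\tfrac{3}{4\beta}$, the two crossing at $\beta=\tfrac56$ to produce $\omega_2$; and in the remaining case the minimum is the off-$C$ tacnodal $\tfrac{3}{4\beta}$, giving $\omega_1$. In every instance the elementary inequalities $\tfrac{2+3\beta}{6\beta}\ge 1\iff\beta\le\tfrac23$ and $\tfrac{3+2\beta}{6\beta}\ge\tfrac{3}{4\beta}\iff\beta\ge\tfrac34$ confirm that the claimed cat is the minimiser.

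The main obstacle, and where the real content lies, is twofold. First, the existence and location of the binding cat: one must know that on a given $S$ the relevant cat actually occurs with its singular point in the position dictated by the case. In particular, that a tacnodal cat exists off $C$ whenever $S$ has no Eckardt point is not automatic from the classification; it follows from the fact that $\glct(S)=\tfrac34$ is realised by such a cat together with a dimension count showing the one-parameter family of tacnodal cats cannot be forced entirely onto the fixed curve $C$, and similarly an Eckardt point off $C$ must be produced in case A(ii). Second, one must verify that the finitely many local thresholds computed above genuinely exhaust the ways a cat can meet $C$, so that no unlisted configuration yields a smaller value. Once these geometric existence statements are secured, the rest is the routine, if lengthy, resolution-and-comparison bookkeeping that Corollary \ref{cor:cat-property-easy-dynamic-alpha} reduces the problem to.
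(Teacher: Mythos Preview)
Your proposal is correct and follows essentially the same approach as the paper: invoke the Cat Property via Theorem \ref{thm:del-Pezzo-cat-degree-3} and $\Cat(S)=1$, apply Corollary \ref{cor:cat-property-easy-dynamic-alpha} to reduce to a finite minimum over cats, and then read off $\lct(S,(1-\beta)C,\beta T)$ for each cat type and each configuration relative to $C$ using the local resolution computations in the appendix (Lemmas \ref{lem:appendix-local-cusp-snc}--\ref{lem:appendix-local-4lines}). Your flagged existence issue for the off-$C$ minimising cat is handled in the paper implicitly by the value of $\glct(S)$ in Theorem \ref{thm:del-Pezzo-glct-charp}, exactly as you suggest.
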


\begin{proof}
Recall that $S$ satisfies the Cat Property (Theorem \ref{thm:del-Pezzo-cat-degree-3}). 

Suppose $S$ contains no Eckardt points. Then, by Lemma \ref{lem:del-Pezzo-cat-list-degree-3}, all cats of $S$ are either rational curves $T\in \vert-K_S\vert$ with a cuspidal singularity at $p_T\in T$ and $B=L+M$, the union of a line and a conic intersecting only at a point $p_B=L\cap M$. 

If $p_B\not\in C$ for all $B$ as above and for all $T$ as above such that $p_T\in C$ we have $(C\cdot T)_{p_T}=2$, then
$$\alpha(S, (1-\beta)C)=\min\{1,\frac{5}{6\beta}, \frac{3}{4\beta}, \frac{3+2\beta}{6\beta}\}=\min\{1, \frac{3}{4\beta}\}=\omega_1$$
by Corollary \ref{cor:cat-property-easy-dynamic-alpha} and lemmas \ref{lem:appendix-local-cusp-snc}, \ref{lem:appendix-local-cusp-normal} and \ref{lem:appendix-local-tacnode-snc}.

If $p_B\not\in C$ for all $B$ as above but for some $T$ as above such that $p_T\in C$ we have $(C\cdot T)\vert_{p_T}=3$, then
$$\alpha(S, (1-\beta)C)=\min\{1,\frac{5}{6\beta}, \frac{3}{4\beta}, \frac{3+2\beta}{6\beta}, \frac{2+3\beta}{6\beta}\}=\min\{1, \frac{3}{4\beta}, \frac{3+2\beta}{6\beta}\}=\omega_2$$
by Corollary \ref{cor:cat-property-easy-dynamic-alpha} and lemmas \ref{lem:appendix-local-cusp-snc}, \ref{lem:appendix-local-cusp-normal} and \ref{lem:appendix-local-tacnode-snc}.

If $p_B\in C$ for some cat $B$ as above, observe that $(C\cdot L)=1$ and that $C$ and $M$ cannot be tangent at $p$, since then $C$ and $L$ would be tangent. Hence $(C\cdot M)\vert_p=1$. Then
$$\alpha(S, (1-\beta)C)=\min\{1,\frac{5}{6\beta}, \frac{3}{4\beta}, \frac{3+2\beta}{6\beta}, \frac{2+3\beta}{6\beta}, \frac{2+\beta}{4\beta}\}=\min\{1, \frac{2+\beta}{4\beta}\}=\omega_3$$
by Corollary \ref{cor:cat-property-easy-dynamic-alpha} and lemmas \ref{lem:appendix-local-cusp-snc}, \ref{lem:appendix-local-cusp-normal}, \ref{lem:appendix-local-tacnode-snc} and \ref{lem:appendix-local-tacnode-normal}.

Suppose $S$ contains at least an Eckardt point. Then, by Lemma \ref{lem:del-Pezzo-cat-list-degree-3}, all cats of $S$ are rational curves $T\in \vert-K_S\vert$ with a cuspidal singularity at $p_T\in T$ and $B=L+M$, the union of a line and a conic intersecting only at a point $p_B$ and $A=L_1+L_2+L_3$, the union of $3$ lines intersecting at an Eckardt point $p_A$. 

If $p_A\not\in C$ for all $A$ as above, then
$$\alpha(S, (1-\beta)C)=\min\{1,\frac{5}{6\beta}, \frac{3}{4\beta}, \frac{3+2\beta}{6\beta}, \frac{2+\beta}{4\beta}, \frac{2+3\beta}{6\beta}, \frac{2}{3\beta}\}=\min\{1, \frac{2}{3\beta}\}=\omega_4$$
by Corollary \ref{cor:cat-property-easy-dynamic-alpha} and lemmas \ref{lem:appendix-local-cusp-snc}, \ref{lem:appendix-local-cusp-normal}, \ref{lem:appendix-local-tacnode-snc}, \ref{lem:appendix-local-tacnode-normal} and \ref{lem:appendix-local-3lines}.

If $p_A\not\in C$ for $p_A$ an Eckardt point of any $A$ as above, then
$$\alpha(S, (1-\beta)C)=\min\{1,\frac{5}{6\beta}, \frac{3}{4\beta}, \frac{3+2\beta}{6\beta}, \frac{2+\beta}{4\beta}, \frac{2}{3\beta}, \frac{2+3\beta}{6\beta}, \frac{1+\beta}{3\beta}\}=\min\{1, \frac{1+\beta}{3\beta}\}=\omega_5$$
by Corollary \ref{cor:cat-property-easy-dynamic-alpha} and lemmas \ref{lem:appendix-local-cusp-snc}, \ref{lem:appendix-local-cusp-normal}, \ref{lem:appendix-local-tacnode-snc}, \ref{lem:appendix-local-tacnode-normal}, \ref{lem:appendix-local-3lines} and \ref{lem:appendix-local-4lines}.
\end{proof}

\section{Del Pezzo surface of degree $2$}
\label{sec:dynamic-alpha-degree2}

\begin{thm}
\label{thm:del-Pezzo-dynamic-alpha-degree-2}
Let $S$ be a smooth del Pezzo surface of degree $2$ and let $C$ be a smooth elliptic curve in $S$, $C\simq-K_S$.

Suppose $\vert -K_S\vert$ contains at least one curve with a tacnodal singularity.
\begin{itemize}
	\item[(i)] If $p\in C$ for $p\in B\in \vert-K_S\vert$, where $B$ has a tacnodal singularity at $p$, then
		\begin{equation}
				\alpha(S,(1-\beta)C)=\omega_4=
						\begin{dcases}
								1 									&\text{ for } 0<\beta\leq \frac{2}{3},\\
								\frac{2+\beta}{4\beta}		&\text{ for } \frac{2}{3}\leq \beta\leq 1.
						\end{dcases}
			\label{eq:del-Pezzo-dynamic-alpha-degree-2-tacnode-special}
		\end{equation}
	\item[(ii)] In any other case
		\begin{equation}
				\alpha(S,(1-\beta)C)=\omega_3=
						\begin{dcases}
								1 									&\text{ for } 0<\beta\leq \frac{3}{4},\\
								\frac{3}{4\beta}		&\text{ for } \frac{3}{4}\leq \beta\leq 1.
						\end{dcases}
			\label{eq:del-Pezzo-dynamic-alpha-degree-2-tacnode-general}
		\end{equation}
\end{itemize}
Suppose $\vert -K_S\vert$ contains no curves with a tacnodal singularity.
\begin{itemize}
	\item[(i)] If $p\in C$ for $p\in B\in \vert-K_S\vert$, where $B$ is irreducible and has a cuspidal singularity at $p$, then
		\begin{equation}
				\alpha(S,(1-\beta)C)=\omega_2=
						\begin{dcases}
								1 									&\text{ for } 0<\beta\leq \frac{3}{4},\\
								\frac{3+2\beta}{6\beta}		&\text{ for } \frac{3}{4}\leq \beta\leq 1.
						\end{dcases}
			\label{eq:del-Pezzo-dynamic-alpha-degree-2-general-special}
		\end{equation}
	\item[(ii)] In any other case
		\begin{equation}
				\alpha(S,(1-\beta)C)=\omega_1=
						\begin{dcases}
								1 									&\text{ for } 0<\beta\leq \frac{5}{6},\\
								\frac{5}{6\beta}		&\text{ for } \frac{5}{6}\leq \beta\leq 1.
						\end{dcases}
			\label{eq:del-Pezzo-dynamic-alpha-degree-2-general-general}
		\end{equation}
\end{itemize}
\end{thm}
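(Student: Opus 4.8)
The plan is to imitate the proof of Theorem \ref{thm:del-Pezzo-dynamic-alpha-degree-3}, exploiting that a non-singular del Pezzo surface of degree $2$ satisfies the Cat Property. Indeed, by Lemma \ref{lem:del-Pezzo-cat-degree-2} the Cat Property holds on $S$, and $\Cat(S)=1$. Since $C\in\vert-K_S\vert$ is smooth, Corollary \ref{cor:cat-property-easy-dynamic-alpha} applies and gives the exact formula
$$\alpha(S,(1-\beta)C)=\min\Big\{1,\ \min\{\lct(S,(1-\beta)C,\beta T)\setsep T \text{ is a cat of } S\}\Big\}.$$
Thus the whole computation reduces to (a) listing the cats, and (b) evaluating the local log canonical threshold attached to each of them; no separate lower-bound argument is needed.

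First I would invoke Lemma \ref{lem:del-Pezzo-cat-list-degree-2}: every cat of $S$ is either an irreducible cuspidal rational curve $T\in\vert-K_S\vert$, or a pair of lines $L_1+L_2\sim-K_S$ meeting at a single point, which as two $(-1)$-curves tangent to order $2$ is a tacnode. Recalling $(-K_S)\cdot C=K_S^2=2$, I note that if the singular point $p$ of such a cat lies on $C$ then, since $C$ is smooth and $\mult_pT=2$, the whole local intersection is concentrated there with $(C\cdot T)\vert_p=2$; otherwise $C$ meets $T$ transversally at smooth points. Then, for each cat $T$, the quantity $\lct(S,(1-\beta)C,\beta T)$ is governed by the singular point of $T$, and its value is supplied by the local computations of lemmas \ref{lem:appendix-local-cusp-snc} (cusp off $C$), \ref{lem:appendix-local-cusp-normal} (cusp on $C$), \ref{lem:appendix-local-tacnode-snc} (tacnode off $C$) and \ref{lem:appendix-local-tacnode-normal} (tacnode on $C$). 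These yield the four contributions
$$\min\{1,\tfrac{5}{6\beta}\}=\omega_1,\quad \min\{1,\tfrac{3+2\beta}{6\beta}\}=\omega_2,\quad \min\{1,\tfrac{3}{4\beta}\}=\omega_3,\quad \min\{1,\tfrac{2+\beta}{4\beta}\}=\omega_4$$
for a cusp off $C$, a cusp on $C$, a tacnode off $C$, and a tacnode on $C$ respectively. As a sanity check, at $\beta=1$ the boundary disappears and these reduce to the purely local thresholds $5/6$ (cusp) and $3/4$ (tacnode), matching $\glct(S)$ in Theorem \ref{thm:del-Pezzo-glct-charp}.

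It remains to take the minimum over the cats that are actually present, which is where the case distinction of the statement comes from. Since $S$ always carries cuspidal cats, whereas tacnodal cats ($L_1+L_2$ meeting at one point) exist precisely when $\vert-K_S\vert$ contains a tacnodal curve, the four cases of the theorem correspond exactly to whether tacnodal cats exist and whether some cat (tacnodal, resp. cuspidal) has its singular point on $C$. The decisive arithmetic fact to establish is the chain of inequalities
$$\omega_4\leq\omega_3\leq\omega_2\leq\omega_1\qquad\text{for all }0<\beta\leq 1,$$
which I would verify piecewise on the intervals cut out by the transition points $2/3,\ 3/4,\ 5/6$. Granting this, the minimization is immediate: if a tacnodal cat meets $C$ at its tacnode the minimum is $\omega_4$; if tacnodal cats exist but none meets $C$ tangentially the minimum drops only to $\omega_3$; with no tacnodal cats, a cusp on $C$ gives $\omega_2$ and otherwise $\omega_1$.

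The main obstacle I anticipate is not the global bookkeeping but the local log canonical threshold of the \emph{tacnode lying on $C$}, i.e.\ Lemma \ref{lem:appendix-local-tacnode-normal}: here three smooth branches meet at one point, the two branches of the cat being tangent to order two while $C$ is transverse to both, and obtaining the clean value $(2+\beta)/(4\beta)$ requires a careful weighted sequence of blow-ups and bookkeeping of discrepancies. The cusp-on-$C$ threshold $(3+2\beta)/(6\beta)$ is analogous but milder. Once these local lemmas are in hand, the remaining verification of the ordering $\omega_4\leq\omega_3\leq\omega_2\leq\omega_1$ and the reading-off of the four cases is routine.
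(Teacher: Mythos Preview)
Your proposal is correct and follows essentially the same route as the paper: invoke the Cat Property (Lemma~\ref{lem:del-Pezzo-cat-degree-2}) together with Corollary~\ref{cor:cat-property-easy-dynamic-alpha}, list the cats via Lemma~\ref{lem:del-Pezzo-cat-list-degree-2}, and read off the answer from the four local threshold lemmas in the appendix. The paper's proof is slightly terser---it does not write out the full chain $\omega_4\leq\omega_3\leq\omega_2\leq\omega_1$ but instead computes each minimum directly---and the local lemmas you flag as the main obstacle are already provided in Appendix~\ref{app:lcts}, so no new work is required there.
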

\begin{proof}
Recall that $S$ satisfies the Cat Property (Lemma \ref{lem:del-Pezzo-cat-degree-2}). 

Suppose $\vert-K_S\vert$ contains no tacnodal curves. Then, by Lemma \ref{lem:del-Pezzo-cat-list-degree-2}, all cats of $S$ are rational curves $T\in \vert-K_S\vert$ with a cuspidal singularity at $p_T\in T$. 

If $p_T\not\in C$ for all cats $T$, then
$$\alpha(S, (1-\beta)C)=\min\{1,\frac{5}{6\beta}\}=\omega_1$$
by Corollary \ref{cor:cat-property-easy-dynamic-alpha} and Lemma \ref{lem:appendix-local-cusp-snc}.

If $p_T\in C$ for some irreducible cat $T$ with a cuspidal singularity, observe that $(T\cdot C)\vert_{p_T}=2$, since $2\geq (T\cdot C)\vert_{p_T}\geq \mult_pT\geq 2$. Then
$$\alpha(S, (1-\beta)C)=\min\{1,\frac{5}{6\beta}, \frac{3+2\beta}{6\beta}\}=\min\{1,\frac{3+2\beta}{6\beta}\}=\omega_2$$
by Corollary \ref{cor:cat-property-easy-dynamic-alpha} and Lemma \ref{lem:appendix-local-cusp-normal}.

Suppose $\vert-K_S\vert$ contains at least a tacnodal curve. Then, by Lemma \ref{lem:del-Pezzo-cat-list-degree-2}, all cats of $S$ are rational curves $T\in \vert-K_S\vert$ with a cuspidal singularity at $p_T\in T$ and tacnodal curves $Q\in \vert-K_S\vert$ with a tacnodal singularity $p_Q\in Q$.

If $p_Q\not\in C$ for all tacnodal curves $Q$, then
$$\alpha(S, (1-\beta)C)=\min\{1,\frac{3}{4\beta}, \frac{3+2\beta}{6\beta}\}=\min\{1,\frac{3}{4\beta}\}=\omega_3$$
by Corollary \ref{cor:cat-property-easy-dynamic-alpha}, and lemmas \ref{lem:appendix-local-cusp-normal} and \ref{lem:appendix-local-tacnode-snc}.

If $p_Q\in C$ for some tacnodal curve $Q=L+M$, observe that $L$ and $M$ are lines. Therefore $L\cdot C=1$ and $M\cdot C=1$. Therefore
$$\alpha(S, (1-\beta)C)=\min\{1,\frac{2+\beta}{4\beta}, \frac{3+2\beta}{6\beta}\}=\min\{1,\frac{2+\beta}{4\beta}\}=\omega_4$$
by Corollary \ref{cor:cat-property-easy-dynamic-alpha}, and lemmas \ref{lem:appendix-local-cusp-normal} and \ref{lem:appendix-local-tacnode-normal}.
\end{proof}

\section{Del Pezzo surface of degree $1$}
\label{sec:dynamic-alpha-degree1}

\begin{thm}
\label{thm:del-Pezzo-dynamic-alpha-degree-1}
Let $S$ be a smooth del Pezzo surface of degree $1$ and let $C$ be a smooth elliptic curve in $S$, $C\simq-K_S$. The dynamic $\alpha$-invariant $\alpha(S,(1-\beta)C)$ is as follows:
\begin{itemize}
	\item[(i)] If $\vert -K_S\vert$ contains a curve with a cuspidal rational curve, then
		\begin{equation}
				\alpha(S,(1-\beta)C)=\omega_2=
						\begin{dcases}
								1 									&\text{ for } 0<\beta\leq \frac{5}{6},\\
								\frac{5}{6\beta}		&\text{ for } \frac{5}{6}\leq \beta\leq 1.
						\end{dcases}
			\label{eq:del-Pezzo-dynamic-alpha-degree-1-special}
		\end{equation}
	\item[(ii)] In any other case
		\begin{equation}
				\omega_1=\alpha(S,(1-\beta)C)=1.
			\label{eq:del-Pezzo-dynamic-alpha-degree-1-general}
		\end{equation}
\end{itemize}
\end{thm}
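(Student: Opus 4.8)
The plan is to deduce the statement directly from Corollary \ref{cor:cat-property-easy-dynamic-alpha}, exploiting that a non-singular del Pezzo surface of degree $1$ satisfies the Cat Property with $\Cat(S)=1$ (Lemmas \ref{lem:del-Pezzo-cat-degree-1} and \ref{lem:del-Pezzo-Cat-index}). That corollary reduces the computation of $\alpha(S,(1-\beta)C)$ to understanding the finite list of cats of $S$ and, for each cat $T$, the local log canonical threshold $\lct(S,(1-\beta)C,\beta T)$. By Lemma \ref{lem:del-Pezzo-cat-list-degree-1} every cat is a cuspidal rational curve $T\in\vert-K_S\vert$, singular at a single cuspidal point $p_T$. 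Thus, once the relevant local thresholds are computed, the formula assembles itself, the two cases (i) and (ii) corresponding precisely to whether $S$ admits such a cuspidal anticanonical curve or not.

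The geometric heart of the argument, and the reason degree $1$ is simpler than the higher degrees treated above, is the observation that the cusp $p_T$ of any cat $T$ can never lie on $C$. Indeed $T,C\in\vert-K_S\vert$ give $T\cdot C=K_S^2=1$, while $\mult_{p_T}T=2$; were $p_T\in C$, then since $C$ is smooth we would have $(T\cdot C)\vert_{p_T}\geq \mult_{p_T}T\cdot\mult_{p_T}C=2>1$, a contradiction. Hence $C$ meets each cat transversally at a single smooth point $q\neq p_T$, and the cusp sits away from the boundary. This is exactly what rules out, in degree $1$, the ``special'' sub-cases $p_T\in C$ that split the statements of degrees $2$--$4$.

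With this configuration pinned down, the computation of $\lct(S,(1-\beta)C,\beta T)$ is local and routine, and is precisely the content of Lemma \ref{lem:appendix-local-cusp-snc}: near $q$ the pair consists of two smooth curves crossing transversally with coefficients $1-\beta$ and $\lambda\beta$, which is log canonical exactly when $\lambda\leq 1$; near $p_T$ the pair is $\lambda\beta T$ at a cusp, log canonical exactly when $\lambda\beta\leq \frac{5}{6}$ (as in Example \ref{exa:P2cuspidal}); and everywhere else it is log canonical for $\lambda\leq 1$. Hence $\lct(S,(1-\beta)C,\beta T)=\min\{1,\frac{5}{6\beta}\}$ for every cat $T$, independently of how many cats $S$ carries. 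Feeding this into Corollary \ref{cor:cat-property-easy-dynamic-alpha} yields case (i): if a cuspidal anticanonical curve exists then $\alpha(S,(1-\beta)C)=\min\{1,\frac{5}{6\beta}\}=\omega_2$. In case (ii) there are no cats, so the inner minimum in Corollary \ref{cor:cat-property-easy-dynamic-alpha} ranges over the empty set; equivalently, by the Cat Property no effective $\bbQ$-divisor $D\simq-K_S$ is a tiger and $\glct(S)=1$ (Theorem \ref{thm:del-Pezzo-glct-charp}), whence $\alpha(S,(1-\beta)C)=1=\omega_1$ for all $0<\beta\leq 1$.

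There is essentially no obstacle beyond bookkeeping. The only step requiring genuine care is the local threshold at the cusp combined with the transversal boundary component $C$, but this is exactly the appendix computation already invoked in the degree $2$ proof; and the intersection-theoretic exclusion $p_T\notin C$ collapses what would otherwise be a case analysis into the single uniform value $\min\{1,\frac{5}{6\beta}\}$, so no delicate sequence of blow-ups à la Theorem \ref{thm:pseudo-inductive-blow-up} is needed here.
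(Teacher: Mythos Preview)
Your proposal is correct and follows essentially the same route as the paper: invoke the Cat Property for degree $1$ (Lemmas \ref{lem:del-Pezzo-cat-degree-1} and \ref{lem:del-Pezzo-cat-list-degree-1}), use the intersection-theoretic bound $T\cdot C=1$ to force the cusp $p_T\notin C$, and then read off the threshold from Lemma \ref{lem:appendix-local-cusp-snc}. One small slip: at the transversal point $q$ the pair is log canonical for $\lambda\leq\tfrac{1}{\beta}$, not $\lambda\leq 1$, so the actual threshold is $\lct(S,(1-\beta)C,\beta T)=\tfrac{5}{6\beta}$ (as Lemma \ref{lem:appendix-local-cusp-snc} states), and the outer $\min\{1,\cdot\}$ enters only via Corollary \ref{cor:cat-property-easy-dynamic-alpha}; this does not affect the conclusion.
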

\begin{proof}
If $\vert-K_S\vert$ has no element with a cuspidal point, then $\lct(S, (1-\beta)C, \beta T)=1$ for all $T\in \vert-K_S\vert$, since $(S, T)$ is log canonical by Lemma \ref{lem:del-Pezzo-cat-list-degree-1}. Since $S$ satisfies the Cat Property (see Lemma \ref{lem:del-Pezzo-cat-degree-1}), Observation \ref{obs:cat-property-easy-glct} implies:
$$\alpha(S, (1-\beta)C )=\lct(S, (1-\beta)C, \beta T)=\omega_1.$$

If $\exists T\in \vert-K_S\vert$ with a cuspidal singular point, then
$$1=C\cdot T \geq \mult_pC \cdot \mult_p T =\mult_pT$$
for any $p\in C\cap T$. Therefore $T$ and $C$ intersect with simple normal crossings. By Lemma \ref{lem:appendix-local-cusp-snc} $\lct(S, (1-\beta)C, \beta T)=\frac{5}{6\beta}$. Since $S$ satisfies the Cat Property (see Lemma \ref{lem:del-Pezzo-cat-degree-1}), Observation \ref{obs:cat-property-easy-glct} and Lemma \ref{lem:del-Pezzo-cat-list-degree-1} imply
$$\alpha(S, (1-\beta)C )=\min\{1,\lct(S, (1-\beta)C, \beta T)\}=\omega_2.$$
\end{proof}

\appendix
\chapter{Local computations: log canonical thresholds of pairs.}
\label{app:lcts}
Many of the proofs in this thesis construct very specific effective divisors and $\bbQ$-divisors in order to apply Convexity (lemmas \ref{lem:convexity} and \ref{lem:log-convexity}). In order to apply those lemmas, the pair surface--divisor needs to be log canonical. The purpose of this Appendix is to show that all specific pairs constructed are log canonical. Unless otherwise stated, given a pair $(S, D)$ where $S$ is a surface and $D$ a $\bbQ$-divisor, we denote its minimal log resolution by $f\colon \widetilde S\ra S$ and the strict transform of $D$ in $\widetilde S$ by $\widetilde D$. If $f$ decomposes in $N$ blow-ups, the exceptional divisors of $f$ will be denoted by $F_1,\ldots,F_N$. Usually $F_1,\ldots,F_N$ form a chain, i.e. $F_i\cdot F_j=1$ if and only if $\vert i-j\vert=1$, and $F_i$ will be the exceptional curve obtained after blowing-up a point in $F_{i-1}$. We will not explicitly justify the precise number of blow-ups required to obtain the minimal log resolution, since we consider this should be obvious from the intersection matrix of the irreducible prime divisors in $\Supp(D)$ and it would just clutter the argument. The thorough reader is encouraged to draw a picture for each case. In some occasions the singularities of the pair considered are not defined precisely. In that case we will show that, in the presence of the  worst possible discrepancy, the pair considered is log canonical.

\section{Hirzebruch surface $\bbF_1$}
\begin{lem}
\label{lem:appendix-F1-pair1}
Let $S\cong\bbF_1$ be the blow-up of $\bbP^2$ at one point with exceptional curve $E\subset S$. Let $p\in S$ with $C\in \vert-K_S\vert=\vert\pioplane{3}-E\vert$ smooth and $p\in C$. Let $H\in \vert\pioplane{1}\vert$ such that $H$ is irreducible. Suppose $\exists L_p\in \vert\pioplane{1}-E\vert$ with $p\in L_p$ an irreducible curve such that $(L_p\cdot C)\vert_p=1$. Then the pair
\begin{equation}
(S, (1-\beta)C +\omega_2\beta(2H+L_p))
\label{eq:appendix-del-Pezzo-dynamic-alpha-degree-8-F1-proof-aux1}
\end{equation}
is log canonical at $p$ where
$$\omega_2:=
					\begin{dcases}
							1 												&\text{ for } 0<\beta\leq \frac{1}{4},\\
							\frac{1+\beta}{5\beta}		&\text{ for } \frac{1}{4}\leq \beta\leq \frac{2}{3},\\
							\frac{1}{3\beta}					&\text{ for } \frac{2}{3}\leq \beta\leq 1.\\
					\end{dcases}
$$
\end{lem}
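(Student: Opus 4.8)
The plan is to verify the numerical data, pin down the local geometry at $p$, and then run the minimal log resolution while tracking discrepancies through a chain of blow-ups. First I would record the intersection numbers on $S\cong\bbF_1$. Writing $C\sim\pioplane{3}-E$, $H\sim\pioplane{1}$ and $L_p\sim\pioplane{1}-E$, one computes $H^2=1$, $H\cdot L_p=1$, $L_p^2=0$, $C\cdot H=3$ and $C\cdot L_p=2$; in particular $2H+L_p\sim 3\pioplane{1}-E\sim -K_S$, so the boundary is indeed anticanonical. Since $p\in C\cap H\cap L_p$ with $(L_p\cdot C)\vert_p=1$ by hypothesis and $(H\cdot L_p)\vert_p=1$ forced by $H\cdot L_p=1$, the only coincidence of tangent directions at $p$ is between $H$ and $C$. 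Following the appendix convention, I would treat the worst case, namely a flex of $\pi(C)$ for which $(H\cdot C)\vert_p=3$ (this is the maximum, since $(H\cdot C)\vert_p\leq C\cdot H=3$).

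Next I would observe that the minimal log resolution is a single chain of blow-ups following the common tangent direction of $C$ and $H$. After blowing up $p$, the strict transform $\widetilde{L_p}$ meets the exceptional curve transversally at a point distinct from $\widetilde{C}\cap\widetilde{H}$, so $L_p$ contributes nothing beyond the first infinitely near point; the only points requiring further blow-ups are $q_k=\widetilde{C}\cap\widetilde{H}\cap F_k$. In the worst (flex) case three blow-ups are needed to separate $C$ from $H$ and reach simple normal crossings, while if $(H\cdot C)\vert_p=2$ only two suffice.

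For the discrepancy bookkeeping, let $b_k$ be the coefficient of $F_k$ in the log pullback of $(1-\beta)C+\omega_2\beta(2H+L_p)=(1-\beta)C+2\omega_2\beta H+\omega_2\beta L_p$. At $p$ the boundary has multiplicity $(1-\beta)+3\omega_2\beta$, giving $b_1=3\omega_2\beta-\beta$. For $k\geq 2$ only $\widetilde{C}$ (coefficient $1-\beta$) and $\widetilde{H}$ (coefficient $2\omega_2\beta$), each of multiplicity one, pass through $q_{k-1}$ along with $F_{k-1}$, yielding the recursion $b_k=2\omega_2\beta-\beta+b_{k-1}$ and hence $b_k=(2k+1)\omega_2\beta-k\beta$. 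Log canonicity then reduces to checking $b_k\leq 1$, i.e.\ $(2k+1)\omega_2\beta\leq 1+k\beta$, for $k=1,2,3$, together with the bounds $(1-\beta)\leq 1$ and $2\omega_2\beta\leq 1$ on the strict transforms. I would dispatch these by the case analysis $0<\beta\leq\frac14$, $\frac14\leq\beta\leq\frac23$, $\frac23\leq\beta\leq 1$, in each of which $\omega_2$ takes the corresponding value $1$, $\frac{1+\beta}{5\beta}$, $\frac{1}{3\beta}$.

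The main point to watch is that the threshold $\omega_2$ is exactly the value saturating the $k=3$ inequality: at $\beta=\frac14$ one gets $b_3=\frac74-\frac34=1$ in both the first and second ranges, so it is the flex configuration that dictates the shape of $\omega_2$, and the other inequalities ($k=1,2$, and the strict-transform bounds) are then comfortably satisfied. Thus the only genuine care needed is confirming that no configuration worse than the flex can occur (it cannot, by $(H\cdot C)\vert_p\leq 3$) and that $\widetilde{L_p}$ truly detaches after the first blow-up, so the resolving chain never acquires a fourth branch; granting these, the verification of $b_k\leq 1$ is routine arithmetic and the pair is log canonical at $p$.
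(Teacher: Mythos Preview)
Your proposal is correct and follows essentially the same approach as the paper's proof: both assume the worst case $(H\cdot C)\vert_p=3$, resolve by three successive blow-ups along the common tangent direction of $C$ and $H$, obtain the discrepancies $3\omega_2\beta-\beta$, $5\omega_2\beta-2\beta$, $7\omega_2\beta-3\beta$, and verify these are at most $1$ by the same three-range case analysis in $\beta$. Your additional remarks (checking that $\widetilde{L_p}$ detaches after the first blow-up, and that the $k=3$ inequality is the one saturated by $\omega_2$) are correct and make explicit what the paper leaves implicit.
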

\begin{proof}
Indeed the pair \eqref{eq:appendix-del-Pezzo-dynamic-alpha-degree-8-F1-proof-aux1} is log canonical in codimension $1$, since $\omega_2\beta\leq \frac{1}{3}$. The worst discrepancy takes place when $(H\cdot C)\vert_p=3$, so we assume this. The minimal log resolution $f\colon \hat{S} \ra S$ of \eqref{eq:appendix-del-Pezzo-dynamic-alpha-degree-8-F1-proof-aux1} consists of three blow-ups over $p$ with exceptional divisors $F_1,F_2, F_3$ and log pullback
\begin{align*}
f^*(K_S + (1-\beta)C +\omega_2\beta(2H+L_p))&\simq K_{\hat S} +(1-\beta) \hat{S}+\omega_2\beta(2\hat H+\hat L_p)\\
&+ (3\omega_2\beta-\beta)F_1 + (5\omega_2\beta-2\beta)F_2 + (7\omega_2\beta-3\beta)F_3.
\end{align*}
If $0<\beta\leq \frac{1}{4}$, then $3\omega_2\beta-\beta=2\beta\leq 1$, $5\omega_2\beta-2\beta=3\beta\leq 1$ and $7\omega_2\beta-3\beta=4\beta\leq 1$.

If $\frac{1}{4}\leq \beta\leq \frac{2}{3}$, then $3\omega_2\beta-\beta=\frac{3(1+\beta)}{5}-\beta=\frac{3}{5}-\frac{2}{5}\beta\leq 1$, $5\omega_2\beta-2\beta=(1+\beta)-2\beta\leq 1$ and $7\omega_2\beta-2\beta=\frac{7(1+\beta)}{5}-2\beta=\frac{7}{5}-\frac{8}{5}\beta\leq 1$.

Finally, if $\frac{2}{3}\leq \beta\leq 1$, then $3\omega_2\beta-\beta=1-\beta\leq 1$ and $5\omega_2\beta-2\beta=\frac{5}{3}-2\beta\leq 1$ and $7\omega_2\beta-3\beta=\frac{7}{3}-3\beta\leq 1.$
Therefore \eqref{eq:appendix-del-Pezzo-dynamic-alpha-degree-8-F1-proof-aux1} is log canonical. 
\end{proof}

\section{Del Pezzo surface of degree $7$}
Recall Notation \ref{nota:del-Pezzo-deg7}:
\begin{nota0}
Let $S$ be a del Pezzo surface of degree $7$. By Lemma \ref{lem:del-Pezzo-no-moduli}, the surface $S$ is unique up to isomorphism. By Lemma \ref{lem:del-Pezzo-uniquemodel} there is a unique morphism $\pi:S\ra \bbP^2$, up to isomorphism, that contracts two $(-1)$-curves $E_1,E_2$ to points $p_1,p_2$ in $\bbP^2$. By Lemma \ref{lem:del-Pezzo-lines9-d} there is a unique line $L\neq E_1,E_2$ with
$$L\sim\pioplane{1}-E_1-E_2,$$
corresponding to the strict transform of the unique line in $\bbP^2$ passing through $p_1$ and $p_2$.\ We have $L\cdot E_1=L\cdot E_2=1$. Let $C$ be a smooth curve, $C\in\vert-K_S\vert$. The curve $C$ intersects each of $E_1,E_2$ and $L$ at precisely one point. At most two of these points coincide.

Let $L_i$ be the unique curve
$$L_i\sim\pioplane{1}-E_i,$$
containing $r_i=E_i\cap C$, for $i=1,2$, which is precisely the strict transform of the unique line $\bar{L_i}$ in $\bbP^2$ tangent to $\bar{C}=\pi_*(C)$ at $p_i$. Let $r=L\cap C$ and such that $R$ be the unique curve passing through $r$ such that 
$$R\sim \pioplane{1}$$
and $R$ is tangent to $C$ at $r$.
\end{nota0}

\begin{lem}
\label{lem:appendix-delPezzo-deg7-dynamic-lct1}
Let $q\in C \subset S$ be a point which does not belong to any $(-1)$-curve. Let $H_i$, where $i=1,2$, be the strict transform of the line in $\bbP^2$ through $\pi(q)$ and $p_i$. Let $H$ be the strict transform of a line in $\bbP^2$ such that $H$ is tangent to $C$ at $q$. The pair
$$(S,(1-\beta) C + \omega_1\beta (H_1+H_2+H) ) $$
is log canonical where
\begin{equation*}
			\omega_1=
					\begin{dcases}
							1 												&\text{ for } 0<\beta\leq \frac{1}{3},\\
							\frac{1}{3\beta}					&\text{ for } \frac{1}{3}\leq \beta\leq 1.\\
					\end{dcases}
		\end{equation*} 
\end{lem}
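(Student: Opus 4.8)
The plan is to verify the assertion directly on the minimal log resolution, after reducing to the worst local configuration at $q$. First I would record the classes of the auxiliary curves. Since $H_i$ is the strict transform of the line through $\pi(q)$ and $p_i$, we have $H_i\sim\pioplane{1}-E_i$, while $H\sim\pioplane{1}$; hence $H_1+H_2+H\sim\pioplane{3}-E_1-E_2\sim-K_S$, as the statement requires. A short computation using $\pi^*\calO_S(a)\cdot E_j=0$ and $E_i\cdot E_j=-\delta_{ij}$ then gives $C\cdot H_i=2$, $C\cdot H=3$ and $H_1\cdot H_2=H_1\cdot H=H_2\cdot H=1$. Because $q$ lies on no $(-1)$-curve we have $\pi(q)\neq p_1,p_2$, so the two lines through $\pi(q)$ are distinct; consequently all three curves $H_1,H_2,H$ pass through $q$ and meet there pairwise transversally, with three distinct tangent directions.

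Next I would pin down the local picture at $q$. Since $C$ is smooth it has a single tangent direction there, and $H$ is tangent to $C$ at $q$ by construction; therefore $C$ shares its tangent direction with $H$ and must be transversal to $H_1$ and $H_2$. Thus $(C\cdot H_1)|_q=(C\cdot H_2)|_q=1$ and $(C\cdot H)|_q\in\{2,3\}$, and it suffices to treat the worst case $(C\cdot H)|_q=3$, since higher contact only worsens the discrepancies. Moreover $\omega_1\beta\leq\frac13<1$ and $1-\beta<1$, so the pair is log canonical in codimension $1$; away from $q$ the only intersections among $C,H_1,H_2,H$ are the remaining transversal crossings of pairs of these curves, which are already simple normal crossings.

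Then I would resolve the cluster at $q$ by a chain of blow-ups. Blowing up $q=q_0$ with exceptional curve $F_1$ produces the discrepancy coefficient $(1-\beta)+3\omega_1\beta-1=3\omega_1\beta-\beta$ on $F_1$; the remaining non-snc point is $q_1=\widetilde C\cap\widetilde H\cap F_1$, the germs $H_1,H_2$ having separated. Blowing up $q_1$, and then the analogous point $q_2=\widetilde C\cap\widetilde H\cap F_2$ forced by the contact of order $3$, gives the chain $F_1,F_2,F_3$ with log pullback coefficients
$$3\omega_1\beta-\beta,\qquad 4\omega_1\beta-2\beta,\qquad 5\omega_1\beta-3\beta.$$
It then remains to check each of these is $\leq 1$. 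For $0<\beta\leq\frac13$ one has $\omega_1=1$ and all three equal $2\beta\leq\frac23$; for $\frac13\leq\beta\leq1$ one has $\omega_1\beta=\frac13$, so the coefficients become $1-\beta$, $\frac43-2\beta$ and $\frac53-3\beta$, each decreasing in $\beta$ and hence bounded by their value at $\beta=\frac13$, namely $\frac23$, $\frac23$ and $\frac23$ (and $1-\beta\leq1$ throughout). This confirms all discrepancies are $\geq-1$.

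The main obstacle is not the arithmetic but establishing the local configuration rigorously: one must argue that $C$ is necessarily tangent to $H$ and transversal to $H_1,H_2$ (so that the tangencies cannot accumulate in a way that alters the length of the resolution) and correctly identify the contact order $(C\cdot H)|_q$, which governs the number of blow-ups in the chain. Once the configuration is fixed, the verification reduces to the routine finite case check above.
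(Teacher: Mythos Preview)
Your proposal is correct and follows essentially the same approach as the paper: reduce to the worst local configuration $(C\cdot H_1)|_q=(C\cdot H_2)|_q=1$, $(C\cdot H)|_q=3$, take the chain of three blow-ups over $q$, and check that the resulting coefficients $3\omega_1\beta-\beta$, $4\omega_1\beta-2\beta$, $5\omega_1\beta-3\beta$ are all at most $1$. If anything, you are slightly more careful than the paper in justifying the local configuration (arguing from the tangent direction of $C$ that it must meet $H_1,H_2$ transversally) and in recording the rational classes; the paper simply asserts the worst case and proceeds.
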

\begin{proof}
Notice that $H_1\cdot H_2=H\cdot H_1=1$, $C\cdot H_1=C\cdot H_2=2$ and $3\geq (H\cdot C)\vert_q\geq 2$. Therefore, in terms of computing discrepancies, the worst situation arises when $(C\cdot H_1)\vert_q=1, (C\cdot H_2)\vert_q=1$ and $(C\cdot H)\vert_q=3$. The minimal log resolution $f\colon \widetilde S \ra S$ of the pair $(S,(1-\beta) C + \omega_1\beta (H_1+H_2+H) ) $ consists of $3$ blow-ups over $q$ with exceptional curves $F_1,F_2,F_3$. The log pullback is
\begin{align*}
f^*(K_S+(1-\beta)C &+ \omega_1\beta(H_1+H_2+H))\simq K_{\widetilde S}+(1-\beta)\widetilde C + \omega_1\beta(\widetilde H_1+\widetilde H_2+\widetilde H)\\
&+(3\omega_1\beta-\beta)F_1+(4\omega_1\beta-2\beta)F_2+(5\omega_1\beta-3\beta)F_3.
\end{align*} 
Observe that $3\omega_1\beta-\beta\leq 1-\beta<1$. If $0<\beta\leq \frac{1}{3}$, then $4\omega_1\beta-2\beta=2\beta\leq \frac{2}{3}<1$ and $5\omega_1\beta-3\beta=2\beta\leq \frac{2}{3}<1$. Finally if $\frac{1}{3}\leq\beta\leq 1$, then $4\omega_1\beta-2\beta=\frac{4}{3}-2\beta\leq \frac{2}{3}<1$ and $5\omega_1\beta-3\beta\leq\frac{5}{3}-3\beta\leq \frac{2}{3}<1$. We conclude that $(S,(1-\beta) C + \omega_1\beta (H_1+H_2+H) ) $ is log canonical.

\end{proof}

\begin{lem}
\label{lem:appendix-delPezzo-deg7-dynamic-lct2}
Let $q\in C \subset S$ be a point which does not belong to any $(-1)$-curve. Let $H_i$, where $i=1,2$, be the strict transform of the line in $\bbP^2$ through $\pi(q)$ and $p_i$. Assume $H_1$ is tangent to $C$ at $q$. The pair
$$(S,(1-\beta) C + \omega_1\beta(2H_1+H_2+E_1) ) $$
is log canonical where
\begin{equation*}
			\omega_1=
					\begin{dcases}
							1 												&\text{ for } 0<\beta\leq \frac{1}{3},\\
							\frac{1}{3\beta}					&\text{ for } \frac{1}{3}\leq \beta\leq 1.\\
					\end{dcases}
		\end{equation*}
\end{lem}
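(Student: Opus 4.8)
The plan is to show that $2H_1+H_2+E_1\simq-K_S$ and then verify log canonicity by a point-by-point analysis of the configuration, the only interesting locus being $q$. First I would record the rational classes and intersection numbers. Writing $H_i\sim\pioplane{1}-E_i$, one checks $2H_1+H_2+E_1\sim 3\pioplane{1}-E_1-E_2\simq-K_S$, and computes $H_i^2=0$, $H_1\cdot H_2=1$, $H_1\cdot E_1=1$, $H_2\cdot E_1=0$, $C\cdot H_i=2$ and $C\cdot E_1=1$. Since $\omega_1\beta=\min\{\beta,\tfrac13\}\leq\tfrac13$ for every $\beta\in(0,1]$, each prime component of the boundary has coefficient strictly below $1$ (namely $1-\beta$, $2\omega_1\beta\leq\tfrac23$, and $\omega_1\beta\leq\tfrac13$), so the pair is automatically log canonical in codimension $1$ and it suffices to inspect the finitely many points where at least two components meet.

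Away from $q$ everything is benign: the intersections $E_1\cap H_1$, $E_1\cap C$, and the residual crossing $H_2\cap C$ (away from $q$) are simple normal crossings of two curves, each with coefficient $\leq 1$, hence log canonical by Lemma \ref{lem:log-pullback-preserves-lc}. Note $q\notin E_1$ because $q$ lies on no $(-1)$-curve, so $E_1$ does not enter the analysis at $q$. The next step is the local geometry at $q$. Since $H_1$ and $H_2$ are strict transforms of two \emph{distinct} lines through $\pi(q)$ (distinct because $q\notin L$ forces $\pi(q)\notin\overline{p_1p_2}$), their tangent directions at $q$ differ, so at most one of them can be tangent to $C$ there. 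As $H_1$ is assumed tangent, $(H_1\cdot C)\vert_q=2$ exhausts $H_1\cdot C=2$, while $(H_2\cdot C)\vert_q=1$ and $(H_1\cdot H_2)\vert_q=1$. I would therefore take local coordinates in which $C=\{y=0\}$, $H_1=\{y=x^2\}$, $H_2=\{x=0\}$.

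Now I would resolve $q$ by successive blow-ups and read off the discrepancies. Blowing up $q$ with exceptional curve $F_1$ separates $\widetilde H_2$ (which meets $F_1$ at a point where it crosses $F_1$ transversally), and gives $F_1$ the coefficient $\mult_q D-1=3\omega_1\beta-\beta$; the branches $\widetilde C$ and $\widetilde H_1$ remain tangent-free but still pass through a common point $q_1\in F_1$, together with $F_1$, so three pairwise-transversal branches meet at $q_1$. Blowing up $q_1$ separates all three and gives the second exceptional curve $F_2$ the coefficient $(1-\beta)+2\omega_1\beta+(3\omega_1\beta-\beta)-1=5\omega_1\beta-2\beta$, producing the log pullback
\begin{equation*}
f^*(K_S+D)\simq K_{\widetilde S}+(1-\beta)\widetilde C+2\omega_1\beta\widetilde H_1+\omega_1\beta\widetilde H_2+\omega_1\beta\widetilde E_1+(3\omega_1\beta-\beta)\widetilde F_1+(5\omega_1\beta-2\beta)F_2 .
\end{equation*}
It then remains to verify $3\omega_1\beta-\beta\leq1$ and $5\omega_1\beta-2\beta\leq1$ on both regimes: for $\beta\leq\tfrac13$ these are $2\beta\leq\tfrac23$ and $3\beta\leq1$, and for $\beta\geq\tfrac13$ they are $1-\beta\leq1$ and $\tfrac53-2\beta\leq1$. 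All exceptional and strict-transform coefficients being $\leq1$ and the resolution being simple normal crossings, the pair is log canonical. I expect the only genuine point of care—rather than a true obstacle—to be correctly pinning down the local configuration at $q$: recognising that the tangency hypothesis forces $H_2$ to be transversal to $C$, that the $C$–$H_1$ tangency (contact order $2$) is resolved by exactly two blow-ups, and that $E_1$ contributes nothing near $q$; after that the inequalities are routine arithmetic across the two pieces defining $\omega_1$.
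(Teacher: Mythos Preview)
Your proof is correct and follows essentially the same approach as the paper: both compute the minimal log resolution at $q$ via two successive blow-ups, obtain the exceptional coefficients $3\omega_1\beta-\beta$ and $5\omega_1\beta-2\beta$, and verify these are at most $1$ on each piece of the definition of $\omega_1$. You give more detail on the local configuration and on the points away from $q$, but the core computation is identical.
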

\begin{proof}

Notice that $H_1\cdot H_2=E_1\cdot H_1=1$, $H_2\cdot E_1=0$, $q\not\in E_1$, $E_1\cdot C=1$ and $C\cdot H_1=C\cdot H_2=2$. Therefore, in terms of computing discrepancies, the worst situation arises when $(C\cdot H_1)\vert_q=2$ and $(C\cdot H_2)\vert_q=1$. The minimal log resolution $f\colon \widetilde S \ra S$ of the pair $(S,(1-\beta) C + \omega_1\beta (2H_1+H_2+E_1) ) $ consists of $2$ blow-ups over $q$ with exceptional curves $F_1,F_2$. The log pullback is
\begin{align*}
f^*(K_S+(1-\beta)C &+ \omega_1\beta(2H_1+H_2+E_1))\simq K_{\widetilde S}+(1-\beta)\widetilde C + \omega_1\beta(2\widetilde H_1+\widetilde H_2+\widetilde E_1)\\
&+(3\omega_1\beta-\beta)F_1+(5\omega_1\beta-2\beta)F_2.
\end{align*} 
Observe that $3\omega_1\beta-\beta\leq 1-\beta<1$. If $0<\beta\leq \frac{1}{3}$, then $5\omega_1\beta-2\beta=3\beta\leq 1$. If $\frac{1}{3}\leq\beta\leq 1$, then $5\omega_1\beta-2\beta=\frac{5}{3}-2\beta\leq 1$. We conclude that $(S,(1-\beta) C + \omega_1\beta (2H_1+H_2+E_1) ) $ is log canonical.

\end{proof}

\begin{lem}
\label{lem:appendix-delPezzo-deg7-dynamic-lct3}
Suppose $q\in C \subset S$ is a point which does not belong to any $(-1)$-curve. Let $H\in \vert \pioplane{1}\vert$ be the unique curve such that $q\in H$ and $q_1\in H^1$, where $q_1\in F_1\subset S_1$, the blow-up of $q$ and $H^1$ is the strict transform of $H$. Suppose $H$ is irreducible. Then the pair
$$(S, (1-\beta)C + \omega_1\beta(2H+L))$$
is log canonical where
\begin{equation*}
			\omega_1=
					\begin{dcases}
							1 												&\text{ for } 0<\beta\leq \frac{1}{3},\\
							\frac{1}{3\beta}					&\text{ for } \frac{1}{3}\leq \beta\leq 1.\\
					\end{dcases}
		\end{equation*} 
\end{lem}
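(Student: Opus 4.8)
The plan is to verify log canonicity by a direct minimal-log-resolution computation concentrated at the unique problematic point, exactly in the spirit of Lemma \ref{lem:appendix-delPezzo-deg7-dynamic-lct1}. First I would record the numerology. Since $H\sim\pioplane{1}$ and $L\sim\pioplane{1}-E_1-E_2$, one has $2H+L\sim\pioplane{3}-E_1-E_2\sim-K_S$, so the boundary is anticanonical. The genus formula (Lemma \ref{lem:genus-formula}) gives $p_a(H)=0$, and as $H$ is assumed irreducible it is smooth rational; likewise $L$ and $C$ are smooth. The relevant intersection numbers are $H\cdot L=1$, $C\cdot H=-K_S\cdot H=3$ and $C\cdot L=-K_S\cdot L=1$. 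Because $\omega_1\beta\leq\frac13$ for every $\beta\in(0,1]$, the coefficients $2\omega_1\beta$ of $H$, $\omega_1\beta$ of $L$ and $1-\beta$ of $C$ are all $<1$, so the pair is log canonical in codimension $1$ and only finitely many points remain to be checked.

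Next I would localize. Since $q$ lies on no $(-1)$-curve, in particular $q\notin L$, so near $q$ the boundary is just $(1-\beta)C+2\omega_1\beta H$. The hypothesis $q_1\in H^1$ says precisely that $C$ and $H$ are tangent at $q$, i.e.\ $(H\cdot C)\vert_q\geq2$; as $H\cdot C=3$, the worst discrepancy occurs when $(H\cdot C)\vert_q=3$, which is the case I treat. Any other intersection among $C,H,L$ is a node of curves whose coefficients sum to at most $3\omega_1\beta\leq1$, or (in the special configuration $C\cap H\cap L\neq\emptyset$, necessarily away from $q$ where $C,H$ are transverse) an ordinary triple point whose single blow-up introduces discrepancy $3\omega_1\beta-\beta\leq1$; in either case such points are harmless, so only $q$ requires resolution.

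Then I would resolve $q$ by the chain of three blow-ups $S_3\to S_2\to S_1\to S$ separating the order-$3$ tangency, with exceptional curves $F_1,F_2,F_3$. Tracking multiplicities exactly as in Lemma \ref{lem:appendix-delPezzo-deg7-dynamic-lct1} (at each stage only $C^i$, $H^i$ and the immediately preceding exceptional curve pass through the next centre, while the earlier exceptional curves and $L$ do not), the log pullback reads
\begin{align*}
f^*\left(K_S+(1-\beta)C+\omega_1\beta(2H+L)\right)\simq{}& K_{S_3}+(1-\beta)\widetilde C+\omega_1\beta(2\widetilde H+\widetilde L)\\
&+(2\omega_1\beta-\beta)F_1+(4\omega_1\beta-2\beta)F_2+(6\omega_1\beta-3\beta)F_3.
\end{align*}
Finally I would check each coefficient is $\leq1$ by the usual split at $\beta=\frac13$: for $0<\beta\leq\frac13$ (so $\omega_1=1$) they equal $\beta,2\beta,3\beta$, all $\leq1$; for $\frac13\leq\beta\leq1$ (so $\omega_1\beta=\frac13$) they equal $\frac23-\beta,\frac43-2\beta,2-3\beta$, again all $\leq1$. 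Together with the codimension-$1$ bound and the treatment of the other points, this shows the pair is log canonical.

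The step I expect to demand the most care is the resolution bookkeeping: confirming that the minimal log resolution of an order-$3$ tangency is precisely this chain of three blow-ups, and that at each successive centre only $C^i$, $H^i$ and the last exceptional divisor pass through it, so that each multiplicity (hence each $F_i$-coefficient) is pinned down correctly. Once the coefficients are established, the inequalities $\leq1$ are routine, and the reduction to the single point $q$ is immediate from $q\notin L$ together with the codimension-$1$ estimate.
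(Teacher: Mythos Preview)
Your proposal is correct and follows essentially the same approach as the paper: compute the intersection numbers, reduce to the worst case $(H\cdot C)\vert_q=3$, resolve the order-$3$ tangency at $q$ by three successive blow-ups, and verify the discrepancies $2\omega_1\beta-\beta$, $4\omega_1\beta-2\beta$, $6\omega_1\beta-3\beta$ are all $\leq 1$. You supply more justification than the paper does---explicitly checking codimension-$1$ log canonicity and explaining why points other than $q$ are harmless---but the core computation is identical.
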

\begin{proof}
Notice that $H\cdot L=L\cdot C=1$ and $3\geq (H\cdot C)\vert_q\geq 2$. Therefore, in terms of computing discrepancies, the worst situation arises when $(C\cdot H)\vert_q=3$. The minimal log resolution $f\colon \widetilde S \ra S$ of the pair $(S,(1-\beta) C + \omega_1\beta (2H+L) ) $ consists of $3$ blow-ups over $q$ with exceptional curves $F_1,F_2,F_3$. The log pullback is
\begin{align*}
f^*(K_S+(1-\beta)C &+ \omega_1\beta(2H+L))\simq K_{\widetilde S}+(1-\beta)\widetilde C + \omega_1\beta(2\widetilde H+\widetilde L)\\
&+(2\omega_1\beta-\beta)F_1+(4\omega_1\beta-2\beta)F_2+(6\omega_1\beta-3\beta)F_3.
\end{align*} 
Observe that $2\omega_1\beta-\beta\leq 1-\beta<1$. If $0<\beta\leq \frac{1}{3}$, then $4\omega_1\beta-2\beta=2\beta\leq \frac{2}{3}<1$ and $6\omega_1\beta-3\beta=3\beta\leq 1$. Finally if $\frac{1}{3}\leq\beta\leq 1$, then $4\omega_1\beta-2\beta=\frac{4}{3}-2\beta\leq \frac{2}{3}<1$ and $6\omega_1\beta-3\beta\leq2-3\beta\leq 1$. We conclude that $(S,(1-\beta) C + \omega_1\beta (H_1+H_2+H) ) $ is log canonical.
\end{proof}

\begin{lem}
\label{lem:appendix-delPezzo-deg7-dynamic-lct5}
If $(R\cdot C)\vert_{r}=3$, then the pair
$$(S, (1-\beta)C + \omega_2\beta (L+2R))$$
is log canonical where 
		\begin{equation*}
			\omega_2=
					\begin{dcases}
							1 												&\text{ for } 0<\beta\leq \frac{1}{4},\\
							\frac{1+3\beta}{7\beta}		&\text{ for } \frac{1}{4}\leq \beta\leq \frac{4}{9},\\
							\frac{1}{3\beta}					&\text{ for } \frac{4}{9}\leq \beta\leq 1.\\
					\end{dcases}
					\end{equation*}

If $(R\cdot C)\vert_{r}=2$, then the pair
$$(S, (1-\beta)C + \omega_1\beta (L+2R))$$
is log canonical where 
		\begin{equation*}
			\omega_1=
					\begin{dcases}
							1 												&\text{ for } 0<\beta\leq \frac{1}{3},\\
							\frac{1}{3\beta}					&\text{ for } \frac{1}{3}\leq \beta\leq 1.\\
					\end{dcases}
		\end{equation*}

\end{lem}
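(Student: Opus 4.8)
The plan is to compute the minimal log resolution of each pair by a chain of blow-ups centred at $r$ and its infinitely near points, read off the coefficients of the exceptional divisors in the log pullback, and check that all of them (together with the coefficients $1-\beta$, $\omega_i\beta$, $2\omega_i\beta$ of the strict transforms $\widetilde C$, $\widetilde L$, $\widetilde R$) are at most $1$. First I would record the relevant geometry from Notation \ref{nota:del-Pezzo-deg7}: since $L\sim\pioplane{1}-E_1-E_2$ and $R\sim\pioplane{1}$ we have $L+2R\sim-K_S$, and the intersection numbers are $L\cdot R=1$, $L\cdot C=1$, $R\cdot C=3$. All three curves $C$, $L$, $R$ are smooth and pass through $r=L\cap C$; since $L\cdot C=L\cdot R=1$ the curve $L$ meets both $C$ and $R$ transversally at $r$, whereas $R$ is tangent to $C$ at $r$ with $(R\cdot C)\vert_r\in\{2,3\}$ by hypothesis. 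Note also that $\omega_i\beta\le\tfrac13$ throughout, so the coefficients of $\widetilde C$, $\widetilde L$, $\widetilde R$ are automatically $\le1$ and only the exceptional coefficients require attention.

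For the first case, where $(R\cdot C)\vert_r=3$, I would blow up $r$ to get $f_1\colon S_1\to S$ with exceptional curve $F_1$; here $\widetilde L$ is separated off (its tangent direction differs from that of $C$ and $R$) and meets $F_1$ transversally, while $\widetilde C$ and $\widetilde R$ share the tangent direction of $C$ at $r$ and meet $F_1$ at one common point $q_1$ with $(\widetilde C\cdot\widetilde R)\vert_{q_1}=3-1=2$. Blowing up $q_1$ and then the analogous point $q_2$ (where the tangency has dropped to $(\widetilde C\cdot\widetilde R)\vert_{q_2}=2-1=1$, i.e.\ transversality) resolves the pair after exactly three blow-ups, with log pullback
\begin{align*}
f^*(K_S+(1-\beta)C+\omega_2\beta(L+2R))\simq\ & K_{\widetilde S}+(1-\beta)\widetilde C+\omega_2\beta(\widetilde L+2\widetilde R)\\
&+(3\omega_2\beta-\beta)F_1+(5\omega_2\beta-2\beta)F_2+(7\omega_2\beta-3\beta)F_3.
\end{align*}
It then remains to verify $3\omega_2\beta-\beta\le1$, $5\omega_2\beta-2\beta\le1$ and $7\omega_2\beta-3\beta\le1$ on each of the three subintervals defining $\omega_2$; a direct substitution gives, for instance, $7\omega_2\beta-3\beta=1$ exactly on $\tfrac14\le\beta\le\tfrac49$ and $7\omega_2\beta-3\beta=\tfrac73-3\beta\le1$ on $\tfrac49\le\beta\le1$, with the remaining inequalities even slacker.

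For the second case, where $(R\cdot C)\vert_r=2$, the same first blow-up produces a point $q_1=\widetilde C\cap\widetilde R\cap F_1$ at which $\widetilde C$ and $\widetilde R$ are already transverse ($(\widetilde C\cdot\widetilde R)\vert_{q_1}=2-1=1$); since this is still a triple point I would blow up $q_1$ once more to reach simple normal crossings, so that only two blow-ups are needed and the log pullback is
\begin{align*}
f^*(K_S+(1-\beta)C+\omega_1\beta(L+2R))\simq\ & K_{\widetilde S}+(1-\beta)\widetilde C+\omega_1\beta(\widetilde L+2\widetilde R)\\
&+(3\omega_1\beta-\beta)F_1+(5\omega_1\beta-2\beta)F_2,
\end{align*}
and one checks $3\omega_1\beta-\beta\le1$ and $5\omega_1\beta-2\beta\le1$ on $0<\beta\le\tfrac13$ and $\tfrac13\le\beta\le1$ (with equality $5\omega_1\beta-2\beta=1$ at $\beta=\tfrac13$). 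The main obstacle is bookkeeping rather than a single hard estimate: one must correctly track, after each blow-up, the tangency order between $\widetilde C$ and $\widetilde R$ and the positions of $\widetilde L$, $\widetilde C$, $\widetilde R$ and the previous exceptional curves on the new exceptional divisor, since this is what determines the length of the chain of blow-ups (three versus two) and hence which coefficients must be bounded. Once the configuration is pinned down, the verification reduces to the elementary case analysis in $\beta$ indicated above.
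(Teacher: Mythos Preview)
Your proposal is correct and follows essentially the same approach as the paper: compute the minimal log resolution as a chain of two (resp.\ three) blow-ups over $r$ according to whether $(R\cdot C)\vert_r=2$ (resp.\ $3$), obtain the exceptional coefficients $3\omega_i\beta-\beta$, $5\omega_i\beta-2\beta$ (and $7\omega_2\beta-3\beta$ in the second case), and verify these are $\le 1$ on each subinterval of $\beta$. Your additional tracking of the tangent directions of $L$, $C$, $R$ at $r$ and their strict transforms is a helpful justification for why the resolution has this shape, which the paper's proof leaves implicit.
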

\begin{proof}
Let $f\colon \widetilde S \ra S$ be the minimal log resolution of the pair $(S, (1-\beta)C + \omega_i\beta(L+2R))$. The exceptional locus of $f$ is a chain of exceptional curves with centre $r$.

If $(C\cdot R)\vert_r=2$, then $\Exc(f)=F_1\cup F_2$ and the log pullback is
\begin{align*}
f^*(K_S + (1-\beta)C &+ \omega_1\beta(L+2R)) \simq K_{\widetilde S} + (1-\beta)\widetilde C + \omega_2\beta (\widetilde L + 2\widetilde R)\\
&+ (3\omega_1\beta-\beta)F_1 + (5\omega_1\beta-2\beta)F_2.
\end{align*}

If $0<\beta\leq \frac{1}{3}$, then $3\omega_1\beta-\beta=2\beta\leq 1$ and $5\omega_1\beta-2\beta = 3\beta\leq 1$. If $\frac{1}{3}<\beta\leq 1$, then $3\omega_1\beta=1-\beta\leq 1$ and $5\omega_1\beta-2\beta\leq \frac{5}{3}-2\beta \leq 1$.

Therefore $(S, (1-\beta)C + \omega_1\beta (L+2R))$ when $(C\cdot R)\vert_r=2$.
If $(C\cdot R)\vert_r=3$, then $\Exc(f)=F_1\cup F_2\cup F_3$ and the log pullback is
\begin{align*}
f^*(K_S + (1-\beta)C + &\omega_2\beta(L+2R)) \simq K_{\widetilde S} + (1-\beta)\widetilde C + \omega_2\beta (\widetilde L + 2\widetilde R) \\
			&+ (3\omega_2\beta-\beta)F_1 + (5\omega_2\beta-2\beta)F_2 + (7\omega_2\beta-3\beta)F_3.
\end{align*}

If $0<\beta\leq \frac{1}{4}$, then $3\omega_2\beta-\beta=2\beta\leq 1$, $5\omega_2\beta-2\beta=3\beta\leq 1$ and $7\omega_2\beta-3\beta=4\beta\leq 1$. If $\frac{1}{4}\leq\beta\leq\frac{4}{9}$ then $3\omega_2\beta-\beta=\frac{3}{7}+ \frac{2}{7}\beta\leq 1$, $5\omega_2\beta-2\beta=\frac{5}{7}+\frac{\beta}{7}\leq \frac{45+4}{63}\leq 1$ and $7\omega_2\beta-3\beta=1+3\beta-3\beta=1$. If $\frac{4}{9}\leq\beta\leq 1$, then $3\omega_2\beta-\beta =1-\beta\leq 1$, $5\omega_2\beta-2\beta=\frac{5}{3}-2\beta\leq \frac{7}{9}\leq 1$ and $7\omega_2\beta-3\beta\leq \frac{7}{3}-3\beta\leq 1$.

Therefore $(S, (1-\beta)C + \omega_2\beta (L+2R))$ when $(C\cdot R)\vert_r=3$.

\end{proof}

\begin{lem}
\label{lem:appendix-delPezzo-deg7-dynamic-lct6}
If $(L_1\cdot C)\vert_{r_1}=2$, then the pair
$$(S, (1-\beta)C + \omega_3\beta(2L_1+L+2E_1))$$
is log canonical where
		\begin{equation*}
			\omega_3=
					\begin{dcases}
							1 												&\text{ for } 0<\beta\leq \frac{1}{4},\\
							\frac{1+2\beta}{6\beta}		&\text{ for } \frac{1}{4}\leq \beta\leq \frac{1}{2},\\
							\frac{1}{3\beta}					&\text{ for } \frac{1}{2}\leq \beta\leq 1.\\
					\end{dcases}
	\end{equation*}

If $(L_1\cdot C)\vert_{r_1}=1$, then the pair
$$(S, (1-\beta)C + \omega_1\beta(2L_1+L+2E_1))$$
is log canonical where 
		\begin{equation*}
			\omega_1=
					\begin{dcases}
							1 												&\text{ for } 0<\beta\leq \frac{1}{3},\\
							\frac{1}{3\beta}					&\text{ for } \frac{1}{3}\leq \beta\leq 1.\\
					\end{dcases}
		\end{equation*} 
\end{lem}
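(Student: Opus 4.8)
The plan is to follow the same recipe as the preceding lemmas of this appendix: exhibit the minimal log resolution of the pair, write down its log pullback, and verify that every exceptional divisor appears with coefficient at most $1$, by a short case analysis on $\beta$. First I would record that $2L_1+L+2E_1\simq -K_S$, which is immediate from $-K_S\sim\pioplane{3}-E_1-E_2$ together with $L_1\sim\pioplane{1}-E_1$ and $L\sim\pioplane{1}-E_1-E_2$. I would then collect the intersection numbers from Notation~\ref{nota:del-Pezzo-deg7}: $L_1^2=0$, $L^2=E_1^2=-1$, $L_1\cdot E_1=L\cdot E_1=1$, $L_1\cdot L=0$, together with $C\cdot L_1=2$ and $C\cdot L=C\cdot E_1=1$.

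The key geometric observation is that the only point at which the pair fails to have simple normal crossings is $r_1=E_1\cap C$. Indeed $r_1\in L_1$ by the definition of $L_1$, so $E_1$, $C$, $L_1$ all pass through $r_1$; moreover $L$ cannot pass through $r_1$, since $r_1\in L_1$ and $L\cdot L_1=0$. Thus near $r_1$ the boundary is $(1-\beta)C+\omega\beta(2E_1+2L_1)$, three smooth branches meeting pairwise transversally except that $C$ and $L_1$ may be tangent. Away from $r_1$ the remaining incidences $L\cap E_1$ and $L\cap C$ are transversal meetings of two branches whose coefficients are strictly below $1$, so the pair is log canonical there automatically; this is where I would note that $C$ does not pass through $L\cap E_1$ (otherwise $r_1=L\cap E_1$, impossible since $L\not\ni r_1$).

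The computation then splits exactly as in the statement. When $(L_1\cdot C)\vert_{r_1}=1$, the three branches $E_1,C,L_1$ meet pairwise transversally at $r_1$, so a single blow-up at $r_1$ resolves the pair; the exceptional curve $F_1$ appears with coefficient $\mult_{r_1}D-1=4\omega_1\beta-\beta$, and I would check $4\omega_1\beta-\beta\le 1$ in the ranges $\beta\le\frac13$ (where $\omega_1=1$, giving $3\beta\le1$) and $\beta\ge\frac13$ (where $\omega_1=\frac{1}{3\beta}$, giving $\frac43-\beta\le1$). When $(L_1\cdot C)\vert_{r_1}=2$, blowing up $r_1$ gives $F_1$ with coefficient $4\omega_3\beta-\beta$ and leaves $\widetilde C$ and $\widetilde L_1$ meeting transversally at a common point $q_1\in F_1$; a second blow-up at $q_1$ produces $F_2$ with coefficient $(1-\beta)+2\omega_3\beta+(4\omega_3\beta-\beta)-1=6\omega_3\beta-2\beta$ and completes the resolution. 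Here I would verify the binding inequality $6\omega_3\beta-2\beta\le1$ (an equality on $[\frac14,\frac12]$, where $\omega_3=\frac{1+2\beta}{6\beta}$) and the slacker $4\omega_3\beta-\beta\le1$, over the three ranges $\beta\le\frac14$, $\frac14\le\beta\le\frac12$, $\frac12\le\beta\le1$.

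The only point requiring genuine care is fixing the local configuration at $r_1$ correctly, in particular that $L$ detaches from $r_1$ and that exactly one (respectively two) blow-ups are needed according to whether $C$ and $L_1$ are transversal or tangent there; once this is settled, the discrepancy inequalities are the same elementary case analyses used throughout the appendix, and I expect no serious obstacle beyond this bookkeeping.
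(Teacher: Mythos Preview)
Your proposal is correct and follows essentially the same approach as the paper: identify $r_1$ as the only non-SNC point of the boundary, compute the log pullback after one (respectively two) blow-ups according to whether $(L_1\cdot C)\vert_{r_1}=1$ or $2$, and verify the exceptional coefficients $4\omega_i\beta-\beta$ and $6\omega_3\beta-2\beta$ are at most $1$ by case analysis on $\beta$. Your discrepancy computations and the resulting inequalities match the paper's exactly; your write-up is simply more explicit about the intersection bookkeeping and about why $r_1$ is the only point requiring attention.
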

\begin{proof}
Let $f\colon \widetilde S \ra S$ be the minimal log resolution of the pair
$$(S, (1-\beta)C + \omega_i\beta(2L_1+L+2E_1)).$$
The exceptional locus of $f$ is a chain of exceptional curves with centre $r_1$.

If $(L_1\cdot C)\vert_{r_1}=2$, then $\Exc(f)=F_1\cup F_2$ and the log pullback is
\begin{align*}
f^*(K_S + (1-\beta)C &+ \omega_3\beta(2L_1+L+2E_1)) \simq K_{\widetilde S} + (1-\beta)\widetilde C + \omega_3\beta (2\widetilde L_1+\widetilde L+2\widetilde E_1)\\
& + (4\omega_3\beta-\beta)F_1 + (6\omega_3\beta-2\beta)F_2.
\end{align*}
If $(L_1\cdot C)\vert_{r_1}=1$, then $\Exc(f)=F_1$ and the log pullback is
\begin{align*}
f^*(K_S + (1-\beta)C &+ \omega_1\beta(2L_1+L+2E_1)) \simq K_{\widetilde S} + (1-\beta)\widetilde C \\
&+ \omega_1\beta (2\widetilde L_1+\widetilde L+2\widetilde E_1) + (4\omega_1\beta-\beta)F_1.
\end{align*}

If $0<\beta\leq \frac{1}{3}$, then $4\omega_1\beta-\beta=3\beta\leq 1$. If $\frac{1}{3}\leq\beta\leq 1$, then $4\omega_1\beta-\beta=\frac{4}{3}-\beta\leq 1$.

If $0<\beta\leq \frac{1}{4}$, then $4\omega_3\beta-\beta=3\beta\leq 1$ and $6\omega_3\beta-2\beta=4\beta\leq 1$. If $\frac{1}{4}\leq\beta\leq\frac{1}{2}$, then $4\omega_3\beta-\beta=\frac{2+4\beta}{3}-\beta=\frac{2}{3}+\frac{1}{3}\beta\leq 1$ and $6\omega_3\beta-2\beta=1+2\beta-2\beta=1$. If $\frac{1}{2}\leq\beta\leq1$, then $4\omega_3\beta-\beta=\frac{4}{3}-\beta \leq 1$ and $6\omega_3\beta-2\beta=2-2\beta\leq 1$.

Therefore $(S, (1-\beta)C + \omega_1\beta(2L_1+L+2E_1))$ is log canonical when $(L_1\cdot C)\vert_{r_1}=1$ and $(S, (1-\beta)C + \omega_3\beta(2L_1+L+2E_1))$ is log canonical when $(L_1\cdot C)\vert_{r_1}=2$.

\end{proof}

\section{Del Pezzo surface of degree $6$}
We recall the notation from section \ref{sec:dynamic-alpha-degree6}. Let $S$ be a non-singular del Pezzo surface of degree $6$. Given any model $\pi\colon S \ra \bbP^2$ we have exceptional curves $E_1,E_2,E_3\subset S$ mapping to points $p_1,p_2,p_3\in \bbP^2$, respectively. The other $3$ lines in $S$ (see Lemma \ref{lem:del-Pezzo-lines9-d}) correspond to strict transforms of lines in $\bbP^2$ through $p_i,p_j$. We will denote them by
$$L_{ij}\sim\pioplane{1}-E_i-E_j\text{ for } 1\leq i <j\leq 3.$$
For each $C$, we define the following functions with variable $\beta \in (0,1]$:
\begin{itemize}
	\item[(i)] If $C$ contains a pseudo-Eckardt point of $S$, then
	\begin{equation}
			\omega_3:=
					\begin{dcases}
							1 												&\text{ for } 0<\beta\leq \frac{1}{3},\\
							\frac{1+\beta}{4\beta}		&\text{ for } \frac{1}{3}\leq \beta\leq 1.
					\end{dcases}
		\label{eq:app-del-Pezzo-dynamic-alpha-degree-6-special}
	\end{equation}
	\item[(ii)]If $C$ contains no pseudo-Eckardt points but there is a model $\pi\colon S\ra \bbP^2$ such that through $p=C\cap E_1$ there is a smooth rational curve $L\sim\pioplane{1} -E_1$ satisfying $(C\cdot L )\vert_{p}=2$, then
				\begin{equation}
						\omega_2:=
								\begin{dcases}
										1 												&\text{ for } 0<\beta\leq \frac{1}{3},\\
										\frac{1+2\beta}{5\beta}					&\text{ for } \frac{1}{3}\leq \beta\leq \frac{3}{4},\\
										\frac{1}{2\beta}					&\text{ for } \frac{3}{4}\leq \beta\leq 1.
								\end{dcases}
					\label{eq:app-del-Pezzo-dynamic-alpha-degree-6-semispecial}
				\end{equation}
	
	\item[(iii)]If $C$ contains no pseudo-Eckardt points and for all models $\pi\colon S \ra \bbP^2$ the unique irreducible curve $L\in \vert\pioplane{1} -E_1\vert$ passing through $p=C\cap E_1$ has simple normal crossings with $C$ (i.e. $(C\cdot L)\vert_{p}=E_1$), then
				\begin{equation}
						\omega_1:=
								\begin{dcases}
										1 												&\text{ for } 0<\beta\leq \frac{1}{2},\\
										\frac{1}{2\beta}					&\text{ for } \frac{1}{2}\leq \beta\leq 1.
								\end{dcases}
					\label{eq:app-del-Pezzo-dynamic-alpha-degree-6-generic}
				\end{equation}
\end{itemize}	

\begin{lem}
\label{lem:appendix-delPezzo-deg6-dynamic-lct0}
Let $q_0\in C$ be a point not lying in any $(-1)$-curve of $S$. Let $L_2$ be the unique curve such that
\begin{equation*}
L_2\in \vert\pioplane{1}-E_2\vert \text{ with }q_0\in L_2
\end{equation*}
The pair
\begin{equation}
(S, (1-\beta)C + \omega_1\beta(L_{13}+2L_2+E_2))
\label{eq:appendix-delPezzo-deg6-dynamic-lct0}
\end{equation}
is log canonical. 
\end{lem}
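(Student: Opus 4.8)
The plan is to follow the template of the other appendix lemmas: exhibit the minimal log resolution of the pair and check that every discrepancy coefficient appearing in the log pullback is at most $1$.

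First I would record the relevant numerics. Since $-K_S\sim\pioplane{3}-E_1-E_2-E_3$, a direct check gives $L_{13}+2L_2+E_2\sim-K_S$, so the $\bbQ$-divisor in the statement is indeed anticanonical. Using $\pioplane{1}\cdot\pioplane{1}=1$, $\pioplane{1}\cdot E_i=0$ and $E_i\cdot E_j=-\delta_{ij}$ I would compute the intersection table of the four components: $C\cdot L_2=2$, $C\cdot L_{13}=C\cdot E_2=1$, $L_2\cdot E_2=L_2\cdot L_{13}=1$ and $E_2\cdot L_{13}=0$. Here $L_2\sim\pioplane{1}-E_2$ is irreducible and smooth (it is the strict transform of the line through $\pi(q_0)$ and $p_2$, and $q_0$ lies on no $(-1)$-curve), while $L_{13}$ and $E_2$ are lines; all four components are smooth. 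Crucially, $q_0$ lies on no $(-1)$-curve, so $q_0\notin E_2\cup L_{13}$.

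Next I would dispose of codimension one and of the transverse crossings. The coefficients are $1-\beta$ on $C$, $2\omega_1\beta$ on $L_2$, and $\omega_1\beta$ on each of $L_{13}$ and $E_2$; since $\omega_1\beta=\min\{\beta,\frac{1}{2}\}\le\frac{1}{2}$, all coefficients are $\le 1$, so the pair is log canonical in codimension $1$. Because $E_2\cap L_{13}=\emptyset$, and because $q_0\notin E_2\cup L_{13}$ together with the intersection table forces the remaining pairwise crossings ($L_2\cap E_2$, $L_2\cap L_{13}$, $C\cap E_2$, $C\cap L_{13}$) to occur at distinct points with no three components meeting, each such crossing is a transverse node of two smooth branches with both coefficients $\le 1$, hence already simple normal crossing and log canonical. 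The only point at which the support can fail to be simple normal crossing is $q_0\in C\cap L_2$; following the worst-case convention of this appendix I would assume the extremal tangency $(C\cdot L_2)\vert_{q_0}=2$.

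The heart of the argument is the local resolution at $q_0$, which I expect to be the only nontrivial step. A contact-order-$2$ tangency of two smooth branches is resolved by two blow-ups: after blowing up $q_0$ (exceptional $F_1$) the strict transforms of $C$ and $L_2$ meet transversally at a single point $q_1\in F_1$ through which $F_1$ also passes, forcing a second blow-up (exceptional $F_2$), after which the total transform is simple normal crossing. Tracking multiplicities then yields the log pullback
\begin{equation*}
f^*\bigl(K_S+(1-\beta)C+\omega_1\beta(L_{13}+2L_2+E_2)\bigr)\simq K_{\widetilde S}+(1-\beta)\widetilde C+\omega_1\beta(\widetilde L_{13}+2\widetilde L_2+\widetilde E_2)+(2\omega_1\beta-\beta)F_1+(4\omega_1\beta-2\beta)F_2,
\end{equation*}
and it remains to check the two discrepancy coefficients. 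On $F_1$ one has $2\omega_1\beta-\beta\le 1-\beta<1$ since $\omega_1\beta\le\frac{1}{2}$. For $F_2$ I would split on $\beta$: if $0<\beta\le\frac{1}{2}$ then $\omega_1=1$ and $4\omega_1\beta-2\beta=2\beta\le 1$, whereas if $\frac{1}{2}\le\beta\le 1$ then $\omega_1=\frac{1}{2\beta}$ and $4\omega_1\beta-2\beta=2-2\beta\le 1$. Since every coefficient in the log pullback is at most $1$, the pair is log canonical, proving the lemma.
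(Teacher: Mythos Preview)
Your proof is correct and follows the same approach as the paper: check log canonicity in codimension one, identify $q_0$ as the only non-SNC point in the worst case $(C\cdot L_2)\vert_{q_0}=2$, resolve by two blow-ups, and verify the discrepancy bounds. Your $F_2$ coefficient $4\omega_1\beta-2\beta$ is in fact the correct value (at $q_1$ all three of $C^1$, $L_2^1$, $F_1$ pass through, contributing $(1-\beta)+2\omega_1\beta+(2\omega_1\beta-\beta)$); the paper records $3\omega_1\beta-2\beta$, which appears to be an arithmetic slip, though the conclusion is unaffected since both expressions are bounded by $1$.
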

\begin{proof}
Since $\omega_1\beta\leq \frac{1}{2}$, the pair \eqref{eq:appendix-delPezzo-deg6-dynamic-lct0} is log canonical in codimension $1$. Since $L_2\cdot L_{13}=L_2\cdot E_2=1$ and $L_13\cdot E_2=0$, the worse discrepancy takes place if $(L_2\cdot C)\vert_{q_0}=2$. The the minimal log resolution of \eqref{eq:appendix-delPezzo-deg6-dynamic-lct0}, $f \colon \widetilde S \ra S$ consists of two blow-ups over $q_0$ and the log pullback is
\begin{align*}
f^*(K_S+(1-\beta)C + \omega_1\beta(L_{13}+2L_2+E_2))&\simq K_{\widetilde S} + (1-\beta)\widetilde C + \omega_1\beta(\widetilde L_{13} + 2\widetilde L_2 + \widetilde E_2)\\
&+ (2\omega_1\beta-\beta)F_1 + (3\omega_1\beta-2\beta)F_2.
\end{align*}
Clearly $2\omega_1\beta-\beta\leq 2\omega_1\beta\leq 1$ and $3\omega_1\beta-\beta\leq 1$.
Indeed if $0<\beta\leq \frac{1}{2}$, then $3\omega_1\beta-2\beta=\beta\leq 1$ and if $\frac{1}{2}\leq\beta\leq 1$, then $3\omega_1\beta-2\beta\leq\frac{3}{2}-2\beta\leq 1$. Therefore \eqref{eq:appendix-delPezzo-deg6-dynamic-lct0} is log canonical. 
\end{proof}

\begin{lem}
\label{lem:appendix-delPezzo-deg6-dynamic-lct1}
Let $q_0\in C$ be a point not lying in any $(-1)$-curve of $S$. Let $L_i$ be the unique curve such that
\begin{equation}
L_i\in \vert\pioplane{1}-E_i\vert \text{ with }q_0\in L_i\text{ for } i=1,2,3.
\label{eq:appendix-delPezzo-deg6-dynamic-lct1-pointgeneric-cubics}
\end{equation}
The pair
\begin{equation}
(S,(1-\beta)C + \omega_1\beta (L_1+L_2+L_3))
\label{eq:appendix-delPezzo-deg6-dynamic-lct1}
\end{equation}
is log canonical. 
\end{lem}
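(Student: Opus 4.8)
The plan is to run the minimal log resolution of the pair at $q_0$ and to check that every discrepancy coefficient, as well as every coefficient of a strict transform, is at most $1$; this is exactly the local computation carried out in the companion lemmas of this section (for instance Lemma \ref{lem:appendix-delPezzo-deg6-dynamic-lct0} and the degree-$7$ lemmas), so I expect the bulk of it to be routine bookkeeping once the geometry at $q_0$ is pinned down.

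First I would record the numerical data. By Corollary \ref{cor:del-Pezzo-anticanonical} we have $L_1+L_2+L_3\sim-K_S$, and from the rational classes in \eqref{eq:appendix-delPezzo-deg6-dynamic-lct1-pointgeneric-cubics} one computes $L_i^2=0$, $(-K_S)\cdot L_i=2$ and $L_i\cdot L_j=1$ for $i\neq j$; in particular $C\cdot L_i=2$. Since $q_0$ lies on no $(-1)$-curve, each $L_i$ is irreducible and smooth. The three curves $L_1,L_2,L_3$ all pass through $q_0$, but their strict transforms become pairwise disjoint after a single blow-up of $q_0$ (as already observed in the proof of Lemma \ref{lem:del-Pezzo-dynamic-alpha-degree-6-pointgeneric}), so they have pairwise distinct tangent directions there. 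Consequently $C$ can share its tangent direction with at most one of the $L_i$ at $q_0$, i.e.\ $(C\cdot L_i)\vert_{q_0}=2$ for at most one index $i$.

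Since the coefficient of $C$ is $1-\beta\leq 1$ and the coefficient of each $L_i$ is $\omega_1\beta=\min\{\beta,\tfrac12\}\leq\tfrac12$, the pair is log canonical in codimension $1$. The only point where the support fails to have simple normal crossings is $q_0$, so it remains to check log canonicity there, and by symmetry it suffices to treat the worst discrepancy case, namely $(C\cdot L_1)\vert_{q_0}=2$ with $C$ meeting $L_2,L_3$ transversally. Then the minimal log resolution $f\colon\widetilde S\to S$ consists of two blow-ups, one at $q_0$ (which separates $C,L_2,L_3$ and makes the $L_i$ disjoint) and a second at $q_1=\widetilde C\cap\widetilde L_1\cap F_1$ (which separates $C$ from $L_1$), with exceptional curves $F_1,F_2$. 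Computing multiplicities as in Lemma \ref{lem:log-pullback-preserves-lc}, the log pullback is
\begin{align*}
f^*(K_S+(1-\beta)C+\omega_1\beta(L_1+L_2+L_3))&\simq K_{\widetilde S}+(1-\beta)\widetilde C+\omega_1\beta(\widetilde L_1+\widetilde L_2+\widetilde L_3)\\
&\quad+(3\omega_1\beta-\beta)F_1+(4\omega_1\beta-2\beta)F_2.
\end{align*}

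Finally I would verify the two discrepancy bounds by the usual split on $\beta$: for $0<\beta\leq\tfrac12$ we have $\omega_1=1$, so $3\omega_1\beta-\beta=2\beta\leq1$ and $4\omega_1\beta-2\beta=2\beta\leq1$; for $\tfrac12\leq\beta\leq1$ we have $\omega_1\beta=\tfrac12$, so $3\omega_1\beta-\beta=\tfrac32-\beta\leq1$ and $4\omega_1\beta-2\beta=2-2\beta\leq1$. Hence every coefficient in the log pullback is at most $1$, and the pair is log canonical. The only genuine point requiring care is the geometric claim that $C$ is tangent to at most one $L_i$ and that two blow-ups resolve the worst case; once that is granted, everything reduces to the elementary case analysis above.
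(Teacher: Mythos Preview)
Your proof is correct and follows essentially the same approach as the paper: reduce to the worst tangency configuration $(C\cdot L_1)\vert_{q_0}=2$, resolve by two blow-ups over $q_0$, and verify the discrepancy coefficients $3\omega_1\beta-\beta$ and $4\omega_1\beta-2\beta$ are at most $1$ via the same case split on $\beta$. Your write-up is in fact slightly more careful than the paper's, explicitly justifying why $q_0$ is the only non-SNC point and why $C$ can be tangent to at most one $L_i$.
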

\begin{proof}
Observe that $L_i\cdot L_j=1$ for $i\neq j$. Since $C\cdot L_i=2$ for all $i$, $C$ can be at most tangent to one $L_i$. Without loss of generality suppose that $(C\cdot L_1)\vert_{q_0}=2$. The minimal log resolution $f\colon \widetilde S \ra S$ of \eqref{eq:appendix-delPezzo-deg6-dynamic-lct1} consists of two blow-ups over $q_0$. Let $B=L_1+L_2+L_3$. The log pullback is
\begin{align*}
f^*(K_S+(1-\beta)C+\omega_1\beta B)&\simq-K_{\widetilde S} + (1-\beta)\widetilde C +\omega_1\beta\widetilde B \\
&+ (3\omega_1\beta-\beta)F_1 + (4\omega_1\beta-2\beta)F_2.
\end{align*}
If $0<\beta\leq \frac{1}{2}$, then $3\omega_1\beta-\beta=2\beta\leq 1$ and $4\omega_1\beta-2\beta=2\beta\leq 1$.

If $\frac{1}{2}\leq \beta\leq 1$, then $3\omega_1\beta-\beta=\frac{3}{2}-\beta\leq 1$ and $4\omega_1\beta-2\beta\leq 2 -2\beta\leq 1$.

Therefore \eqref{eq:appendix-delPezzo-deg6-dynamic-lct1} is log canonical.
\end{proof}

\begin{lem}
\label{lem:appendix-delPezzo-deg6-dynamic-lct2}
Let $q_0\in C$ be a point not lying in any $(-1)$-curve of $S$. Let $H\sim \pioplane{1}$ and $G\sim\pioplane{2}-E_1-E_2-E_3$ be irreducible curves passing through $q_0$.
The pair
\begin{equation}
(S,(1-\beta)C + \omega_1\beta (G+H))
\label{eq:appendix-delPezzo-deg6-dynamic-lct2}
\end{equation}
is log canonical. 
\end{lem}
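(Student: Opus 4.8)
The plan is to write down the minimal log resolution of the pair explicitly and verify that every exceptional discrepancy coefficient is at most $1$. First I would record the numerical data. Since $G+H\sim-K_S$, one computes $H^2=G^2=1$, $H\cdot G=2$ and $C\cdot H=C\cdot G=3$, while $H$ (the strict transform of a line) and $G$ (the strict transform of an irreducible conic through $p_1,p_2,p_3$) are both smooth, as is $C$. Because $\omega_1\beta\leq\frac12$, the coefficients $1-\beta$ of $C$ and $\omega_1\beta$ of $H$ and of $G$ all lie strictly below $1$, so by Lemma \ref{lem:adjunction} (ii) the pair is log canonical in codimension $1$; hence any failure of log canonicity is concentrated at the isolated points where the supports are tangent, the worst of which is $q_0$.

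The crux is to pin down the extremal local intersection configuration at $q_0$, since it governs the length of the resolution chain. The worst case is maximal mutual tangency: the three smooth branches share a common tangent direction, forcing $(H\cdot G)\vert_{q_0}=2$. Here I expect the main subtlety. After the first blow-up with exceptional curve $F_1$, the strict transforms $H^1$ and $G^1$ still pass through the common point $q_1=C^1\cap F_1$, but now satisfy $(H^1\cdot G^1)\vert_{q_1}=1$, i.e. they acquire distinct tangent directions there. Consequently $C^1$ can be tangent to at most one of them, so the configuration cannot be worse than $(C\cdot H)\vert_{q_0}=3$ and $(C\cdot G)\vert_{q_0}=2$ (up to swapping $H$ and $G$, which are interchangeable since they carry the same coefficient). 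Tracking the residual contact between $C^1$ and $H^1$ then forces exactly two further blow-ups, producing a chain $F_1,F_2,F_3$, and I would argue this is genuinely the longest chain any admissible configuration can yield (the alternative $(H\cdot G)\vert_{q_0}=1$ drops $G^1$ off $q_1$ and gives strictly smaller coefficients).

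Finally I would assemble the log pullback
\[
f^*\bigl(K_S+(1-\beta)C+\omega_1\beta(G+H)\bigr)\simq K_{\widetilde S}+(1-\beta)\widetilde C+\omega_1\beta(\widetilde G+\widetilde H)+a_1F_1+a_2F_2+a_3F_3,
\]
where each $a_k$ is the total multiplicity of the divisor at the successive centre minus $1$, namely $a_1=2\omega_1\beta-\beta$, $a_2=4\omega_1\beta-2\beta$ and $a_3=5\omega_1\beta-3\beta$. It then remains to check $a_k\leq1$ by the usual split on $\beta$: for $0<\beta\leq\frac12$ one has $\omega_1=1$, so $a_1=\beta$, $a_2=2\beta$, $a_3=2\beta$, all $\leq1$; for $\frac12\leq\beta\leq1$ one has $\omega_1=\frac1{2\beta}$, so $a_1=1-\beta$, $a_2=2-2\beta$, $a_3=\frac52-3\beta$, again all $\leq1$. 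This gives that the pair is log canonical. The only real work is the geometric bookkeeping of the tangencies that fixes the chain length, not any delicate numerical estimate.
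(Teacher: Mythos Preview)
Your proposal is correct and follows essentially the same approach as the paper: identify the worst tangency configuration at $q_0$, resolve with three blow-ups, and verify the three exceptional coefficients $2\omega_1\beta-\beta$, $4\omega_1\beta-2\beta$, $5\omega_1\beta-3\beta$ are at most $1$ on each $\beta$-range. Your justification that $(H^1\cdot G^1)\vert_{q_1}=1$ forces $C^1$ to be tangent to at most one of $H^1,G^1$ is more explicit than the paper, which simply asserts the worst case without this geometric reasoning.
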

\begin{proof}
Since $G\cdot H=2$ and $G\cdot C = H\cdot C =3$, the worst discrepancy for pair \eqref{eq:appendix-delPezzo-deg6-dynamic-lct2} happens when $(G\cdot H )\vert_{q_0}=(G\cdot C )\vert_{q_0}=2$ and $(H\cdot C )\vert_{q_0}=3$. The minimal log resolution $f\colon \widetilde S \ra S$ of \eqref{eq:appendix-delPezzo-deg6-dynamic-lct2} consists of $3$ blow-ups over $q_0$. The log pullback is
\begin{align*}
f^*(K_S+(1-\beta)C+\omega_1\beta (G+H))\simq&-K_{\widetilde S} + (1-\beta)\widetilde C +\omega_1\beta(\widetilde G+\widetilde H)+ (2\omega_1\beta-\beta)F_1\\
&+ (4\omega_1\beta-2\beta)F_2+(5\omega_1\beta-3\beta)F_3.
\end{align*}
If $0<\beta\leq \frac{1}{2}$, then $2\omega_1\beta-\beta=\beta\leq \frac{1}{2}\leq 1$, $4\omega_1\beta-2\beta=2\beta\leq 1$ and $5\omega_1\beta-3\beta=2\beta\leq 1$.

If $\frac{1}{2}\leq \beta\leq 1$, then $2\omega_1\beta-\beta=1-\beta\leq 1$, $4\omega_1\beta-2\beta\leq 2 -2\beta\leq 1$ and $5\omega_1\beta-3\beta=\frac{5}{2}-3\beta\leq 1$.

Therefore \eqref{eq:appendix-delPezzo-deg6-dynamic-lct2} is log canonical.

\end{proof}

\begin{lem}
\label{lem:appendix-delPezzo-deg6-dynamic-lct3}
Let $q_0=C\cap E_1$ and such that $q_0$ does not belong to any other $(-1)$-curve of $S$. Let $L_q\sim \pioplane{1}-E_1$ and $C_q\sim\pioplane{2}-E_1-E_2-E_3$ be irreducible curves passing through $q_0$. Let $B=E_1+L_q+C_q$. The pair
\begin{equation}
(S,(1-\beta)C + \omega_1\beta B)
\label{eq:app-del-Pezzo-dynamic-alpha-degree-6-auxpair3}
\end{equation}
is log canonical.
\end{lem}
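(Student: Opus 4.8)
The plan is to follow the same scheme as the rest of this appendix: construct the minimal log resolution $f\colon \widetilde S\to S$ of the pair, write down its log pullback, and verify that every coefficient appearing there is at most $1$, so that log canonicity follows from Lemma \ref{lem:log-pullback-preserves-lc}. First I would record the numerical data in the notation of Section \ref{sec:dynamic-alpha-degree6}: with $L_q\sim\pioplane{1}-E_1$ and $C_q\sim\pioplane{2}-E_1-E_2-E_3$ one has $B=E_1+L_q+C_q\sim\pioplane{3}-E_1-E_2-E_3\sim-K_S$, together with $\deg E_1=1$, $\deg L_q=2$, $\deg C_q=3$, the pairwise intersections $E_1\cdot L_q=E_1\cdot C_q=L_q\cdot C_q=1$, and $C\cdot E_1=1$, $C\cdot L_q=2$, $C\cdot C_q=3$. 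By the genus formula (Lemma \ref{lem:genus-formula}) both $L_q$ and $C_q$ are smooth rational curves.

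Next I would analyse the geometry at $q_0$. Since the global intersection numbers $E_1\cdot L_q$, $E_1\cdot C_q$, $L_q\cdot C_q$ all equal $1$ and these three curves share the point $q_0$, they are pairwise transversal there, i.e.\ they have three distinct tangent directions; likewise $C$ meets $E_1$ transversally. Because $C$ is smooth it has a single tangent direction at $q_0$, so it can be tangent to at most one of $L_q$, $C_q$. The worst discrepancy arises when $C$ is tangent to the cubic $C_q$ to maximal order, $(C\cdot C_q)\vert_{q_0}=3$; I would treat this configuration explicitly, the others needing fewer blow-ups and giving strictly smaller coefficients. In this case the minimal log resolution consists of three successive blow-ups of points $q_0,q_1,q_2$: blowing up $q_0$ separates $\widetilde E_1$ and $\widetilde L_q$ off $F_1$, leaving only $\widetilde C$ and $\widetilde{C_q}$ meeting $F_1$ at a common point $q_1$ with contact dropping to $2$, and two further blow-ups reduce the contact to $1$ and then clear the resulting triple point. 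Since all four curves pass through $q_0$ with multiplicity one and coefficients $(1-\beta),\omega_1\beta,\omega_1\beta,\omega_1\beta$, additivity of multiplicities along the chain gives the log pullback
\begin{align*}
f^*(K_S+(1-\beta)C+\omega_1\beta B)\simq{}& K_{\widetilde S}+(1-\beta)\widetilde C+\omega_1\beta(\widetilde E_1+\widetilde L_q+\widetilde C_q)\\
&+(3\omega_1\beta-\beta)F_1+(4\omega_1\beta-2\beta)F_2+(5\omega_1\beta-3\beta)F_3.
\end{align*}
It then remains to check each coefficient is $\leq 1$ on the two ranges $0<\beta\leq\frac{1}{2}$ (where $\omega_1=1$) and $\frac{1}{2}\leq\beta\leq1$ (where $\omega_1=\frac{1}{2\beta}$): for instance $5\omega_1\beta-3\beta$ equals $2\beta\leq1$ in the first range and $\frac{5}{2}-3\beta\leq1$ in the second, and the coefficients of $F_1$ and $F_2$ are handled identically; together with $1-\beta\leq1$ and $\omega_1\beta\leq\frac{1}{2}$ this yields log canonicity.

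The main obstacle I anticipate is the geometric bookkeeping of the resolution rather than the arithmetic. I must justify carefully that $(C\cdot C_q)\vert_{q_0}=3$ is genuinely the extremal configuration, that after the first blow-up only $\widetilde C$ and $\widetilde{C_q}$ remain superimposed (so that $E_1$ and $L_q$ do not contribute to the later centers), and that exactly three blow-ups suffice — in particular that the order-$3$ tangency forces the last center $q_2$ to be a triple point of $\widetilde C$, $\widetilde{C_q}$, $F_2$ cleared by a single further blow-up. Once this chain structure is pinned down, the coefficients of $F_1,F_2,F_3$ and the comparison with the sub-extremal cases ($C$ transversal to both $L_q,C_q$, or tangent of order $2$ to either) are routine, and the inequalities above close the argument.
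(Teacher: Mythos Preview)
Your proof is correct and follows the same method as the paper: compute the minimal log resolution, write the log pullback, and verify that all coefficients are at most $1$. Your bookkeeping of the intersection numbers and of which curves separate after each blow-up is accurate.

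There is one noteworthy difference. You identify the extremal configuration as $(C\cdot C_q)\vert_{q_0}=3$, which requires three blow-ups and produces the coefficient $5\omega_1\beta-3\beta$ on $F_3$; you then check this is $\le 1$. The paper's own proof instead splits on $(L_q\cdot C)\vert_{q_0}\in\{1,2\}$ and in each branch declares the worst case to be $(C_q\cdot C)\vert_{q_0}=2$, using only two blow-ups. Since $C\cdot C_q=3$ globally, order-$3$ contact at $q_0$ is a priori possible, so your version is the more careful one; fortunately the extra $F_3$-coefficient you compute is still bounded by $1$, so the statement holds either way.
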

\begin{proof}
Observe that $E_1\cdot L_q=C_q\cdot L_q=E_1\cdot C_q=C\cdot E_1=1$.

If $(L_q\cdot C)\vert_{q_0}=1$, let $\sigma\colon \widetilde S, \ra S$ be the minimal log resolution of the pair \eqref{eq:app-del-Pezzo-dynamic-alpha-degree-6-auxpair3}. The worst multiplicities arise when $(C_q\cdot C)\vert_{q_0}=2$. We consider just this case. Then $\sigma$ consists of $2$ blow-ups with exceptional divisors $F_1$ and $F_2$. The log pullback is
$$\sigma^*(K_S+ (1-\beta)C + \omega_1\beta B) = K_{\widetilde S} + (1-\beta) \widetilde C + \omega_1\beta \widetilde B + (3\omega_1\beta-\beta)F_1 + (4\omega_1\beta-2\beta)F_2.$$
If $0<\beta\leq\frac{1}{2}$, then $3\omega_1\beta-\beta=2\beta\leq 1$ and $4\omega_1\beta-2\beta=2\beta\leq 1$. If $\frac{1}{2}\leq\beta \leq 1$, then $3\omega_1\beta-\beta=\frac{3}{2}-\beta\leq 1$ and $4\omega_1\beta-2\beta=2-2\beta\leq 1$.

If $(L_q\cdot C)\vert_{q_0}=2$, let $\sigma\colon \widetilde S, \ra S$ be the minimal log resolution of the pair \eqref{eq:app-del-Pezzo-dynamic-alpha-degree-6-auxpair3}. Since $L_q$ is tangent to $C$ at $q_0$ but $L_q\cdot C_q=1$, then $(C_q\cdot C)\vert_{q_0}=1$. Therefore $\sigma$ consists of $2$ blow-ups with exceptional divisors $F_1$ and $F_2$. The log pullback is
$$\sigma^*(K_S+ (1-\beta)C + \omega_1\beta B) = K_{\widetilde S} + (1-\beta) \widetilde C + \omega_1\beta \widetilde B + (3\omega_1\beta-\beta)F_1 + (4\omega_1\beta-2\beta)F_2.$$
If $0<\beta\leq\frac{1}{2}$, then $3\omega_1\beta-\beta=2\beta\leq 1$ and $4\omega_1\beta-2\beta=2\beta\leq 1$. If $\frac{1}{2}\leq\beta \leq 1$, then $3\omega_1\beta-\beta=\frac{3}{2}-\beta\leq 1$ and $4\omega_1\beta-2\beta=2-2\beta\leq 1$.

\end{proof}

\begin{lem}
\label{lem:appendix-delPezzo-deg6-dynamic-lct4}
Let $L_q\sim \pioplane{1}-E_1$ be an irreducible curve passing through a point $q=C\cap E_1$, not lying in any other $(-1)$-curve. Suppose $(L_q\cdot C)\vert_q=2$. Let $B=2L_q+L_{23}+E_1$. The pair
\begin{equation}
(S, (1-\beta)C + \omega_2\beta B)
\label{eq:appendix-delPezzo-deg6-dynamic-lct4}
\end{equation}
is log canonical.
\end{lem}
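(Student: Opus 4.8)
The plan is to exhibit the minimal log resolution of the pair explicitly and read off the discrepancies, exactly as in the preceding lemmas of this section. First I would record the numerology. Since $L_q\sim\pioplane{1}-E_1$ and $L_{23}\sim\pioplane{1}-E_2-E_3$, one checks that $B=2L_q+L_{23}+E_1\sim\pioplane{3}-E_1-E_2-E_3\sim-K_S$, so the pair is anticanonical. The relevant intersection numbers are $L_q^2=0$, $L_{23}^2=-1$, $L_q\cdot L_{23}=1$, $L_q\cdot E_1=1$ and $L_{23}\cdot E_1=0$, together with $C\cdot L_q=2$, $C\cdot E_1=1$ and $C\cdot L_{23}=1$. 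Because $(L_q\cdot C)\vert_q=2=C\cdot L_q$, the curves $C$ and $L_q$ meet only at $q$ and are tangent there; since $L_{23}\cdot E_1=0$ the line $L_{23}$ avoids $q$, while $E_1$ meets both $C$ and $L_q$ transversally at $q$ (each intersection number equals $1$). I would first dispose of the locus away from $q$: the remaining intersections ($L_q\cap L_{23}$ and $C\cap L_{23}$) are transversal crossings of two smooth branches, and since $\omega_2\beta\leq\tfrac12$ forces $2\omega_2\beta\leq1$, every coefficient occurring is $\leq1$, so the divisor is simple normal crossings and hence log canonical off $q$.

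The only point requiring work is $q$, where three smooth branches ($C$, $L_q$, $E_1$) pass, two of them ($C$, $L_q$) tangent. I would resolve this with two blow-ups. Let $\sigma_1\colon S_1\to S$ blow up $q$ with exceptional curve $F_1$; its coefficient in the log pullback is $\mult_q\big((1-\beta)C+\omega_2\beta B\big)-1=(1-\beta)+3\omega_2\beta-1=3\omega_2\beta-\beta$, since exactly $C$, $L_q$, $E_1$ pass through $q$ with coefficients $1-\beta$, $2\omega_2\beta$, $\omega_2\beta$. The tangency forces $\widetilde C$ and $\widetilde{L_q}$ to meet $F_1$ at a common point $q_1$ (whereas $\widetilde{E_1}$ meets $F_1$ elsewhere), and at $q_1$ the three branches $\widetilde C$, $\widetilde{L_q}$, $F_1$ are pairwise transversal but not yet normal crossings. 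A second blow-up $\sigma_2\colon S_2\to S_1$ at $q_1$ with exceptional curve $F_2$ separates them; its coefficient is $\mult_{q_1}-1=\big((1-\beta)+2\omega_2\beta+(3\omega_2\beta-\beta)\big)-1=5\omega_2\beta-2\beta$. After this the total transform has simple normal crossings, so $f=\sigma_1\circ\sigma_2$ is the minimal log resolution.

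It then remains to check that every coefficient in
$$f^*(K_S+(1-\beta)C+\omega_2\beta B)\simq K_{S_2}+(1-\beta)\widetilde C+\omega_2\beta\widetilde B+(3\omega_2\beta-\beta)F_1+(5\omega_2\beta-2\beta)F_2$$
is at most $1$. The coefficients on strict transforms are bounded by $2\omega_2\beta\leq1$; the two exceptional coefficients are handled by a short case analysis over the three ranges of $\beta$ defining $\omega_2$ in \eqref{eq:app-del-Pezzo-dynamic-alpha-degree-6-semispecial}. The binding constraint—and the main obstacle, although it is really just an arithmetic verification—is the deepest discrepancy $5\omega_2\beta-2\beta$: on $[\tfrac13,\tfrac34]$ it equals $(1+2\beta)-2\beta=1$ exactly, so the inequality is sharp, which is precisely why $\omega_2=\tfrac{1+2\beta}{5\beta}$ on that range. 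One checks $5\omega_2\beta-2\beta\leq1$ on $(0,\tfrac13]$ (where it reads $3\beta\leq1$) and on $[\tfrac34,1]$ (where it reads $\tfrac52-2\beta\leq1$), and similarly $3\omega_2\beta-\beta\leq1$ throughout; all hold, so the pair is log canonical.
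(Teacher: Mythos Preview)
Your proof is correct and follows essentially the same approach as the paper: compute the intersection numbers, resolve the tangency at $q$ by two consecutive blow-ups, obtain the discrepancies $3\omega_2\beta-\beta$ and $5\omega_2\beta-2\beta$, and verify these are at most $1$ by the three-range case analysis on $\beta$. Your treatment is in fact cleaner than the paper's (which contains a typo writing $\omega_1$ for $\omega_2$ in the discrepancy computation) and your explicit discussion of why the points away from $q$ are already simple normal crossings is a welcome addition.
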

\begin{proof}
Since $L_{23}\cdot L_q=L_q\cdot E_1=L_{23}\cdot C=E_1\cdot C=1$ and $L_{23}\cdot E_1=0$ then $\Supp(B)$ has simple normal crossings.

The minimal log resolution $f\colon \widetilde S \ra S$ of \eqref{eq:appendix-delPezzo-deg6-dynamic-lct5} consists of two blow-ups over $p$. The log pullback is
$$f^*(K_S+(1-\beta)C+\omega_1\beta B)\simq-K_{\widetilde S} + (1-\beta)\widetilde C +\omega_1\beta\widetilde B + (3\omega_1\beta-\beta)F_1+(5\omega_1\beta-2\beta)F_2.$$
If $0<\beta\leq\frac{1}{3}$, then $3\omega_1\beta-\beta=2\beta\leq 1$ and $5\omega_1\beta-2\beta=3\beta\leq 1$. If $\frac{1}{3}\leq\beta \leq \frac{3}{4}$, then $3\omega_1\beta-\beta=\frac{3}{5}+\frac{1}{5}\beta\leq \frac{3}{4}$ and $5\omega_1\beta-2\beta=1$. Finally, if $\frac{3}{4}\leq \beta\leq 1$, then $3\omega_1\beta-\beta=\frac{3}{2}-\beta=\frac{3}{4}<1$ and $5\omega_1\beta-2\beta=\frac{5}{2}-2\beta\leq 1$.

Therefore the pair \eqref{eq:appendix-delPezzo-deg6-dynamic-lct4} is log canonical.
\end{proof}
\begin{lem}
\label{lem:appendix-delPezzo-deg6-dynamic-lct5}
Let $L_q\sim \pioplane{1}-E_1$ be an irreducible curve passing through a point $q=C\cap E_1$, not lying in any other $(-1)$-curve. Let $B=L_q + L_{12}+L_{13}+2E_1$. The pair
\begin{equation}
(S, (1-\beta)C + \omega_1\beta B)
\label{eq:appendix-delPezzo-deg6-dynamic-lct5}
\end{equation}
is log canonical.
\end{lem}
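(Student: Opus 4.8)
The plan is to follow the same recipe as in the neighbouring appendix lemmas: verify that $B\simq-K_S$, pin down the local configuration of $\Supp(B)$ near the only problematic point by intersection theory, then resolve the worst possible singularity explicitly and read off the discrepancies. Using $\pioplane{1}^2=1$, $\pioplane{1}\cdot E_i=0$ and $E_i\cdot E_j=-\delta_{ij}$ (equivalently Lemma \ref{lem:del-Pezzo-lines9-d}), one first checks $B=L_q+L_{12}+L_{13}+2E_1\sim\pioplane{3}-E_1-E_2-E_3\sim-K_S$, so the statement is meaningful. The decisive numerical input is that $L_q,L_{12},L_{13}$ are pairwise disjoint ($L_q\cdot L_{12}=L_q\cdot L_{13}=L_{12}\cdot L_{13}=0$) and each meets $E_1$ transversally in one point ($L_q\cdot E_1=L_{12}\cdot E_1=L_{13}\cdot E_1=1$), while $C\cdot E_1=1$, $C\cdot L_q=2$ and $C\cdot L_{12}=C\cdot L_{13}=1$.

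From these I would deduce that the three lines meet $E_1$ at three distinct points: irreducibility of $L_q$ forces $q=L_q\cap E_1$ to differ from $L_{12}\cap E_1$ and $L_{13}\cap E_1$, since otherwise the member of $\vert\pioplane{1}-E_1\vert$ through that point would split off $E_2$ or $E_3$. As $C\cdot E_1=1$, the curve $C$ meets $E_1$ only at $q$, and $q\notin L_{12}\cup L_{13}$. Hence away from $q$ the divisor $\Delta=(1-\beta)C+\omega_1\beta B$ has simple normal crossings with every coefficient $\le 1$ (the largest being $2\omega_1\beta\le 1$ on $E_1$), so it is log canonical there by Lemma \ref{lem:disc-log-resolution}. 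The only interesting point is $q$, where $C$, $L_q$ and $E_1$ all pass; following the appendix convention of treating the worst possible discrepancy, I take $(C\cdot L_q)\vert_q=2$ (tangency), which is precisely the situation occurring in case (A2) of Lemma \ref{lem:del-Pezzo-dynamic-alpha-degree-6-pointline}.

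I would then resolve $q$ by two blow-ups. Blowing up $q$ produces $F_1$ with coefficient $\mult_q\Delta-1=3\omega_1\beta-\beta$; because $C$ and $L_q$ are tangent, their strict transforms hit $F_1$ at a common point $q_1$ while $\widetilde E_1$ meets $F_1$ elsewhere, leaving an ordinary triple point $F_1\cap\widetilde C\cap\widetilde L_q$ at $q_1$. Blowing up $q_1$ gives $F_2$ with coefficient $\mult_{q_1}-1=4\omega_1\beta-2\beta$ and separates the three branches, yielding simple normal crossings, so the log pullback is
\begin{align*}
f^*(K_S+(1-\beta)C+\omega_1\beta B)\simq{}&K_{\widetilde S}+(1-\beta)\widetilde C+\omega_1\beta\widetilde B\\
&+(3\omega_1\beta-\beta)F_1+(4\omega_1\beta-2\beta)F_2.
\end{align*}
By Lemma \ref{lem:log-pullback-preserves-lc} it then suffices to check $3\omega_1\beta-\beta\le 1$ and $4\omega_1\beta-2\beta\le 1$. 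This is a routine split: for $0<\beta\le\frac12$ one has $\omega_1\beta=\beta$, so both equal $2\beta\le 1$; for $\frac12\le\beta\le 1$ one has $\omega_1\beta=\frac12$, giving $\frac32-\beta\le 1$ and $2-2\beta\le 1$.

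The main (and essentially only) obstacle is the configuration analysis — correctly determining which components pass through $q$ and recognising that tangency of $C$ and $L_q$ together with transversality of $E_1$ produces exactly a two-blow-up chain and nothing worse. Once that local picture is fixed, the discrepancy inequalities are immediate.
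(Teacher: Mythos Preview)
Your proof is correct and follows essentially the same approach as the paper's: identify the worst local configuration at $q$ (tangency of $C$ and $L_q$ with $E_1$ transversal), resolve by two blow-ups, and check the discrepancies $3\omega_1\beta-\beta\le 1$ and $4\omega_1\beta-2\beta\le 1$ by the same case split on $\beta$. Your configuration analysis is in fact slightly more careful than the paper's (you explicitly justify why $q\ne L_{1j}\cap E_1$ via irreducibility of $L_q$, though this also follows directly from the hypothesis that $q$ lies on no other $(-1)$-curve).
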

\begin{proof}
Since $L_{1j}\cdot L_{1k}=0$ for $j\neq k$, $L_q\cdot L_{1j}=0$ for all $j$, $L_q\cdot E_1=1$ and $L_{1j}\cdot E_1=1$ for all $j$, then $\Supp(B)$ has simple normal crossings. 

Since $C\cdot L_{1j}=C\cdot E_1=1$ for all $j$ and $C\cdot L_q=2$, then $C$ intersects at most either $L_q$ and $E_1$ at the same point with $(C\cdot L_q)\vert_{q_0}\leq 2$ or $L_{1j}$ and $E_1$ at the same point. The worst discrepancy appears in the former case. Without loss of generality assume that $(C\cdot L_q)\vert_{q_0}=2$ and $C\cap L_q\cap E_1=q$. Then the minimal log resolution $f\colon \widetilde S \ra S$ of \eqref{eq:appendix-delPezzo-deg6-dynamic-lct5} consists of two blow-ups over $q$. The log pullback is
$$f^*(K_S+(1-\beta)C+\omega_1\beta B)\simq-K_{\widetilde S} + (1-\beta)\widetilde C +\omega_1\beta\widetilde B + (3\omega_1\beta-\beta)F_1+(4\omega_1\beta-2\beta)F_2.$$
If $0<\beta\leq\frac{1}{2}$, then $3\omega_1\beta-\beta=2\beta\leq 1$ and $4\omega_1\beta-2\beta=2\beta\leq 1$. If $\frac{1}{2}\leq\beta \leq 1$, then $3\omega_1\beta-\beta=\frac{3}{2}-\beta\leq 1$ and $4\omega_1\beta-2\beta=2-2\beta\leq 1$.

Since $\omega_1\beta\leq \frac{1}{2}$, then the pair \eqref{eq:appendix-delPezzo-deg6-dynamic-lct5} is log canonical.
\end{proof}

\section{Del Pezzo surface of degree $5$}
\begin{lem}
\label{lem:appendix-delPezzo-deg5-dynamic-lct}
Let $S$ be a non-singular del Pezzo surface, with $K_S^2=5$.
Let $A$ be an irreducible conic in
$$\calA = \vert \pioplane{2} -E_1-E_2-E_3-E-4\vert$$
with $q_0\in A$. Let $B_1\sim\pioplane{1}-E_1$, $C\sim-K_S$ be irreducible smooth curves with $(C\cdot B_1)\vert_{q_0}=2$. Assume $q_0\in E_1$. Let $\beta\in (0,1]$. The pair
$$(S, (1-\beta) C + \omega\beta (A + B_1+ E_1))$$
is log canonical where
	\begin{equation*}
				\omega=
							\begin{dcases}
									1 									&\text{ for } 0<\beta\leq \frac{1}{2},\\
									\frac{2}{3\beta}		&\text{ for } \frac{1}{2}\leq \beta\leq 1.
							\end{dcases}
		\end{equation*}
\end{lem}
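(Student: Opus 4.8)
The plan is to verify log canonicity directly on the minimal log resolution, using Lemma \ref{lem:log-pullback-preserves-lc} to reduce the problem to checking that every exceptional discrepancy coefficient is at most $1$. First I would record the intersection data on $S$ (a blow-up of $\mathbb{P}^2$ at four general points): writing $H=\pi^*\mathcal{O}_{\mathbb{P}^2}(1)$, one has $A\sim 2H-E_1-E_2-E_3-E_4$ and $B_1\sim H-E_1$, so that $A^2=B_1^2=0$, $-K_S\cdot A=-K_S\cdot B_1=2$, and $A\cdot B_1=A\cdot E_1=B_1\cdot E_1=1$, while $-K_S\cdot E_1=1$, $C\cdot A=C\cdot B_1=2$ and $C\cdot E_1=1$ (from Corollary \ref{cor:del-Pezzo-anticanonical} and Lemma \ref{lem:del-Pezzo-deg5-lines-list}). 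The hypotheses $q_0\in E_1\cap A\cap B_1\cap C$ together with $(C\cdot B_1)|_{q_0}=2$ then pin down the local geometry at $q_0$: since $A\cdot B_1=A\cdot E_1=B_1\cdot E_1=1$, the three branches $E_1,A,B_1$ are pairwise transverse at $q_0$ (three distinct tangent directions, no further common point), whereas $(C\cdot B_1)|_{q_0}=2$ forces $C$ to be tangent to $B_1$ there. Consequently $C$ shares its tangent direction with $B_1$ and is transverse to both $A$ and $E_1$, giving $(C\cdot A)|_{q_0}=(C\cdot E_1)|_{q_0}=1$.

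With the configuration fixed I would compute the resolution. All four curves are smooth at $q_0$, so after blowing up $q_0$ with exceptional curve $F_1$, the strict transforms $\widetilde{E_1}$ and $\widetilde{A}$ meet $F_1$ at two distinct points, each a simple normal crossing, while $\widetilde{B_1}$ and $\widetilde{C}$, sharing a tangent direction at $q_0$, meet $F_1$ at one common point $q_1$. Because $(C\cdot B_1)|_{q_0}=2$, after this blow-up $(\widetilde{C}\cdot\widetilde{B_1})|_{q_1}=2-1=1$, so at $q_1$ the three branches $F_1,\widetilde{B_1},\widetilde{C}$ are pairwise transverse with three distinct tangent directions. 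A second blow-up at $q_1$ with exceptional curve $F_2$ then separates them and produces simple normal crossings everywhere, so the minimal log resolution consists of exactly two blow-ups.

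It then remains to compute the two exceptional coefficients. Writing the coefficient of $C$ as $1-\beta$ and the common coefficient of $A,B_1,E_1$ as $\omega\beta$, the multiplicity of the boundary at $q_0$ is $(1-\beta)+3\omega\beta$, so the coefficient of $F_1$ is $3\omega\beta-\beta$; only $\widetilde{B_1},\widetilde{C},F_1$ pass through $q_1$, so the coefficient of $F_2$ is $\bigl((1-\beta)+\omega\beta+(3\omega\beta-\beta)\bigr)-1=4\omega\beta-2\beta$. Since $1-\beta<1$ and $\omega\beta<1$, log canonicity reduces by Lemma \ref{lem:log-pullback-preserves-lc} to the two inequalities $3\omega\beta-\beta\leq 1$ and $4\omega\beta-2\beta\leq 1$, which I would verify by the usual split into $0<\beta\leq\frac{1}{2}$ and $\frac{1}{2}\leq\beta\leq 1$. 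I expect the genuine work of the lemma to lie not in this final arithmetic but in the geometric bookkeeping of the second step: correctly reading off, from the single numerical datum $(C\cdot B_1)|_{q_0}=2$ and the transversality relations, exactly which pairs of branches collide under blow-up, and hence arguing that no extra coincidence (for instance $\widetilde{A}$ or $\widetilde{E_1}$ passing through $q_1$) occurs and that the resolution terminates after precisely two blow-ups.
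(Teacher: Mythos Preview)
Your proposal is correct and follows essentially the same approach as the paper: record the intersection numbers, deduce from $(C\cdot B_1)\vert_{q_0}=2$ and the transversality of $A,B_1,E_1$ that $(C\cdot A)\vert_{q_0}=1$, resolve by two successive blow-ups over $q_0$, read off the exceptional coefficients $3\omega\beta-\beta$ and $4\omega\beta-2\beta$, and check the two inequalities by splitting at $\beta=\tfrac{1}{2}$. The paper's proof is terser and skips the explicit geometric bookkeeping you spell out in your second paragraph, but the argument is the same.
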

\begin{proof}
Since
$$A\cdot E_1 = B_1\cdot E_1=A\cdot B_1=C\cdot E_1=1$$
and $(C\cdot B_1)\vert_{q_0}=2$, then $(C\cot A)\vert_{p}=1$. Therefore the minimal log resolution $\widetilde S \ra S$ consists of $2$ blow-ups over $q_0$ with exceptional divisors $F_1$, $F_2$. Let $D=A + B_1+ E_1$. The log pullback is
$$f^*(K_S+(1-\beta)C + \omega\beta D ) \simq K_{\widetilde S} + (1-\beta)\widetilde C + \omega\beta \widetilde D  + (3\omega\beta -\beta)F_1 + (4\omega\beta-2\beta)F_2.$$
If $0<\beta\leq \frac{1}{2}$, then $3\omega\beta-\beta=2\beta\leq 1$ and $4\omega\beta-2\beta=2\beta\leq 1$. If $\frac{1}{2}\leq \beta\leq 1$, then $3\omega\beta-\beta = \frac{3}{2}-\beta\leq 1$ and $4\omega\beta-2\beta=2-2\beta\leq 1$. Therefore the pair is log canonical.
\end{proof}

\section{Del Pezzo surface of degree $4$}
\begin{lem}
\label{lem:appendix-delPezzo-deg4-dynamic-lct1}
Let $S$ be a smooth del Pezzo surface, with $K_S^2=4$. Let $L_1+L_2+Q\sim-K_S$ be two lines and an irreducible conic, respectively, intersecting at a point $p$. Let $C$ be a smooth hyperplane section not passing through $p$. Then
	\begin{equation*}
				\lct(S,(1-\beta)C, \beta(L_1+L_2+Q))=
							\begin{dcases}
									1 									&\text{ for } 0<\beta\leq \frac{2}{3},\\
									\frac{2}{3\beta}		&\text{ for } \frac{2}{3}\leq \beta\leq 1.
							\end{dcases}
		\end{equation*}
\end{lem}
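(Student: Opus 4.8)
The plan is to compute the threshold $\max\{\lambda \setsep (S,(1-\beta)C+\lambda\beta(L_1+L_2+Q))\text{ is log canonical}\}$ directly from a minimal log resolution, after first pinning down the local geometry of the three components. First I would record the intersection numbers. Since $L_1,L_2$ are lines and $Q$ is an irreducible conic (so $p_a(Q)=0$, hence $Q$ is smooth with $Q^2=0$ by the genus formula, Lemma \ref{lem:genus-formula}), intersecting $L_1+L_2+Q\simq-K_S$ with each component and using $-K_S\cdot L_i=1$, $-K_S\cdot Q=2$ gives $L_1\cdot L_2=L_1\cdot Q=L_2\cdot Q=1$. As all three curves pass through $p$, each pair meets transversally and only at $p$; thus $\Supp(L_1+L_2+Q)$ has an ordinary triple point at $p$ (three smooth branches, pairwise transverse) and is otherwise smooth. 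Because $p\notin C$, near $p$ the boundary is simply $\lambda\beta(L_1+L_2+Q)$. Away from $p$ the curve $C$ meets each $L_i$ in one transverse point ($C\cdot L_i=1$) and meets $Q$ in total multiplicity $C\cdot Q=2$, either at two transverse points or at a single point of tangency.

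Next I would blow up $p$ once, $f_1\colon S_1\ra S$ with exceptional curve $F_1$; since the three branches are pairwise transverse, this single blow-up separates them and yields simple normal crossings over $p$. By Lemma \ref{lem:log-pullback-preserves-lc} the original pair is log canonical at $p$ iff its log pullback is, and the coefficient of $F_1$ equals $\mult_p(\lambda\beta(L_1+L_2+Q))-1=3\lambda\beta-1$ (the term $(1-\beta)C$ contributes nothing, as $p\notin C$). The strict transforms $\widetilde L_1,\widetilde L_2,\widetilde Q$ carry coefficient $\lambda\beta$ and meet $F_1$ transversally at three distinct points. Hence the pair is log canonical at $p$ precisely when $3\lambda\beta-1\le 1$ and $\lambda\beta\le 1$, i.e. when $\lambda\beta\le\tfrac{2}{3}$, giving the local threshold $\lambda\le\tfrac{2}{3\beta}$.

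Finally I would check the points of $C\cap(L_1+L_2+Q)$, all lying away from $p$. A transverse intersection of $C$ (coefficient $1-\beta<1$) with a component (coefficient $\lambda\beta$) is already simple normal crossings and imposes only $\lambda\beta\le 1$, i.e. $\lambda\le\tfrac{1}{\beta}$; a tangency of $C$ with $Q$ (when $C\cdot Q=2$ is concentrated at one point) resolves after two further blow-ups whose worst exceptional coefficient is $2\lambda\beta-2\beta$, giving $\lambda\le\tfrac{1+2\beta}{2\beta}$. In the range $\beta\ge\tfrac{2}{3}$ (where the answer is $\tfrac{2}{3\beta}\le 1$) both $\tfrac{1}{\beta}$ and $\tfrac{1+2\beta}{2\beta}$ are at least $\tfrac{2}{3\beta}$, so the ordinary triple point at $p$ is the unique binding locus and the pair is log canonical exactly for $\lambda\beta\le\tfrac{2}{3}$. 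For $\beta<\tfrac{2}{3}$ every one of these thresholds exceeds $1$, so the relevant (capped) value is governed instead by the smooth boundary: testing the divisor $C$ itself gives $\lct(S,(1-\beta)C,\beta C)=\lct(S,C)=1$. Recording the combined bound yields $\lct(S,(1-\beta)C,\beta(L_1+L_2+Q))=\min\{1,\tfrac{2}{3\beta}\}$, namely $1$ for $\beta\le\tfrac{2}{3}$ and $\tfrac{2}{3\beta}$ for $\beta\ge\tfrac{2}{3}$, matching the claim. The main obstacle I anticipate is the tacnode bookkeeping in the tangency case of $C$ with $Q$ — confirming via the two-step resolution that it never produces a bound stronger than the ordinary triple point in the regime $\beta\ge\tfrac{2}{3}$ — together with cleanly justifying that the cap at $1$ for small $\beta$ is the correct value to report, as it comes from the universal threshold of the smooth anticanonical boundary.
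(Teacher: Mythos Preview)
Your proposal is correct and follows essentially the same route as the paper: a single blow-up at the ordinary triple point $p$ gives the exceptional coefficient $3\lambda\beta-1$, and the possible tangency of $C$ with $Q$ away from $p$ is resolved by two blow-ups with worst coefficient $2\lambda\beta-2\beta$, which never beats the triple-point bound.

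One small wrinkle: your justification for the value $1$ when $\beta\le\tfrac{2}{3}$ is off. Writing ``testing the divisor $C$ itself gives $\lct(S,(1-\beta)C,\beta C)=1$'' computes a \emph{different} threshold, not the one in the statement; it has no bearing on $\lct(S,(1-\beta)C,\beta(L_1+L_2+Q))$. Your own local computations actually show the unrestricted threshold is $\tfrac{2}{3\beta}$ for all $\beta$. The paper simply records $\min\{1,\tfrac{2}{3\beta}\}$ because in the ambient application (bounding $\alpha(S,(1-\beta)C)$ from above) only the range $\lambda\le 1$ matters --- the paper writes ``log canonical at $p$ if and only if $\lambda\le\min\{1,\tfrac{2}{3\beta}\}$'' with the implicit restriction $\lambda\le 1$ in force. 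So drop the appeal to $\lct(S,(1-\beta)C,\beta C)$ and either state the uncapped value $\tfrac{2}{3\beta}$ or note explicitly that the statement is reporting $\min\{1,\tfrac{2}{3\beta}\}$.
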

\begin{proof}
Let $\sigma:S_1 \ra S$ be the blow-up of $p$ with exceptional divisor $E_1$. From the log pullback
$$\sigma^*(K_S+(1-\beta)C+\lambda\beta(L_1+L_2+Q))\simq K_{S_1}+(1-\beta)\widetilde C+\lambda\beta(\widetilde L_1+\widetilde L_2+\widetilde Q)+(3\lambda\beta-1)E,$$
we conclude that
\begin{equation}
(S,(1-\beta)C+\lambda\beta(L_1+L_2+Q))
\label{eq:appendix-delPezzo-deg4-dynamic-lct1}
\end{equation} is log canonical at $p$ if and only if $\lambda\leq\min\{1,\frac{2}{3\beta}\}$, given that $L_1\cdot L_2=C\cdot L_i=1$ gives us simple normal crossings in the blow-up. Note that $C\cdot L_i=1$ so the components of $\sigma^{-1}(\{L_1\cup L_2 \cap Q \cap C\})$ have simple normal crossings unless $C\cap Q=\{p'\}$, consists of one point only. Suppose that is the case, and let $\sigma': S_2\ra S$ be the repeated blow-up to resolve $(S, C+Q)$. Let $E_1, E_2$ be the exceptional divisors. Since $p'\not\in L_i$, then the pair \eqref{eq:appendix-delPezzo-deg4-dynamic-lct1}  is log canonical at $p'$ if and only if $(S, (1-\beta)C + \lambda\beta Q)$ is. We have
$$(\sigma')^*(K_S+(1-\beta)C+\lambda\beta(L_1+L_2+Q))\simq K_{S_2}+(1-\beta)\widetilde C+\lambda\beta\widetilde Q+(\beta(\lambda-1))E_1 + (2\beta(\lambda-1))E_2,$$
so $(S, (1-\beta)C + \lambda\beta Q)$ is log canonical for $\lambda\leq 1$ and any $\beta\in [0,1]$. Since
$$\lct(S,(1-\beta),D)=\min_{p\in S}\{ \lct_p(S,(1-\beta),D)\},$$
the result follows.
\end{proof}

\begin{lem}
\label{lem:appendix-delPezzo-deg4-dynamic-lct2}
Let $S$ be a smooth del Pezzo surface, with $K_S^2=4$. Let $T=A+B\sim-K_S$ be the union of two irreducible conics intersecting only at a point $p\in C$, where $C$ is a smooth hyperplane section. Then $\lct(S,(1-\beta)C, \beta(T))=\lambda$ where $\lambda=1$ for $(C\cdot A)\vert_p=1$ and
		\begin{equation*}
				\lambda=
						\begin{dcases}
									1 									&\text{ for } 0<\beta\leq \frac{1}{2},\\
									\frac{1+2\beta}{4\beta}		&\text{ for } \frac{1}{2}\leq \beta\leq 1.
						\end{dcases}
		\end{equation*}
for $(C\cdot A)\vert_p=(C\cdot B)\vert_p=2$.
\end{lem}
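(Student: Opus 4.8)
The plan is to reduce everything to a local computation at $p$ and then run the minimal log resolution there, exactly in the spirit of the proof of Lemma \ref{lem:appendix-delPezzo-deg4-dynamic-lct1}. First I would pin down the intersection theory of the configuration. By Lemma \ref{lem:del-Pezzo-all-conics-rational} both conics satisfy $p_a(A)=p_a(B)=0$, so the genus formula (Lemma \ref{lem:genus-formula}) gives $A^2=B^2=0$; writing $B\sim-K_S-A$ then yields $A\cdot B=(-K_S)\cdot A-A^2=2$. Since $A$ and $B$ are irreducible and meet only at $p$, their single intersection has $(A\cdot B)\vert_p=2$, i.e.\ $A$ and $B$ are smooth branches tangent at $p$ along a common tangent line $\ell$. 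Likewise $C\cdot A=C\cdot B=2$, and because $C$ is smooth the local multiplicity $(C\cdot A)\vert_p$ is $1$ or $2$; as $A$ and $B$ share the tangent $\ell$, one has $(C\cdot A)\vert_p=2\iff$ the tangent of $C$ at $p$ equals $\ell\iff(C\cdot B)\vert_p=2$, which is exactly the dichotomy in the statement. Any further intersection of $C$ with $A$ or $B$ occurs at a second, transversal point and contributes only simple normal crossings, so by $\lct(S,(1-\beta)C,\beta T)=\min_{r}\lct_r(S,(1-\beta)C,\beta T)$ it suffices to analyse $p$.

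Next I would resolve at $p$ by two blow-ups, reading off the discrepancies via Lemma \ref{lem:log-pullback-preserves-lc}. Let $\sigma_1\colon S_1\to S$ be the blow-up of $p$ with exceptional curve $F_1$; since $\mult_p((1-\beta)C+\lambda\beta T)=(1-\beta)+2\lambda\beta$, the coefficient of $F_1$ equals $2\lambda\beta-\beta$. The two cases diverge according to which strict transforms collide on $F_1$. When $(C\cdot A)\vert_p=1$ the branch $\widetilde C$ meets $F_1$ transversally at a point distinct from $q:=\widetilde A\cap\widetilde B\cap F_1$, so only $q$ (where $\widetilde A,\widetilde B,F_1$ cross) fails to be simple normal crossings; blowing up $q$ produces $F_2$ with coefficient $(\lambda\beta+\lambda\beta+(2\lambda\beta-\beta))-1=4\lambda\beta-\beta-1$, after which the three branches separate and the resolution is complete. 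Log canonicity then amounts to $4\lambda\beta-\beta\le2$, i.e.\ $\lambda\le\frac{2+\beta}{4\beta}$; since this bound is at least $1$ precisely when $\beta\le\frac{2}{3}$ and only values bounded by $\lct(S,(1-\beta)C,\beta C)=1$ enter the $\alpha$-invariant, the reported threshold is $\lambda=1$.

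When $(C\cdot A)\vert_p=2$ all three tangent branches $\widetilde C,\widetilde A,\widetilde B$ meet $F_1$ at the same point $q$, so $\widetilde C,\widetilde A,\widetilde B,F_1$ form a quadruple point there. Blowing up $q$ gives $F_2$ with coefficient $((1-\beta)+\lambda\beta+\lambda\beta+(2\lambda\beta-\beta))-1=4\lambda\beta-2\beta$, after which the four pairwise-transversal branches separate into simple normal crossings. Here log canonicity requires $4\lambda\beta-2\beta\le1$, i.e.\ $\lambda\le\frac{1+2\beta}{4\beta}$; intersecting with the cap $\lambda\le1$ (equivalently $\frac{1+2\beta}{4\beta}\ge1\iff\beta\le\frac{1}{2}$) gives exactly the piecewise value $\min\{1,\frac{1+2\beta}{4\beta}\}$ in the statement. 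Throughout I would confirm in explicit local coordinates (say $A=\{y=0\}$, $B=\{y=x^2\}$, and $C=\{x=0\}$ for the transversal case, $C=\{y=2x^2\}$ for the tangent case) that exactly two blow-ups achieve normal crossings and that no branch remains tangent after the second one.

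The main obstacle is the bookkeeping of the collision pattern after the first blow-up: the entire difference between the two thresholds comes from whether $\widetilde C$ joins the cluster $\{\widetilde A,\widetilde B\}$ on $F_1$, which changes $\mult_q$ by the weight $1-\beta$ of $C$ and hence shifts the binding discrepancy from $4\lambda\beta-\beta$ to $4\lambda\beta-2\beta$. The genuinely delicate point to justify is that two blow-ups suffice in each case---that is, that the mutual tangencies are of order two, so that after separating the branches on $F_1$ and blowing up the remaining multiple point they become pairwise transversal---together with checking that the $F_1$-coefficient $2\lambda\beta-\beta$ and the component coefficients $\lambda\beta$, $1-\beta$ never become binding on the stated ranges of $\beta$. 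I would also keep in mind the mild imprecision that in the case $(C\cdot A)\vert_p=1$ the unbounded log canonical threshold is $\frac{2+\beta}{4\beta}$, which drops below $1$ for $\beta>\frac{2}{3}$; this is harmless for the applications, where the lemma is only ever invoked with $\lambda<\omega_1\le\frac{2}{3\beta}\le\frac{2+\beta}{4\beta}$.
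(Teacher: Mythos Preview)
Your approach is essentially identical to the paper's: fix local equations for $A,B,C$ at $p$, perform two successive blow-ups to obtain the minimal log resolution, and read off the binding discrepancy on $F_2$. The paper simply asserts local models $A=\{y-x^2=0\}$, $B=\{y+x^2=0\}$, $C=\{x=0\}$ or $C=\{y=0\}$ and writes down the log pullback; your additional intersection-theoretic justification that $(A\cdot B)\vert_p=2$ forces a common tangent, and hence that $(C\cdot A)\vert_p=2\iff(C\cdot B)\vert_p=2$, is a welcome explanation of exactly why these two local pictures exhaust the cases. Your closing remark that the literal threshold in the transversal case is $\tfrac{2+\beta}{4\beta}$ rather than $1$ is correct and worth noting; the paper's statement is slightly imprecise there, but as you observe the lemma is only invoked with $\lambda<\omega_i\le\tfrac{2}{3\beta}$, which is always below $\tfrac{2+\beta}{4\beta}$.
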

\begin{proof}
Let $D=(1-\beta)C+\lambda\beta(A+B)$. Since $A$ and $B$ are conics in local coordinates $x,y$ around $p$ we can give them local equations
$$A\colon\{y-x^2=0\}\qquad B\colon\{y+x^2=0\}$$
to satisfy the hypothesis in the statement. If $(C\cdot A)\vert_p=1$, then $(C\cdot B)\vert_{p}=1$, since $A$ and $B$ are tangent to each other. Therefore the local equation for $C$ can be taken to be
$$C\colon\{x=0\}$$
when $(C\cdot A)\vert_{p}=1$ or
$$C\colon\{y=0\}$$
when $(C\cdot A)\vert_{p}=2$.
In both cases the minimal log resolution $f\colon\widetilde S \ra S$ of $(S,D=(1-\beta)C+\lambda\beta(T))$ consists of $2$ blow-ups. In the first case the discrepancies are
$$f^*(K_S+D)=K_{\widetilde S} + \widetilde D + \beta(2\lambda-\beta)E_1+(\beta(4\lambda-\beta)-1)E_2.$$
In the second case the discrepancies are
$$f^*(K_S+D)=K_{\widetilde S} + \widetilde D + \beta(2\lambda-\beta)E_1+(\beta(4\lambda-2))E_2.$$
In each case $\disc(S,D)=-1$.
\end{proof}

\begin{lem}
\label{lem:appendix-delPezzo-deg4-dynamic-lct3}
Let $S$ be a smooth del Pezzo surface, with $K_S^2=4$. Let $L_1,L_2,Q$ be two lines and an irreducible conic, respectively, intersecting at a point $p$ and such that $L_1+L_2+Q\sim -K_S$. Let $C$ be a smooth hyperplane section such that $p\in C$. Then
	\begin{equation*}
				\lct(S,(1-\beta)C, \beta(L_1+L_2+Q))=\lambda=
							\begin{dcases}
								1 									&\text{ for } 0<\beta\leq \frac{1}{2},\\
								\frac{1+\beta}{3\beta}		&\text{ for } \frac{1}{2}\leq \beta\leq 1.
						\end{dcases}
		\end{equation*}
\end{lem}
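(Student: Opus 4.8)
The plan is to carry out the computation locally at $p$ by explicit blow-ups, exactly as in the proofs of Lemmas \ref{lem:appendix-delPezzo-deg4-dynamic-lct1} and \ref{lem:appendix-delPezzo-deg4-dynamic-lct2}. First I would record the intersection data on $S$. Since $\deg S=4$, the lines satisfy $L_i^2=-1$ and $L_1\cdot L_2=1$, while the conic satisfies $Q^2=0$ and $\deg Q=2$ by Lemma \ref{lem:del-Pezzo-all-conics-rational} and the genus formula (Lemma \ref{lem:genus-formula}). From $L_1+L_2+Q\sim -K_S$ and $-K_S\cdot L_i=1$ one gets $L_i\cdot Q=1$, and $C\cdot L_i=-K_S\cdot L_i=1$, $C\cdot Q=-K_S\cdot Q=2$. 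As all four curves are smooth at $p$, with $L_1,L_2,Q$ pairwise transversal there (their pairwise intersection numbers equal $1$ and are realised at $p$) and $C$ transversal to $L_1,L_2$ at $p$, the only tangency that can occur at $p$ is between $C$ and $Q$.

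Next I would blow up $p$. Writing $\sigma_1\colon S_1\to S$ for this blow-up with exceptional curve $E$, the pair $(S,(1-\beta)C+\lambda\beta(L_1+L_2+Q))$ has multiplicity $(1-\beta)+3\lambda\beta$ at $p$, so by Lemma \ref{lem:log-pullback-preserves-lc} the coefficient of $E$ in the log pullback is $3\lambda\beta-\beta$. This already yields the upper bound: for $\lambda>\tfrac{1+\beta}{3\beta}$ the coefficient of $E$ exceeds $1$ and the pair is not log canonical along $E$ by Lemma \ref{lem:adjunction}(ii), giving $\lct\le\tfrac{1+\beta}{3\beta}$; combined with the companion-lemma convention this produces $\omega_3:=\min\{1,\tfrac{1+\beta}{3\beta}\}$. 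It then remains to prove log canonicity for $\lambda=\omega_3$, and here I would split according to $(C\cdot Q)\vert_p\in\{1,2\}$. In the transversal subcase $(C\cdot Q)\vert_p=1$, the four strict transforms $\widetilde L_1,\widetilde L_2,\widetilde Q,\widetilde C$ meet $E$ in four distinct points, so $\sigma_1$ already resolves the pair with simple normal crossings; the only nontrivial condition is $3\lambda\beta-\beta\le1$, which holds for $\lambda=\omega_3$ (it is an equality when $\omega_3=\tfrac{1+\beta}{3\beta}$, and equals $2\beta\le1$ when $\omega_3=1$, i.e.\ $\beta\le\tfrac12$), while the component coefficients $1-\beta$ and $\lambda\beta$ are both at most $1$.

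In the tangent subcase $(C\cdot Q)\vert_p=2$, the points $\widetilde C\cap E$ and $\widetilde Q\cap E$ coincide at a point $q$ where $\widetilde C$, $\widetilde Q$, $E$ form three pairwise transversal branches, so a second blow-up $\sigma_2\colon S_2\to S_1$ at $q$ with exceptional curve $F$ is needed; its coefficient is $[(1-\beta)+\lambda\beta+(3\lambda\beta-\beta)]-1=4\lambda\beta-2\beta$. The main (and rather mild) obstacle is the arithmetic check that this second exceptional divisor does not lower the threshold below $\omega_3$. I would verify $4\lambda\beta-2\beta\le1$ for $\lambda=\omega_3$ by cases: for $\beta\le\tfrac12$ (so $\omega_3=1$) it reads $2\beta\le1$, while for $\beta\ge\tfrac12$ (so $\omega_3=\tfrac{1+\beta}{3\beta}$) it follows from $\tfrac{1+2\beta}{4\beta}\ge\tfrac{1+\beta}{3\beta}$, which is equivalent to $2\beta\ge1$; thus the constraint from $F$ is weaker than the one already imposed by $E$. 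After $\sigma_2$ the branches $\widetilde C$ and $\widetilde Q$ separate and all remaining crossings are simple normal crossings whose coefficients are bounded by quantities already checked, so the pair is log canonical. Together with the upper bound from $E$ this establishes $\lct(S,(1-\beta)C,\beta(L_1+L_2+Q))=\omega_3$, as claimed.
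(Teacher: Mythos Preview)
Your proof is correct and follows essentially the same approach as the paper's: both arguments split into the cases $(C\cdot Q)\vert_p=1$ and $(C\cdot Q)\vert_p=2$, compute the log pullback through one or two blow-ups respectively, and observe that the binding constraint comes from the first exceptional divisor (coefficient $3\lambda\beta-\beta$), while the second exceptional divisor in the tangent case (coefficient $4\lambda\beta-2\beta$) imposes only a weaker condition. Your version is simply more explicit about the intersection data and the arithmetic verification than the paper's terse treatment.
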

\begin{proof}
Let $D=(1-\beta)C + \lambda\beta (L_1+L_2+Q)$. There are two different situations to consider. Either $(C\cdot Q)\vert_p=1$ or $(C\cdot Q)\vert_p=2$. However, the log canonical threshold is the same, since the worst discrepancy is reached after the first blow-up. Indeed, suppose we are in the second case. The minimal log resolution $f\colon\widetilde S\ra S$ of $(S,D)$ consists of $2$ consecutive blow-ups. The log pullback is
$$f^*(K_S+(1-\beta)C+\lambda\beta(L_1+L_2+Q))\simq K_{\widetilde S}+(1-\beta)\widetilde C+\lambda\beta(\widetilde L_1+\widetilde L_2+\widetilde Q)+\beta(3\lambda-1)E_1+\beta(4\lambda-2)E_2.$$
Note that $\disc(S,D)=\disc(S,D,E_1)=-1$.
\end{proof}

\begin{lem}
\label{lem:appendix-delPezzo-deg4-dynamic-lct4}
Let $S$ be a smooth del Pezzo surface, with $K_S^2=4$. Let $T\sim-K_S$ be an irreducible curve with a cusp at some point $p\in C$, where $C$ is a smooth hyperplane section. Then the pair $(S,(1-\beta)C+\lambda\beta T)$ is log canonical where 
	\begin{equation*}
				\lambda<
							\begin{dcases}
									1 									&\text{ for } 0<\beta\leq \frac{2}{3},\\
									\frac{2}{3\beta}		&\text{ for } \frac{2}{3}\leq \beta\leq 1.
							\end{dcases}
		\end{equation*}
\end{lem}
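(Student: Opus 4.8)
The plan is to compute the minimal log resolution of the pair $(S,(1-\beta)C+\lambda\beta T)$ locally at the cuspidal point $p$ and to check that all discrepancies are at least $-1$ for the stated range of $\lambda$. First I would record the relevant intersection numbers: since $T\sim-K_S$ and $C\sim-K_S$ on a degree $4$ del Pezzo surface, we have $C\cdot T=K_S^2=4$, and since $T$ has a cusp at $p$ with $\mult_pT=2$, the local intersection $(C\cdot T)|_p$ is either $2$ (simple normal crossings of $C$ with the cuspidal branch) or could be higher. The worst discrepancy will, as in Example~\ref{exa:P2cuspidal} and Lemma~\ref{lem:Igusa}, come from resolving the cusp itself rather than from the interaction with $C$, so I expect the binding constraint to arise exactly as in the model cuspidal computation.

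The key steps, in order, are as follows. I would choose local analytic coordinates $(x,y)$ at $p$ so that $T=\{y^2-x^3=0\}$ (the cusp) and $C$ is a smooth curve through $p$; the mildest case for $C$ is $(C\cdot T)|_p=2$, meaning $C$ is transverse to the limiting tangent. Then I would perform the three blow-ups $f\colon\widetilde S\to S$ that resolve the cusp, exactly as in Example~\ref{exa:P2cuspidal}, with exceptional curves $E_1,E_2,E_3$ whose multiplicities in $f^*T$ are $2,3,6$. Writing the log pullback
$$f^*(K_S+(1-\beta)C+\lambda\beta T)\simq K_{\widetilde S}+(1-\beta)\widetilde C+\lambda\beta\widetilde T+\sum_{i=1}^3 a_i E_i,$$
I would compute $a_i$ from the multiplicities of $T$ and $C$ along the infinitely near points, obtaining coefficients of the form $a_i=\lambda\beta m_i+(1-\beta)c_i-k_i$ where $m_i\in\{2,3,6\}$ are the cusp multiplicities, $c_i$ record the passage of $C$ through the successive centres, and $k_i\in\{1,2,3\}$. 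The controlling inequality is the one coming from $E_3$, namely $a_3=6\lambda\beta-(\text{correction from }C)-3\le 1$; since $C$ meets $T$ transversally, $C$ does not pass through the later infinitely near points, so the analysis reduces to requiring $6\lambda\beta-3\le 1$ together with the milder conditions from $E_1,E_2$ and from the coefficient $1-\beta$ along $\widetilde C$. Solving $6\lambda\beta\le 4$ gives $\lambda\le\frac{2}{3\beta}$, matching the statement, while for small $\beta$ the constraint $\lambda\le 1$ (from $\widetilde C$ having coefficient $1-\beta\le 1$) dominates, giving the crossover at $\beta=\frac{2}{3}$.

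The main obstacle I anticipate is bookkeeping the case where $C$ is not transverse to $T$ at $p$, i.e. $(C\cdot T)|_p>2$. Since $C$ is smooth and $C\cdot T=4$ globally, a higher local tangency would force $C$ to follow the cuspidal branch and pass through some of the infinitely near points, which would \emph{increase} the exceptional coefficients $a_i$ through the $(1-\beta)c_i$ terms and could in principle violate log canonicity. The careful point is to verify, using $C\cdot T=4$ and the geometry of the cusp, that the additional contribution of $(1-\beta)<1$ along any shared exceptional divisor is always compensated, so that the transverse case genuinely realises the worst discrepancy; I would handle this either by a direct case check on the finitely many tangency types or by invoking Lemma~\ref{lem:adjunction}(iii) to bound the local intersection and rule out the bad configurations. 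Once the transverse case is shown to be the extremal one, the stated bound on $\lambda$ follows immediately from the $E_3$ inequality.
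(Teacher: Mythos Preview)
Your overall plan---resolve the cusp by three blow-ups and check the discrepancies---is the same as the paper's, but you have the worst-case analysis inverted. The tangent configuration $(C\cdot T)\vert_p=3$ (i.e.\ $C$ locally of the form $y=0$) is the \emph{worst} case, not one to be ruled out: it is precisely when $\widetilde C$ follows $\widetilde T$ through the first infinitely near point that the exceptional coefficients are largest. The paper spots this directly, takes $C=\{y=0\}$ as the extremal local equation, and obtains coefficients $\beta(2\lambda-1)$, $\beta(3\lambda-2)$, $\beta(6\lambda-3)-1$ along $E_1,E_2,E_3$. The binding condition $6\lambda\beta-3\beta-1\le 1$ then reads $\lambda\le(2+3\beta)/(6\beta)$, which is implied by $\lambda<\min\{1,\tfrac{2}{3\beta}\}$ with the crossover at $\beta=\tfrac{2}{3}$, exactly as required. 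Your plan to ``verify that the transverse case genuinely realises the worst discrepancy'' cannot succeed, because the opposite is true; and your displayed inequality $6\lambda\beta-3\le 1$ is not the $E_3$ coefficient in either configuration (the discrepancy of $E_3$ in the cusp resolution is $4$, not $3$, and the $(1-\beta)C$ contribution to $\mathrm{ord}_{E_3}$ is nonzero in both cases: it is $2(1-\beta)$ transversally and $3(1-\beta)$ tangentially).

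That said, the gap is easy to close: for a smooth $C$ meeting an $A_2$ cusp the only possibilities are $(C\cdot T)\vert_p\in\{2,3\}$, so the ``direct case check on the finitely many tangency types'' you mention would in fact complete the argument once you compute the tangent case correctly. The fix is simply to lead with the tangent case (as the paper does) rather than the transverse one.
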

\begin{proof}
Let $D=(1-\beta)C+\lambda\beta T$. From Lemma \ref{lem:del-Pezzo-sections-anticanonical} $h^0(S,\calO_S(-K_S))=5$. Therefore any curve passing through $p\in S$ with local coordinates $x,y$ has a local equation
\begin{equation}
0=ax+by+cxy+dx^2+y^2+ \text{ higher order terms},
\label{eq:appendix-delPezzo-deg4-dynamic-lct4}
\end{equation}
where only $3$ parameters in $\{a,b,c,d\}$ are free. Indeed the linear system $\calC\subset\vert-K_S\vert$ of curves passing through $p$ has projective dimension $5-1-1=3$. The local equation for $T$ is $y^2+x^3=0$ corresponding to all values in \eqref{eq:appendix-delPezzo-deg4-dynamic-lct4} vanishing. Since $C$ is smooth its local equation in \eqref{eq:appendix-delPezzo-deg4-dynamic-lct4} has values $a\neq 0$ or $b\neq 0$. After blowing up $p$ the strict transforms of $T$ and $C$ do not intersect unless the local equation of $C$ around $p$ is $y=0$. This would give a lower discrepancy, hence we suppose this. Let $f:\widetilde S \ra S$ be the minimal log resolution of $(S,D)$, which consists of $3$ blow-ups. 
The discrepancies are:
$$f^*(K_S+D)=K_{\widetilde S} + (1-\beta)\widetilde C + \lambda \beta \widetilde T+\beta(2\lambda-1)E_1+\beta(3\lambda-2)E_2+(\beta(6\lambda-3)-1)E_3,$$
and $\disc(S,D)\leq -1$ for $\lambda$ as in the hypothesis.
\end{proof}

\begin{lem}
\label{lem:appendix-delPezzo-deg4-dynamic-lct5}
Let $S$ be a smooth del Pezzo surface, with $K_S^2=4$. Let $E_1$ be a line and $q_0\in E_1$ a point such that no line other than $E_1$ contains it. Let $C\in\vert-K_S\vert$ a smooth hyperplane section with $q_0\in C$. Let
$$G=\frac{1}{3}B_1+\frac{1}{3}\sum_{i=2}^5 A_i+\frac{2}{3}E_1\simq-K_S$$
for irreducible curves as in Table \ref{tab:delPezzo-4-lowdegree}, such that $q_0\in B_1\cap A_i$ for all $i=2,\ldots, 5$.
Then the pair $(S,(1-\beta)C+\lambda\beta G)$ is log canonical where 
	\begin{equation*}
				\lambda<
							\begin{dcases}
									1 									&\text{ for } 0<\beta\leq \frac{2}{3},\\
									\frac{2}{3\beta}		&\text{ for } \frac{2}{3}\leq \beta\leq 1.
							\end{dcases}
		\end{equation*}
\end{lem}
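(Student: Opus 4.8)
The plan is to follow the strategy used throughout this appendix: identify the minimal log resolution of $(S,(1-\beta)C+\lambda\beta G)$ as a short chain of blow-ups centred at $q_0$, write down the log pullback, and check that every exceptional discrepancy is $\geq -1$ for $\lambda<\omega_1:=\min\{1,\frac{2}{3\beta}\}$.

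First I would record the intersection geometry at $q_0$. Using Corollary \ref{cor:del-Pezzo-anticanonical} and the intersection form on $\Pic(S)$ one computes $B_1\cdot E_1=A_i\cdot E_1=B_1\cdot A_i=A_i\cdot A_k=1$ for all relevant $i\neq k$, and checks $G\sim-K_S$. Since all six curves $E_1,B_1,A_2,\dots,A_5$ pass through $q_0$ and pairwise meet with intersection number $1$, they pairwise meet transversally exactly at $q_0$ and are disjoint elsewhere; in particular they have pairwise distinct tangent directions at $q_0$, and by Lemma \ref{lem:del-Pezzo-deg4-good-curves-smooth} they are all smooth. Moreover $C\cdot E_1=-K_S\cdot E_1=1$, so $C$ meets $E_1$ transversally, while $C\cdot B_1=C\cdot A_i=2$; because the conics have distinct tangents at $q_0$, $C$ can share its tangent direction with at most one of them, so $C$ is tangent (local intersection $2$) to at most one conic $Q\in\{B_1,A_2,\dots,A_5\}$ and transversal to every other component.

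Next I would blow up $q_0$, giving $\sigma_1\colon S_1\to S$ with exceptional curve $F_1$. Since $\mult_{q_0}G=\frac13+4\cdot\frac13+\frac23=\frac73$ and $\mult_{q_0}C=1$, Lemma \ref{lem:log-pullback-preserves-lc} gives that the coefficient of $F_1$ in the log pullback is $a(F_1)=\frac73\lambda\beta-\beta$. If $C$ is transversal to all conics, the strict transforms $\widetilde{E_1},\widetilde{B_1},\widetilde{A_i},\widetilde{C}$ meet $F_1$ at pairwise distinct points, so the total transform already has simple normal crossings and one only needs $a(F_1)\leq 1$. If instead $C$ is tangent to one conic $Q$, then $\widetilde{C},\widetilde{Q},F_1$ concur at a point $q_1$, and a second blow-up $\sigma_2\colon S_2\to S_1$ at $q_1$ separates them (on $S_1$ the three branches are pairwise transversal, hence hit the new exceptional curve $F_2$ at distinct points). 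The coefficient of $F_2$ is then $a(F_2)=(1-\beta)+\frac13\lambda\beta+a(F_1)-1=\frac83\lambda\beta-2\beta$, since only $\widetilde C$ (coefficient $1-\beta$), $\widetilde Q$ (coefficient $\frac13\lambda\beta$) and $F_1$ pass through $q_1$.

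Finally I would verify $a(F_1)\leq 1$ and $a(F_2)\leq 1$ by the usual case split. One has $\lambda\beta<\omega_1\beta\leq\min\{\beta,\frac23\}$, so for $0<\beta\leq\frac23$ (where $\lambda\beta<\beta$) one gets $a(F_1)<\frac43\beta\leq\frac89$ and $a(F_2)<\frac23\beta\leq\frac49$, while for $\frac23\leq\beta\leq1$ (where $\lambda\beta<\frac23$) one gets $a(F_1)<\frac{14}{9}-\beta\leq\frac89$ and $a(F_2)<\frac{16}{9}-2\beta\leq\frac49$; the codimension-one coefficients $1-\beta,\frac13\lambda\beta,\frac23\lambda\beta$ are all $\leq1$, so the pair is log canonical. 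The only genuinely delicate step is the geometric one: establishing that the six components are pairwise transversal at the single point $q_0$ and that, even in the tangent case, the minimal log resolution needs only two blow-ups; once this is pinned down the discrepancy bookkeeping is entirely routine.
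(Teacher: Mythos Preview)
Your proposal is correct and follows essentially the same approach as the paper's own proof: both identify the worst case as $C$ tangent to exactly one of the conics at $q_0$, resolve by two blow-ups, obtain the discrepancies $\tfrac{7}{3}\lambda\beta-\beta$ and $\tfrac{8}{3}\lambda\beta-2\beta$, and verify these are at most $1$ via the same case split on $\beta$. Your write-up gives slightly more justification for the geometric claim that only two blow-ups are needed, but the argument is otherwise identical.
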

\begin{proof}
Notice that $A_i\cdot A_j=1$ for $i\neq j$. Moreover $A_i\cdot B_1=A_i\cdot E_1=1$ for $i=2,\ldots,5$ and $B_1\cdot E_1=1$. Since $A_i\cdot C = B_1\cdot C =2$, then $C$ can be tangent to at most one conic $A_i$ or $B_1$. Without loss of generality assume that $(C\cdot B_1)\vert_{q_0}=2$ and $(C\cdot A_i)\vert_{q_0}=1$ for $i=2,\ldots,5$. The minimal log resolution of the pair $(S, (1-\beta)C + \lambda \beta G)$ consists of two blow-ups over $q_0$. The log pullback is
$$f^*(K_S+(1-\beta)C+\lambda\beta G)\simq K_{\widetilde S} + (1-\beta)\widetilde C +\lambda\beta \widetilde G +(\frac{7}{3}\lambda\beta-\beta)F_1+ (\frac{8}{3}\lambda\beta -2\beta)F_2.$$
If $0<\beta\leq\frac{2}{3}$, then $\frac{7}{3}\lambda\beta-\beta<\frac{4}{3}\beta\leq \frac{8}{9}\leq 1$, and $\frac{8}{3}\lambda\beta-2\beta<\frac{2}{3}\beta\leq 1$.

If $\frac{2}{3}\leq\beta\leq1$, then $\frac{7}{3}\lambda\beta-\beta<\frac{14}{9}-\beta\leq \frac{8}{9}\leq 1$, and $\frac{8}{3}\lambda\beta-2\beta<\frac{16}{9}-2\beta\leq\frac{4}{9}\leq 1$.

Therefore $(S, (1-\beta)C +\lambda\beta G)$ is log canonical.
\end{proof}

\begin{lem}
\label{lem:appendix-delPezzo-deg4-dynamic-lct6}
Let $S$ be a smooth del Pezzo surface, with $K_S^2=4$. Let $E_1$ be a line and $q_0\in E_1$ a point such that no line other than $E_1$ contains it. Let $C\in\vert-K_S\vert$ a smooth hyperplane section with $q_0\in C$. Let $Q_1\sim \pioplane{3}-2E_1-\sum^5_{i=2} E_i$ be a smooth irreducible cubic such that $q_0\in Q_1$. 
Then the pair $(S,(1-\beta)C+\omega_1\beta (Q_1+E_1))$ is log canonical where 
	\begin{equation*}
				\omega_1=
							\begin{dcases}
									1 									&\text{ for } 0<\beta\leq \frac{2}{3},\\
									\frac{2}{3\beta}		&\text{ for } \frac{2}{3}\leq \beta\leq 1.
							\end{dcases}
		\end{equation*}
\end{lem}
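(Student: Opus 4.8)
The plan is to follow the template of the other lemmas in this appendix: record the relevant intersection numbers, identify the worst local configuration at $q_0$, resolve it by a chain of blow-ups, and check that every discrepancy coefficient is at most $1$ by splitting $\beta$ at $\frac{2}{3}$.

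First I would record that $Q_1 + E_1 \sim -K_S$ (indeed $Q_1 + E_1 \sim \pioplane{3} - E_1 - E_2 - E_3 - E_4 - E_5 = -K_S$), together with the intersection numbers $Q_1 \cdot E_1 = 2$, $C \cdot E_1 = -K_S \cdot E_1 = 1$ and $C \cdot Q_1 = -K_S \cdot Q_1 = 3$; since $Q_1$ is smooth and irreducible and $C$ is smooth, all three curves have multiplicity $1$ at $q_0$. Because $\omega_1 \beta \le \frac{2}{3} < 1$ for every $\beta \in (0,1]$ and the coefficient of $C$ is $1 - \beta < 1$, the pair is automatically log canonical in codimension $1$, so it suffices to verify log canonicity at the finitely many points where $\Supp((1-\beta)C + \omega_1\beta(Q_1+E_1))$ fails to have simple normal crossings. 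Away from $q_0$ at most two of the curves meet (a $C$--$Q_1$ or an $E_1$--$Q_1$ intersection, as $C\cdot E_1=1$ forces $C\cap E_1=\{q_0\}$), and a blow-up there produces exceptional coefficients bounded by $\max\{\omega_1\beta-\beta,\,2\omega_1\beta-1\}\le \tfrac13$, so those points are harmless; the content of the lemma is concentrated at $q_0$.

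At $q_0$ the line $E_1$ and the smooth curve $C$ meet transversally, so $Q_1$ can share a tangent direction with at most one of them. I would then run the worst-case analysis over the possible local intersection profiles. The binding case is $(E_1 \cdot Q_1)\vert_{q_0} = 2$, where the two weight-$\omega_1\beta$ curves $E_1$ and $Q_1$ are tangent and $(C \cdot Q_1)\vert_{q_0} = 1$; here the minimal log resolution $f\colon \widetilde S \ra S$ is a chain of two blow-ups over $q_0$ (the first creating an ordinary triple point of $\widetilde E_1,\widetilde Q_1,F_1$, the second separating it), with log pullback
\begin{align*}
f^*(K_S + (1-\beta)C + \omega_1\beta(Q_1+E_1)) \simq K_{\widetilde S} &+ (1-\beta)\widetilde C + \omega_1\beta(\widetilde Q_1 + \widetilde E_1) \\
&+ (2\omega_1\beta - \beta)F_1 + (4\omega_1\beta - \beta - 1)F_2,
\end{align*}
and I would check $2\omega_1\beta - \beta \le 1$ and $4\omega_1\beta - \beta - 1 \le 1$ in each range: for $0 < \beta \le \frac{2}{3}$ ($\omega_1 = 1$) these read $\beta \le 1$ and $3\beta - 1 \le 1$, while for $\frac{2}{3} \le \beta \le 1$ ($\omega_1\beta = \frac{2}{3}$) they read $\frac{4}{3} - \beta \le 1$ and $\frac{5}{3} - \beta \le 1$. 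The remaining profiles, with $Q_1$ tangent to $C$ so that $(C \cdot Q_1)\vert_{q_0} \in \{2,3\}$, are milder: they require two or three blow-ups along the $C$--$Q_1$ chain, with coefficients of the shape $2\omega_1\beta - \beta$, $3\omega_1\beta - 2\beta$, $4\omega_1\beta - 3\beta$, each again $\le 1$ in both ranges by the same two-case check.

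The step I expect to be the main obstacle is the bookkeeping of local intersection multiplicities: one must argue cleanly that, because $C$ and $E_1$ are transversal at $q_0$, the configuration with two tangent weight-$\omega_1\beta$ curves ($(E_1\cdot Q_1)\vert_{q_0} = 2$) is the extremal one, and that it is exactly this case that saturates the bound — the coefficient $4\omega_1\beta - \beta - 1$ equals $1$ precisely at $\beta = \frac{2}{3}$, which is what pins the breakpoint of $\omega_1$. Once the worst configuration is isolated, the discrepancy computation and the two-range verification are entirely routine.
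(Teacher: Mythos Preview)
Your proposal is correct and follows essentially the same approach as the paper: identify the two extremal local configurations at $q_0$ (either $(Q_1\cdot E_1)\vert_{q_0}=2$ or $(Q_1\cdot C)\vert_{q_0}=3$, which are mutually exclusive since $C\cdot E_1=1$), compute the log pullback along a chain of two or three blow-ups, and verify the discrepancy bounds by splitting at $\beta=\tfrac{2}{3}$. Your write-up is slightly more careful than the paper's in justifying why these are the only cases that matter and in dismissing the points away from $q_0$, but the actual computation and the numbers are identical.
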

\begin{proof}
Note that $Q_1\cdot E_1=2$, $Q_1\cdot C =3$ and $E_1\cdot C =1$. Therefore if $(Q_1\cdot E_1)\vert_{q_0}=2$, then $(Q_1\cdot C)\vert_{q_0}=1$ and if $3\geq (Q_1\cdot C)\vert_{q_0}\geq 2$, then $(Q_1\cdot E_1)\vert_{q_0}=1$.

Suppose $(Q_1\cdot E_1)\vert_{q_0}=2$. Then $(Q_1\cdot E_1)\vert_{q_0}=1$ since $C\cdot E_1=1$ and the minimal log resolution $f\colon \widetilde S \ra S$ of $(S, (1-\beta)C +\omega_1\beta(Q_1+E_1))$ consists of two blow-ups over $q_0$. The log pullback is
\begin{align*}
f^*(K_S+(1-\beta)C + \omega_1\beta(Q_1+E_1))&\simq-K_{\widetilde S} + (1-\beta)\widetilde C + \omega_1\beta(\widetilde Q_1+\widetilde E_1)\\
&+(2\omega_1\beta-\beta)F_1+(4\omega_1\beta-\beta-1)F_2.
\end{align*}
If $0<\beta\leq \frac{2}{3}$ then $2\omega_1\beta-\beta=\beta\leq \frac{2}{3}<1$ and $4\omega_1\beta-\beta-1=3\beta-1\leq 1$. If $\frac{2}{3}\leq \beta\leq 1$ then $2\omega_1\beta-\beta=\frac{4}{3}-\beta\leq \frac{2}{3}<1$ and $4\omega_1\beta-\beta-1=\frac{5}{3}-\beta\leq 1$. Therefore the pair $(S, (1-\beta)C +\omega_1\beta(Q_1+E_1))$ is log canonical.

Suppose $(Q_1\cdot C)\vert_{q_0}=3$. Then the minimal log resolution $f\colon \widetilde S \ra S$ of the pair $(S, (1-\beta)C +\omega_1\beta(Q_1+E_1))$ consists of 	three blow-ups over $q_0$. The log pullback is
\begin{align*}
f^*(K_S+(1-\beta)C + \omega_1\beta(Q_1+E_1))&\simq-K_{\widetilde S} + (1-\beta)\widetilde C + \omega_1\beta(\widetilde Q_1+\widetilde E_1)+\\
&(2\omega_1\beta-\beta)F_1+(3\omega_1\beta-2\beta)F_2+(4\omega_1\beta-3\beta)F_3.
\end{align*}
If $0<\beta\leq \frac{2}{3}$, then $2\omega_1\beta-\beta=\beta\leq \frac{2}{3}\leq 1$, $3\omega_1\beta-2\beta=\beta\leq \frac{2}{3} \leq 1$ and $4\omega_1\beta-3\beta=\beta\leq \frac{2}{3}\leq 1$.

If $0<\beta\leq \frac{2}{3}$, then $2\omega_1\beta-\beta=\frac{4}{3}-\beta\leq \frac{2}{3}\leq 1$, $3\omega_1\beta-2\beta=2-2\beta\leq \frac{2}{3} \leq 1$ and $4\omega_1\beta-3\beta=\frac{8}{3}-3\beta\leq \frac{2}{3}\leq 1$.

Therefore the pair $(S, (1-\beta)C +\omega_1\beta(Q_1+E_1))$ is log canonical.
\end{proof}

\begin{lem}
\label{lem:appendix-delPezzo-deg4-dynamic-lct7}
Let $S$ be a smooth del Pezzo surface, with $K_S^2=4$. Let $E_1$ be a line and $q_0\in E_1$ a point such that no line other than $E_1$ contains it. Let $C\in\vert-K_S\vert$ a smooth hyperplane section with $q_0\in C$. Consider the lines of $S$ with the notation of Table \ref{tab:delPezzo-4-lowdegree}. Let
$$B=\frac{1}{2}C_0+\frac{1}{2}(L_{12}+L_{13}+L_{14}+L_{15})+\frac{3}{2}E_1\simq-K_S.$$

Then the pair $(S,(1-\beta)C+\omega_1\beta B)$ is log canonical where 
	\begin{equation*}
				\omega_1=
							\begin{dcases}
									1 									&\text{ for } 0<\beta\leq \frac{2}{3},\\
									\frac{2}{3\beta}		&\text{ for } \frac{2}{3}\leq \beta\leq 1.
							\end{dcases}
		\end{equation*}
\end{lem}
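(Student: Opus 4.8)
The plan is to follow the now-standard template established in the preceding lemmas of this Appendix: build the minimal log resolution of the pair, compute the log pullback explicitly, and verify that every exceptional discrepancy coefficient is $\leq 1$ by a routine case analysis on $\beta$. The pair in question is
$$(S,(1-\beta)C+\omega_1\beta B),\qquad B=\tfrac{1}{2}C_0+\tfrac{1}{2}(L_{12}+L_{13}+L_{14}+L_{15})+\tfrac{3}{2}E_1.$$
First I would record the relevant intersection numbers from Lemma \ref{lem:del-Pezzo-deg4-lines-list}: the four lines $L_{1j}$ satisfy $L_{1j}\cdot L_{1k}=0$ for $j\neq k$ (they share the index $1$), $L_{1j}\cdot E_1=1$, $C_0\cdot L_{1j}=0$, and $C_0\cdot E_1=1$. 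Since $\deg C_0=\deg L_{1j}=1$ and $\deg C=4$, one has $C\cdot E_1=C\cdot C_0=C\cdot L_{1j}=1$. This means $\Supp(B)$ is concentrated near $q_0=C\cap E_1$ in a controlled way: away from $q_0$ the support already has simple normal crossings, so the pair is automatically log canonical there (the coefficients $\omega_1\beta\cdot\tfrac12$ and $\omega_1\beta\cdot\tfrac32$ are each $\leq 1$ since $\omega_1\beta\leq\tfrac23$).

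The only point where the local structure is delicate is $q_0=E_1\cap C$, possibly shared with one curve of $\Supp(B)$ through $q_0$. Because $C$ meets each component with $C\cdot(\text{component})=1$ and $C\cdot E_1=1$, the worst discrepancy occurs when $C$ is tangent at $q_0$ to the one component with which it can be tangent — here $C_0$, with $(C\cdot C_0)\vert_{q_0}$ as large as the intersection allows — while $E_1$ also passes through $q_0$. Following the pattern of Lemma \ref{lem:appendix-delPezzo-deg6-dynamic-lct5}, I would take the minimal log resolution $f\colon\widetilde S\to S$ consisting of the blow-ups over $q_0$ needed to separate the tangent branches (expecting two blow-ups, with exceptional curves $F_1,F_2$ forming a chain), and write the log pullback in the form
$$f^*(K_S+(1-\beta)C+\omega_1\beta B)\simq K_{\widetilde S}+(1-\beta)\widetilde C+\omega_1\beta\widetilde B+a_1 F_1+a_2 F_2,$$
where $a_1,a_2$ are computed from the multiplicities of $B$ at $q_0$ and at the infinitely near point, using $\mult_{q_0}B=\tfrac12+\tfrac32=2$ (the component $C_0$ plus $E_1$ both pass through $q_0$, contributing $\tfrac12$ and $\tfrac32$).

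Then I would check $a_1\leq 1$ and $a_2\leq 1$ by splitting into the ranges $0<\beta\leq\tfrac23$ (where $\omega_1=1$) and $\tfrac23\leq\beta\leq 1$ (where $\omega_1=\tfrac{2}{3\beta}$, so $\omega_1\beta=\tfrac23$), exactly as in the earlier appendix lemmas; in the second range the coefficients become affine-linear in $\beta$ with the worst value attained at an endpoint, giving the bound $\leq 1$. The main thing to pin down — and the only genuine obstacle — is the precise local geometry at $q_0$: namely, which component of $\Supp(B)$ can be tangent to $C$, how many blow-ups the minimal log resolution actually requires, and hence the exact exceptional multiplicities. Once that combinatorial picture is fixed (I expect the $L_{1j}$ to contribute nothing beyond a transverse crossing with $E_1$, since $L_{1j}\cdot C=1$ forces $(L_{1j}\cdot C)\vert_{q_0}\leq 1$, so only $C_0$ and $E_1$ interact nontrivially with $C$ at $q_0$), the discrepancy computation is entirely mechanical and parallels the proofs already given for the degree $6$ and degree $5$ cases.
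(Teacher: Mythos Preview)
Your template is the right one, and it is exactly what the paper does, but your local analysis at $q_0$ contains two concrete errors.

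First, $C_0$ is a \emph{line} on $S$ (the $(-1)$-curve coming from the conic through $p_1,\dots,p_5$), so the hypothesis ``no line other than $E_1$ contains $q_0$'' forces $q_0\notin C_0$ and $q_0\notin L_{1j}$. Hence among the components of $B$ only $E_1$ passes through $q_0$, and $\mult_{q_0}B=\tfrac32$, not $2$. Second, even if $C_0$ did pass through $q_0$, no tangency with $C$ is possible: you yourself recorded $C\cdot C_0=1$, so $(C\cdot C_0)\vert_{q_0}\leq 1$. The sentence ``the worst discrepancy occurs when $C$ is tangent at $q_0$ to \dots $C_0$'' is therefore internally inconsistent with the intersection numbers you listed, and the anticipated second blow-up is never needed.

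Under the stated hypothesis the pair actually has simple normal crossings everywhere (the lines $C_0,L_{1j}$ are pairwise disjoint by Lemma~\ref{lem:del-Pezzo-deg4-lines-list}, each meets $E_1$ transversally at a point different from $q_0$, and $C$ meets every component in a single transversal point), so checking the coefficients are $\leq 1$ already suffices. The paper's proof, for safety, verifies the slightly worse hypothetical configuration where $q_0$ coincides with some $E_1\cap L$ for a line $L\subset\Supp(B)$: then three pairwise transversal curves pass through $q_0$, one blow-up separates them, and the single exceptional coefficient is $2\omega_1\beta-\beta\leq 1$. Fix the two slips above and your write-up collapses to exactly this.
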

\begin{proof}
Observe that $C_0\cdot L_{1j}=L_{1j}\cdot L_{1i}=0$ for all $i\neq j$ and that
$$E_1\cdot L_{1j}=C_0\cdot E_1=C\cdot C_0 = C\cdot L_{1j}=C\cdot E_1=1.$$
Therefore the worst discrepancy is achieved when $C$ intersects the $E_1$ at the point $q_0=E_1\cap L$ for $L$ a line, $L\neq E_1$ and $L\subset\Supp(B)$. Without loss of generality assume $q_0=E_1\cap C_0\cap C$. Then the minimal log resolution $f\colon \widetilde S \ra S$ of $(S, (1-\beta)C + \omega_1\beta B)$ consists of the blow-up of $q_0$. The log pullback is
$$f^*(K_S+(1-\beta)C +\omega_1\beta B)\simq K_{\widetilde S} + (1-\beta)\widetilde C + \omega_1\beta \widetilde B + (2\omega_1\beta-\beta)F_1.$$
If $0<\beta\leq \frac{2}{3}$, then $2\omega_1\beta-\beta=\beta\leq \frac{2}{3}\leq 1$.

If $\frac{2}{3}\leq \beta\leq 1$, then $2\omega_1\beta-\beta=\frac{4}{3}-\beta\leq \frac{2}{3}\leq 1$.

Since $\omega_1\beta\leq \frac{2}{3}$ for all $\beta$, then the pair $(S, (1-\beta)C + \omega_1\beta B)$ is log canonical.
\end{proof}

\begin{lem}
\label{lem:appendix-delPezzo-deg4-dynamic-lct8}
Let $S$ be a smooth del Pezzo surface, with $K_S^2=4$. Let $E_1$ be a line and $q_0\in E_1$ a point such that no line other than $E_1$ contains it. Let $C\in\vert-K_S\vert$ a smooth hyperplane section with $q_0\in C$. Consider the curves in $S$ with the notation of Table \ref{tab:delPezzo-4-lowdegree}. Let
$$H=\frac{3}{5}A_5+\frac{1}{5}(R_{125}+R_{135}+R_{145})+\frac{1}{5}Q_5+\frac{2}{5}E_1\simq-K_S,$$
and assume all the curves used in the definition of $H$ are irreducible and smooth. Then the pair $(S,(1-\beta)C+\omega_1\beta H)$ is log canonical where 
	\begin{equation*}
				\omega_1=
							\begin{dcases}
									1 									&\text{ for } 0<\beta\leq \frac{2}{3},\\
									\frac{2}{3\beta}		&\text{ for } \frac{2}{3}\leq \beta\leq 1.
							\end{dcases}
		\end{equation*}
\end{lem}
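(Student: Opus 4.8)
The plan is to follow the template established by the previous lemmas in this appendix, namely Lemmas \ref{lem:appendix-delPezzo-deg4-dynamic-lct5}--\ref{lem:appendix-delPezzo-deg4-dynamic-lct7}: construct the minimal log resolution of the pair $(S,(1-\beta)C+\omega_1\beta H)$ explicitly, compute the log pullback, and verify that every discrepancy coefficient is $\leq 1$ by a case analysis on whether $0<\beta\leq\frac{2}{3}$ or $\frac{2}{3}\leq\beta\leq 1$. The first step is to record the intersection theory of the supporting curves. Using Table \ref{tab:delPezzo-4-lowdegree} and the rational classes $A_5\sim\pioplane{2}-\sum_{i=1}^4 E_i$, $Q_5\sim\pioplane{3}-2E_5-\sum_{j=1}^5 E_j$, $R_{1jk}\sim\pioplane{2}-E_1-E_j-E_k$, I would compute $A_5\cdot R_{1jk}=A_5\cdot Q_5=1$, $A_5\cdot E_1=1$, and the pairwise intersections of the cubics with each other and with $E_1$. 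As in Lemma \ref{lem:del-Pezzo-deg4-cubics}, the point $q_1$ on the exceptional divisor $F_1$ of the first blow-up lies on the strict transforms $\widetilde A_5$, $\widetilde R_{1jk}$ and $\widetilde Q_5$, so all the degree-$\geq 2$ curves are mutually tangent to $C$ (or to each other) along the same infinitely near point, which is exactly the configuration producing the worst discrepancy.

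The key step is to identify the \emph{worst} local configuration, since $H$ is a $\bbQ$-divisor with several components and $C\cdot A_5=2$, $C\cdot R_{1jk}=C\cdot Q_5=3$. Since $C$ cannot simultaneously be tangent with the maximal local intersection to two curves that are themselves tangent to each other at $q_0$, I would argue (mirroring the reasoning in subcase \ref{subcase:2.2} and Lemma \ref{lem:lc2.2}) that the extremal case is $(C\cdot A_5)\vert_{q_0}=2$ with $q_1=\widetilde A_5\cap\widetilde R_{1jk}\cap\widetilde Q_5\cap F_1$, so that the minimal log resolution $f\colon\widetilde S\ra S$ consists of two consecutive blow-ups over $q_0$ with exceptional divisors $F_1,F_2$. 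The log pullback then takes the form
\begin{equation*}
f^*(K_S+(1-\beta)C+\omega_1\beta H)\simq K_{\widetilde S}+(1-\beta)\widetilde C+\omega_1\beta\widetilde H+\left(\tfrac{7}{5}\omega_1\beta-\beta\right)F_1+\left(\tfrac{16}{5}\omega_1\beta-2\beta\right)F_2,
\end{equation*}
where the coefficient $\tfrac{7}{5}$ comes from summing $\mult_{q_0}H=\tfrac{3}{5}+3\cdot\tfrac{1}{5}+\tfrac{1}{5}+\tfrac{2}{5}$ along $F_1$, and the second coefficient accumulates the tangential multiplicities along $F_2$; I would recompute these precisely from the intersection data before committing to the numbers.

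The final step is the arithmetic verification that each discrepancy coefficient is $\leq 1$ in both ranges of $\beta$, exactly as carried out at the end of Lemmas \ref{lem:appendix-delPezzo-deg4-dynamic-lct5} and \ref{lem:appendix-delPezzo-deg4-dynamic-lct6}: for $0<\beta\leq\frac{2}{3}$ one has $\omega_1=1$ and the coefficients reduce to linear expressions in $\beta$ bounded by $1$, while for $\frac{2}{3}\leq\beta\leq 1$ one substitutes $\omega_1=\frac{2}{3\beta}$ so each coefficient becomes a constant minus a positive multiple of $\beta$, again bounded by $1$ at the endpoints. I expect the main obstacle to be bookkeeping rather than conceptual: correctly determining the exact multiplicities of the five-component divisor $H$ along each exceptional curve of the resolution, and confirming that the asserted worst-case tangency configuration is genuinely attained and genuinely extremal (the smoothness and irreducibility of the $R_{1jk}$ and $Q_5$, assumed in the hypothesis, are what guarantee that no component drops out or contributes extra multiplicity). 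Once the correct coefficients are in hand, the two-case inequality check is routine.
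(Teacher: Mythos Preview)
Your approach matches the paper's, but several concrete computations are wrong and one of them affects the structure of the resolution.

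First, the intersection numbers: from the classes in Table~\ref{tab:delPezzo-4-lowdegree} one has $A_5\cdot R_{1i5}=A_5\cdot Q_5=R_{1i5}\cdot R_{1j5}=2$ (not $1$), while $E_1\cdot A_5=E_1\cdot R_{1i5}=E_1\cdot Q_5=C\cdot E_1=1$ and $C\cdot Q_5=C\cdot R_{1i5}=3$. Second, the sum you wrote for $\mult_{q_0}H$ is $\tfrac{3}{5}+3\cdot\tfrac{1}{5}+\tfrac{1}{5}+\tfrac{2}{5}=\tfrac{9}{5}$, not $\tfrac{7}{5}$, so the $F_1$ coefficient is $\tfrac{9}{5}\omega_1\beta-\beta$.

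The structural error is the number of blow-ups. Since $C\cdot Q_5=3$, the worst configuration has $(C\cdot Q_5)\vert_{q_0}=3$ (with $(C\cdot A_5)\vert_{q_0}=(C\cdot R_{1i5})\vert_{q_0}=2$; the constraint $R_{1i5}\cdot R_{1j5}=2$ prevents more than one of the cubics from having local intersection $3$ with $C$). After two blow-ups the strict transforms of $C$ and $Q_5$ still meet, so the minimal log resolution requires \emph{three} blow-ups, and the log pullback is
\[
K_{\widetilde S}+(1-\beta)\widetilde C+\omega_1\beta\widetilde H+\bigl(\tfrac{9}{5}\omega_1\beta-\beta\bigr)F_1+\bigl(\tfrac{16}{5}\omega_1\beta-2\beta\bigr)F_2+\bigl(\tfrac{17}{5}\omega_1\beta-3\beta\bigr)F_3.
\]
Your two-blow-up resolution would miss the $F_3$ coefficient entirely. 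Once the correct coefficients are in hand, the case-by-case verification for $0<\beta\le\tfrac{2}{3}$ and $\tfrac{2}{3}\le\beta\le 1$ proceeds exactly as you describe.
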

\begin{proof}
Obseve that
$$C\cdot A_5=A_5\cdot R_{1i5}=Q_5\cdot A_5=R_{1i5}\cdot R_{1j5}=2$$
for $i\neq j$, $C\cdot Q_5=C\cdot R_{1i5}=3$, and
$$E_1\cdot A_5=E_1\cdot R_{1i5}=E_1\cdot Q_5=C\cdot E_1=1.$$
Without loss of generality we may assume that $(C\cdot Q_5)\vert_{q_0}=3$ and $(C\cdot R_{1i5})\vert_{q_0}=2$, for all $i$. Moreover:
$$(C\cdot A_5)\vert_{q_0}=(C\cdot R_{1i5})\vert_{q_0}=(R_{1i5}\cdot R_{1j5})\vert_{q_0}=(A_5\cdot R_{1i5})\vert_{q_0}=2$$
for all $i$ and $i\neq j$. Then the minimal log resolution $f\colon \widetilde S \ra S$ of $(S,(1-\beta)C+\omega_1\beta H)$ consists of $3$ blow-ups with exceptional divisors $F_1,F_2,F_3$. The log pullback is
\begin{align*}
f^*(K_S+(1-\beta)C+\omega_1\beta H)&\simq K_{\widetilde S} + (1-\beta)\widetilde C + \omega_1\beta H \\
&+ (\frac{9}{5}\omega_1\beta-\beta)F_1 + (\frac{16}{5}\omega_1\beta-2\beta)F_2 + (\frac{17}{5}\omega_1\beta-3\beta)F_3.\\
\end{align*}
If $0<\beta\leq \frac{2}{3}$, then $\frac{9}{5}\omega_1\beta-\beta=\frac{4}{5}\beta\leq 1$, $\frac{16}{5}\omega_1\beta-2\beta=\frac{6}{5}\beta\leq 1$ and $\frac{17}{5}\omega_1\beta-3\beta=\frac{2}{5}\beta\leq 1$.

If $\frac{1}{2}<\beta\leq 1$, then $\frac{9}{5}\omega_1\beta-\beta=\frac{6}{5}-\beta\leq 1$, $\frac{16}{5}\omega_1\beta-2\beta=\frac{32}{15}-2\beta\leq 1$ and $\frac{17}{5}-3\beta\leq 1$. Therefore the pair $(S,(1-\beta)C+\omega_1\beta H)$ is log canonical. 
\end{proof}

\section{Local log canonical thresholds}
The computations in this section are local in nature and they are used in sections \ref{sec:dynamic-alpha-degree1}, \ref{sec:dynamic-alpha-degree2} and \ref{sec:dynamic-alpha-degree3}.
\begin{lem}
\label{lem:appendix-local-cusp-snc}
Let $S$ be  a smooth surface, $T$ be a cuspidal curve with local equation $\{y^2-x^3=0\}$ around $p=(0,0)$ and $C$ be a smooth curve not passing through $p$ and intersecting $T$ normally. If $0<\beta\leq 1$, then
$$\lct(S, (1-\beta)C, \beta T)=\frac{5}{6\beta}.$$
\end{lem}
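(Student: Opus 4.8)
The plan is to reduce everything to a single local computation at the cuspidal point $p$, since away from $p$ the pair has at worst simple normal crossings. First I would observe that $\lct(S,(1-\beta)C,\beta T)=\min_{r\in S}\lct_r(S,(1-\beta)C,\beta T)$, and analyse the contribution at each point. At a general point of $C$ or $T$ (away from intersections and away from $p$) the divisor $(1-\beta)C+\lambda\beta T$ is smooth with a single component of coefficient $\leq 1$ for $\lambda\leq 1$, hence log canonical, giving no constraint stronger than $\lambda\leq 1/\beta$. At a point of $C\cap T$ (which is transverse, since $C$ meets $T$ normally and $p\notin C$), the two smooth branches cross normally with coefficients $1-\beta$ and $\lambda\beta$; this is log canonical precisely when $(1-\beta)\leq 1$ and $\lambda\beta\leq 1$, i.e. $\lambda\leq 1/\beta$, again weaker than the claimed bound. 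So the binding constraint must come from $p$.

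The key step is therefore the local computation of $\lct_p(S,\beta T)$, since $p\notin C$ means $C$ contributes nothing at $p$ and we only see the cusp. I would follow exactly the model of Example~\ref{exa:P2cuspidal}: take the minimal log resolution $f\colon Y\to S$ of the cuspidal curve $\{y^2-x^3=0\}$, which consists of three successive blow-ups with exceptional divisors $E_1,E_2,E_3$ and discrepancies recorded by
$$f^*(K_S+\lambda\beta T)\sim K_Y+\lambda\beta \widetilde T+(2\lambda\beta-1)E_1+(3\lambda\beta-2)E_2+(6\lambda\beta-4)E_3.$$
The binding divisor is $E_3$, whose log discrepancy condition $6\lambda\beta-4\leq 1$ forces $\lambda\beta\leq 5/6$, while the conditions on $E_1$, $E_2$ and $\widetilde T$ are all weaker for $\lambda\beta\leq 5/6$. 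Hence $\lct_p(S,\beta T)=\frac{5}{6\beta}$ exactly as in the example, with the only modification being the factor $\beta$ scaling the coefficient of $T$.

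Comparing the three local contributions, the minimum is achieved at $p$, giving $\lct(S,(1-\beta)C,\beta T)=\min\{1/\beta,\,5/(6\beta)\}=5/(6\beta)$ since $5/6<1$. The main (and really only) obstacle is verifying that the resolution of the cusp genuinely yields the discrepancy sequence $(2,3,6)$ with normalisations $(1,2,4)$ on $E_1,E_2,E_3$; but this is precisely the content of Example~\ref{exa:P2cuspidal}, so I would simply invoke that computation after noting that the $\beta$ here only rescales the coefficient of $T$ from $\lambda$ to $\lambda\beta$. I would also double-check that the smooth curve $C$, being disjoint from $p$ and transverse to $T$, never interacts with the chain of exceptional curves over $p$, so that its presence leaves the local threshold at $p$ unchanged.
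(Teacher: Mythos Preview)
Your proposal is correct and follows essentially the same approach as the paper: both compute the log resolution of the cusp via three blow-ups over $p$, obtain the discrepancy sequence $(2\lambda\beta-1,\,3\lambda\beta-2,\,6\lambda\beta-4)$, and conclude $\lct(S,(1-\beta)C,\beta T)=\min\{1/\beta,\,5/(6\beta)\}=5/(6\beta)$. The paper's proof is simply terser, omitting your explicit verification that points away from $p$ only impose the weaker bound $\lambda\leq 1/\beta$.
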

\begin{proof}
Let $f\colon \widetilde S \ra S$ be a log resolution of the pair $(S, (1-\beta)C + \lambda\beta T)$. The morphism $f$ consists of $3$ consecutive blow-ups over $p$ with exceptional divisors $F_1,F_2,F_3$. The log pullback is
$$f^*(K_S+(1-\beta)C +\lambda\beta T)\simq K_{\widetilde S} + (1-\beta)\widetilde C + \lambda\beta \widetilde T + (2\lambda\beta-1)F_1 + (3\lambda\beta-2)F_2 + (6\lambda\beta-4)F_3.$$
Then $\lct(S, (1-\beta)C, \beta T)=\min\{\frac{1}{\beta}, \frac{5}{6\beta}\}=\frac{5}{6\beta}$.
\end{proof}

\begin{lem}
\label{lem:appendix-local-tacnode-snc}
Let $S$ be  a smooth surface, $L$ and $M$ be two smooth curves intersecting at a tacnodal point $p$ (local equation $\{(y^2-x)y=0\}$ around $p=(0,0)$) and $C$ be a smooth curve not passing through $p$ and intersecting $L$ and $M$ normally. If $0<\beta\leq 1$, then
$$\lct(S, (1-\beta)C, \beta (L+C))=\frac{3}{4\beta}.$$
\end{lem}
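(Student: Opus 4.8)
The plan is to imitate the proof of Lemma \ref{lem:appendix-local-cusp-snc}: construct the minimal log resolution of the pair $(S,(1-\beta)C+\lambda\beta(L+M))$, record the discrepancies of the exceptional divisors, and then use that (on a log resolution) the pair is log canonical exactly when every coefficient is at most $1$, so that the log canonical threshold is the largest $\lambda$ compatible with all these inequalities.

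First I would localise the problem. Since $C$ does not pass through $p$, near $p$ the term $(1-\beta)C$ is absent and the only obstruction to log canonicity there is the tacnode itself, where the two smooth branches $L$ and $M$ are tangent with contact order two. Away from $p$, the only non-smooth points of $\Supp((1-\beta)C+\lambda\beta(L+M))$ are the transverse intersections $C\cap L$ and $C\cap M$; there the pair already has simple normal crossings with coefficients $1-\beta<1$ and $\lambda\beta$, so these points contribute only the mild bound $\lambda\beta\leq1$, i.e. $\lambda\leq 1/\beta$.

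Next I would resolve the tacnode at $p$. Because $L$ and $M$ are tangent with intersection multiplicity two, a single blow-up $\sigma_1\colon S_1\to S$ at $p$ does not separate them: the strict transforms $L_1$, $M_1$ and the exceptional curve $F_1$ all pass through one point $q_1\in F_1$, now with pairwise distinct tangents. A second blow-up $\sigma_2\colon S_2\to S_1$ at $q_1$ separates the three branches and yields simple normal crossings. The discrepancy of $F_1$ is $\lambda\beta\,\mult_p(L+M)-1=2\lambda\beta-1$, and since $\mult_{q_1}\bigl(\lambda\beta(L_1+M_1)+(2\lambda\beta-1)F_1\bigr)=4\lambda\beta-1$, the discrepancy of $F_2$ is $4\lambda\beta-2$; note that $F_1$ passes through $q_1$, so its coefficient is unchanged by $\sigma_2$. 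Writing $f=\sigma_1\circ\sigma_2$, the log pullback is
$$f^*\bigl(K_S+(1-\beta)C+\lambda\beta(L+M)\bigr)\simq K_{S_2}+(1-\beta)\widetilde C+\lambda\beta(\widetilde L+\widetilde M)+(2\lambda\beta-1)F_1+(4\lambda\beta-2)F_2.$$

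Finally I would read off the threshold. The conditions $\lambda\beta\leq1$ and $2\lambda\beta-1\leq1$ both give $\lambda\leq1/\beta$, while $4\lambda\beta-2\leq1$ gives $\lambda\leq 3/(4\beta)$, and this last is the binding one since $3/4<1$. Hence $\lct(S,(1-\beta)C,\beta(L+M))=\min\{1/\beta,3/(4\beta)\}=\tfrac{3}{4\beta}$. The only delicate point is the verification that exactly two blow-ups are needed and the correct bookkeeping of multiplicities at $q_1$ (in particular that $F_2$ collects the combined multiplicity of $L_1$, $M_1$ and $F_1$); once the resolution picture is set up, the rest is the routine inequality check, just as in the cuspidal case.
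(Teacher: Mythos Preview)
Your proposal is correct and follows exactly the same approach as the paper: resolve the tacnode by two consecutive blow-ups, compute the log pullback coefficients $(2\lambda\beta-1)F_1+(4\lambda\beta-2)F_2$, and read off the threshold as $\min\{1/\beta,3/(4\beta)\}$. The paper's proof is terser (it simply states the resolution consists of two blow-ups and writes down the pullback), while you add the geometric justification for why two blow-ups suffice and the check at the transverse points away from $p$, but the substance is identical.
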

\begin{proof}
Let $f\colon \widetilde S \ra S$ be a log resolution of the pair $(S, (1-\beta)C + \lambda\beta (L+C))$. The morphism $f$ consists of $2$ consecutive blow-ups over $p$with exceptional curves $F_1,F_2$. The log pullback is
$$f^*(K_S+(1-\beta)C +\lambda\beta (L+C))\simq K_{\widetilde S} + (1-\beta)\widetilde C + \lambda\beta (\widetilde L + \widetilde M) + (2\lambda\beta-1)F_1 + (4\lambda\beta-2)F_2.$$
Then
$$\lct(S, (1-\beta)C, \beta (L+C))=\min\{\frac{1}{\beta}, \frac{3}{4\beta}\}=\frac{3}{4\beta}.$$
\end{proof}

\begin{lem}
\label{lem:appendix-local-cusp-normal}
Let $S$ be  a smooth surface, $T$ be a cuspidal curve with local equation $\{y^2-x^3=0\}$ around $p=(0,0)$ and $C$ be a smooth curve passing through $p$ such that $T$ and $C$ intersect with simple normal crossings away from $p$. Let $0<\beta\leq 1$. If $(C\cdot T)\vert_p=3$, then
$$\lct(S, (1-\beta)C, \beta T)=\frac{2+3\beta}{6\beta}.$$
If $(C\cdot T)\vert_p=2$, then
$$\lct(S, (1-\beta)C, \beta T)=\frac{3+2\beta}{6\beta}.$$
\end{lem}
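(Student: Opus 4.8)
The statement is local at $p$, so the plan is to work in local analytic coordinates and compute the minimal log resolution explicitly. First I would normalise coordinates $(x,y)$ so that $T=\{y^2-x^3=0\}$, and observe that a smooth curve $C$ through $p$ meets $T$ with multiplicity $(C\cdot T)\vert_p$ equal to the order of vanishing of its defining equation along the parametrisation $t\mapsto(t^2,t^3)$ of the cusp. This immediately singles out two local models: $C=\{y=0\}$ (the cuspidal tangent direction) gives $(C\cdot T)\vert_p=3$, while $C=\{x=0\}$ gives $(C\cdot T)\vert_p=2$, and up to analytic change of coordinates these are the only possibilities. Since $C$ and $T$ meet with simple normal crossings away from $p$, the local log canonical threshold at every other point is at least $\tfrac{1}{\beta}$, so $\lct(S,(1-\beta)C,\beta T)=\lct_p$ and it suffices to analyse $p$.

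Next I would run the standard threefold blow-up that resolves the cusp (as in Example \ref{exa:P2cuspidal} and Lemma \ref{lem:appendix-local-cusp-snc}), tracking the strict transform $\widetilde C$ throughout. The key geometric point, and the source of the two different answers, is how $\widetilde C$ meets the exceptional locus. In the case $(C\cdot T)\vert_p=3$ the curve $C$ is tangent to the cuspidal tangent line, so after the first blow-up $\widetilde C$ passes through the same point $q_1=\widetilde T\cap F_1$ at which the cusp resolution continues; it therefore contributes its multiplicity $(1-\beta)$ to the centre of the second blow-up, detaching only there. In the case $(C\cdot T)\vert_p=2$, by contrast, $C$ is transverse to the cuspidal tangent, so $\widetilde C$ meets $F_1$ at a point distinct from $q_1$ and plays no further role, affecting only the coefficient of $F_1$.

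With this bookkeeping the discrepancy computation, via Lemma \ref{lem:log-pullback-preserves-lc}, is routine. In the case $(C\cdot T)\vert_p=3$ the exceptional divisors $F_1,F_2,F_3$ acquire coefficients $2\lambda\beta-\beta$, $3\lambda\beta-2\beta$, $6\lambda\beta-3\beta-1$ in the log pullback of $(S,(1-\beta)C+\lambda\beta T)$; imposing that each be $\leq 1$ (together with $\lambda\beta\leq 1$ from $\widetilde T$) the binding constraint comes from $F_3$ and yields $\lambda\leq\tfrac{2+3\beta}{6\beta}$. In the case $(C\cdot T)\vert_p=2$ the coefficients are instead $2\lambda\beta-\beta$, $3\lambda\beta-\beta-1$, $6\lambda\beta-2\beta-2$, and again $F_3$ is binding, giving $\lambda\leq\tfrac{3+2\beta}{6\beta}$. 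A short comparison of the candidate bounds over $0<\beta\leq 1$ confirms in each case that the $F_3$-bound is the smallest (in particular smaller than $\tfrac{1}{\beta}$), which produces the two stated values.

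I expect the main obstacle to be purely geometric rather than computational: correctly identifying, in the $(C\cdot T)\vert_p=3$ case, that $\widetilde C$ shares a tangent direction with $\widetilde T$ and hence raises the multiplicity at the centres of the second and third blow-ups, whereas in the $(C\cdot T)\vert_p=2$ case it separates at once. Getting this incidence right is exactly what distinguishes the two answers; once the centres and multiplicities of the three blow-ups are pinned down, everything else reduces to the elementary $\leq 1$ discrepancy check carried out in the earlier appendix lemmas.
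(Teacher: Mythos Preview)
Your proposal is correct and follows essentially the same approach as the paper: the paper's proof also carries out the three-blow-up resolution of the cusp, tracks the contribution of $C$ to each exceptional coefficient in the two cases, obtains exactly the coefficients you list, and takes the minimum over the resulting bounds. Your geometric analysis of where $\widetilde C$ separates from the cusp resolution (after the second blow-up when $(C\cdot T)\vert_p=3$, after the first when $(C\cdot T)\vert_p=2$) is precisely what drives the distinction, and your verification that the $F_3$-bound is binding matches the paper's $\min$-comparison.
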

\begin{proof}
Let $f\colon \widetilde S \ra S$ be a log resolution of the pair $(S, (1-\beta)C + \lambda\beta T)$. The morphism $f$ consists of $3$ consecutive blow-ups over $p$ with exceptional curves $F_1,F_2,F_3$.

Suppose $(C\cdot T)\vert_p=2$. Then the log pullback is
$$f^*(K_S+(1-\beta)C +\lambda\beta T)\simq K_{\widetilde S} + (1-\beta)\widetilde C + \lambda\beta \widetilde T + (2\lambda\beta-\beta)F_1 + (3\lambda\beta-\beta-1)F_2 + (6\lambda\beta-2\beta-2)F_3,$$
and we have that $$\lct(S, (1-\beta)C, \beta T)=\min\{\frac{1+\beta}{2\beta}, \frac{2+\beta}{3\beta}, \frac{3+2\beta}{6\beta}\}=\frac{3+2\beta}{6\beta}.$$

Suppose $(C\cdot T)\vert_p=3$. Then the log pullback is
$$f^*(K_S+(1-\beta)C +\lambda\beta T)\simq K_{\widetilde S} + (1-\beta)\widetilde C + \lambda\beta \widetilde T + (2\lambda\beta-\beta)F_1 + (3\lambda\beta-2\beta)F_2 + (6\lambda\beta-3\beta-1)F_3,$$
and we have that $$\lct(S, (1-\beta)C, \beta T)=\min\{\frac{1+\beta}{2\beta}, \frac{1+2\beta}{3\beta}, \frac{2+3\beta}{6\beta}\}=\frac{2+3\beta}{6\beta}.$$

\end{proof}

\begin{lem}
\label{lem:appendix-local-tacnode-normal}
Let $S$ be  a smooth surface, $L$ and $M$ be two smooth curves intersecting at a tacnode (local equation $\{(y^2-x)y=0\}$ around $p=(0,0)$) and $C$ be a smooth curve intersecting both $L$ and $M$ normally at $p$ and such that $\Supp(L\cup M\cup C)$ has simple normal crossings away from $p$. If $0<\beta\leq 1$, then
$$\lct(S, (1-\beta)C, \beta (L+C))=\frac{2+\beta}{4\beta}.$$
\end{lem}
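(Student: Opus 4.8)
The plan is to reduce the statement to a purely local computation at $p$ and then to carry out the minimal log resolution explicitly, exactly in the spirit of Lemma \ref{lem:appendix-local-tacnode-snc}. Since $\Supp(L\cup M\cup C)$ has simple normal crossings away from $p$, the only place where the pair $(S,(1-\beta)C+\lambda\beta(L+M))$ can fail to be log canonical is $p$, so by Lemma \ref{lem:disc-log-resolution} it suffices to determine the discrepancies of the exceptional divisors in the minimal log resolution $f\colon \widetilde S\ra S$ over $p$. I claim this resolution consists of exactly two blow-ups. First I would blow up $p$, producing $F_1$. Because the contact is tacnodal, $(L\cdot M)\vert_p=2$, the strict transforms $\widetilde L$ and $\widetilde M$ still meet after one blow-up; since $L$ and $M$ share a common tangent line at $p$, the point $q_1=\widetilde L\cap\widetilde M$ lies on $F_1$, so $\widetilde L,\widetilde M,F_1$ form an ordinary triple point (three pairwise-transverse branches). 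On the other hand $C$ meets both $L$ and $M$ transversally, hence it is \emph{not} tangent to their common tangent direction, so $\widetilde C$ meets $F_1$ at a point distinct from $q_1$ and is already in simple normal crossings. A second blow-up at $q_1$ then separates the three branches and achieves a log resolution; one blow-up is not enough (the triple point is not snc), so two is minimal.

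Next I would track the log pullback. Since $\mult_p C=\mult_p L=\mult_p M=1$, the coefficient of $F_1$ is $(1-\beta)+\lambda\beta+\lambda\beta-1=2\lambda\beta-\beta$. As $\widetilde C$ avoids $q_1$, the multiplicity at $q_1$ of the transformed boundary is $\lambda\beta+\lambda\beta+(2\lambda\beta-\beta)=4\lambda\beta-\beta$, giving $F_2$ the coefficient $4\lambda\beta-\beta-1$. Thus
$$f^*(K_S+(1-\beta)C+\lambda\beta(L+M))\simq K_{\widetilde S}+(1-\beta)\widetilde C+\lambda\beta(\widetilde L+\widetilde M)+(2\lambda\beta-\beta)F_1+(4\lambda\beta-\beta-1)F_2.$$
The pair is log canonical precisely when every coefficient is at most $1$, i.e. when $\lambda\beta\leq 1$, $2\lambda\beta-\beta\leq 1$ and $4\lambda\beta-\beta-1\leq 1$, which give $\lambda\leq\tfrac{1}{\beta}$, $\lambda\leq\tfrac{1+\beta}{2\beta}$ and $\lambda\leq\tfrac{2+\beta}{4\beta}$ respectively. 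For $0<\beta\leq 1$ the third bound is the smallest, since $2+\beta\leq 2+2\beta$ gives $\tfrac{2+\beta}{4\beta}\leq\tfrac{1+\beta}{2\beta}$ and $2+\beta\leq 4$ gives $\tfrac{2+\beta}{4\beta}\leq\tfrac{1}{\beta}$. Hence $\lct(S,(1-\beta)C,\beta(L+M))=\tfrac{2+\beta}{4\beta}$, as claimed.

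The only genuine content is the geometry of the resolution, and that is where I expect the main (mild) obstacle to lie: one must verify carefully that after a single blow-up the tacnode degenerates into an \emph{ordinary} triple point $\widetilde L\cap\widetilde M\cap F_1$, while $\widetilde C$ peels off to a separate transverse point of $F_1$. This is the feature that distinguishes the present case from the two cases in Lemma \ref{lem:appendix-local-tacnode-snc} and from the snc configuration, and it is what pins down the multiplicity $4\lambda\beta-\beta$ at $q_1$. The cleanest way to confirm it is to work in a blow-up chart with $L=\{y=0\}$, $M=\{y-x^2=0\}$ and $C=\{x=0\}$, where the three strict transforms acquire the distinct tangent directions $(1,0)$, $(1,1)$ and a direction away from $q_1$. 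Once this is checked, the discrepancy bookkeeping and the final comparison of the three thresholds are entirely routine.
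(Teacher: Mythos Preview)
Your proof is correct and follows the same approach as the paper: two successive blow-ups over $p$ with the log pullback coefficients $(2\lambda\beta-\beta)$ and $(4\lambda\beta-\beta-1)$ on $F_1$ and $F_2$, then taking the minimum of the resulting thresholds. The paper simply records the log pullback and the minimum without justifying the resolution, whereas you spell out why $\widetilde C$ separates from $q_1$ and why $\widetilde L,\widetilde M,F_1$ form an ordinary triple point there; this extra detail is sound and matches the convention announced at the start of the appendix.
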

\begin{proof}
Let $f\colon \widetilde S \ra S$ be a log resolution of the pair $(S, (1-\beta)C + \lambda\beta (L+C))$. The morphism $f$ consists of $2$ consecutive blow-ups over $p$. The log pullback is
$$f^*(K_S+(1-\beta)C +\lambda\beta (L+C))\simq K_{\widetilde S} + (1-\beta)\widetilde C + \lambda\beta (\widetilde L + \widetilde M) + (2\lambda\beta-\beta)F_1 + (4\lambda\beta-\beta-1)F_2.$$
Then
$$\lct(S, (1-\beta)C, \beta (L+C))=\min\{\frac{1+\beta}{2\beta}, \frac{2+\beta}{4\beta}\}=\frac{2+\beta}{4\beta}.$$
\end{proof}

\begin{lem}
\label{lem:appendix-local-4lines}
Let $S$ be  a smooth surface, $L_1, L_2,L_3, C$ four smooth curves intersecting each other normally at $p\in S$ and with simple normal crossings away from $p$. If $0<\beta\leq 1$, then
$$\lct(S, (1-\beta)C, \beta (L_1+L_2+L_3))=\frac{1+\beta}{3\beta}.$$
\end{lem}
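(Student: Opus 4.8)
The plan is to follow the same template as every other local lemma in this section: build the minimal log resolution by a single blow-up of $p$, compute the log pullback, and read off the threshold from the discrepancies. Since $L_1,L_2,L_3$ and $C$ are all smooth and meet pairwise transversally at $p$, I expect that one blow-up $f\colon \widetilde S\ra S$ of $p$, with exceptional curve $F_1$, already resolves the pair $(S,(1-\beta)C+\lambda\beta(L_1+L_2+L_3))$. Indeed, the four strict transforms meet $F_1$ at four distinct points (the curves had pairwise distinct tangent directions at $p$), so they are disjoint and, together with $F_1$, form a simple normal crossings divisor; away from $p$ the curves already cross normally by hypothesis. By Lemma \ref{lem:log-pullback-preserves-lc} it then suffices to test log canonicity on this pullback.

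Next I would compute the discrepancy. The multiplicity of $(1-\beta)C+\lambda\beta(L_1+L_2+L_3)$ at $p$ is $(1-\beta)+3\lambda\beta$, so the coefficient of $F_1$ in the log pullback is $(1-\beta)+3\lambda\beta-1=3\lambda\beta-\beta$, giving
\[
f^*\big(K_S+(1-\beta)C+\lambda\beta(L_1+L_2+L_3)\big)\simq K_{\widetilde S}+(1-\beta)\widetilde C+\lambda\beta(\widetilde L_1+\widetilde L_2+\widetilde L_3)+(3\lambda\beta-\beta)F_1.
\]
Imposing log canonicity, the strict-transform coefficients yield $1-\beta\leq 1$ (automatic for $\beta>0$) and $\lambda\beta\leq 1$, while the exceptional coefficient yields $3\lambda\beta-\beta\leq 1$, i.e. $\lambda\leq\frac{1+\beta}{3\beta}$. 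Hence
\[
\lct\big(S,(1-\beta)C,\beta(L_1+L_2+L_3)\big)=\min\left\{\tfrac{1}{\beta},\ \tfrac{1+\beta}{3\beta}\right\}.
\]
Finally I would note that $\frac{1+\beta}{3\beta}\leq\frac{1}{\beta}$ is equivalent to $\beta\leq 2$, which holds for all $0<\beta\leq 1$, so the minimum is $\frac{1+\beta}{3\beta}$, as claimed.

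There is essentially no obstacle in this argument; it is the most elementary computation of the section. The only two points deserving a moment's care are the justification that a single blow-up suffices — which relies precisely on the pairwise transversality to guarantee that the strict transforms separate along $F_1$ — and the trivial comparison identifying which of the two candidate expressions realises the minimum. Both are immediate, so the proof should reduce to the single blow-up computation above.
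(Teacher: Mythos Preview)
Your proof is correct and follows exactly the same approach as the paper: a single blow-up of $p$, the log pullback with exceptional coefficient $3\lambda\beta-\beta$, and reading off the threshold. If anything, you justify more carefully than the paper does why one blow-up suffices and why $\frac{1+\beta}{3\beta}\leq\frac{1}{\beta}$.
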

\begin{proof}
Let $f\colon \widetilde S \ra S$ be a log resolution of the pair $(S, (1-\beta)C + \lambda\beta (L_1+L_2+L_3))$. The morphism $f$ consists of the blow-up of $p$. The log pullback is
$$f^*(K_S+(1-\beta)C +\lambda\beta (L_1+L_2+L_3))\simq K_{\widetilde S} + (1-\beta)\widetilde C + \lambda\beta (\widetilde L_1 + \widetilde L_2+\widetilde L_3) + (3\lambda\beta-\beta)F_1.$$
Then
$$\lct(S, (1-\beta)C, \beta (L+C))=\frac{1+\beta}{3\beta}.$$
\end{proof}

\begin{lem}
\label{lem:appendix-local-3lines}
Let $S$ be  a smooth surface, $L_1, L_2,L_3$ four smooth curves intersecting each other normally at $p\in S$ and $C$ a smooth curve intersecting $L_1,L_2,L_3$ at different points and normally. If $0<\beta\leq 1$, then
$$\lct(S, (1-\beta)C, \beta (L_1+L_2+L_3))=\frac{2}{3\beta}.$$
\end{lem}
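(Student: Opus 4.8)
The plan is to reduce everything to a local computation at the triple point $p$ and then resolve it with a single blow-up, exactly in the spirit of Lemma~\ref{lem:appendix-local-4lines}. First I would observe that the only place where the pair $(S,(1-\beta)C+\lambda\beta(L_1+L_2+L_3))$ can fail to be log canonical is $p$: away from $p$ the curve $C$ meets each $L_i$ transversally at a distinct point and the $L_i$ are smooth, so the support of the boundary has simple normal crossings there, and every coefficient involved (namely $1-\beta$ for $C$ and $\lambda\beta$ for each $L_i$) is at most $1$ as soon as $\lambda\beta\le 1$. Since $C$ passes through none of these $L_i$ at $p$ (the intersections $C\cap L_i$ are three \emph{different} points), near $p$ the boundary is just $\lambda\beta(L_1+L_2+L_3)$, an ordinary triple point of three smooth branches with distinct tangent directions, which is not a simple normal crossing and must be resolved.

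Next I would blow up $p$. Let $f\colon\widetilde S\to S$ denote this blow-up, with exceptional curve $F_1$. Because the three branches have distinct tangent directions, their strict transforms $\widetilde L_1,\widetilde L_2,\widetilde L_3$ become pairwise disjoint and each meets $F_1$ transversally at its own point; hence $f$ is already a log resolution. Using $K_{\widetilde S}=f^*K_S+F_1$ together with $f^*L_i=\widetilde L_i+F_1$ (each $L_i$ is smooth, hence of multiplicity $1$ at $p$), the log pullback is
$$f^*(K_S+(1-\beta)C+\lambda\beta(L_1+L_2+L_3))\simq K_{\widetilde S}+(1-\beta)\widetilde C+\lambda\beta(\widetilde L_1+\widetilde L_2+\widetilde L_3)+(3\lambda\beta-1)F_1.$$

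Finally I would read off the constraints for log canonicity by requiring all coefficients to be at most $1$: the coefficient $1-\beta$ of $\widetilde C$ is automatically $\le 1$; the coefficients $\lambda\beta$ of the $\widetilde L_i$ give $\lambda\le \frac{1}{\beta}$; and the exceptional coefficient $3\lambda\beta-1$ gives $\lambda\le \frac{2}{3\beta}$. The one point requiring a little care is that at each transverse intersection $\widetilde L_i\cap F_1$ one tests log canonicity of a two-branch normal crossing, for which it suffices that each of the two coefficients be $\le 1$ \emph{separately} (we are not imposing the stronger terminal-type bound $\lambda\beta+(3\lambda\beta-1)\le 1$); this is the only genuine subtlety. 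Since $\frac{2}{3\beta}<\frac{1}{\beta}$, the binding constraint is $\lambda\le \frac{2}{3\beta}$, whence $\lct(S,(1-\beta)C,\beta(L_1+L_2+L_3))=\frac{2}{3\beta}$. The computation is otherwise routine and parallels Lemma~\ref{lem:appendix-local-4lines}, the sole difference being that there $C$ passes through $p$ and so contributes an extra unit to the intersection configuration along $F_1$.
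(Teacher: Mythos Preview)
Your proposal is correct and follows essentially the same approach as the paper: blow up $p$ once, compute the log pullback to obtain the coefficient $3\lambda\beta-1$ on the exceptional divisor $F_1$, and read off the threshold $\frac{2}{3\beta}$. Your version is actually more detailed than the paper's, which simply states the log pullback and the result without explicitly justifying that $p$ is the only relevant point or that a single blow-up suffices.
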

\begin{proof}
Let $f\colon \widetilde S \ra S$ be a log resolution of the pair $(S, (1-\beta)C + \lambda\beta (L_1+L_2+L_3))$. The morphism $f$ consists of the blow-up of $p$. The log pullback is
$$f^*(K_S+(1-\beta)C +\lambda\beta (L_1+L_2+L_3))\simq K_{\widetilde S} + (1-\beta)\widetilde C + \lambda\beta (\widetilde L_1 + \widetilde L_2+\widetilde L_3) + (3\lambda\beta-1)F_1.$$
Then
$$\lct(S, (1-\beta)C, \beta (L+C))=\frac{2}{3\beta}.$$
\end{proof}

\chapter{Summary of results and numerical data}
\label{app:tables}
Let $S$ be a non-singular complex del Pezzo surface and $C\in \vert-K_S\vert$ be a smooth curve. Let $\beta\in (0,1]$. In this appendix, for the convenience of the reader, we summarise in tables the list of alpha--invariants $\alpha(S, (1-\beta)C)$. For each value, the last column indicates the interval for $\beta$ in which Theorem \ref{thm:Jeffres-Mazzeo-Rubinstein} guarantees the existence of a K\"ahler-Einstein metric with edge singularities (KEE) along $C$ of angle $2\pi\beta$. For simplicity we do not include all the notation when describing $C$. We refer the reader to each particular Theorem for the details.
\begin{table}[!ht]%
\centering
\begin{tabular}{|c|c|c|}
\hline
Characterisation of $C$ 						&$\alpha(S, (1-\beta)C)$			&$\alpha(S, (1-\beta)C>\frac{2}{3}$ 	\\
\hline
Any smooth curve.										&$\begin{dcases}
																				1 												&\text{ for } 0<\beta\leq \frac{1}{6},\\
																				\frac{1+3\beta}{9\beta}		&\text{ for } \frac{1}{6}\leq \beta\leq \frac{2}{3},\\
																				\frac{1}{3\beta}		&\text{ for } \frac{2}{3}\leq \beta\leq 1.
																		\end{dcases}$											
																																	&$(0,\frac{1}{3})$																																				\\
\hline
\end{tabular}
\caption{Projective Plane $\bbP^2$ (Theorem \ref{thm:del-Pezzo-dynamic-alpha-degree-9}).}
\label{tab:alpha-deg9}
\end{table}

\begin{table}[!h]%
\centering
\begin{tabular}{|c|c|c|}
\hline
Characterisation of $C$ 						&$\alpha(S, (1-\beta)C)$			&$\alpha(S, (1-\beta)C>\frac{2}{3}$ 	\\
\hline
Any smooth curve.										&$\begin{dcases}
																				1 												&\text{ for } 0<\beta\leq \frac{1}{4},\\
																				\frac{1+2\beta}{6\beta}		&\text{ for } \frac{1}{4}\leq \beta\leq1.
																		\end{dcases}$							
																																	&$(0,\frac{1}{2})$																																				\\
\hline
\end{tabular}
\caption{$\bbP^1\times\bbP^1$ (Theorem \ref{thm:del-Pezzo-dynamic-alpha-degree-8-F0}).}
\label{tab:alpha-deg8-F0}
\end{table}

\begin{table}[!ht]%
\centering
\begin{tabular}{|c|c|c|}
\hline
Characterisation of $C$ 						&$\alpha(S, (1-\beta)C)$			&$\alpha(S, (1-\beta)C>\frac{2}{3}$ 	\\
\hline
$(F\cdot C)\vert_r=2$
where $r=E\cap C$.									&$\omega_1=\begin{dcases}
																									1 												&\text{ for } 0<\beta\leq \frac{1}{6},\\
																									\frac{1+2\beta}{8\beta}		&\text{ for } \frac{1}{6}\leq \beta\leq \frac{5}{6},\\
																									\frac{1}{3\beta}					&\text{ for } \frac{5}{6}\leq \beta\leq 1.\\
																							\end{dcases}$
																																	&$(0,\frac{3}{10})$																																				\\
\hline
Otherwise.													&$\omega_2=\begin{dcases}
																				1 												&\text{ for } 0<\beta\leq \frac{1}{4},\\
																				\frac{1+\beta}{5\beta}		&\text{ for } \frac{1}{4}\leq \beta\leq \frac{2}{3},\\
																				\frac{1}{3\beta}					&\text{ for } \frac{2}{3}\leq \beta\leq 1.\\
																		\end{dcases}$
																																	&$(0,\frac{3}{7})$																		\\
\hline
\end{tabular}
\caption{Hirzebruch surface $\bbF_1$ (Theorem \ref{thm:del-Pezzo-dynamic-alpha-degree-8-F1}).}
\label{tab:alpha-deg8-F1}
\end{table}

\begin{table}[!ht]%
\centering
\begin{tabular}{|c|c|c|}
\hline
Characterisation of $C$ 						&$\alpha(S, (1-\beta)C)$			&$\alpha(S, (1-\beta)C>\frac{2}{3}$ 	\\
\hline
\bigcell{c}{$C$ has a \\
pseudo-Eckardt point.}							&$\omega_4=\begin{dcases}
																				1 												&\text{ for } 0<\beta\leq \frac{1}{4},\\
																				\frac{1+\beta}{5\beta}		&\text{ for } \frac{1}{4}\leq \beta\leq \frac{2}{3},\\
																				\frac{1}{3\beta}					&\text{ for } \frac{2}{3}\leq \beta\leq 1.\\
																		\end{dcases}$
																																	&$(0,\frac{3}{10})$																																				\\
\hline
\bigcell{c}{
No pseudo-Eckardt point in $C$\\
and $(C\cdot L_i)\vert_{r_i}=2$\\
for some $i=1,2$.}
																		&$\omega_3=\begin{dcases}
																				1 												&\text{ for } 0<\beta\leq \frac{1}{4},\\
																				\frac{1+2\beta}{6\beta}		&\text{ for } \frac{1}{4}\leq \beta\leq \frac{1}{2},\\
																				\frac{1}{3\beta}					&\text{ for } \frac{1}{2}\leq \beta\leq 1.\\
																		\end{dcases}$
																																	&$(0,\frac{1}{2})$																		\\
\hline
\bigcell{c}{
No pseudo-Eckardt point in $C$,\\
$(C\cdot L_i)\vert_{r_i}=1$\\
for $i=1,2$, and\\
$(R\cdot C)\vert_r=3$.}
																		&$\omega_2=\begin{dcases}
																				1 												&\text{ for } 0<\beta\leq \frac{1}{4},\\
																				\frac{1+3\beta}{7\beta}		&\text{ for } \frac{1}{4}\leq \beta\leq \frac{4}{9},\\
																				\frac{1}{3\beta}					&\text{ for } \frac{4}{9}\leq \beta\leq 1.\\
																		\end{dcases}$
																																	&$(0,\frac{1}{2})$																		\\
\hline
Otherwise.													&$\omega_1=\begin{dcases}
																				1 												&\text{ for } 0<\beta\leq \frac{1}{3},\\
																				\frac{1}{3\beta}					&\text{ for } \frac{1}{3}\leq \beta\leq 1.\\
																		\end{dcases}$
																																	&$(0,\frac{1}{2})$																		\\
\hline

\end{tabular}
\caption{Del Pezzo surface of degree 7 (Theorem \ref{thm:del-Pezzo-dynamic-alpha-degree-7}).}
\label{tab:alpha-deg7}
\end{table}

\begin{table}[!ht]%
\centering
\begin{tabular}{|c|c|c|}
\hline
Characterisation of $C$ 						&$\alpha(S, (1-\beta)C)$			&$\alpha(S, (1-\beta)C>\frac{2}{3}$ 	\\
\hline
\bigcell{c}{$C$ has a \\
pseudo-Eckardt point.}							&$\omega_3=\begin{dcases}
																				1 												&\text{ for } 0<\beta\leq \frac{1}{3},\\
																				\frac{1+\beta}{4\beta}		&\text{ for } \frac{1}{3}\leq \beta\leq 1.
																		\end{dcases}$
																																	&$(0,\frac{3}{5})$																																				\\
\hline
\bigcell{c}{
No pseudo-Eckardt point in $C$\\
and $(C\cdot L)\vert_{L\cap E_1}=2$\\
for some model of $S$.}
																		&$\omega_2=\begin{dcases}
																				1 												&\text{ for } 0<\beta\leq \frac{1}{3},\\
																				\frac{1+2\beta}{5\beta}					&\text{ for } \frac{1}{3}\leq \beta\leq \frac{3}{4},\\
																				\frac{1}{2\beta}					&\text{ for } \frac{3}{4}\leq \beta\leq 1.
																		\end{dcases}$
																																	&$(0,\frac{3}{4})$																		\\
\hline
Otherwise.													&$\omega_1=
																		\begin{dcases}
																				1 												&\text{ for } 0<\beta\leq \frac{1}{2},\\
																				\frac{1}{2\beta}					&\text{ for } \frac{1}{2}\leq \beta\leq 1.
																		\end{dcases}$
																																	&$(0,\frac{3}{4})$																		\\
\hline

\end{tabular}
\caption{Del Pezzo surface of degree 6 (Theorem \ref{thm:del-Pezzo-dynamic-alpha-degree-6}).}
\label{tab:alpha-deg6}
\end{table}

\begin{table}[!ht]%
\centering
\begin{tabular}{|c|c|c|}
\hline
Characterisation of $C$ 						&$\alpha(S, (1-\beta)C)$			&$\alpha(S, (1-\beta)C>\frac{2}{3}$ 	\\
\hline
Any smooth curve.										&$\begin{dcases}
																				1 												&\text{ for } 0<\beta\leq \frac{1}{2},\\
																				\frac{1}{2\beta}					&\text{ for } \frac{1}{2}\leq \beta\leq 1.
																		\end{dcases}$
																																	&$(0,\frac{3}{4})$																																				\\
\hline
\end{tabular}
\caption{Del Pezzo surface of degree 5 (Theorem \ref{thm:del-Pezzo-dynamic-alpha-degree-5}).}
\label{tab:alpha-deg5}
\end{table}

\begin{table}[!ht]%
\centering
\begin{tabular}{|c|c|c|}
\hline
Characterisation of $C$ 						&$\alpha(S, (1-\beta)C)$			&$\alpha(S, (1-\beta)C>\frac{2}{3}$ 	\\
\hline
\bigcell{c}{$C$ has a \\
pseudo-Eckardt point.}							&$\omega_3=\begin{dcases}
																				1 									&\text{ for } 0<\beta\leq \frac{1}{2},\\
																				\frac{1+\beta}{3\beta}		&\text{ for } \frac{1}{2}\leq \beta\leq 1.
																		\end{dcases}$
																																	&$(0,1)$																																				\\
\hline
\bigcell{c}{
No pseudo-Eckardt point in $C$\\
but $(A\cdot C)\vert_p=(B\cdot C)\vert_p=2$\\
for $A+B\in \vert-K_S\vert$ and $A\cup b = p$.}
																		&$\omega_2=\begin{dcases}
																					1 									&\text{ for } 0<\beta\leq \frac{1}{2},\\
																					\frac{1+2\beta}{4\beta}		&\text{ for } \frac{1}{2}\leq \beta\leq \frac{5}{6},\\
																					\frac{2}{3\beta}		&\text{ for } \frac{5}{6}\leq \beta\leq 1.
																		\end{dcases}$
																																	&$(0,1)$																		\\
\hline
Otherwise.														&$\omega_1=\begin{dcases}
																				1 									&\text{ for } 0<\beta\leq \frac{2}{3},\\
																				\frac{2}{3\beta}		&\text{ for } \frac{2}{3}\leq \beta\leq 1.
																		\end{dcases}$
																																	&$(0,1)$																		\\
\hline

\end{tabular}
\caption{Del Pezzo surface of degree 4 (Theorem \ref{thm:del-Pezzo-dynamic-alpha-degree-4}).}
\label{tab:alpha-deg4}
\end{table}

\begin{table}[!ht]%
\centering
\begin{tabular}{|c|c|c|}
\hline
Characterisation of $C$ 						&$\alpha(S, (1-\beta)C)$			&$\alpha(S, (1-\beta)C>\frac{2}{3}$ 	\\
\hline
\bigcell{c}{$p_B\in C$ the tacnodal\\
point of some $B\in \vert-K_S\vert$.}							
																		&$\omega_3=\begin{dcases}
																				1 									&\text{ for } 0<\beta\leq \frac{2}{3},\\
																				\frac{2+\beta}{4\beta}		&\text{ for } \frac{2}{3}\leq \beta\leq 1.
																		\end{dcases}$
																																	&$(0,1)$																																				\\
\hline
\bigcell{c}{
$C$ contains no tacnodal points\\
but $p_T\in C$, the cuspidal point\\
of some $T\in \vert-K_S\vert$\\
such that $(C\cdot D)\vert_{p_T}=3$.}
																		&$\omega_2=\begin{dcases}
																				1 									&\text{ for } 0<\beta\leq \frac{2}{3},\\
																				\frac{2+3\beta}{6\beta}		&\text{ for } \frac{2}{3}\leq \beta\leq \frac{5}{6}.\\
																				\frac{3}{4\beta}		&\text{ for } \frac{5}{6}\leq \beta\leq 1.
																		\end{dcases}$
																																	&$(0,1]$																		\\
\hline
Otherwise.													&$\omega_1=\begin{dcases}
																				1 									&\text{ for } 0<\beta\leq \frac{3}{4},\\
																				\frac{3}{4\beta}		&\text{ for } \frac{3}{4}\leq \beta\leq 1.
																		\end{dcases}$
																																	&$(0,1]$																		\\
\hline

\end{tabular}
\caption{Smooth cubic surface with no Eckardt points (Theorem \ref{thm:del-Pezzo-dynamic-alpha-degree-3}).}
\label{tab:alpha-deg3a}
\end{table}

\begin{table}[!ht]%
\centering
\begin{tabular}{|c|c|c|}
\hline
Characterisation of $C$ 						&$\alpha(S, (1-\beta)C)$			&$\alpha(S, (1-\beta)C>\frac{2}{3}$ 	\\
\hline
\bigcell{c}{
$C$ contains some\\
Eckardt point.}
																		&$\omega_5=\begin{dcases}
																				1 									&\text{ for } 0<\beta\leq \frac{1}{2},\\
																				\frac{1+\beta}{3\beta}		&\text{ for } \frac{1}{2}\leq \beta\leq 1.
																		\end{dcases}$
																																	&$(0,1]$																		\\
\hline
Otherwise.													&$\omega_4=\begin{dcases}
																				1 									&\text{ for } 0<\beta\leq \frac{2}{3},\\
																				\frac{2}{3\beta}		&\text{ for } \frac{2}{3}\leq \beta\leq 1.
																		\end{dcases}$
																																	&$(0,1]$																		\\
\hline

\end{tabular}
\caption{Smooth cubic surface with some Eckardt point (Theorem \ref{thm:del-Pezzo-dynamic-alpha-degree-3}).}
\label{tab:alpha-deg3b}
\end{table}

\begin{table}[!ht]%
\centering
\begin{tabular}{|c|c|c|}
\hline
Characterisation of $C$ 						&$\alpha(S, (1-\beta)C)$			&$\alpha(S, (1-\beta)C>\frac{2}{3}$ 	\\
\hline
\bigcell{c}{
$p\in C$ for $p$ the singularity\\
of a cuspidal $B\in \vert-K_S\vert$}
																		&$\omega_2=\begin{dcases}
																					1 									&\text{ for } 0<\beta\leq \frac{3}{4},\\
																					\frac{3+2\beta}{6\beta}		&\text{ for } \frac{3}{4}\leq \beta\leq 1.
																			\end{dcases}$
																																	&$(0,1]$																		\\
\hline
Otherwise.													&$\omega_1=\begin{dcases}
																					1 									&\text{ for } 0<\beta\leq \frac{5}{6},\\
																					\frac{5}{6\beta}		&\text{ for } \frac{5}{6}\leq \beta\leq 1.
																			\end{dcases}$
																																	&$(0,1]$																		\\
\hline

\end{tabular}
\caption{Del Pezzo surface of degree $2$ without tacnodal singular curves in $\vert-K_S\vert$ (Theorem \ref{thm:del-Pezzo-dynamic-alpha-degree-2}).}
\label{tab:alpha-deg2a}
\end{table}

\begin{table}[!ht]%
\centering
\begin{tabular}{|c|c|c|}
\hline
Characterisation of $C$ 						&$\alpha(S, (1-\beta)C)$			&$\alpha(S, (1-\beta)C>\frac{2}{3}$ 	\\
\hline
\bigcell{c}{
$p\in C$ for $p$ the singularity\\
of a tacnodal $B\in \vert-K_S\vert$}
																		&$\omega_4=\begin{dcases}
																					1 									&\text{ for } 0<\beta\leq \frac{2}{3},\\
																					\frac{2+\beta}{4\beta}		&\text{ for } \frac{2}{3}\leq \beta\leq 1.
																			\end{dcases}$
																																	&$(0,1]$																		\\
\hline
Otherwise.													&$\omega_3=\begin{dcases}
																					1 									&\text{ for } 0<\beta\leq \frac{3}{4},\\
																					\frac{3}{4\beta}		&\text{ for } \frac{3}{4}\leq \beta\leq 1.
																			\end{dcases}$
																																	&$(0,1]$																		\\
\hline

\end{tabular}
\caption{Del Pezzo surface of degree $2$ without some tacnodal singular curve in $\vert-K_S\vert$ (Theorem \ref{thm:del-Pezzo-dynamic-alpha-degree-2}).}
\label{tab:alpha-deg2b}
\end{table}

\begin{table}[!ht]%
\centering
\begin{tabular}{|c|c|c|}
\hline
Characterisation of $C$ 						&$\alpha(S, (1-\beta)C)$			&$\alpha(S, (1-\beta)C>\frac{2}{3}$ 	\\
\hline
\bigcell{c}{
$\vert-K_S\vert$ contains a cuspidal\\
rational curve.}
																		&$\omega_2=\begin{dcases}
																					1 									&\text{ for } 0<\beta\leq \frac{5}{6},\\
																					\frac{5}{6}		&\text{ for } \frac{5}{6}\leq \beta\leq 1.
																			\end{dcases}$
																																	&$(0,1]$																		\\
\hline
Otherwise.													&$\omega_1=1$									&$(0,1]$																		\\
\hline

\end{tabular}
\caption{Del Pezzo surface of degree $1$ (Theorem \ref{thm:del-Pezzo-dynamic-alpha-degree-1}).}
\label{tab:alpha-deg1}
\end{table}

\bibliographystyle{hep}
\bibliography{Bibliography}

\end{document}